\newcommand{\mc}[1]{\mathcal{{#1}}}
\newtheorem{thm}{Theorem}[section]
\newtheorem{cor}[thm]{Corollary}
\newtheorem{lem}[thm]{Lemma}
\newtheorem{prop}[thm]{Proposition}
\newtheorem{prob}[thm]{Problem}
\newtheorem{conj}[thm]{Conjecture}
\theoremstyle{definition}
\newtheorem{defn}[thm]{Definition}
\theoremstyle{remark}
\newtheorem{rem}[thm]{Remark}
\newtheorem{ex}[thm]{Example}
\newfont{\eufm}{eufm10}
\newcommand{\calC}{\mathcal{C}}
\newcommand{\calG}{\mathcal{G}}
\newcommand{\calH}{\mathcal{H}}
\newcommand{\calM}{\mathcal{M}}
\newcommand{\calP}{\mathcal{P}}
\newcommand{\calQ}{\mathcal{Q}}
\newcommand{\calR}{\mathcal{R}}
\newcommand{\X}{\mathbb{X}}
\newcommand {\calc} {{\mathcal {C}}}
\newcommand {\calh} {{\mathcal {H}}}
\newcommand {\calq} {{\mathcal {Q}}}
\newcommand {\calr} {{\mathcal {R}}}
\newcommand {\bbH} {{\mathbb {H}}}
\newcommand {\bbN} {{\mathbb {N}}}
\newcommand {\bbR} {{\mathbb {R}}}
\newcommand {\bbX} {{\mathbb {X}}}
\newcommand {\bbZ} {{\mathbb {Z}}}
\newcommand{\acosh}{\mathop{\mathrm{acosh}}}
\newcommand{\MCG}{\calM\calC\calG (\Sigma)}
\edef\@tempa#1#2{\def#1{\mathaccent\string"\noexpand\accentclass@#2 }}
\@tempa\rond{017}
\newcommand{\es}{\emptyset}
\renewcommand{\phi}{\varphi}
\newcommand{\m} {^{-1}}
\newcommand{\eps} {\varepsilon}
\newcommand {\ra} {\rightarrow}
\newcommand {\onto} {\twoheadrightarrow}
\newcommand{\actson}{\curvearrowright}
\newcommand{\ol}[1]{\overline{#1}}
\newcommand{\normal} {\vartriangleleft}
\newcommand{\ie} {i.~e.\ }
\newcommand{\grp}[1]{\langle #1 \rangle}
\newcommand{\ceil}[1]{\lceil#1\rceil}
\newcommand{\Stab} {{\mathrm{Stab}}}
\newcommand{\Out} {{\mathrm{Out}}}
\newcommand{\Axis} {{\mathrm{Axis}}}
\newcommand{\inj} {{\mathrm{inj}}}
\newcommand {\Cone} {Cone} \newcommand {\Rot} {Rot}
\newcommand{\RCH}{R_{\mathrm{CH}}}
\newcommand{\dCH}{\delta_{\mathrm{CH}}}
\newcommand{\ru}{r_U}
\newcommand{\du}{\delta_U}
\renewcommand{\AA }{\overline{Area}^{rel}}
\newcommand{\G }{\Gamma (G, X\sqcup \mathcal H)}
\newcommand{\Ga }{\Gamma (G, \mathcal A)}
\newcommand{\Gk}{\Gamma (G, Y\sqcup \mathcal E)}
\newcommand{\dxh }{{\rm d}_{X\cup\mathcal H}}
\newcommand{\dx }{{\rm d}_X}
\newcommand{\dol }{{\rm d}_{Y_\lambda}}
\newcommand{\dl }{\widehat {\rm d}_{\lambda}}
\newcommand{\Ker }{{\rm Ker }}
\newcommand{\e }{\varepsilon }
\renewcommand{\kappa }{\varkappa}
\renewcommand{\P }{\mathcal P}
\newcommand{\Hl }{\{ H_\lambda \} _{\lambda \in \Lambda } }
\newcommand{\Km}{\{ K_{\lambda\mu} \} _{\mu \in M_{\lambda } } }
\newcommand{\N }{\mbox{\eufm N}}
\let\LL\ll
\renewcommand{\ll }{\left\langle\hspace{-.7mm}\left\langle }
\newcommand{\rr }{\right\rangle\hspace{-.7mm}\right\rangle }
\renewcommand{\d }{{\rm d} }
\newcommand{\he }{hyperbolically embedded }
\newcommand{\Lab }{{\bf Lab}}
\newcommand{\h}{\hookrightarrow _{h}}
\newcommand{\diam}{{\rm diam}}
\newcommand{\Ul }{\{ U_\lambda \} _{ \lambda \in \Lambda  }}
\newcommand{\Am }{\{ A_\mu  \} _{ \mu \in {\rm M} }}
\newcommand{\Bn }{\{ B_\nu \} _{ \nu\in {\rm N}  }}
\newcommand{\pky}{$\mathcal P_K(\mathbb Y)$}
\newcommand{\pr}{{\rm proj}}
\newcommand{\Nl}{\{N_\lambda \}_{\lambda \in \Lambda }}
\begin{document}

\title{Hyperbolically embedded subgroups and rotating families in groups acting on hyperbolic spaces}
\author{F. Dahmani, V. Guirardel, D. Osin}
\date{}
\maketitle

\begin{abstract}
We introduce and study the notions of hyperbolically embedded and very rotating families of subgroups. The former notion can be thought of as a generalization of the peripheral structure of a relatively hyperbolic group, while the later one provides a natural framework for developing a geometric version of small cancellation theory. Examples of such families naturally occur in groups acting on hyperbolic spaces including hyperbolic and relatively hyperbolic groups, mapping class groups, $Out(F_n)$, and the Cremona group. Other examples can be found among groups acting geometrically on $CAT(0)$ spaces, fundamental groups of graphs of groups, etc. We obtain a number of general results about rotating families and hyperbolically embedded subgroups; although our technique applies to a wide class of groups, it is capable of producing new results even for well-studied particular classes. For instance, we solve two open problems about mapping class groups, and obtain some results which are new even for relatively hyperbolic groups.
\end{abstract}

\tableofcontents

\newpage \section{Introduction}

The notion of a hyperbolic space was introduced by Gromov in his seminal paper \cite{Gro} and since then hyperbolic geometry has proved itself to be one of the most efficient tools in geometric group theory. Gromov's philosophy suggests that groups acting ``nicely" on hyperbolic spaces have properties similar to those of free groups and fundamental groups of closed hyperbolic manifolds. Of course not all actions, even free ones, are equally good for implementing this idea. Indeed every group $G$ acts freely on the complete graph with $|G|$ vertices, which is a hyperbolic space. Thus, to derive meaningful results, one needs to impose certain properness conditions.

Groups acting on hyperbolic spaces geometrically (i.e., properly and cocompactly) constitute the class of hyperbolic groups. Replacing properness with its relative analogue modulo a fixed collection of subgroups leads to the notion of a relatively hyperbolic group. These classes turned out to be wide enough to encompass many examples of interest, while being restrictive enough to allow building  an interesting theory, main directions of which were outlined by Gromov \cite{Gro}.

On the other hand, there are many examples of non-trivial actions of
non-relatively hyperbolic groups on hyperbolic spaces: the action of
the fundamental group of a graph of groups on the corresponding
Bass-Serre tree, the action of the mapping class group of a closed oriented
surface on the curve complex, and the action of the outer automorphism
group of a free group on the free factor (or free splitting) complex, just to name a few.
In general, these actions are very far from being proper. Nevertheless, they can be (and were) used to prove  interesting results.

The main goal of this paper is to suggest a general approach which allows to study hyperbolic and relatively hyperbolic groups, examples mentioned in the previous paragraph, and many other classes of groups acting on hyperbolic spaces in a uniform way. To achieve this generality, we have to sacrifice ``global properness" (in any reasonable sense). Instead we require the actions to satisfy a ``properness-like" condition that only applies to a selected collection of subgroups.

We suggest two ways of formalizeing this idea. The first way leads to the notion of a \emph{hyperbolically embedded collection of subgroups}, which can be thought of as a generalization of the peripheral structure of relatively hyperbolic groups. The other formalization is based on Gromov's rotating families \cite{Gro_cat} of special kind, which we call \emph{very rotating families of subgroups}; they provide a suitable framework to study collections of subgroups satisfying small cancellation conditions. At first glance, these two ways seem quite different: the former is purely geometric, while the latter has rather dynamical flavor. However, they turn out to be closely related to each other and many general results can be proved using either of them. On the other hand, each approach has its own advantages and limitations, so they are not completely equivalent.

Groups acting on hyperbolic spaces provide the main source of examples in our paper. Loosely speaking, we show that if a group $G$ acts on a hyperbolic space $\X$ so that the action of some subgroup $H\le G$ is proper, orbits of $H$ are quasi-convex, and    distinct translates of $H$-orbits quickly diverge, then $H$ is hyperbolically embedded in $G$. If $K\normal H$ is a normal subgroup of $H$ and all nontrivial elements of $K$ act on $\X $ with large translation length, then the set of conjugates of $K$ in $G$ forms a very rotating family. The main tools used in the proofs of these results are the projection complexes introduced in a recent paper by Bestvina, Bromberg, and Fujiwara \cite{BBF} and the hyperbolic cone-off construction suggested by Gromov in \cite{Gro_cat}. This general approach allows us to construct hyperbolically embedded subgroups and very rotating families in many particular classes of groups, e.g., hyperbolic and relatively hyperbolic groups, mapping class groups, $Out(F_n)$, the Cremona group, many fundamental groups of graphs of groups, groups acting properly on proper $CAT(0)$ spaces and containing rank one isometries, etc.

Many results previously known for hyperbolic and relatively hyperbolic groups can be uniformly reproved in the general context of groups with hyperbolically embedded subgroups, and very rotating families often provide the most convenient way of doing that. As an illustration of this idea we generalize the group theoretic analogue of Thurston's hyperbolic Dehn surgery theorem proved for relatively hyperbolic groups by the third-named author in \cite{Osi07} (and independently by Groves and Manning \cite{Gr_Ma} in the particular case of finitely generated and torsion free relatively hyperbolic groups).

This and other general results from our paper have many particular applications.  Despite its generality, our approach  is capable of producing new results even for well-studied particular classes of groups. For instance, we answer two well-known questions about normal subgroups of mapping class groups.  We also show that the sole existence of a non-degenerate (in a certain precise sense) hyperbolically embedded subgroup in a group $G$ imposes strong restrictions on the algebraic structure of $G$, complexity of its elementary theory, the structure of operator algebras associated to $G$, etc. However, we stress that the main goal of this paper is to build a general theory for the future use rather than to consider particular applications. Some further results
can be found in \cite{AnMS,BW,Hull,HO,MacSis,MO,Osi13}.

The paper is organized as follows. In the next section we provide a detailed outline of the paper and discuss the main definitions and results. We believe it useful to state most results in a simplified form there, as in the main body of the paper we stick to the ultimate generality which makes many statements rather technical.  Section 3 establishes notation and recalls some well-known results used throughout the paper. In Sections 4 and 5 we develop a general theory of hyperbolically embedded subgroups and rotating families, respectively. Most examples are collected in Section 6. Section 7 is devoted to the proof of the Dehn filling theorem. Applications are collected in Section 8. Finally we discuss some open questions and directions for the future research in Section 9.

\paragraph{Acknowledgments.} We are grateful to Mladen Bestvina, Brian
Bowditch, Montse Casals-Ruiz, Remi Coulon, Thomas Delzant, Pierre de
la Harpe, Ilya  Kazachkov, Ashot Minasyan,  Alexander Olshanskii, Mark
Sapir, and Alessandro Sisto  with whom we discussed various topics
related to this paper, and to the referee. We benefited a lot from these discussions. The
first two authors were partially supported by the ANR grant
ANR 2011-BS01-013 and the IUF. The research of the third author was supported by the NSF grants DMS-1006345, DMS-1308961, and by the RFBR grant 11-01-00945.


\section{Main results}


\subsection{Hyperbolically embedded subgroups}

The first key concept of our paper is the notion of a hyperbolically embedded collection of subgroups. For simplicity, we only discuss the case when the collection consists of a single subgroup here and refer to Section \ref{sec:GRH} for the general definition.

Let $G$ be a group, $H$ a subgroup of $G$, $X$ a (not necessary finite) subset of $G$. If $G=\langle X\cup H\rangle $, we denote by $\Gamma(G, X\sqcup H)$ \label{i-Gxh1} the Cayley graph of $G$ with respect to the generating set $X\sqcup H$. Here we think of $X$ and $H$ as disjoint alphabets; more precisely, disjointness of the union $X\sqcup H$ means that if some $x\in X$ and $h\in H$ represent the same element $g\in G$, then $\Gamma(G, X\sqcup H)$ contains two edges connecting every vertex $v\in G$ to the vertex $vg$: one edge is labelled by $x$ and the other is labelled by $h$. Let also $\Gamma _H$ denote the Cayley graph of $H$ with respect to the generating set $H$. Clearly $\Gamma _H$ is a complete subgraph of $\Gamma (G, X\sqcup H)$.

We say that a path $p$ in $\Gamma (G, X\sqcup H)$ is \emph{admissible} if $p$ does not contain edges of $\Gamma _H$. Note that we do allow $p$ to pass through vertices of $\Gamma _H$. \label{i-dhat} Given two elements $h_1,h_2\in H$, define $\widehat\d(h_1,h_2)$ to be the length of a shortest admissible path $p$ in $\Gamma (G, X\sqcup H) $ that connects $h_1$ to $h_2$. If no such path exists we set $\widehat\d(h_1, h_2)=\infty $. Since concatenation of two admissible paths is an admissible path, it is clear that $\widehat\d\colon H\times H\to [0, \infty]$ is a metric on $H$. (For the triangle inequality to make sense we extend addition from $[0, \infty)$ to $[0, \infty]$ in the obvious way.)

\begin{defn} \label{i-hypemb} We say that $H$ is {\it hyperbolically embedded in $G$ with respect to} a subset $X\subseteq G$ (and write $H\hookrightarrow _h (G,X) $) if the following conditions hold.
\begin{enumerate}
\item[(a)] $G$ is generated by $X\cup H$.
\item[(b)] The Cayley graph $\Gamma (G, X\sqcup H)$ is hyperbolic.
\item[(c)] $(H,\widehat\d)$ is a  proper metric space, i.e., every ball (of finite radius) is finite.
\end{enumerate}
We also say that $H$ is \textit{hyperbolically embedded in $G$} (and write $H\hookrightarrow _h G$) if $H\hookrightarrow _h (G,X) $ for some $X\subseteq G$.
\end{defn}

\begin{ex}\label{basic-ex-he}
\begin{enumerate}
\item[(a)] Let $G$ be any group. Then $G\h G$.  Indeed take $X=\emptyset $. Then the Cayley graph $\Gamma(G, X\sqcup H)$ has diameter $1$ and $\widehat d(h_1, h_2)=\infty $ whenever $h_1\ne h_2$. Further, if $H$ is a finite subgroup of a group $G$, then $H\h G$. Indeed $H\h (G,X)$ for $X=G$. These cases are referred to as {\it degenerate}. In what follows we are only interested in non-degenerate examples.

\item[(b)] Let $G=H\times \mathbb Z$, $X=\{ x\} $, where $x$ is a generator of $\mathbb Z$. Then $\Gamma (G, X\sqcup H)$ is quasi-isometric to a line and hence it is hyperbolic. However $\widehat\d(h_1, h_2)\le 3$ for every $h_1, h_2\in H$. Indeed in the shift $x\Gamma _H$ of $\Gamma _H$ there is an edge (labelled by $h_1^{-1}h_2\in H$) connecting $h_1x$ to $h_2x$, so there is an admissible path of length $3$ connecting $h_1$ to $h_2$ (see Fig. \ref{fig1}). Thus if $H$ is infinite, then $H\not\h (G,X)$. Moreover it is not hard to show that $H\not\h G$.

\item[(c)] Let $G=H\ast \mathbb Z$, $X=\{ x\} $, where $x$ is a generator of $\mathbb Z$. In this case $\Gamma (G, X\sqcup H)$ is quasi-isometric to a tree (see Fig. \ref{fig1}) and $\widehat\d(h_1, h_2)=\infty $ unless $h_1=h_2$. Thus $H\h (G,X)$.
\end{enumerate}
\end{ex}

\begin{figure}
  \centering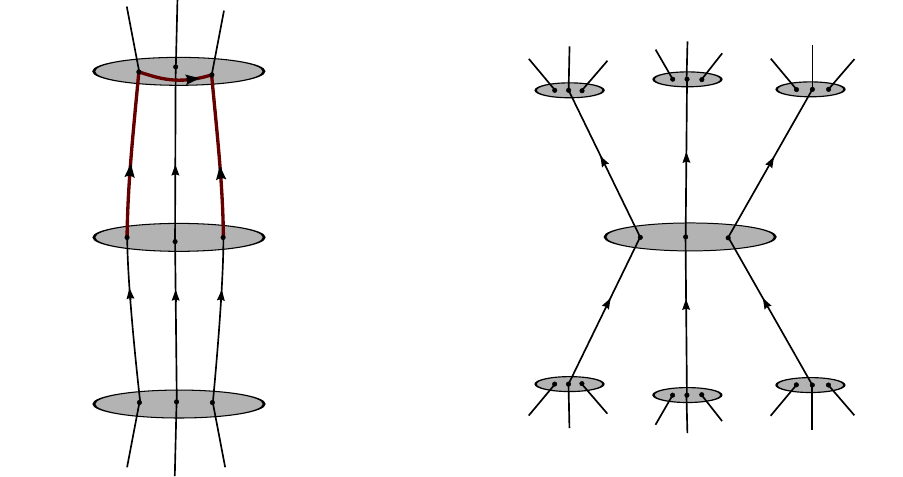\\
  \caption{Cayley graphs $\Gamma(G, X\sqcup H)$ for $G=H\times \mathbb Z$ and $G=H\ast \mathbb Z$.}\label{fig1}
\end{figure}

Our approach to the study of hyperbolically embedded subgroups is inspired by \cite{Osi06a}. In particular, we first provide an isoperimetric characterization of hyperbolically embedded subgroups, which resembles the corresponding characterization of relatively hyperbolic groups.

Recall that a {\it relative presentation} of a group $G$ with respect to a subgroup $H\le G$ and a subset $X\subseteq G$ is a presentation of the form
\begin{equation}\label{relpres-intr}
G=\langle H,\, X \mid \mathcal R\rangle,
\end{equation}
which is obtained from a presentation of $H$ by adding the set of generators $X$ and the set of relations $\mathcal R$. Thus $G=H\ast F(X)/\ll \mathcal R\rr $, where $F(X)$ is the free group with basis $X$ and $\ll \mathcal R\rr $ is the normal closure of $\mathcal R$ in $H\ast F(X)$.

The relative presentation (\ref{relpres-intr}) is {\it bounded}, if all elements of $\mathcal R$ have uniformly bounded length being considered as words in the alphabet $X\sqcup H$; further it is {\it strongly bounded} if, in addition, the set of letters from $H$ appearing in words from $\mathcal R$ is finite. For instance, if $H$ is an infinite group with a finite generating set $A$, then the relative presentation $$\langle H,\, \{ x\} \mid [x,h]=1, \, h\in H\rangle $$ of the group $G=H\times \mathbb Z$ is bounded but not strongly bounded. On the other hand, the presentation $$\langle H,\, \{ x\}\mid [a,x]=1,\, a\in A\rangle $$ of the same group is strongly bounded.

The relative isoperimetric function of a relative presentation is defined in the standard way. Namely we say that $f\colon \mathbb N\to \mathbb N$ is a \emph{relative isoperimetric function} of a relative presentation (\ref{relpres-intr}), if for every $n\in \mathbb N$ and every word $W$ of length at most $n$ in the alphabet $X^{\pm 1} \sqcup H$ which represents the trivial element in $G$, there exists a decomposition
$$
W=\prod\limits_{i=1}^k f_i^{-1}R_i^{\pm 1}f_i
$$
in the free product $H \ast F(X)$, where for every $i=1, \ldots , k$, we have $f_i\in H \ast F(X)$, $R_i\in \mathcal R$, and $k\le f(n)$.

\begin{thm}[Theorem \ref{ipchar}]\label{ipchar-intr}
Let $G$ be a group, $H$ a subgroup of $G$, $X$ a subset of $G$ such that $G=\langle X\cup H\rangle $. Then $H\h (G,X)$ if and only if there exists a strongly bounded relative presentation of $G$ with respect to $X$ and $H $ with linear relative isoperimetric function.
\end{thm}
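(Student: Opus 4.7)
The plan is to adapt the isoperimetric characterization of relative hyperbolicity from \cite{Osi06a}, replacing ``finite relative presentation'' with ``strongly bounded'' and allowing an infinite relative generating set. Both directions are argued via relative van Kampen diagrams.

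For the direction from the presentation to the hyperbolic embedding, assume a strongly bounded relative presentation with linear relative isoperimetric function of constant $C$; let $F \subset H \setminus \{1\}$ be the finite set of $H$-letters appearing in $\mathcal{R}$ and $N$ a uniform length bound on the relators. Hyperbolicity of $\Gamma(G, X \sqcup H)$ follows from the standard argument: every combinatorial loop bounds a relative van Kampen diagram of linearly bounded area with $2$-cells of perimeter $\leq N$, yielding a linear isoperimetric inequality for the Cayley graph and hence hyperbolicity (Bowditch/Papasoglu). For local finiteness of $(H, \widehat{\d})$, take $h \in H$ with $\widehat{\d}(1, h) \leq n$ and a length-$\leq n$ admissible word $W$ with $W =_G h$; then $W h^{-1} =_G 1$ admits a relative van Kampen diagram $D$ with $\leq C(n+1)$ cells. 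The boundary edge labeled $h^{-1}$ lies on the boundary of a peripheral subdiagram of $D$ associated to the coset $H$. Since $W$ is admissible, this peripheral subdiagram has no boundary edges on $W$, so its remaining boundary consists of internal edges, each shared with an $\mathcal{R}$-cell and therefore labeled by an element of $F \cup F^{-1}$. Reading off the labels around the peripheral subdiagram expresses $h$ as a product of at most $N C(n+1)$ elements from $F \cup F^{-1}$, leaving finitely many possibilities for $h$ at each~$n$.

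For the reverse direction, assume $H \hookrightarrow_h (G, X)$ with $\Gamma(G, X \sqcup H)$ being $\delta$-hyperbolic, choose $K = K(\delta)$ large enough for the standard triangulation argument in hyperbolic spaces, and let $\mathcal{R}$ be the set of all words in $X \sqcup H \setminus \{1\}$ of length $\leq K$ that represent $1$ in $G$ and in which every $H$-letter $h$ satisfies $\widehat{\d}(1, h) \leq K$. The ball of radius $K$ in $(H, \widehat{\d})$ is finite, so $\mathcal{R}$ is strongly bounded. To obtain the linear isoperimetric function, decompose any loop of length $n$ in $\Gamma(G, X \sqcup H)$ into $O(n)$ triangles of perimeter $\leq K$ using $\delta$-hyperbolicity; the work is to check that each resulting short relator actually lies in $\mathcal{R}$. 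For an $H$-letter $h$ in such a triangle, the other edges of the triangle produce, after translation, a short path between $h$ and $1$; a bounded-length local modification, using hyperbolicity of $\Gamma(G, X \sqcup H)$ to reroute any $\Gamma_H$-edges, converts this into an admissible path witnessing $\widehat{\d}(1, h) \leq K$.

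The principal obstacle is the final geometric claim in the forward direction: showing that any $H$-letter appearing in a short cycle of the Cayley graph has bounded admissible distance from $1$. One must argue that short non-admissible paths between two elements of $H$ can be upgraded to admissible paths of comparable length, which amounts to controlling the length of peripheral excursions in $\Gamma(G, X \sqcup H)$. This is where hyperbolicity and local finiteness of $\widehat{\d}$ interact most subtly, and it is what ultimately forces the choice of the constant $K$.
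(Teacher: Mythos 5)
Your argument for the implication ``strongly bounded presentation with linear relative isoperimetric function $\Rightarrow H\h(G,X)$'' is essentially the paper's: hyperbolicity of $\Gamma(G,X\sqcup H)$ from linear area bounds, and local finiteness from the peripheral-subdiagram / isolated-component argument (which in the paper is packaged as Lemma~\ref{Ylambda} and Lemma~\ref{omega}). That direction is sound.

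The gap is in the other direction, and you have in fact put your finger on it yourself in your closing paragraph, but the fix you gesture at (``reroute any $\Gamma_H$-edges'') does not work as stated. The intermediate claim you would need---that any $H$-letter $h$ occurring in a cycle of bounded length in $\Gamma(G,X\sqcup H)$ satisfies $\widehat{\d}(1,h)\le K$---is simply false. The cycle of length $2$ labeled $h\,h^{-1}$ is a counterexample for every $h\in H$, and more generally any short cycle in which the edge labeled $h$ is \emph{not isolated} as an $H$-component (e.g.\ two consecutive $H$-edges) gives no bound on $\widehat{\d}(1,h)$. Hyperbolicity of $\Gamma(G,X\sqcup H)$ cannot rescue this: one cannot in general upgrade a short non-admissible path between points of $H$ into a short admissible one, and local finiteness of $(H,\widehat{\d})$ is a statement about cardinalities of balls, not about existence of such reroutings. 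Consequently your set $\mathcal{R}$ (short relators with all $H$-letters of small $\widehat{\d}$) need not contain the triangles produced by a geodesic subdivision of a long loop, and the isoperimetric argument breaks.

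What makes the bound hold is an additional \emph{reducedness} hypothesis on the relator, not a generic rerouting. The paper (Lemma~\ref{pres}) takes $\mathcal{R}$ to be the set of \emph{atomic} relators of length $\le 16\delta$: relators in which every $H$-component of the corresponding cycle is isolated and consists of a single edge. For such a cycle, Lemma~\ref{isocomp} gives $\widehat{\d}(1,h)\le\ell(\text{cycle})$ directly, since the complement of the isolated component is an admissible path; combined with local finiteness this yields the finiteness of the $H$-alphabet, i.e.\ strong boundedness. Separately, Lemma~\ref{pres} shows by an induction on word length that every relator of bounded length (in particular, every triangle from the subdivision) can be filled by a diagram over $\mathcal{S}\cup\mathcal{R}$ with boundedly many cells, by repeatedly peeling off $\mathcal{S}$-cells to merge connected components and shorten long ones. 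So the correct move is not to modify the triangles' sides to become admissible, but to replace the offending triangles themselves by bounded-area diagrams whose $\mathcal{R}$-cells are atomic. If you substitute this reduction step for your rerouting step, the rest of your outline goes through.
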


This theorem and the analogous result for relatively hyperbolic groups (see \cite{Osi06a}) imply that the notion of a hyperbolically embedded subgroup indeed generalizes the notion of a peripheral subgroup of a relatively hyperbolic group,
  where one requires $X$ to be finite. More precisely, we have the following.

\begin{prop}[Proposition \ref{he-rh}]\label{he-rh-intr}
Let $G$ be a group, $H\le G$ a subgroups of $G$. Then $G$ is hyperbolic relative to $H$ if and only if $H \h (G,X)$ for some (equivalently, any) finite subset $X$ of $G$.
\end{prop}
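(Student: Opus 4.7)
The plan is to deduce the proposition from Theorem~\ref{ipchar-intr} combined with the well-known isoperimetric characterization of relative hyperbolicity from \cite{Osi06a}: $G$ is hyperbolic relative to $H$ if and only if, for some (equivalently, any) finite subset $X\subseteq G$ with $G=\langle X\cup H\rangle$, the group $G$ admits a \emph{finite} relative presentation $\langle H, X\mid \mathcal R\rangle$ (meaning both $X$ and $\mathcal R$ are finite) with linear relative isoperimetric function. Theorem~\ref{ipchar-intr} characterizes $H\h (G,X)$ by the existence of a \emph{strongly bounded} relative presentation with linear relative isoperimetric function. The task is therefore to show that, when $X$ is finite, ``finite'' and ``strongly bounded'' relative presentations are interchangeable without worsening the relative Dehn function.

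For the forward implication, fix any finite $X$ with $G=\langle X\cup H\rangle$, and apply Osin's characterization to obtain a finite relative presentation $\langle H, X\mid \mathcal R\rangle$ with linear relative isoperimetric function. A finite set of relators is automatically strongly bounded: the lengths of elements of $\mathcal R$ are bounded by $\max_{R\in\mathcal R}|R|$, and only finitely many letters from $H\setminus\{1\}$ occur across $\mathcal R$. Theorem~\ref{ipchar-intr} then yields $H\h (G,X)$.

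Conversely, suppose $H\h (G,X)$ for some finite $X$. By Theorem~\ref{ipchar-intr}, $G$ admits a strongly bounded relative presentation $\langle H, X\mid \mathcal R\rangle$ with linear relative isoperimetric function. Strong boundedness together with finiteness of $X$ implies that $\mathcal R$ consists of words of uniformly bounded length over a \emph{finite} alphabet, so up to removing duplicates there are only finitely many distinct relators in $\mathcal R$. Discarding duplicates does not affect the relative isoperimetric function, so $\mathcal R$ can be taken finite; Osin's characterization then gives that $G$ is hyperbolic relative to $H$. The ``some $\Leftrightarrow$ any'' clause falls out for free: each of the conditions $H\h (G,X)$ is equivalent to $G$ being hyperbolic relative to $H$, a property not involving $X$, so it holds for one finite $X$ (with $G=\langle X\cup H\rangle$) precisely when it holds for all.

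The only real obstacle is the bookkeeping needed to check that passage between strongly bounded and finite sets of relators preserves linearity of the relative Dehn function; once Theorem~\ref{ipchar-intr} is available this amounts essentially to counting words of bounded length over a finite alphabet, and nothing deeper is required.
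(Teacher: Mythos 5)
Your proof is correct and follows essentially the same route as the paper: both directions come down to observing that (i) a finite relative presentation is automatically strongly bounded, and (ii) when $X$ is finite, a strongly bounded relative presentation has relators of bounded length over a finite alphabet and is therefore finite, so that Theorem~\ref{ipchar-intr} mediates directly between the isoperimetric definition of relative hyperbolicity and the definition of $H\h (G,X)$. The paper phrases this as an immediate consequence of the two definitions rather than spelling out the cardinality count, but the content is identical.
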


On the other hand, by allowing $X$ to be infinite, we obtain many other examples of groups with  hyperbolically embedded subgroups. A rich source of such examples is provided by groups acting on hyperbolic spaces. More precisely, we introduce the following.

\begin{defn}\label{GeomSep_mainres}
Let $G$ be a group acting on a space $S$. Given an element $s\in S$ and a subset $H\subseteq G$, we define the {\it $H$-orbit } of $s$ by
$$
H(s)=\{ h(s)\mid h\in H\} .
$$
We say that (the collection of cosets of) a subgroup $H\le G$ is {\it geometrically separated} if for every $\e >0$ and every $s\in S$, there exists $R>0$ such that the following holds. Suppose that for some $g\in G$ we have
$$
{\rm diam} \left(H (s)\cap \mathcal (gH(s))^{+\e}\right)\ge R,
$$
where $(gH(s))^{+\e}$ denotes the $\e$-neighborhood of the $gH$-orbit of $s$ in $S$. Then $g \in H$.
\end{defn}

Informally, the definition says that distinct translates of the $H$-orbit of $s$ rapidly diverge. It is also fairly easy to see that replacing ``every $s\in S$" with ``some $s\in S$" yields an equivalent definition (see Remark \ref{rem-gs}).

\begin{ex}
Suppose that $G$ is generated by a finite set $X$. Let $S=\Gamma (G, X)$, and $H$ a subgroup of $G$.
Then geometric separability of $H$ with respect to the natural action on $S$ implies that $H$ is almost malnormal in $G$, i.e., $|H^g\cap H|<\infty $ for any $g\notin H$. (The converse is not true in general.)
\end{ex}

\begin{thm}[Theorem \ref{crit}]\label{geom-sep-intr}
Let $G$ be a group acting by isometries on a hyperbolic space $S$, $H$ a geometrically separated subgroup of $G$. Suppose that $H$ acts on $S$ properly and there exists $s\in S$ such that the $H$-orbit of $s$ is quasiconvex in $S$. Then $H \h G$.
\end{thm}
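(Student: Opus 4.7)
The plan is to verify conditions (a), (b), (c) of the definition of $H\h(G,X)$ directly, with the generating set $X\subseteq G$ built from the geometry of the $G$-action on $S$ and with hyperbolicity of the Cayley graph obtained via a Gromov-style cone-off construction.

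Fix $s\in S$ with $H(s)$ quasi-convex, and let $D$ be a large constant depending on the hyperbolicity constant of $S$, the quasi-convexity constant of $H(s)$, and the separation gauge in Definition~\ref{GeomSep_mainres}. Take $X$ to consist of all $g\in G\setminus H$ with $d_S(s,g(s))\le D$, enlarged if necessary so that $X\cup H$ generates $G$ (condition~(a); one approximates an $S$-geodesic from $s$ to $g(s)$ by orbit points at $D$-spacing). Then form the coned-off space $\dot S$ by attaching to each $Y=gH(s)$ in the $G$-invariant family $\mathbb{Y}=\{gH(s)\mid gH\in G/H\}$ a cone point $v_Y$ joined by an edge of length $\tfrac{1}{2}$ to every point of $Y$. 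Uniform quasi-convexity of the $Y$'s in the hyperbolic space $S$, together with a standard coning/electrification argument, yields hyperbolicity of $\dot S$. The $G$-equivariant orbit map $g\mapsto g(s)$, extended by $gH\mapsto v_{gH(s)}$, sends $\Gamma(G,X\sqcup H)$ into $\dot S$: $X$-edges map to $S$-paths of length $\le D$, and $H$-edges map to two-edge detours through the cone point $v_{H(s)}$. Arguing that this map is a quasi-isometric embedding onto a coarsely dense subspace transfers hyperbolicity to $\Gamma(G,X\sqcup H)$, giving condition~(b).

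Condition (c), local finiteness of $(H,\widehat{d})$, is the easiest. An admissible path of length $n$ from $1$ to $h\in H$ uses only $X$-edges, so its orbit image is an $S$-path of length at most $nD$; hence $d_S(s,h(s))\le nD$, and properness of the $H$-action on $S$ leaves only finitely many such $h$.

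The main obstacle is the lower bound in the quasi-isometry of condition~(b): short alternations of $H$-edges and $X$-edges in $\Gamma(G,X\sqcup H)$ could, a priori, simulate a long $S$-displacement at cheap combinatorial cost (every $H$-edge collapses to a single cone detour in $\dot S$), destroying the quasi-isometric embedding. Geometric separation is precisely the hypothesis that excludes this pathology: distinct cosets $gH(s)$ and $g'H(s)$ diverge rapidly in $S$, so any ``inter-coset'' shortcut in the Cayley graph must actually correspond to a real displacement between well-separated quasi-convex subsets, and no clever alternation of $H$-edges and $X$-edges can collapse $\widehat{d}$-distances below their true values. Carrying out this quantitative control—plausibly through the Bestvina-Bromberg-Fujiwara projection complex machinery mentioned in the introduction—is the technical heart of the proof.
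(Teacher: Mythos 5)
Your proposed route---cone off the orbit sets $gH(s)$ in $S$ and transport hyperbolicity through the orbit map---is a genuinely different strategy from the paper's, which builds the Bestvina--Bromberg--Fujiwara projection complex $\mathcal{P}_K(\mathbb{Y})$ on the abstract set $\mathbb{Y}=\{gH(s)\}$, proves it is a quasi-tree, carefully chooses $X$ from edges of that complex via a minimality condition, and deduces hyperbolicity of $\Gamma(G,X\sqcup H)$ by showing it is quasi-isometric to $\mathcal{P}_K(\mathbb{Y})$. Your approach is conceptually closer to Farb electrification; the paper's gives more (a quasi-tree rather than merely a hyperbolic space) and sidesteps a density issue I will flag below. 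You are honest that the lower bound in the quasi-isometry is ``the technical heart'' and you do not carry it out, so the proposal is admittedly incomplete there. But there is also a concrete error in the part you regard as easy.

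The argument for condition (c) is wrong. You write that ``an admissible path of length $n$ from $1$ to $h\in H$ uses only $X$-edges.'' It does not. Admissibility only forbids edges of $\Gamma_H$, i.e.\ $H$-labeled edges between two vertices that both lie in $H$. An admissible path is perfectly free to use an $H$-labeled edge from $g$ to $gh'$ with $g\notin H$ and $h'\in H$; such an edge is not in $\Gamma_H$. Under the orbit map such an edge has $S$-displacement $d_S(g(s),gh'(s))=d_S(s,h'(s))$, which can be arbitrarily large, so the orbit image of an admissible path is \emph{not} a path of $S$-length $\le nD$ and the bound $d_S(s,h(s))\le nD$ fails. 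Worse, if you instead try to bound the image in the cone-off $\dot S$, you get nothing: both $s$ and $h(s)$ lie in $H(s)$, which is coned, so $d_{\dot S}(s,h(s))\le 1$ for all $h$, and that gives no local finiteness. Controlling the contribution of such stray $H$-edges is exactly where geometric separation and properness must be invoked, and it is the content of the paper's Lemma~\ref{projest} and the inequality~(\ref{hec0}): one shows that any edge of the Cayley graph labeled by $x\in X\cup\mathcal{H}$ that is not an $H_\lambda$-edge inside the coset $H_\lambda$ contributes at most a uniform constant $\alpha$ to the projection onto $H_\lambda(s)$, whence $\d_S(s,h(s))\le\alpha\,\dl(1,h)$. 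Your (c) requires precisely this projection estimate; it is not the ``easiest'' part, it is where the two difficulties meet.

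A secondary issue: your $X=\{g: d_S(s,g(s))\le D\}$ need not generate $G$ together with $H$ when the $G$-action on $S$ is not cobounded (the theorem makes no coboundedness assumption), and ``enlarged if necessary'' is unsafe because any enlargement has to be compatible with the rest of the quantitative argument---in particular, arbitrary enlargement could destroy the projection estimate you need for (c). The paper avoids this by choosing $X$ to consist of elements $x_e$ realizing edges of the projection complex and satisfying the minimality condition~(\ref{dcoset}); connectivity of $\mathcal{P}_K(\mathbb{Y})$ (Proposition~\ref{BBF}) is what guarantees generation (Lemma~\ref{qipk}), not density of the orbit in $S$.
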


This theorem is one of the main technical tools in our paper.  In Section \ref{ex-intr}, we will discuss many particular examples of groups with hyperbolically embedded subgroups obtained via Theorem \ref{geom-sep-intr}. To prove the theorem, we first use the Bestvina-Bromberg-Fujiwara projection complexes (see Definition \ref{projc}) to construct a hyperbolic space on which $G$ acts coboundedly. Then a refined version of the standard Milnor-Svar\v c argument allows us to construct a (usually infinite) subset $X\subseteq G$ such that $H\h (G,X)$.

Here we mention just one application of Theorem \ref{geom-sep-intr}, which makes use of the following notion introduced by Bestvina and Fujiwara \cite{BF}.

\begin{defn}\label{WPD-intr}
Let $G$ be a group acting on a hyperbolic space $S$, $h$ an element of $G$.  One says that $h$ satisfies the {\it weak proper discontinuity} condition (or $h$ is a {\it WPD element}) if for every $\e >0$ and every $x\in S$, there exists $N=N(\e )$ such that
\begin{equation}\label{eq: wpd}
|\{ g\in G \mid \d (x, g(x))<\e, \;   \d (h^N(x), gh^N(x))<\e \} |<\infty .
\end{equation}
Recall also that an element $h\in G$ is {\it loxodromic} if the map $\mathbb Z\to S$ given by $n\mapsto h^n(s)$ is a quasi-isometric embedding for some (equivalently any) $s\in S$.
\end{defn}

This corollary summarizes Lemma \ref{elem1} and a particular case of Theorem \ref{wpd}. To prove it, we verify that $H=E(h)$ satisfies all assumptions of Theorem \ref{geom-sep-intr}.

\begin{cor}\label{wpd-he-intr}
Let $G$ be a group acting on a hyperbolic space and let $h$ be a loxodromic WPD element. Then $h$ is contained in a unique maximal virtually cyclic subgroup  of $G$, denoted $E(h)$, and $E(h)\h G$.
\end{cor}

Let us mention one restriction which is useful in proving that a subgroup is \emph{not} hyperbolically embedded in a group. In fact, it is a generalization of Example \ref{basic-ex-he} (b).

\begin{prop}[Proposition \ref{malnorm}]\label{malnorm-intr}
Let $G$ be a group, $H$ a hyperbolically embedded subgroup of $G$. Then $H$ is almost malnormal, i.e., $|H\cap H^g| <\infty $ whenever $g\notin H$.
\end{prop}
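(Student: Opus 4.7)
The plan is to show that every $k$ in the intersection $H \cap g H g^{-1}$ satisfies $\widehat\d(1,k) \le C$ for a constant $C = C(g)$ depending only on $g$. Local finiteness of $(H,\widehat\d)$ will then force this intersection to be finite; applying the same argument to $g^{-1}$ (which is also outside $H$) yields $|H \cap g^{-1} H g| < \infty$, which is the desired conclusion since $H^g = g^{-1} H g$.

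The core construction goes as follows. Assume for now that an admissible path $\gamma$ from $1$ to $g$ exists in $\Gamma(G, X \sqcup H)$, of length $\ell$. For any $k = g h g^{-1}$ with $h \in H$, concatenate (i) $\gamma$ from $1$ to $g$, then (ii) the edge labeled $h$ from $g$ to $g h = k g$, then (iii) the left-translate $k \bar\gamma$ from $kg$ to $k$, where $\bar\gamma$ is $\gamma$ traversed in reverse. The total length is $2\ell + 1$. For admissibility: step (i) is admissible by assumption; the edge in step (ii) has endpoint $g \notin H$, so it is not in $\Gamma_H$; and step (iii) is admissible because left-multiplication by $k \in H$ is a graph automorphism of $\Gamma(G, X \sqcup H)$ that sends $\Gamma_H$ onto itself (since $kH = H$), hence preserves admissibility. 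We conclude $\widehat\d(1,k) \le 2\ell + 1$, and local finiteness of $(H,\widehat\d)$ finishes the job.

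It remains to produce the admissible path $\gamma\colon 1 \to g$. I would do this by a preliminary reduction: take any word $w$ in $X \sqcup H$ representing $g$, with partial-product vertices $v_0 = 1, v_1, \dots, v_\ell = g$, and let $j$ be the largest index with $v_j \in H$ (it exists since $v_0 = 1 \in H$, and satisfies $j < \ell$ since $g \notin H$). The suffix from $v_j$ to $v_\ell$ contains no edge of $\Gamma_H$, because any such edge would require both endpoints in $H$, but $v_i \notin H$ for all $i > j$ by maximality of $j$. Left-translating this suffix by $v_j^{-1} \in H$ yields an admissible path from $1$ to $\tilde g := v_j^{-1} g$. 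Since $v_j \in H$, one has $H \cap \tilde g H \tilde g^{-1} = v_j^{-1}(H \cap g H g^{-1}) v_j$, so the two intersections have the same cardinality, and we may replace $g$ by $\tilde g$ from the outset.

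The main obstacle to watch out for is the asymmetry in step (iii) of the core construction: we crucially left-translate by the $H$-element $k$, not by $g$ or $g^{-1}$, because $\Gamma_H$ is invariant only under left multiplication by $H$. Once this admissibility-preservation point is isolated, and combined with the obvious fact that any edge labeled by an element of $H$ but with at least one endpoint outside $H$ is not in $\Gamma_H$, the whole argument is short.
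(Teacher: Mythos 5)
Your proof is correct, and it takes a genuinely different route from the one in the paper. The paper proves Proposition~\ref{malnorm} (from which the statement above follows) by taking a geodesic word $W$ for $g$ in $X\sqcup\mathcal H$, building a geodesic quadrilateral $c=a^{-1}pbq$ in $\G$ whose side $p$ is an $H$-component, showing that $p$ is isolated in $c$, and then invoking Proposition~\ref{sn}, a nontrivial bound on isolated components in quasi-geodesic polygons whose proof relies on the hyperbolicity of the relative Cayley graph. Your argument bypasses this machinery entirely: the explicit concatenation $\gamma\cdot e\cdot k\bar\gamma$ is an admissible path from $1$ to $k$ of length $2\ell(\gamma)+1$ built by hand, using only two elementary facts (an $H$-labelled edge with an endpoint outside $H$ is not in $\Gamma_H$, and left-translation by an element of $H$ preserves $\Gamma_H$), so that only the local finiteness of $(H,\widehat\d)$ enters and the hyperbolicity hypothesis is never used. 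Your preliminary last-exit reduction to produce the admissible seed path $\gamma$ is also sound, as is the observation that conjugating by $v_j\in H$ preserves the cardinality of the intersection. The trade-off is that the paper's argument gives a bound on the $\widehat\d$-diameter of $H\cap H^g$ that is \emph{uniform in $g$} (the constant $A$ of Proposition~\ref{malnorm}), which is the stronger statement actually needed elsewhere in the paper; your bound $2\ell(\gamma)+1$ depends on $g$, but this is all that the stated finiteness assertion requires.
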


Yet another obstruction for being hyperbolically embedded is provided by homological invariants. It was proved by the first and the second author \cite{DG} that every peripheral subgroup of a finitely presented relatively hyperbolic group is
finitely presented. This was generalized by Gerasimov-Potyagailo \cite{GP} to a class of quasiconvex subgroups of  relatively hyperbolic groups. In this paper we generalize the result of \cite{DG} in another direction, namely to hyperbolically embedded subgroups. Our argument is geometric, inspired by \cite{GP}, and allows us to obtain several finiteness results in a uniform way. It is worth noting that for $n>2$ parts b) and c) of Theorem 2.9 below are new even for peripheral subgroups of relatively hyperbolic groups.

Recall that a group $G$ is said to be of \emph{type $F_n$} ($n\ge 1$) if it admits an Eilenberg-MacLane space $K(G,1)$ with finite $n$-skeleton. Thus conditions $F_1$ and $F_2$ are equivalent to $G$ being finitely generated and finitely presented, respectively. Further $G$ is said to be of \emph{type $FP_n$} if the trivial $G$-module $\mathbb Z$ has a projective resolution which is finitely generated in all dimensions up to $n$. Obviously $FP_1$ is equivalent to $F_1$. For $n=2$ these conditions are already not equivalent; indeed there are groups of type $FP_2$ that are not finitely presented \cite{BB}. For $n\ge 2$, $F_n$ implies $FP_n$ and is equivalent to $FP_n$ for finitely presented groups. For details we refer to the book \cite{Bro}.

Recall also that for $n\ge 1$, the $n$-dimensional Dehn function of a group $G$ is defined whenever $G$ has type $F_{n+1}$; it is denoted by $\delta^{(n)}_G$. In particular $\delta ^{(1)} _G=\delta _G$ is the ordinary Dehn function of $G$. The definition can be found in \cite{APW} or \cite{Bri}; we stick to the homotopical version here and refer to \cite{Bri} for a brief review of other approaches. As usual we write $f\preceq g$ for some functions $f,g\colon \mathbb N\to \mathbb N$ if there are $A,B,C,D\in \mathbb N$ such that $f(n)\le Ag(Bn)+Cn+D$ for all $n\in \mathbb N$.

In Section 4.3, we prove the following.

\begin{thm}[Corollary \ref{cor-fn}]\label{thm-fn}
Let $G$ be a finitely generated group and let $H$ be a hyperbolically embedded subgroup of $G$. Then the following conditions hold.
\begin{enumerate}
\item[(a)] $H$ is finitely generated.
\item[(b)] If $G$ is of type $F_n$ for some $n\ge 2$, then so is $H$. Moreover, we have $\delta ^{n-1} _H\preceq \delta ^{n-1}_G$. In particular, if $G$ is finitely presented, then so is $H$ and $\delta _H\preceq \delta _G$.
\item[(c)] If $G$ is of type $FP_n$, then so is $H$.
\end{enumerate}
\end{thm}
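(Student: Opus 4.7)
The proof relies on the isoperimetric characterization (Theorem~\ref{ipchar-intr}) together with a geometric analysis of relative van Kampen diagrams, inspired by \cite{DG, GP}. Fix a strongly bounded relative presentation $\langle H, X \mid \mathcal R\rangle$ of $G$ with linear relative isoperimetric function and let $H_0 \subset H\setminus\{1\}$ be the finite set of $H$-letters appearing in $\mathcal R$. Fix also a finite generating set $Y$ of $G$; for each $y \in Y$, pick a word $w_y$ over $X \sqcup H\setminus\{1\}$ representing $y$, and let $H_Y \subset H\setminus\{1\}$ be the (finite) set of $H$-letters occurring in the $w_y$'s.

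For part (a), given $h \in H$, write $h = y_1 \cdots y_n$ with $y_i \in Y^{\pm 1}$ and form the word $W = h^{-1} w_{y_1} \cdots w_{y_n}$, of length $O(n)$ and representing $1$ in $G$. Fix a relative van Kampen diagram $\Delta$ for $W$. If the $2$-cell of $\Delta$ abutting the boundary edge labelled $h^{-1}$ is an $\mathcal R$-cell, then by strong boundedness $h \in H_0 \cup H_0^{-1}$ and we are done; otherwise, let $K \subset \Delta$ be the connected subcomplex of $H$-cells containing that abutting cell. The topological boundary of $K$ in $\Delta$ consists only of $H$-edges whose labels are drawn from the finite set $\{h\} \cup H_Y \cup H_0$: the edge $h^{-1}$ lies on $\partial\Delta$; any other boundary edge of $K$ lying on $\partial\Delta$ comes from some $w_{y_i}$ and is labelled by $H_Y$; and every interior boundary edge of $K$ is shared with some $\mathcal R$-cell, hence labelled by $H_0$ by strong boundedness. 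The outer boundary of $K$ is a simple closed loop containing the $h^{-1}$ edge exactly once, and bounds a region in the disk $\Delta$, so the word it reads equals $1$ in $G$; being a word in $H$-letters, it also equals $1$ in $H$. This exhibits $h$ as an explicit product of letters from $H_0 \cup H_Y$, so $H = \langle H_0 \cup H_Y\rangle$ is finitely generated.

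For parts (b) and (c), the argument generalizes to higher dimensions. Starting from a $K(G,1)$ with finite $n$-skeleton (for $F_n$) or from a projective $\mathbb ZG$-resolution of $\mathbb Z$ finitely generated in degrees $\le n$ (for $FP_n$), one builds a \emph{relative} model of $G$ in which finitely many $G$-orbits of cells in each dimension $\le n$ come from the $G$-structure, augmented by cells implementing the $H$-structure of each coset of $H$. The linear relative isoperimetric function and its higher-dimensional analogues (governed by $\delta_G^{(k)}$ for $k \le n-1$) guarantee that this construction can be carried out with only finitely many $G$-orbits of cells in each dimension. An $H$-equivariant subcomplex (resp. subresolution) associated to the coset $H$ is then extracted by a higher-dimensional version of the $H$-subcomplex construction in (a), yielding a $K(H,1)$ (resp. $\mathbb ZH$-resolution) of the required finiteness type. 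The same truncation procedure converts a $G$-filling of an $(n-1)$-sphere supported in $H$ into an $H$-filling whose complexity is controlled by the filling in $G$ together with the linear relative isoperimetric constant, which delivers the bound $\delta_H^{(n-1)} \preceq \delta_G^{(n-1)}$.

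The chief technical difficulty is the correct formulation and analysis of ``$H$-subcomplexes'' in relative van Kampen-type fillings, in particular controlling $H$-cells, whose boundaries a priori carry arbitrary $H$-labels. In dimension $2$ one handles this by working with reduced diagrams (merging any pair of adjacent $H$-cells that can be combined) together with the local finiteness of $(H, \widehat d)$, which ensures that only a finite alphabet of $H$-labels can appear on the boundary of the relevant $H$-subcomplex. In higher dimensions (parts (b)--(c)), producing the correct analogue of an ``$H$-component,'' constraining its boundary data to a finite set of $H$-labels, and simultaneously ensuring that the filling complexity is bounded by $\delta_G^{(n-1)}$, is where the bulk of the technical work lies.
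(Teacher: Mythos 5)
Your proposal takes a fundamentally different route from the paper, and while part~(a) is salvageable, parts~(b) and~(c) contain a genuine gap.

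The paper's actual strategy is much more modular. It proves (Lemma~\ref{lem-retrLip}) that the nearest-point projection $r\colon G\to H$ in the relative Cayley graph $\Gamma(G,X\sqcup H)$ satisfies $\widehat{\d}(r(f),r(g))\le C\,\dx(f,g)$: when $\dx(f,g)=1$, the $H$-edge from $r(g)$ to $r(f)$ is an isolated component of the geodesic $4$-gon $(f,g,r(g),r(f))$ by the minimality defining $r$, so Proposition~\ref{sn} bounds its $\widehat{\d}$-length. It then shows (Theorem~\ref{he-retr}) that with $Y=\{y\in H:\widehat{\d}(1,y)\le C\}$ -- finite by local finiteness of $(H,\widehat{\d})$ -- the inclusion $i\colon(H,\d_Y)\to(G,\d_X)$ and $r$ form a Lipschitz quasi-retraction $r\circ i=\mathrm{id}_H$. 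All three conclusions (a)--(c), including the bound $\delta_H^{n-1}\preceq\delta_G^{n-1}$, then follow at one stroke by citing Alonso~\cite{Alo} and Alonso--Pride--Wang~\cite{APW}, who proved exactly that quasi-retracts inherit $F_n$, $FP_n$, and higher Dehn function bounds. This is why the paper's argument can handle all dimensions "in a uniform way": the hard higher-dimensional combinatorics you struggle with in your last paragraph has already been done, once and for all, in \cite{Alo,APW}.

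Your part~(a) is a different argument -- a direct relative van Kampen diagram analysis -- and the idea is sound, but the assertion that "the outer boundary of $K$ is a simple closed loop containing the $h^{-1}$ edge exactly once" does not follow automatically: a connected subcomplex of a disk can have cut vertices and holes, so its frontier need be neither simple nor a single cycle. The fix (take the simply-connected hull $\bar K$ of $K$ in $\Delta$; its boundary is a genuine subdiagram boundary, and the frontier edges still only carry the finitely many labels $\{h\}\cup H_Y\cup H_0$ because interior edges of $\Delta$ are either shared by two $\mathcal S$-cells, hence interior to $K$, or shared with an $\mathcal R$-cell) is routine, so this is a repairable gap rather than a fatal one.

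Parts~(b) and~(c), however, are not proved. What you give is a plan ("one builds a relative model of $G$\dots", "an $H$-equivariant subcomplex is then extracted\dots"), and your own closing paragraph concedes that correctly formulating and controlling the $H$-subcomplexes in dimensions $\ge 2$ "is where the bulk of the technical work lies." That is precisely the content that must be supplied for this to be a proof, and it is nontrivial: controlling the $H$-labels on the boundary of a higher-dimensional $H$-subcomplex, ensuring the result is a $K(H,1)$ or a projective resolution of the right finiteness type, and relating its filling functions to $\delta_G^{(n-1)}$ are each substantial. The paper's quasi-retract reduction avoids all of this. As written, your parts~(b) and~(c) constitute an unfilled gap.
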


Many other results previously known for relatively hyperbolic groups can be reproved in the general context of hyperbolically embedded subgroups. One of the goals of this paper is to help making this process ``automatic". More precisely, in Section \ref{sec:GRH} we generalize some useful technical lemmas originally proved for relatively hyperbolic groups in \cite{Osi07, Osi06a} to the case of hyperbolically embedded subgroups. Then proofs of many  results about relatively hyperbolic groups work in the general context of hyperbolically embedded subgroups almost verbatim after replacing references. This approach is illustrated by the proof of the group theoretic analogue of Thurston's Dehn filling theorem discussed in Section \ref{subsec:DF}.

\subsection{Rotating families.}
The other main concept used in our paper is that of an $\alpha$-rotating family of subgroups, which we again discuss in the particular case of a single subgroups here. It is based on the notion of a rotating family (or rotation family, or rotation schema), which was introduced by Gromov in \cite[\S 26---28]{Gro_cat} in the context of groups acting on $CAT(\kappa)$ spaces with $\kappa \leq 0$. It allows to envisage a small-cancellation like property for a family of subgroups in a group through a geometric configuration of a space upon which the group acts and in which the given subgroups fix different points.
 The essence of this definition is that we have a $G$-invariant collection of points, and for each
point $c$ in this collection, a subgroup $G_c$ of $G$ whose non-trivial elements act as rotations around $c$ with
a large angle. This angle condition would make sense in a $CAT(0)$ or $CAT(-1)$ space,
and the definition we give mimics this situation in the coarser setting of a Gromov-hyperbolic space (see Figure \ref{fig_rotating}). Because of this coarseness, we need to assume that the points $c$ in our family are sufficiently far away from each other compared to the hyperbolicity constant.

\begin{figure}
  \centering\includegraphics[width=5cm]{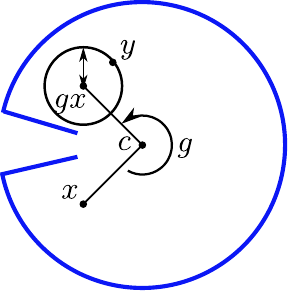}
  \caption{In a very rotating family, $g\in G_c\setminus\{1\}$ rotates by a large angle}\label{fig_rotating}
\end{figure}

\begin{defn}\label{defi;rotatingF}\label{i-rot}
\begin{enumerate}
 \item[(a)]  (Gromov's rotating families) Let $G\actson \X$ be an action of a group on a metric space.  A rotating family  $\calC = (C, \{G_c, c\in C\}) $ consists of a
   subset $C\subset \X$, and a collection  $\{G_c, c\in C\}$ of subgroups of $G$ such that
  \begin{enumerate}
  \item[(a-1)] $C$ is $G$-invariant,
  \item[(a-2)] each $G_c$ fixes $c$,
  \item[(a-3)] $\forall g\in G \; \forall c\in C\;  G_{gc}= gG_{c} g^{-1} $.
  \end{enumerate}
  The set $C$ is called the set of \emph{apices} of the family, and the groups $G_c$ are called the \emph{rotation subgroups} of the family.

\item[(b)] \label{i-sep} (Separation) One says that $C$ (or  $\calC$)  is $\rho$-\emph{separated} if any two distinct apices are at distance  at least $\rho$.

\item[(c)] \label{i-vrot} (Very rotating condition) When $\X$ is $\delta$-hyperbolic for some $\delta >0$, one says that $\calC$ is \emph{very rotating} if, for all $c\in C, g\in G_c\setminus \{1\}$, and all $x, y \in \X$ with
  both $\d (x,c),\d (y,c)$ in the interval $[20\delta ,  40\delta]$
      and $\d (gx,y)\leq 15\delta$,
          any geodesic between $x$ and $y$ contains $c$.
\item[(d)] \label{i-arot} ($\alpha$-rotating subgroup) A subgroup $H$ of a group $G$ is called \emph{$\alpha$-rotating} if there is an $ \alpha\delta$-separated very rotating family of $G$ acting on a $\delta$-hyperbolic space $\X$ for some $\delta >0$ whose rotation subgroups are exactly the conjugates of $H$. When we want to stress a particular action, we will say that $H$ is \emph{$\alpha$-rotating with respect to the given action} of $G$ on $\X$.
\end{enumerate}
\end{defn}

\begin{ex}\label{ex: freeprod}
Suppose that $G=H\ast K$. Let $C$ be the set of vertices of the corresponding Bass-Serre tree $\X $ and let $G_c$ denote the stabilizer of $c\in C$ in $G$. Then we obtain a rotating family $\calC= (C, \{G_c, c\in C\}) $ of subgroups of $G$.
Since $\X$ is $\delta$-hyperbolic for any $\delta>0$, we see that $H$ and $K$ are $\alpha $-rotating subgroups of $G$ for every $\alpha >0$.
\end{ex}

These definitions come with three natural problems. First, study the structure of the subgroups generated by rotating families. Second, study the quotients of groups and spaces by the action of rotating families. Third, provide a way to construct spaces with rotating families in different contexts. We will show that the first two questions can be answered for $\alpha$-rotating collections of subgroups if $\alpha $ is large enough and provide many examples of such collections.

The main structural result about rotating families is a partial converse of Example \ref{ex: freeprod}. Recall that given a subset $S$ of a group $G$, we denote by $\ll S\rr^G$ the normal closure of $S$ in $G$, i.e., the minimal normal subgroup of $G$ containing $S$.

\begin{thm} [Theorem \ref{theo;app_wind}]\label{free-intr}
Let $G$ be a group acting on a hyperbolic space $\X$, $H$ an $\alpha $-rotating subgroup of $G$ with respect to this action for some $\alpha \geq 200$. Then the following holds.
\begin{enumerate}
\item[(a)] There exists a (usually infinite) subset $T\subseteq G$ such that $\ll H\rr ^G=\ast _{t\in T} t^{-1}Ht$.
\item[(b)] Every element $h\in \ll H\rr^G$ either is conjugate to an element of $H$, or is loxodromic with respect to the action on $\X$
\end{enumerate}
\end{thm}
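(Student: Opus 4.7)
The plan is a ping-pong argument driven by the very rotating condition. Let $c_0\in C$ be the apex with $H=G_{c_0}$, and set $C':=Gc_0$. Axiom (a-3) gives $G_{gc_0}=gHg^{-1}$, so the normal closure $N:=\ll H\rr^G$ coincides with $\langle G_c:c\in C'\rangle$. Moreover, $\Stab_G(c_0)$ normalizes $G_{c_0}=H$ (if $s\in\Stab_G(c_0)$ then $sHs^{-1}=G_{sc_0}=G_{c_0}=H$), so any transversal $T\subseteq G$ of $G/\Stab_G(c_0)$ parameterizes $C'$ via $t\mapsto tc_0$, and the rotation subgroups at apices of $C'$ are exactly $\{H^t:t\in T\}$. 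With this in hand, both (a) and (b) will be consequences of a single assertion: every \emph{reduced} word $w=h_1h_2\cdots h_n$, with $h_i\in G_{a_i}\setminus\{1\}$, $a_i\in C'$, and $a_i\neq a_{i+1}$, acts on $\X$ with a long, explicit quasi-geodesic orbit.

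The central geometric lemma is that the broken path
\[
\gamma_w:\; a_1\;\longrightarrow\;h_1a_2\;\longrightarrow\;h_1h_2a_3\;\longrightarrow\;\cdots\;\longrightarrow\;h_1\cdots h_{n-1}a_n,
\]
obtained by concatenating geodesic segments through the successive apices $b_i:=h_1\cdots h_{i-1}a_i$, is a $(\lambda,\mu)$-quasi-geodesic with constants depending only on $\delta$, provided $\alpha\geq 100$. The main obstacle is to verify that the bend at each apex $b_i$ is genuine. Translating by $(h_1\cdots h_{i-1})^{-1}$ reduces this to the following local statement at $a_i$: any geodesic from a point $p$ on $[a_{i-1},a_i]$ near $a_i$ to a point on $[a_i,h_ia_{i+1}]$ near $a_i$ passes $O(\delta)$-close to $a_i$. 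Here one pulls back the outgoing segment by $h_i$: since $h_i$ fixes $a_i$ and $h_i^{-1}[a_i,h_ia_{i+1}]=[a_i,a_{i+1}]$, the very rotating hypothesis applied to $x$ on $[a_{i-1},a_i]$ with $20\delta\leq\d(x,a_i)\leq 30\delta$ and $y:=h_iz$ for $z$ on $[a_i,a_{i+1}]$ chosen with $\d(x,z)\leq 15\delta$ (possible because $\d(a_i,a_{i+1})\geq 100\delta$ makes the two outgoing rays from $a_i$ long enough for thin-triangle estimates to locate such a $z$) forces $a_i$ to lie on the geodesic $[x,y]$. Iterating the resulting ``sharp bend'' estimates at consecutive apices, which are $\geq 100\delta$ apart, the standard local-to-global criterion for quasi-geodesics in hyperbolic spaces (sufficiently sharp bends at sufficiently well-separated points give a quasi-geodesic) promotes the local statement to the global quasi-geodesicity of $\gamma_w$.

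With the lemma in hand both claims are immediate. For (a), any nontrivial reduced word $w$ yields a path $\gamma_w$ of length $\geq(n-1)\alpha\delta-O(\delta)>0$ connecting $a_1$ to $b_n$, and the quasi-geodesic estimate forces $b_n$ to lie at distance $\geq \lambda^{-1}(n-1)\alpha\delta-\mu$ from $a_1$, so $b_n\neq a_1$ and $w$ acts nontrivially on $\X$; hence $w\neq 1$ in $G$, and the canonical surjection $\ast_{t\in T}H^t\twoheadrightarrow N$ is an isomorphism. For (b), represent any $w\in N$ by a reduced word in this free product. If the cyclically reduced form of $w$ has length $1$, then $w$ is conjugate into some $H^t$, hence conjugate to an element of $H$. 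Otherwise, let $w_0$ be a cyclically reduced conjugate of $w$ of length $n\geq 2$; then for every $k\geq 1$ the power $w_0^k$ is reduced of length $kn$, so $\gamma_{w_0^k}$ gives $\d(a_1,w_0^ka_1)\gtrsim k$. Thus $w_0$ has positive stable translation length on $\X$ and is loxodromic, and so is its conjugate $w$.
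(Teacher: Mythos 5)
Your central geometric lemma --- that the broken path $\gamma_w$ through the apices $b_i=h_1\cdots h_{i-1}a_i$ is a quasi-geodesic --- is false, and the gap is exactly where you assert that $\d(a_i,a_{i+1})\geq 100\delta$ lets you ``locate such a $z$''. Separation of apices does not force $[a_i,a_{i-1}]$ and $[a_i,a_{i+1}]$ to fellow-travel near $a_i$, so the required $z\in[a_i,a_{i+1}]$ with $\d(x,z)\leq 15\delta$ need not exist; and when the two rays diverge at $a_i$, nothing prevents $h_i$ from rotating the outgoing ray straight back onto the incoming one. Concretely, take $n=3$ and $a_3=h_2^{-1}a_1$ (a legitimate apex, distinct from $a_1$ and $a_2$): then $b_3=h_1h_2a_3=h_1a_1=a_1=b_1$, so $\gamma_w$ is a closed loop of length $\geq 200\delta$, while nevertheless $w=h_1h_2h_3\neq 1$, since $w\,a_3=h_1h_2a_3=a_1\neq a_3$. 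This is realizable, for instance, in $G=A\ast B$ acting on its rescaled Bass-Serre tree, with $a_1=A$, $a_2=bA$, $h_2=bab^{-1}$, $a_3=ba^{-1}b^{-1}A$, and any $h_1\in A\setminus\{1\}$, $h_3\in G_{a_3}\setminus\{1\}$; the path $[b_1,b_2]\cup[b_2,b_3]$ backtracks along the edge between $bA$ and $B$. The very rotating condition bounds the rotation angle of each $h_i$ from below, but it is silent about the angle between the two geodesics from $a_i$ to its neighbors in the sequence of apices, and it is that second angle which governs whether the bend is sharp.

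What is missing is a canonical ``incoming direction'' at each apex, and supplying this is precisely the role of the windmill construction in the paper's proof. Once one has a $4\delta$-quasiconvex $G_W$-invariant set $W$ staying outside $B(c,30\delta)$, any geodesic from a point of $W$ to $c$ passes $2\delta$-close to $[\bar c,c]$, where $\bar c$ is a closest point of $W$ to $c$; the outgoing geodesic, landing in $hW$, likewise passes close to $h[\bar c,c]$; and those two rays are genuinely related by the rotation $h$, so Lemma~\ref{lem_very_global} forces the sharp bend --- this is the content of Lemma~\ref{lem_local_C}. Growing the windmill inductively (Proposition~\ref{prop_grow}) is what lets one run this local estimate along an arbitrary word. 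Your direct approach has no such anchor, so the angles at the bends of $\gamma_w$ are uncontrolled. A secondary point: even if $\gamma_w$ were a quasi-geodesic, to conclude $w\neq 1$ you would want $w\,a_n\neq a_n$, not merely $b_n\neq a_1$; as written the deduction does not go through when $a_n\neq a_1$.
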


 The proof of this result is inspired by \cite[\S 26---28]{Gro_cat}, where $\X$ is assumed to be $CAT(0)$.  Claiming that this context has ``rather limited application'', Gromov indicates that a generalization to spaces with ``approximately negative'' curvature is desirable and sketches it in \cite{Gro_Meso} and \cite{G-rnd}. A result similar to Theorem \ref{free-intr} was stated in \cite[1/6 theorem]{G-rnd}. For the proof, Gromov refers to the Delzant's paper \cite{Del_Duke}, which deals with the particular case of hyperbolic groups.  Delzant did not use rotating families there, and his argument, which can indeed be generalized,  is quite technical. We propose here an alternative proof inspired by the more geometric settings of  \cite[\S 26--28]{Gro_cat}; our approach is based on the notion of a \emph{windmill} introduced in Section \ref{wind}.

A standard way of producing very rotating families is through a coning-off construction proposed by Gromov  \cite[\S 29--32]{Gro_cat} for $CAT(\kappa)$  spaces with $\kappa <0$. It was later adapted to ``approximate'' negative curvature in \cite{ArzhDel, Coulon,Del_Gro}. The general idea is to start with some action of a group $G$ on a hyperbolic space and then glue ``hyperbolic cones" to orbits of a family of subgroups to make these subgroups elliptic. In general, this does not yield a very rotating family; in fact, the resulting space may not be even hyperbolic. In order to be able to proceed with the coning-off construction while getting a suitable space, we introduce a condition of small cancellation flavor (see Definition \ref{dfn_sc_subgroup} and Proposition \ref{prop_sc_subgroup}).

We mention here a particular application of this idea to acylindrical actions and, more generally, group actions with WPD loxodromic elements.

\begin{defn}
Following Bowditch \cite{B_acyl}, we say that an action of a group $G$ on a metric space $\X $ is \emph{acylindrical} if for any $\e\ge 0$, there exist $R=R(\e)>0$ and $N=N(\e)>0$ such that for all $x,y \in \X $ with $\d(x,y)\geq R$, the set $$\{ g\in G \mid \d(x,gx)\leq \e, \d(y,gy)\leq \e\}$$ contains at most $N$ elements.
\end{defn}

It is easy to see that if the action of $\G $ on $\X$ is acylindrical, then every loxodromic element $g\in G$ is WPD (see Definition \ref{WPD-intr}). Thus part (a) of the proposition applies to a more general situation than part (b), while the conclusion in part (b) is more uniform.

\begin{prop}[Proposition \ref{prop;Acyl_free}]\label{cor-acyl-intr}
Let $G$ be a group acting on a hyperbolic space $\X$ and let $\alpha$ be a positive number.
\begin{enumerate}
\item[(a)] For any loxodromic WPD element $g\in G$, there exists $m=m(\alpha, g)\in \mathbb N$ such that the subgroup $\langle g^m\rangle$ is $\alpha$-rotating with respect to the induced action of $G$ on a certain cone-off of $\X$.
\item[(b)] If the action of $G$ on $\X$ is acylindrical, then there exists $n=n(\alpha)$ such that for every loxodromic $g\in G$ the subgroup $\langle g^n\rangle$ is $\alpha$-rotating with respect to the induced action of $G$ on a certain cone-off of $\X$.
\end{enumerate}
\end{prop}

After obtaining a rotating family acting on a suitable space, one may want to quotient this space by the group  normally generated by the very rotating family. A typical result of this type would assert that  hyperbolicity is preserved, possibly in an effective way (compare to \cite[Theorem 1/7]{G-rnd}). This is indeed what we obtain in Propositions \ref{prop;quotient_hyp} and  \ref{prop;quotient_isom}. In addition, we show that, under certain mild assumptions,  acylindricity is preserved through coning-off and taking such a quotient (see Propositions   \ref{prop;cone_acyl} and \ref{prop;quotient_acyl}, respectively). These results can be summarized as follows.

\begin{prop}\label{prop;intro_quotient}
 Let $G$ be a group acting on a hyperbolic graph $\X$.
 Let $H$ be an $\alpha$-rotating subgroup of $G$ with respect to this action, where $\alpha$ in large enough. Then for any , the following conditions hold.
\begin{enumerate}
  \item[(a)] $\X/\ll H\rr^G$ is Gromov-hyperbolic
  \item[(b)] The quotient map $\X \to \X/\ll H\rr^G$ is a local isometry away from the apices of the rotating family.
  \item[(c)] Any elliptic isometry of $\X/\ll H\rr^G$  in $G/\ll H\rr^G$ has a preimage in $G$ that is elliptic.
  \item[(d)] Suppose that the action of $G$ on $\X$ is acylindrical. Suppose also that, for a fixed point $c$ of $H$ in $\X$, the action of the stabilizer ${\rm Stab}_G(c)$ on the sphere centered at $c$ satisfies a certain properness assumption (see \ref{prop;quotient_acyl}). Then the action of $G/\ll H\rr^G$ on  $\X/\ll H\rr^G$  is acylindrical.
\end{enumerate}
\end{prop}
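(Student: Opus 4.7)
The plan is to establish each part via a uniform toolkit built on top of Theorem~\ref{free-intr} and the windmill structure sketched in Section~\ref{wind}. The key preliminary is a strengthening of the very rotating axiom into a ``Greendlinger-type'' lemma: given distinct apices $c_1,\dots,c_k$ and rotations $g_i\in G_{c_i}\setminus\{1\}$, the broken path obtained by concatenating $[x,c_1]$, $g_1[c_1,c_2]$, $g_1g_2[c_2,c_3]$, $\ldots$ is a $(1,K)$-quasi-geodesic of $\X$ for a constant $K=K(\delta,\alpha)$. This follows by iterating the very rotating axiom: the axiom forces any geodesic between the two endpoints of a segment $g_i[c_i,c_{i+1}]$ through the apex $c_i$, so bypassing an apex costs at least $\sim 10\delta$ of detour. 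Provided $\alpha$ is sufficiently large, the $\alpha\delta$-separation of apices ensures these detour costs accumulate and the concatenation is quasi-geodesic with uniform constants.

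With this local model, part (b) follows directly. If $x\in \X$ has distance at least, say, $50\delta$ from every apex, and $g\in \ll H\rr^G\setminus\{1\}$ satisfies $\d(x,gx)\le 10\delta$, write $g=r_1\cdots r_n$ in the normal form of Theorem~\ref{free-intr}(a); the quasi-geodesic replacement above would force $\d(x,gx)$ to be comparable to the sum of the distances traversed between the associated apices, contradicting the bound $10\delta$. Hence the quotient map is injective, and in fact isometric, on controlled balls centered away from apices.

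For part (a) I would adapt the polygon argument from \cite[\S28]{Gro_cat}. Given a geodesic triangle $\bar T$ in $\X/\ll H\rr^G$, lift each side to a geodesic in $\X$ and connect consecutive lifts by elements of $\ll H\rr^G$. Using the Greendlinger-style lemma, each connecting element can be absorbed as a bounded sequence of rotations at apices, producing a closed ``rotation polygon'' in $\X$ whose sides are (quasi-)geodesics of $\X$ interrupted at finitely many apex-vertices. Hyperbolicity of $\X$ then gives thinness of this polygon with uniform constants, and the thinness descends to $\bar T$ via the local isometry of part (b) in the regions away from apices, together with the very rotating axiom in the regions close to an apex. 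I expect this to be the main obstacle: one must carefully track the constants so that they are uniform in the triangle and so that the case analysis near apices (two sides meeting at an apex, one side glancing off an apex, etc.) closes.

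Parts (c) and (d) are deduced from the same picture. For (c), let $\bar g\in G/\ll H\rr^G$ fix a bounded set $\bar B$, lift $\bar B$ to $B\subset\X$ and pick any preimage $g_0$ of $\bar g$; then $g_0 B\subset \ll H\rr^G\cdot B$, and using the free-product decomposition of $\ll H\rr^G$ from Theorem~\ref{free-intr}(a) together with the associated Bass-Serre-like action one finds $h\in\ll H\rr^G$ such that $hg_0$ stabilizes a bounded subset of $\X$, hence is elliptic. For (d), given $\bar x, \bar y\in\X/\ll H\rr^G$ at large distance, lift to $x,y\in\X$ and consider the set of elements of $G$ which, modulo $\ll H\rr^G$, move both points by at most $d$. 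Any such lift either preserves the geodesic $[x,y]$ up to a bounded neighborhood avoiding apices (so acylindricity of $G\actson\X$ bounds its number directly) or its action factors through rotations at apices encountered by $[x,y]$; the properness assumption on $G_c$-actions on spheres bounds the number of contributions of each such apex, and part (b) guarantees that only finitely many apices are involved, so the total count is bounded by a constant $N_d$ depending only on $d$.
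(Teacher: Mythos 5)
Your preliminary "Greendlinger-type" lemma captures the right intuition, but there are two substantive issues with the sketch.

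The main gap is in part (a). You propose lifting a geodesic triangle from $\X/\ll H\rr^G$, gluing the lifted sides by elements of $\ll H\rr^G$, and then absorbing each gluing element as a bounded sequence of rotations at apices to get a closed ``rotation polygon'' in $\X$. The trouble is that the number of apices encountered along the sides of the lifted triangle is not bounded (it grows with the size of the triangle), so the resulting polygon has an a~priori unbounded number of sides, and the thinness constant you extract from hyperbolicity of $\X$ degrades accordingly. Your sketch does not address this, and you yourself flag it as ``the main obstacle.'' The paper sidesteps it completely: in Proposition \ref{prop;quotient_hyp} it never lifts large triangles. Instead it proves that $\X/\ll H\rr^G$ is $(\rho/10)$-locally hyperbolic — this involves only small triangles (perimeter $\le 6\rho/10$), for which at most one apex is relevant and the lifting/absorption analysis collapses to a single quadrilateral $(x,gx,y,gy)$ — and then invokes the Cartan–Hadamard theorem (\ref{theo;CH}) to upgrade local to global hyperbolicity (using that $\ll H\rr^G$ is generated by elliptics, so the quotient is $\delta$-simply connected). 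That reduction to local hyperbolicity is the key structural move your proposal is missing, and without it the polygon argument does not close.

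A secondary remark: your preliminary lemma asserts that the broken path concatenating $[x,c_1],\,g_1[c_1,c_2],\,\dots$ is a $(1,K)$-quasi-geodesic. The paper (Lemma \ref{lem_very_global}, Lemma \ref{lem_local_C}, and the windmill machinery culminating in Lemma \ref{lem_toutes}) proves the stronger fact that these concatenations are \emph{actual geodesics} of $\X$. The stronger statement is what makes parts (b), (c), (d) clean: for (b) and Corollary \ref{cor_free2} one needs that a geodesic from $x$ to $gx$ \emph{contains} a cone point, not merely passes near it; for (c), the paper's argument is two short cases — either $\bar g$ moves a point close to some $\bar c$, in which case a lift $g$ moves a point close to $c$ and by $\rho$-separation one concludes $gc=c$ so $g$ is elliptic; or $\bar g$'s approximate fixed set is far from apices, in which case a ball around it embeds isometrically and the $g$-orbit of a lift is bounded — neither of which involves the Bass–Serre-type argument you gesture at, which is not developed in your sketch. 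Your part (d) outline is in the right spirit, but it also relies on knowing exactly where apices occur on geodesics $[x,y]$, which again uses the exact (not approximate) Greendlinger statement.
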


Preservation of acylindricity allows us to iterate applications of  Corollary \ref{cor-acyl-intr} and Proposition \ref{prop;intro_quotient} infinitely many times and construct some interesting quotient groups (see Section \ref{sec-MCG}).

\subsection{Examples}\label{ex-intr}
We now discuss some examples of hyperbolically embedded and $\alpha $-rotating subgroups. Before looking at particular groups, let us mention one general result, which allows us to pass from hyperbolically embedded subgroups to $\alpha$-rotating ones.

\begin{thm}[Theorem \ref{he-vrf}]\label{he-vrf-intr}
Suppose $H$ is a hyperbolically embedded subgroup of a group $G$. Then for every $\alpha >0$, there exists a finite subset $\mathcal F\subseteq H\setminus\{ 1\}$ such that the following holds. Let $N\lhd H$ be a normal subgroup of $H$ that contains no elements of $\mathcal F$. Then $N$ is an $\alpha $-rotating subgroup of $G$.
\end{thm}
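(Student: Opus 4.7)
The strategy is to deduce the result from the small-cancellation criterion of Proposition~\ref{prop_sc_subgroup}. For the given $\alpha$, let $A,\varepsilon>0$ be the constants supplied by that proposition, so that any conjugation-invariant family of subgroups satisfying the $(A,\varepsilon)$-small cancellation condition on a suitable hyperbolic $G$-space yields an $\alpha$-rotating family. The plan is to exhibit a finite $\mathcal{F}\subset H\setminus\{1\}$ such that whenever $N\lhd H$ is disjoint from $\mathcal{F}$, the family $\mathcal{R}=\{gNg^{-1}:g\in G\}$ satisfies this condition on a hyperbolic space naturally attached to the hyperbolic embedding $H\hookrightarrow_h G$.

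Fix $X\subseteq G$ witnessing $H\hookrightarrow_h(G,X)$. The hyperbolic space I would use is a geometric realization of the embedding: refine $\Gamma(G,X\sqcup H)$ by giving each edge labelled by $h\in H$ length $\widehat{d}(1,h)$ (edges of infinite $\widehat{d}$-length are removed). The resulting space $\X$ is quasi-isometric to $\Gamma(G,X\sqcup H)$, hence $\delta$-hyperbolic for suitable $\delta$, but crucially the translation length of $h\in H$ on $\X$ now records the relative length $\widehat{d}(1,h)$. For each conjugate $N^g=gNg^{-1}$, the designated subspace is $Q_{N^g}=gH$; because $N^h=N$ for every $h\in H$, this assignment depends only on the coset $gH$, is $G$-equivariant, and is strongly quasi-convex by the hyperbolic embedding.

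It remains to pick $\mathcal{F}$ so that conditions~(b) and~(c) of Definition~\ref{dfn_sc_subgroup} hold simultaneously. For~(b), local finiteness of $\widehat{d}$ on $H$ makes $\mathcal{F}_L:=\{h\in H\setminus\{1\}:\widehat{d}(1,h)\le L\}$ finite for every $L$, and avoiding this set forces $\inj_{\X}(\mathcal{R})\ge L$. For~(c), the geometric-separation phenomenon underlying hyperbolic embeddings --- the almost-malnormality of Proposition~\ref{malnorm-intr} refined into a quantitative statement about how quickly distinct cosets of $H$ diverge in $\Gamma(G,X\sqcup H)$ --- gives, for every $D$, a uniform upper bound on the $\widehat{d}$-diameter of coarse intersections of distinct cosets, controlled again by a finite set $\mathcal{F}_D'\subset H$ of ``short'' near-coincidences. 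Taking $\mathcal{F}=\mathcal{F}_L\cup\mathcal{F}_D'$ with $L\ge A\delta$ and $D\le\varepsilon L$ yields the required small-cancellation data, and Proposition~\ref{prop_sc_subgroup} delivers that $N$ is $\alpha$-rotating.

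The step I expect to dominate the technical work is the quantitative geometric separation used for~(c): showing that bad coarse overlaps between distinct cosets $gH,g'H$ can be precluded by excluding only finitely many elements of $H$. This is the hyperbolically-embedded analogue of the fine geometry of peripheral cosets in relatively hyperbolic groups, and presumably relies on the isoperimetric characterization of Theorem~\ref{ipchar-intr} combined with the projection-complex toolkit invoked for Theorem~\ref{geom-sep-intr}. The remaining ingredients --- quasi-convexity of the $Q_{N^g}$ and the bound on $\inj_{\X}(\mathcal{R})$ --- are essentially formal consequences of the definition of hyperbolic embedding.
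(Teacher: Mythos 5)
Your proposal routes the argument through the small-cancellation criterion (Proposition \ref{prop_sc_subgroup}), whereas the paper constructs the rotating family directly on a Groves--Manning style horoball space $\mathbb{K}_r(G,X,Y,\mathcal{H})$ and verifies the very rotating condition by hand. This is a genuinely different strategy, and it founders on the construction of the auxiliary hyperbolic space.

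The central problem is the claim that rescaling each $H$-labelled edge of $\Gamma(G,X\sqcup H)$ to have length $\widehat{d}(1,h)$ (and deleting edges where $\widehat{d}=\infty$) produces a space $\X$ quasi-isometric to $\Gamma(G,X\sqcup H)$. This is false, and in fact $\X$ need not even be connected. Take $G=H*\langle x\rangle$ with $X=\{x\}$, the basic example (c) from the introduction: there $\widehat{d}(1,h)=\infty$ for every $h\in H\setminus\{1\}$, so your construction deletes \emph{all} $H$-edges in \emph{every} coset and leaves $\Gamma(G,\{x\})$, a disjoint union of lines, one per coset of $\langle x\rangle$. No hyperbolicity argument can be run on this, and neither the strong quasiconvexity of the $gH$'s nor the injectivity radius estimate makes sense. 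Even when $\widehat{d}$ is everywhere finite, the rescaled metric blows up distances between adjacent vertices in the coset $H$ from $1$ to $\widehat{d}(1,h)$, which is unbounded by local finiteness, so the identity map is not a quasi-isometry, and hyperbolicity of $\X$ would require a fresh argument that the proposal does not supply. A further difficulty is that even granting a suitable space, the injectivity radius of $N$ acting on it is an infimum over \emph{all} points, not just over $H$, and points far from the coset $H$ may be moved only a small amount; your sketch conflates $\widehat{d}(1,h)$ with the translation length on $\X$.

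The paper sidesteps all of this: it attaches a combinatorial horoball $\mathcal{H}_r(Q)$ (in the sense of Groves--Manning) with an apex to every coset $Q=g\Gamma(H_\lambda,Y_\lambda)$ in $\Gamma(G,X\cup Y)$, where $Y$ is the \emph{finite} set of $H$-letters appearing in a strongly bounded relative presentation. The resulting space $\mathbb{K}_r$ is shown to be $\delta$-hyperbolic with $\delta$ independent of $r$ (Lemmas \ref{hypcone1} and \ref{hypcone2}, via Bowditch's homological isoperimetric criterion), the apices give the rotating set $C$, and $N_\lambda$ fixes its apex (so the injectivity radius question does not arise). The very rotating condition is then verified directly from the exponential distortion of horoballs (part (c) of Theorem \ref{GM}): a geodesic near an apex avoiding the apex forces a short admissible loop and hence $\widehat{d}(1,a^{-1}ga)\le 2^{3(2r+92\delta-3)/2}K$ for some $a\in H_\lambda$ and $g\in N_\lambda\setminus\{1\}$, which one excludes by taking $D$ large. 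So the horoball geometry is precisely the mechanism that converts the combinatorial $\widehat{d}$-largeness of $N$ into metric rotation, and it cannot be replaced by a naive edge-rescaling.
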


In particular, this theorem together with Proposition \ref{he-rh-intr} allows us to construct $\alpha $-rotating subgroups in relatively hyperbolic groups. Yet another typical application is the case when $H$ is infinite and virtually cyclic. In this case, it is easy to show that for every infinite order element $h\in H$ and every $\alpha >0$, there exists $k\in \mathbb N$ such that the subgroup $\langle h^k\rangle$ is $\alpha$-rotating (see Corollary \ref{theo;WPD_free}).

Below we consider some examples where the groups are not, in general, relatively hyperbolic.

The first class of examples consists of mapping class groups. Let $\Sigma$ be a (possibly punctured) orientable closed surface. The mapping class group $\MCG$ is the group of isotopy classes of orientation preserving homeomorphisms of $\Sigma$; we do allow homeomorphisms to permute the punctures. By Thurston's classification, an element of $\MCG$ is either of finite order, or reducible (i.e., fixes a multi-curve), or pseudo-Anosov. Recall that all but finitely many mapping class groups are not relatively hyperbolic, essentially because of large ``degree of commutativity" \cite{AAS}. However, we prove the following result (see Theorem \ref{ex-MCG} and Theorem \ref{thm_MCG_HE}).

\begin{thm}\label{ex-MCG-intr}
Let $\Sigma$ be a (possibly punctured) orientable closed surface and let $\calM\calC\calG(\Sigma)$ be its mapping class group.
\begin{enumerate}
\item[(a)] For every pseudo-Anosov element $a\in \calM\calC\calG(\Sigma)$, we have $E(a)\h \calM\calC\calG(\Sigma)$, where $E(a)$ is the unique maximal virtually cyclic subgroup containing $a$.
\item[(b)] For every $\alpha >0$, there exists $n\in \mathbb N$ such that for every pseudo-Anosov element $a\in \calM\calC\calG(\Sigma)$, the cyclic subgroup $\langle a^{n}\rangle $  is $\alpha$-rotating.
\item[(c)] Every subgroup of $\calM\calC\calG(\Sigma) $ is either virtually abelian or virtually surjects onto a group with a non-degenerate hyperbolically embedded subgroup.
\end{enumerate}
\end{thm}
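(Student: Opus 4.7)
My plan is to leverage the action of $\MCG$ on the curve complex $\mathcal{C}(\Sigma)$, which is Gromov-hyperbolic by Masur--Minsky and on which $\MCG$ acts acylindrically by Bowditch, with pseudo-Anosov elements being precisely the loxodromic isometries. For (a), I would fix a pseudo-Anosov $a$ and verify the hypotheses of Theorem \ref{geom-sep-intr} for $H=E(a)$ acting on $\mathcal{C}(\Sigma)$. Any $\langle a\rangle$-orbit is a quasi-geodesic, so the $E(a)$-orbit of any vertex $s$ is quasi-convex; since nontrivial powers of $a$ translate along $\mathcal{C}(\Sigma)$ with positive translation length, point stabilizers of the $E(a)$-action are finite and the action is metrically proper; geometric separation follows because a long fellow-traveling segment between $E(a)\cdot s$ and $gE(a)\cdot s$ forces the axes of $a$ and $gag^{-1}$ to fellow-travel, whence by acylindricity they share endpoints on $\partial\mathcal{C}(\Sigma)$, and then $g\in E(a)$ by maximality of $E(a)$ as the stabilizer of this fixed point pair. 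Theorem \ref{geom-sep-intr} then yields $E(a)\h \MCG$.

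Part (b) is a direct application of Corollary \ref{cor-acyl-intr} to the acylindrical action $\MCG\actson\mathcal{C}(\Sigma)$: for every $\alpha>0$ the corollary produces a single $n$, depending only on $\alpha$ and the acylindricity constants of $\Sigma$, such that $\langle g^n\rangle$ is $\alpha$-rotating for every loxodromic $g\in\MCG$, and loxodromic coincides with pseudo-Anosov here.

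For (c), I would invoke the Birman--Lubotzky--McCarthy/Ivanov trichotomy: a subgroup $H\le\MCG$ either contains a pseudo-Anosov or is reducible, in the sense that a finite-index $H_0\le H$ fixes a multicurve $\gamma$. In the pseudo-Anosov case, if $H$ is virtually cyclic then it is virtually abelian; otherwise the restricted action $H\actson\mathcal{C}(\Sigma)$ remains acylindrical and non-elementary, and the argument of (a) applied inside $H$ yields $E_H(a):=E(a)\cap H\h H$, which is non-degenerate because it is infinite (contains $a$) and proper (virtually cyclic in a non-virtually-cyclic group), so $H$ is its own virtual surjection. In the reducible case, the cutting homomorphism sends $H_0$ into a product $\prod_i \calM\calC\calG(\Sigma_i)$ of mapping class groups of strictly simpler complementary subsurfaces, with kernel a free abelian group of Dehn twists along $\gamma$; I would then induct on complexity. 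If some factor projection of $H_0$ is not virtually abelian, the inductive hypothesis supplies a virtual surjection onto a target with a non-degenerate hyperbolically embedded subgroup, which composes to a virtual surjection from $H$; otherwise all projections, and hence $H_0$ (a central extension of virtually abelian by free abelian) and $H$, are virtually abelian. The main obstacle will be organizing (c) cleanly: handling the base cases of low-complexity surfaces (such as $S_{1,0}$, $S_{1,1}$, $S_{0,4}$, where $\calM\calC\calG$ is itself virtually free and part (a) applies directly), and verifying that restriction of the $\MCG$-action to a subgroup $H$ preserves enough acylindricity and loxodromic dynamics to re-run the argument of (a) inside $H$.
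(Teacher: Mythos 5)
Your proposal is essentially the paper's own approach: part (a) is Theorem \ref{ex-MCG}(a), obtained from the Masur--Minsky/Bowditch action on the curve complex via Theorem \ref{wpd} (which packages the verification of Theorem \ref{crit}'s hypotheses); part (b) is Theorem \ref{ex-MCG}(b) via Propositions \ref{prop;SC_from_acyl} and \ref{prop_sc_cyclic}, i.e.\ Corollary \ref{cor-acyl-intr}; and part (c) is Theorem \ref{thm_MCG_HE}, proved by the same Ivanov trichotomy and induction on complexity through the cutting homomorphism. The one step to tighten in your sketch of (a) is the geometric-separation argument: a long but \emph{finite} fellow-travelling of the axes of $a$ and $gag^{-1}$ does not by itself force them to share endpoints on $\partial\mathcal{C}(\Sigma)$; the paper instead uses Lemma \ref{BFlemma} (Bestvina--Fujiwara), which converts sufficiently long oriented $B$-closeness into equality of positive powers of the two conjugates, and then Corollary \ref{elemrem} yields $g\in E(a)$.
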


We explain the proof in the case on non-exceptional surfaces. That is, we assume that $3g+p> 4$, where $g$ and $p$ are the genus and the number of punctures of $\Sigma$, respectively. In all exceptional cases, $\calM\calC\calG(\Sigma)$ is a hyperbolic group, and the proof is much easier.

Associated to $\Sigma$ is its \emph{curve complex} $\cal C$, on which $\calM\calC\calG(\Sigma)$ acts by isometries; pseudo-Anosov elements of $\calM\calC\calG(\Sigma)$ are exactly loxodromic elements with respect to this action.
Recall that $\mathcal C$ is hyperbolic for all non-exceptional surfaces  $\Sigma$  \cite{MM99} and the action of $\calM\calC\calG(\Sigma)$ on $\mathcal C$ is acylindrical \cite{B_acyl} (the WPD property for pseudo-Anosov elements was established earlier in \cite{BF}). Thus we obtain (a) by applying Corollary \ref{wpd-he-intr} to the action of the mapping class group on the corresponding curve complex. We note that the subgroup $\langle a\rangle $ is not necessarily hyperbolically embedded. In fact, Proposition \ref{malnorm-intr} easily implies that no proper infinite subgroup of $E(a)$ is hyperbolically embedded in $\calM\calC\calG(\Sigma)$. Further, part (b) follows immediately from Proposition \ref{cor-acyl-intr} (b). Finally, part (c) is easy to derive from (a) and Ivanov's trichotomy, stating that every subgroup of a mapping class group is either finite, or reducible, or contains a pseudo-Anosov element \cite{Iva92}.

\label{i-iwip}
A similar result can be proved for the group $Out(F_n)$ of outer automorphisms of a free group. Recall that an element $g\in Out(F_n)$ is {\it irreducible with irreducible powers} (or {\it iwip}, for brevity) if none  of its non-trivial powers preserve the conjugacy class of a proper free factor of $F_n$. These automorphisms play the role of pseudo-Anosov mapping classes in the usual analogy between mapping class groups and  $Out(F_n)$.
As an analogue of the curve complex, we can use the free factor complex \cite{BFe2}, or a specially crafted
hyperbolic complex  \cite{BFe} on which a given iwip element
 $g$ acts loxodromically and satisfies the WPD condition.
Arguing as above, we obtain:

\begin{thm}[Theorem \ref{outfn}]
Let $F_n$ be the free group of rank $n$, $g$ an iwip element. Then  $E(g)\h Out(F_n)$, where $E(g)$ is the unique maximal virtually cyclic subgroup of $Out(F_n)$ containing $g$. Furthermore, for every $\alpha >0$, there exists $k\in \mathbb N$ such that the cyclic subgroup $\langle g^{k}\rangle $  is $\alpha$-rotating.
\end{thm}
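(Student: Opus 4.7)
The plan is to run exactly the template used for Theorem \ref{ex-MCG-intr}, substituting the curve complex with the hyperbolic $Out(F_n)$-complex $\mathcal Y = \mathcal Y_g$ of Bestvina--Feighn \cite{BFe}. On $\mathcal Y$ the group $Out(F_n)$ acts by isometries, $g$ is loxodromic, and $g$ satisfies the WPD condition. The three ingredients we will chain together are: Lemma \ref{elem1} (existence and uniqueness of the maximal elementary subgroup containing a loxodromic WPD element), Theorem \ref{geom-sep-intr} (geometric separability together with quasi-convex proper orbits implies hyperbolic embedding), and Theorem \ref{he-vrf-intr} (hyperbolic embedding upgrades to $\alpha$-rotating subgroups inside).

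First I would define $E(g)$ and check its quasi-convex orbits. Applying Lemma \ref{elem1} to the loxodromic WPD element $g \actson \mathcal Y$ yields a unique maximal virtually cyclic subgroup $E(g) \le Out(F_n)$ containing $g$, realized as the setwise stabilizer in $Out(F_n)$ of the pair of endpoints of the axis of $g$ in $\partial \mathcal Y$. Because $\langle g\rangle$ has finite index in $E(g)$, any orbit $E(g)(s)$ lies in a bounded neighborhood of the quasi-geodesic orbit $\langle g\rangle (s)$, and is therefore quasi-convex in $\mathcal Y$. Properness of the $E(g)$-action on $\mathcal Y$ at the level of orbit points is immediate from virtual cyclicity combined with the WPD property.

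The technical heart is verifying that $E(g)$ is geometrically separated in the sense of Definition \ref{GeomSep_mainres}; this is the step I expect to be the main obstacle, though it is entirely parallel to the MCG case. Fix a point $s$ on the axis of $g$. If for some $h\in Out(F_n)$ and some large $R$ the intersection $E(g)(s) \cap (hE(g)(s))^{+\eps}$ has diameter at least $R$, then two long segments of the axes of $g$ and of $hgh^{-1}$ $\eps$-fellow-travel. The WPD condition forces the finite set of elements producing such double near-coincidences to lie in the (virtually cyclic) stabilizer of the endpoint pair of the axis of $g$, so $h$ preserves this pair and thus $h \in E(g)$. This is precisely the argument used in the mapping class group setting and uses nothing about $Out(F_n)$ beyond the Bestvina--Feighn WPD statement.

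Having quasi-convex proper orbits and geometric separability, Theorem \ref{geom-sep-intr} applied to $Out(F_n) \actson \mathcal Y$ yields $E(g) \h Out(F_n)$, which is the first assertion. For the second, given $\alpha > 0$, Theorem \ref{he-vrf-intr} supplies a finite subset $\mathcal F \subset E(g) \setminus \{1\}$ such that every normal subgroup of $E(g)$ avoiding $\mathcal F$ is $\alpha$-rotating in $Out(F_n)$. Since $E(g)$ is virtually cyclic, there is an integer $k$ with $\langle g^{km}\rangle \normal E(g)$ for every $m \ge 1$, and by choosing $m$ large enough we force $\langle g^{km}\rangle \cap \mathcal F = \emptyset$. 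Setting the exponent in the statement equal to $km$ (a positive integer depending only on $\alpha$ via $\mathcal F$, and on $g$ via $k$) gives the desired $\alpha$-rotating cyclic subgroup $\langle g^{n}\rangle$.
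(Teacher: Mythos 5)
Your proposal is correct and follows essentially the same route as the paper. The paper itself shortcuts the middle steps by invoking its Theorem \ref{wpd} (``a collection of pairwise non-commensurable loxodromic WPD elements yields a hyperbolically embedded family $\{E(h_1),\dots,E(h_k)\}$'') applied directly to the Bestvina--Feighn complex, and then the $\alpha$-rotating statement comes from Corollary \ref{cor-he-vrf} exactly as you describe; your write-up just unpacks the proof of Theorem \ref{wpd} (quasiconvexity of orbits from finite index of $\langle g\rangle$ in $E(g)$, properness, geometric separability via WPD-driven fellow-traveling control as in Lemma \ref{BFlemma}/Corollary \ref{elemrem}) rather than citing it as a black box. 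One small imprecision worth noting: you say the exponent ``depends only on $\alpha$ via $\mathcal F$, and on $g$ via $k$,'' but $\mathcal F$ itself depends on $g$ (through the choice of the $g$-specific Bestvina--Feighn space and the resulting relative metric on $E(g)$), so the exponent depends on $g$ through both factors -- this is why, unlike for mapping class groups, the $Out(F_n)$ statement cannot be made uniform over all iwips.
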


\begin{rem}\label{rem:outFn}
Note that this theorem is significantly weaker than Theorem \ref{ex-MCG-intr}. There are two reasons: first, currently there is no known hyperbolic space on which $Out(F_n)$ acts acylindrically so that all iwip elements are exactly the loxodromic ones; second, no analogue of Ivanov's trichotomy is known for subgroups of $Out(F_n)$.  The recent papers \cite{HanMos,HiHo_hyperbolicity} provide another hyperbolic space on which $Out(F_n)$ acts. It is very well possible that further study of the action of $Out(F_n)$ on this or other similar spaces will lead to a stronger version of the theorem.
\end{rem}

The same argument also works for the group ${\bf Bir}(\mathbb P^2_{\mathbb C})$ of birational transformations of the projective plane $\mathbb P^2_{\mathbb C}$, called the \emph{Cremona group}. For the definition and details about  ${\bf Bir}(\mathbb P^2_{\mathbb C})$ we refer to the survey \cite{C13}. In \cite{CL}, Cantat and Lamy introduced the notion of a \emph{tight} element (see Definition \ref{tightdef}) and proved that ``most" elements of ${\bf Bir}(\mathbb P^2_{\mathbb C})$ are tight.  They use this notion to prove that the Cremona group is not simple, using a generalization of small cancellation arguments by Delzant \cite{Del_Duke}.
Tight elements act loxodromically on a hyperbolic space naturally associated to ${\bf Bir}(\mathbb P^2_{\mathbb C})$.
 Existence of tight elements and some additional results from \cite{BC,CL} allow us to apply Theorem \ref{geom-sep-intr} to certain virtually cyclic subgroups containing these elements.

\begin{thm}[Corollary \ref{crem}] \label{Crem-intr}
Let $g$ be a tight element of the Cremona group ${\bf Bir}(\mathbb P^2_{\mathbb C})$. Then there exists a virtually cyclic subgroup $E(g)$ of ${\bf Bir}(\mathbb P^2_{\mathbb C})$ which contains $g$ and is hyperbolically embedded in ${\bf Bir}(\mathbb P^2_{\mathbb C})$. Furthermore, for every $\alpha >0$, there exists $k\in \mathbb N$ such that the cyclic subgroup $\langle g^{k}\rangle $  is $\alpha$-rotating.
\end{thm}

The next example is due to Sisto \cite{Sis}. It answers a question from the first version of this paper.

\begin{thm}[Sisto \cite{Sis}]\label{Sis}
Let $G$ be a group acting properly on a proper $CAT(0)$ space. Suppose that $g\in G$ is a rank one isometry. Then $g$ is contained in a unique maximal virtually cyclic subgroup of $G$, which is hyperbolically embedded in $G$.
\end{thm}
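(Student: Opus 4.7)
The natural strategy is to reduce to Theorem~\ref{geom-sep-intr} (the criterion via geometric separation) by producing a hyperbolic $G$-space in which $E(g)$ is proper, quasiconvex and geometrically separated. The first step is to isolate the candidate subgroup. Let $Y$ denote the proper $CAT(0)$ space, let $\gamma$ be an axis of $g$ in $Y$, and let $g^{\pm\infty}\in\partial Y$ be its endpoints. Define
\[ E(g) \;=\; \{h\in G\mid h\cdot\{g^{+\infty},g^{-\infty}\}=\{g^{+\infty},g^{-\infty}\}\}. \]
Using properness of the $G$-action on $Y$ together with the rank one property (which forces the stabilizer of each endpoint to preserve $\gamma$ up to bounded Hausdorff distance), a standard argument shows that $E(g)$ is virtually cyclic and is the unique maximal virtually cyclic subgroup containing $g$; any subgroup of $G$ containing $g$ with bounded orbits in $\partial Y\smallsetminus\{g^{\pm\infty}\}$ must lie in $E(g)$.

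Next I would construct the desired hyperbolic $G$-space using the Bestvina--Bromberg--Fujiwara projection complex machinery, applied to the $G$-orbit $\mathbb{Y}=\{h\gamma\mid h\in G\}/E(g)$ of (quasi-)axes. The key input is that every rank one axis in a $CAT(0)$ space is \emph{strongly contracting}: the closest-point projection $\pi_{h\gamma}\colon Y\to h\gamma$ sends any metric ball disjoint from $h\gamma$ to a set of uniformly bounded diameter (Bestvina--Fujiwara, Charney--Sultan). Combined with properness of the $G$-action, this contraction property yields, for each pair $h\gamma\ne h'\gamma$, a well-defined coarse projection with the bounded-domain property; verification of the Bestvina--Bromberg--Fujiwara projection axioms then gives a hyperbolic graph $\mathcal X=\mathcal C(\mathbb Y)$ on which $G$ acts, together with a natural quasi-isometric embedding of each axis $h\gamma$. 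The element $g$ acts loxodromically on $\mathcal X$, and any orbit $E(g)\cdot s$ is quasi-isometric to $\gamma$ (hence quasiconvex in $\mathcal X$); properness of the $E(g)$-action on $\mathcal X$ is inherited from properness on $Y$ via the Milnor--\v Svarc--type argument described after Theorem~\ref{geom-sep-intr}.

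The final and main step is to verify geometric separation of $E(g)$ for the action on $\mathcal X$: given $\varepsilon$, I must show that for $|h|$ large enough (in the coset space $G/E(g)$), the $\varepsilon$-neighborhoods of $E(g)\cdot s$ and $hE(g)\cdot s$ intersect in a set of bounded diameter. Translated back to $Y$, this says that two distinct translates $h\gamma$, $h'\gamma$ of the rank one axis cannot fellow-travel arbitrarily long at bounded distance. This is precisely the content of the contraction property combined with properness: if the $\varepsilon$-neighborhoods of $h\gamma$ and $h'\gamma$ intersected along a set of diameter $R\gg\varepsilon$, then the projections $\pi_{h\gamma}$ and $\pi_{h'\gamma}$ would each have images of diameter $\ge R/2$ inside the other axis, contradicting strong contraction as soon as $R$ exceeds the contraction constant, unless $h\gamma=h'\gamma$, i.e.\ $h^{-1}h'\in E(g)$. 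This is the step I expect to be the main technical obstacle, because one must convert the coarse-intersection bound in $Y$ into the geometric separation condition inside the auxiliary hyperbolic space $\mathcal X$. Once this is established, Theorem~\ref{geom-sep-intr} applies and delivers $E(g)\hookrightarrow_h G$; uniqueness and maximality of $E(g)$ then follow from Proposition~\ref{malnorm-intr} (almost malnormality).
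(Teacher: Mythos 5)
The paper does not actually prove Theorem~\ref{Sis}: it is attributed to Sisto's preprint~\cite{Sis} and only cited, with no argument in the text. So there is no ``paper's own proof'' to compare against. Your proposal is a reconstruction in the spirit of what Sisto does, routed through the paper's machinery, and it is broadly on the right track. But a few points deserve flagging.

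First, you need the BBF \emph{quasi-tree of metric spaces} (where each vertex $h\gamma$ is blown up to a copy of the axis), not merely the projection complex graph $\mathcal P_K(\mathbb Y)$ as defined in Definition~\ref{projc}. In the latter, the entire axis $\gamma$ collapses to a single vertex, so the $E(g)$-orbit is bounded and the action of $E(g)$ cannot be proper; the hypotheses of Theorem~\ref{crit} would fail. Your wording (``any orbit $E(g)\cdot s$ is quasi-isometric to $\gamma$'') suggests you do mean the blown-up space, but then you should explicitly invoke the BBF result that each $h\gamma$ is quasi-isometrically embedded and quasiconvex in $\mathcal X$; neither is trivial, and the paper's own account of BBF (Proposition~\ref{BBF}) covers only the graph version, not the quasi-tree of spaces. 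Second, the step you flag as the main obstacle — translating the coarse-intersection bound from $Y$ into geometric separation inside $\mathcal X$ — is indeed unresolved as written, and I would not try to translate: because $\mathcal X$ contains shortcuts between different axes, long fellow-travelling in $\mathcal X$ is \emph{a priori} weaker than long fellow-travelling in $Y$, so the $Y$-argument does not pass through directly. The correct move is to argue entirely inside $\mathcal X$: nearest-point projection in the quasi-tree onto an embedded vertex space agrees, up to a uniform error, with the BBF projection used to build $\mathcal X$, and the boundedness axiom~(A$_4$)/Lemma~\ref{boundproj} then gives geometric separation with no reference to $Y$. Finally, the cleanest route in the framework of this paper is not through Theorem~\ref{crit} (= Theorem~\ref{geom-sep-intr}) with geometric separation, but through Theorem~\ref{wpd}: once $\mathcal X$ is built, it suffices to show that $g$ acts loxodromically and satisfies WPD on $\mathcal X$, which follows from strong contraction of the axis plus properness of the $G$-action on $Y$; Lemma~\ref{elem1} then automatically supplies the unique maximal elementary subgroup $E(g)$, so you do not need to hand-construct it from endpoint stabilizers. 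Your Step~1 is fine as a consistency check, but it duplicates work Theorem~\ref{wpd} and Lemma~\ref{elem1} already do for you.
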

This result also gives rise to rotating families via Theorem \ref{he-vrf-intr}.

The notion of a rank one isometry originates in the Ballman's paper \cite{Bal82}. Recall that an axial isometry $g$ of a $CAT(0)$ space $S$ is \emph{rank one} if there is an axis for $g$ which does
not bound a flat half-plane. Here a flat half-plane means a totally geodesic
embedded isometric copy of an Euclidean half-plane in $S$. For details see \cite{Bal95,Ham09} and references therein.

Theorem \ref{Sis} provides a large source of groups with non-degenerated \he subgroups. For instance let $M$ be an irreducible Hadamard manifold that is not a higher rank symmetric space. Suppose that a group $G$ acts on $M$ properly and cocompactly. Then $G$ always contains a rank one isometry \cite{BBE,BBS,BS87}. Conjecturally, the same conclusion holds for any locally compact geodesically complete irreducible $CAT(0)$ space that is not a higher rank symmetric space or a Euclidean building of dimension at least $2$ \cite{BB08}. Recall that a $CAT(0)$ space is called \emph{geodesically complete} if every geodesic segment can be extended to some bi-infinite geodesic. This conjecture was settled by Caprace and Sageev \cite{CS} for $CAT(0)$ cube complexes. Namely, they show that for any locally compact geodesically complete $CAT(0)$ cube complex $Q$ and any infinite discrete group $G$ acting properly and cocompactly on $Q$, $Q$ is a product of two geodesically complete unbounded convex subcomplexes or $G$ contains a rank one isometry. For instance, this applies to right angled Artin and Coxeter groups acting on the universal covers of their Salvetti complexes and their Davis complexes, respectively (see \cite{Cha} for details).

In most examples discussed above, the hyperbolically embedded subgroups are elementary (i.e. virtually cyclic). The next result allows us to construct non-elementary hyperbolically embedded subgroups starting from any non-degenerate (but possibly elementary) ones. The proof is also based on Theorem \ref{geom-sep-intr} and a small cancellation like argument. This theorem has many applications, e.g., to the proof of SQ-universality  and $C^\ast$-simplicity of groups with non-degenerate hyperbolically embedded subgroups, discussed in Section \ref{subsec:appl}.

\begin{thm}[Theorem \ref{vf}]\label{vf-intr}
Suppose that a group $G$ contains a non-degenerate hyperbolically embedded subgroup. Then the following hold.
\begin{enumerate}
\item[(a)] There exists a (unique) maximal finite normal subgroup of $G$, denoted $K(G)$.
\item[(b)] For every infinite subgroup $H\h G$, we have $K(G)\le H$.
\item[(c)] For any $n\in \mathbb N$, there exists a subgroup $H\le G$ such that $H\h G$ and $H\cong F_n \times K(G)$, where $F_n$ is a free group of rank $n$.
\end{enumerate}
\end{thm}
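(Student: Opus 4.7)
\emph{Parts (a) and (b).} I first establish the key lemma: \emph{for every infinite hyperbolically embedded subgroup $H\h G$ and every finite normal subgroup $N\lhd G$, we have $N\leq H$.} Fix $n\in N$. The $G$-conjugacy class of $n$ is contained in $N$ and therefore finite, so $[G:C_G(n)]<\infty$, and $C_H(n)=H\cap C_G(n)$ has finite index in the infinite group $H$; in particular $|C_H(n)|=\infty$. For any $h\in C_H(n)$ one has $h=nhn^{-1}\in nHn^{-1}$, so $C_H(n)\subseteq H\cap nHn^{-1}$. If $n\notin H$, almost malnormality (Proposition \ref{malnorm-intr}) forces $|H\cap nHn^{-1}|<\infty$, contradicting the previous sentence. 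Hence $n\in H$, proving the lemma. Now (a) follows by applying the lemma to both the given non-degenerate $H_0\h G$ and a conjugate $H_0^g$ with $g\in G\setminus H_0$: every finite normal $N$ lies in $H_0\cap H_0^g$, which is finite by almost malnormality. Therefore the join $K(G)$ of all finite normal subgroups of $G$ is itself a finite normal subgroup, and is maximal by construction. Part (b) is then immediate, since $K(G)$ is finite normal and the lemma says it lies in every infinite $H\h G$.

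\emph{Part (c).} The plan is to produce free elements $b_1,\dots,b_n$ centralizing $K(G)$ whose span $F=\langle b_1,\dots,b_n\rangle$ is free of rank $n$ and such that $F\cdot K(G)\h G$. Since $K(G)$ is finite, the conjugation homomorphism $G\to\mathrm{Aut}(K(G))$ has finite image, so $G_0:=C_G(K(G))$ has finite index in $G$. Fix a non-degenerate $H_1\h (G,X)$; the Cayley graph $\Gamma(G,X\sqcup H_1)$ is hyperbolic and the action of $G$ on it is non-elementary (because $H_1$ is non-degenerate). By the standard Ping-Pong machinery, starting from a loxodromic element and using $H_1$-translates, one produces loxodromic elements $a_1,\dots,a_n\in G_0$ with pairwise independent fixed point pairs on the Gromov boundary. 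For $N$ sufficiently large the powers $b_i:=a_i^N$ satisfy a Ping-Pong configuration, so $F:=\langle b_1,\dots,b_n\rangle\cong F_n$. Since $F\leq G_0$ centralizes $K(G)$, and $F\cap K(G)$ is a finite subgroup of the torsion-free group $F$, it is trivial; therefore $F\cdot K(G)\cong F\times K(G)\cong F_n\times K(G)$.

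\emph{Hyperbolic embedding and main obstacle.} It remains to verify that $F\cdot K(G)\h G$, which I would do via Theorem \ref{geom-sep-intr} applied to a suitable action of $G$ on a hyperbolic space (built from $\Gamma(G,X\sqcup H_1)$, possibly after coning off along $H_1$-cosets and enlarging the relative generating set). For $N$ large enough the action of $F\cdot K(G)$ is proper (since $K(G)$ is finite and $F$ acts freely and loxodromically) and orbits are quasiconvex (Ping-Pong quasiconvexity for $F$, preserved under multiplication by the finite group $K(G)$). The non-trivial step is geometric separability in the sense of Definition \ref{GeomSep_mainres}, and this is where I expect the main difficulty to lie: one must exploit the WPD/acylindricity-type control inherited from $H_1\h (G,X)$ together with the Ping-Pong configuration of the $b_i$ to show that long overlaps between a coset $g(F\cdot K(G))$ and $F\cdot K(G)$ force matching Ping-Pong data, and hence force $g\in F\cdot K(G)$. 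The delicate engineering is to choose $N$, the $a_i$, and the auxiliary hyperbolic structure simultaneously large or strong enough that all three conditions of Theorem \ref{geom-sep-intr} hold at once.
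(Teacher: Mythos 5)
Your argument here is correct and takes a somewhat different, slightly more direct route than the paper. The paper defines $K(G)=\bigcap_{g\in \mathcal L_{WPD}} E(g)$ and proves finiteness, normality, and maximality from that description; you instead prove directly the lemma that every finite normal subgroup $N$ is contained in every infinite hyperbolically embedded $H$ (via the observation that $C_H(n)$ is infinite but lies in $H\cap nHn^{-1}$, contradicting almost malnormality if $n\notin H$), then bound all finite normal subgroups by $H_0\cap H_0^g$ for $g\notin H_0$. Both routes are valid; yours avoids introducing the WPD machinery for these two parts, while the paper's definition of $K(G)$ as an intersection of $E(g)$'s is the form it actually reuses in the later lemmas, so the two approaches reflect different levels of preparation for part (c).

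\textbf{Part (c).} Here you have a genuine gap, and you flag it yourself: the geometric separability of $F\cdot K(G)$ is never established, and your sketched plan (generic Ping-Pong on $\Gamma(G,X\sqcup H_1)$ inside $G_0=C_G(K(G))$, then ``exploit WPD/acylindricity-type control'') is not a proof. Ping-Pong gives freeness, properness, and quasiconvexity, but it does not by itself control what happens when a coset $g\,(F\cdot K(G))$ has a long overlap with $F\cdot K(G)$: you need to be able to conclude that the ``error'' element produced by the overlap actually lies in $F\cdot K(G)$, and in particular that it commutes appropriately with your free generators. The paper solves this by not using a generic Ping-Pong configuration. It first proves (Lemmas on $\mathcal L^+_{WPD}$ and on the intersection $E(h_1)\cap E(h_2)=K(G)$) that one can manufacture arbitrarily many pairwise non-commensurable elementary subgroups $E_i\cong\langle g_i\rangle\times K(G)$ that are \emph{jointly} hyperbolically embedded, and then takes the free generators to be the specific products $x=g_1^ng_2^ng_3^n$, $y=g_4^ng_5^ng_6^n$ (and analogously for higher rank). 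Six distinct $E_i$'s, three per generator, is not an accident: the component-matching argument (Lemma~\ref{w}) that verifies geometric separability needs to align consecutive components of two nearby paths and identify the connecting elements as lying in some $E_i\cap E_j$, hence in $K(G)$, and that only works when adjacent letters come from different $E_i$'s and when a decomposition property (the condition the paper calls (**)) holds; the paper explicitly notes that the analogous construction with only two $E_i$'s per generator would fail. Your plan has none of this structure, and in particular gives no mechanism for showing that the element $g$ witnessing a long overlap splits as an element of $\langle b_1,\dots,b_n\rangle$ times an element of $K(G)$. So while the high-level outline (free group centralizing $K(G)$, verify the three conditions of the separability criterion) is on the right track, the ``delicate engineering'' you defer is precisely the substance of the proof, and the paper's approach shows it requires replacing generic Ping-Pong elements with a carefully chosen algebraic configuration of elementary \he subgroups.
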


Given groups with hyperbolically embedded subgroups, we can combine them using amalgamated products and HNN-extensions. In Section \ref{sec:ex} we discuss generalizations of some combination theorems previously established for relatively hyperbolic groups by Dahmani \cite{Dah}. Here we state our results in a simplified form and refer to Theorem \ref{HNN} and Theorem \ref{Am} for the full generality. The first part of the theorem requires the general definition of a hyperbolically embedded collection of subgroups, which we do not discuss here (see Definition \ref{hes}).

\begin{thm}
\begin{enumerate}
\item[(a)]
Let $G$ be a group, $\{H, K\}$ a hyperbolically embedded collection of subgroups, $\iota :K\to H$ a monomorphism. Then $H$ is hyperbolically embedded in the HNN--extension
\begin{equation}\label{HNN-pres}
 \langle G,t\; |\; t^{-1}kt=\iota (k),\; k\in K\rangle .
\end{equation}
\item[(b)] Let $H$ and $K$ be hyperbolically embedded isomorphic subgroups of groups $A$ and $B$, respectively. Then $H=K$ is hyperbolically embedded in the amalgamated product $A\ast _{H=K}B$.
\end{enumerate}
\end{thm}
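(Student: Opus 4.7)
The plan is to apply the isoperimetric characterization of hyperbolic embedding (Theorem~\ref{ipchar-intr}) in its natural collection form: the hypothesis provides a subset $X \subseteq G$ and a strongly bounded relative presentation $G = \langle X, H, K \mid \mathcal{R}\rangle$ with linear relative isoperimetric function, and in each part I aim to construct a strongly bounded relative presentation of the combined group with respect to $H$ alone, still with linear relative isoperimetric function, whereupon Theorem~\ref{ipchar-intr} yields the desired hyperbolic embedding.

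For part (a), let $G^* = \langle G, t \mid t^{-1}kt = \iota(k),\ k \in K\rangle$ be the HNN-extension and take $X^* = X \cup \{t\}$. I would convert the natural presentation of $G^*$ in alphabet $X \cup H \cup K \cup \{t\}$ with relators $\mathcal{R} \cup \{t^{-1}kt \iota(k)^{-1}\}_{k\in K}$ into a relative presentation $G^* = \langle X^*, H \mid \mathcal{R}'\rangle$ via the Tietze substitution $k \mapsto t \iota(k) t^{-1}$, yielding relators $\mathcal{R}'$ of length at most triple those of $\mathcal{R}$, using only the (finite) set of $H$-letters appearing in $\mathcal{R}$ together with the $\iota$-images of the (finite) set of $K$-letters in $\mathcal{R}$. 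Strong boundedness is preserved. To bound the relative isoperimetric function linearly, given a null-homotopic word $W$ of length $n$ over $X^* \sqcup H$, I would apply Britton's lemma in the \emph{original} HNN presentation: iteratively locate a $t$-pinch $t^{\pm 1} u t^{\mp 1}$ whose interior $u$ represents an element of $K$ or $\iota(K)$, reduce $u$ to a single letter via $O(|u|)$ applications of $\mathcal{R}$ (using the linear isoperimetric function of $G$), and apply one HNN relator. Translating this reduction back via the Tietze substitution, the $\mathcal{R}$-applications become $\mathcal{R}'$-applications (up to a constant factor) while the HNN-relator applications become trivial, giving an $O(n)$ decomposition over $\mathcal{R}'$.

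For part (b), given strongly bounded relative presentations $A = \langle X_A, H \mid \mathcal{R}_A\rangle$ and $B = \langle X_B, K \mid \mathcal{R}_B\rangle$ with linear relative isoperimetric functions (supplied by $H \hookrightarrow_h (A, X_A)$ and $K \hookrightarrow_h (B, X_B)$), identifying $H$ with $K$ yields a strongly bounded relative presentation $A *_{H=K} B = \langle X_A \cup X_B, H \mid \mathcal{R}_A \cup \mathcal{R}_B\rangle$. Linearity of the combined relative isoperimetric function follows from the amalgam normal form: a null-homotopic word decomposes into alternating $A$- and $B$-syllables, each equal to an element of $H$ and each resolvable linearly by the corresponding isoperimetric function, and the sum is linear in the total word length.

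The principal obstacle is the linear accounting in the Britton reduction in part (a): one must show that the total interior length of the $t$-pinches encountered throughout the reduction is $O(n)$ rather than $O(n^2)$, since processing a pinch could in principle create longer pinches downstream. This is the analogue, in our setting, of the standard corridor-area analysis for HNN-extensions of hyperbolic groups, and it relies essentially on strong boundedness of the original $\{H,K\}$-relative presentation of $G$ together with linearity of its relative isoperimetric function.
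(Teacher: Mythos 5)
Your plan—construct a strongly bounded relative presentation and invoke the isoperimetric characterization (Theorem \ref{ipchar}/\ref{ipchar-intr})—is exactly the paper's approach, and the Tietze substitution $k\mapsto t\,\iota(k)\,t^{-1}$ plus the observation that strong boundedness is preserved is correct. But the proof is not complete, and the place it stalls is precisely the place you flag: the quantitative accounting showing that the relative isoperimetric function of the new presentation is linear. Acknowledging that the Britton/pinch analysis ``could in principle'' give a superlinear bound does not discharge the obligation; one must actually prove that it does not. The paper handles this by referring to the proof of Theorem 1.2 of \cite{Osi06c}, which produces an explicit bound of the form $C_1\overline\gamma\circ\overline\gamma(C_2 n)+C_3 n$ for the relative isoperimetric function of the HNN-extension in terms of the original relative isoperimetric function $\gamma$, where $\overline\gamma(n)=\max_i\max_{a_1+\dots+a_i=n}(\gamma(a_1)+\dots+\gamma(a_i))$. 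When $\gamma$ is linear, $\overline\gamma$ and hence $\overline\gamma\circ\overline\gamma$ are linear, so the bound is linear. Your proposal would need to redo this calculation (replacing Lemma 2.1 of \cite{Osi06c} with Lemma \ref{omega} of this paper, as the paper indicates), and this is the nontrivial step you have left out.

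Two further discrepancies with the paper. First, the paper's Theorem \ref{HNN} includes the hypothesis that $K$ is finitely generated; the proof routes through the relative generating set $X\cup Y\cup\{t\}$ with $Y$ a finite generating set of $K$, and then removes $Y$ by Corollary \ref{he-indep} (finite symmetric difference preserves hyperbolic embedding). Your proposal silently works with $X\cup\{t\}$ from the start, which is not what \cite{Osi06c} does; if you intend to bypass $Y$ by direct substitution, you would need to rework the isoperimetric estimate accordingly. Second, for part (b) your direct amalgam normal-form argument has exactly the same unaddressed accounting issue (reducing a syllable to an $H$-letter can lengthen and merge neighboring syllables, so ``the sum is linear'' does not follow immediately), and this time you do not flag it. The paper avoids this entirely: it realizes $A*_{H=K}B$ as a retract of an HNN-extension of $A*B$ (the ``retraction trick''), applies part (a), and then uses Lemma \ref{retr} (hyperbolic embedding passes to retracts) together with Proposition \ref{heconj} (conjugation invariance). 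That route requires no fresh area estimate for the amalgam and is the argument you should adopt for (b).
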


Finally we note that many non-trivial examples of hyperbolically embedded subgroups can be constructed via group theoretic Dehn filling discussed in the next section.

\subsection{Group theoretic Dehn filling}\label{subsec:DF}

Roughly speaking, Dehn surgery on a 3-dimensional manifold consists of cutting of a solid torus from the manifold (which may be thought of as ``drilling" along an embedded knot) and then gluing it back in a different way. The study of these ``elementary transformations" is partially motivated by the  Lickorish-Wallace theorem,
which states that every closed orientable connected 3-manifold can be obtained by
performing finitely many surgeries on the $3$-dimensional sphere.

\label{i-dfgeom} The second part of the surgery, called {\it Dehn filling}, can be formalized as follows. Let $M$ be a compact orientable 3--manifold with toric boundary. Topologically
distinct ways to attach a solid torus to $\partial M$ are parameterized
by free homotopy classes of unoriented essential simple closed curves in $\partial M$, called {\it slopes}.
For a slope $\sigma $, the corresponding   Dehn filling  $M(\sigma )$ of $M$ is the manifold
obtained from $M$ by attaching a solid torus $\mathbb D^2\times
\mathbb S^1$ to $\partial M$ so that the meridian
$\partial \mathbb D^2$ goes to a simple closed curve of the slope
$\sigma $.

The fundamental theorem of Thurston \cite[Theorem 1.6]{Th} (see \cite{HK,PP} for proofs) asserts that if  $M\setminus\partial M$ admits a complete finite volume hyperbolic
structure, then the resulting closed manifold $M(\sigma )$ is
hyperbolic provided $\sigma $ is  not in a finite set of exceptional slopes.
Algebraically this means that for all but finitely many primitive elements $x\in \pi _1(\partial M)\le \pi _1(M)$, the quotient group of $\pi_1(M)$ by the normal closure of $x$ (which is isomorphic to $\pi _1(M(\sigma))$ by the Seifert-van Kampen theorem) is non-elementary hyperbolic.
Modulo the geometrization conjecture proved by Perelman,
this algebraic statement is equivalent to the Thurston theorem.

Dehn filling can be generalized in the context of abstract group theory as follows. Let $G$ be a group and let $H$ be a subgroup of $G$. One can think of $G$ and $H$ as the analogues of $\pi_1(M)$ and $\pi _1(\partial M)$, respectively. Instead of considering just one element $x\in H$, let us consider a normal subgroup $N\lhd H$. By $\ll N\rr ^G$ we denote its normal closure in $G$. Associated to this data is the quotient group $G/\ll N\rr ^G$, which we call the \emph{group theoretic Dehn filling of $G$}.\label{i-df1}

\begin{thm}[Osin \cite{Osi07}] Suppose that a group $G$ is hyperbolic relative to a subgroup $H$. Then for any subgroup $N\lhd H$ avoiding a fixed finite set of nontrivial elements, the natural map from $H/N$ to $G/\ll N\rr ^G$ is injective and $G/\ll N\rr ^G$ is hyperbolic relative to $H/N$. In particular, if $H/N$ is hyperbolic, then so is $G/\ll N\rr ^G$.
\end{thm}

This theorem was proved in \cite{Osi07}; an independent proof for finitely generated torsion free relatively hyperbolic groups was later given in \cite{Gr_Ma}. Since the fundamental group of a complete finite volume hyperbolic manifold $M$ with toric boundary is hyperbolic relative to the subgroup $\pi_1 (\partial M)$ \cite{F} (which does embed in $\pi _1(M)$ in this case), the above result can be thought of as a generalization of the Thurston theorem.

In this paper we further generalize these results to groups with hyperbolically embedded subgroups. We also study the kernel of the filling and obtain some other results which are new even for relatively hyperbolic groups. We state a simplified version of our theorem here and refer to Theorem \ref{CEP} for a more general and stronger version.

\begin{thm}\label{CEP-simple}
Let $G$ be a group, $H$ a subgroup of $G$. Suppose that $H\h (G,X)$ for some $X\subseteq G$. Then there exists a finite subset $\mathcal F$ of nontrivial elements of $H$ such that for every subgroup $N\lhd H$ that does not contain elements from $\mathcal F$, the following hold.
\begin{enumerate}
\item[(a)] The natural map from $H /N$ to $G/\ll N\rr ^G$ is injective (equivalently, $H\cap \ll N\rr ^G=N$).

\item[(b)] $H/N\h (G/\ll N\rr ^G, \bar X)$, where $\bar X$ is the natural image of $X$ in $G/\ll N\rr ^G$.

\item[(c)] Every element of $\ll N\rr ^G$ is either conjugate to an element of $N$ or acts loxodromically on $\Gamma (G, X\sqcup H)$. Moreover, translation numbers of loxodromic elements of $\ll N\rr ^G$ are uniformly bounded away from zero.

\item[(d)] $\ll N\rr ^G=\ast_{t\in T } N^t $ for some subset $T\subseteq G$.
\end{enumerate}
\end{thm}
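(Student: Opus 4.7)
The plan is to derive the theorem from the very rotating families machinery via the bridge provided by Theorem~\ref{he-vrf-intr}. Given $H\hookrightarrow_h (G,X)$, fix a sufficiently large $\alpha$ (at least the $\alpha\ge 100$ required by Theorem~\ref{free-intr} and whatever is needed for Proposition~\ref{prop;intro_quotient}). By Theorem~\ref{he-vrf-intr} there is a finite $\mathcal F\subseteq H\setminus\{1\}$ such that any $N\lhd H$ disjoint from $\mathcal F$ becomes an $\alpha$-rotating subgroup of $G$, via an action of $G$ on a $\delta$-hyperbolic space $\X$. In this setup $H$ is the rotation subgroup at a distinguished apex $c_0\in \X$, and the collection of $G$-conjugates of $N$ gives the very rotating family. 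This $\mathcal F$ is the finite set we take in the theorem, and all four statements will be read off from the rotating family structure on $\X$ together with Proposition~\ref{prop;intro_quotient}.

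Parts (d), (c), and (a) are then rather direct. Part (d) is precisely Theorem~\ref{free-intr}(a) applied to $N$. For part (c), Theorem~\ref{free-intr}(b) says every $g\in \ll N\rr^G$ is either conjugate into $N$ or loxodromic on $\X$; since $\X$ is built from $\Gamma(G,X\sqcup H)$ by a cone-off/apex construction, loxodromicity on $\X$ implies loxodromicity on $\Gamma(G,X\sqcup H)$, and the uniform lower bound on translation length is a standard consequence of the $\alpha\delta$-separation of apices combined with the very rotating condition (any such $g$ must traverse a uniformly bounded number of apex neighborhoods per unit translation). For part (a), take a nontrivial $h\in H\cap\ll N\rr^G$. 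Since $h\in H$ fixes $c_0$, it is elliptic on $\X$, so by Theorem~\ref{free-intr}(b) it is conjugate into $N$, say $h=gng^{-1}$. Because $h$ is nontrivial and fixes $c_0$, a windmill/free-product normal form argument (using the uniqueness of reduced expressions in $\ast_{t\in T}N^t$ and the very rotating condition that forces rotation subgroups at distinct apices to interact only by pushing points through their apex) forces $g$ to stabilize $c_0$, so $g\in H$, and then $h=gng^{-1}\in N$ because $N\lhd H$.

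Part (b) is the main work. We need to verify the three conditions of hyperbolic embedding for $H/N$ in $(G/\ll N\rr^G,\bar X)$. Generation of $\bar G=G/\ll N\rr^G$ by $\bar X\cup H/N$ is immediate. Hyperbolicity of the Cayley graph $\Gamma(\bar G,\bar X\sqcup H/N)$ will be deduced from Proposition~\ref{prop;intro_quotient}(a), which gives hyperbolicity of $\X/\ll N\rr^G$, together with a $G$-equivariant comparison between $\X/\ll N\rr^G$ and (an apex-augmented version of) $\Gamma(\bar G,\bar X\sqcup H/N)$. The delicate point is the local finiteness of the induced admissible metric $\widehat{\d}_{\bar G}$ on $H/N$. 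To control it, I would lift an admissible path in $\Gamma(\bar G,\bar X\sqcup H/N)$ between $\bar 1$ and $\bar h\in H/N$ to a path in $\Gamma(G,X\sqcup H)$, and use the very rotating condition together with Proposition~\ref{prop;intro_quotient}(b) (the quotient is a local isometry away from apices) to replace each ``excursion through an apex'' by a controlled amount of $\Gamma_H$-motion. This converts an admissible path of length $R$ downstairs into an admissible-up-to-$H$-letters path upstairs of bounded length ending within $H$; local finiteness of $\widehat{\d}$ on $H$ then caps the number of possible upstairs endpoints, hence the number of downstairs endpoints in the $R$-ball.

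The main obstacle, as anticipated, is this last step: bounding the number of ways an admissible path in the quotient Cayley graph can lift to the original one. Essentially one needs to show that, up to the action of $\ll N\rr^G$, each admissible path downstairs has only finitely many ``essentially distinct'' lifts whose endpoints lie in $H$. This is precisely where the very rotating hypothesis (with $\alpha$ taken large enough) enters quantitatively: it forces any shortcut created by the quotient to factor through a rotation at an apex, and the number of such rotations along a bounded-length path is bounded. Once this lifting-control lemma is in place, parts (a)--(d) assemble into the statement.
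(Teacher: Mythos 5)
Your proposal for parts (c) and (d) matches the paper's own strategy: both are read off from the rotating-family machinery via Theorem~\ref{he-vrf-intr}, Theorem~\ref{free-intr}, and Proposition~\ref{prop;intro_quotient}. Part (a) you prove by a genuinely different route. The paper deduces (a) from van Kampen diagram surgery (Lemma~\ref{CEP1}(c)), whereas you use the geometry of the very rotating family. Your route is correct and can be stated more cleanly than the ``windmill/free-product normal form'' language suggests: if $h\in H\cap\ll N\rr^G$ is nontrivial then $h$ fixes the apex $c_0$ corresponding to $H$, so by the Greendlinger Lemma~\ref{lem;green} applied to $p_0=c_0$ (the shortening alternative is impossible when $hp_0=p_0$) we get $h\in G_c$ with $\d(c_0,c)\le 20\delta$; separation of apices forces $c=c_0$, hence $h\in G_{c_0}=N$. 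Alternatively, from $h=gng^{-1}$ one notes $h\in G_{gc_0}$, and Corollary~\ref{cor_free} (free action off the $20\delta$-ball around the apex) together with $hc_0=c_0$ and separation forces $gc_0=c_0$, i.e., $g\in H$. Either version is a valid substitute for the paper's diagram argument.

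Part (b) is where there is a genuine gap, and you flag it yourself. The difficulty is precisely the local finiteness of $\widehat{\d}_{\bar G}$ on $H/N$, which is the content-bearing half of the hyperbolic-embedding condition (hyperbolicity of $\Gamma(\bar G,\bar X\sqcup H/N)$ does follow, via a quasi-isometry with $\X/\ll N\rr^G$ and Proposition~\ref{prop;intro_quotient}(a)). Your lifting sketch does not yet close: an admissible path of length $R$ in $\Gamma(\bar G,\bar X\sqcup H/N)$ lifts to a path in $\Gamma(G,X\sqcup H)$ of length $R$, but its terminal vertex lands only in the coset $h\ll N\rr^G$, not in $H$; there is no a priori control on how far (in $\widehat{\d}$) that terminal vertex is from $hN\subset H$. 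Inserting $\Gamma_H$-letters to correct the endpoint destroys admissibility, so one does not directly obtain a bound $\widehat{\d}(1,h)\le R'$ for some lift $h$ of $\bar h$, which is what would let you invoke local finiteness upstairs. The ``local isometry away from apices'' statement is also of limited use here because an admissible path downstairs can pass arbitrarily close to images of apices, precisely where the quotient map fails to be a local isometry and lifts can diverge. The paper avoids this issue entirely by proving (b) via the isoperimetric characterization: Lemma~\ref{CEP2} shows by diagram surgery (Proposition~\ref{DS}, Lemma~\ref{CEP1}) that $\overline{\mathrm{Area}}^{\mathrm{rel}}(W)\le 3C\|W\|$ for the quotient presentation~(\ref{GNrel}), and since the strong boundedness of the relative presentation of $G$ is inherited, Theorem~\ref{ipchar} yields $H/N\hookrightarrow_h(\bar G,\bar X)$. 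You would need either to reproduce that diagram argument, or to supply a precise and nontrivial geometric lemma replacing it (bounding, for each $\bar h$ in the $R$-ball of $\widehat{\d}_{\bar G}$, some lift $h\in H$ with $\widehat{\d}(1,h)$ bounded in terms of $R$); as written, the step is a genuinely missing ingredient rather than a routine detail.
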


The proof of parts (a) and (b) makes use of van Kampen diagrams and Theorem \ref{ipchar-intr}, while parts (c) and (d) are proved using rotating families, namely Theorem \ref{he-vrf-intr} and Theorem \ref{free-intr}. Note that parts (c) and (d) of this theorem (as well as some other parts of Theorem \ref{CEP}) are new even for relatively hyperbolic groups.

\subsection{Applications}\label{subsec:appl}
We start with some results about mapping class groups. The following question is Problem 2.12(A) in  Kirby's list. It was asked in the early '80s and is often attributed to Penner,  Long, and  McCarthy. It is also recorded by  Ivanov \cite[Problems 3]{Iv}, and  Farb refers to it in \cite[\S 2.4]{F_book} as a ``well known open question". Recall that a subgroup of a mapping class group is called \emph{purely pseudo-Anosov}, if all its non-trivial elements are pseudo-Anosov.

\begin{prob}
Let $\Sigma $ be a closed orientable surface. Does $\MCG$ contain a non-trivial purely pseudo-Anosov normal subgroup?
\end{prob}

The abundance of finitely generated (non-normal) purely pseudo-Anosov free subgroups of $\MCG$ is well known, and follows from an easy ping-pong argument. However, this method does not elucidate the case of infinitely generated normal subgroups.  For a surface of genus $2$ the problem was answered by  Whittlesey \cite{Whi} who proposed an example based on Brunnian
braids (see also the study of Lee and Song of the kernel of a variation of the Burau representation \cite{LeeSong}). Unfortunately methods of \cite{Whi, LeeSong} do not generalize to closed surfaces of higher genus.

Another question was probably first asked by Ivanov (see \cite[Problem 11]{Iv}). Farb also recorded this question in \cite[Problem 2.9]{F_book}, and qualified it as a ``basic test question" for understanding normal subgroups
of $\MCG$.
\begin{prob}
Let $\Sigma $ be a closed orientable surface. Is the normal closure of a certain nontrivial power of a pseudo-Anosov element of $\MCG$ free?
\end{prob}

We answer both questions positively. In fact, our approach can be used in more general settings. Namely, we derive the following from Proposition \ref{cor-acyl-intr} and Theorem \ref{free-intr}. Part (a) can be alternatively derived from Corollary \ref{wpd-he-intr} and Theorem \ref{CEP-simple}.

\begin{thm}[Theorem \ref{wpd-free}]\label{wpd-free-intr}
Let $G$ be a group acting on a hyperbolic space $\X$.
\begin{enumerate}
\item[(a)] For every loxodromic WPD element $g\in G$, there exists $n\in \mathbb N$ such that the normal closure $\ll g^n\rr$ in $G$ is free.
\item[(b)] If the action of $G$ is acylindrical, then there exists $n\in \mathbb N$ such that for every loxodromic element $g\in G$, the normal closure $\ll g^n\rr$ in $G$ is free.
\end{enumerate}
Moreover, in both cases every non-trivial element of $\ll g^n\rr $ is loxodromic with respect to the action on $\X$.
\end{thm}

This result can be viewed as a generalization of Delzant's theorem \cite{Del_Duke} stating that for a hyperbolic group $G$ and every element of infinite order $g\in G$, there exists $n\in \mathbb N$ such that $\ll g^n\rr $ is free (see also \cite{Chay} for a clarification of certain aspects of Delzant's proof).

Applying  Theorem \ref{wpd-free-intr} to mapping class groups acting on the corresponding curve complexes, we obtain:

\begin{thm}[Theorem \ref{theo;MCG}]
Let $\Sigma$ be a (possibly punctured) closed orientable surface. Then there exists $n\in \mathbb N$ such that for any pseudo-Anosov element  $a\in \MCG$, the normal closure of $a^{n}$ is free and purely pseudo-Anosov.
\end{thm}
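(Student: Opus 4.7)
The plan is to combine the rotating-family technology with the Masur--Minsky characterization of pseudo-Anosovs as the loxodromic elements on the curve complex. Let $\mathcal{C}(\Sigma)$ be the curve complex, which is $\delta$-hyperbolic for some $\delta>0$, and on which $\MCG$ acts. Recall that a nontrivial element $f \in \MCG$ acts loxodromically on $\mathcal{C}(\Sigma)$ if and only if $f$ is pseudo-Anosov; reducible and finite-order elements have bounded orbits on $\mathcal{C}(\Sigma)$.

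First I would fix $\alpha = 100$ and apply part (b) of Theorem~\ref{ex-MCG-intr} to produce a single integer $n \in \mathbb{N}$ (depending only on $\Sigma$) such that for \emph{every} pseudo-Anosov $a \in \MCG$, the cyclic subgroup $\langle a^n\rangle$ is an $\alpha$-rotating subgroup of $\MCG$ associated to its action on the (rescaled) curve complex. The uniformity in $a$ is precisely what part (b) gives us, and this is the decisive input allowing $n$ to be chosen independently of $a$.

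Next, with $\langle a^n\rangle$ an $\alpha$-rotating subgroup for $\alpha \geq 100$, I would invoke Theorem~\ref{free-intr}. Part (a) of that theorem supplies a subset $T \subseteq \MCG$ with
\[
\ll a^n \rr^{\MCG} \;=\; \ast_{t \in T}\, \langle a^n\rangle^{t},
\]
a free product of infinite cyclic groups, which is therefore a free group. This immediately gives the freeness conclusion.

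Finally, to verify that $\ll a^n \rr^{\MCG}$ is purely pseudo-Anosov, I would use part (b) of Theorem~\ref{free-intr}: every nontrivial element $h \in \ll a^n \rr^{\MCG}$ is either conjugate to a nontrivial element of $\langle a^n\rangle$, or is loxodromic for the action on $\mathcal{C}(\Sigma)$. In the first case, $h$ is conjugate to a nonzero power of the pseudo-Anosov $a$, so $h$ itself is pseudo-Anosov. In the second case, the Masur--Minsky dichotomy on $\mathcal{C}(\Sigma)$ forces $h$ to be pseudo-Anosov as well. Hence every nontrivial element of $\ll a^n\rr^{\MCG}$ is pseudo-Anosov, completing the proof. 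The main conceptual step is the uniform choice of $n$ in the first paragraph; everything else is a clean application of Theorem~\ref{free-intr} together with the well-known loxodromic-equals-pseudo-Anosov dichotomy on the curve complex.
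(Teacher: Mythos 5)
Your proposal is correct and follows essentially the same route as the paper: uniform $n$ from Theorem~\ref{ex-MCG} (b), then freeness and the loxodromic/conjugate dichotomy from Theorem~\ref{theo;app_wind}. One small imprecision: Theorem~\ref{theo;app_wind} gives loxodromicity on the cone-off space $\dot X$ supporting the rotating family, not directly on $\mathcal{C}(\Sigma)$, so you should insert the (easy, since coning-off only shortens distances) observation that loxodromic on $\dot X$ implies loxodromic on the curve complex before invoking the Masur--Minsky dichotomy.
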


For $Out(F_n)$, we cannot achieve such a uniform result for the reason explained in Remark \ref{rem:outFn}. Newertheless, we obtain the following theorem by applying Theorem \ref{wpd-free-intr} to the action of $Out(F_n)$ on the free factor complex.

\begin{thm}[Theorem \ref{theo;Out}]
Let $f$ be an iwip element of $Out (F_n)$. Then there exists $n\in \mathbb N$ such that the normal closure of $f^{n}$ is free and purely iwip.
\end{thm}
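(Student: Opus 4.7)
The plan is to mimic, almost verbatim, the argument used for the mapping class group case (Theorem \ref{theo;MCG}), combining the hyperbolic embedding $E(f)\h Out(F_n)$ with the free-product structure of normal closures generated by large rotating families. Since all of the real work has already been done when the statement is reached, the proof is essentially an assembly of Theorem \ref{outfn} and Theorem \ref{free-intr}(a).

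First I invoke Theorem \ref{outfn}, established earlier in the paper from the Bestvina--Feighn hyperbolic complex on which $f$ acts loxodromically and satisfies WPD. It yields a unique maximal virtually cyclic subgroup $E(f)$ containing $f$, with $E(f)\h Out(F_n)$, and moreover the second assertion of that theorem says that for every $\alpha>0$ there exists $N\in\mathbb N$ such that $\langle f^{N}\rangle$ is $\alpha$-rotating in $Out(F_n)$. I apply this with $\alpha=100$, which is the separation constant required by Theorem \ref{free-intr}, and obtain a power $N$ (depending only on the universal constant $100$) such that $\langle f^{N}\rangle$ is a $100$-rotating subgroup of $Out(F_n)$.

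Next I apply Theorem \ref{free-intr}(a) with $G=Out(F_n)$ and $H=\langle f^{N}\rangle$: there is a subset $T\subseteq Out(F_n)$ with
$$
\ll f^{N}\rr^{Out(F_n)} \;=\; \mathop{\text{\Large $\ast$}}_{t\in T}\,\langle f^{N}\rangle^{t}.
$$
Since $f$ is iwip, it is of infinite order, so each conjugate $\langle f^{N}\rangle^{t}$ is an infinite cyclic group. Hence the normal closure is a free product of infinite cyclic groups, which is a free group, and the theorem holds with this value of $N$ in place of the $n$ of the statement.

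I do not expect a genuine obstacle at this stage, since the two substantive inputs, namely the hyperbolic embedding $E(f)\h Out(F_n)$ and the windmill-based free-product decomposition of Theorem \ref{free-intr}, have already been proved in Sections 5 and 6. The only thing to be careful about is the compatibility of constants: one needs $\alpha\ge 100$ in Theorem \ref{free-intr}, and Theorem \ref{outfn} delivers an $\alpha$-rotating cyclic subgroup for any prescribed $\alpha$, so the constants match and $N$ can be chosen uniformly in $f$ (depending only on the action on the Bestvina--Feighn complex used to produce the hyperbolic embedding).
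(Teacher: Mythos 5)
Your proposal is correct and follows essentially the same route as the paper's: Theorem \ref{outfn} supplies a $100$-rotating cyclic subgroup $\langle f^{N}\rangle$, and Theorem \ref{theo;app_wind} (Theorem \ref{free-intr}(a)) then gives the free product decomposition of $\ll f^{N}\rr$ into infinite cyclic conjugates, so the normal closure is free. One small inaccuracy at the end: your parenthetical claim that $N$ can be chosen \emph{uniformly in $f$} is unwarranted, since the Bestvina--Feighn complex (and hence the constants feeding into the rotating-family construction) depends on the chosen iwip, a point the paper itself flags by noting that the construction is ``not uniform'' in contrast with the curve complex used for $\MCG$; this is harmless here because the statement only asserts existence of a power for each fixed $f$, but it would be wrong to read uniformity into Theorem \ref{outfn}.
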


Using techniques developed in our paper it is not hard to obtain many general results about groups with hyperbolically embedded subgroups. We prove just some of them to illustrate our methods and leave others for future papers. We start with a theorem, which shows that a group containing a non-degenerate hyperbolically embedded subgroup is ``large" in many senses. Recall that the class of groups with non-degenerate hyperbolically embedded subgroups includes non-elementary hyperbolic and relatively hyperbolic groups with proper peripheral subgroups, all but finitely many mapping class groups, $Out(F_n)$ for $n\ge 2$, the Cremona group, and many other examples.

\begin{thm}[Theorem \ref{large}]\label{large-intr}
Suppose that a group $G$ contains a non-degenerate hyperbolically embedded subgroup. Then the following hold.
\begin{enumerate}
\item[(a)] The group $G$ is SQ-universal. Moreover, for every finitely generated group $S$ there is a quotient group $Q$ of $G$ such that $S\h Q$.
\item[(b)] ${\rm dim\, }H_b^2 (G, \mathbb R)=\infty $\footnote[1]{After the first version of this paper was completed, M. Hull and the third author proved in \cite{HO} a more general extension theorem for quasi-cocycles, which also implies that ${\rm dim\, }H_b^2 (G, \ell^p(G))=\infty $ for any $p\in [1, +\infty)$. Later Bestvina, Bromberg and Fujiwara \cite{BBF13} proved this result (in different terms) even for more general coefficients. For the relation between these and some older results, see the discussion before Theorem 8.3 in \cite{Osi13}. In particular, we have ${\rm dim\, }H_b^2 (G, \ell^2(G))=\infty $, which allows one to apply orbit equivalence and measure equivalence rigidity results of Monod an Shalom \cite{MS} to groups with hyperbolically embedded subgroups.} In particular, $G$ is not boundedly generated.
\item[(c)] The elementary theory of $G$ is not superstable.
\end{enumerate}
\end{thm}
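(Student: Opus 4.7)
The plan is to derive all three conclusions from a single structural input: by Theorem~\ref{vf-intr}, $G$ contains a hyperbolically embedded subgroup of the form $F_n\times K(G)$ for every $n\ge 2$, where $K(G)$ is the finite radical. Fix a nonabelian free subgroup $F\cong F_2$ with $F\h (G,X)$ for some $X\subseteq G$; the associated Cayley graph $\Gamma=\Gamma(G,X\sqcup F)$ is then hyperbolic and $G$ acts on it by isometries via left multiplication. Every element of infinite order in $F$ is loxodromic for this action, and the local finiteness of the relative metric $\widehat d$ on $F$ --- condition (c) in the definition of hyperbolic embedding --- is precisely the ingredient needed to verify the weak proper discontinuity condition of Bestvina--Fujiwara for these loxodromic elements. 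This setup is the common platform for all three parts.

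For part (a), given a finitely generated $S$, the goal is a quotient $Q$ of $G$ into which $S$ embeds hyperbolically. First realize $S$ as a quotient $F_n/N$ for some $n$ large enough (using Higman--Neumann--Neumann if $S$ is not already $n$-generated). The crucial point is to choose $N\trianglelefteq F_n$ so that, once $F_n$ is identified with the free factor of some $F_n\times K(G)\h G$, $N$ avoids the finite exceptional set $\mathcal F$ output by the Dehn filling theorem (Theorem~\ref{CEP-simple}). Because free groups have an abundance of normal subgroups simultaneously realizing a prescribed finitely generated quotient and avoiding a prescribed finite set of nontrivial elements (a standard small cancellation manoeuvre), such an $N$ exists. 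Theorem~\ref{CEP-simple}(b) then yields $F_n/N\h G/\ll N\rr^G = Q$; tuning the construction so that $F_n/N\cong S$ gives $S\h Q$. The main subtlety is this joint arrangement of $N$: it must both cut out $S$ on the nose and miss $\mathcal F$.

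Part (b) then follows directly from the Bestvina--Fujiwara quasi-morphism construction applied to the action $G\actson\Gamma$. Since $F\cong F_2$ supplies two independent loxodromic elements, the action is non-elementary; since each loxodromic satisfies WPD by the previous paragraph, Bestvina--Fujiwara produces an infinite linearly independent family of homogeneous quasi-morphisms on $G$. This gives $\dim EH^2_b(G;\mathbb R)=\infty$, hence $\dim H^2_b(G;\mathbb R)=\infty$. The bounded-generation statement is then immediate from the classical fact that for a boundedly generated group the second bounded cohomology with trivial real coefficients is finite-dimensional.

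For part (c), I would exploit the abundance of quotients supplied by part (a) together with the free product decomposition of normal closures provided by Theorem~\ref{free-intr}. The cleanest route is to locate an infinite family of pairwise non-conjugate loxodromic elements $\{g_i\}\subset F$ whose normal closures $\ll g_i\rr^G$ are pairwise incompatible in a controlled way (by Theorem~\ref{free-intr}(a) each such normal closure is a free product of conjugates), and then transcribe this into a first-order formula $\varphi(x;\bar y)$ with parameters drawn from a fixed tuple in $X$ whose instances realize a binary branching tree of $1$-types, contradicting the Shelah tree characterisation of superstability. The main obstacle, and the technical heart of the argument, is passing from the geometric data (the tree-like arrangement of conjugates and normal closures inside the free subgroup) to an honest first-order formula in the language of groups; this follows the template used for non-superstability of the elementary theory of non-abelian free and hyperbolic groups, and relies on detecting the relevant conjugacy and commutation relations in $G$ via definable centralizers.
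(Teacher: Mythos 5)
Your parts (b) and (a) (for bare SQ-universality) broadly follow the paper's strategy, but part (a) for the stronger claim $S\h Q$ rests on a step that is simply false, and your part (c) is a genuinely different route that leaves the hard work undone.

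\textbf{The gap in (a).} You assert that for a prescribed finitely generated group $S$ and a prescribed finite set $\mathcal F\subset F_n\setminus\{1\}$ one can, ``by a standard small cancellation manoeuvre,'' find $N\lhd F_n$ with $F_n/N\cong S$ \emph{and} $N\cap\mathcal F=\emptyset$. This cannot be done in general. Take $S=\mathbb Z/2\mathbb Z$: any $N\lhd F_n$ with $F_n/N\cong\mathbb Z/2$ contains $[F_n,F_n]$ and hence every commutator $[a,b]$ of generators, which are words of length $4$; there is no way to push them outside a bad set by raising $n$ or by small cancellation, because the isomorphism type of the quotient is pinned down. The paper does not try to realize $S$ as $F_n/N$ on the nose. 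Instead it first picks a deep congruence-extension free subgroup $F_n\le F_2$ (with the CEP), then applies the free-product small-cancellation construction (Lemma~\ref{FreeSQ} in the paper) to embed $S$ into a quotient $K$ of the free group that is hyperbolic \emph{relative to} $S$; the kernel of $F_2\to K$ can be pushed arbitrarily deep without changing the fact that $S\h K$. Applying Dehn filling (Theorem~\ref{CEP}) gives $K\h \bar G$, and transitivity of $\h$ (Proposition~\ref{transitive}) yields $S\h\bar G$. The difference is essential: the paper only needs $S$ to sit hyperbolically inside a controllable quotient of the free group, not to coincide with it.

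\textbf{A second issue in (a).} You apply Theorem~\ref{CEP-simple} with $H=F_n\times K(G)$ and $N\le F_n$, obtaining $H/N\cong S\times K(G)\h Q$. From this you cannot conclude $S\h Q$: an infinite $S$ is \emph{not} hyperbolically embedded in $S\times K(G)$ when $K(G)\neq 1$ (the relative metric on $S$ is bounded, by the same computation as in Example~\ref{basic-ex-he}(b)). The paper first replaces $G$ by $G/K(G)$ using Lemma~\ref{quotbyfin} and works in a group with trivial finite radical; you skip this reduction, and without it the argument does not close.

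\textbf{Part (c).} Your proposal (build a binary tree of $1$-types from the combinatorics of normal closures of loxodromic elements and invoke Shelah's characterization of superstability) is a different route from the paper's, which instead applies Baudisch's structure theorem for superstable groups (a subnormal chain with abelian or simple sections), combines it with Lemma~\ref{g-normal} (infinite subnormal subgroups inherit a non-degenerate $\h$-subgroup) and then gets a contradiction with SQ-universality and the free normal subgroup from (a)--(b). Your route could in principle work, but you explicitly defer the step of translating the geometry of normal closures into a first-order formula --- and normal closures are not definable in general, so the translation is not routine; this is precisely the content one would have to supply. As written this is a real gap, whereas the paper's Baudisch argument avoids the definability problem entirely.

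\textbf{Part (b)} is essentially the paper's argument and is fine: non-elementarity plus WPD plus Bestvina--Fujiwara, and the standard fact that bounded generation forces $\dim\widetilde{QH}(G)<\infty$.
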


Recall that a group $G$ is called
{\it SQ-universal} \label{i-SQ} if  every countable group can be
embedded into a quotient of $G$ \cite{Sch}. It is straightforward
to see that any SQ-universal group contains an infinitely
generated free subgroup. Furthermore, since the set of all
finitely generated groups is uncountable and  every single
quotient of $G$ contains at most countably many finitely
generated subgroups,  every SQ-universal group has uncountably
many non-isomorphic quotients.

The first non-trivial example of an SQ-universal group was
provided by Higman, Neumann and Neumann \cite{HNN}, who proved
that the free group of rank $2$ is SQ-universal. Presently many
other classes of groups are known to be SQ-universal:
various HNN-extensions and amalgamated products
\cite{FT,Los,Sas}, groups of deficiency $2$ (it follows from \cite{BP}), most $C(3)\,
\& \, T(6)$-groups \cite{How}, non-elementary hyperbolic groups \cite{Del_Duke,Ols}, and non-elementary groups hyperbolic relative to proper subgroups \cite{AMO}. However our result is new, for instance, for mapping class groups, $Out(F_n)$, the Cremona group and some other classes. The proof is based on Theorem \ref{vf-intr} and part (a) of Theorem \ref{CEP-simple}.

The next notion of ``largeness" comes from model theory. We briefly recall some definitions here and refer to \cite{Mar} for details. An algebraic structure  $M$ for a first order language is called \emph{$\kappa $-stable} for an infinite cardinal $\kappa $, if for every subset $A\subseteq M$ of cardinality $\kappa $ the number of complete types over $A$ has cardinality $\kappa $. Further, $M$ is called \emph{stable}, if it is $\kappa $-stable for some infinite cardinal $\kappa $, and \emph{superstable} if it is $\kappa $-stable for all sufficiently large cardinals $\kappa $. A theory $T$ in some language is called {\it stable} or {\it superstable}, if all models of $T$ have the respective property. The notions of stability and superstability were introduced by Shelah \cite{Sh69}. In \cite{Sh85}, he showed that superstability is  a necessary condition for a countable complete theory to permit a reasonable classification of its models. Thus the absence of superstability may be considered, in a very rough sense, as an indication of logical complexity of the theory. For other results about stable and superstable groups we refer to the survey \cite{Wag}.

Sela \cite{Sel} showed that free groups and, more generally, torsion free hyperbolic groups are stable. On the other hand, non-cyclic free groups are known to be not superstable \cite{P}. More generally, Ould Houcine \cite{Ould} proved that a superstable torsion free hyperbolic group is cyclic. It is also known that a free product of two nontrivial groups is  superstable if and only if both groups have order $2$ \cite{P}. Our theorem can be thought as a generalization of these results.

For the definition and main properties of bounded cohomology we refer to \cite{Mon}. It is known that $H_b^2 (G, \mathbb R)$ vanishes for amenable groups and all irreducible lattices in higher rank semi-simple algebraic groups over local fields. On the other hand, according to Bestvina and Fujiwara \cite{BF}, groups which admit a ``non-elementary" (in a certain precise sense) action on a hyperbolic space have infinite-dimensional space of nontrivial quasi-morphisms $\widetilde{QH} (G)$, which can be identified with the kernel of the canonical map $H_b^2(G, \mathbb R)\to H^2(G, \mathbb R)$. Examples of such groups include non-elementary hyperbolic groups \cite{EF}, mapping class groups of surfaces of higher genus \cite{BFe},  and $Out(F_n)$ for $n\ge 2$ \cite{BFe}. In Section \ref{appl} we show that the action of any group $G$ with a non-degenerate hyperbolically embedded subgroup on the corresponding relative Cayley graph is non-elementary in the sense of \cite{BF}, which implies ${\rm dim\, }H_b^2 (G, \mathbb R)=\infty $.

Recall also that a group $G$ is {\it boundedly generated}, if there are elements $x_1, \ldots , x_n$ of $G$ such that for any $g\in G$ there exist integers $\alpha _1,\ldots , \alpha _n$ satisfying the equality $g=x_1^{\alpha _1}\ldots x_n^{\alpha _n}$. Bounded generation is closely related to the Congruence Subgroup Property of arithmetic groups \cite{Rap}, subgroup growth \cite{LubSeg}, and Kazhdan Property (T) of discrete groups \cite{Sh}. Examples of boundedly generated groups include $SL_n(\mathbb Z)$ for $n\ge 3$ and many other lattices in semi-simple Lie groups of $\mathbb R$-rank at least $2$ \cite{CK,T}. There also exists a finitely presented boundedly generated group which contains all recursively presented groups as subgroups \cite{Osi05}. It is well-known and straightforward to prove that for every boundedly generated group $G$, the space $\widetilde{QH} (G)$ is finite dimensional, which implies the second claim of (c).

We mention one particular application of Theorem \ref{large-intr} to subgroups of mapping class groups. It follows immediately from part (c) of Theorem \ref{ex-MCG-intr} together with the fact a group that has an $SQ$-universal subgroup of finite index or an $SQ$-universal quotient is itself $SQ$-universal.

\begin{cor}[Corollary \ref{coro;HJI}]\label{SQ-sub-MCG}
Let $\Sigma$ be a (possibly punctured) closed orientable surface. Then every subgroup of $\MCG$ is either virtually abelian or $SQ$-universal.
\end{cor}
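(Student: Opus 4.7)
The plan is to combine the structural dichotomy from Theorem \ref{ex-MCG-intr}(c) with the $SQ$-universality statement in Theorem \ref{large-intr}(a). Fix an arbitrary subgroup $H\le \MCG$. By Theorem \ref{ex-MCG-intr}(c), either $H$ is virtually abelian---in which case the corollary holds trivially---or $H$ has a finite-index subgroup $H_0$ admitting a surjection $\pi\colon H_0\tto Q$ onto a group $Q$ that contains a non-degenerate hyperbolically embedded subgroup. I assume henceforth that we are in the second alternative.

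In that case Theorem \ref{large-intr}(a) gives that $Q$ is $SQ$-universal, and I would promote this property upward to $H$ in two steps. First, $SQ$-universality passes through any surjection $\pi\colon H_0\tto Q$: for each countable group $S$, fix a normal subgroup $N\lhd Q$ with $S\hookrightarrow Q/N$; then $S$ embeds in $H_0/\pi^{-1}(N)\cong Q/N$, which is a quotient of $H_0$. Thus $H_0$ is $SQ$-universal. Second, I would invoke the classical fact (recorded in the discussion preceding the corollary) that a group containing an $SQ$-universal subgroup of finite index is itself $SQ$-universal. Applying this to the pair $H_0\le H$ yields that $H$ is $SQ$-universal, completing the dichotomy.

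The main obstacle here is purely bookkeeping: the substantive content---both the subgroup classification for mapping class groups and the $SQ$-universality theorem for groups with non-degenerate hyperbolically embedded subgroups---has already been established in the two theorems cited above. The intermediate transfer steps, through a surjection and through a finite-index inclusion, are entirely standard, so no additional idea is needed beyond those results.
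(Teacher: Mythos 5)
Your proof is correct and follows essentially the same route as the paper: apply the dichotomy from Theorem \ref{thm_MCG_HE}, deduce SQ-universality of $Q$ from Theorem \ref{large}, pull it back to the finite-index subgroup $H_0$ through the surjection (the paper states this step tersely as ``$Q$ (and hence $G_0$) is SQ-universal''), and finish with the Neumann--Hall theorem that an SQ-universal subgroup of finite index forces SQ-universality of the whole group. No genuinely different ideas appear; you have merely spelled out the transfer-through-a-quotient step that the paper leaves implicit.
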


It is easy to show that every $SQ$-universal group $G$ contains non-abelian free subgroup; if, in addition, $G$ is finitely generated, then it has uncountably many normal subgroups. Thus Corollary \ref{SQ-sub-MCG} can be thought of as a simultaneous strengthening of the Tits alternative \cite{Iva92} and various non-embedding theorems of lattices into mapping class groups \cite{Farb_Masur}. Indeed we recall that if $\Gamma$ is an irreducible lattice in a connected higher rank semi-simple Lie group with finite center, then every normal subgroup of $\Gamma$ is either finite or of finite index by the Margulis theorem. In particular, $\Gamma$ has only countably many normal subgroups. Hence the image of every such  a lattice in $\MCG$ is finite.

We also obtain some results related to von Neumann algebras and reduced $C^\ast$-algebras of groups with hyperbolically embedded subgroups. Recall that a non-trivial group $G$ is \textit{ICC} (Infinite Conjugacy Classes) if every nontrivial conjugacy class of $G$ is infinite. By a classical result of Murray and von Neumann \cite{MvN} a countable discrete group $G$ is ICC if and only if  the von Neumann algebra $W^\ast (G)$ of $G$ is a $II_1$ factor. Further a group $G$ is called {\it inner amenable}, if there exists a finitely additive measure $\mu\colon \mathcal P (G\setminus \{ 1\})\to [0,1]$  defined on the set of all subsets of $G\setminus \{ 1\}$ such that $\mu (G\setminus \{ 1\})=1$ and $\mu $ is conjugation invariant, i.e., $\mu (g^{-1}Ag)=\mu (A)$ for every $A\subseteq G\setminus \{ 1\}$ and $g\in G$. This property was first introduced by Effros \cite{Efr}, who proved that if $G$ is a countable group and $W^\ast (G)$ is a $II_1$ factor which has property $\Gamma $ of Murray and von Neumann, then $G$ is inner amenable. (The converse is not true as was recently shown by Vaes \cite{V}.)

It is easy to show that every group with a nontrivial finite conjugacy class is inner amenable. It is also clear that every amenable group is inner amenable. Other examples of inner amenable groups include R. Thompson's group $F$, its generalizations \cite{J,Pic}, and some HNN-extensions \cite{Stal}. On the other hand, the following groups are known to be not inner amenable: ICC Kazhdan groups (this is straightforward to prove), lattices in connected real semi-simple Lie groups with trivial center and without compact factors \cite{HaSk}, and non-cyclic torsion free hyperbolic groups \cite{Har88}. To the best of our knowledge, the question of whether every non-elementary ICC hyperbolic group is not inner amenable was open until now (see the discussion in Section 2.5 of \cite{Har95}). In this paper we prove a much more general result.

\begin{thm}[Theorem \ref{cstar}]\label{vN-intr}
Suppose that a group $G$ contains a non-degenerate hyperbolically embedded subgroup. Then the following conditions are equivalent.
\begin{enumerate}
\item[(a)] $G$ has no nontrivial finite normal subgroups.
\item[(b)] $G$ is ICC.
\item[(c)] $G$ is not inner amenable.
\end{enumerate}
If, in addition, $G$ is countable, the above conditions are also equivalent to
\begin{enumerate}
\item[(d)] The reduced $C^\ast $-algebra of $G$ is simple.
\item[(e)] The reduced $C^\ast $-algebra of $G$ has  a unique normalized trace.
\end{enumerate}
\end{thm}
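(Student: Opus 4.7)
The easy direction shows that each of (b), (c), (d), (e) implies (a). A nontrivial finite normal subgroup $N\trianglelefteq G$ provides an explicit obstruction in each case: any $1\ne n\in N$ has conjugacy class contained in $N$ and hence finite, so $G$ is not ICC; the normalized counting measure on $N\setminus\{1\}$ is a conjugation-invariant mean, so $G$ is inner amenable; and the idempotent $p_N=|N|^{-1}\sum_{n\in N}\lambda(n)\in C_r^*(G)$ is a nonzero central projection of canonical trace $1/|N|<1$, whose principal ideal is proper (ruling out $C^*$-simplicity) and which factors the regular representation through $C_r^*(G/N)$, furnishing a trace distinct from the canonical one.

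For the converse implications the common input is Theorem \ref{vf-intr}: under (a) the maximal finite normal subgroup $K(G)$ is trivial and $G$ contains \he free subgroups of every rank, and in particular many loxodromic elements on $\Gamma(G,X\sqcup H)$ with pairwise distinct maximal elementary closures $E(h)$. To deduce (b) I would show that the $FC$-center $FC(G)=\{g\in G\st [G:C_G(g)]<\infty\}$ is trivial. Any $g\in FC(G)$ commutes with a finite-index subgroup of $G$, hence with positive powers of two loxodromic elements $h_1,h_2$ chosen from such a free \he subgroup so as to have distinct endpoint pairs on the Gromov boundary of $\Gamma(G,X\sqcup H)$. Since the centralizer of a loxodromic element is contained in its virtually cyclic elementary closure, $g\in E(h_1)\cap E(h_2)$, which is finite. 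Thus $FC(G)$ is finite, hence contained in $K(G)=1$, and $G$ is ICC.

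For (a) $\Rightarrow$ (c), (d), (e), I would verify Powers' averaging property in the regular representation: given $g_1,\dots ,g_k\in G\setminus\{1\}$ and $\varepsilon>0$, produce conjugators $t_1,\dots ,t_m\in G$ with $\bigl\|m^{-1}\sum_i\lambda(t_ig_jt_i^{-1})\bigr\|<\varepsilon$ for every $j$. This single condition is well known to imply simultaneously $C^*$-simplicity, uniqueness of the canonical trace, and non-inner-amenability (via the Akemann--Ostrand inequality and a paradoxical-decomposition argument for means). I would select the $t_i$ as long, pairwise ``transverse'' words in a free \he subgroup $F=\langle a,b\rangle\le G$ of sufficiently large rank, so that the loxodromic dynamics of the $t_i$ on $\Gamma(G,X\sqcup H)$ create disjoint Schottky-type sectors into which the conjugates $t_ig_jt_i^{-1}$ are pushed, making the corresponding left-regular operators nearly orthogonal and delivering the Powers estimate by the usual counting argument.

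The main obstacle is uniformity in $j$: ping-pong for a single element $g_j$ on a hyperbolic space is standard, but one must arrange the Schottky disjointness \emph{simultaneously} for all $k$ elements of a given finite set. This is exactly where the abundance of \he free subgroups of all ranks from Theorem \ref{vf-intr} becomes essential, together with the fact from Theorem \ref{free-intr}(b) that every element of $\ll F\rr^G$ either lies in a conjugate of $F$ or is loxodromic on the ambient hyperbolic space. Taking the rank of $F$ large relative to $k$ and exploiting strong quasi-convexity of \he orbits, one can choose $t_i$ whose dynamics on $\Gamma(G,X\sqcup H)$ separate the entire set $\{g_1,\dots ,g_k\}$ into independent orbits, completing the verification of the Powers property and thereby the theorem.
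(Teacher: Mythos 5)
Your overall structure is reasonable and the easy implications (b),(c),(d),(e)$\Rightarrow$(a) are handled correctly, but the central step — verifying the Powers averaging property to derive (d), (e), and (c) simultaneously — is not actually carried out, and there is strong reason to believe it cannot be carried out with the tools at hand without substantial new ideas.

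The paper takes a deliberately different route precisely because the authors could not establish a Powers-type property. They say so explicitly in the introduction: after noting that in the relatively hyperbolic case $C^*$-simplicity and unique trace were derived from the property $P_{nai}$ (a free-product ping-pong property of the same flavor as Powers averaging), they write that they \emph{do not know} whether this property holds for groups with non-degenerate hyperbolically embedded subgroups. Instead they prove that under (a) the group is of \emph{Akemann--Lee type}: the Dehn filling theorem applied to the infinite cyclic \he subgroup $C=\langle c\rangle$ makes $\ll c^n\rr^G$ a normal free subgroup, and Proposition \ref{malnorm} (applied twice) shows its centralizer in $G$ is trivial. Lemma \ref{Ake1} then gives (d) and (e) directly. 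Non-inner-amenability (c) is proved separately by an explicit Tarski-style paradoxical decomposition of $G\setminus\{1\}$ into sets $A,B$ whose translates $A^{h_3},A^{h_4},B^{h_1},B^{h_2}$ by generators of four cyclic \he subgroups are shown to be pairwise disjoint via Proposition \ref{sn}. This two-pronged approach completely avoids any dynamical control of conjugates of arbitrary nontrivial elements of $G$.

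Your Schottky-sector argument founders on exactly the point you flag as the ``main obstacle,'' but the problem is deeper than uniformity in $j$. The action of $G$ on $\Gamma(G,X\sqcup H)$ is not proper and in general not even acylindrical; the elements $g_j$ in a Powers test set need not be loxodromic (they may be elliptic with enormous displacement-zero sets, e.g.\ elements of $H$ itself). For such $g_j$ one cannot arrange $t_ig_jt_i^{-1}$ to push almost all of $G$ into a small sector by translating far along the axis of $t_i$, because the elliptic set of $g_j$ is carried along and remains large. Theorem \ref{free-intr}(b) controls the dynamics of elements \emph{inside} $\ll F\rr^G$, not of the test elements $g_j$, so it does not resolve this. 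Without a substitute for the missing dynamical control, the Powers estimate is not available, and the chain (a)$\Rightarrow$(d),(e),(c) is a gap. Your FC-center argument for (a)$\Rightarrow$(b) is essentially sound and parallels the paper's use of malnormality of a cyclic \he subgroup, so that part can stand.
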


The study of groups with simple  reduced $C^\ast$-algebras have begun with the Power's paper \cite{Pow}, where he proved that the reduced $C^\ast $-algebra of a non-abelian free group is simple. Since then many other examples of groups with simple  reduced $C^\ast$-algebras have been found, including centerless mapping class groups, $Out(F_n)$ for $n\ge 2$, many amalgamated products and HNN-extensions \cite{BrHa,HaPre}, and free Burnside groups of sufficiently large odd exponent \cite{OO}. For a comprehensive survey we refer to \cite{Har07}. Recall also that a (normalized) \emph{trace} on a unitary C$^\ast $-algebra $A$ is a linear map $\tau\colon A\to \mathbb C$ such that $$\tau (1)=1,\;\;\; \tau (a^\ast a)\ge 0,\;\;\; {\mathrm{and}} \;\;\; \tau(ab)=\tau (ba)$$ for all $a, b \in A$.

Equivalence of (a), (d), and (e) was known before for relatively hyperbolic groups \cite{AM}.  Note however that in \cite{AM} properties (d) and (e) are derived from the fact that the corresponding group satisfies the property $P_{nai}$, which says that for every finite subset $\mathcal F\subseteq G$, there exists a nontrivial element $g\in G$ such that for every $f\in \mathcal F$ the subgroup of $G$ generated by  $f$ and $g$ is isomorphic to the free product of the cyclic groups generated by $f$ and $g$. In this paper we choose a different approach:  Theorems \ref{vf-intr} and \ref{CEP-simple} are used to show that if a group $G$ contains a non-degenerate hyperbolically embedded subgroup and satisfies (a), then it is a group of the so-called Akemann-Lee type, which means that $G$ contains a non-abelian normal free subgroup with trivial centralizer. For countable groups, this implies (d) and (e) according to \cite{AL}.


\section{Preliminaries}


\subsection{General conventions and notation}

Throughout the paper we use the standard notation $[a,b]=a^{-1}b^{-1}ab$ and $a^b=b^{-1}ab$ for elements $a,b$ of a group $G$. Given a subset $R\subseteq G$, by $\ll R\rr ^{G}$ (or simply by $\ll R\rr$ if no confusion is possible) we denote the normal closure of $R$ in $G$, i.e., the smallest normal subgroup of $G$ containing $R$.

We say that a group is \emph{elementary} \label{i-elem} if it is virtually cyclic.

Given a path $p$ in a metric space, we denote by $p_-$ and $p_+$ its beginning and ending points, respectively. If $p$ is a combinatorial path in a labeled directed graph (e.g., a Cayley graph or a van Kampen diagram), $\Lab (p)$ denotes its label \label{i-lab1}.

When talking about metric spaces, we allow the distance function to take infinite values. Algebraic operations and relations $<$, $>$, etc., are extended to $[-\infty, +\infty]$ in the natural way. Say, $c+\infty=\infty$ for any $c\in (-\infty , +\infty]$ and $c\cdot \infty = \infty$ for any $c\in [0, +\infty]$, while $-\infty+\infty$ and $\infty /\infty $ are undefined. Whenever we write any expression potentially involving $\pm \infty$, we assume that it is well defined.

If $S$ is a geodesic metric space and $x,y\in S$, $[x,y]$ denotes a geodesic in $S$ connecting $x$ and $y$. For two subsets $T_1, T_2$ of a metric space $S$ with metric $\d$, we denote by $\d (T_1, T_2)$ and $\d _{Hau} (T_1, T_2)$ the usual and the Hausdorff distance between $T_1$ and $T_2$, respectively. That is,
$$
\d (T_1, T_2)=\inf \{\d (t_1, t_2)\mid t_1\in T_1,\, t_2\in T_2\}
$$
and
$$
\d _{Hau}(T_1, T_2)=\sup \{ \d (t_1, T_2), \d (T_1, t_2) \mid t_1\in T_1,\, t_2\in T_2\} \} .\label{i-dHau}
$$
For a subset $T\subseteq S$, $T^{+\e} $ denotes the closed $\e$-neighborhood of $T$, i.e.,
$$
T^{+\e }= \{ s\in S\mid \d (s,T)\le \e \}.
$$

Given a word $W$ in an alphabet $\mathcal A$, we denote by $\| W\| $ its length. We
write $W\equiv V$ to express the letter-for-letter equality of
words $W$ and $V$. If $\mathcal A$ is a generating set of a group $G$, we do not distinguish between words in $\mathcal A$ and elements of $G$ represented by these words if no confusion is possible. Recall that a subset $X$ of a group $G$ is said to be {\it symmetric} if for any $x\in X$, we have $x^{-1}\in X$.
In this paper all generating sets of groups under consideration
are supposed to be symmetric, unless otherwise is stated explicitly.

If $G$ is a group and $X\subseteq G$, we denote by $|g|_X$ the {\it (word) length}  \label{i-wl} of an element $g\in G$. Note that we do not require $G$ to be generated by $X$ and we will often work with word length with respect to non-generating subsets of $G$.  By definition, $|g|_X$ is the length of a shortest word in $X$ representing $g$ in $G$ if $g\in \langle X\rangle $ and $\infty $ otherwise. Associated to this length function is the \emph{word metric} \label{i-wd} $\d _{X}\colon G\times G\to [0, \infty ]$ defined in the usual way:
$$
\d_X (f,g)=|f^{-1}g|_X
$$
for any $f,g\in G$. If $G$ is generated by $X$, we also denote by $\d _X$ the natural extension of this metric to the corresponding Cayley graph.

To deal with infinite values, we extend addition and multiplication to $[0, \infty]$ in the following way:
$$
c+\infty=\infty+c=\infty, \;\;\; d\cdot \infty =\infty \cdot d=\infty, \;\;\; 0\cdot \infty = 0
$$
for every $c\in [0, \infty]$ and $d\in (0, \infty)$. We also order $[0, +\infty]$ in the natural way.

\subsection{Hyperbolic spaces and group actions}

A geodesic metric space $S$
is {\it $\delta $--hyperbolic} \label{i-hyps} for some $\delta \ge 0$ (or simply
{\it hyperbolic}) if for any geodesic triangle with vertices $x$, $y$, $z$ in $S$,
and any points $p\in [x,y]$, $q\in [x,y]$ with
$\d(x,p)=\d(x,q)\leq (y.z)_x$, we have
$\d(p,q)\leq \delta$. Here by $y.z)_x$ we denote the Gromov's product of $y$ and $z$ with respect to $x$, that is,
$$
(y.z)_x=\frac{\d(x,y)+\d(x,z)-\d(y,z)}{2}.
$$
In particular, any
side of the triangle belongs to the union of the closed $\delta
$--neighborhoods of the other two sides \cite{Gro}. A finitely generated group is called {\it hyperbolic} \label{defhypg} if its Cayley
graph with respect to some (equivalently, any) generating set is a hyperbolic metric space.

By $\partial S$ we denote the Gromov boundary of a hyperbolic space $S$. Note that we do not assume, in general, that $S$ is proper and thus we have to employ the Gromov's definition of the boundary via sequences convergent at infinity (see \cite[Section 1.8]{Gro}).

Given a group $G$ acting on a hyperbolic space $S$, an element $g\in G$ is called \emph{elliptic} \label{i-ell} if some (equivalently, any) orbit of $g$ is bounded, and \emph{loxodromic} \label{i-lox} if the map $\mathbb Z\to S$ defined by $n\mapsto g^n(s)$ is a quasi-isometric embedding for some (equivalently any) $s\in S$. Equivalently, an element $g\in G$ is loxodromic if it has exactly $2$ limit points on the Gromov boundary $\partial S$. Finally, an element $g$ is \emph{parabolic} \label{i-par} if it has exactly one limit point on the boundary  $\partial S$. Every isometry of a hyperbolic space is either elliptic, or loxodromic, or parabolic. For details we refer to \cite{Gro}; a clarification of some of Gromov's arguments in the case of non-proper spaces can be found in \cite{Ham}.

Given a path $p$ in a metric space, we denote by $p_-$ and $p_+$ the origin and the terminus of $p$, respectively. We also denote $\{ p_-, p_+\}$ by $p_\pm$. The length of $p$ is denoted by $\ell (p)$. A path $p$ in a metric space $S$ is called
{\it $(\lambda , c)$--quasi--geodesic} \label{i-qg} for some $\lambda \ge 1$,
$c\ge 0$ if
$$\ell (q)\le  \lambda dist(q_-, q_+)+c$$ for any subpath $q$ of $p$.
The following property of quasi-geodesics in a hyperbolic space is well known and will be widely used in this paper.

\begin{lem}\label{qg}
For any $\delta \ge 0$, $\lambda \ge 1$, $c\ge 0$, there exists a
constant $\kappa =\kappa (\delta , \lambda , c)\ge 0$ such that
\begin{enumerate}
\item[(a)] Every two $(\lambda , c)$--quasi--geodesics in a $\delta $-hyperbolic space with the same endpoints belong to the closed $\kappa $--neighborhoods of each other.

\item[(b)] For every two bi-infinite $(\lambda , c)$--quasi--geodesics $a,b$ in a $\delta $-hyperbolic space,  $\d _{Hau} (a,b)<\infty $ implies  $\d _{Hau} (a,b)<\kappa $.
\end{enumerate}
\end{lem}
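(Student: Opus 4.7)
The plan is to treat the two parts separately: (a) is the classical Morse/stability lemma for quasi-geodesics in hyperbolic spaces, and (b) is a ``limit'' version obtained by reducing to the case of common endpoints, viewed either in $S$ itself or at infinity.

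For part (a), I would first reduce to the case of continuous, in fact piecewise-geodesic, quasi-geodesics. This is a standard ``taming'' step: given a $(\lambda,c)$-quasi-geodesic $p:[0,L]\to S$, one samples $p$ at integer times and connects consecutive samples by geodesic segments, obtaining a continuous $(\lambda',c')$-quasi-geodesic with the same endpoints and staying within a bounded neighborhood of the original, where $\lambda',c'$ depend only on $\lambda,c$. Once the quasi-geodesic is continuous and has finite length on bounded subintervals, I would prove the one-sided statement: every point of a continuous $(\lambda,c)$-quasi-geodesic $q$ lies within some $D=D(\delta,\lambda,c)$ of any geodesic $[q_-,q_+]$. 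The classical way to prove this is to take a point $x\in q$ realizing the maximal distance $D$ to $[q_-,q_+]$ and, using $\delta$-thinness of the geodesic triangle with vertices $q_-,x,q_+$ together with the length bound coming from the quasi-geodesic inequality, derive an estimate of the form $D\le 2\delta + \lambda(2\sinh D)/(\text{something})+c$, which forces $D$ to be bounded by a constant depending only on $\delta,\lambda,c$. The two-sided Hausdorff bound then follows by iterating: having shown $q\subseteq ([q_-,q_+])^{+D}$, the converse inclusion is obtained by covering $[q_-,q_+]$ by the projections of short subarcs of $q$ and applying the quasi-geodesic inequality again. Set $\kappa$ to be the resulting constant (slightly enlarged to absorb the taming error).

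For part (b), let $a,b$ be bi-infinite $(\lambda,c)$-quasi-geodesics in $S$ with $\d_{Hau}(a,b)<\infty$. The plan is to show that for every $x\in a$ one has $\d(x,b)\le \kappa$, where $\kappa=\kappa(\delta,\lambda,c)$ is the constant produced in part (a) (enlarged by a bounded additive amount). Fix $x\in a$ and parametrize $a$ by arc length. Choose points $a^-,a^+$ on $a$, one on each side of $x$, with $\d(a^\pm,x)$ very large compared to $\d_{Hau}(a,b)$, and pick $b^\pm\in b$ with $\d(a^\pm,b^\pm)\le \d_{Hau}(a,b)+1$. Form the concatenation $\gamma$ consisting of the geodesic $[a^-,b^-]$, the subarc of $b$ from $b^-$ to $b^+$, and the geodesic $[b^+,a^+]$. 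A short check shows $\gamma$ is a $(\lambda'',c'')$-quasi-geodesic with constants depending on $\lambda,c$ and $\d_{Hau}(a,b)$, and it has the same endpoints as the subarc $a'$ of $a$ from $a^-$ to $a^+$. Part (a) applied to $a'$ and $\gamma$ gives $\d(x,\gamma)\le \kappa(\delta,\lambda'',c'')$; however, since $x$ is chosen far from $a^\pm$, its nearest point on $\gamma$ cannot lie on the short geodesic ``caps'' $[a^-,b^-]\cup[b^+,a^+]$ (these caps have length bounded in terms of $\d_{Hau}(a,b)$), and therefore $\d(x,b)$ is bounded by the original $\kappa(\delta,\lambda,c)$ of part (a), independently of $\d_{Hau}(a,b)$. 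By symmetry one also bounds $\d(y,a)$ for $y\in b$, giving $\d_{Hau}(a,b)<\kappa$.

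The main obstacle is the uniformity issue in part (b): the naive application of part (a) produces a constant $\kappa$ that a priori depends on $\d_{Hau}(a,b)$, which would be useless. The resolution, outlined above, is that the contribution of the two ``caps'' enters only through a finite additive term that can be discarded by taking $x$ far from the endpoints $a^\pm$; alternatively, one can pass to the Gromov boundary, observe that $\d_{Hau}(a,b)<\infty$ forces $a$ and $b$ to share both ideal endpoints, and then apply the standard stability of quasi-geodesic lines with prescribed endpoints at infinity. Either route gives the required bound depending only on $\delta,\lambda,c$.
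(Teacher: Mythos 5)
The paper does not actually prove this lemma: it states it as ``well known'' and gives no argument, citing the surrounding literature implicitly. So there is no paper proof to compare against; what you have written is a from-scratch reconstruction of the standard stability-of-quasi-geodesics fact, and your part~(a) is the classical Morse lemma argument (taming to a piecewise-geodesic path, an extremal-distance estimate via exponential divergence or logarithmic subdivision, and a covering argument for the reverse inclusion). That sketch is correct in outline; the specific intermediate inequality you write down with a $\sinh$ term is not quite the standard estimate, but since the argument is schematic this is not a real defect.

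There is, however, a genuine gap in your main route for part~(b). You apply part~(a) to the concatenation $\gamma = [a^-,b^-]\ast b|_{[b^-,b^+]}\ast[b^+,a^+]$, whose quasi-geodesic constants $(\lambda'',c'')$ depend on $\d_{Hau}(a,b)$, and you obtain $\d(x,\gamma)\le \kappa(\delta,\lambda'',c'')$. You then assert that because the nearest point of $\gamma$ to $x$ lies on the middle piece $b$, the bound improves back to the original $\kappa(\delta,\lambda,c)$. That conclusion does not follow: knowing \emph{where} the nearest point sits does not shrink the distance estimate, which is already contaminated by the $\d_{Hau}(a,b)$-dependent constants. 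The correct way to remove this dependence is to avoid making $\gamma$ a quasi-geodesic at all and instead compare through the genuine geodesics: apply part~(a) to the subarc of $a$ from $a^-$ to $a^+$ to place $x$ within $\kappa$ of the geodesic $[a^-,a^+]$, then use $2\delta$-thinness of the quadrilateral $(a^-,a^+,b^+,b^-)$ to place that geodesic point within $2\delta$ of $[a^-,b^-]\cup[b^-,b^+]\cup[b^+,a^+]$; since the two caps are entirely contained in the $M$-ball around $a^\pm$ (where $M=\d_{Hau}(a,b)+1$), a point far from $a^\pm$ must land within $2\delta$ of $[b^-,b^+]$, and one final application of part~(a) to the subarc of $b$ gives $\d(x,b)\le 2\kappa+2\delta$, a bound depending only on $\delta,\lambda,c$. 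Your alternative suggestion via the Gromov boundary is also valid in principle (the two lines share both ideal endpoints and one quotes stability of quasi-geodesic lines with prescribed endpoints at infinity), but as written it is a pointer rather than an argument, and the main route as presented does not close.
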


\begin{proof}
The first assertion follows for instance from \cite[Th.1.7 p.401]{BH_metric}.
We could not find a reference for the second assertion when the space is not proper
(we cannot use the existence of a bi-infinite geodesic).
Let $M=\d_{Hau}(a,b)$, and $x=a(t)$.
Let $\kappa_0$ be a constant as in the first assertion. We claim that we can take $\kappa=2\kappa_0+2\delta$
in the second assertion.

Consider $t_1<t<t_2$, $x_1=a(t_1), x_2=a(t_2)$
so that $\d(x_1,x),\d(x_2,x)\geq M+\kappa_0+10\delta$.
Consider $y\in [x_1,x_2]$ at distance
$\leq \kappa_0$ from $x$.
Let $x'_1,x'_2$ be two points on $b$ at distance at most $M$ from $x_1$ and $x_2$.
Considering the quadrilateral $x_1,x_2,x'_2,x'_1$, we see that $y$ is at distance
at most $2\delta$ from $[x_1,\cup,x'_1]\cup[x'_1,x'_2]\cup [x'_2,x_2]$,
but since $d(y,\{x_1,x_2\})\geq M+10\delta$, there is a point $y'\in [x'_1,x'_2]$
such that $d(y,y')\leq 2\delta$.
Using the first assertion, $y'$ is at distance at most $\kappa_0$ from $b$,
so $d(x,b)\leq 2\kappa_0+2\delta$.
\end{proof}

The next lemma is a simplification of Lemma 10 from \cite{Ols92}. We say that two paths $p$ and $q$ in a metric space are {\it $\e $-close} for some $\e >0$ if $\d _{Hau} \{p_\pm, q_\pm)\le \e$.

\begin{lem}\label{Ols}
Suppose that the set of all sides of a geodesic $n$--gon $\mathcal P=p_1p_2\ldots p_n$ in a $\delta $--hyperbolic space is divided into two subsets $S$, $T$. Assume that the total lengths of all sides from $S$ is at least $10^3 cn$ for some $c\ge 30\delta $. Then there exist two distinct sides $p_i$, $p_j$, and $13\delta $-close subsegments $u$, $v$ of $p_i$ and $p_j$, respectively, such that $p_i\in S$ and $\min \{ \ell(u), \ell(v)\} > c$.
\end{lem}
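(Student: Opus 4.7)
My plan is an averaging argument combined with the tree-like behavior of geodesic polygons in $\delta$-hyperbolic spaces.

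First I would use the hypothesis to pin down a long side of $S$: since $\sum_{p_k\in S}\ell(p_k)\geq 10^3 cn$ and $|S|\leq n$, pigeonhole gives some $p_i\in S$ with $\ell(p_i)\geq 10^3 c$. The subsegment $u$ will be found as a portion of $p_i$, so the condition $p_i\in S$ is guaranteed from the outset; the side $p_j$ carrying $v$ is allowed to be arbitrary.

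Next I would exploit the fact that geodesic polygons in $\delta$-hyperbolic spaces are tree-like. Triangulate $\mathcal{P}$ by geodesic diagonals from a fixed endpoint of $p_i$ to every other vertex, producing a fan of $n-2$ geodesic triangles. Applying the $\delta$-thinness of each triangle in the fan, one sees that every point $x\in p_i$ (away from its endpoints) has a nearest point on some other side $p_j$ at a distance controlled by a small universal multiple of $\delta$. This defines a nearest-side function $x\mapsto j(x)$ which is coarsely continuous, and it decomposes $p_i$ into maximal sub-arcs on which $j$ is constant, separated by short transition zones. On each such sub-arc the projection onto the corresponding $p_j$ is a coarse isometry with additive defect $O(\delta)$, by the standard stability of geodesics (Lemma~\ref{qg}). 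A pigeonhole argument, comparing $\ell(p_i)\geq 10^3 c$ to the total length absorbed by the transition zones, yields a sub-arc $u\subset p_i$ of length strictly greater than $c$ on which $j$ is constant; its projected image $v\subset p_j$ is the required companion, with $\d_{Hau}(u,v)\leq 13\delta$.

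The main obstacle is the careful bookkeeping of constants. The conclusion demands closeness within $13\delta$ independent of $n$, so one must ensure that the $\delta$-thinness in the fan does not accumulate along chains of diagonals: on any maximal sub-arc of constant nearest-side index, the projection onto $p_j$ factors through a single triangle of the fan and hence incurs only a small constant multiple of $\delta$, not a multiple of $n\delta$. The generous constant $10^3$ in the hypothesis is then used to absorb the additive errors coming from the transition zones together with the coarse-isometry defects, the assumption $c\geq 30\delta$ converting all $\delta$-level additive errors into errors negligible on the scale of $c$. Matching these explicit constants to arrive at precisely $13\delta$ in the conclusion and at $\ell(u),\ell(v)>c$ for both endpoints of the fellow-traveling pair is the delicate step, but it proceeds by routine estimation once the triangulation is set up.
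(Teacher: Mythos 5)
Your initial pigeonhole, producing a side $p_i\in S$ with $\ell(p_i)\geq 10^3 c$, is correct, but the step that follows it contains the essential gap. The claim that every interior point of $p_i$ lies within a \emph{universal} multiple of $\delta$ of some other side of the polygon is false. In the fan triangulation from an endpoint of $p_i$, the side $p_i$ belongs to only one triangle of the fan, so a point of $p_i$ is $\delta$-close to the union of the adjacent side and the first diagonal; to reach a remote side $p_j$ one must propagate through the full chain of diagonals, and the errors accumulate to roughly $(n-2)\delta$ (or $\delta\lceil\log_2(n-1)\rceil$ if one triangulates dichotomically). This grows without bound in $n$. Your attempted repair --- that ``the projection onto $p_j$ factors through a single triangle of the fan'' --- is not available, since $p_i$ and $p_j$ share no triangle of the fan unless $p_j$ is one of the two neighbouring sides. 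The conclusion demands $13\delta$-closeness uniformly in $n$, so a single triangulation plus nearest-side projection cannot produce it.

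There is a second, independent failure in the concluding pigeonhole. The nearest-side index along $p_i$ can change on the order of $n$ times, producing on the order of $n$ maximal constant sub-arcs; to guarantee that one of them has length $>c$ you would need $\ell(p_i)\gtrsim nc$, but in the worst case (every side in $S$, each of length exactly $10^3 c$) your first pigeonhole gives only $\ell(p_i)=10^3 c$, independent of $n$. In effect the factor $n$ in the hypothesis $\sum_{p_k\in S}\ell(p_k)\geq 10^3 cn$ has been consumed by the opening pigeonhole and is no longer available to beat the count of sub-arcs. The paper does not reprove this lemma --- it is quoted as a simplification of Lemma 10 of \cite{Ols92} --- and the argument there deploys the factor $n$ differently: one bisects the whole polygon recursively by short geodesic chords (compare Lemma~\ref{P}, another Ol'shanskii result quoted in this paper, which supplies such a chord of length $O(\delta\log n)$ splitting the $n$-gon into two roughly balanced subpolygons), tracking the total $S$-length through the subdivision, rather than attempting a global nearest-side decomposition of one distinguished long side.
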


\newcommand{\sqc}{strongly quasiconvex}
From now on, let $S$ be a geodesic metric space.
\begin{defn}\label{dfn_sqc}
 A subset  $Q\subset S$ is \textit{$\sigma $-quasiconvex} if any geodesic in $S$ between any two points of $Q$ is contained in the closed $\sigma$-neighborhood of $Q$. We say that $Q\subset S$ is \textit{$\sigma $-\sqc } if for any two points $x,y\in Q$, there exist $x^\prime,y^\prime\in Q$
and geodesics $[x^\prime,y^\prime],[x,x^\prime],[y,y^\prime]$ of $S$
  such that $\max \{ \d (x, x^\prime ), \d (y, y^\prime )\} \le \sigma $ and $[x^\prime,y^\prime]\cup[x,x^\prime]\cup[y,y^\prime]\subset Q$.
\end{defn}

We will need the following remarks. The proofs are elementary and we leave them to the reader.

\begin{lem}\label{lem_4delta}
Let $S$ be a $\delta$-hyperbolic space, and $Q$ a subset of $S$.
\begin{enumerate}
\item[(a)] If $Q$ is $\sigma$-quasiconvex, then for all $r\geq \sigma$, $Q^{+r }$ is $2\delta$-\sqc.
\item[(b)] If $Q$ is $\sigma $-\sqc, then the induced path-metric $\d _Q$ on $Q$ satisfies for all $x,y\in Q$, $\d _S(x,y)\leq \d _Q(x,y) \leq \d _S(x,y)+2\sigma $.
\item[(c)] If $Q$ is $2\delta$-\sqc then $Q$ is $4\delta$-quasiconvex.
\end{enumerate}
\end{lem}

\subsection{Relative presentations and isoperimetric functions}\label{sec-GPIF}

\paragraph{Van Kampen Diagrams and isoperimetric functions} \label{i-vK} A {\it van Kampen diagram} $\Delta $ over a presentation
\begin{equation}
G=\langle \mathcal A\; | \; \mathcal O\rangle \label{ZP}
\end{equation}
is a finite oriented connected planar 2--complex endowed with a
labeling function $\Lab : E(\Delta )\to \mathcal A \cup\{1\}$,
where $E(\Delta ) $ denotes the set of oriented edges of $\Delta $,
such that $\Lab(e^{-1})\equiv(\Lab(e))^{-1}$. (Recall that we always assume that generating sets are symmetric; thus $\mathcal A=\mathcal A^{-1}$). We identify $1$ with the empty word in $A$; thus $1=1^{-1}$. It is convenient to assume that the empty word represents the identity element of $G$.

Given a path $p=e_1\ldots e_k$ in a van Kampen diagram, where $e_1, \ldots, e_k$ are edges, we define $\Lab (p)$ \label{i-lab2} to be the concatenation of labels of $e_1, \ldots , e_k$. Note that we remove all $1$'s from the label since $1$ is identified with the empty word. Thus the label of every path in a van Kampen diagram is a word in $\mathcal A$.

We call edges labelled by letters from $\mathcal A$ \emph{essential}; edges labelled by $1$ are called \emph{$0$-edges}. \label{i-0edge}  Since   $1^{-1}=1$, we will often drop the orientation of $0$-edges in illustrations.

By a \emph{cell} of a van Kampen diagram, we always mean a $2$-cell. Given a cell $\Pi $ of $\Delta $, we denote by $\partial \Pi$ the
boundary of $\Pi $. Similarly, $\partial \Delta $ denotes the
boundary of $\Delta $. The labels of $\partial \Pi $ and $\partial
\Delta $ are defined up to cyclic permutations. An additional
requirement is that for any cell $\Pi $ of $\Delta $, one of the following two conditions holds.

\begin{enumerate}
 \item[(a)] $\Lab (\partial \Pi )$ is equal to (a cyclic permutation of) a word $P^{\pm 1}$, where $P\in \mathcal O$.
 \item[(b)] The boundary path of $\Pi$ either entirely consists of $0$-edges or has exactly two  essential edges (in addition to $0$-edges) with mutually inverse labels. (In both cases the boundary label of such a cell is equal to $1$ in the free group generated by $\mathcal A$.) Such cells are called \emph{$0$-cells} and all other cells are called {\it essential}. \label{i-0c}
\end{enumerate}

A diagram $\Delta $ over (\ref{ZP}) is called a {\it disk diagram} if it is homeomorphic to a disc. Note that every simply connected van Kampen diagram can be made homeomorphic to a disk by adding $0$-cells. \label{i-0ref} This can be done by the so-called \emph{$0$-refinement}, which is illustrated on Fig. \ref{0-ref}. For a more formal discussion we refer to \cite[Section 11]{Ols-book}.

Similarly, using $0$-refinement we can ensure the following condition, which will be assumed throughout the paper.
\begin{enumerate}
\item[(c)] Every cell is homeomorphic to a disk, i.e., its boundary do not self intersect.
\end{enumerate}

By the well-known van Kampen Lemma, a word $W$ over an alphabet
$\mathcal A$ represents the identity in the group given by
(\ref{ZP}) if and only if there exists a disc diagram $\Delta $ over (\ref{ZP}) such
that $\Lab (\partial \Delta )\equiv W$ (see \cite[Ch. 4]{Ols-book}).

\begin{figure}
  \label{0-ref}
  \centering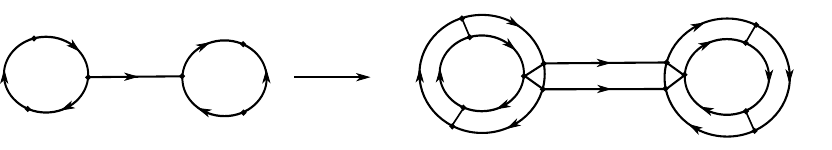
  \caption{A 0-refinement of a van Kampen diagram over the presentation $G=\langle a,b\mid a^3=1\rangle$}
\end{figure}

\begin{rem}\label{rem-mu}
It is easy to show that for any vertex $O$ of a disc van Kampen diagram $\Delta $ over (\ref{ZP}), there is a natural continuous map $\mu$ from the $1$-skeleton of $\Delta $ to the Cayley graph $\Ga $
that maps $O$ to the identity vertex of $\Ga $, collapses $0$-edges to points,  and preserves labels and
orientation of essential edges.
\end{rem}

Let
\begin{equation}\label{standpres}
G=\langle X\mid \mathcal R\rangle
\end{equation}
be a group presentation. Given a word $W$ in the alphabet $X\cup \X^{-1}$ representing $1$ in $G$, denote by $Area (W)$ the minimal number of cells in a van Kampen diagram with boundary label $W$. A function $f\colon \mathbb N\to \mathbb N$ is called an {\it isoperimetric function} \label{i-ip} of (\ref{standpres}) if $Area (W)\le f(n)$ for every word $W$ in $X\cup X^{-1}$ of length at most $n$ representing $1$ in $G$.

\paragraph{Relative presentations.} \label{i-rp} Let $G$ be a group, $\Hl $ a collection of subgroups of $G$. A subset $X$ is called a {\it relative generating set of
$G$ with respect to $\Hl $} \label{i-rgs} if $G$ is generated by $X$ together with
the union of all $H_\lambda $'s. In what follows we always assume relative generating sets
to be symmetric, i.e., if $x\in X$, then $x^{-1}\in X$.

Let us fix a relative generating set $X$ of
$G$ with respect to $\Hl $. The group $G$ can be regarded as
a quotient group of the free product
\begin{equation}
F=\left( \ast _{\lambda\in \Lambda } H_\lambda  \right) \ast F(X),
\label{F}
\end{equation}
where $F(X)$ is the free group with the basis $X$.

Suppose that kernel of the natural homomorphism $F\to G$ is a normal closure of a subset
$\mathcal R$ in the group $F$. The set $\mathcal R$ is always supposed to be {\it symmetrized}. This means that if $R\in \mathcal R$ then every cyclic shift of $R^{\pm 1}$ also belongs to $\mathcal R$.
Let $$\mathcal H= \bigsqcup\limits_{\lambda \in \Lambda } H_\lambda.$$ We think of $\mathcal H$ as a subset of $F$. Let us stress that the union is disjoint, i.e., for every nontrivial element $h\in G$ such that $h\in H_\lambda\cap H_\mu $ for some $\lambda\ne \mu$, the set $\mathcal H$ contains two copies of $h$, one in $H_\lambda $ and the other in $H_\mu $. Further for every $\lambda \in \Lambda $, we denote by $\mathcal S_\lambda
$ the set of all words over the alphabet $ H_\lambda $ that represent the identity in $H_\lambda $. Then the group $G$ has the presentation
\begin{equation}
\langle X, \mathcal H  \mid  \mathcal S\cup \mathcal R\rangle,
\label{rp}
\end{equation}
where $\mathcal S=\bigcup\limits_{\lambda\in \Lambda } \mathcal S_\lambda $. In what follows, presentations of this type are called {\it relative presentations} of $G$ with respect to $X$ and $\Hl$. Sometimes we will also write (\ref{rp}) in the form
$$
G=\langle X, \Hl\mid \mathcal R\rangle .
$$

Let $\Delta $ be a van Kampen diagram over (\ref{rp}). As usual, \label{i-RSc} a cell of $\Delta $ is called an $\mathcal R$-cell (respectively, a $\mathcal S$-cell) if its boundary is labeled by a (cyclic permutation of a) word from $\mathcal R$ (respectively $\mathcal S$).

Given a word $W$ in the alphabet $X\cup \mathcal H$ such that $W$
represents $1$ in $G$, there exists an expression
\begin{equation}
W=_F\prod\limits_{i=1}^k f_i^{-1}R_i^{\pm 1}f_i \label{prod}
\end{equation}
where the equality holds in the group $F$, $R_i\in \mathcal R$, and
$f_i\in F $ for $i=1, \ldots , k$. The smallest possible number
$k$ in a representation of the form (\ref{prod}) is called the
{\it relative area} \label{i-ra} of $W$ and is denoted by $Area^{rel}(W)$.

Obviously $Area^{rel}(W)$ can also be defined in terms of van Kampen diagrams. Given a diagram $\Delta $ over (\ref{rp}), we define its {\it relative area}, $Area^{rel} (\Delta )$, to be the number of $\mathcal R$-cells in $\Delta $. Then $Area^{rel}(W)$ is the minimal relative area of a van Kampen diagram over (\ref{rp}) with boundary label $W$.

Finally we say that $f(n)$ is a {\it relative isoperimetric function} \label{i-rip} of (\ref{rp}) if for every word $W$ of length at most $n$ in the alphabet $X\cup \mathcal H$ representing $1$ in $G$, we have $Area^{rel} (W)\le f(n)$. Thus, unlike the standard isoperimetric function, the relative one only counts $\mathcal R$-cells.

\paragraph{Relatively hyperbolic groups.}
The notion of relative hyperbolicity goes back to Gromov \cite{Gro}. There are
many definitions of (strongly) relatively hyperbolic
groups \cite{Bow,DS,F,Osi06a}. All these definitions are equivalent for finitely generated groups. The proof of the equivalence and a detailed analysis of the case of infinitely generated groups can be found in \cite{Hru}.

\label{i-rhg} We recall the isoperimetric
definition suggested in \cite{Osi06a}, which is the most suitable one for
our purposes. That relative hyperbolicity in the sense of
\cite{Bow,F,Gro} implies relative hyperbolicity in the sense of the
definition stated below is essentially due to
Rebbechi \cite{Reb}. Indeed it was proved in \cite{Reb} for finitely presented groups. The later condition is not really important and the proof from \cite{Reb} can easily be generalized to the general case (see \cite{Osi06a}). The converse implication was proved in \cite{Osi06a}.

\begin{defn}\label{rhg}
Let $G$ be a group, $\Hl $ a collection of subgroups of $G$. Recall that $G$ is {\it hyperbolic relative to a collection of subgroups} $\Hl $ if $G$ has a finite relative presentation (\ref{rp}) (i.e., the sets $X$ and  $\mathcal R$ are finite) with linear relative isoperimetric function.
\end{defn}
In particular, $G$ is an ordinary {\it hyperbolic group} if $G$ is hyperbolic relative to the trivial subgroup.


\section{Generalizing relative hyperbolicity}\label{sec:GRH}


\subsection{Weak relative hyperbolicity and bounded presentations}\label{sec-WRHBP}

Throughout this section let us fix a group  $G$, a collection of subgroups $\Hl $ of $G$, and a (not necessary finite) relative generating set $X$ of $G$ with respect to $\Hl $. That is, we assume that $G$ is generated by $X$ together with the union of all $H_\lambda$. We also assume that $X$ is symmetric, i.e., for every $x\in X$, we have $x^{-1}\in X$.

Our first goal is to extend some standard tools from the theory of relatively hyperbolic groups to a more general case.

More precisely, as in the case of relatively hyperbolic groups we define 
\begin{equation}\label{calH}
\mathcal H= \bigsqcup\limits_{\lambda \in \Lambda } H_\lambda
\end{equation}
and let $\G $ denote the Cayley graph of $G$ with respect to the alphabet $X\sqcup\mathcal H$.

Here by the \emph{Cayley graph} \label{i-Cg} of a group $G$ with respect to an alphabet $\mathcal A$ given together with a (not necessarily injective) map $\alpha \colon \mathcal A\to G$ we mean the graph with vertex set $G$ and set of edges $\{ (g,g\alpha(a),a) \mid g\in G, \; a\in \mathcal A\}$. The edge $(g, g\alpha(a),a)$ goes from $g$ to $g\alpha(a)$ and has label $a$. For $\mathcal A=X\sqcup \mathcal H$, the map $\alpha $ is the obvious one, so we omit it from the notation. Note that some letters from $X\sqcup\mathcal H$ may represent the same element in $G$, in which case $\G $ has multiple edges corresponding to these letters.\label{i-Gxh2} For example, there are at least $|\Lambda |$ loops at each vertex, which correspond to identity elements of subgroups $H_\lambda$. (We could remove these loops by considering $H_\lambda \setminus \{1\}$ instead of $H_\lambda$ in (\ref{calH}), but their presence does not cause any problems.)

\begin{defn}\label{i-wrh}
We say that $G$ is {\it weakly hyperbolic} relative to $X$ and $\Hl $ if the Cayley graph $\G $ is hyperbolic.
\end{defn}

We also denote by $\Gamma _\lambda $ the Cayley graphs $\Gamma (H_\lambda, H_\lambda )$, which we think of as complete subgraphs of $\G $.

\begin{defn} \label{maindef}
For every $\lambda \in \Lambda $, we introduce a \textit{relative metric} $\dl \colon H_\lambda \times H_\lambda \to [0, +\infty]$ as follows. Given $h,k\in H_\lambda $ let $\dl (h,k)$ be the length of a shortest path $p$ in $\G $ that connects $h$ to $k$ and has no edges in $\Gamma _\lambda $. We stress that we do alow $p$ to pass through vertices of $\Gamma_\lambda$; $p$ can also have edges $e$ labelled by elements of $H_\lambda$ if $e$ is not in $\Gamma_\lambda$ (i.e., $p$ can travel inside a coset of $H_\lambda$ other than $1H_\lambda$). If no such path exists, we set $\dl (h,k)=\infty $. Clearly $\dl $ satisfies the triangle inequality.
\end{defn}

The notion of weak relative hyperbolicity defined above is not sensitive to `finite changes' in generating sets in the following sense. Recall that two metrics $\d _1, \d _2\colon S\to [0, +\infty)$ on a set $S$ are {\it bi-Lipschitz equivalent} \label{i-bL1} (we write $\d _1\sim _{Lip} \d _2$ if $\d_1$) if the ratios $\d _1/\d _2$ and $\d _2/\d _1$ are bounded on $S\times S$ minus the diagonal.

\begin{prop}\label{x1x2}
Let $G$ be a group, $\Hl$ a collection of subgroups of $G$, $X_1$, $X_2$ two relative generating sets of $G$ with respect to $\Hl $. Suppose that $|X_1\triangle X_2|<\infty $. Then $\d_{X_1\cup \mathcal H}\sim _{Lip} \d_{X_2\cup \mathcal H}$. In particular, $\Gamma (G, X_1\sqcup \mathcal H)$ is hyperbolic if and only if $\Gamma (G, X_1\sqcup \mathcal H)$ is.
\end{prop}

\begin{proof}
The proof is standard and is left to the reader.
\end{proof}

\begin{rem}\label{dl-not-eq}
Note that the metric $\dl $ is much more sensitive. For instance, let $G$ be any finite group, $H=G$, and $X=\emptyset$. Then $\widehat \d (g,h)=\infty$ for any distinct $g,h\in H$. However if we take $X=G$, we have $\widehat \d (g,h)<\infty$ for all $g,h\in H$.
\end{rem}

\begin{defn}[{\bf Components, connected and isolated components}]\label{defcomp}
Let $q$ be a path in the Cayley graph $\G $. A (non-trivial) subpath $p$ of $q$ is called an \emph{$H_\lambda $-subpath}, if the label of $p$ is a word in the alphabet $H_\lambda   $. An $H_\lambda$-subpath $p$ of $q$ is an {\it $H_\lambda $-component} if $p$ is not contained in a longer subpath of $q$ with this property. Further by a {\it component} of $q$ we mean an $H_\lambda $-component of $q$ for some $\lambda \in \Lambda$.

Two $H_\lambda $-components $p_1, p_2$ of a path $q$ in $\G $ are called {\it connected} if there exists a
path $c$ in $\G $ that connects some vertex of $p_1$ to some vertex of $p_2$, and ${\Lab (c)}$ is a word consisting only of letters from $H_\lambda $. In algebraic terms this means that all vertices of $p_1$ and $p_2$ belong to the same left coset of $H_\lambda $. Note also that we can always assume that $c$ has length at most $1$ as every non-trivial element of $H_\lambda $ is included in the set of generators. An $H_\lambda$-component $p$ of a path $q$ in $\G $ is {\it isolated} if it is not connected to any other component of $q$.

Finally, given  a path $p$ in $\G $ labelled by a word in an alphabet $H_\lambda  $ for some $\lambda \in \Lambda $, we define $$\widehat\ell (p)=\dl (1, \Lab (p)).$$  We stress that $\widehat\ell$ not only depends on the endpoints of $p$, but also on $\lambda$. Indeed it can happen that two vertices $x,y\in G$ can be connected by paths $p$ and $q$ labelled by words in alphabets $H_\lambda  $ and $H_\mu  $ for some $\mu \ne \lambda $. In this case $\widehat \ell (p) =\dl (1, x^{-1}y)$ and $\widehat \ell (q)=\widehat\d_\mu (1, x^{-1}y)$ may be non-equal. Also note that $\widehat \ell$ is undefined for paths whose labels involve letters from more then one $H_\lambda $ or from $X$. In our paper $\widehat \ell$ will be used to ``measure" components of paths in $\G $, in which case it is always well-defined (but may be infinite).
\end{defn}

The lemma below follows immediately from Definitions \ref{maindef} and \ref{defcomp}.

\begin{lem}\label{isocomp}
Let $p$ be an isolated $H_\lambda $-component of a cycle of length $C$ in $\G $. Then $\widehat\ell (p)\le C$.
\end{lem}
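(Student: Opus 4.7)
The plan is to exhibit an admissible path from $p_-$ to $p_+$ of length at most $C$ by using the arc complementary to $p$ in the cycle. Since left-translation by $p_-^{-1}$ is a label-preserving automorphism of $\G$ that sends edges with endpoints in $p_-H_\lambda$ and labels in $H_\lambda \setminus \{1\}$ to edges of $\Gamma_\lambda$, the quantity $\widehat\ell(p) = \dl(1, \Lab(p))$ equals the length of a shortest path in $\G$ from $p_-$ to $p_+$ that uses no edge having both endpoints in the coset $p_-H_\lambda$ and label in $H_\lambda \setminus \{1\}$. It is enough to produce one such path of length at most $C$.

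First I would write the cycle as $c = p\, q$, where $q$ runs from $p_+$ back to $p_-$ with $\ell(q) = C - \ell(p)$. In the degenerate case $\ell(q) = 0$, $p$ is itself a closed loop in $\G$, so $\Lab(p) =_G 1$ and $\widehat\ell(p) = 0 \le C$; so I may assume $\ell(q) > 0$. My candidate admissible path is $q^{-1}$, which runs from $p_-$ to $p_+$ and has length $C - \ell(p) \le C$. I need to verify that $q^{-1}$ contains no edge of the forbidden type.

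Suppose, for contradiction, that $q^{-1}$ contains such an edge $e$, and let $s$ be the maximal $H_\lambda$-subpath of $q^{-1}$ containing $e$; then all vertices of $s$ lie in the single coset $p_- H_\lambda$. The key observation is that $s$ is an $H_\lambda$-component of the full cycle $c$, not merely of $q^{-1}$: by maximality of $p$ as a component of $c$, the edge of $c$ immediately following $p$ (the first edge of $q$, i.e.\ the last edge of $q^{-1}$) and the edge immediately preceding $p$ (the last edge of $q$, i.e.\ the first edge of $q^{-1}$) are not $H_\lambda$-labelled; hence any $H_\lambda$-component of $q^{-1}$ sits strictly in the interior of $q^{-1}$ and remains maximal when viewed inside $c$. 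Since $s$ and $p$ are then distinct $H_\lambda$-components of $c$ whose vertex sets both lie in $p_- H_\lambda$, there is an edge of $\G$ labelled by an element of $H_\lambda \setminus \{1\}$ joining any vertex of $p$ to any vertex of $s$, so $s$ and $p$ are connected in the sense of Definition \ref{defcomp}, contradicting the isolation of $p$.

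The main technical point to get right is the distinction between edges of $\Gamma_\lambda$ itself (which live only in the coset of the identity) and their translates, together with the care needed at the endpoints of $q^{-1}$: an $H_\lambda$-component of $q^{-1}$ could \emph{a priori} extend beyond $q^{-1}$ into $p$ when viewed inside $c$, and it is precisely the maximality of $p$ as a component of $c$ that rules this out. The same reasoning actually gives the slightly sharper bound $\widehat\ell(p) \le C - \ell(p)$.
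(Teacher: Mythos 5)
Your proof is correct and is exactly the argument the paper has in mind; the paper simply asserts that the lemma ``follows immediately from Definitions \ref{maindef} and \ref{defcomp}'' without writing it out, and your write-up supplies the details (using the complementary arc as the admissible path, translating by $p_-^{-1}$, and invoking maximality of $p$ to ensure no $H_\lambda$-component of the complementary arc merges with $p$). The one nontrivial point, that a forbidden edge in $q^{-1}$ would produce an $H_\lambda$-component of the cycle distinct from $p$ and connected to it, is handled correctly.
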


\begin{defn}[{\bf Bounded and reduced presentations}]\label{i-brp}
A relative presentation
\begin{equation}\label{relpres}
\langle X, \mathcal H \mid \mathcal S\cup \mathcal R\rangle
\end{equation}
is said to be {\it bounded} if relators from $\mathcal R$ have uniformly bounded length, i.e., $\sup \{ \| R\| \mid R\in \mathcal R\} <\infty $. Further the presentation is called {\it reduced} if for every $R\in \mathcal R$ and some (equivalently any) cycle $p$ in $\G $ labeled by $R$, all components of $p$ are isolated and have length $1$ (i.e., consist of a single edge).
\end{defn}

\begin{rem}\label{rem1}
Note that whenever (\ref{relpres}) is reduced, for any letter $h\in H_\lambda  $ appearing in a word from $\mathcal R$, we have  $\dl (1, h)\le \| R\|$ by Lemma \ref{isocomp}. In particular, if (\ref{relpres}) is bounded, there is a uniform bound on $\dl (1, h)$ for such $h$.
\end{rem}

\begin{lem}\label{pres}
Suppose that a group $G$ is weakly hyperbolic relative to a collection of subgroups $\Hl $ and a subset $X$. Then there exists a bounded reduced relative presentation of $G$ with respect to $\Hl $ and $X$ with linear relative isoperimetric function.

Conversely, suppose that there exists a bounded relative presentation of $G$ with respect to $\Hl $ and $X$ with linear relative isoperimetric function. Then $G$ is weakly hyperbolic relative to the collection of subgroups $\Hl $ and the subset $X$.
\end{lem}

\begin{proof}
Let us call a word $W$ in the alphabet $X\sqcup \mathcal H$ a {\it relator} if $W$ represents the identity in $G$. Further we call $W$ {\it atomic} if the following conditions hold: a) for some (hence any) cycle $p$ in $\G $ labeled by $W$, all components of $p$ are isolated and have length $1$ (i.e., consist of a single edge); b) $W$ is not a single letter from $\mathcal H$.

Let $\mathcal R^\prime $ (respectively, $\mathcal R$) consist of all relators (respectively, atomic relators) that have length at most $16\delta $, where $\delta $ is the hyperbolicity constant of $\G $.

Let us first show that for every integer $n$, there exists a constant $C_n>0$ such that for every word $W\in \mathcal R^\prime $ of length $\| W\| \le n$, there is a van Kampen diagram $\Delta $ with boundary label $W$ and $Area ^{rel}(\Delta )\le C_n$ over the presentation
\begin{equation}\label{rp49}
\langle X, \mathcal H \mid \mathcal S\cup \mathcal R\rangle ,
\end{equation}
where $\mathcal S=\bigcup_{\lambda\in \Lambda} \mathcal S_\lambda  $ as in Section \ref{sec-GPIF}. We proceed by induction on $n$. If $\| W\| =1$, then $W$ is either atomic or consists of a single letter from $\mathcal H$. Thus we can tale $C_1=1$. Suppose now that $\| W\| =n>1$ and $W$ is not atomic. Let $p$ be a cycle in $\G$ labeled by $W$. There are two possibilities to consider.

\begin{figure}
  \centering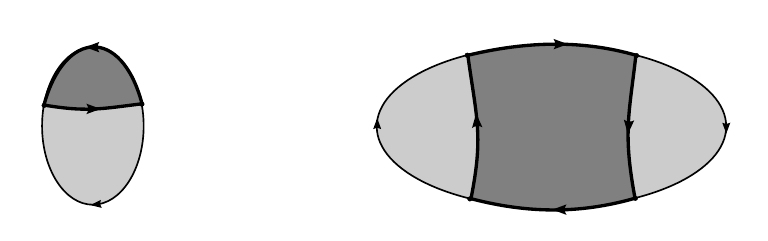\\
  \caption{Two cases in the proof of Lemma \ref{pres}.}\label{non-atomic}
\end{figure}

First assume that some $H_\lambda $--component $q$ of $p$ has length greater than $1$. Up to a cyclic permutation, we have $W\equiv AQ$, where $Q=\Lab (q)$. Let $h\in H_\lambda $ be the element represented by $Q$. Then $Ah$ is a relator of lengths at most $\| W\| -1$ and by the inductive assumption there is a van Kampen diagram $\Sigma $ over (\ref{rp49}) with boundary label $Ah$ and area at most $C_{n-1}$. Gluing this diagram and the $\mathcal S$-cell with boundary label $h^{-1}Q$ in the obvious way (Fig. \ref{non-atomic}), we obtain a van Kampen diagram over (\ref{rp49}) with boundary label $W$ and area at most $C_{n-1}+1$.

Now assume that the cycle $p$ decomposes as $a_1ua_2v$, where $a_1, a_2$ are connected $H_\lambda $ components for some $\lambda \in \Lambda $. Let $A_1UA_2V$ be the corresponding decomposition of $W$. Since the components $a_1$ and $a_2$ are connected to each other, $U$ and $V$ represent some elements $h$ and $k$ of $H_\lambda $, respectively. Note that $A_1hA_2k\in \mathcal S_\lambda $. Further the words $h^{-1}U$, $k^{-1}V$ represent $1$ in $G$ and have lengths smaller than $\| W\| $. By the inductive assumption there are disc van Kampen diagrams $\Delta _1$ and $\Delta _2$ over (\ref{rp49}) with boundary labels $h^{-1}U$ and $k^{-1}V$, respectively, and areas at most $C_{n-1}$. Gluing these diagrams and the $\mathcal S$-cell labeled $A_1hA_2k$ in the obvious way (Fig. \ref{non-atomic}), we obtain a diagram over (\ref{rp49}) with boundary label $W$ and area at most $2C_{n-1}+1$.  Thus we can set $C_n=2C_{n-1}+1$.

Recall that any $\delta $--hyperbolic graph
endowed with the combinatorial metric becomes $1$--connected after gluing $2$--cells along all combinatorial loops of length at most
$16\delta$ and moreover the combinatorial isoperimetric function of the resulting $2$-complex is linear (see \cite[Ch. III.H, Lemma 2.6]{BH_metric} for details). In our settings this means that the presentation
\begin{equation}\label{rp01}
\langle X, \mathcal H\mid \mathcal R^\prime \rangle
\end{equation}
represents the group $G$ and (\ref{rp01}) has a linear isoperimetric function $f(n)=An$ for some constant $A$. According to the previous paragraph every diagram over (\ref{rp01}) can be converted to a diagram over (\ref{rp49}) by replacing every cell with a van Kampen diagram over (\ref{rp49}) having the same boundary label and at most $C_{16\delta}$ cells.  Thus (\ref{rp49}) represents $G$ and $C_{16\delta }An$ is a relative isoperimetric function of (\ref{rp49}). Clearly (\ref{rp49}) is bounded and reduced.

To prove the converse, let (\ref{rp49}) be a relative presentation of $G$ with respect to $\Hl $ and $X$ with  relative isoperimetric function $f(n)=Cn$. Obviously we also have
\begin{equation}\label{rp49-1}
G=\langle X, \mathcal H \mid \mathcal S^\prime\cup \mathcal R\rangle ,
\end{equation}
where $\mathcal S^\prime =\bigcup_{\lambda\in \Lambda} \mathcal S_\lambda^\prime  $ and  $\mathcal S_\lambda ^\prime$ consists of all words of length $\le 3$ in the alphabet $H_\lambda  $ representing $1$ in $H_\lambda$. The idea is to show that the (non-relative) isoperimetric function of (\ref{rp49-1}) is linear. Since the length of relators in (\ref{rp49-1}) is uniformly bounded, the combinatorial isoperimetric function of $\G$ is also linear by Remark \ref{rem-mu}. Hence $\G $ is hyperbolic (for the definition of an isoperimetric function of a general space and its relation to hyperbolicity see Sec. 2 of Ch. III.H  and specifically Theorem 2.9 in \cite{BH_metric}).

Let $W$ be a word in $X\sqcup \mathcal H$ representing $1$ in $G$. Suppose that $\| W\|=n$. Let $\Delta $ be a van Kampen diagram with boundary label $W$ over (\ref{rp49}) such that a) $\Delta $ has at most $Cn$ $\mathcal R$-cells; and b) $\Delta $ has minimal number of $\mathcal S$-cells among all diagrams satisfying a). In particular, b) implies that no two $\mathcal S$-cells can have a common boundary edge as otherwise we could replace these $\mathcal S$-cells with a single one. Hence every boundary edge of every $\mathcal S$-cell either belongs to $\partial \Delta $ or to a boundary of an $\mathcal R$-cell. Thus the total length of boundaries of all $\mathcal S$-cells in $\Delta $ is at most $(CM+1)n$, where $M=\max \{ \| R\| : R\in \mathcal R\}$. Triangulating every $\mathcal S$-cell of $\Delta $ in the obvious way, we obtain a van Kampen diagram $\Delta ^\prime $ over (\ref{rp49-1}) with less than $(CM+C+1)n$ cells. Hence the (non-relative) isoperimetric of (\ref{rp49-1}) is linear and we are done.
\end{proof}

Given a subset $Y\le G $ and a path $p$ in a Cayley graph $\G$, let $\ell _Y (p)$ \label{i-lY} denote the word length of the element represented by $\Lab (p)$ with respect to $Y$. Recall that $\ell _Y(p)=\infty $ if $\Lab (p)\notin \langle Y\rangle $.

\begin{lem}\label{Ylambda}
Let
\begin{equation}\label{rp1}
\langle X, \mathcal H  \mid  \mathcal S\cup \mathcal R\rangle
\end{equation}
be a bounded presentation of a group $G$ with respect to a collection of subgroups $\Hl $. Let $Y_\lambda$ be the set of all letters from $H_\lambda   $ that appear in words from $\mathcal R$. Suppose that (\ref{rp1}) has relative isoperimetric function $f(n)$. Then for every  cycle $q$ in $\Gamma (G, X\sqcup H)$ and every set of isolated components $p_1, \ldots , p_n$ of $q$, where $p_i$ is an $H_{\lambda _i}$-component, we have
\begin{equation}\label{sumoflengths}
\sum\limits_{i=1}^n \ell_{Y_{\lambda _i}} (p_i) \le M f(\ell (q)),
\end{equation}
where
\begin{equation}\label{M}
M=\max\limits_{R\in \mathcal R} \| R\| .
\end{equation}
\end{lem}

\begin{proof}
Consider a van Kampen diagram $\Delta $ over (\ref{rp1}) whose boundary label is $\Lab (q)$. In what follows
we identify $\partial \Delta $ with $q$. Assume that $q=p_1r_1\cdots p_nr_n$. For $i=1,
\ldots , n$, let $\mathcal D_i$ denote the set of all subdiagrams
of $\Delta $ bounded by $p_i(p^{\prime }_i)^{-1}$, where
$p^{\prime }_i$ is a path in $\Delta $ without self intersections such that
$(p^{\prime }_i)_-=(p_i)_-$, $(p^{\prime }_i)_+=(p_i)_+$, and
$\Lab (p^{\prime }_i)$ is a word in the alphabet $H_{\lambda
_i}$. We choose a subdiagram $\Sigma _i \in
\mathcal D_i$ that has maximal number of cells among all
subdiagrams from $\mathcal D_i$ (see Fig. \ref{fig4}).

\begin{figure}
  \centering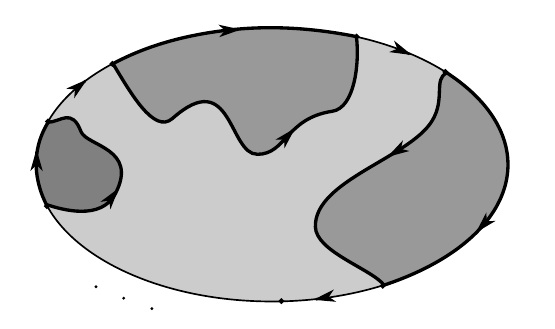\\
  \caption{Decomposition of $\Delta $.}\label{fig4}
\end{figure}

Let $\partial \Sigma _i= p_is_i^{-1}$. Since $p_i$ is an isolated
component of $q$, the path $s_i$ has no common edges with $r_i$,
$i=1, \ldots k$, and the sets of edges of $s_i$ and $s_j$ are
disjoint whenever $j\ne i$. Therefore each edge $e$ of $s_i$ belongs
to the boundary of some cell $\Pi $ of the subdiagram $\Xi $ of
$\Delta $ bounded by $s_1r_1\cdots s_kr_k $.

If $\Pi $ is an $\mathcal S$--cell, then $\Lab (\Pi )$ is a word in the alphabet $H_{\lambda
_i} $. Hence by joining $\Pi $ to $\Sigma _i$ we get
a subdiagram $\Sigma _i^\prime \in \mathcal D_i$ with bigger number
of cells that contradicts the choice of $\Sigma _i $. Thus each edge
of $s_i$ belongs to the boundary of an $\mathcal R$--cell.

The total (combinatorial) length of $s_i$'s does not exceed the number of $\mathcal R$--cells in $\Xi $
times the maximal number of edges in boundary of an $\mathcal
R$--cell. Therefore,
\begin{equation}\label{Lomega}
\sum\limits_{i=1}^k \ell _{Y_{\lambda _i}} (p_i)= \sum\limits_{i=1}^k
\ell _{Y_{\lambda _i}}(s_i) \le M Area ^{rel} (\Lab(\partial \Delta ))\le
Mf(\ell(q)).
\end{equation}
\end{proof}

We extend the definition of bi-Lipschitz equivalence \label{i-bL2} to metrics with possibly infinite values as follows. Two metrics $\d_1, \d_2\colon H\to [0, +\infty]$ on a set $H$ are \emph{bi-Lipschitz equivalent} (we write $\d_1 \sim_{Lip} \d_2$), if there is a constant $C$ such that for any $h_1,h_2\in H$, $\d_1 (h_1,h_2)$ is finite if and only if $\d_2(h_1,h_2)$ is, and if both ratios are finite we have $\d_1(h_1,h_2)/\d_2(h_1,h_2)<C$ and $\d_2(h_1,h_2)/\d_1(h_1,h_2)<C$.

Recall that given a path $p$ in $\G $, $\widehat\ell (p)$ is only defined if $p$ is labelled by elements of some $H_\lambda $ and equals $\dl (1, \Lab (p))$ in this case. For weakly relatively hyperbolic groups we obtain the following.

\begin{lem}\label{omega}
Suppose that $G$ is weakly hyperbolic relative to $X$ and $\Hl $. Then the following hold.
\begin{enumerate}
\item[(a)] There exists a constant $L$ such that for every  cycle $q$ in $\G$ and every set of isolated components $p_1, \ldots , p_n$ of $q$, we have
$$
\sum\limits_{i=1}^n \widehat\ell (p_i) \le L \ell (q).
$$
\item[(b)] For every $\lambda \in \Lambda $, there exists a subset $Y_\lambda \subseteq H_\lambda $ such that $\dol \sim_{Lip} \dl$. More precisely, if (\ref{rp1}) is a reduced bounded relative presentation
of $G$ with respect to $X$ and $\Hl $ with linear relative isoperimetric function, then one can take  $Y_\lambda $ to be the set of all letters from $H_\lambda   $ that appear in words from $\mathcal R$.
\end{enumerate}
\end{lem}

\begin{proof}
By Lemma \ref{pres} there exists a reduced bounded relative presentation of $G$ with respect to $X$ and $\Hl $ with linear relative isoperimetric function. Let (\ref{rp1}) be any such a presentation. Note that since (\ref{rp1}) is reduced, we have
\begin{equation}\label{lly}
\dl (1,y)\le M
\end{equation}
for every $y\in Y_\lambda$ by Lemma \ref{isocomp}, where $M$ is defined by (\ref{M}). This and the inequality (\ref{sumoflengths}) implies (a).

To prove (b), take any $h\in H_\lambda$. Notice that $\dl (1,h)\le M |h|_{Y_\lambda }$ by (\ref{lly}). It remains to prove the converse inequality. In case $\dl (1,h) =\infty$ we obviously have $|h|_{Y_\lambda }\le \dl (1,h)$. Suppose now that $\dl (1,h)=n<\infty $. Let $p$ be a path in $\G $ of length $n$ such that $p_-=1$, $p_+=h$, and $p$ contains no edges of $\Gamma _{H_\lambda }$. Let $e$ be the edge of $\G $ connecting $1$ to $h$ and labeled by $h$. Then $e$ is an isolated $H_\lambda $-component of the cycle $ep^{-1}$ and by part (a) we have $$|h|_{Y_\lambda }=\ell _{Y_\lambda } (e)\le L \ell (q)=L (n+1)\le 2Ln=2L\dl (1,h).$$ Thus $\dl $ and $\dol$ are Lipschitz equivalent.
\end{proof}

In many cases the subsets $Y_\lambda $ can be described explicitly. Here are some elementary examples. Note that in these cases changing the relative presentation significantly affects the corresponding relative metric (cf. Remark \ref{dl-not-eq}).

\begin{ex}\label{ex: HNN}
\begin{enumerate}
\item[(a)] Let $G=H_1\ast _{A=B} H_2$ be the amalgamated product of groups $H_1,H_2$ corresponding to an isomorphism $\iota\colon A\to B$ between subgroups $A\le H_1$ and $B\le H_2$. Then $G$ is weakly hyperbolic relative to $\{ H_1, H_2\}$ and $X=\emptyset$. Indeed it is easy to verify that $\G $ is quasi-isometric to the Bass-Serre tree of $G$ (see, e.g., \cite{Osi04}). The natural relative presentation
    \begin{equation}\label{amprod}
    G=\langle H_1, H_2\mid a=\iota (a), \, a\in A\rangle
    \end{equation}
     is obviously bounded. Moreover it easily follows from the normal form theorem for amalgamated products \cite[Ch. IV, Theorem 2.6]{LS} that (\ref{amprod}) has linear relative isoperimetric function. The definition of $Y_\lambda $ from Lemma \ref{Ylambda} gives $Y_1=A$, $Y_2=B$ in this case. Hence by part (b) of Lemma \ref{omega}, for the corresponding relative metrics on $H_1$ and $H_2$ we have $\widehat\d _1\sim _{Lip} \d _A$ and $\widehat\d _2\sim _{Lip} \d _B$, where $d_A$ and $d_B$ are the word metrics on $H_1$ and $H_2$ with respect to the subsets $A$ and $B$, respectively (note that these metrics only take values in $0,1,\infty$).

\item[(b)] Similarly if $G$ is an HNN-extension of a group $H$ with associated subgroups $A,B\le H$, then $G$ is weakly hyperbolic relative to $H$ and $X=\{ t\}$, where $t$ is the stable letter. The corresponding relative metric on $H$ is bi-Lipschitz equivalent to the word metric with respect to the set $A\cup B$.

\item[(c)] More generally, it is not hard to show that for every finite graph of groups $\mathcal G$, its fundamental group $\pi_1(\mathcal G)$ is weakly hyperbolic relative to the collection of vertex groups and the subset $X$ consisting of stable letters (i.e., generators corresponding to edges of $\mathcal G\setminus T$, where $T$ is a spanning subtree of $\mathcal G$). The corresponding relative metric on a vertex group $H_v$ corresponding to a vertex $v$ will be bi-Lipschitz equivalent to the word metric with respect to the union of the edge subgroups of $H_v$ corresponding to edges incident to $v$. The proof is essentially the same as above. We leave this as an exercise for the reader. For details about fundamental groups of graphs of groups, their presentations, and the normal form theorem we refer to \cite{Ser}.
\end{enumerate}
\end{ex}

\subsection{Isolated components in geodesic polygons}

Throughout this section let $G$ be a group, $\Hl $ a collection of subgroups of $G$, $X$ a subset of $G$.
Our next goal is to generalize some useful results about quasi-geodesic polygons in Cayley graphs of relatively hyperbolic groups proved in \cite{Osi07}. We start with a definition which is an analogue of \cite[Definition 3.1]{Osi07}.

\begin{defn}
For $\mu \ge 1$, $c\ge 0$, and $n\ge 2$, let $\mathcal
Q_{\mu , c}(n)$ denote the set of all pairs $(\P ,\, I)$,
where $\P =p_1\ldots p_n$ is an $n$--gon in $\G $ and $I$ is a
distinguished subset of the set of sides $\{ p_1, \ldots, p_n\} $
of $\P $ such that:
\begin{enumerate}
\item Each side $p_i\in I$ is an isolated $H_{\lambda_i}$-component of $\P $ for some $\lambda_i\in \Lambda$.

\item Each side $p_i\notin I$ is $(\mu ,c)$--quasi--geodesic.
\end{enumerate}
For technical reason, it is convenient to allow some of the sides
$p_1, \ldots , p_n$ to be trivial. Thus we have $\mathcal
Q_{\mu , c}(2)\subseteq\mathcal Q_{\mu , c}(3)\subseteq
\ldots $.
Given $(\P , I)\in \mathcal Q_{\mu , c}(n)$, we set
$$s(\P , I)=\sum\limits_{p_i\in I} \widehat \ell (p_i)=\sum\limits_{p_i\in I} \widehat d_{\lambda_i} (1, \Lab(p_i))$$ and $$ s_{\mu , c}(n)=\sup\limits_{(\P , I) \in
\mathcal Q_{\mu , c}(n)} s(\P , I).$$\label{i-smcn}
\end{defn}

A priori, it is not even clear whether the quantity $s_{\mu , c}(n)$ is finite for fixed values of $n$, $\mu $, and $c$. However
a much stronger result holds. It is the analogue of Proposition 3.2 from \cite{Osi07}.

\begin{prop}\label{sn}
Suppose that $G $ is weakly hyperbolic relative to $X$ and $\Hl $. Then for any $\mu \ge 1$, $c\ge 0$, there exists a constant $D=D(\mu , c)>0$ such that $s_{\mu , c}(n)\le Dn $ for any $n\in \mathbb N$.
\end{prop}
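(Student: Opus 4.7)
The plan is to proceed by induction on $n$, following the strategy that Osin uses for relatively hyperbolic groups in \cite[Proposition 3.2]{Osi07}. Two main ingredients drive the argument: Lemma \ref{omega}(a), which bounds $\sum \widehat{\ell}(p_i)$ by a universal multiple of the perimeter of any cycle containing the $p_i$ as isolated components; and Lemma \ref{Ols}, which uses the hyperbolicity of $\G$ to extract long close subsegments whenever a distinguished subset of sides is disproportionately long. Before running the induction I would normalize $(\mathcal{P}, I)$ by collapsing each multi-edge $H_\lambda$-component of $I$ into a single edge (preserving both isolation and $\widehat{\ell}$, since $H_\lambda$ is closed under multiplication) and by replacing every $(\mu, c)$-quasi-geodesic side with a geodesic on the same endpoints; Lemma \ref{qg} guarantees this costs only uniform Hausdorff distance $\kappa = \kappa(\delta, \mu, c)$.

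For the inductive step with $n$ large, set $C_0 = 10^3 \max\{30\delta, c\}$ and split into two regimes. In the \emph{short} regime, $\sum_{p_j \notin I} \ell(p_j) \leq C_0 n$, so the normalized polygon $\bar{\mathcal{P}}$ has perimeter at most $(1 + C_0) n$ and Lemma \ref{omega}(a) immediately yields $s(\mathcal{P}, I) \leq L(1 + C_0) n$. In the \emph{long} regime, Lemma \ref{Ols} applied with $S$ equal to the set of non-distinguished sides of $\bar{\mathcal{P}}$ produces sides $p_i \in S$, $p_j$ and $13\delta$-close subsegments $u \subset p_i$, $v \subset p_j$ of length exceeding $C_0$. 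Joining the endpoints of $u$ and $v$ by two geodesics of length at most $13\delta$ cuts $\bar{\mathcal{P}}$ into two sub-polygons, each with strictly fewer sides than $n$. The distinguished sides partition across the two pieces, the newly introduced cut geodesics are non-distinguished and bounded in length, and the inductive hypothesis applied to each sub-polygon yields the desired total bound.

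The main obstacle is calibrating the constant $D = D(\mu, c)$ so the recursion closes uniformly. The cut in the long regime produces sub-polygons whose total side count exceeds $n$ by a bounded amount (because $p_i$ and $p_j$ are split into subsegments and the two cut geodesics become new sides), so the naive estimate $s(\mathcal{P}, I) \leq D k_1 + D k_2$ incurs an additive overhead. One absorbs this either by strengthening the induction to include an additive correction of the form $s \leq D n + D'$, or by choosing $D$ together with a sufficiently robust base case (small $n$, where the perimeter is controlled by $\mu, c$ alone thanks to $d_{X \cup \mathcal{H}}(1, h) = 1$ for every $h \in H_\lambda \setminus \{1\}$). A secondary technical point is that the cut geodesics must be treated as $(1, 0)$-quasi-geodesic sides of the sub-polygons so that these still lie in $\mathcal{Q}_{\mu, c}$, which is fine since $\mu \geq 1$; one must also confirm that no distinguished component loses its isolation property under either the geodesic replacement or the cut, any such failure being handled by absorbing the affected component into the recursion on smaller $n$. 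Once calibrated, the argument yields $D$ linear in $L(1 + C_0)$, matching the structure of \cite[Proposition 3.2]{Osi07}.
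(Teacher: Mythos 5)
Your proposal captures the right flavor (induction on $n$, Lemma \ref{omega}(a) for short polygons, a cut for long ones), but it has a gap that would stop the argument: the recursion you set up does not actually progress. Lemma \ref{Ols} tells you that two distinct sides $p_i$, $p_j$ contain close long subsegments $u$, $v$, but it gives no control over \emph{where} those sides sit on the polygon. If $p_i$ and $p_j$ are adjacent (say $j = i+1$), cutting across $u$ and $v$ produces a tiny piece (a $3$- or $4$-gon) and a large piece with roughly $n+1$ sides (the parts of $p_i$, $p_j$ outside $u$, $v$ become new sides, and so do the cut geodesics). The claim that each sub-polygon has ``strictly fewer sides than $n$'' is false, and the induction on $n$ stalls. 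The paper avoids exactly this by not using Lemma \ref{Ols} at all: it uses Lemma \ref{P} (Olshanskii's Lemma 23), which produces a single short chord $[u,v]$ of length $O(\log n)$ that is \emph{guaranteed} to split the polygon into pieces of size $m_i$ with $n/4 < m_i < 3n/4 + 2$ — a balanced split — which is what makes the recursion well-founded.

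Two further points that are glossed over and are load-bearing in the paper's argument. First, the closure of the recursion: even if your split were balanced, the bookkeeping ``$s(n) \le Dn + D'$'' does not close, because at each split the additive constant doubles ($s(n) \le s(n_1) + s(n_2) + K$ with $n_1 + n_2 \le n + O(1)$ forces $D' \le -2D - K$). The paper uses a separate result, Lemma \ref{linfunct}, which is calibrated precisely for a decomposition into $k \le C\log n$ pieces with $\sum n_j \le n + C\log n$ and $n_j \le \alpha n$; these bounds are exactly what Lemma \ref{P} delivers and Lemma \ref{Ols} does not. Second, the isolation problem: after cutting, a distinguished side $p_i$ may become connected to an $H_\lambda$-component of the new cut geodesic $t$, so it is no longer isolated in the sub-polygon and no longer eligible for the recursive bound. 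Saying this is ``handled by absorbing the affected component into the recursion on smaller $n$'' is not an argument — there is no smaller polygon in which $p_i$ is isolated. The paper's resolution is substantive: for each such $p_{i_j}$ it introduces two auxiliary edges $e_j$, $f_j$ joining $p_{i_j}$ to the relevant component $y_j$ of $t$, and re-organizes the polygon into a chain of $O(\log n)$ smaller cycles $c_0, \dots, c_{l+m+1}$ in which $y_j$, $e_j$, $f_j$ appear as isolated components, so that $\widehat\ell(p_{i_j})$ is recovered via $\widehat\ell(y_j) + \widehat\ell(e_j) + \widehat\ell(f_j)$. You would need to supply a construction of comparable care for your approach to go through.
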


The proof of this proposition repeats the proof of its relatively hyperbolic analogue,  Proposition 3.2 in Section 3 of \cite{Osi07},  almost verbatim after few changes in notation and terminology. In fact, the key tool in the proof of Proposition 3.2 in \cite{Osi07} was Lemma 2.7 from the same paper, which has a direct analogue, namely Lemma \ref{omega}, in our situation. Apart from this lemma, the proof in \cite{Osi07} only uses general facts about hyperbolic spaces, so all arguments remain valid. Since Proposition \ref{sn} plays a central role in our paper, we reproduce here the proof for convenience of the reader.

The following obvious observation will often be used without special references. If $q_1$, $q_2$ are two components of some path in $\G $ that are connected, then for any two vertices $u\in
q_1$ and $v\in q_2$, we have $\dxh (u,v)\le 1$. Note also that replacing $p_i$ for each $i\in I$ with a single edge labelled by a letter from the corresponding alphabet $H_\lambda  $ does not change $\widehat \ell (p_i)$. Thus we assume that for each $i\in I$, $p_i$ is a single edge. Below we also use the following notation for vertices of
$\P $:
$$ x_1=(p_n)_+=(p_1)_-, \; x_2=(p_1)_+=(p_2)_-,\; \ldots , \;
x_n=(p_{n-1})_+=(p_{n})_-.$$

The following immediate corollary of Lemma \ref{qg} will be used several times.
\begin{lem}\label{rect}
For any $\delta \ge 0$, $\mu \ge 1$, $c\ge 0$, there exists a
constant $\theta =\theta(\delta , \mu , c)\ge 0$ with the
following property. Let $Q$ be a quadrangle in a
$\delta$--hyperbolic space whose sides are $(\mu ,
c)$--quasi--geodesic. Then each side of $Q$ belongs to the closed
$\theta $--neighborhood of the union of the other three sides.
\end{lem}
\begin{proof}
Obviously $\theta =\kappa (\mu , c) + 2\delta$, where $\kappa (\mu , c)$ is the constant provided by Lemma \ref{qg}, works.
\end{proof}

From now on, we fix $\mu $ and $c$.  Without loss of generality we may
assume $\theta = \theta(\delta , \mu , c)$ to be a positive integer.
The proof of Proposition \ref{sn} is by induction on $n$. We
begin with the case $n\le 4$.

\begin{lem}\label{l1}
For any $\mu \ge 1$, $c\ge 0$, and $n\le 4$,  $s_{\mu ,
c}(n)$ is finite.
\end{lem}

\begin{proof}
Suppose that $(\P ,\, I)\in \mathcal Q_{\mu , c}(4)$, $\P
=p_1p_2p_3p_4$.  According to Lemma
\ref{omega}, it suffices to show that for each $p_i\in I$, there
is a cycle $c_i$ in $\G $ of length at most $K$, where $K$ is a constant which depends on $\mu $, $c$, and the hyperbolicity
constant $\delta $ of the graph $\G $ only, such that $p_i$ is an isolated component of
$c_i$. We will show that $$K=100(\mu
\theta +c+\theta )$$ works. There are $4$ cases to consider.

\noindent {\bf Case 1.}  Suppose $\sharp\, I=4$. Then the
assertion of the lemma is obvious. Indeed $\ell(\P )=4<K$ as each
$p_i\in I$ has lengths $1$, and we can set $c_i=\P $ for all $i$.

\noindent {\bf Case 2.}  Suppose $\sharp\, I=3$, say $I=\{
p_1,p_2,p_3\} $. Since $p_4$ is $(\mu, c)$--quasi--geodesic,
we have
$$\ell(p_4)\le \mu \dxh (x_4, x_1)+c\le 3\mu +c$$ by
the triangle inequality. Hence $\ell(\P )\le 3\mu +c+3< K$ and we
can set $c_i=\P $ again.

\medskip

\begin{figure}
\hspace{31mm} \includegraphics{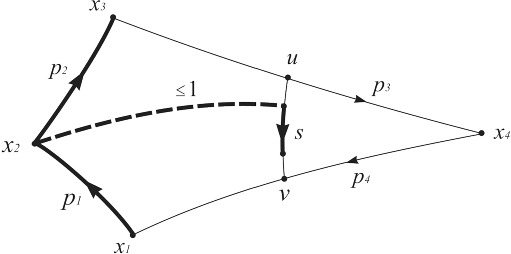} \\
\hspace*{36mm} \includegraphics{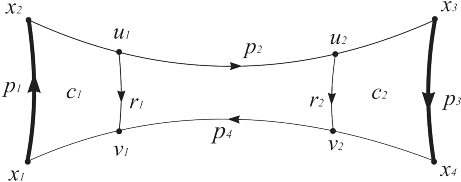}\\
\vspace*{-8mm} \caption{Cases 3 a) and b)}\label{7-f1}
\end{figure}

\noindent {\bf Case 3.}  Assume now that $\sharp\, I=2$. Up to
renumbering
the sides, there are two possibilities to consider.

\smallskip

a) First suppose $I=\{ p_1,p_2\} $. If $\dxh (x_3, x_4)< \theta
+2$, we have
$$
\ell(p_3)\le \mu \dxh (x_3, x_4) +c<\mu (\theta +2)+c,
$$
$$
\begin{array}{cl}
\ell(p_4)\le & \mu \dxh (x_4, x_1) +c \le \\ &  \mu (\dxh
(x_1, x_2) +\dxh (x_2, x_3) +\dxh (x_3, x_4))+ c< \\ & \mu (1
+1 + \theta +2) +c \le \mu (\theta +4)+c,
\end{array}
$$
and hence $$\ell(\P )<2+ \ell(p_3)+\ell(p_3) <\mu (2\theta +6)+2c+2<
K.$$ Thus we may assume $\dxh (x_3, x_4)\ge \theta +2$. Let $u$ be
a vertex on $p_3$ such that $\dxh (x_3, u)=\theta +2$. By Lemma
\ref{rect} there exists a vertex $v\in p_1\cup p_2\cup p_4 $ such
that $\dxh (u,v)\le \theta $. Note that, if fact, $v\in p_4$.
Indeed otherwise $v=x_2$ or $v=x_3$ and we have
$$\dxh (x_3, u)\le \dxh (x_3, v)+\dxh (u, v)\le 1+\theta $$ that
contradicts the choice of $u$.

Let $r$ be a geodesic path in $\G $ connecting $u$ to $v$. We wish
to show that no component of $r$ is connected to $p_1$ or $p_2$.
Indeed suppose that a component $s$ of $r$ is connected to $p_1$
or $p_2$ (Fig.\ref{7-f1}). Then $\dxh (x_2, s_-)\le 1$ and we
obtain
$$
\begin{array}{rl}
\dxh (u, x_3)\le & \dxh (u, s_-)+\dxh (s_-, x_2)+ \dxh (x_2,
x_3)\le \\ & (\theta -1)+1 +1=\theta +1.
\end{array}
$$
This contradicts the choice of $u$ again. Note also that $p_1$,
$p_2$ can not be connected to a component of $p_3$ or $p_4$ as
$p_1$, $p_2$ are isolated components in $\P $. Therefore $p_1$ and
$p_2$ are isolated components of the cycle
$$c=p_1p_2[x_3, u]r[v, x_1],$$ where $[x_3, u]$ and $[v, x_1]$ are
segments of $p_3$ and $p_4$ respectively. Using the triangle
inequality, it is easy to check that $\ell([v, x_1]) \le
\mu(2\theta +4)$ and $\ell(c)\le \mu (3\theta +6)+2c+\theta
+2< K$.

\smallskip

 b) Let $I=\{ p_1,p_3\}  $. If $\dxh (x_2, x_3)< 2\theta +2$,
we obtain $\ell(\P )< K$ arguing as in the previous case. Now assume
that $\dxh (x_2, x_3)\ge 2\theta +2$. Let $u_1$ (respectively
$u_2$) be the vertex on $p_2$ such that $\dxh (x_2, u_1)=\theta
+1$ (respectively $\dxh (x_3, u_2)=\theta +1$). By Lemma
\ref{rect} there exist vertices $v_1, v_2$ on $p_1\cup p_3\cup
p_4$ such that $\dxh (v_i,u_i)\le \theta $, $i=1,2$. In fact,
$v_1, v_2$ belong to $p_4$ (Fig.\ref{7-f1}). Indeed the reader can
easily check that the assumption $v_1=x_2$ (respectively
$v_1=x_3$) leads to the inequality $\dxh (x_2, u_1)\le \theta $
(respectively $\dxh (x_2, x_3)\le 2\theta +1$). In both cases we
get a contradiction. Hence $v_1\in p_4$ and similarly $v_2\in
p_4$.

Let $r_i$, $i=1,2$, be a geodesic path in $\G $ connecting $u_i$
to $v_i$. We set
$$c_1=p_1[x_2,u_1]r_1[v_1, x_1]$$ and
$$c_3=p_3[x_4, v_2]r_2^{-1}[u_2, x_3].$$
Arguing as in Case 3a) we can easily show that $p_i$ is an
isolated component of $c_i$ and $\ell(c_i)<K$ for $i=1,2$.

\medskip

\begin{figure}
   \hspace{12mm} \includegraphics{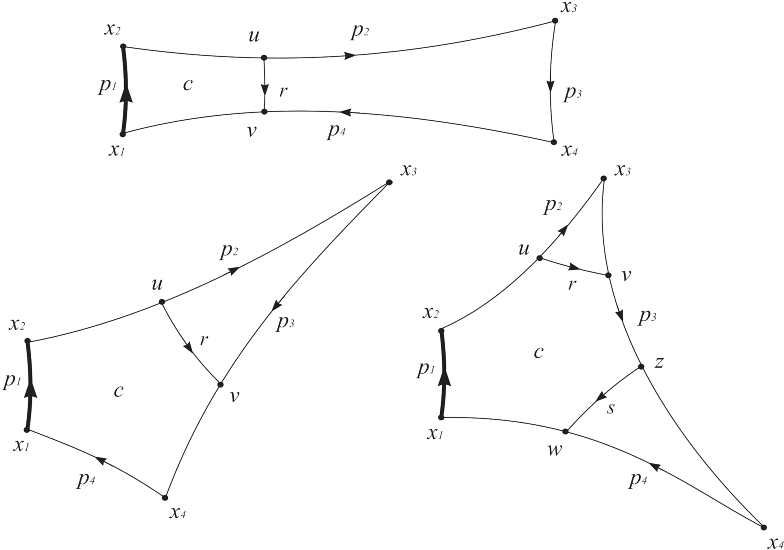}\\
  \vspace*{-10mm}
  \caption{Cases 4 a),  b1), and b2). }\label{7-f3}
\end{figure}

\noindent {\bf Case 4.}  Finally assume $\sharp I=1$. To be
definite, let $I=\{ p_1\} $. If $\dxh(x_2,x_3)< \theta +1$ and
$\dxh(x_4,x_1)< \theta +1$, we obtain $\ell(\P )< K$ as in the
previous cases. Thus, changing the enumeration of the sides if
necessary, we may assume that $\dxh(x_2,x_3)\ge \theta +1$. Let
$u$ be a point on $p_2$ such that $\dxh (x_2, u)=\theta +1$, $v$ a
point on $p_1\cup p_3\cup p_4$ such that $\dxh (u,v)\le \theta $,
$r$ a geodesic path in $\G $ connecting $u$ to $v$. As above it is
easy to show that $v\in p_3\cup p_4$. Let us consider two
possibilities (see Fig. \ref{7-f3}).

\smallskip

a) $v\in p_4$. Using the same arguments as in Cases 2 and 3 the
reader can easily prove that $p_1$ is an isolated component of the
cycle
\begin{equation}\label{c4a}
c=p_1[x_2,u]r[v, x_1].
\end{equation}
It is easy to show that $\ell(c)<K$.

\smallskip

b) $v\in p_3$. Here there are 2 cases again.

\smallskip

b1) If $\dxh (x_1,x_4)< \theta +1$, then we set
$$
c=p_1[x_2,u]r[v,x_4]p_4.
$$
The standard arguments show that $\ell(c)<K$ and $p_1$ is isolated in
$c$.

\smallskip

b2) $\dxh (x_1,x_4)\ge \theta  +1$.  Let $w$ be a vertex on $p_4$
such that $\dxh (x_1, w)=\theta +1$, $z$ a vertex on $p_1\cup
p_2\cup p_3$ such that $\dxh (z,w)\le \theta $. Again, in fact,
our assumptions imply that $z\in p_2\cup p_3$. If $z\in p_2$, the
lemma can be proved by repeating the arguments from the case 4a)
(after changing enumeration of the sides). If $z\in p_3$, we set
$$c=p_1[x_2,u]r[v,z]s[w,x_1],$$ where $s$ is a geodesic in $\G $
connecting $z$ to $w$. It is straightforward to check that $p_1$
is an isolated component of $c$ and $\ell(c)<K$. We leave details to
the reader.

\end{proof}

\begin{lem}\label{l2}
For any $n\ge 4 $, we have
\begin{equation}\label{ge5}
s_{\mu , c}(n)\le n (s_{\mu , c}(n-1)+s_{\mu , c}(4)).
\end{equation}
\end{lem}

\begin{proof}
We proceed by induction on $n$. The case $n=4$ is obvious, so we
assume that $n\ge 5$. Let $(\P , I)\in \mathcal Q_{\mu ,
c}(n)$, $p_i\in I$, and let $q$ be a geodesic in $\G $ connecting
$x_i$ to $x_{i+3}$ (indices are taken $mod\, n$). If $p_i$ is
isolated in the cycle $p_ip_{i+1}p_{i+2}q^{-1}$, we have $\widehat \ell
(p_i)\le s_{\mu , c}(4)$. Assume now that the component $p_i$
is not isolated in the cycle $p_ip_{i+1}p_{i+2}q^{-1}$. As $p_i$
is isolated in $\P $, this means that $p_i$ is connected to a
component $s$ of $q$. Hence $\dxh (x_i, s_+)\le 1$. Since $q$ is
geodesic in $\G $, this implies $s_-=x_i$ (see Fig. \ref{7-f4}).

Let $q=ss^\prime $ and let $e$ denote an edge in $\G $ such that $e_-=x_{i+1}$, $e_+=s_+$, and $\phi (e)$ is
a word in $\mathcal H$. We note that $e$ is
an isolated component of the cycle
$r=p_{i+1}p_{i+2}(s^{\prime}e)^{-1}$. Indeed if $e$ is connected to
a component of $p_{i+1}$ or $p_{i+2}$, then $p_i$ is not isolated
in $p$, and if $e$ is connected to a component of $s^\prime $,
then $q$ is not geodesic.  Similarly $s$ is an isolated component
of $p_{i+3}\ldots p_{i-1} ss^\prime $. Hence $\widehat \ell (s)\le
s_{\mu , c}(n-1)$ by the inductive assumption and $\widehat \ell
(e)\le s_{\mu , c}(4)$. Therefore we have $\widehat \ell (p_i)\le
s_{\mu , c}(4)+s_{\mu , c}(n-1)$. Repeating these
arguments for all $p_i\in I$, we get (\ref{ge5}).
\end{proof}

\begin{figure}
  \hspace{37mm}\includegraphics{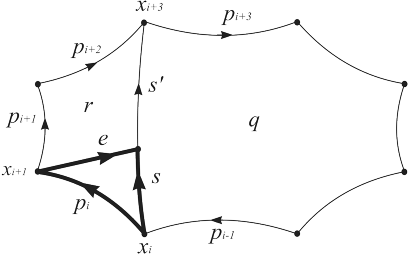}
  \caption{}\label{7-f4}
\end{figure}

\begin{cor}
$s_{\mu , c}(n)$ is finite for any $n$.
\end{cor}

The proof of the next lemma is a calculus exercise. We do not copy it and refer the reader to \cite[Lemma 3.6]{Osi07}.

\begin{lem}\label{linfunct}
Let $f\colon \mathbb N\to \mathbb N$. Suppose that there exist
constants $C, N>0$, and $\alpha \in (0,1)$ such that for any $n\in
\mathbb N$, $n>N$, there are $n_1, \ldots , n_k\in \mathbb N$
satisfying the following conditions:

a) $k\le C \ln n$;

b) $f(n)\le \sum\limits_{i=1}^k f(n_i)$;

c) $n\le \sum\limits_{i=1}^k n_i\le n+C \ln n$;

d) $n_i\le \alpha n$  for any $i=1, \ldots , k$.

\noindent Then $f(n)$ is bounded by a linear function from above.
\end{lem}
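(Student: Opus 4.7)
The plan is to prove by strong induction that $f(n) \le An - B\ln n$ for suitable constants $A,B>0$ and all $n$ above a threshold, which immediately gives the desired linear bound $f(n) \le An$. The ``buffer'' $-B\ln n$ is necessary because a naive hypothesis $f(m) \le Am$ cannot absorb the extra $A\cdot C\ln n$ error coming from the slack $\sum n_i - n \le C\ln n$ in condition (c); iterating the recursion naively would yield only a super-polynomial bound.

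The crux of the argument is a lower bound on $\sum_i \ln n_i$. Set $\gamma := \lceil 1/\alpha\rceil$; since $\alpha < 1$ we have $\gamma \ge 2$. I claim that for every decomposition obeying (c) and (d),
\[
\sum_{i=1}^{k} \ln n_i \;\ge\; \gamma\ln n - E(n),
\]
with $E(n)=O(\ln\ln n)$ (or even $O(1)$ by a concavity argument). By pigeonhole on the order statistics $n_{(1)}\ge \dots \ge n_{(k)}$, removing the top $j-1$ pieces (each of size $\le \alpha n$) leaves a sum $\ge (1-(j-1)\alpha)n$, so
\[
n_{(j)} \;\ge\; \frac{(1-(j-1)\alpha)\,n}{k-j+1} \;\ge\; \frac{c_j n}{C\ln n}
\]
for each $j \le \gamma$ (the factor $1-(j-1)\alpha$ is strictly positive precisely because $j\le \lceil 1/\alpha\rceil$). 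Thus $\ln n_{(j)} \ge \ln n - O(\ln\ln n)$, and summing over the top $\gamma$ pieces gives the claim. Alternatively, concavity of $\ln$ forces the minimum of $\sum\ln n_i$ over the feasible polytope to be attained at an extreme decomposition in which each $n_i \in \{1,\alpha n\}$, with at least $\gamma$ of the pieces equal to $\alpha n$, yielding an $O(1)$ error.

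Now fix $B > AC/(\gamma-1)$, possible because $\gamma-1\ge 1$; this makes $AC-B(\gamma-1)<0$. For the inductive step at $n>n_0$, applying the hypothesis to each $n_i<n$ and combining (b), (c), and the auxiliary bound yields
\[
f(n) \;\le\; A\sum_i n_i - B\sum_i \ln n_i \;\le\; A(n+C\ln n) - B\bigl(\gamma\ln n - E(n)\bigr),
\]
which rearranges as
\[
f(n) \;\le\; An - B\ln n \;+\; \bigl(AC - B(\gamma-1)\bigr)\ln n \;+\; B\,E(n).
\]
The coefficient of $\ln n$ is strictly negative, and since $\ln n / E(n) \to \infty$, for all $n$ above an effective threshold $n_0$ the last two summands combined are nonpositive, closing the induction. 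For the base case $n\le n_0$, one chooses $A$ large enough that $An-B\ln n \ge \max_{m\le n_0} f(m)$ on the finite range $\{1,\dots,n_0\}$.

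The main technical obstacle is the lower bound $\sum\ln n_i \ge \gamma\ln n - o(\ln n)$ with $\gamma > 1$. The strict inequality $\alpha<1$ is used essentially here: it forces at least $\lceil 1/\alpha\rceil \ge 2$ ``large'' pieces in every valid decomposition, supplying the extra logarithmic mass that balances the $AC\ln n$ slack from (c). Without this strictness the inductive buffer $-B\ln n$ could not compensate the error term, and the recursion would produce only super-polynomial growth rather than a linear bound.
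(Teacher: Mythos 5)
The paper itself does not reproduce a proof of this lemma (it defers to \cite[Lemma 3.6]{Osi07}), so I am judging your argument on its own terms. Your strategy is sound, and its engine is correct: the bound $\sum_i \ln n_i \ge \gamma\ln n - O(\ln\ln n)$ with $\gamma=\lceil 1/\alpha\rceil\ge 2$ does follow from your pigeonhole on the order statistics (note in passing that $k\ge 1/\alpha$, since $\sum n_i\ge n$ and each $n_i\le\alpha n$, so there really are $\gamma$ pieces to sum over, and the remaining terms $\ln n_i$ are $\ge 0$ because $n_i\ge 1$). The strict inequality $\alpha<1$ is indeed what makes $\gamma-1\ge1$ and lets the potential $-B\ln n$ absorb the $AC\ln n$ slack from (c).

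The genuine gap is the base case. Your inductive step forces $B>AC/(\gamma-1)$, hence $An-B\ln n< A\left(n-\tfrac{C}{\gamma-1}\ln n\right)$. Whenever $\tfrac{C}{\gamma-1}>n/\ln n$ — for instance at $n=3$ as soon as $C>3(\gamma-1)/\ln 3$ — the right-hand side is negative for \emph{every} $A>0$, while $f(n)\ge0$. So the statement ``choose $A$ large enough that $An-B\ln n\ge\max_{m\le n_0}f(m)$ on $\{1,\dots,n_0\}$'' is impossible for large $C$: enlarging $A$ only rescales a negative quantity. Since the inductive step applies the hypothesis to the pieces $n_i$, which can be arbitrarily small, the induction does not close as written. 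A secondary, repairable circularity: your threshold $n_0$ depends on $B$ (hence on $A$), while $A$ is then chosen using $\max_{m\le n_0}f(m)$; this is only legitimate if you fix the ratio first, e.g.\ $B=2AC/(\gamma-1)$, so that the closing condition becomes $\ln n\ge\tfrac{2}{\gamma-1}E(n)$, independent of $A$.

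The repair is routine but changes the shape of the induction: assert $f(n)\le An-B\ln n$ only for $n\ge n_1$, where $n_1$ (depending only on $C,\alpha$) is large enough that $n/\ln n>3C/(\gamma-1)$ there, and bound $f(n_i)\le M_1:=\max_{m<n_1}f(m)$ for the small pieces. This adds an error $kM_1\le CM_1\ln n$ to the inductive step, so the budget must become $B(\gamma-1)=2(AC+CM_1)$; one must also take $n$ large enough that the top $\gamma$ pieces, of size $\ge c_\gamma n/(C\ln n)$, themselves exceed $n_1$. With these adjustments your computation closes and yields $f(n)\le An+M_1$ for all $n$.
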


The next lemma was proved by Olshanskii \cite[Lemma 23]{Ols92} for
geodesic polygons. In \cite{Ols92}, the inequality (\ref{distuv})
had the form $dist (u,v)\le 2\delta ( 2+\log_2 n)$. Passing to
quasi--geodesic polygons we only need to add a constant to the right
hand side according to the above--mentioned property of
quasi--geodesics in hyperbolic spaces.

\begin{lem}\label{P}
For any $\delta \ge 0$, $\mu \ge 1$, $c\ge 0$, there exists a
constant $\eta =\eta (\delta , \mu , c)$ with the following
property. Let $\P =p_1\ldots p_n$ be a $(\mu ,
c)$--quasi--geodesic $n$--gon in a $\delta $--hyperbolic space. Then
there are points $u$ and $v$ on sides of $\mathcal P$ such that
\begin{equation}\label{distuv}
dist (u,v)\le 2\delta ( 2+\log_2 n)+\eta
\end{equation}
and the geodesic segment connecting $u$ to $v$ divides $\P $ into an
$m_1$--gon and $m_2$--gon such that $n/4 < m_i < 3n/4 +2$.
\end{lem}

\begin{figure}
\hspace{-3mm} \includegraphics[width=160mm]{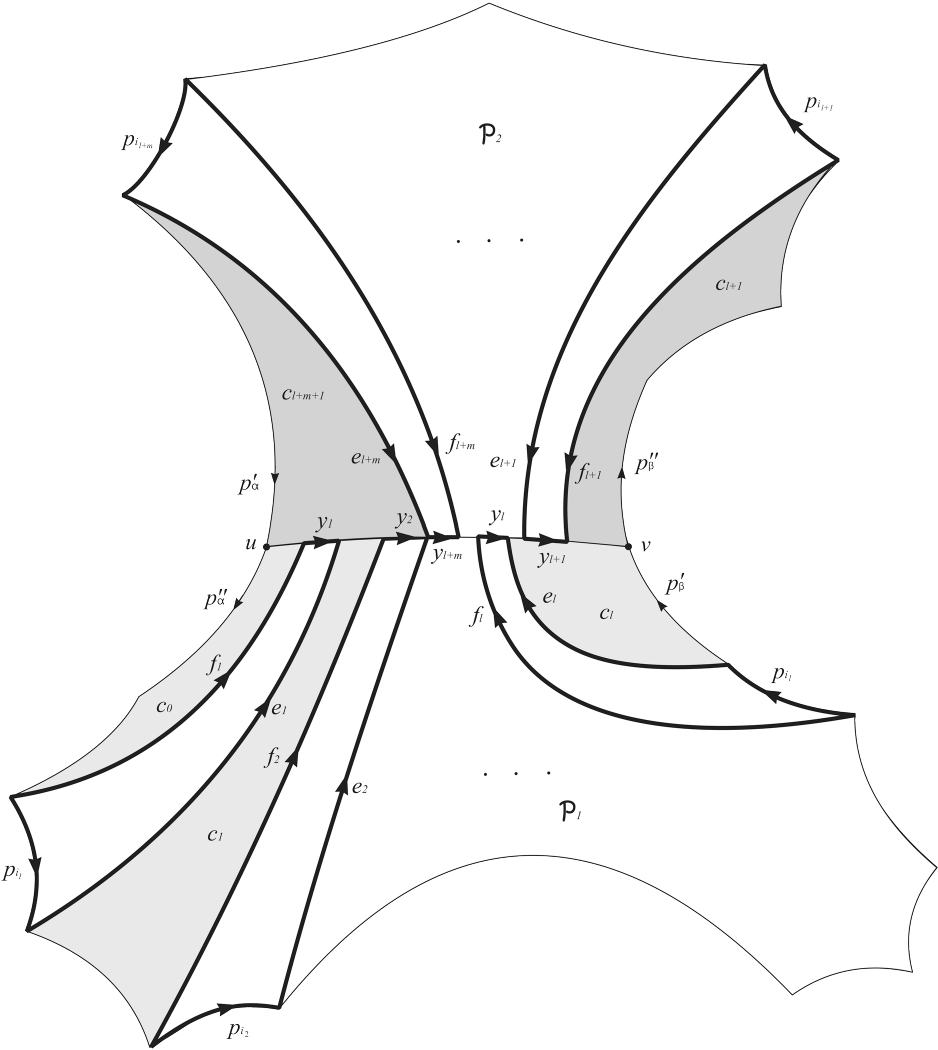}\\
\vspace{3mm}  \caption{Decomposition of the $n$--gon in the proof of
Proposition \ref{sn}} \label{7-f5}
\end{figure}

Now we are ready to prove the main result of this section.

\begin{proof}[Proof of Proposition \ref{sn}]
We are going to show that for any fixed $\mu \ge 1$, $c\ge 0$,
the function $s_{\mu , c}(n)$ satisfies the assumptions of
Lemma \ref{linfunct}. Let $(\P , I)\in \mathcal Q_{\mu ,
c}(n)$, where $\P =p_1\ldots p_n$. As in the proof of Lemma
\ref{l1}, we may assume that every $p_i\in I$ consists of a single
edge. We also assume $n\ge N$, where the constant $N$ is big
enough. The exact value of $N$ will be specified later.

Let $u,v$ be the points on $\P $ provided by Lemma \ref{P}.
Without loss of generality we may assume that $u,v$ are vertices
of $\G $. Further let $t$ denote a geodesic path in $\G $ such
that $t_-=u$, $t_+=v$. According to Lemma \ref{P},
\begin{equation}\label{lr}
\ell(t)\le 2\delta (2+\log _2 n)+\eta ,
\end{equation}
where $\eta $ is a constant depending only on $\delta $,
$\mu $, and $c$, and $t$ divides $\P $ into an $m_1$--gon $\P
_1$ and $m_2$--gon $\P _2$ such that
\begin{equation}\label{ni}
m_i\le 3n/4 +2<n
\end{equation}
for $i=1,2$. To be precise we assume that $u\in p_\alpha $, $v\in
p_\beta $, and $p_\alpha =p_\alpha ^\prime p_\alpha
^{\prime\prime}$,  $p_\beta =p_\beta ^\prime p_\beta
^{\prime\prime}$, where $(p_\alpha ^\prime)_+=(p_\alpha
^{\prime\prime})_-=u$, $(p_\beta ^\prime)_+=(p_\beta
^{\prime\prime})_-=v$. Then $$ \P _1 = p_\alpha ^{\prime\prime }
p_{\alpha +1} \ldots p_{\beta -1}p_\beta ^\prime t^{-1}$$ and $$
\P _2 = p_\beta ^{\prime\prime }p_{\beta +1}\ldots p_{\alpha
-1}p_\alpha ^\prime t.$$ (Here and below the indices are taken
modulo $n$.) Since each $p_i\in I$ consists of a single edge, one
of the paths $p_\alpha ^\prime $, $p_\alpha ^{\prime\prime }$
(respectively $p_\beta ^\prime $, $p_\beta ^{\prime\prime }$) is
trivial whenever $p_\alpha \in I$ (respectively $p_\beta \in I$).
Hence the set $I$ is naturally divided into two disjoint parts
$I=I_1\sqcup I_2$, where $I_i$ is a subset of $I$ consisting of
sides of $\P _i$, $i=1,2$.

Let us consider the polygon $\P _1$ and construct cycles $c_0,
\ldots , c_l$ in $\G $ as follows. If each $p_i\in I_1$ is isolated
in $\P _1$, we set $l=0$ and $c_0=\P _1$. Further suppose this is
not so. Let $p_{i_1}\in I_1$, be the first component (say, an
$H_{\lambda _1}$--component) in the sequence $p_\alpha , p_{\alpha
+1}, \ldots $ such that $p_{i_1}$ is not isolated in $\P _1$. As
$p_{i_1}$ is isolated in $\P $, this means that $p_{i_1}$ is
connected to an $H_{\lambda _1}$--component $y_1$ of $t$. Let $f_1$
(respectively $e_1$) be an edge in $\G $ labelled by an element of
$H_{\lambda _1}$ such that
$(f_1)-=(p_{i_1})_-$, $(f_1)_+=(y_1)_-$ (respectively
$(e_1)_-=(p_{i_1})_+$, $(e_1)_+=(y_1)_+$). We set $$
c_0=p_\alpha^{\prime\prime }p_{\alpha +1}\ldots p_{i_1-1}f_1 [
(y_1)_-,u],$$ where $[(y_1)_-,u]$ is the segment of $t^{-1}$ (see
Fig. \ref{7-f5}).

Now we proceed by induction. Suppose that the cycle $c_{k-1}$ and
the corresponding paths $f_{k-1},e_{k-1},y_{k-1}, p_{i_{k-1}}$ have
already been constructed. If the sequence $p_{i_{k-1} +1},
p_{i_{k-1} +2}, \ldots $ contains no component $p_i\in I_1$ that is
not isolated in $\P _1$, we set $l=k$,
$$ c_k=e_{k-1}^{-1}p_{i_{k-1}+1}\ldots p_{\beta -1}p_\beta ^\prime
[v,(y_{k-1})_+], $$ where $[v,(y_{k-1})_+]$ is the segment of
$t^{-1}$, and finish the procedure. Otherwise we continue as
follows. We denote by $p_{i_k}$ the first component in the
sequence $p_{i_{k-1} +1}, p_{i_{k-1} +2}, \ldots $ such that
$p_{i_k}\in I_1$  and $p_{i_k}$ is connected to some component
$y_k$ of $t$. Then we construct $f_k$, $e_k$ as above and set $$
c_k=e_{k-1}^{-1}p_{i_{k-1}+1}\ldots
p_{i_k-1}f_k[(y_k)_-,(y_{k-1})_+].$$ Observe that each path
$p_i\in I_1$ is either included in the set $J_1=\{ p_{i_1},
\ldots, p_{i_l}\} $ or is an isolated component of some $c_j$.
Indeed a path $p_i\in I_1\setminus J_1$ can not be connected to a
component of $t$ according to our choice of $p_{i_1}, \ldots,
p_{i_l}$. Moreover $p_i\in I_1\setminus J_1$ can not be connected
to some $f_j$ or $e_j$ since otherwise $p_i$ is connected to
$p_{i_j}$ that contradicts the assumption that sides from the set
$I$ are isolated components in $\P $.

By repeating the ``mirror copy" of this algorithm for $\P _2$, we
construct cycles $c_{l+1}, \ldots , c_{l+m+1}$, $m\ge 0$, the set
of components $J_2=\{ p_{i_{l+1}},\ldots , p_{i_{l+m}}\}\subseteq
I_2$, components $y_{l+1}, \ldots , y_{l+m}$ of $t$, and edges $f_{l+1}, e_{l+1}, \ldots,  f_{l+m}, e_{l+m}$ in
$\G $ such that $f_j$ (respectively $e_j$) goes from $(p_{i_j})_-$
to $(y_j)_+$ (respectively from $(p_{i_j})_+$ to $(y_j)_-$)  (see
Fig. \ref{7-f5}) and each path $p_i\in I_2$ is either included in
the set $J_2$ or is an isolated component of $c_j$ for a certain
$j\in \{ l+1, \ldots, l+m+1\} $.

Each of the cycles $c_j$, $0\le j\le l+m+1$, can be regarded as a
geodesic $n_j$--gon whose set of sides consists of paths of the
following five types (up to orientation):

\begin{enumerate}
\item[(1)] Components from the set $I\setminus (J_1\cup J_2)$.

\item[(2)] Sides of $\P _1$ and $\P _2$ that do not belong to the
set $I$.

\item[(3)] Paths $f_j$ and $e_j$, $1\le j\le l+m$.

\item[(4)] Components $y_1 , \ldots , y_{l+m}$ of $t$.

\item[(5)] Maximal subpaths of $t$ lying ``between" $y_1, \ldots ,
y_{l+m}$, i.e. those maximal subpaths of $t$ that have no common
edges with $y_1, \ldots , y_{l+m}$.
\end{enumerate}

It is straightforward to check that for a given $0\le j\le l+m+1$,
all sides of $c_j$ of type (1), (3), and (4) are isolated
components of $c_j$. Indeed we have already explained that sides
of type (1) are isolated in $c_j$. Further, if $f_j$ or $e_j$ is
connected to $f_k$, $e_k$, or $y_k$ for $k\ne j$, then $p_{i_j}$
is connected to $p_{i_k}$ and we get a contradiction. For the same
reason $f_j$ or $e_j$ can not be connected to a component of a
side of type (2). If $f_j$ or $e_j$ is connected to a component
$x$ of a side of type (5), i.e., to a component of $t$, then $y_j$
is connected to $x$. This contradicts the assumption that $t$ is
geodesic. Finally $y_j$ can not be connected to a component of a
side of type (2) since otherwise $p_{i_j}$ is not isolated in $\P
$, and $y_j$ can not be connected to another component of $t$ as
notified in the previous sentence.

Observe that (\ref{lr}) and (\ref{ni}) imply the following
estimate of the number of sides of $c_j$:
$$
n_j\le \max\{ m_1, m_2\} + \ell(t)\le 3n/4 +2 +2\delta (\log _2 n +2)
+\eta .
$$
Assume that $N$ is a constant such that $3n/4 +2 +2\delta (\log _2
n +2)+\eta \le 4n/5 $ for all $n\ge N$. Then for any $n\ge N$,
we obtain the following.
$$
\sum\limits_{p_i\in I} \widehat \ell (p_i)\le \sum\limits_{p_i\in
I\setminus (J_1\cup J_2)} \widehat \ell (p_i) +\sum\limits_{j=1}^{l+m}
\big( \widehat \ell (y_j) + \widehat \ell (e_j) +\widehat \ell (f_j)\big) \le
\sum\limits_{j=0}^{l+m+1} s_{\mu , c}(n_j)
$$
Here the last inequality follows from the fact that every component appearing in its left side is an isolated component of $c_j$ for some $j$.

Further there is a constant $C>0$ such that
$$
\sum\limits_{j=0}^{m+l+1} n_j \le n + 6\ell(t)\le n +12\delta (\log
_2 n +2) +6\eta \le n+C\log _2 n
$$
and
$$
m+l+2\le 2\ell(t)+2\le C\log _2 n.
$$
Therefore, for any $n\ge N$, the function $s_{\mu , c}(n)$
satisfies the assumptions of Lemma \ref{linfunct} for $k=m+l+2$
and $\alpha =4/5$. Thus $s(n, \mu , c)$ is bounded by a linear
function from above.
\end{proof}

\subsection{Paths with long isolated components}

In this section we prove a technical lemma, which will be used several times in this paper. Informally it says the following. Let $p$ be a path in $\G$ such that at least every other edge is a long (with respect to $\widehat\ell $) component and no two consecutive components are connected. Then $p$ is quasi-geodesic. Further if two such paths are long and close to each other, then there are many consecutive components of one of them which are connected to consecutive components of the other. For relatively hyperbolic groups, similar lemmas were proved in \cite{AMO,Min}. Recall that relative generating sets are always assumed symmetric, so $X=X^{-1}$ in the following lemma.

\begin{lem}\label{w}
Let $G$ be a group weakly hyperbolic relative to $X$ and $\Hl $ and let
$\mathcal W$ be the set consisting of all words $U$ in $X\sqcup \mathcal H$ such that:
\begin{enumerate}
\item[(W$_1$)] $U$ contains no subwords of type $xy$, where $x,y\in X$.
\item[(W$_2$)] If $U$ contains a letter $h\in H_\lambda  $ for some $\lambda \in \Lambda $, then $\dl (1, h)>50D $, where $D=D(1,0)$ is given by Proposition \ref{sn}.
\item[(W$_3$)] If $h_1xh_2$  (respectively, $h_1h_2$) is a subword of $U$, where $x\in X$, $h_1\in H_\lambda $, $h_2\in H_\mu $, then either $\lambda \ne \mu $ or the element represented by $x$ in $G$ does not belong to $H_\lambda $ (respectively, $\lambda \ne \mu $).
\end{enumerate}
Then the following hold.
\begin{enumerate}
\item[(a)] Every path in $\G $ labelled by a word from $\mathcal W$ is $(4,1)$-quasi-geodesic.

\item[(b)] For every $\e>0$ and every integer $K>0$, there exists $R=R(\e, K)>0$ satisfying the following condition. Let $p, q$ be two paths in $\G$ such that $\ell (p)\ge R$, $\Lab (p), \Lab (q)\in \mathcal W$, and $p$, $q$ are oriented $\e$-close, i.e., $$\max \{ \d (p_-,q_-), \, \d(p_+, q_+)\} \le \e .$$ Then there exist $K$ consecutive components of $p$ which are connected to $K$ consecutive components of $q$. That is, $$p=x_0a_1\ldots x_{K-1}a_Kx_K, \;\;\; q=y_0b_1\ldots y_{K-1}b_Ky_K,$$ where $x_i, y_i$ are edges labelled by elements of $X$ or trivial paths for $i=1, \ldots, K-1$, and $a_j$, $b_j$ are connected components for every $j=1, \ldots, K$.
\end{enumerate}
\end{lem}

\begin{proof}
Let $p$ be a path in $\G $ such that $\Lab (p)\in \mathcal W$. Then according to (W$_1$) and (W$_3$),  $p=r_0p_1r_1\ldots p_mr_m$, where $p_i$'s are edges labelled by elements of $\mathcal H$ while $r_i$'s are either edges labelled by elements of $X$ or trivial paths. Further (W$_3$) guarantees that  no two consecutive components of $p$ are connected.

We start by showing that all components of $p$ are isolated. Suppose that two $H_\lambda $-components, $p_i$ and $p_j$, are connected for some $j>i$ and $j-i$ is minimal possible (Fig. \ref{43-f1}). Note that  $j=i+1+k$ for some $k\ge 1$, as no two consecutive components of $p$ are connected. Let $t$ denote the segment of $p$ with $t_-=(p_i)_+$ and $t_+=(p_j)_-$, and let $c$ be an empty path or an edge in $\G $ labelled by an element of $H_\lambda  $ such that $c_-=(p_i)_+$, $c_+=(p_j)_-$. Note that the components $p_{i+1}, \ldots , p_{i+k}$ are isolated in the cycle $tc^{-1}$. Indeed otherwise we can pass to another pair of components connected to each other with smaller value of $j-i$. By Proposition \ref{sn} we have
$$
\sum\limits_{l=1}^k \widehat\ell (p_{i+l})\le  D\ell (tc^{-1})\le D (2k+4).
$$
Hence $\widehat\ell (p_{i+l})\le D (2+4/k)\le 6D$ for some $l$ which
contradicts (W$_2$). Thus all components of $p$ are isolated.

To prove (a) we have to show that $p$  is $(4,1)$-quasi-geodesic. If $\ell (p)=1$, then this is obvious, so we assume that $\ell(p)>1$ and hence $m\ge 1$. Let $u$ be a geodesic connecting $p_+$ and $p_-$. Consider the geodesic $(2m+2)$-gon $\mathcal P =pu$ whose sides are $u$ and edges of $p$. Let $I$ be any subset of components of $p$ that are isolated in $\mathcal P$. By Proposition \ref{sn} we have
$$
\sum\limits_{s\in I} \widehat\ell (s)\le D(2m+2).
$$
Since $\widehat \ell(s)> 50D$ for every $s\in I$ by (W$_2$), we have $|I|<(2m+2)/50\le m/10$. Hence at least $9m/10$ components of $p$ are not isolated in $\mathcal P$. As no two distinct components of $p$ are connected, these $9m/10$ components are connected to distinct components of $u$.  In particular, $$\ell(u)\ge 9m/10> 3m/4 \ge (2m+1)/4\ge \ell (p)/4.$$ As this argument works for any subpath of $p$ as well, $p$ is $(4,1)$-quasi-geodesic.

\begin{figure}
  \centering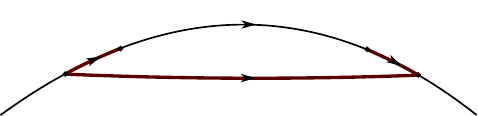\\
  \caption{}\label{43-f1}
\end{figure}

Let us prove (b). Fix $\e>0$ and an integer $K>0$. Let $p$ be as above  and let $q=s_0q_1s_1\ldots q_ns_n$, where $q_j$'s are edges labelled by elements of $\mathcal H$ while $s_j$'s are either edges labelled by elements of $X$ or trivial paths. As above, $q_j$'s are isolated components of $q$. Since $p$ is $(4,1)$-quasi-geodesics, we can choose $R$ such that
\begin{equation}\label{sctl-1}
R\ge 8\e +3
\end{equation}
and the inequality $\ell (p)\ge R$ guarantees the existence of a subpath $w$ of $p$ such that
\begin{equation}\label{sctl-2}
\d (w, p_\pm)> \e
\end{equation}
and
\begin{equation}\label{dwppm}
\ell (w)\ge 4K+ \ell (p)/2.
\end{equation}
Let $\mathcal Q=u_1pu_2q^{-1}$ be a loop in $\G $ such that $u_i$ is geodesic and
\begin{equation}\label{lui}
\ell (u_i)\le \e ,\; i=1,2.
\end{equation}
We can think of $\mathcal Q$ as a geodesic $k$-gon for $k=\ell (p)+\ell(q) +2$ whose sides are $u_1,u_2$ and the edges of $p$ and $q$. Since $q$ is $(4,1)$-quasi-geodesic, we have $\ell (q)\le 4(2\e + \ell (p))+1$. Hence $k\le 5\ell (p) +8\e +3\le 6 \ell (p)$ by (\ref{sctl-1}). Since $\ell (w)\ge \ell(p)/2 +1$, $w$ contains at least $\ell (p)/4$ components. Using Proposition \ref{sn},  (W$_2$), and arguing as above, we can show that every set $I$ of isolated components of $w$ satisfies $|I|\le 6\ell(p)/50<\ell(p)/4$ and hence not all components of $w$ are isolated in $\mathcal Q$.

Let $p_i$ be an $H_\lambda$-component of $w$ that is not isolated in $\mathcal Q$. We can assume that the segment $v$ of $w$ starting from $(p_i)_+$ and ending at $w_+$ has length at least $(\ell (w)-1)/2$. (The case when the initial subsegment of $w$ ending at $(p_i)_-$ has length at least $(\ell (w)-1)/2$ is symmetric.) By (\ref{lui}) and  (\ref{sctl-2}), $p_i$ can not be connected to a component of $u_1$ or $u_2$. Hence $p_i$ is connected to an $H_\lambda$-component $q_j$ of $q$.

Let $e$ be the edge  connecting $(q_j)_+$ to $(p_i)_+$ and labelled by a letter from $H_\lambda $. Note that $v$ has at least
$$
(\ell (v)-1)/2 \ge ((\ell (w)-1)/2-1)/2=(\ell(w)-3)/4> \ell (p)/8
$$
components by (\ref{dwppm}). We consider the polygon $$\mathcal Q^\prime=r_{i+1}p_{i+1}\ldots r_{m-1}p_mr_m u_2 (s_{j+1}q_{j+1}\ldots s_{n-1}q_ns_n)^{-1} e,$$ where the only side that is not an edge is $u_2$. Clearly the total number of sides of $\mathcal Q^\prime$ is less than $k\le 6 \ell (p)$. Again by (W$_2$) and Proposition \ref{sn} every set $I$ of isolated components of $v$ satisfies $|I|\le 6\ell(p)/50<\ell(p)/8$ and therefore not all components of $v$ are isolated in $\mathcal Q^\prime$. Let $p_{i+a}$ be an $H_\mu $-component of $v$ which is  not isolated in $\mathcal Q^\prime$ and such that $a$ is minimal possible. Note that $p_{i+a}$ can not be connected to $e$ as otherwise it is connected to $p_i$ as well, which contradicts the fact that all components of $p$ are isolated. Again by (\ref{sctl-2}) and (\ref{lui}), $p_{i+a}$ can not be connected to a component of $u_2$. Hence  $p_{i+a}$ is connected to an $H_\mu $-component $q_{j+b}$ of $q$. Let $f$ be an edge (or an empty path) connecting $(p_{i+a})_-$ to $(q_{j+b})_-$ and labelled by a letter from $H_\mu $ (Fig. \ref{43-f2}). Routinely applying Proposition \ref{sn} to the polygon $\mathcal Q^{\prime\prime}$ whose sides are $e$, $f$, and edges of $p$ (respectively, $q$) between $(p_{i})_+$ and $(p_{i+a})_-$ (respectively, $(q_{j})_+$ and $(q_{j+b})_-$), we conclude that if $a>1$, then there is a component $p_{i+a^\prime}$ of $p$, $0<a^\prime <a $, which is not isolated in $\mathcal Q^{\prime\prime}$. As above $p_{i+a^\prime}$ can not be connected to $e$ or $f$. Hence it is connected to $q_{j+b^\prime}$ for some $b^\prime >0$. However this contradicts minimality of $a$. Hence $a=1$ and similarly $b=1$. Thus $p_{i+1}$ is connected to $q_{j+1}$.

\begin{figure}
  \centering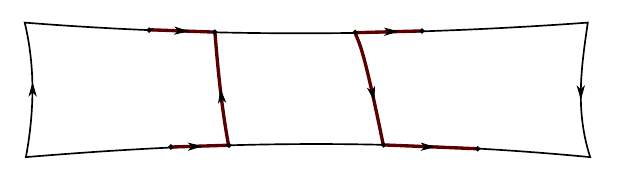\\
  \caption{}\label{43-f2}
\end{figure}

Repeating the arguments from the previous paragraph, we can show that components $p_{i}, p_{i+1},\ldots, p_{i+K-1}$ are connected to $q_j, q_{j+1},\ldots, q_{j+K-1}$. The key point here is that, for every $1\le l \le K-2$, the segment $[(p_{i+l})_+, w_+]$ of $p$ contains at least $$(\ell(w)-3)/4 -l> (\ell (w) -4l-3)/4 > (\ell(w) -4K)/4 \ge \ell(p)/8$$ components while at most $6\ell(p)/50<\ell(p)/8$ of them are not connected to components of the segment $[(q_{j+l})_+, q_+]$ of $q$. Thus there exists a component $p_{i+l+a}$ of $[(p_{i+l})_+, w_+]$ connected to a component $q_{j+l+b}$ of $[(q_{j+l})_+, q_+]$ and then the same argument as above shows that $a=b=1$. Thus part (b) is proven.
\end{proof}

\subsection{Hyperbolically embedded subgroups}

Our next goal is to introduce the notion of a hyperbolically embedded collection of subgroups. Assume that the group $G$ has a relative presentation

\begin{equation}
\langle X, \mathcal H  \mid  \mathcal S\cup \mathcal R\rangle
\label{rp1_rep}
\end{equation}
with respect to a collection of subgroups $\Hl$ and a relative generating set $X$. (The reader may want to review Section 3.3 before reading the rest of this section.)

\begin{defn}[{\bf Strongly bounded presentations}]\label{i-sbp}
We say that a relative presentation (\ref{rp1_rep}) is {\it strongly bounded} if  it is bounded
(that is, words in $\mathcal R$ have bounded length),
and for every $\lambda \in \Lambda$, the set of letters from $H_\lambda $ that appear in relators $R\in \mathcal R$ is finite.
\end{defn}

One easily checks that this definition agrees with the one given in the introduction, when
there is a single subgroup $H_\lambda$ (i.e.\ $|\Lambda|=1$).

\begin{ex}
Let $K$ be the free group with countably infinite basis $X=\{ x_1,x_2, \ldots \}$ and let $H=\langle t\rangle $. The group $G=K\times H$ has relative presentation $G=\langle X, H \mid  \mathcal R \rangle $ with respect to $X$ and $H$, where $\mathcal R=\{ [t, x_n]=1 \mid n=1,2,\ldots \} $. This relative presentation is strongly bounded. There is another relative presentation $\langle t, K \mid \mathcal R\rangle $ of $G$ with respect to the generating set $\{ t\} $ and the subgroup $K$. This presentation is bounded but not strongly bounded.
\end{ex}

\begin{thm}\label{ipchar}
Let $G$ be a group, $\Hl $ a collection of subgroups of $G$, $X$ a relative generating set of $G$ with respect to $\Hl $. The following conditions are equivalent.
\begin{enumerate}
\item[a)] The Cayley graph $\G $ is hyperbolic and for every $\lambda\in \Lambda $, the metric space $(H_\lambda , \dl)$ is locally finite.

\item[b)] There exists a strongly bounded relative presentation of $G$ with respect to $X$ and $\Hl $ with linear relative isoperimetric function.
\end{enumerate}
\end{thm}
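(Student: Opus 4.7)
The proof is a two-sided application of Lemma \ref{pres} together with Lemma \ref{Ylambda} and Remark \ref{rem1}: the former converts a bounded relative presentation with linear relative isoperimetric function into hyperbolicity of $\G$, while the combination of the latter two translates reducedness and boundedness into finiteness of letters having small $\dl$-length.

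For the implication (b) $\Rightarrow$ (a), I would start with a strongly bounded relative presentation $\langle X, \mathcal H \mid \mathcal S \cup \mathcal R\rangle$ of $G$ with linear relative isoperimetric function $f(n) = An$ and set $M = \max_{R \in \mathcal R}\|R\|$. Since the presentation is bounded, the converse direction of Lemma \ref{pres} immediately yields hyperbolicity of $\G$. To obtain local finiteness of $(H_\lambda, \dl)$, fix $h \in H_\lambda$ with $\dl(1,h) = n < \infty$, realise this distance by an admissible path $p$ from $1$ to $h$ of length $n$, and let $e$ be the $H_\lambda$-edge labelled by $h$. The cycle $q = e p^{-1}$ has length $n+1$ and $e$ is its unique, hence isolated, $H_\lambda$-component, so Lemma \ref{Ylambda} produces $|h|_{Y_\lambda} = \ell_{Y_\lambda}(e) \le M f(n+1)$, where $Y_\lambda$ denotes the set of $H_\lambda$-letters appearing in $\mathcal R$. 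Strong boundedness makes $Y_\lambda$ finite, so the $\dl$-ball of radius $n$ around $1$ in $H_\lambda$ sits inside the finite $d_{Y_\lambda}$-ball of radius $M f(n+1)$ and is therefore itself finite.

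For the implication (a) $\Rightarrow$ (b), the hyperbolicity of $\G$ allows me to invoke the direct part of Lemma \ref{pres}, producing a bounded reduced relative presentation $\langle X, \mathcal H \mid \mathcal S \cup \mathcal R\rangle$ of $G$ with respect to $X$ and $\{H_\lambda\}$ with linear relative isoperimetric function; let $M$ again bound the lengths of relators. Because this presentation is reduced, Remark \ref{rem1} gives $\dl(1,y) \le M$ for every letter $y \in Y_\lambda$, so $Y_\lambda$ lies inside the $\dl$-ball of radius $M$ around $1$. By the local-finiteness assumption in (a), that ball is finite, and hence $Y_\lambda$ is finite as well; this is exactly the strong boundedness condition.

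The main delicate point I anticipate is the reading of ``strongly bounded'' when $\Lambda$ is infinite: the argument above yields a presentation in which each individual $Y_\lambda$ is finite, which is the natural extension of the single-subgroup definition given in the introduction. If the intended convention demands additionally that the union $\bigsqcup_\lambda Y_\lambda$ be globally finite, one must further prune the atomic relators produced in the proof of Lemma \ref{pres} so that only finitely many indices $\lambda$ contribute; I would approach this by rewriting any relator containing an ``exotic'' peripheral letter as an equivalent short word in $X$ together with letters from a fixed finite subfamily of peripherals, and checking that the resulting relative isoperimetric function remains linear.
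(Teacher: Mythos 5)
Your proposal matches the paper's proof essentially line for line: both directions invoke Lemma \ref{pres}, the forward direction converts strong boundedness to local finiteness via Lemma \ref{Ylambda} applied to the cycle $ep^{-1}$, and the backward direction converts local finiteness to finiteness of the peripheral letters appearing in $\mathcal R$ via Remark \ref{rem1}. Two remarks.

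First, a small imprecision in your (b) $\Rightarrow$ (a) step: you assert that $e$ is the \emph{unique} $H_\lambda$-component of $q=ep^{-1}$ and deduce isolation from uniqueness. This is not quite right. The admissibility of $p$ only excludes edges lying in the subgraph $\Gamma_\lambda=\Gamma(H_\lambda,H_\lambda\setminus\{1\})$ — edges whose \emph{both} endpoints lie in $H_\lambda$. The path $p$ may perfectly well contain $H_\lambda$-labelled edges (hence $H_\lambda$-components) with endpoints outside $H_\lambda$. The correct argument, which is what the paper gives, is that if some $H_\lambda$-component of $p$ were connected to $e$, its vertices would lie in the same left coset of $H_\lambda$ as the vertices $1,h$ of $e$, i.e.\ in $H_\lambda$ itself, so it would be an edge of $\Gamma_\lambda$, contradicting admissibility of $p$. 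The conclusion that $e$ is isolated is the same; only the justification needed a correction.

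Second, the ``delicate point'' you raise in your final paragraph about infinite $\Lambda$ is genuinely present in the paper's own argument, and you are right to be nervous about it. In the (a) $\Rightarrow$ (b) direction, what Remark \ref{rem1} plus local finiteness actually give is that each individual $Y_\lambda$ is finite; the paper then immediately asserts that the total set of $\mathcal H$-letters occurring in $\mathcal R$ (the union over $\lambda$) is finite, which is exactly what the definition of ``strongly bounded'' requires but does not follow from per-$\lambda$ finiteness alone when $\Lambda$ is infinite. A concrete illustration of why the step is not automatic: take $G=\mathbb Z/2\times\bigl(\ast_{n\ge 2}\mathbb Z/2\bigr)$ with $X=\{a\}$ generating the direct $\mathbb Z/2$ factor and $H_n=\langle b_n\rangle$ the free factors; then $\G$ is quasi-isometric to a tree and each $H_n$ is finite, so condition (a) holds, yet any relative presentation must record the commutation $[a,b_n]=1$ separately for each $n$ and therefore involves infinitely many $\mathcal H$-letters. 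So your proposed ``pruning'' fix will not work in general, and this point deserves either an explicit finiteness hypothesis on $\Lambda$ or a further argument restricting which $\lambda$ can occur. Since the paper's proof makes the same unjustified leap, your reconstruction faithfully reproduces the paper's argument including this gap; flagging it is the right instinct.
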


\begin{proof}
Suppose first that for every $\lambda\in \Lambda $, the metric space $(H_\lambda , \dl)$ is locally finite. Let
\begin{equation}
\langle X, \mathcal H  \mid  \mathcal S\cup \mathcal R\rangle
\label{rp12}
\end{equation}
be a reduced bounded presentation with linear relative isoperimetric function provided by Lemma \ref{pres}. By Remark \ref{rem1} the letters from $\mathcal H$ that appear in relators $R\in \mathcal R$ have uniformly bounded length with respect to $\dl$. Since $(H_\lambda , \dl)$ is locally finite, the later condition means that the set of letters from $\mathcal H$ that appear in relators $R\in \mathcal R$ is finite. Thus (\ref{rp12}) is strongly bounded.

Now suppose that (\ref{rp12}) is a  strongly bounded relative presentation of $G$ with respect to $X$ and $\Hl $ with linear relative isoperimetric function. Let $Y_\lambda\subseteq H_\lambda $ be the subset consisting of all letters from $\mathcal H$ that appear in relators $R\in \mathcal R$. Suppose that $\dl (1,h)=n<\infty $ for some $h\in H_\lambda $. Let $p$ be a path in $\G $ of length $n$ such that $p_-=1$, $p_+=h$, and $p$ contains no edges of $\Gamma _{H_\lambda }$, $h\ne 1$. Let $e$ be the edge of $\G $ connecting $1$ to $h$ and labeled by $h\in H_\lambda $. Since $p$ contains no edges of $\Gamma _{H_\lambda }$, $e$ is an isolated $H_\lambda $-component of the cycle $ep^{-1}$. By Lemma \ref{Ylambda}, we obtain
\begin{equation}\label{ep}
\ell _{Y_{\lambda _i}} (p_i)\le MC \ell (ep^{-1})=MC(n+1),
\end{equation}
where $C$ is the isoperimetric constant of (\ref{rp12}) and $M=\max\limits_{R\in \mathcal R} \| R\| $. Since (\ref{rp12}) is strongly bounded, $Y_\lambda$ is finite and $M<\infty $. Therefore there are only finitely many $h\in H_\lambda $  satisfying (\ref{ep}) and thus $(H_\lambda , \dl)$ is locally finite.
\end{proof}

\begin{defn}\label{hes}
If either of the conditions from Theorem \ref{ipchar} holds, we say that the collection $\Hl $ is {\it hyperbolically embedded} in $G$ with respect to $X$ and write $\Hl \h (G,X)$. Further we say that $\Hl $ is \he in $G$ and write $\Hl \h G$ if $\Hl \h (G,X)$ for some relative generating set $X$.
\end{defn}

\begin{rem} Note that if $\Hl\h G$, then $H_\lambda \h G$. Indeed let $\mathcal H_\lambda =\bigcup\limits_{\mu\in \Lambda\setminus\{ \lambda \}} H_\mu$. Then it follows immediately from the definition that $H_\lambda \h (G, X\cup \mathcal H_\lambda )$ for every $\lambda\in \Lambda $.  However the converse does not hold. For example, let $H_1=G=F(x,y)$ be the free group of rank $2$ and let $H_2=\langle x\rangle$. Then one has $H_1\h G$ and $ H_2\h G$. However $\{H_1, H_2\}$ is not hyperbolically embedded in $(G, X)$ for any $X$ as $(H_2,\widehat\d_2 )$ is always bounded.
\end{rem}

We record a useful corollary of Theorem \ref{ipchar} (cf. Proposition \ref{x1x2}).

\begin{cor}\label{he-indep}
Let $G$ be a group, $\Hl$ a collection of subgroups of $G$, $X_1, X_2\subseteq G$ relative generating sets of $G$ with respect to $\Hl$. Suppose that $|X_1\triangle X_2|<\infty $. Then  $\Hl\h (G, X_1)$ if and only if
$\Hl \h (G, X_2)$.
\end{cor}

\begin{proof}
It is convenient to use both
definitions of hyperbolically embedded subgroups from Theorem \ref{ipchar}.
Suppose that $\Hl\h (G, X_1)$. We first note that $G$ is weakly hyperbolic relative to $\Hl$ and $X_2$ by Proposition \ref{x1x2}. Further observe that if $(H_\lambda , \d_\lambda ^X)$ is locally finite, where the relative metric $\d_\lambda ^X$ on $H_\lambda $ is defined using some subset $X\subseteq G$, then for every $Y\subseteq X$, $(H_\lambda , \d_ \lambda ^Y)$ is also locally finite, where $\d_ \lambda ^Y$ is defined using $Y$. Indeed this follows directly from the definition of the relative metric. Hence it suffices to prove that $\Hl\h (G, X_1\cup X_2)$. Thus without loss of generality, we can assume that $X_1\subseteq X_2$. By induction, we can further reduce this to the case $X_2=X_1\cup \{ t\}$. The proof in this case will be done using the isoperimetric characterization of hyperbolically embedded subgroups.

Let
\begin{equation}\label{rp-X1}
G=\langle X_1, \mathcal H \mid \mathcal S\cup \mathcal R\rangle.
\end{equation}
be a strongly bounded relative presentation of $G$ with respect to $X_1$ and $\Hl $ with relative isoperimetric function $Cn$. Let $V$ be a word in $X_1\sqcup \mathcal H $ representing $t$ in $G$. Then
\begin{equation}\label{rp-X2}
G=\langle X_2, \mathcal H \mid \mathcal S\cup (\mathcal R\cup \{ tV^{-1}\}) \rangle.
\end{equation}
and it is routine to check that (\ref{rp-X2}) has linear relative isoperimetric function.

Indeed let $W$ be a word in $X_2\sqcup \mathcal H $ of length $\| W\| \le n$ representing $1$ in $G$. Let $$W=W_1t^{\e_1}\cdots W_kt^{\e _k}W_{k+1},$$ where words $W_1, \ldots , W_{k+1}$ do not contain $t^{\pm 1}$ and $\e_i=\pm 1$ for $i=1, \ldots, k$. Obviously we have $W=_GU$, where
$$
U=W_1V^{\e_1}\cdots W_kV^{\e _k}W_{k+1}.
$$
Let $Area_1^{rel}$ and $Area_2^{rel}$ denote the relative areas with respect to presentations (\ref{rp-X1}) and (\ref{rp-X2}), respectively. Obviously $$Area_2^{rel}(W)\le Area_1^{rel}(U) + k\le C\| U\| +k \le C\|V\|n+n.$$
Thus the relative isoperimetric function of (\ref{rp-X2}) is also linear and hence  $\Hl \h (G, X_2)$.
\end{proof}

The next result shows that Definition \ref{hes} indeed generalizes the notion of a relatively hyperbolic group.

\begin{prop}\label{he-rh}
Let $G$ be a group, $\Hl$ a collection of subgroups of $G$.
\begin{enumerate}
\item[a)] Suppose that $G$ is hyperbolic relative to $\Hl$. Then $\Hl \h (G,X)$ for some (equivalently, any) finite relative generating set $X$ of $G$.
\item[b)] Conversely if $\Hl \h (G,X)$ for some (equivalently, any) finite relative generating set $X$ of $G$ and $\Lambda $ is finite, then $G$ is hyperbolic relative to $\Hl $.
\end{enumerate}
\end{prop}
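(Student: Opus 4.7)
The plan is to prove both directions by reducing to the isoperimetric characterization of hyperbolic embedding (Theorem \ref{ipchar}), matching the combinatorial finiteness hypotheses on both sides. The ``some/any'' equivalences in each part follow immediately from Corollary \ref{he-indep}, since any two finite relative generating sets differ by a finite set; so it suffices to produce a single witness $X$ in each direction.

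For part (a), I start from the definition of relative hyperbolicity (Definition \ref{rhg}): there exists a finite relative generating set $X$ and a finite set of relators $\mathcal R$ such that
\[
G=\langle X,\mathcal H\mid\mathcal S\cup\mathcal R\rangle
\]
has linear relative isoperimetric function. The key observation is that this presentation is automatically strongly bounded: since $\mathcal R$ is finite, the lengths $\|R\|$ for $R\in\mathcal R$ are uniformly bounded, and the set of letters from $\mathcal H$ occurring in the (finitely many, finite-length) words of $\mathcal R$ is finite. Theorem \ref{ipchar} then yields $\Hl\h(G,X)$.

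For part (b), I start with $\Hl\h(G,X)$ for a finite relative generating set $X$, and invoke Theorem \ref{ipchar} to obtain a strongly bounded relative presentation $\langle X,\mathcal H\mid\mathcal S\cup\mathcal R\rangle$ with linear relative isoperimetric function. The aim is to show $\mathcal R$ can be taken finite, which by Definition \ref{rhg} is exactly relative hyperbolicity. Let $M=\sup_{R\in\mathcal R}\|R\|<\infty$ by boundedness, and let $Y\subset\mathcal H$ be the finite set of letters from $\mathcal H$ appearing in words of $\mathcal R$ (finite by strong boundedness). Every element of $\mathcal R$ is then a word of length at most $M$ in the finite alphabet $X\sqcup Y$; there are only finitely many such words, so $\mathcal R$ (viewed as a subset of the free product $F=(\ast_{\lambda\in\Lambda}H_\lambda)\ast F(X)$) is finite. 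This is where the hypothesis that $X$ is finite is essential. Combined with the linear relative isoperimetric function already in hand, this is exactly the definition of $G$ being hyperbolic relative to $\Hl$.

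There is no real obstacle here beyond bookkeeping; the content has already been packaged into Theorem \ref{ipchar} and Corollary \ref{he-indep}. The one point that requires a moment of attention is verifying that ``strongly bounded $+$ finite $X$'' forces $\mathcal R$ to be finite as a set of words in $F$, which is a counting argument over a finite alphabet with bounded word length.
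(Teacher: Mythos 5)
Your proof is correct and takes the same route as the paper: part (a) reduces to the observation that a finite relative presentation is automatically strongly bounded, and part (b) is the counting argument that a strongly bounded presentation over a finite $X$ has finitely many relators, both steps then channeled through Theorem~\ref{ipchar}. One small point worth noting: your counting in (b) never actually uses the hypothesis that $\Lambda$ is finite (only that $X$ is finite and the $\mathcal H$-letters appearing in $\mathcal R$ form a finite set), whereas the paper's proof nominally invokes that hypothesis; your argument is not gappy on this score, since strong boundedness already confines $\mathcal R$ to a finite alphabet with bounded word length.
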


\begin{proof}
Since every finite relative presentation
is strongly bounded, a) follows immediately from Definitions \ref{rhg} and \ref{hes}. Conversely if $X$ and $\Lambda $ are finite, then every strongly bounded relative presentation of $G$ with respect to $X$ and $\Hl $ is finite, and the claim follows from the definitions again.
\end{proof}

We are now going to discuss some general results about hyperbolically embedded subgroups. Our first goal is to prove that many finiteness properties pass from groups to hyperbolically embedded subgroups. We will need the following.

\begin{lem}\label{lem-retrLip}
Let $G$ be a group, $H$ a subgroup of $G$, $X$ a generating set of $G$.
Suppose that $\Gamma (G, X\sqcup H)$ is hyperbolic. Then there is a map $r\colon G\to H$ such that the restriction of $r$ to $H$ is the identity map for some fixed constant $C>0$ we have
\begin{equation}\label{retrLip}
\widehat\d  (r(f),r(g))\le C \dx (f,g)
\end{equation}
for every $f,g\in G$.
\end{lem}

\begin{proof}
Given $g\in G$ we define $r(g)$ to be
any element of $H$ such that $$\dxh (g,h)=\dxh (g, H).$$ Obviously $r(g)=g$ for every $g\in H$.

Assume first that $f,g\in G$ and $\dx (f,g)=1$. Consider a geodesic $4$-gon $Q$ in $\Gamma (G, X\sqcup H)$ with consecutive vertices $f,g, r(g), r(f)$ (some sides of $Q$ may be trivial) such that the side $[f,g]$ is labelled by some $x\in X$ and the side $p=[r(g), r(f)]$ is labelled by some $h\in H$. By the definition of $r$, the sides $[f,r(f)]$ and $[g, r(g)]$ intersect $H$ only at $r(f)$ and $r(g)$, respectively. Hence $p$ is a component of $Q$ which is not connected to any $H$-component of $[f,r(f)]$ or $[g, r(g)]$. Since $[f,g]$ is labelled by some $x\in X$ and thus has no $H$-components at all, $p$ is isolated in $Q$. Hence $\widehat\d  (p_-, p_+)\le 4D$, where $D=D(1,0)$ is the constant from Proposition \ref{sn}. Now (\ref{retrLip}) follows for any $f,g\in G$ and $C=4D$ by the triangle inequality.
\end{proof}

The next definition is inspired by \cite{Alo}.

\begin{defn}
Let $S$, $T$ be metric spaces. We say that $S$ is a {\it Lipschitz quasi-retract} \label{i-Lqr} of $T$ if there exists a sequence of Lipschitz maps $$ S\stackrel{i}\longrightarrow T \stackrel{r}\longrightarrow S $$ such that $r\circ i\equiv id _{S}$.
\end{defn}

Given a finitely generated group $A$ and a group $B$, we say that $B$ is a \emph{Lipschitz quasi-retract} of $A$ if $B$ is finitely generated and $(B, \d_Y)$ is a Lipschitz quasi-retract of $(A, \d _X)$, where $\d_X$ and $\d _Y$ are word metrics corresponding to some finite generating sets $X$ and $Y$ of $A$ and $B$ respectively. (Obviously replacing `some finite generating sets $X$ and $Y$' with `any finite generating sets $X$ and $Y$' leads to an equivalent definition.) We stress that the maps $i$ and $r$ do not need to preserve the group structure, so our definition does not imply that $B$ is a retract of $A$ in the group theoretic sense. On the other hand, it is easy to see that if $B$ is a retract of $A$ in the group theoretic sense and $A$ is finitely generated, then $B$ is a Lipschitz quasi-retract of $A$.

\begin{thm}\label{he-retr}
Let $G$ be a finitely generated group and let $H$ be a hyperbolically embedded subgroup of $G$. Then $H$ is a Lipschitz quasi-retract of $G$.
\end{thm}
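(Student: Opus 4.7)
\emph{Proof proposal.} The plan is to combine Lemma \ref{lem-retrLip}, which already provides a Lipschitz retraction from $(G, d_X)$ to $(H, \widehat{d})$, with the observation from Lemma \ref{omega}(b) that $\widehat{d}$ is bi-Lipschitz equivalent to a genuine word metric on $H$ with respect to a finite generating set. Finite generation of $H$ will then drop out for free.

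First, I would apply Theorem \ref{ipchar} to fix a strongly bounded relative presentation $\langle X, H \mid \mathcal{S} \cup \mathcal{R}\rangle$ of $G$ with linear relative isoperimetric function. Let $Y \subseteq H \setminus \{1\}$ be the (finite, by strong boundedness) set of letters from $H$ appearing in relators from $\mathcal{R}$. By Lemma \ref{omega}(b), there is a constant $K$ such that $K^{-1} d_Y(h_1,h_2) \le \widehat{d}(h_1,h_2) \le K d_Y(h_1,h_2)$ for all $h_1,h_2 \in H$ (with both sides simultaneously finite or infinite). Next, let $Z$ be a finite generating set of $G$. By Corollary \ref{he-indep} we may enlarge $X$ to $X \cup Z$ without destroying hyperbolic embedding, so after replacing $X$ by $X \cup Z$ we may assume $Z \subseteq X$, and in particular $d_X \le d_Z$ on $G$.

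Now let $r \colon G \to H$ be the map produced by Lemma \ref{lem-retrLip}, so that $r|_H = \mathrm{id}_H$ and $\widehat{d}(r(f),r(g)) \le C\, d_X(f,g) \le C\, d_Z(f,g)$ for all $f,g \in G$. Composing with the equivalence $\widehat{d} \sim_{\mathrm{Lip}} d_Y$ yields $d_Y(r(f),r(g)) \le CK\, d_Z(f,g)$. Specialising to $f = 1$ and $g = h \in H$ gives $d_Y(1,h) \le CK\, d_Z(1,h) < \infty$, which shows that $Y$ actually generates $H$; since $Y$ is finite, $H$ is finitely generated. Thus $r \colon (G, d_Z) \to (H, d_Y)$ is Lipschitz.

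It remains to produce the Lipschitz inclusion $i \colon (H, d_Y) \to (G, d_Z)$. Since $Y$ is finite and each $y \in Y$ lies in $G = \langle Z \rangle$, the number $M = \max_{y \in Y} d_Z(1,y)$ is finite; writing any $h \in H$ as a $Y$-word of length $d_Y(1,h)$ and expanding each letter into $Z$-letters shows $d_Z(1,h) \le M\, d_Y(1,h)$, so the set-theoretic inclusion $i$ is $M$-Lipschitz. Since $r \circ i = \mathrm{id}_H$, this exhibits $H$ as a Lipschitz quasi-retract of $G$. The only subtle point is the passage from the relative metric $\widehat{d}$ (which a priori might take the value $\infty$) to the genuine finite-valued word metric $d_Y$, but this is already packaged by Lemma \ref{omega}(b) together with strong boundedness; no further work is needed.
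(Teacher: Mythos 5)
Your proof is correct, and it is a genuine variant of the paper's argument rather than a rediscovery of it. Both you and the paper hinge on the quasi-retraction $r$ from Lemma~\ref{lem-retrLip}, but you differ in how the finite generating set $Y$ of $H$ is produced and in how the Lipschitz constant for $r\colon (G,\d_Z)\to(H,\d_Y)$ is obtained. The paper simply sets $Y=\{y\in H\mid\widehat\d(1,y)\le C\}$ -- finite because $(H,\widehat\d)$ is locally finite -- and then uses $r$ along an $X_0$-path from $1$ to $h$ to show simultaneously that $Y$ generates $H$, that $|h|_Y\le|h|_{X_0}$, and (by the same argument applied to arbitrary endpoints) that $r$ is $1$-Lipschitz from $(G,\d_{X_0})$ to $(H,\d_Y)$. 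You instead take $Y$ from a strongly bounded (and reduced) relative presentation via Theorem~\ref{ipchar} and invoke the bi-Lipschitz equivalence $\d_Y\sim_{Lip}\widehat\d$ of Lemma~\ref{omega}(b), then derive the Lipschitz property of $r$ by composing its $\widehat\d$-estimate with that equivalence, and derive finite generation of $H$ from the finiteness of $\d_Y(1,h)\le CK\,\d_Z(1,h)$. Your route costs you the heavier Lemma~\ref{omega}(b), but it makes explicit the link between $\widehat\d$ and a genuine word metric, whereas the paper's route stays entirely inside the $r$-argument and doesn't need Lemma~\ref{omega} at all.

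Two small points to tighten. First, an ordering issue: you fix the presentation, $Y$, and the constant $K$ relative to the original $X$, and only afterwards replace $X$ by $X\cup Z$. This changes $\widehat\d$, so the $K$ you quoted is nominally for the wrong metric. It is harmless (Proposition~\ref{x1x2} gives $\widehat\d\sim_{Lip}\widehat\d'$), but the clean fix is to enlarge $X$ to $X\cup Z$ first, via Corollary~\ref{he-indep}, and only then extract the strongly bounded presentation and $Y$. Second, Lemma~\ref{omega}(b) is stated for a \emph{reduced} bounded presentation; Theorem~\ref{ipchar} as stated gives a strongly bounded one, but its proof (through Lemma~\ref{pres}) produces one that is also reduced -- worth remarking so the application of Lemma~\ref{omega}(b) is visibly legitimate.
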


\begin{proof}
Let $X_0$ be a finite generating set of $G$. Suppose that $H\h (G, X)$. By Corollary \ref{he-indep} we can assume that $X_0\subseteq X$. Lemma \ref{lem-retrLip} easily implies that $H$ is generated by the set $$Y=\{ y\in H \mid \widehat\d (1,y)\le C\}.$$ Indeed for any $h\in H$ there is a path $q$ in $\Gamma (G, X\sqcup H)$ labelled by a word in the alphabet $X_0$ and connecting $1$ to $h$. Let $h_0=1$, $h_1$, \ldots , $h_n=h$ be the images of consecutive vertices of $q$ under the map $r$ provided by Lemma \ref{lem-retrLip}. Then for $1\le i\le n$, we have $\widehat\d (1, h_{i-1}^{-1}h_i )= \widehat\d  (h_{i-1}, h_i)\le C$ by Lemma \ref{lem-retrLip}. Hence $h_{i-1}^{-1}h_i \in Y$ and $h= (h_0^{-1}h_1) \cdots (h_{n-1}^{-1}h_n) \in \langle Y\rangle $. Thus $Y$ generates $H$. Moreover, our argument shows that for every $h\in H$, we have
\begin{equation}\label{retr-undist}
|h|_Y \le |h|_X.
\end{equation}

Let $i\colon (H, \d_Y)\to (G, \d_X)$ be the map induced by the natural embedding $H\to G$. Then $i$ is Lipschitz by (\ref{retr-undist}). Further it is obvious that the composition $r\circ i$ is identical on $H$. Since $r$ is also Lipschitz, we conclude that $(H, \d_Y)$ is a Lipschitz quasi-retract of $(G, \d_X)$. It remains to note that $Y$ is finite since $H\h G$.
\end{proof}

Note that every Lipschitz quasi-retract in our sense is a quasi-retract in the sense of \cite{Alo}. It was proved in \cite{Alo} and \cite{APW} that if a finitely generated group $H$ is a quasi-retract of a finitely generated group $G$, then $H$ inherits some finiteness properties and upper bounds on Dehn functions (including higher dimensional ones) from $G$.
Combining Theorem \ref{he-retr} with these results we obtain the following.

\begin{cor}\label{cor-fn}
Let $G$ be a finitely generated group and let $H$ be a hyperbolically embedded subgroup of $G$. Then the following conditions hold.
\begin{enumerate}
\item[(a)] $H$ is finitely generated.
\item[(b)] If $G$ is of type $F_n$ for some $n\ge 2$, then so is $H$. Moreover, the corresponding $(n-1)$-dimensional Dehn functions satisfy $\delta ^{n-1} _H\preceq \delta ^{n-1}_G$. In particular, if $G$ is finitely presented, then so is $H$ and $\delta _H\preceq \delta _G$.
\item[(c)] If $G$ is of type $FP_n$, then so is $H$.
\end{enumerate}
\end{cor}

Let us mention some other elementary results generalizing well-known properties of relatively hyperbolic groups. These results will be used later in this paper.

\begin{prop}\label{malnorm}
Suppose that a group $G$ is weakly hyperbolic relative to $X$ and $\Hl $. Then there exists a constant $A>0$ such that following conditions hold.
\begin{enumerate}
\item[a)] For any distinct $\lambda,\mu \in \Lambda$, and any $g\in G$, the intersection $H_\lambda ^g\cap H_\mu$ has diameter at most $A$ with respect to $\dl $. In particular, if $\Hl\h G$, then $|H_\lambda^g\cap H_\mu|<\infty $.
\item[b)] For any $\lambda \in \Lambda $ and any $g\in G\setminus H_\lambda $, the intersection $H_\lambda ^g\cap H_\lambda$ has diameter at most $A$ with respect to $\dl $. In particular, if $\Hl\h G$, then $|H_\lambda \cap H_\lambda ^g|<\infty $.
\end{enumerate}
\end{prop}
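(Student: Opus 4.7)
My plan is to reduce both parts to bounding $\widehat d_\mu(1,h)$ (resp.\ $\widehat d_\lambda(1,h)$) for any element $h$ in the intersection, uniformly in $g$; the diameter bound then follows from the left-$H_\mu$-invariance of $\widehat d_\mu$ (resp.\ left-$H_\lambda$-invariance of $\widehat d_\lambda$) and the fact that the intersection is a subgroup. The bound $A$ will be a multiple of $D:=D(1,0)$ from Proposition \ref{sn}.

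To bound $\widehat d_\mu(1,h)$ for $h\in H_\mu\cap H_\lambda^g$, write $h=g^{-1}kg$ with $k=ghg^{-1}\in H_\lambda$ and choose $g$ with $|g|_{X\cup\mathcal H}$ minimal subject to $ghg^{-1}\in H_\lambda$. Consider the 4-gon $Q$ in $\Gamma(G,X\sqcup\mathcal H)$ with consecutive vertices $1$, $g^{-1}$, $g^{-1}k$, $h=g^{-1}kg$, whose sides are: geodesics $P_1$ (from $1$ to $g^{-1}$) and $P_2$ (from $g^{-1}k$ to $h$), each of length $|g|_{X\cup\mathcal H}$; a single $H_\lambda$-edge $E_1$ labeled $k$; and a single $H_\mu$-edge $E_2$ labeled $h^{-1}$. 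If I can show that $E_1$ and $E_2$ are isolated components of $Q$, then applying Proposition \ref{sn} to $Q$ with $I=\{E_1,E_2\}$ and the sides $P_1,P_2$ being $(1,0)$-quasi-geodesic yields $\widehat\ell(E_2)+\widehat\ell(E_1)\le 4D$, so $\widehat d_\mu(1,h)=\widehat\ell(E_2)\le 4D$. Isolation of $E_1$ follows from a standard shortcut argument: if an $H_\lambda$-edge of $P_1$ had both endpoints in the coset $g^{-1}H_\lambda$, then $P_1$ could be shortened via the direct $H_\lambda$-edge to $g^{-1}$, forcing such an edge to be the last edge of $P_1$, ending at some $g^{-1}y$ with $y\in H_\lambda$ and $|y^{-1}g|=|g|-1$; but $y^{-1}g$ still conjugates $h$ into $H_\lambda$ (since the minimization set is closed under left multiplication by $H_\lambda$), contradicting minimality of $|g|$. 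The same argument applies to $P_2$ via the left-translate by $k^{-1}g$.

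Isolation of $E_2$ is the main obstacle, because the set $\{g:ghg^{-1}\in H_\lambda\}$ is not closed under left multiplication by $H_\mu$, so the shortcut argument does not by itself contradict minimality. The same shortcut reasoning still shows that any bad $H_\mu$-edge in $P_1$ (with both endpoints in $H_\mu$) can only occur as the first edge of $P_1$, and any bad $H_\mu$-edge in $P_2$ only as its last edge. If such boundary edges exist, the plan is to absorb them into a single $H_\mu$-edge: replace $E_2$ and the bad edge(s) by a shorter $H_\mu$-edge joining the new endpoints, obtaining a 4-gon of the same combinatorial type but strictly shorter perimeter, in which the first edge of the truncated geodesic is no longer an $H_\mu$-letter by geodesicity of the original $P_1$. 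Applying Proposition \ref{sn} to the reduced 4-gon yields a bound on $\widehat d_\mu$ between these new endpoints, and the triangle inequality, together with the fact that the absorbed bad edges connect $1$ to its neighbor and $h$ to its neighbor within $H_\mu$, gives the required bound on $\widehat d_\mu(1,h)$; the constant $A$ increases by a controlled amount.

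For part (b), the same 4-gon works with $\lambda=\mu$; now both $E_1$ and $E_2$ are $H_\lambda$-edges, but the hypothesis $g\notin H_\lambda$ places their endpoints in the distinct cosets $g^{-1}H_\lambda$ and $H_\lambda$, so $E_1$ and $E_2$ are not connected to each other, and the shortcut argument on both ends of $P_1$ and $P_2$ (all $H_\lambda$-types, both cosets now controlled by minimality in $H_\lambda g$) ensures isolation. Finally, when $\Hl\h G$, Theorem \ref{ipchar} states that $(H_\nu,\widehat d_\nu)$ is locally finite for every $\nu$, so a subset of bounded $\widehat d_\nu$-diameter is automatically finite, giving the "in particular" clauses.
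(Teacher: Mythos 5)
Your treatment of $E_1$ is sound and is essentially the paper's own proof in different clothing: the paper normalizes $g$ by stripping the maximal $H_\lambda$-prefix of a geodesic word, you normalize by minimizing $|g|_{X\cup\mathcal H}$ over a set closed under left multiplication by $H_\lambda$, and in both cases any component of $P_1$ or $P_2$ connected to $E_1$ is forced to be a terminal edge whose presence would shorten $g$. Applying Proposition \ref{sn} with $I=\{E_1\}$ alone already gives $\widehat\ell(E_1)=\widehat\d_\lambda(1,ghg^{-1})\le 4D$, which is exactly the bound the paper proves, is the meaning of ``diameter with respect to $\widehat\d_\lambda$'' (the intersection is measured after conjugating it into $H_\lambda$), and suffices for both ``in particular'' clauses via local finiteness of $(H_\lambda,\widehat\d_\lambda)$. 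One small repair is needed even here: minimize over the coset $H_\lambda g$ rather than over $\{g': g'h(g')^{-1}\in H_\lambda\}$, so that the conjugator is independent of $h$ and you genuinely bound the diameter of a single conjugate of the intersection rather than of an $h$-dependent family.

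The extra step — isolating $E_2$ so as to bound $\widehat\d_\mu(1,h)$ — has a genuine gap, and the inequality it targets is in fact false. After absorbing the bad first edge of $P_1$ (from $1$ to $u\in H_\mu$) and the bad last edge of $P_2$ (from $h'$ to $h$), Proposition \ref{sn} controls only $\widehat\d_\mu(u,h')$; your triangle inequality then needs $\widehat\d_\mu(1,u)$ and $\widehat\d_\mu(h',h)$, i.e.\ the relative lengths of the absorbed edges, and nothing in the argument bounds these — they are arbitrary $H_\mu$-letters occurring on a geodesic. This cannot be patched: take $G=F(s,t)\ast_{s=x}F(x,y)$ with $H_1=F(s,t)$, $H_2=F(x,y)$ and $X=\emptyset$, which is weakly hyperbolic relative to $\{H_1,H_2\}$ with $\widehat\d_2\sim_{Lip}\d_{\langle x\rangle}$ by Example \ref{ex: HNN} and Lemma \ref{omega}. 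For $g=y^n$ the element $h=y^{-n}xy^{n}$ lies in $H_1^{g}\cap H_2$, yet $\widehat\d_2(1,h)=\infty$ since $h\notin\langle x\rangle$, while $\widehat\d_1(1,ghg^{-1})=\widehat\d_1(1,s)$ is bounded. So for part (a) one must bound the $\widehat\d_\lambda$-length of $ghg^{-1}$, not the $\widehat\d_\mu$-length of $h$. The same defect appears in part (b): minimality of $|g|$ in $H_\lambda g$ controls the $g^{-1}$-end of $P_1$ and the $g^{-1}k$-end of $P_2$ (hence isolation of $E_1$), but not the $1$-end and the $h$-end, because the minimization set is not closed under right multiplication by $H_\lambda$; the hypothesis $g\notin H_\lambda$ should be used only to rule out $E_1$ being connected to $E_2$, after which one concludes from $E_1$ alone as above.
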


\begin{proof}
We first prove (a). Consider a shortest word $W$ in the alphabet $X\sqcup \mathcal H$ that represents $g$ in $G$. Assume that
$W=W_1W_2$, where $W_1$ is the maximal (may be empty) prefix of
$W$ consisting of letters from $H_\lambda $. Denote by
$f$ the element of $G$ represented by $W_2$. It is clear that
$H^g_\lambda =H^{f}_\lambda $. Thus passing from $g$ to $f$ if necessary, we can assume that the first letter of $W$ does not belong to $H_\lambda $.

Let us take an arbitrary element $h\in H_\lambda ^g \cap H_\mu $
and denote by $h_1$, $h_2$ the letters from $H_\lambda $ and $H_\mu $ that represent elements $h^{g^{-1}}\in H_\lambda $ and $h\in H_\mu $, respectively. Since $W^{-1}h_1W$ and $h_2$ represent the same element $h$, there is a geodesic quadrilateral $c=a^{-1}pbq$ in $\G $, where $a$ and $b$ are paths labelled by $W$, and $p,q$ are edges labelled by $h_1\in H_\lambda $ and $h_2^{-1}\in H_\lambda $, respectively. Note that $p$ is an isolated component of $c$. Indeed as $\lambda \ne \mu $, $p$ can not be connected to $q$. Further suppose that a component of $a^{-1}$ is connected to $p$. Since $a$ is geodesic this component must be the last edge of $a^{-1}$, which contradicts our assumption that the first letter of $W$ does not belong to $H_\lambda $. Hence $p$ can not be connected to a component of $a$. The same argument applies to $b$.  Thus $p$ is isolated in $c$ and $\widehat \ell (p)\le 4L$, where $L$ is the constant provided by Proposition \ref{sn}. This proves (a).

The proof of (b) is similar. The only difference is that $p$ can not be connected to $q$ in this case since $g\notin H_\lambda $.
\end{proof}

\begin{cor}\label{center}
Suppose that $G$ is a group with infinite center. Then $G$ contains no proper infinite hyperbolically embedded subgroups.
\end{cor}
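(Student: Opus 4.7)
The plan is to derive a contradiction by combining the assumption that $Z(G)$ is infinite with the almost malnormality statement in Proposition \ref{malnorm}(b). Suppose, for contradiction, that $H$ is a proper infinite subgroup of $G$ with $H \hookrightarrow_h G$. The key observation is that elements of $Z(G)$ either lie inside $H$ or outside, and in either case they are ``too well-behaved'' to coexist with almost malnormality.

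I would split into two cases based on whether $Z(G) \subseteq H$ or not. In the first case, pick any $g \in G \setminus H$ (which exists since $H$ is proper). For every $z \in Z(G)$, one has $g^{-1} z g = z \in H$, so $z \in H \cap H^g$. Hence $Z(G) \subseteq H \cap H^g$, but by Proposition \ref{malnorm}(b) the intersection $H \cap H^g$ is finite, contradicting the infiniteness of $Z(G)$.

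In the second case, pick some $z \in Z(G) \setminus H$. Since $z$ is central, conjugation by $z$ is trivial on $G$, so $H^z = z^{-1}Hz = H$. Then $H \cap H^z = H$, and since $z \notin H$, Proposition \ref{malnorm}(b) forces $H$ to be finite, contradicting the assumption that $H$ is infinite.

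I expect no serious obstacle: the main content is already in Proposition \ref{malnorm}(b), and the only thing to be careful about is handling both the ``$Z(G)$ meets $H$'' and ``$Z(G)$ escapes $H$'' scenarios uniformly. The hypothesis that $H$ is \emph{proper} is essential for the first case (to produce $g$), while the hypothesis that $Z(G)$ is \emph{infinite} (rather than just nontrivial) is essential for the second.
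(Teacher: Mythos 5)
Your proof is correct and is exactly the argument the paper implicitly has in mind (the paper states this as an immediate corollary of Proposition \ref{malnorm} with no separate proof given). Both cases are handled properly: centrality forces $H \cap H^g$ (Case 1) or $H \cap H^z = H$ (Case 2) to be infinite, contradicting the almost malnormality from Proposition \ref{malnorm}(b). One small inaccuracy in your closing remark about which hypothesis serves which case: the infiniteness of $Z(G)$ is what you actually use in Case 1 (to contradict $|H \cap H^g| < \infty$), not in Case 2; in Case 2 the contradiction already follows from $H$ itself being infinite, and mere nontriviality of $Z(G) \setminus H$ would suffice there. Properness is indeed needed only to produce $g$ in Case 1. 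This does not affect the validity of the argument.
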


\begin{proof}
Assume that there exists a non-degenerate hyperbolically embedded subgroup $H$ of $G$ and let $Z$ denote the center of $G$. Then $H^z=H$ for every $z\in Z$. Since $H$ is infinite, we obtain $Z\le H$ from part b) of the proposition (and the fact that $H$ is a proper space with respect to the relative metric $\widehat d$). Since $H\ne G$, there exists $g\in G\setminus H$. Then $H^g\cap H$ must be finite by part b) of the proposition. Obviously this intersection contains $Z$. Hence $Z$ is finite.
\end{proof}

The next proposition shows that ``being a hyperbolically embedded subgroup" is a transitive property. The analogous property of relatively hyperbolic groups can be found in \cite{Osi06a}.

\begin{prop}\label{transitive}
Let $G$ be a group, $\Hl$ a finite collection of subgroups of $G$, $X\subseteq G$, $Y_\lambda\subseteq H_\lambda$. Suppose that $\Hl\h (G, X)$ and, for each $\lambda \in \Lambda$, there is a collection of subgroups $\Km $ of $H_\lambda $ such that $\Km\h (H_\lambda, Y_\lambda)$. Then $\bigcup_{\lambda \in \Lambda } \Km\h (G,Z)$, where
$Z=X\cup \left(\bigcup_{\lambda \in \lambda }Y_\lambda\right).$
\end{prop}

\begin{proof}
Let us fix some strongly bounded relative presentations with linear relative isoperimetric functions:
\begin{equation}\label{GrelH}
G=\left.\left\langle X, \mathcal H \left| \left(\bigcup_{\lambda\in \Lambda} \mathcal S_\lambda \right) \cup \mathcal R\right.\right.\right\rangle
\end{equation}
and
\begin{equation}\label{HrelK}
H_\lambda = \langle Y_\lambda , \Km   \mid  \mathcal P_\lambda\rangle .
\end{equation}
Here, as usual, $\mathcal H=\bigsqcup_{\lambda \in \Lambda } H_\lambda $, and $\mathcal S_\lambda $ is the set of all words in $H_\lambda $ representing $1$ in $H_\lambda $. Clearly $G$ can be also represented by the relative presentation
\begin{equation}\label{GrelK}
G=\left\langle \left.X\cup Y, \bigcup_{\lambda\in \Lambda}\Km  \right| \mathcal P \cup \mathcal R\right\rangle ,
\end{equation}
where $Y=\bigcup_{\lambda \in \lambda }Y_\lambda $ and $\mathcal P=  \bigcup_{\lambda\in \Lambda} \mathcal P_\lambda$. It is clear that (\ref{GrelK}) is strongly bounded. To prove the proposition it suffices to show that it has linear relative isoperimetric function. We define the notions of $\mathcal S_\lambda $-, $\mathcal S$-, $\mathcal R$-, $\mathcal P_\lambda $-, and $\mathcal P$-cells in diagrams over (\ref{GrelH})-(\ref{GrelK}) in the obvious way.

Since $\Lambda $ is finite, there exists $C>0$ such that $f(n)=Cn$ is a relative isoperimetric function of the presentations (\ref{GrelH}) and (\ref{HrelK}) for all $\lambda $. Let $\preceq $ be the lexicographic order on $\mathbb N\times\mathbb N$, that is $(a,b)\preceq (c,d)$ if and only if $a<c$ or $a=c$ and $b\le d$. We say that a diagram $\Delta $ over (\ref{GrelH}) has type $(a,b)$ if $a$ and $b$ are the numbers of $\mathcal R$-cells and $\mathcal S$-cells in $\Delta $, respectively. Let $W$ be any word in $X\cup Y\sqcup\mathcal K$, where $\mathcal K=\bigsqcup_{\lambda \in \Lambda }\bigsqcup_{\mu \in M_\lambda } (K_{\lambda \mu}  )$, and suppose that $W$ represents $1$ in $G$. Let $\Delta $ be the diagram over (\ref{GrelH}) of minimal type with $\partial \Delta \equiv W$.

Observe that no two $S_\lambda $-cells have a common edge in $\Delta $. Indeed otherwise we could replace these two cells with one, which contradicts the minimality of the type of $\Delta $. Hence every edge of in $\Delta $ either belongs to the boundary of $\Delta $ or to the boundary of some $\mathcal R$-cell. Let $E$ be the total number of edges in $\Delta $. Then $E\le (CM+1) \| W\| $ , where $M=\max\limits_{R\in \mathcal R} \| R\| $.

For every $S_\lambda $-cell $\Xi $ in $\Delta $, there is a diagram over (\ref{HrelK}) with the same boundary label and the number of $\mathcal P_\lambda $-cells at most $C\ell(\partial \Xi)$. After replacing all $S_\lambda $-cells (for all $\lambda $) with such diagrams, we obtain a diagram $\Delta ^\prime$ over (\ref{GrelK}), where the total number of $\mathcal P$-cells is at most $CE\le C(CM+1)\| W\| $. Note that the number of $\mathcal R$-cells does not change and is at most $C\| W\| $ by the minimality of the type of $\Delta $ and the choice of $C$. Hence the total number of $\mathcal P$ and $\mathcal R$-cells in $\Delta ^\prime$ is at most $C(CM+2)\| W\| $. Thus (\ref{GrelK}) has a linear relative isoperimetric function.
\end{proof}

The next result shows that the property of being hyperbolically embedded is conjugacy invariant. Moreover, we have the following.
\begin{prop}\label{heconj}
Let $G$ be a group, $\Hl $ a collection of subgroups of $G$, $X$ a subset of $G$ such that $\Hl\h (G,X)$. Let $t$ be an arbitrary element of $G$ and let $M$ be any subset of $\Lambda $. Then we have $\{ H_\lambda^t\} _{\lambda\in M}\cup \{ H_\lambda\}_{\lambda \in \Lambda \setminus M}\h (G,X)$.
\end{prop}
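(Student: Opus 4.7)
The plan is to verify the isoperimetric characterization of hyperbolic embedding from Theorem \ref{ipchar} for the new collection. By hypothesis and Theorem \ref{ipchar}, I fix a strongly bounded relative presentation
\[
G = \langle X,\, \mathcal{H} \mid \mathcal{S} \cup \mathcal{R}\rangle
\]
with linear relative isoperimetric function, where $\mathcal{H} = \bigsqcup_\lambda(H_\lambda\setminus\{1\})$. I will construct a strongly bounded relative presentation of $G$ with respect to $X$ and the new alphabet $\mathcal{H}' = \bigsqcup_\lambda (H'_\lambda \setminus \{1\})$ that also has linear relative isoperimetric function, from which the conclusion follows via Theorem \ref{ipchar}.

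The transformation proceeds by a bounded substitution. First, I would fix a word $T$ in $X \sqcup \mathcal{H}'$ representing $t$, and let $\psi \colon \mathcal{H} \to \mathcal{H}'$ be the letter-for-letter bijection sending $h \in H_\lambda \setminus \{1\}$ to the letter $t^{-1}ht \in H'_\lambda\setminus\{1\}$ for $\lambda \in M$, and to $h$ itself for $\lambda \notin M$. For each relator $R \in \mathcal{R}$ I replace each letter $h$ appearing in $R$ by $T\,\psi(h)\,T^{-1}$ when $h \in H_\lambda \setminus \{1\}$ with $\lambda \in M$, and by $\psi(h)=h$ otherwise; this produces a word $R'$ over the alphabet $X \sqcup \mathcal{H}'$ which still represents $1$ in $G$. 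The candidate new presentation is
\[
G = \langle X,\, \mathcal{H}' \mid \mathcal{S}' \cup \mathcal{R}'\rangle,
\]
where $\mathcal{R}' = \{R' : R \in \mathcal{R}\}$ and $\mathcal{S}'$ is the natural set of abelian relators over $\mathcal{H}'$.

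To finish the argument, I would verify four things. (i) This is actually a presentation of $G$, because the substitution leaves each element of $G$ represented by a word unchanged and because $\mathcal{S}'$ records all relations internal to the new subgroups. (ii) The presentation is bounded: $\|R'\| \le (2|T|+1)\|R\|$, so relator lengths remain uniformly bounded. (iii) The presentation is strongly bounded, because the $\mathcal{H}'$-letters appearing in $\mathcal{R}'$ consist of the $\psi$-images of the finitely many $\mathcal{H}$-letters occurring in $\mathcal{R}$ together with the finitely many $\mathcal{H}'$-letters occurring in $T$ and $T^{-1}$, hence a finite set. (iv) The relative isoperimetric function is still linear: any van Kampen diagram over the new presentation with boundary label $W'$ may be transported, via the inverse substitution and the amalgamation of appropriate $\mathcal{S}$-cells, into a van Kampen diagram over the old presentation with boundary label at most a bounded multiple of $\|W'\|$, and a linear bound on the number of $\mathcal{R}$-cells translates back into a linear bound on the number of $\mathcal{R}'$-cells.

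The main difficulty is making the substitution rigorous when $T$ involves letters from $H_\mu$ with $\mu\in M$, and, relatedly, ensuring that $X \sqcup \mathcal{H}'$ generates $G$ so that a word $T$ in this alphabet exists at all. I would address this by working in the enlarged free product $F(X) * (\ast_\lambda H_\lambda) * (\ast_\lambda H'_\lambda)$, equipped with the identifications $h = t\psi(h)t^{-1}$, and performing the substitution globally in a single step before projecting back to the free product built from $X \sqcup \mathcal{H}'$; once this is done, assertions (i)--(iv) become essentially combinatorial bookkeeping on van Kampen diagrams, and Theorem \ref{ipchar} delivers the desired hyperbolic embedding of $\{H_\lambda^t\}_{\lambda \in M}\cup\{H_\lambda\}_{\lambda\in\Lambda\setminus M}$ in $(G,X)$.
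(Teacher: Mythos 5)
Your route is the dual of the paper's: you work on the isoperimetric side of Theorem \ref{ipchar} and push a strongly bounded presentation with linear relative Dehn function through the substitution $h\mapsto T\,(t^{-1}ht)\,T^{-1}$, whereas the paper works directly with the two relative Cayley graphs $\G$ and $\Gamma(G,X\sqcup\mathcal H')$ and compares the metrics $\dl$ and $\dl'$ via the inverse substitution $h^t\mapsto t^{-1}ht$ (a length-$3$ rewriting), deducing local finiteness of $(H_\lambda^t,\dl')$ from that of $(H_\lambda,\dl)$. Both arguments are bounded-substitution arguments, and once the point below is fixed your steps (i)--(iv) do go through; your version even handles hyperbolicity of the new relative Cayley graph automatically via the linear isoperimetric function, which the paper leaves implicit.

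The genuine gap is the issue you flag at the end but do not resolve: $X\sqcup\mathcal H'$ need not generate $G$, so the word $T$ may simply not exist and there is no presentation of $G$ over the new alphabet to write down. This is not a formality. Take $G=\langle a\rangle\ast\langle b\rangle$, $X=\emptyset$, $H_1=\langle a\rangle$, $H_2=\langle b\rangle$, $M=\{1\}$ and $t=ba$; then $\langle H_1^t\cup H_2\rangle=\langle a^{-1}b^{-1}aba,\,b\rangle$ is a proper (infinite-index) subgroup of $G$, as a Stallings folding shows. Working in the enlarged free product $F(X)\ast(\ast_\lambda H_\lambda)\ast(\ast_\lambda H'_\lambda)$ does not help, because the obstruction lives in $G$ itself, not in how you organize the free product. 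The correct fix --- and the first line of the paper's proof --- is to invoke Corollary \ref{he-indep} to replace $X$ by $X\cup\{t,t^{-1}\}$ at the outset; then $T$ is the single letter $t$, the substitution is length-preserving up to a factor of $3$, and the rest of your bookkeeping is sound. (You should also be aware that after this enlargement the conclusion is really $\h(G,X\cup\{t\})$; descending back to the original $X$ would again require $X$ to relatively generate $G$ with respect to the new collection, which the example above shows can fail. The paper's own statement and proof share this imprecision, so I do not count it against you, but your write-up should make the enlargement of $X$ explicit rather than hoping to avoid it.)
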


\begin{proof}
Let $$\mathcal H=\bigsqcup \limits_{\lambda \in \Lambda } H_\lambda$$ and $$\mathcal H^\prime =\left(\bigsqcup\limits_{\lambda\in M} H_\lambda^t\right) \sqcup \left(\bigsqcup \limits_{\lambda \in \Lambda\setminus M } H_\lambda\right).$$ By Corollary \ref{he-indep} we can assume that $t\in X$ without loss of generality.

Let $\dl$ and (respectively, $\dl^\prime $) be the metric defined on $H_\lambda $ (respectively, $H_\lambda ^t$ for $\lambda \in M$ and $H_\lambda $ for $\lambda \in \Lambda \setminus M$) using the graph $\G $ (respectively, $\Gamma (G, X\sqcup \mathcal H^\prime)$). Note that every word $W$ in the alphabet $X\sqcup \mathcal H^\prime $ can be turned into a word in the alphabet $X\sqcup \mathcal H$ by replacing each letter $h^t\in H_\lambda^t $, $\lambda\in M$, with the word $t^{-1}ht$ of length $3$, where $h\in H_\lambda$. We will denote the resulting word by $\pi (W)$. Note that $W$ and $\pi (W)$ represent the same element in the group $G$.

Let $x^t\in H_\lambda ^t$ for some $\lambda \in M $. Let $p$ be a in $\Gamma (G, X\sqcup \mathcal H^\prime)$ between $1$ and the vertex $x^t$. Let $q$ be the path connecting $1$ and $x\in H_\lambda$ with label $t\pi (\Lab(p))t^{-1}$. It is straightforward to verify that if $p$ does not contain any edges of the subgraph $\Gamma (H_\lambda ^t, H_\lambda^t )$ of $\Gamma (G, X\sqcup \mathcal H^\prime)$, then $q$ does not contain any edges of the subgraph $\Gamma (H_\lambda , H_\lambda )$ of $\G $. Since $\ell (q)\le 2+3\ell (p)$, we conclude that $\dl (1,x)\le 3\dl^\prime (1, x^t)$ for every $x\in H_\lambda $. Hence local finiteness of $(H_\lambda , \dl)$ implies local finiteness of $(H_\lambda ^t, \dl ^\prime)$ for $\lambda \in M$. Further for $\lambda \in \Lambda \setminus M$, the local finiteness of $(H_\lambda ^t, \dl ^\prime)$ can be obtained in the same way. The only difference is that we have to use the label $\pi(\Lab (p))$ instead of $t\pi (\Lab(p))t^{-1}$ in the definition of $q$. Thus $\{ H_\lambda^t\} _{\lambda\in M}\cup \{ H_\lambda\}_{\lambda \in \Lambda \setminus M}\h (G,X)$ by the definition.
\end{proof}

\subsection{Projection complexes and geometrically separated subgroups}

Our main goal in this section is to propose a general method of constructing hyperbolically embedded subgroups in groups acting on hyperbolic spaces. Our approach is based on projection complexes introduced by Bestvina, Bromberg, and Fujiwara in \cite{BBF}. We begin by recalling the definitions.

\begin{defn}\label{projc}
Let $\mathbb Y$ be a set. Assume that for each $Y \in \mathbb Y$ we have a
function $$\d_Y^\pi  \colon (\mathbb  Y\setminus\{Y\}) \times (\mathbb Y\setminus\{Y\})
\longrightarrow [0,\infty),$$ called \textit{projection on $Y$}, and a constant $\xi>0$ that
  satisfy the following axioms for all $A,B \in \mathbb Y\setminus \{ Y\}$:
\begin{enumerate}
\item[(A$_1$)] $\d_Y^\pi (A,B) = \d_Y^\pi (B,A)$;
\item[(A$_2$)] $\d_Y^\pi(A,B) + \d_Y^\pi(B,C) \ge \d_Y^\pi(A,C)$;
\item[(A$_3$)] \label{chris}   $\min\{\d_Y^\pi(A,B), \d_B^\pi(A,Y) \} < \xi $;
\item[(A$_4$)] \label{finite}  $\# \{Y |
  \d_Y^\pi(A,B)\ge \xi\}$ is finite.
\end{enumerate}

Let also $K$ be a positive constant. Associated to this data is the {\it projection complex}, {\pky }, which is a graph constructed as follows. The set of vertices of {\pky } is the set $\mathbb Y$ itself. To describe the set of edges, one first defines a new function $\d_Y   \colon (\mathbb  Y\setminus\{Y\}) \times (\mathbb Y\setminus\{Y\}) \longrightarrow [0,\infty)$ as a small perturbation of $\d _Y^\pi $. The exact definition can be found in \cite{BBF} and is not essential for our goals. The only essential property of $\d _Y$ is the following inequality, which is an immediate corollary of \cite[Proposition 2.2]{BBF}. For every $Y\in \mathbb Y$ and every $A,B\in \mathbb  Y\setminus\{Y\}$, we have
\begin{equation}\label{dpd}
|\d^\pi _Y(A,B) -\d _Y(A,B)|< 2\xi .
\end{equation}

Two vertices $A,B\in \mathbb Y$ are connected by an edge if and only if for every $Y\in \mathbb Y\setminus \{A,B\}$, the projection $\d_Y (A,B)$ satisfies $\d_Y(A,B)\le K$. Note that this construction strongly depends on $K$ and, in general, the complexes corresponding to different $K$ are not quasi-isometric. We also remark that if $\mathbb Y$ is endowed with an action of a group $G$ that preserves projections (i.e., $d^\pi_{g(Y)}(g(A), g(B))=\d^\pi _Y (A,B)$), then it extends to an action of $G$ on {\pky }.
\end{defn}

The following is the main example (due to Bestvina, Bromberg, and Fujiwara \cite{BBF}), which motivates the terminology.

\begin{ex}\label{pk-ex}
Let $G$ be a discrete group of isometries of $\mathbb H^n$ and $g_1, \ldots , g_k$
a finite collection of loxodromic elements of $G$. Denote by $X_i$ the axis of
$g_i$ and let
$$
\mathbb Y = \{ gX_i \mid g\in G, \; i=1, \ldots , k\}
$$
It is easy to check that there exists  $\nu > 0$ such that the projection ${\rm proj }_YX$
(i.e. the image under the nearest point projection map) of any geodesic
$X\in \mathbb Y$ to any other geodesic $Y\in \mathbb Y$ has diameter bounded by $\nu $. Thus we can define
$\d^{\pi}_Y (X,Z)$ to be ${\rm diam}({\rm proj }_Y (X\cup Z))$. The reader may check that all axioms
hold.
\end{ex}

Later on we will apply the above construction in a situation which can be viewed as a generalization of Example \ref{pk-ex}.

The following was proved in \cite[Lemma 2.4 and Theorem 2.9]{BBF} under the assumptions of Definition \ref{projc}.
\begin{prop}\label{BBF}
There exists $K>0$ such that {\pky } is connected and quasi-isometric to a tree.
\end{prop}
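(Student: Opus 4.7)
The plan is to follow the approach of Bestvina, Bromberg, and Fujiwara \cite{BBF}. I will choose $K$ to be a suitable constant depending only on $\xi$, and verify both properties in parallel by exploiting axioms (A$_3$) and (A$_4$), which together furnish a ``betweenness'' relation on $\mathbb Y$. Throughout, I will need to freely switch between the projection $\d_Y^\pi$ (in which the axioms are stated) and the perturbed distance $\d_Y$ (which defines the edges of $\mathcal P_K(\mathbb Y)$), using the estimate (\ref{dpd}) to pay a uniform additive cost of $2\xi$ at each conversion.

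For connectedness, I would fix $A,B \in \mathbb Y$ and consider the finite set $\mathbb Y_{A,B} = \{Y \in \mathbb Y\setminus\{A,B\} \mid \d_Y^\pi(A,B) \ge \xi\}$, which is finite by (A$_4$). On this set I would define a relation $\prec$ by declaring $Y_1 \prec Y_2$ if $\d_{Y_1}^\pi(A, Y_2)$ is small (below some threshold of order $\xi$) and check, using (A$_3$) for totality and (A$_2$) for transitivity, that $\prec$ is a total order after absorbing perturbation constants. Enumerating $\mathbb Y_{A,B} \cup \{A,B\}$ in this order as $A = Y_0 \prec Y_1 \prec \cdots \prec Y_n \prec Y_{n+1} = B$, I would show that each pair $Y_i, Y_{i+1}$ spans an edge, because any candidate obstruction $Z$ with $\d_Z(Y_i, Y_{i+1}) > K$ would, via (A$_2$), have to lie strictly between $Y_i$ and $Y_{i+1}$ in the order $\prec$, contradicting that the enumeration exhausts $\mathbb Y_{A,B}$.

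For the tree property, I would apply Manning's bottleneck criterion, which characterizes connected graphs quasi-isometric to trees by the existence of a constant $\Delta$ such that any two vertices $A,B$ admit a ``midpoint'' vertex through which every edge-path from $A$ to $B$ passes within distance $\Delta$. Using the enumeration constructed above, I claim that every edge-path in $\mathcal P_K(\mathbb Y)$ from $A$ to $B$ must in fact visit each $Y_i$. Indeed, if some $Y_i$ were avoided, two consecutive vertices $C, D$ on the path would straddle $Y_i$ in the order $\prec$, forcing $\d_{Y_i}^\pi(C,D)$ to be large by (A$_2$); converting to $\d_{Y_i}$ via (\ref{dpd}) then contradicts the edge condition $\d_{Y_i}(C,D) \le K$ once $K$ is chosen sufficiently large relative to $\xi$. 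This yields the bottleneck property with $\Delta$ bounded purely in terms of $\xi$ and $K$.

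The main obstacle is a purely bookkeeping one: calibrating the three numerical thresholds---the cutoff defining $\mathbb Y_{A,B}$, the threshold controlling the order $\prec$, and the edge constant $K$---so that all the inequalities above close up after repeated application of (A$_2$) and (A$_3$) with the $2\xi$-loss from (\ref{dpd}). Verifying that a single choice (for instance $K \ge 4\xi$ with comparable thresholds) works uniformly for all pairs $(A,B)$ is the technical heart of \cite[Theorem 2.9]{BBF}, and my proof would either invoke that theorem directly or carry out the constant-chasing explicitly.
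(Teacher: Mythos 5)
Your proposal is correct and matches the paper's treatment: the paper offers no proof of this proposition at all, but simply cites \cite[Lemma 2.4 and Theorem 2.9]{BBF}, so your fallback of invoking that theorem is exactly what the authors do, and your sketch of its internals (the totally ordered ``standard'' set $\mathbb Y_{A,B}$, consecutive elements spanning edges, and Manning's bottleneck criterion via the fact that every path must pass through each $Y_i$) is the genuine BBF argument. One caveat should you carry out the details: your order $\prec$ is stated backwards --- $Y_1$ lying between $A$ and $Y_2$ corresponds to $\d_{Y_1}^\pi(A,Y_2)$ being \emph{large} (equivalently, $\d_{Y_2}^\pi(A,Y_1)$ being small, by (A$_3$)), not small as written.
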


Given a group $G$ acting on a set $S$, an element $s\in S$, and a subset $H\le G$, we define the {\it $H$-orbit } of $s$ by
$$
H(s)=\{ h(s)\mid h\in H\} .
$$

\begin{defn}\label{GeomSep}
Let $G$ be a group acting on a space $(S, d)$. A collection of subgroups $\Hl $ of $G$ is called {\it geometrically separated} if for every $\e >0$ and every $s\in S$, there exists $R>0$ such that the following holds. Suppose that for some $g\in G$ and $\lambda,\mu \in \Lambda $ we have
\begin{equation}\label{GS-eq}
{\rm diam} \left(H_\mu (s)\cap \mathcal (gH_\lambda (s))^{+\e}\right)\ge R.
\end{equation}
Then $\lambda =\mu $ and $g \in H_\lambda $.
\end{defn}

Informally, the definition says that the orbits of distinct cosets of subgroups from the collection $\Hl $ rapidly diverge. In the next section, we will also show that geometric separability can be thought of as a generalization of the weak proper discontinuity condition introduced by Bestvina and Fujiwara \cite{BF}.

\begin{figure}
  \centering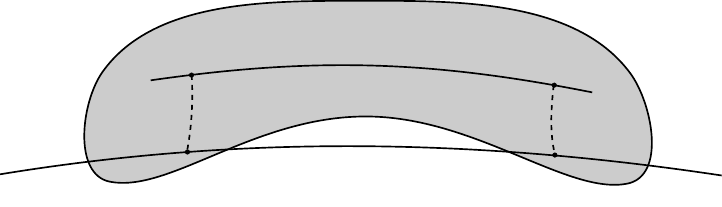\\
  \caption{}\label{44-f3}
\end{figure}

\begin{rem}\label{rem-gs}
Note that in order to prove that $\Hl $ is geometrically separated it suffices to verify that for every $\e >0$ and \emph{some} $s\in S$, there exists $R=R(\e)>0$ satisfying the requirements of the Definition \ref{GeomSep}. Indeed then for every $\e>0$ and every $s^\prime \in S$, we can take $$R^\prime =2R(\e + 2\d (s, s^\prime))+ 4\d (s, s^\prime).$$ Now if
$$
{\rm diam} \left(H_\mu (s^\prime )\cap \mathcal (gH_\lambda (s^\prime ))^{+\e}\right)\ge R^\prime ,
$$
then there exist $h_1,h_2\in H_\mu$ and $k_1, k_2\in gH_\lambda $ such that $$\d (h_1(s^\prime), h_2(s^\prime))\ge R^\prime/2=R(\e + 2\d (s, s^\prime))+ 2\d (s, s^\prime)$$ and $\d (h_i (s^\prime), k_i(s^\prime))\le \e $ for $i=1,2$ (Fig. \ref{44-f3}). This implies
$$
\d (h_1(s), h_2(s))\ge \d (h_1(s^\prime), h_2(s^\prime)) -\d (h_1(s^\prime), h_1(s))- \d (h_2(s^\prime),h_2(s)) \ge R(\e + 2\d (s, s^\prime))
$$
and similarly
$$
\d (h_i (s), k_i(s))\le \e +2\d (s, s^\prime).
$$
Therefore,
$$
{\rm diam} \left(H_\mu (s)\cap \mathcal (gH_\lambda (s))^{+\e+2\d (s, s^\prime)}\right)\ge R(\e + 2\d (s, s^\prime)),
$$
which implies $\lambda =\mu $ and $g \in H_\lambda $.
\end{rem}

 The main result of this section is the following.

\begin{thm}\label{crit}
Let $G$ be a group, $\Hl $ a finite collection of distinct subgroups of $G$. Suppose that the following conditions hold.
\begin{enumerate}
\item[(a)] $G$ acts by isometries on a hyperbolic space $(S, \d)$.

\item[(b)] There exists $s\in S$ such that for every $\lambda \in \Lambda $, the $H_\lambda $-orbit of $s$ is quasiconvex in $S$.

\item[(c)] $\Hl $ is geometrically separated.
\end{enumerate}
Then there exists a relative generating set $X$ of $G$ with respect to $\Hl $ and a constant $\alpha >0$ such that the Cayley graph $\G $ is hyperbolic, and for every $\lambda \in \Lambda $ and $h\in H_\lambda $ we have
\begin{equation}\label{hec0}
\dl (1,h) \ge \alpha \d (s, h(s)).
\end{equation}
In particular, if every $H_\lambda $ acts on $S$ properly, then $\Hl \h (G,X)$.
\end{thm}

\begin{rem}
Note that the assumptions of the theorem imply that each $H_\lambda $ acts properly and coboundedly on a hyperbolic space, namely on a suitable neighborhood of the orbit $H_\lambda (s)$. This implies that $H_\lambda $ is hyperbolic. Thus all applications of Theorem \ref{crit} lead to \he families of hyperbolic subgroups.
\end{rem}

Note that if $ {\rm diam} (H_\lambda (s))<\infty $ for some $\lambda \in \Lambda $, then the inequality (\ref{hec0}) holds for $\alpha = 1/{\rm diam} (H_\lambda (s) )$ for {\it any} generating set $X$. (In particular, if $ {\rm diam} (H_\lambda (s))<\infty $ for all $\lambda \in \Lambda $, then we can take $X=G$.) Thus it suffices to prove the theorem assuming that
\begin{equation}\label{diamHl}
{\rm diam} (H_\lambda (s))=\infty
\end{equation}
for all $\lambda \in \Lambda $.

We present the proof as a sequence of lemmas. Throughout the rest of the section we work under the assumptions of Theorem \ref{crit}. We also assume (\ref{diamHl}).

Let us first introduce some auxiliary notation.
Let $\delta >0$ be a hyperbolicity constant of $S$. Given a point $a\in S$ and a subset $Y\subseteq S$, we define the {\it projection} of $a$ to $Y$ by
\begin{equation}\label{projdef}
{\rm proj }_Y(a)=\{ y\in Y\mid \d (a,y)\le \d (a, Y)+\delta \}.
\end{equation}
Further, given two subsets $A,Y\subseteq S$, we define
$$
{\rm proj }_Y(A)=\{ {\rm proj }_Y(a)| a\in A\} .
$$
The proof of following lemma is a standard exercise in hyperbolic geometry.

\begin{figure}
  \centering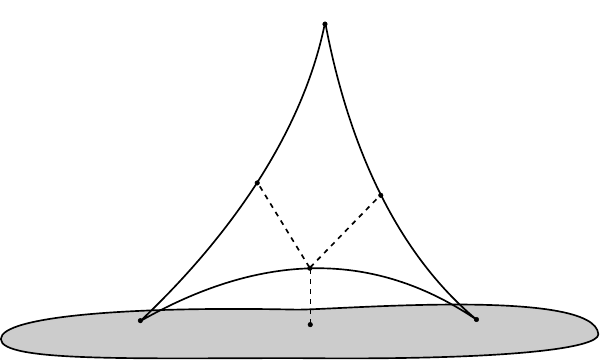\\
  \caption{}\label{44-f2}
\end{figure}

\begin{lem}\label{PrOfPoint}
Suppose that $Y$ is a $\sigma$-quasiconvex subset of $S$. Then for every $a\in S$, we have ${\rm diam} (\pr _Y(a))\le 6\delta + 2\sigma $.
\end{lem}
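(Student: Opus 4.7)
The plan is to fix two points $y_1, y_2 \in \mathrm{proj}_Y(a)$ and bound $d(y_1, y_2)$ directly from above by $6\delta + 2\sigma$. The strategy combines two standard tools of $\delta$-hyperbolic geometry: the almost-minimality built into the definition (\ref{projdef}) of the projection, and the $\delta$-thinness of geodesic triangles. A picture matching the argument is essentially the right-hand part of Figure~\ref{44-f2} in the paper: $a$ on top, $y_1, y_2$ on $Y$ below, with an auxiliary midpoint $m$ on $[y_1, y_2]$, a quasiconvex companion $z \in Y$ near $m$, and a fellow-traveler point $w$ on one of the sides $[a, y_i]$.

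First I would introduce $m$ as the midpoint of a geodesic $[y_1, y_2]$ and use $\sigma$-quasiconvexity to pick $z \in Y$ with $d(m,z) \le \sigma$. Since $z \in Y$, we have $d(a, y_i) \le d(a, Y) + \delta \le d(a, z) + \delta$ for $i = 1, 2$, and the triangle inequality $d(a, z) \le d(a, m) + d(m, z)$ then gives the key lower bound
$$
d(a, m) \;\ge\; d(a, y_i) - \delta - \sigma \qquad (i=1,2).
$$

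Next I would apply $\delta$-slimness to the geodesic triangle with vertices $a, y_1, y_2$: the midpoint $m$ lies within distance $\delta$ of some point $w$ on one of the two sides emanating from $a$; without loss of generality $w \in [a, y_1]$. Because $w$ lies on a geodesic segment from $a$ to $y_1$, we have $d(a,w) + d(w, y_1) = d(a, y_1)$. Combined with $d(a, w) \ge d(a, m) - \delta$ and the lower bound from the previous step, this forces $d(w, y_1) \le 2\delta + \sigma$, whence $d(m, y_1) \le d(m, w) + d(w, y_1) \le 3\delta + \sigma$. Since $m$ is the midpoint, $d(y_1, y_2) \le 6\delta + 2\sigma$, as desired.

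The argument is entirely routine and I do not expect any real obstacle; the only thing to track carefully is how the additive slack $\delta$ built into the definition of $\mathrm{proj}_Y$ combines with the $\delta$ coming from thin triangles and the $\sigma$ coming from quasiconvexity, which together account exactly for the $6\delta + 2\sigma$ in the stated bound.
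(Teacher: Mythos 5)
Your argument is correct and is essentially the paper's proof: both use the $\delta$-slack in the definition of $\mathrm{proj}_Y$, quasiconvexity to produce a point of $Y$ near a point of $[y_1,y_2]$, and $\delta$-thinness of the triangle $(a,y_1,y_2)$. The only cosmetic difference is that the paper picks a center point of $[x,y]$ (equidistant from the two other sides) and bounds both halves symmetrically, whereas you pick the midpoint, bound one half, and double.
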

\begin{proof}
Let $x,y\in \pr _Y(a)$, let $z$ be the point of the geodesic segment $[x,y]$ such that $\d (z, z_1)=\d (z, z_2)\le \delta $ for some $z_1\in [a,x]$ and $z_2\in [a,y]$, and let $w$ be a point from $Y$ such that $\d (z,w)\le \sigma $ (Fig. \ref{44-f2}). By the definition of $\pr _Y(a)$, we have $\d (z_1, x) \le \d (z_1, w)+\delta \le 2\delta +\sigma$. Hence $\d (x, z)\le \d (x, z_1)+\d (z_1, z)\le 3\delta + \sigma $ and similarly for $\d (y,z)$.
\end{proof}

We define $\mathbb Y$ to be the set of orbits of all cosets of $H_\lambda$'s. That is, let
$$
\mathbb Y=\{ gH_\lambda (s) \mid \lambda\in \Lambda ,\; g\in G \} .
$$
In what follows the following observation will be used without any references.
\begin{lem}\label{abuse}
Suppose that for some $f,g\in G$ we have $gH_\lambda (s)=fH_\mu (s)$. Then $gH_\lambda =fH_\mu$.
\end{lem}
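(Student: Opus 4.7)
The plan is to deduce this directly from geometric separability (Definition \ref{GeomSep}) combined with the standing assumption (\ref{diamHl}) that each orbit $H_\lambda(s)$ has infinite diameter.

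First I would translate the hypothesis by an element of $G$ to bring one of the cosets into the form assumed in the definition of geometric separation. Specifically, from $gH_\lambda(s)=fH_\mu(s)$, acting by the isometry $f^{-1}$ gives $(f^{-1}g)H_\lambda(s)=H_\mu(s)$. Set $h=f^{-1}g$, so this reads $hH_\lambda(s)=H_\mu(s)$.

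Next I would feed this equality into Definition \ref{GeomSep}. For any fixed $\varepsilon>0$, we have
\[
H_\mu(s)\cap (hH_\lambda(s))^{+\varepsilon}\;\supseteq\; H_\mu(s)\cap hH_\lambda(s)\;=\;H_\mu(s),
\]
and by assumption (\ref{diamHl}) the right-hand side has infinite diameter. In particular, for the constant $R=R(\varepsilon)$ furnished by geometric separability we have $\operatorname{diam}\bigl(H_\mu(s)\cap(hH_\lambda(s))^{+\varepsilon}\bigr)\geq R$. The conclusion of the definition then forces $\lambda=\mu$ and $h\in H_\lambda$, which is exactly $f^{-1}g\in H_\lambda$, i.e.\ $gH_\lambda=fH_\lambda=fH_\mu$, as required.

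There is essentially no obstacle here: the entire content of the lemma is that the map $gH_\lambda\mapsto gH_\lambda(s)$ from the disjoint union of coset spaces $G/H_\lambda$ into $\mathbb Y$ is injective, and this is immediate from geometric separability once one notes that the orbits are infinite. The only minor point to be careful about is that one must invoke (\ref{diamHl}) to know $H_\mu(s)$ is unbounded; without that reduction the statement could fail in the ``degenerate'' case when some $H_\lambda(s)$ is bounded (where cosets can have coinciding but bounded orbits). Since that case is handled separately at the top of the proof of Theorem \ref{crit}, the argument above completes the lemma.
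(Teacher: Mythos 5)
Your argument is correct and is essentially identical to the paper's own proof: translate by $f^{-1}$ to get $f^{-1}gH_\lambda(s)=H_\mu(s)$, then invoke geometric separability together with the standing assumption (\ref{diamHl}) to conclude $\lambda=\mu$ and $f^{-1}g\in H_\lambda$. The extra detail you supply about the $\varepsilon$-neighborhood and the constant $R$ just makes explicit what the paper leaves implicit.
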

\begin{proof}
If $gH_\lambda (s)=fH_\mu (s)$, then $f^{-1}gH_\lambda (s)=H_\mu (s)$. Now the geometric separability condition together with (\ref{diamHl}) imply that $\lambda =\mu $ and $f^{-1}g\in H_\lambda$, hence the claim.
\end{proof}

Recall that $|\Lambda |<\infty $ in Theorem \ref{crit}. Let us denote by $\sigma $ a common quasiconvexity constant of all $H_\lambda (s)$, $\lambda \in \Lambda $. Thus all subsets $Y\in \mathbb Y$ of $S$ are $\sigma$-quasiconvex.

\begin{lem}\label{boundproj}
There exists a constant $\nu $ such that for any distinct $A,B\in \mathbb Y$ we have
\begin{equation}\label{nu}
{\rm diam} ( {\rm proj}_B(A) )\le \nu .
\end{equation}
\end{lem}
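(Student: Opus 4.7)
The plan is to argue by contradiction. Suppose no such $\nu$ exists. Then for every $N$ we can find distinct $A,B \in \mathbb Y$ and points $b_1,b_2 \in \mathrm{proj}_B(A)$ with $\d(b_1,b_2) \ge N$, where $b_i \in \mathrm{proj}_B(a_i)$ for some $a_i \in A$. Write $A = fH_\mu(s)$ and $B = gH_\lambda(s)$; the distinctness of $A$ and $B$ combined with Lemma \ref{abuse} ensures $fH_\mu \ne gH_\lambda$.

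The first step is the standard projection-to-quasiconvex-set lemma in $\delta$-hyperbolic geometry: since $b_i$ nearly minimizes distance from $a_i$ to the $\sigma$-quasiconvex set $B$, the concatenation $p = [a_1,b_1]\cdot [b_1,b_2]\cdot [b_2,a_2]$ is a $(\kappa_1,\kappa_2)$-quasi-geodesic, with constants $\kappa_1,\kappa_2$ depending only on $\delta$ and $\sigma$. By stability of quasi-geodesics (Lemma \ref{qg}), there exists $\kappa_3 = \kappa_3(\delta,\sigma)$ such that any geodesic $[a_1,a_2]$ lies in the closed $\kappa_3$-neighborhood of $p$ and vice versa. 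In particular the subsegment $[b_1,b_2]$ is contained in $[a_1,a_2]^{+\kappa_3}$. Since $A$ is $\sigma$-quasiconvex, $[a_1,a_2] \subseteq A^{+\sigma}$, so
\[
[b_1,b_2] \subseteq A^{+(\kappa_3+\sigma)}.
\]
On the other hand, $\sigma$-quasiconvexity of $B$ gives $[b_1,b_2] \subseteq B^{+\sigma}$.

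Now I choose a $1$-separated net $\{q_1,\dots,q_k\} \subseteq [b_1,b_2]$ with $k \ge \lfloor\d(b_1,b_2)\rfloor$, and for each $q_j$ pick $\alpha_j \in A$ with $\d(q_j,\alpha_j) \le \kappa_3+\sigma$ and $\beta_j \in B$ with $\d(q_j,\beta_j) \le \sigma$. Set $\varepsilon := \kappa_3+2\sigma$; then $\d(\alpha_j,\beta_j) \le \varepsilon$, so each $\alpha_j$ lies in $A \cap B^{+\varepsilon}$. Moreover $\d(\alpha_1,\alpha_k) \ge \d(b_1,b_2) - 2(\kappa_3+\sigma)$, which can be made arbitrarily large by taking $N$ large. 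Consequently
\[
\mathrm{diam}\bigl(A \cap B^{+\varepsilon}\bigr) \longrightarrow \infty.
\]

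To conclude, translate by $f^{-1}$: one has $\mathrm{diam}\bigl(H_\mu(s) \cap (f^{-1}gH_\lambda(s))^{+\varepsilon}\bigr) = \mathrm{diam}(A \cap B^{+\varepsilon})$, which exceeds the threshold $R = R(\varepsilon)$ from the geometric separability hypothesis once $N$ is large enough. Geometric separability then forces $\lambda = \mu$ and $f^{-1}g \in H_\mu$, i.e.\ $fH_\mu = gH_\lambda$, contradicting the choice of $A \ne B$. This yields the existence of the uniform bound $\nu$. The only mildly technical point is the quasi-geodesicity of the concatenation $p$ in the first step, but this is a classical computation using that $b_i$ realizes the distance (up to $\delta$) from $a_i$ to a quasiconvex set.
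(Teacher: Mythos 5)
Your argument is correct and reaches the same contradiction as the paper -- a long projection of $A$ onto $B$ forces a long stretch where $A$ and $B$ fellow-travel, which violates geometric separability (Definition \ref{GeomSep}) after translating back to the basepoint orbits -- but the technical route to the fellow-traveling is different. The paper applies Lemma \ref{Ols} (Olshanskii's polygon lemma) to the geodesic $4$-gon $a_1,a_2,b_2,b_1$ to extract $13\delta$-close subsegments of length $\ge c$, then rules out the cases where the partner subsegment lies on $[a_1,b_1]$ or $[a_2,b_2]$ using the near-minimality of $\d(a_i,b_i)$, concluding that $[b_1,b_2]$ has a long piece close to $[a_1,a_2]\subseteq A^{+\sigma}$. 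You instead establish that the concatenation $[a_1,b_1]\cdot[b_1,b_2]\cdot[b_2,a_2]$ is a uniform quasi-geodesic and invoke stability (Lemma \ref{qg}) to trap all of $[b_1,b_2]$ in a uniform neighborhood of $[a_1,a_2]$. Your version yields a slightly cleaner global statement (the entire segment $[b_1,b_2]$ fellow-travels $A$, not just a subsegment), at the cost of relying on the taut-concatenation lemma; the paper's version avoids that lemma but needs the case analysis. Both produce explicit constants of the same flavor.

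One imprecision worth fixing: as stated, the claim that $p$ is a $(\kappa_1,\kappa_2)$-quasi-geodesic with constants depending only on $\delta$ and $\sigma$ is false without a lower bound on $\d(b_1,b_2)$ (e.g.\ if $b_1=b_2$ and $a_1,a_2$ are close to each other but far from $B$, the concatenation is a long path between nearby points). The correct statement is that there is a threshold $C_0(\delta,\sigma)$ such that $p$ is a uniform quasi-geodesic whenever $\d(b_1,b_2)\ge C_0$: subpaths meeting two consecutive segments are handled by the usual Gromov-product estimate showing $\d(a_i,y)\ge \d(a_i,b_i)+\d(b_i,y)-(10\delta+2\sigma)$ for $y\in B^{+\sigma}$, while subpaths meeting all three segments contain all of $[b_1,b_2]$ and are handled by the standard fact that when two points have projections to a quasiconvex set at distance $\ge C_0$, any geodesic between them passes near both projections. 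Since you only invoke the claim in the regime $\d(b_1,b_2)\ge N$ with $N$ arbitrarily large, this does not affect the validity of your proof, but the lemma should be quoted with the threshold hypothesis.
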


\begin{proof}
Let $$\e = 13\delta + 2\sigma $$ and let $R=R(\e )$ be the constant given by Definition \ref{GeomSep}. Let $$c=\max\{ R+ 2\sigma , 30\delta +\sigma\} .$$ We will show that (\ref{nu}) is satisfied for $\nu = 4000c$.

Indeed let $a_1,a_2\in A$ and let $b_1\in {\rm proj }_B(a_1)$, $b_2\in {\rm proj }_B(a_2)$ and suppose that $\d (b_1, b_2)> 4000 c$. Note that by our definition of projections, we have
\begin{equation}\label{b1b2}
\d (a_i, b_i)\le \d(a_i, B) +\delta, \; i=1,2.
\end{equation}
By Lemma \ref{Ols} applied to the geodesic $4$-gon with consecutive vertices $a_1, a_2, b_2, b_1$, there is a subsegment $u$ of the geodesic segment $[b_1, b_2]$ and a subsegment $v$ of one of the geodesic segments $[a_1, b_1], [a_1, a_2], [a_2, b_2]$ such that $\min\{\ell (u),\ell (v)\} \ge c$ and $u,v$ are $13\delta $-close.

\begin{figure}
  \centering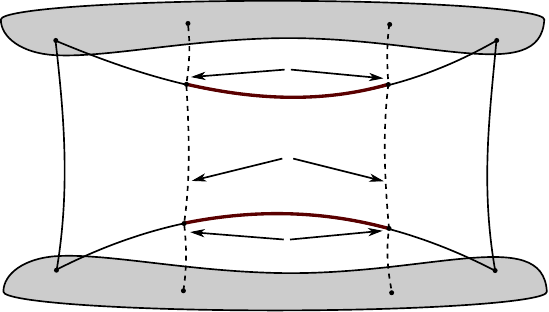\\
  \caption{}\label{44-f6}
\end{figure}

It easily follows from our definition of projections that $v$ can not belong to $[a_1,b_1]$ or $[a_2, b_2]$. Indeed if $v$ is an (oriented) subsegment of $[a_i,b_i]$ for some $i\in \{ 1,2\}$, then
$$
 \d (a_1,B)\le \d (a_1, v_-)+\d (v_-, u)+\d (u,B)\le \d (a_1, v_-)+13\delta +\sigma,
$$
while
$$
\d (a_1, b_1)\ge \d (a_1, v_-) +\ell (v) \ge \d (a_1, v_-)+30\delta +\sigma.
$$
This contradicts (\ref{b1b2}). Hence $v$ is a subsegment of $[a_1, a_2]$ (Fig. \ref{44-f6}).

Since $A$ and $B$ are $\sigma$ -quasiconvex, we obtain
$$
{\rm diam} \left(B\cap A^{+\e }\right)\ge \ell (u) - 2\sigma \ge c-2\sigma \ge R.
$$
By the geometric separability condition, this inequality implies $A=B$. A contradiction.
\end{proof}

Let us now define, for any $Y\in \mathbb Y$ and $A,B\in \mathbb Y\setminus \{ Y\} $,
$$
\d ^\pi _Y (A,B)={\rm diam } ({\rm proj}_Y  (A)\cup {\rm proj}_Y  (B)).
$$
The quantity $\d ^\pi _Y$ is finite by Lemma \ref{boundproj}.

\begin{lem}\label{A14}
The functions $\d ^\pi _Y$ satisfy axioms (A$_1$)-(A$_4$) from Definition \ref{projc}.
\end{lem}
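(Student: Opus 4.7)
The proof will verify the four axioms in order of increasing difficulty.

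\medskip

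\noindent\textbf{Axiom (A$_1$)} is immediate since $\d^\pi_Y(A,B)$ is defined symmetrically in $A$ and $B$.

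\medskip

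\noindent\textbf{Axiom (A$_2$)} is a diameter-of-union inequality. Note that all projections are non-empty, so pick any $b_0 \in \pr_Y(B)$. For any two points $p, q \in \pr_Y(A) \cup \pr_Y(C)$, at least one of the pairs $(p,b_0)$, $(q,b_0)$ has both endpoints in $\pr_Y(A) \cup \pr_Y(B)$ or both in $\pr_Y(B) \cup \pr_Y(C)$, so a triangle-inequality bookkeeping (together with the fact that the diameters of $\pr_Y(A)$ and $\pr_Y(C)$ are already bounded by the appropriate sides of the inequality since $\pr_Y(B)$ is nonempty) yields (A$_2$). This is routine.

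\medskip

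\noindent\textbf{Axiom (A$_3$)} is the heart of the matter: the Behrstock-type inequality. Here I would use hyperbolicity together with Lemma \ref{PrOfPoint} (projections onto $\sigma$-quasiconvex sets have diameter at most $6\delta+2\sigma$) and Lemma \ref{boundproj} (projections between distinct subsets of $\mathbb Y$ have diameter $\le \nu$). The standard argument: fix $a \in A$, $b \in B$, and consider a geodesic $[a,b]$. Let $p_Y \in [a,b]$ and $p_B \in [a,b]$ be points closest to $Y$ and $B$ respectively. Along $[a,b]$ these points appear in some order. If $\d^\pi_Y(A,B)$ is very large (say, much larger than $\nu + 10\delta + 2\sigma$), then by Lemma \ref{PrOfPoint} the projections of $a$ and $b$ to $Y$ must differ substantially, forcing the geodesic $[a,b]$ to fellow-travel $Y$ for a long segment that is far from $B$'s closest-point along $[a,b]$. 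This in turn forces $\pr_B(a)$ and $\pr_B(Y)$ to be within bounded distance, giving a uniform bound on $\d^\pi_B(A,Y)$. Playing this out carefully produces an explicit $\xi_0$ depending only on $\delta$, $\sigma$, $\nu$ such that $\min\{\d^\pi_Y(A,B),\d^\pi_B(A,Y)\} < \xi_0$.

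\medskip

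\noindent\textbf{Axiom (A$_4$)} is where geometric separability enters, and will be the main obstacle. Fix $A,B \in \mathbb Y$ and suppose for contradiction that infinitely many $Y_1,Y_2,\ldots \in \mathbb Y$ satisfy $\d^\pi_{Y_i}(A,B) \ge \xi$ with $\xi$ chosen very large compared to $\delta$, $\sigma$, and $R(\e)$ from Definition \ref{GeomSep} for $\e = 13\delta + 2\sigma$. Pick $a \in A$, $b \in B$ and a geodesic $[a,b]$. Large projection $\d^\pi_{Y_i}(A,B)$ combined with $\sigma$-quasiconvexity of $Y_i$ and Lemma \ref{Ols} (a thin-quadrilateral/pentagon argument applied to $a,b$ and the nearest-point projections onto $Y_i$) forces a long subsegment $u_i$ of $[a,b]$ to be $13\delta$-close to $Y_i$; thus the $\e$-neighborhood of $Y_i$ contains a subsegment of $[a,b]$ of length at least $R(\e)$, provided $\xi$ was taken large enough relative to $\nu$, $\delta$, $\sigma$. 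Since $[a,b]$ has finite length, infinitely many such subsegments $u_i$ must pairwise overlap substantially, so for some $i \ne j$ we get
$$\diam\bigl(Y_i \cap Y_j^{+2\e}\bigr) \ge R(\e).$$
Writing $Y_i = gH_\lambda(s)$ and $Y_j = fH_\mu(s)$, geometric separability (via Remark \ref{rem-gs}) forces $\lambda=\mu$ and $g^{-1}f \in H_\lambda$, hence $Y_i = Y_j$, a contradiction. This establishes (A$_4$), and the constant $\xi$ can be taken to work simultaneously in (A$_3$) and (A$_4$) by taking the maximum of the two thresholds.
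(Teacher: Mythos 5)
Your proposal is correct and follows essentially the same strategy as the paper: (A$_1$)--(A$_2$) are routine, (A$_3$) is the Behrstock-type inequality obtained by forcing a geodesic from $A$ to $B$ to fellow-travel $Y$ (via Lemmas \ref{PrOfPoint} and \ref{boundproj} together with the thin-polygon Lemma \ref{Ols}) and then controlling the projection to $B$, and (A$_4$) follows because each $Y$ with large projection claims a long subsegment of one fixed geodesic while geometric separability forces distinct $Y$'s to claim essentially disjoint subsegments — exactly the paper's hexagon/pentagon argument in slightly different packaging. The only place you are substantially thinner than the paper is the unproved assertion in (A$_3$) that the fellow-traveling segment lies far from $B$: one must still rule out that it sits inside a neighborhood of $B$ (which again uses geometric separability, or Lemma \ref{boundproj}; the paper does this via a second application of Lemma \ref{Ols} to a pentagon), but this is a standard completion rather than a gap.
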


\begin{proof}
Axioms (A$_1$) and (A$_2$) obviously hold. The  nontrivial part of the proof is to verify  (A$_3$) and (A$_4$).

\begin{figure}
  \centering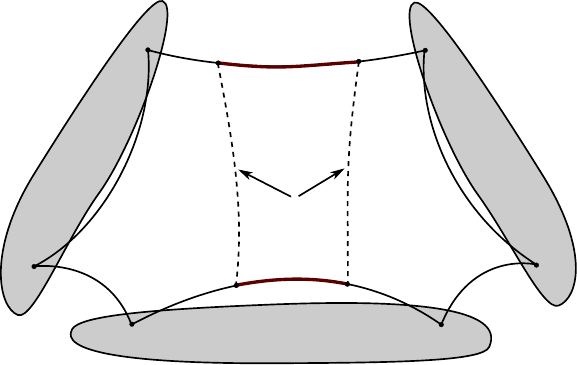\\
  \caption{}\label{44-f4}
\end{figure}

Let us start with  (A$_3$). Let $\e $, $R$, and $c$ be as in the proof of Lemma \ref{boundproj}. We will show that (A$_3$) hold for any $\xi > 6000 c+2\nu$, where $\nu $ is given by Lemma \ref{boundproj}.
Indeed suppose that $\d _Y (A,B)\ge  \xi $. Let $a\in A$, $b\in B$, $x,y\in Y$ be points such that
\begin{equation}\label{dABY}
\d (a,x)\le \d (A,Y)+\delta ,\;\;\; \d(b,y)\le \d(B,Y)+\delta .
\end{equation}
In particular, $x\in {\rm proj}_Y(a)$, $y\in {\rm proj}_Y(b) $, and hence $$\d (x,y)> \d _Y (A,B) -{\rm diam }({\rm proj }_Y A)  -{\rm diam }({\rm proj }_Y B)  \ge \xi -2 \nu > 6000 c.$$
By Lemma \ref{boundproj} it suffices to show that for any $a^\prime\in A$ and any $b^\prime \in {\rm proj} _B(a^\prime)$, we have
\begin{equation}\label{dbbp}
\d (b^\prime , b)\le 6000c .
\end{equation}

Consider the geodesic hexagon $P$ with consecutive vertices $a^\prime, a, x,y, b, b^\prime $ (Fig. \ref{44-f4}). By Lemma \ref{Ols}, there exists a subsegment $u$ of $[x,y]$ and a subsegment $v$ of one of the other $5$ sides of $P$ such that $u$ and $v$ are $13\delta $-close and $\min\{\ell (u),\ell (v)\} \ge c$. As in the proof of Lemma \ref{boundproj}, we can show that $v$ can not be a subsegment of $[x,a]$ or $[y,b]$ and $v$ can not be a subsegment of $[a, a^\prime]$ or $[b,b^\prime]$ by the geometric separability condition  as $A\ne Y$ and $B\ne Y$. Hence $v$ is a subsegment of $[a^\prime , b^\prime]$. For definiteness, assume that $\d (u_+, v_+)\le 13\delta $.

We now consider the geodesic pentagon $Q$ with consecutive vertices $u_+, v_+, b^\prime, b, y$. If $\d (b, b^\prime )>5000 c$, then applying Lemma \ref{Ols} we obtain $13\delta$-close subsegments $w$ and $t$ of $[b, b^\prime]$ and one of the other $4$ sides of $Q$, respectively, which have length at least $c\ge 30\delta $. This leads to a contradiction since $t$ can not be a subsegment of $[v_+, u_+] $ as $\d(v_+, u_+)\le 13\delta$, and $t$ can not be a subsegment of the other $3$ sides for the same reasons as above. Hence  $\d (b, b^\prime )\le 5000 c$. In particular, (\ref{dbbp}) holds. This completes the proof of (A$_3$).

\begin{figure}
  \centering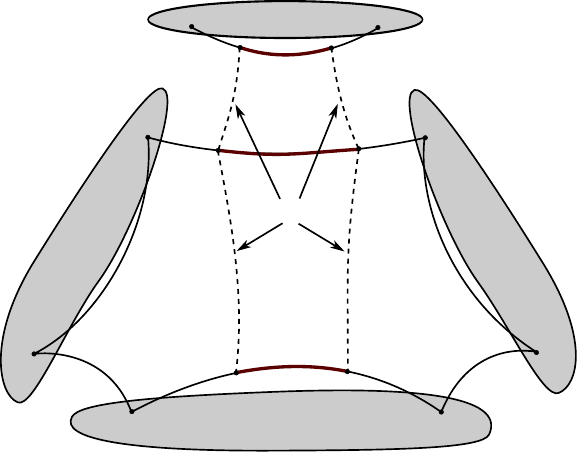\\
  \caption{}\label{44-f5}
\end{figure}

To verify (A$_4$), we take $$\e = 26\delta + 2\sigma $$ and modify $R=R(\e)$ and $$c=\max\{ R+ 2\sigma , 30\delta +\sigma \} $$ accordingly. Again we will prove (A$_4$) for any $\xi > 6000 c+2\nu$. Fix any $a^\prime \in A$ and any $b^\prime \in {\rm proj} _B(a^\prime)$. As above if $\d _Y (A,B)\ge  \xi $, then for any $a\in A$, $b\in B$, $x\in {\rm proj}_Y(a)$, $y\in {\rm proj}_Y(b) $ we have $\d (x,y)> 6000 c$. Consider the geodesic $6$-gon with consecutive vertices $a, a^\prime, b^\prime , b, y,x$ (Fig. \ref{44-f5}). Arguing as in the prof of (A$_3$) we can find subsegments $u$ of $[x,y]$ and $v$ of $[a^\prime, b^\prime]$ such that $u$ and $v$ are $13\delta $-close and $\min\{\ell (u),\ell (v)\} \ge c$. Note that $Y$ is uniquely defined by the subsegment $v$. Indeed if for some $Y^\prime \in \mathbb Y$ and $x^\prime, y^\prime \in Y^\prime $, we also have a subsegment $u^\prime $ of $[x^\prime,  y^\prime]$, which is $13\delta $-close to $v$, then $u$ and $u^\prime $ are $26\delta $-close. Hence $Y=Y^\prime $ by the geometric separability condition  as in the proof of Lemma \ref{boundproj}. Thus the number of $Y$'s satisfying the inequality in (A$_4$) is bounded by the number of subsegments of $[a^\prime, b^\prime]$, which is finite.
\end{proof}

Let {\pky } be the projection complex associated to the set $\mathbb Y$ and the family of projections defined above. We will denote by $\d_{\mathcal P}$ the combinatorial metric on {\pky }. Our definition of projections is $G$-equivariant and hence the (cofinite) action of the group $G$ on $\mathbb Y$ extends to a (cobounded) action on {\pky }. Let $\Lambda =\{1, \ldots , k\} $ and let $$\Sigma =\{ s_1, \ldots , s_k\}\subseteq \mathbb Y,$$ where $s_\lambda=H_\lambda (s)$.

Our next goal is to construct a special generating set of $G$. We proceed as follows.  For every $\lambda\in \Lambda $ and every edge $e\in {\rm Star} (s_\lambda)$ going from $s_\lambda$ to another vertex $v=gH_\mu (s)=g(s_\mu)$, we choose any element $x_e\in H_\lambda gH_\mu $ such that
\begin{equation}\label{dcoset}
\d (s, x_e(s))\le \inf \{ \d (s, y(s)) \mid y\in H_\lambda gH_\mu\} +\delta .
\end{equation}
We will say that $x_e$ has {\it type} $(\lambda,\mu )$.

\begin{rem}\label{edge}
Note that for every $x_e$ as above there is an edge in {\pky } going from $s_\lambda$ to $x_e (s_\mu)$. Indeed $x_e=h_1gh_2$ for some $h_1\in H_\lambda$, $h_2\in H_\mu$, hence $$\d _{\mathcal P}(s_\lambda , x_es_\mu) =\d _{\mathcal P}(h_1^{-1} (s_\lambda ), gh_2(s_\mu ))= \d _{\mathcal P}(s_\lambda , g(s_\mu ))=1.$$
\end{rem}

For every edge $e$ connecting $s_\lambda$ and $g(s_\mu)$, there exists a dual edge, $f=g^{-1}(e)$, connecting $g^{-1}(s_\lambda)$ and $s_\mu $. In addition to (\ref{dcoset}), we can (and will) choose the elements $x_e$ and $x_f$ to be mutually inverse. In particular, the following set
$$
X=\left\{ x_e\ne 1 \;\left| \; e\in \bigcup\limits_{\lambda=1}^k {\rm Star }(s_\lambda)\right.\right\}
$$
is symmetric (i.e., closed under taking inverses). Let also $\mathcal H=\bigsqcup\limits_{\lambda=1}^k H_\lambda$.

\begin{lem}\label{qipk}
The union $X\cup \left(\bigcup\limits_{\lambda=1}^k H_\lambda\right)$ generates $G$ and the Cayley graph $\G $ is quasi-isometric to $\mathcal P _K(\mathbb Y)$.
\end{lem}

\begin{proof} We define a map $\iota\colon  G\to \mathbb Y$ by the rule $\iota (g)=g(s_1)$.

Note that if $x_e\in X$ is of type $(\lambda,\mu)$, then we have
\begin{equation}\label{diam1}
\d _{\mathcal P}(x_e(s_1), s_1)\le \d _{\mathcal P}(x_e (s_1), x_e(s_\mu))+\d _{\mathcal P}(x_e(s_\mu), s_\lambda) +\d _{\mathcal P}(s_\lambda, s_1)\le 2\diam (\Sigma )+1
\end{equation}
(see Remark \ref{edge}). Similarly for every $\lambda\in \Lambda $ and every $h\in H_\lambda$ we have $h(s_\lambda )=s_\lambda $ and hence
\begin{equation}\label{diam2}
\d _{\mathcal P}(h(s_1), s_1)\le \d _{\mathcal P}(h(s_1), h(s_\lambda)) +\d _{\mathcal P}(s_\lambda, s_1)\le 2 \diam (\Sigma ).
\end{equation}
Inequalities (\ref{diam1}) and (\ref{diam2}) can be summarized as $\d _{\mathcal P}(a(s_1), s_1)\le 2\diam (\Sigma )+1$ for any $a\in X\cup\mathcal H$. This immediately implies
$$
\d _{\mathcal P}(\iota(1), \iota (g))\le (2\diam (\Sigma )+1) |g|_{X\cup \mathcal H}.
$$
Thus the map $\iota $ is Lipschitz.

On the other hand, suppose that for some $g\in G$ we have $\d _{\mathcal P}(\iota (1), \iota (g))=r$. If $r=0$, then $gH_1(s)=g(s_1)=s_1=H_1(s)$ and hence $g\in H_1$ by Lemma \ref{abuse}. In particular, $|g|_{X\cup \mathcal H}\le 1$. Let now $r>0$ and let $p$ be a geodesic in {\pky } connecting $s_1$ to $\iota (g)=g(s_1)$. Let $$v_0=s_1,\; v_1,\; \ldots ,\; v_r=g(s_1)$$ be consecutive vertices of $p$. Suppose that $v_i=g_iH_{\lambda_i}(s)=g_i(s_{\lambda_i})$ for some $g_i\in G$ and $\lambda_i\in \Lambda $. We assume that $g_0=1$ and $g_r=g$. Since $g_i(s_{\lambda_i})$ is connected by an edge to $g_{i+1}(s_{\lambda_{i+1}})$, the vertex $s_{\lambda_i}$ is connected to the vertex $g_i^{-1}g_{i+1}(s_{\lambda_{i+1}})$. This means that $g_i^{-1}g_{i+1}=h_iy_ih_i^\prime $ for some $y_i\in X$ and $h_i\in H_{\lambda_{i}}$, $h_i^\prime \in H_{\lambda_{i+1}}$. In particular, $|g_i^{-1}g_{i+1}|_{X\cup\mathcal H} \le 3$. Hence
$$
|g|_{X\cup\mathcal H} = \left| \prod\limits_{i=1}^r g_{i-1}^{-1}g_i\right|_{X\cup\mathcal H} \le \sum\limits_{i=1}^r |g_{i-1}^{-1}g_i|_{X\cup\mathcal H} \le 3r= 3\d (\iota (1), \iota (g)).
$$
As {\pky } is connected, we obtain that $X\cup\mathcal H$ generates $G$ and  $\iota $ is a quasi-isometric embedding of $(G, |\cdot |_{X\cup\mathcal H})$ into {\pky }. Finally note that the vertex set of {\pky } is contained in $(\iota(G))^{+{\rm diam} (\Sigma )}$. Therefore,  $\G $ is quasi-isometric to {\pky }.
\end{proof}

Note that so far we have not used (\ref{dcoset}). However this condition is essential for the next lemma.

\begin{lem}\label{projest}
There exists a constant $\alpha $ such that if for some $Y\in \mathbb Y$ and $x\in X\cup\mathcal H$, we have
\begin{equation}\label{sxs}
{\rm diam} (\pr _Y \{ s, x(s)\} )> \alpha ,
\end{equation}
then $x\in H_\lambda $ and $Y=H_\lambda (s)$ for some $\lambda \in \Lambda $.
\end{lem}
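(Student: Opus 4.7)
The idea is to go through the two types of elements $x \in X \cup \mathcal H$ and show that in every case other than the conclusion of the lemma, the projection $\pr_Y\{s, x(s)\}$ has diameter bounded by a constant depending only on $\delta$, $\sigma$, $\nu$, $K$, and $\xi$. We will take $\alpha$ to be the maximum of the three bounds produced below.

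First consider $x \in \mathcal H$, say $x \in H_\lambda \setminus \{1\}$. Then both $s$ and $x(s)$ lie in $H_\lambda(s) \in \mathbb Y$, so if $Y = H_\lambda(s)$ we are done. Otherwise $Y \ne H_\lambda(s)$, and $\pr_Y\{s, x(s)\} \subseteq \pr_Y(H_\lambda(s))$, which has diameter at most $\nu$ by Lemma \ref{boundproj}.

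Now suppose $x \in X$, of type $(\lambda, \mu)$, corresponding to an edge from $s_\lambda = H_\lambda(s)$ to $g(s_\mu)$ in $\mathbb Y$; by choice of $x_e$, we have $x \in H_\lambda g H_\mu$ with $\d(s, x(s))$ within $\delta$ of the infimum of $\d(s, y(s))$ over this double coset, and by Remark \ref{edge} there is an edge in {\pky} from $H_\lambda(s)$ to $xH_\mu(s)$. The crucial observation is that the minimization property forces $s$ to be an almost-nearest point of $H_\lambda(s)$ to $x(s)$, and symmetrically $x(s)$ to be an almost-nearest point of $xH_\mu(s)$ to $s$. Indeed, for every $h \in H_\lambda$ we have $h^{-1}x \in H_\lambda g H_\mu$, so
\[
\d(s, x(s)) \le \d(s, h^{-1}x(s)) + \delta = \d(h(s), x(s)) + \delta,
\]
which gives $\d(s, x(s)) \le \d(x(s), H_\lambda(s)) + \delta$, and hence $s \in \pr_{H_\lambda(s)}(x(s))$ by the definition of $\pr$. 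Symmetrically, for $h \in H_\mu$ we have $xh \in H_\lambda g H_\mu$, which gives $x(s) \in \pr_{xH_\mu(s)}(s)$.

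Thus if $Y = H_\lambda(s)$, then $\pr_Y(s)$ and $\pr_Y(x(s))$ both contain $s$ and each has diameter at most $6\delta + 2\sigma$ by Lemma \ref{PrOfPoint}, so their union has diameter at most $12\delta + 4\sigma$; the case $Y = xH_\mu(s)$ is handled identically with $x(s)$ in place of $s$. Finally, if $Y \notin \{H_\lambda(s), xH_\mu(s)\}$, the edge of {\pky} between $H_\lambda(s)$ and $xH_\mu(s)$ yields $\d_Y(H_\lambda(s), xH_\mu(s)) \le K$ by definition of the projection complex, and then (\ref{dpd}) gives $\d_Y^\pi(H_\lambda(s), xH_\mu(s)) \le K + 2\xi$; since $s \in H_\lambda(s)$ and $x(s) \in xH_\mu(s)$, we conclude ${\rm diam}(\pr_Y\{s, x(s)\}) \le K + 2\xi$. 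Setting $\alpha = \max\{\nu,\, 12\delta + 4\sigma,\, K + 2\xi\}$ completes the proof. The only delicate point — and the one place where the specific choice (\ref{dcoset}) of the generators $x_e$ is used — is verifying that $s$ and $x(s)$ lie in the projections onto their own cosets; once this is in hand, everything else is formal from Lemmas \ref{boundproj}, \ref{PrOfPoint} and the definition of {\pky}.
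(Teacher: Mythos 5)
Your proof is correct, and for the central case it takes a genuinely different and shorter route than the paper's. The cases $x\in\mathcal H$ with $Y\ne H_\lambda(s)$ (via Lemma \ref{boundproj}) and $Y\notin\{H_\lambda(s),xH_\mu(s)\}$ (via Remark \ref{edge}, the definition of {\pky}, and (\ref{dpd})) coincide with the paper's. The difference is in the case $Y=H_\lambda(s)$ (and its mirror $Y=xH_\mu(s)$): the paper argues by contradiction, choosing $y\in\pr_Y(x(s))$ far from $s$, building a geodesic triangle on $s,x(s),y$, locating a point $w=h(s)$ of the orbit near $[s,y]$, and producing an element of $H_\lambda x$ of the form $h^{-1}zht$ with $t\in {\rm Stab}_G(s)$ that moves $s$ by less than $\d(s,x(s))-\delta$, contradicting (\ref{dcoset}). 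You instead use (\ref{dcoset}) positively: since $h^{-1}x$ stays in the double coset for every $h\in H_\lambda$, the near-minimality gives $\d(x(s),s)\le \d(x(s),H_\lambda(s))+\delta$, i.e.\ $s\in\pr_{H_\lambda(s)}(x(s))$ by (\ref{projdef}), so $\pr_Y(s)$ and $\pr_Y(x(s))$ share the point $s$ and Lemma \ref{PrOfPoint} bounds their union by $12\delta+4\sigma$ at once. This is the same underlying mechanism — the minimization built into the choice of $x_e$ — but it eliminates the auxiliary triangle, the stabilizer element, and the thresholds $2\sigma+7\delta$ and $\sigma+4\delta$, and even yields a slightly smaller constant ($\max\{\nu,\,12\delta+4\sigma,\,K+2\xi\}$ versus the paper's $\max\{K+2\xi,\,6\sigma+19\delta,\,\nu\}$). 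No loss of generality is incurred, since Remark \ref{edge} guarantees both that the edge used in your third sub-case exists and that the two cosets are distinct vertices of {\pky}.
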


\begin{figure}
  \centering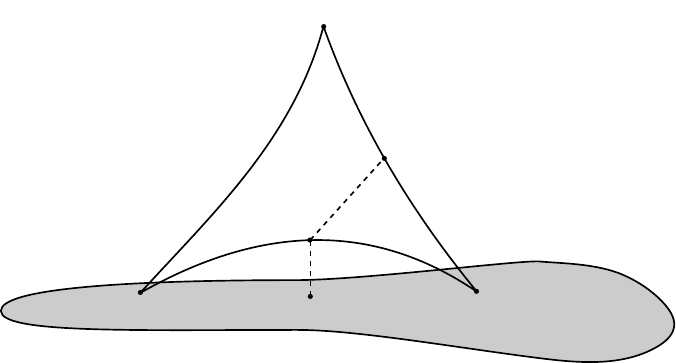\\
  \caption{Case 2 in the proof of Lemma \ref{projest}.}\label{44-f1}
\end{figure}

\begin{proof}
Let $$\alpha =\max \{ K+2\xi , 6\sigma +19\delta, \nu \}, $$ where $\xi $ is the constant from Definition \ref{projc}, and $\nu $ is given by Lemma \ref{boundproj}.

Assume first that $x\in X$. Let $x$ be of type $(\lambda, \mu)$, i.e., there is an edge in {\pky } connecting $H_\lambda (s)$  and $xH_\mu (s)$ (see Remark \ref{edge}). There are three cases to consider. We will arrive at a contradiction in each case thus showing that $x$ cannot belong to $X$.

{\it Case 1.} If $H_\lambda (s)\ne Y\ne xH_\mu (s)$, then $${\rm diam} (\pr _Y \{ s, x(s)\} )\le \d _Y^{\pi} (H_\lambda (s) , xH_\mu (s))\le \d _Y (H_\lambda (s) , xH_\mu (s)) +2\xi \le K +2\xi \le \alpha $$ by the definition of {\pky } and (\ref{dpd}). This contradicts (\ref{sxs}).

{\it Case 2.}  Further suppose that $H_\lambda (s)=Y$. Let $y\in \pr _Y(x(s))$. If $\d (s, y)\le 2\sigma +7\delta $, then by Lemma \ref{PrOfPoint}, we have $${\rm diam} (\pr _Y \{ s, x(s)\} )\le 6\sigma +19\delta\le \alpha.$$ Thus  $$\d (s, y)> 2\sigma +7\delta .$$ Consider the geodesic triangle with vertices $s, x(s), y$. Let $u$ be a point on the geodesic segment $[s, y]$ such that
\begin{equation}\label{duv}
\d (u,y)=\sigma +4\delta
\end{equation}
and let $v\in [s,x(s)]\cup [x(s), y]$ be such that $\d (u,v)\le \delta $. Using the definition of projection and (\ref{duv}) it is easily to show that, in fact, $v\in [s,x(s)]$ (see Fig. \ref{44-f1}). Let $w\in Y$ be such that $\d (u, w)\le \sigma$. Let $h\in H_\lambda $ and $z\in G$ be such that  $h(s)=w$ and $z(w)=x(s)$. We obviously have $x=zht$ for some $t\in Stab_G(s)$. Note that $$\d (s, v)\ge \d (s,u) -\delta > \d (s,y) -\d (y,u) -\delta >\sigma +2\delta .$$ Hence
$$
\begin{array}{rcl}
\d (s, h^{-1}zht(s))& = & \d (h(s), zh(s))=\d (w, x(s))\le \d (w,v) +\d (v, x(s))= \\ && \\
&& \d (w,v)+\d (s, x(s)) - \d (s,v) < \d (s,x(s))  -\delta.
\end{array}
$$
This contradicts (\ref{dcoset}) as $y=h^{-1}zht\in H_\lambda x$.

{\it Case 3.}  The last case when $H_\lambda (s)\ne Y$, but $xH_\mu (s)=Y$ can be reduced to the previous one by translating everything by $x^{-1}$. Indeed in this case ${\rm diam} (\pr _Y \{ s, x(s)\} )= {\rm diam} (\pr _{H_\mu (s)} \{ s, x^{-1}(s)\} )$, $x^{-1}\in X$ as $X$ is symmetric, and $x^{-1}$ has type $(\mu, \lambda )$. So the same arguments apply.

Thus if (\ref{sxs}) holds, then $x\notin X$, i.e., $x\in H_\lambda $ for some $\lambda \in \Lambda $. If $H_\lambda (s)\ne Y $, then ${\rm diam} (\pr _Y \{ s, x(s)\} )\le \nu <\alpha $ again by Lemma \ref{boundproj}. Thus  $H_\lambda (s) =Y$.
\end{proof}

\begin{proof}[Proof of Theorem \ref{crit}]
The Cayley graph $\G $ is hyperbolic by Lemma \ref{qipk} and Proposition \ref{BBF}. It only remains to prove (\ref{hec0}).

Let us take $h\in H_\lambda $ such that $\dl (1, h)=r$. Let $e$ be the edge in $\G $ connecting $h$ to $1$ and labelled by $h^{-1}$. Then by the definition of $\dl $ there exists a path $p$ in $\G $ of length $r$ such that $e$ is an isolated component of the cycle $ep$ in $\G $. Let $\Lab (p)\equiv x_1\ldots x_r$ where $x_1, \ldots , x_r\in X\cup\mathcal H$ and let $$v_0=s,\; v_1=x_1 (s),\; \ldots ,\; v_r=x_1\ldots x_r(s) = h(s).$$ Note that for every $i=1, \ldots , r$, we have
$$
{\rm diam} (\pr _{H_\lambda (s)} \{ v_{i-1}, v_{i}\}) = {\rm diam} (\pr _{Y} \{ s, x_{i}(s)\}),
$$
where $Y=(x_1\ldots x_{i-1})^{-1} H_\lambda (s)$. By Lemma \ref{projest}, we have
\begin{equation}\label{dvi}
{\rm diam} (\pr _{H_\lambda (s)} \{ v_{i-1}, v_{i}\})\le \alpha
\end{equation}
unless $x_{i}\in H_\lambda (s)$ and $(x_1\ldots x_{i-1})^{-1} H_\lambda(s) =H_\lambda (s) $, i.e., $x_1\ldots x_{i-1}\in H_\lambda $. However this would mean that $e$ is not isolated in $ep$. Hence (\ref{dvi}) holds for all $1\le i\le r$ and we obtain $$
\d (s, h(s)) \le {\rm diam} (\pr _{H_\lambda (s)} \{ v_0, v_r\})\le \sum\limits_{i=1}^r {\rm diam} (\pr _{H_\lambda (s)} \{ v_{i-1}, v_{i}\}) \le \alpha r.
$$
\end{proof}


\section{Very rotating families}


In the context of relatively hyperbolic groups, an important space to consider is the cone-off of a Cayley graph, first used by Farb \cite{F} for this purpose. In this graph, each left coset of each parabolic subgroup has diameter $1$. One can also use another type of cone-off, by hyperbolic horoballs, as in Bowditch's definitions \cite{Bow}. This time, the left cosets of parabolic subgroups still have infinite diameter, but their word  metric is exponentially distorted in the new ambient metric.  There are also  mixtures of both choices (see \cite{Gr_Ma}).  In all these spaces, each conjugate of a parabolic subgroup fixes a point (usually unique, possibly at infinity for Bowditch's model), and the rest of the space ``rotates'' around this point, under its action.

Now consider a group, which possibly is no longer relatively hyperbolic, but with some hyperbolically embedded subgroup.  When one suitably cones off such a subgroup,   one may obtain an interesting space, and,   if the residual properties of this subgroup allow it, some interesting dynamics (see Corollary \ref{cor-he-vrf}). This is captured by the definition of rotating families.

On the other hand, given a suitable space with a suitable rotating family, one may infer that the rotating groups are hyperbolically embedded. This is made precise in Corollary \ref{cor;RFtoHE}.

In this section, we first establish a structural result on the
group generated by a suitable rotating family in the spirit of Greendlinger's
lemma. This allows us to show that under relevant assumptions,
quotienting the space by a rotating family preserves hyperbolicity
and acylindricity. Finally, we also provide conditions, and
constructions in the literature leading to such rotating families.

\subsection{Rotating families and windmills}\label{wind}

\subsubsection{Definitions and main results}
In this section, we recall the definition of  rotating families (\ref{defi;rotatingF}), the very rotating assumption,
and the main results we prove about them.

\begin{figure}[htbp]
  \centering\includegraphics[width=5cm]{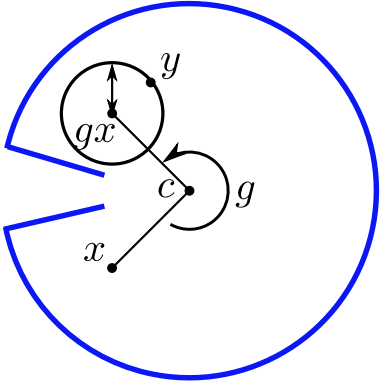}
  \caption{In a very rotating family, $g\in G_c\setminus\{1\}$ rotates by a large angle. Any geodesic $[x,gx]$ contains $c$, and more generally, so does $[x,y]$ for any $y$ close enough to $gx$.}\label{fig_rotating2}
\end{figure}

Assuming that  $\X$  is CAT(0), one can think of the very rotating assumption below
in terms of \emph{large rotation angles} as follows (see Figure \ref{fig_rotating2}).
Assume that
any $g\in G_c\setminus\{1\}$ fixes $c$ and rotates any $x\in \X\setminus\{c\}$
by an angle larger that $\pi$, i.e.\ that the angle between $[c,x]$ and $[c,gx]$ is larger than $\pi$ in the link of $c$.
Then the geodesic joining $x$ to $y=gx$ has to go through $c$, and this is still true
if $y$ is any point close enough to $gx$.
The very rotating condition is a version of this large angle assumption
that makes sense in a hyperbolic space. We only ask it to hold for $x$ such that $d(x,c)\in [20\delta,40\delta]$:
we don't care about what happens to $x$ too close to $c$,
and we will see in Lemma \ref{lem_very_global} that the very rotating condition implies that a similar condition holds for $x$ at distance $>40\delta$ from $c$.

\begin{defn}
\begin{enumerate} \label{defi;Rotating_and_virtues}\label{i-rot1}
 \item[(a)]  (Gromov's rotating families) Let $G\actson \X$ be an action of a group on a metric space.  A rotating family  $\calC = (C, \{G_c, c\in C\}) $ consists of a
   subset $C\subset \X$, and a collection  $\{G_c, c\in C\}$ of subgroups of $G$ such that
  \begin{enumerate}
  \item[(a-1)] $C$ is $G$-invariant,
  \item[(a-2)] each $G_c$ fixes $c$,
  \item[(a-3)] $\forall g\in G \; \forall c\in C\;  G_{gc}= gG_{c} g^{-1} $.
  \end{enumerate}
  The set $C$ is called the set of \emph{apices} of the family, and the groups $G_c$ are called the \emph{rotation subgroups} of the family.

\item[(b)] \label{i-sep} (Separation) One says that $C$ (or  $\calC$)  is $\rho$-\emph{separated} if any two distinct apices are at distance  at least $\rho$.

\item[(c)] \label{i-vrot} (Very rotating condition) When $\X$ is $\delta$-hyperbolic for some $\delta >0$, one says that $\calC$ is \emph{very rotating} if, for all $c\in C, g\in G_c\setminus \{1\}$, and all $x, y \in \X$ with
  both $\d (x,c),\d (y,c)$ in the interval $[20\delta ,  40\delta]$
      and $\d (gx,y)\leq 15\delta$,
          any geodesic between $x$ and $y$ contains $c$.
\item[(d)] \label{i-arot} ($\alpha$-rotating subgroup) A subgroup $H$ of a group $G$ is called \emph{$\alpha$-rotating} if there is an $ \alpha\delta$-separated very rotating family of $G$ acting on a $\delta$-hyperbolic space $\X$ for some $\delta >0$ whose rotation subgroups are exactly the conjugates of $H$. When we want to stress a particular action, we will say that $H$ is \emph{$\alpha$-rotating with respect to the given action} of $G$ on $\X$.
\end{enumerate}
\end{defn}

Depending on the context, it might be more relevant to identify the property of admitting such an action, rather than the action itself. This motivates the following definition.

\begin{defn}
      \label{def;VR}\label{i-arot1}
            A collection of subgroups  $\Nl $  of a group $G$ is  called $\alpha$-rotating if there is   a  $ \alpha\delta$-separated very rotating family of $G$ on a $\delta$-hyperbolic space $\X$, whose rotation subgroups are exactly the conjugates of  elements of $\Nl $. When we want to stress a particular action, we will say that $\Nl$ is \emph{$\alpha$-rotating with respect to the given action} of $G$ on $\X$
\end{defn}

Our goal is to prove the following structure theorem, analogous to
\cite{Del_Duke}.

\begin{thm}\label{theo;app_wind}
Let $G\actson \X$  be a group acting on a $\delta$-hyperbolic geodesic space, 
and $\calC= (C, \{G_c, c\in C\}) $ be a $\rho$-separated very rotating family for some $\rho\geq 200\delta$.
Then the normal subgroup $\Rot=\grp{ G_c|c\in C}\normal G$ satisfies
\begin{enumerate}
      \item[(a)]       $\Rot=\displaystyle *_{c\in C^\prime} G_c$ for some (usually infinite) subset $C^\prime\subset C$.
      \item[(b)] For any $g\in \Rot$, either $g\in G_c$ for some $c\in C$, or $g$ is loxodromic with respect to the action $G\actson \X$ and has an invariant geodesic line on which $g$ acts by translation of length at least $\rho$.
\end{enumerate}
\end{thm}

As a particular case, we get

\begin{cor}\label{coro;app_wind}
    Let $H$ be a $200$-rotating subgroup of a group $G$. Then the normal subgroup of $G$ generated by $H$
is a free product of a  (usually infinite) family of conjugates of $H$.\qed
\end{cor}

Before stating additional corollaries,
we observe that the local very rotating property gives a global condition:
    \begin{lem}[Global very rotating condition]\label{lem_very_global}
      Assume that $\calC = (C, \{G_c, c\in C\})$ is a very rotating family on a $\delta$-hyperbolic space $\X$.

Consider $x_1,x_2\in \X$ such that there exists $q_i\in[c,x_i]$ with $\d (q_i,c)\geq 20\delta$ and $h\in G_c\setminus \{1\}$, such that $\d (q_1,h q_2)\leq 10\delta$.
Then any geodesic between $x_1$ and $x_2$ contains $c$. In particular, for any choice of geodesics $[x_1,c]$, $[c,x_2]$,
their concatenation $[x_1,c]\cup [c,x_2]$ is geodesic.
    \end{lem}

One immediately deduces:

    \begin{cor}\label{cor_free}
     Under the previous condition, for each $c\in C$, $G_c$ acts freely and discretely on $\X\setminus B(c,20\delta)$.
\qed
    \end{cor}

 \begin{proof}[Proof of Lemma \ref{lem_very_global}]
Let $d=\d(q_1,hq_2)$.
We claim that there exists $q'_i\in [c,q_i]$ such that $21\delta\leq \d(c,q'_i)\leq 39\delta$
and such that  $\d(q'_1,hq'_2) \leq d+2\delta$. Indeed, if $d(c,q_1)\geq 39\delta$ or $d(c,q_2)\geq 39\delta$
we can take for $q'_i$ the point at distance $21\delta$ from $c$, and in this case $\d(q'_1,hq'_2)\leq \delta$.
Otherwise, one can take $q'_i$ at distance at most $\delta$ from $q_i$ to ensure that $\d(c,q'_i)\geq 21\delta$.
      The fact that $\calC$ is very rotating implies that every geodesic from $q^\prime_1$ to $q^\prime_2$ contains $c$.

      Let $[x_1,x_2]$ be any geodesic.
      Looking at the triangle $(c,x_1,x_2)$, we see that there are points
$q^{\prime\prime}_1,q^{\prime\prime}_2\in[x_1,x_2]$
such that $\d(q'_i,q''_i)\leq \delta$.
Thus, $\d (q^{\prime\prime}_1,hq^{\prime\prime}_2)\leq d+4 \delta\leq 15\delta $, and $20\delta\leq \d (c,q^{\prime\prime}_i) \leq 40\delta$.
      By the very rotating hypothesis, $[q^{\prime\prime}_1,q^{\prime\prime}_2]\subset [x_1,x_2]$ contains $c$.
    \end{proof}

Using Theorem \ref{theo;app_wind}, we deduce:

    \begin{cor}\label{cor_free2} Under the assumptions of Theorem \ref{theo;app_wind},
      the group $\Rot=\grp{ G_c|c\in C}$ acts freely and discretely on the complement of the $20\delta$-neighborhood of $C$ in $\X$.

      If $h\in \Rot\setminus\{1\}$ and $x_0\in \X$ are such that $\d (x_0,hx_0)< \rho$,
      then $h\in G_c$ for
      some $c\in C$,
      and either $d(x_0,c)\leq 20\delta$ or $\d (c,x_0)=\d (x_0,hx_0)/2$. \qed
    \end{cor}

\label{i-Greend}
Additionally, we are going to prove a refinement of the last assertion of Theorem \ref{theo;app_wind},
which is a qualitative analogue of the classical Greendlinger lemma in small
cancellation theory.
Recall that the Greendlinger lemma guarantees that, for a group
         with a small cancellation presentation, for each word $w$
         in the normal subgroup generated by the relators, there exists $r$ a conjugate of the relators
         such that $|wr|<|w|$.

\begin{figure}[htbp]
  \centering\includegraphics[width=9cm]{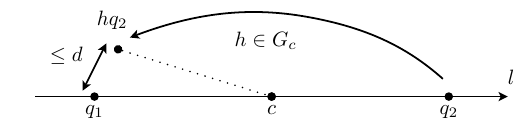}
  \caption{A $d$-shortening pair $\{q_1,q_2\}$ at $c$ on a geodesic $l$}\label{fig_shortening}
\end{figure}

\label{i-shorteningpair}
\begin{defn}\label{dfn_shortening}
Given a geodesic $l$, $d<50\delta$ and a point $c\in l\cap C$,
we say that $\{q_1,q_2\}\subset l$ is a \emph{$d$-shortening pair} at $c$ if
$c\in [q_1,q_2]$,
$\d(c,q_1),\d(c,q_2)\in [25\delta,30\delta]$, and there exists $h\in G_c\setminus\{1\}$
such that $\d (q_1,hq_2)\leq d$ (see Figure \ref{fig_shortening}).
\end{defn}

Note that the condition implies that $\d(q_1,hq_2)\leq \d(q_1,q_2)- (50\delta-d)<\d(q_1,q_2)$.
In particular, $[q_1,q_2]$ does not map to a geodesic segment in $\X/\grp{G_c}$.

\begin{lem}(Qualitative Greendlinger lemma) \label{lem;green_unpointed}
      Let $\X$  be a  hyperbolic geodesic  space, equipped with a
 $200\delta$-separated very rotating family
      $\calC= (C, \{G_c, c\in C\}) $, and consider
 $\Rot=\grp{G_c|c\in C}$ as above.

For any $g\in \Rot\setminus\{1\}$, either $g\in G_c$ for some $c\in C$, or $g$ is loxodromic in $\X$,
it has an invariant geodesic line $l$, and $l\cap C$ contains at least two distinct points in a $g$-orbit
at which there is a $3\delta$-shortening pair.
\end{lem}

We also give a pointed version:

\begin{lem}(Pointed qualitative Greendlinger lemma) \label{lem;green_pointed}
 In the situation above,
given $g\in \Rot\setminus\{1\}$ and $p_0\in \X$,
either $g\in G_c$ and $d(p_0,c)\leq 25\delta$ for some $c\in C$,
or any geodesic $[p_0,gp_0]$ contains a
 $5\delta$-shortening pair at some $c\in [p_0,gp_0]\cap C$.
\end{lem}

A consequence of the qualitative lemma is the following form of linear isoperimetric inequality:
if $g\in \Rot$ is such that its translation length is at most $l$, then it is a product
of at most $Kl$ elements of $\cup_{c\in C} G_c$ for some constant $K= \frac{1}{47\delta}$.
We will prove these lemmas in Subsection \ref{subsec_green}.


\subsubsection{Windmills and proof of the structure theorem}

\begin{figure}[htbp]
  \centering\includegraphics[width=10cm]{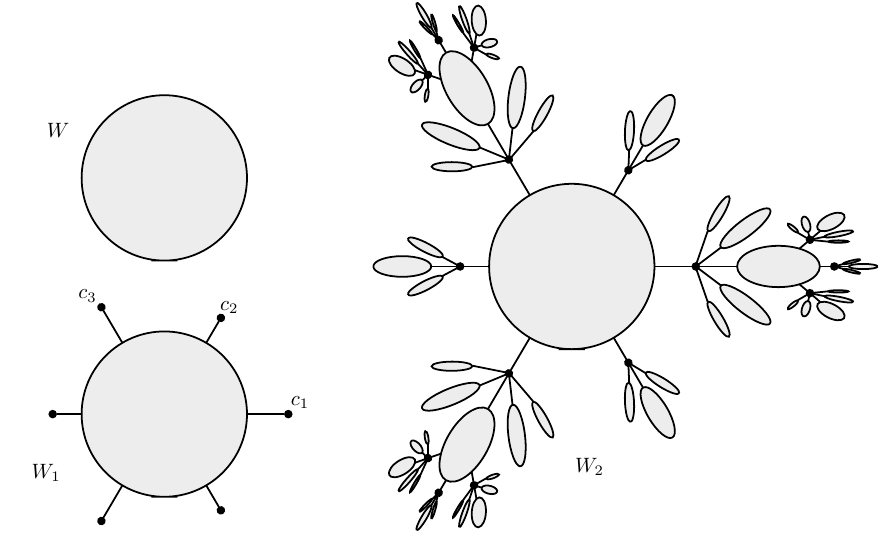}
  \caption{A windmill}\label{fig_windmill}
\end{figure}

The  goal of this section is to prove Theorem \ref{theo;app_wind}
giving the free product structure of the normal subgroup generated by a very rotating family.
Our proof  follows an argument of  Gromov in a CAT(0) setting
\cite{Gro_cat}.
Let us briefly sketch the argument. It may be helpful to think that $\X$ is CAT(0) (so that the notion of angle makes sense)
and to assume that every element in $G_c\setminus \{1\}$
rotates any point in $\X\setminus\{c\}$ by an angle larger than $\pi$.
Start with any apex $c\in C$, and consider a small ball around $c$.
Let its radius increase until it comes sufficiently close to some $c'\in C$
(like one of the $c_i$'s on Figure \ref{fig_windmill}).
Because the points in $C$ are far from each other, this is now a \emph{big} ball $W$
(see Figure \ref{fig_windmill}).
The key point implied by the convexity of $W$ and the very rotating condition at $c'$ is that all translates of $W$
under $G_{c'}$ are disjoint; even more:
for any $g\in G_{c'}\setminus\{1\}$, any geodesic joining a point in $W$ to a point in $gW$ has to go through $c'$.
Since $W$ is $G_c$-invariant, we have a similar picture at any point in the $G_c$-orbit of $c'$
(in the proof, we rather consider the collection of all points outside $W$ but close enough to $W$).
Now \emph{unfold} $W$ by taking the union of all its translates by the action of the group \emph{generated by} $G_c$ and $G_{c'}$ (this is $W_2$ on Figure \ref{fig_windmill}). The main claim is that
this collection of balls has a tree-like structure.
Indeed, consider a word $w=g_1h_1\dots g_nh_n$ with $g_i\in G_c$ and $h_i\in G_{c'}$,
and two points $a\in W$, $b\in gW$.
The word $w$ naturally defines a broken geodesic $[a,c_1]\cup[c_1,c_2]\cup\dots\cup [c_n,b]$
that starts from $a$,
goes to $c_1=g_1c'=g_1h_1c'$, then to $c_2=g_1h_1g_2c'=g_1h_1g_2h_2c'$,
etc.
Thanks to the very rotating assumption, the key point above shows that this broken geodesic is a local geodesic,
 hence a global one. This implies that our collection of balls is tree-like, and that
the group $G'$ generated by  $G_c\cup G_{c'}$ is a free product of these two groups.
Moreover, a suitable neighborhood $W'$ of $W_2$ will be convex. Because of
its shape, we call $(W',G')$ a \emph{windmill}.
This whole procedure will be applied inductively: starting from a windmill $(W,G_W)$, we produce
a larger windmill $(W',G_{W'})$ where $W'$ is convex, and $G_{W'}$ is a free product of $G_W$ with some
rotation groups. In this process, the windmills will exhaust $\X$, and the corresponding
groups will exhaust the (normal) subgroup generated by $\{G_c|c\in C\}$.
 Although not unrelated, our windmills are not the same as and McCammond and Wise's \cite{McCW_windmills}.
\\

We give an axiomatic definition of windmills in Definition \ref{dfn_windmill} below, and proposition \ref{prop_grow}
is the iterative step allowing to construct a larger windmill from an existing one.
Axioms \ref{wm_freeprod}-\ref{wm_elliptic} of this definition say
that the theorem 
applies to $G_W$.
Axiom \ref{wm_elliptic} also
implies 
a weak version of the unpointed Greendlinger's Lemma \ref{lem;green_unpointed} which
asks for $2$ distinct shortening pairs.
Axiom \ref{wm_Cloin} is a technical assumption saying that $W$ does not get too close
to any apex in $C$.

\newcounter{memowm}
    \begin{defn}[Windmill]\label{dfn_windmill}
      Let $\X$ be a $\delta$-hyperbolic metric space, and  $\calC= (C, \{G_c, c\in C\}) $
      a $\rho$-separated very rotating family on $\X$. A \emph{windmill} for $\calC$ is a subset $W$ of $\X$ satisfying the following axioms.

      \begin{enumerate}
        \item \label{wm_4delta} $W$ is $4\delta$-quasiconvex,
        \item \label{wm_Cloin} $W^{+50\delta}\cap C = W\cap C \neq \emptyset$,
        \item \label{wm_invar} The group $G_W$ generated by $\bigcup_{c\in W\cap C} G_c$ preserves $W$.
        \item \label{wm_freeprod} There exists a subset $S_W\subset
          W\cap C$ such that $G_W$ is the free product  $\displaystyle
          *_{c\in S_W} G_c$.
        \item \label{wm_elliptic} Every elliptic element of $G_W$ lies in some $G_c, c\in W\cap C$.
          Every non-elliptic element of $G_W$ is loxodromic, of translation length at least $\rho$, and has an invariant geodesic line $l\subset W$.
          Moreover, any such $l$ contains a point $c\in C$ at which there is a $\delta$-shortening pair.
      \end{enumerate}
      \setcounter{memowm}{\theenumi}
    \end{defn}

    We first note that if $C$ is $\rho$-separated with $\rho\geq 200\delta$, then for any $c\in C$, the ball $W=B(c,100\delta)$ is a windmill because $W\cap C=\{c\}$, so $G_W=G_c$.

Our iterative procedure for the proof of Theorem \ref{theo;app_wind} is contained in the following proposition.
It is illustrated on Figure \ref{fig_windmill}, where starting from a windmill $W$, one gets a new windmill $W'$
as a small thickening of $W_2$.

    Recall that if $Q\subset \X$, we write $Q^{+r}$ for the set of points within distance
    at most $r$ from $Q$.

    \begin{prop}[Growing windmills]\label{prop_grow}
       Let $G$ act on a $\delta$-hyperbolic space $\X$, and  $\calC= (C, \{G_c, c\in C\}) $ be a $\rho$-separated very rotating
       family, with $\rho\geq 200\delta$.

      Then for any windmill $W$, there exists a windmill $W^\prime$ containing $W^{+10\delta}$ and  $W^{+60\delta}\cap C$,
 such that $G_{W'}=G_W*(*_{x\in S} G_c)$ for some (maybe infinite) subset $S\subset \calc\cap(W'\setminus W)$.
    \end{prop}

\begin{proof}[Proof of Theorem \ref{theo;app_wind} using Proposition \ref{prop_grow}]
First choose $c_0\in C$. Then as noticed above, $W=B(c_0,100\delta)$ is a windmill.
Define inductively $W_{n+1}$ as the windmill obtained from $W_n$ using  Proposition \ref{prop_grow}.
Then $\bigcup_{n\in \mathbb{N}} W_n=\X$ since $W_{n+1}$ contains the $10\delta$-neighborhood of $W_n$.
Consider $C_0=\{c_0\}$, and let $S_{n+1}\subset C\cap W_{n+1}$ be such that
$G_{W_{n+1}}=G_{W_n}*\left( \displaystyle *_{c\in S_{n+1}} G_c\right)$, and $S_\infty=\cup_{n\geq 0} S_n$.
Since $\Rot=\displaystyle \cup_{n\geq 0} G_{W_n}$,
we have $\Rot=\displaystyle *_{c\in S_{\infty}} G_c$.

Given any element $g\in \Rot=\grp{G_c|c\in C}$, $g$ lies in some $\grp{G_{c_1},\dots, G_{c_k}}$,
so $g\in G_{W_n}$ as soon as $W_n$ contains $\{c_1,\dots,c_k\}$.
The last statement of Theorem \ref{theo;app_wind}
then follow from Axiom \ref{wm_elliptic} of a windmill.
\end{proof}

We now prove  Proposition \ref{prop_grow} (Lemmas from now on to \ref{lem;Wpr_4deltaqc} are dedicated to this).

Assume that $W^{+60\delta}\cap C=\es$.
Then $W^\prime=W^{+10\delta}$ is clearly a windmill with $G_{W^\prime}=G_W$, and we are done.
Therefore, we assume that the set $C_1=W^{+60\delta}\cap C$ is non-empty. By Axiom \ref{wm_Cloin},
all points of $C_1$ are at distance
at least $50\delta$ from $C$, and  
$C_1$ is $G_W$-invariant by Axiom \ref{wm_invar}.

For all $c\in C_1$, let $\bar c$ be a closest point to $c$ in $W$,
and $[c,\bar c]$ a geodesic segment.
 Note that $G_W$ acts freely on $C_1$ by Axiom \ref{wm_elliptic} and Corollary \ref{cor_free},
 so one can make this choice in a $G_W$-equivariant way.
Define 
$W_1= W\cup \left(\bigcup_{c\in C_1} [c, \bar c]\right)$.

Note that $W_1\cap C=(W\cap C)\cup C_1$ since points of $C$ are at distance at least $\rho> 60\delta$ from each other.
The group $G_{W_1}$ generated by $\{G_c| c\in W_1\}$
is the group generated by $G_W$ and by $\{G_c| c\in C_1\}$.
Finally, we define $W_2 = G_{W_1} W_1$ and $W^\prime = W_2^{+10\delta}$
(we \emph{unfold} to get $W_2$, and then thicken to get $W'$, see Figure \ref{fig_windmill}).
Note that by construction, $G_{W_1}=G_{W_2}$, and $W^\prime$ contains $W^{+10 \delta}$ and $C_1$.

It remains to check that $W^\prime$ is a windmill.

As $\calc$ is $200\delta$-separated, 
we have
$d(c,W_1)>60\delta$ for any $c\in C\setminus W_1$.
For each $c\in C\setminus W_2$, $d(c,W_2)=d(c,gW_1)=d(g\m c,W_1)$ for some $g\in G_{W_1}$,
and since $g\m c\notin W_1$, $d(c,W_2)> 60\delta$.
It follows that
$W'^{+50\delta}\cap C=W_2^{+60\delta}\cap C=W_2\cap C\subset W'\cap C$
 so $W^\prime$ satisfies Axiom \ref{wm_Cloin} of a windmill.
Since $W^\prime\cap C=W_2\cap C$, $G_{W^\prime}=G_{W_2}=G_{W_1}$.
Axiom \ref{wm_invar} follows.

\begin{lem}\label{lem;W1qc}
  $W_1$ is $6\delta$-quasiconvex.
\end{lem}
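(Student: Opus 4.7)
The plan is to verify $6\delta$-quasiconvexity by a case analysis on which pieces of $W_1 = W \cup S$ contain the two endpoints $x, y$ of an arbitrary geodesic $[x,y]$, where $S = \bigcup_{c \in C_1}[c, \bar c]$ consists of segments of length at most $50\delta$ joining each $c \in C_1$ to its closest point $\bar c \in W$. The key observation is that each segment $[c, \bar c]$ lies entirely in $W_1$ by construction, so every geodesic built from pieces of $W$ together with these radial segments provides a path already lying in $W_1$ itself.

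First I would dispose of the easy case: if $x, y \in W$, then axiom \ref{wm_4delta} of a windmill gives $[x, y] \subset W^{+4\delta} \subset W_1^{+4\delta}$, which is better than what we need. Next, if $x \in W$ and $y \in [c, \bar c]$ for some $c \in C_1$, I would consider the geodesic triangle with vertices $x, \bar c, y$. The side $[x, \bar c]$ has both endpoints in $W$, so it lies in $W^{+4\delta}$ by quasiconvexity of $W$, while $[\bar c, y]$ is a subsegment of $[c, \bar c] \subset W_1$. By $\delta$-thinness of triangles, $[x, y] \subset ([x, \bar c] \cup [\bar c, y])^{+\delta} \subset W_1^{+5\delta}$.

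The main case is when $x \in [c, \bar c]$ and $y \in [c', \bar{c'}]$ with $c, c' \in C_1$. Here I would consider the geodesic quadrilateral $(x, \bar c, \bar{c'}, y)$. The sides $[x, \bar c]$ and $[\bar{c'}, y]$ lie in $W_1$, and the side $[\bar c, \bar{c'}]$ has both endpoints in $W$, so it lies in $W^{+4\delta} \subset W_1^{+4\delta}$ by the quasiconvexity of $W$. Since in a $\delta$-hyperbolic space every side of a geodesic quadrilateral lies in the $2\delta$-neighborhood of the union of the other three sides, we conclude $[x, y] \subset W_1^{+4\delta + 2\delta} = W_1^{+6\delta}$, which is exactly the required bound.

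The argument has no real obstacle; the only minor thing to watch is that the constant $6\delta$ is the largest one produced (the quadrilateral case), and that $x, y$ lying on the same segment $[c, \bar c]$ is trivial since that whole segment is contained in $W_1$. Thus $W_1$ is $6\delta$-quasiconvex, as claimed.
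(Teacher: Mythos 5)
Your proof is correct and follows essentially the same route as the paper: the main case of two points on segments $[c,\bar c]$, $[c',\bar c']$ is handled via the geodesic quadrilateral with corners $x,\bar c,\bar c', y$, using $4\delta$-quasiconvexity of $W$ for the side $[\bar c,\bar c']$ and the $2\delta$ quadrilateral-thinness bound, giving $6\delta$. The paper states only this main case and leaves the easier configurations implicit, which you spell out; no substantive difference.
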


\begin{proof}
Consider $x_1,x_2\in W_1$, and $[x_1,x_2]$ a geodesic of $\X$ joining them.
Assume for instance that $x_1\in [c_1,\ol c_1]$ and $x_2\in [c_2,\ol c_2]$ for some $c_1,c_2\in C_1$.
Then $[x_1,x_2]$ is contained in the $2\delta$-neighborhood of $[c_1,\ol c_1]\cup [\ol c_1,\ol c_2]\cup [\ol c_2,c_2]$.
Since $W$ is $4\delta$-quasiconvex, $[\ol c_1,\ol c_2]$ is contained in the $4\delta$-neighborhood of $W$.
The other cases are similar, which proves the Lemma.
\end{proof}

\begin{rem}\label{rem_qc}
If $c_0$ is some point in $C_1$, the lemma also applies to $W\cup \bigcup_{c\in C_1\setminus\{ c_0\}} [c,\ol c]$.
\end{rem}

\begin{lem}\label{lem_local_C}
Consider $c\in C_1$ and $h\in G_{c}\setminus \{1\}$.
Let $[c,x]$ and $[c,y]$ be two geodesics
that intersect $W\cup (C_1\setminus \{c\})$.

Then $[x,c]\cup [c,hy]$ is geodesic, and
any geodesic joining $x$ to $hy$ contains $c$ and a $\delta$-shortening pair at $c$.

In particular,
$W_1\cup h W_1$ is $6\delta$-quasiconvex.
\end{lem}




\begin{proof}
Let $W'_1=W\cup\bigcup_{c'\in C_1\setminus \{c\}}[c',\ol c']$.
We prove the lemma under the weaker assumption that $[c,x]$ and $[c,y]$ intersect $W'_1$.
Consider $x'\in [c,x]\cap W'_1$,
and $y'\in [c,y]\cap W'_1$.
Since $W'_1$ is $6\delta$-quasiconvex by Remark \ref{rem_qc},
$[x',y']$ is contained in the $6\delta$-neighbourhood of $W'_1$.

Consider $q_1\in [c,x]$ and $q_2\in  [c,y]$ at distance $28\delta$ from $c$.
By hyperbolicity of the triangle $(c,x', y')$, if $d(q_1,q_2)>\delta$, then
there exists  $q_3\in [x',y']$ at distance $\leq \delta$ from $q_1$.
Then $d(q_1,W'_1)\leq d(q_1,q_3)+6\delta\leq 7\delta$
so $d(c,W'_1)\leq 35\delta$ contradicting Axiom \ref{wm_Cloin}.
Therefore $d(q_1,q_2)\leq \delta$.

The global very rotating property (Lemma \ref{lem_very_global})
implies that $[x,c]\cup [c,hy]$ is geodesic, and that any geodesic joining $x$ to $hy$ contains $c$.
Moreover, since $d(q_1,q_2)\leq \delta$, $\{q_1,hq_2\}$ is a $\delta$-shortening pair at $c$
in $[x,c]\cup [c,hy]$.

Consider $\gamma$ any other geodesic joining $x$ to $hy$, we know that it contains $c$, and we prove that
$\gamma$ contains a $\delta$-shortening pair at $c$. The argument is the same as the one above:
consider the points $x'',hy''\in \gamma$ defined by
$\d(x'',x)=\d(x',x)$ and $\d(hy'',hy)=\d(hy',hy)$.
In particular, $\d(x',x'')\leq \delta$ and $\d(y',y'')\leq \delta$.
Define $q'_1\in [c,x'']\subset \gamma$, $q'_2\in [c,y'']\subset h\m\gamma$
at distance $28\delta$ from $c$.
Since $[x'',y'']$ lies in the $2\delta$ neighbourhood of $[x',y']$,
hence in the $8\delta$-neighbourhood of $W'_1$,
we get as above that if $d(q'_1,q'_2)>\delta$,
$d(c,W'_1)\leq 28\delta+\delta+8\delta=37\delta$ a contradiction.

%

To prove the $6\delta$-quasiconvexity of $W_1\cup hW_1$, consider $x,y\in W_1$.
If $x,y\in W'_1$, then any geodesic $[x,hy]$ contains $c$, and we conclude using the  $6\delta$-quasiconvexity
of $W_1$. Assume that $x\in [c,\ol c]$, and $y\in W'_1$, the other cases being similar.
Then every geodesic from $\ol c$ to $hy$ contains $c$, so
we have triangle equalities
$\d(\ol c,hy)=\d(\ol c,c)+d(c,hy)=\d(\ol c,x)+d(x,hy)$,
so for any geodesic $[x,hy]$, $[\ol c,x]\cup [x,hy]$ is a geodesic, so $[x,hy]$ has to contain $c$.
We conclude as above
using the  $6\delta$-quasiconvexity
of $W_1$.
\end{proof}

We now prove that $W_2$ is tree-like.
Consider the bipartite graph $\Gamma$ whose vertices are
the images of $W$ under $G_{W_1}$, together with the points of $G_{W_1}.C_1$.
We put an edge between $gW$ and $hc$  if $d(hc,gW)\leq 60\delta$ i.e.\ if $g\m hc\in C_1\cup W$.

\begin{figure}[htbp]
  \centering\includegraphics[height=3cm]{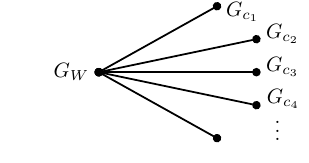}
  \caption{The graph of groups $\Lambda$}\label{fig_gog}
\end{figure}

Let $\Tilde C_1\subset C_1$ be a set of representatives of the orbits of the action $G_W$ on $C_1$ ($\Tilde C_1$ needs not be finite).
We consider a graph of groups $\Lambda$ whose fundamental group is $G_W*(*_{c\in\Tilde C_1} G_c)$ as
in Figure \ref{fig_gog}:
its underlying graph is a tree,
it has a central vertex with vertex group $G_W$,
and for each $c\in \Tilde C_1$, it has a vertex with vertex group $G_c$
joined to the central vertex by an edge with trivial edge group.
Let $\phi:\pi_1(\Lambda)\ra G_{W_2}$ be the map induced by the inclusions of the vertex groups in $G_{W_2}$.
Let $T_\Lambda$ be the Bass-Serre tree of this graph of groups,
and $v_W\in T_\Lambda$ (resp. $v_c\in T_\Lambda$) the vertex fixed by $W$ (resp.\ by $G_c$
for $c\in\Tilde C_1$).
Let $f:T_\Lambda\ra \Gamma$ the $\phi$-equivariant map sending $gv_W$ to $gW$ and $gv_c$ to $gc$.
Denote by $V_W\subset T_\Lambda$ be the set of vertices of $T_\Lambda$ in the orbit of $v_W$,
and by $V_C$ the vertices in $T_\Lambda\setminus V_W$, \ie corresponding to an element of $C$.
Note that $T_\Lambda$ is bipartite for this partition of vertices.

We are going to prove that $f$ is an isomorphism of graphs.
To each segment $[u,v]$ in $T_\Lambda$, we associate a path $\gamma_{[u,v]}$ in $\X$,
depending on some choices, as follows.
Assume first that $u,v\in V_C$.
Half of the vertices in $[u,v]$ lie in $V_C$, denote them by $u=v_0,v_1,\dots, v_n=v$.
Let $c_i=f(v_i)$ be the element of $C$ corresponding to $v_i$.
We define $\gamma_{[u,v]}$ as the concatenation of some chosen geodesics
$[c_i,c_{i+1}]$ in $\X$.
In the remaining case, $u= gv_W$ or $v= g^\prime v_W$ for some $g,g^\prime\in G_{W_2}$.
Still denote by $v_0,v_1,\dots, v_n$ the vertices of $[u,v]\cap V_C$,
and choose any point in $p\in gW$
(resp.\  $p^\prime\in g^\prime W$).
We then define $\gamma_{[u,v]}$ as the concatenation $[p,c_0]\cup \gamma_{[v_0,v_n]}\cup [c_n,p^\prime]$.
In the degenerate case where $u=v\in V_W$ we define $\gamma_{[u,v]}=[p,p^\prime]$.

\begin{lem}
  For every choice, $\gamma_{[u,v]}$ is a geodesic in $\X$.
\end{lem}

\begin{proof}
We can translate the segments $[p,c_{0}]$ and $[c_0,c_{1}]$
 by a suitable element in $G_{W_1}$ so that they
 satisfy the hypotheses of Lemma \ref{lem_local_C}.
 We thus get that the concatenation $[p,c_{0}]\cup [c_0,c_{1}]$
 is geodesic.
Applying \ref{lem_local_C} again to $[p,c_{0}]\cup [c_0,c_{1}]$ and $ [c_1,c_2]$, we get that $[p,c_{0}]\cup [c_0,c_{1}]\cup [c_1,c_2]$ is geodesic.
By induction, we see  that $\gamma_{[u,v]}$ is geodesic (whatever the choices).
\end{proof}

\begin{lem}
  $f:T_\Lambda\ra \Gamma$ is an isomorphism of graphs,
and $\phi:\pi_1(\Lambda)\ra G_{W_2}$ is an isomorphism.
\end{lem}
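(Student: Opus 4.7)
The plan is to leverage the preceding lemma (geodesicity of each path $\gamma_{[u,v]}$ in $\X$) together with a careful analysis of the bipartite structure of $T_\Lambda$ and the quasiconvexity of $W$ to prove both statements at once. Surjectivity of $\phi$ follows from the fact that $G_{W_2}$ is generated by $G_W$ together with $\{G_c\}_{c\in C_1}$, and each such $G_c$ is a $G_W$-conjugate of $G_{c'}$ for some $c'\in\tilde C_1$ (where I take $\tilde C_1\subset C_1\setminus W$, since $G_W$-orbits inside $W\cap C$ are already covered by the free product structure of $G_W$ given by axiom~(\ref{wm_freeprod})). Surjectivity of $f$ on vertices and edges then follows from $\phi$-equivariance and the construction of $\Gamma$.

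For injectivity of $\phi$, I distinguish whether an element $g\in\pi_1(\Lambda)\setminus\{1\}$ fixes $v_W$ in $T_\Lambda$. If it does, then $g\in G_W$, which is embedded canonically as a subgroup of $G_{W_2}$, so $\phi(g)=g\neq1$. Otherwise $gv_W\neq v_W$, and the first $V_C$-vertex $u_1$ on the segment $[v_W,gv_W]$ is of the form $u_1=g_0 v_{c_1}$ for some $g_0\in G_W$ and $c_1\in\tilde C_1\subset C_1\setminus W$; thus $c_0:=f(u_1)=\phi(g_0)c_1$ lies in $C\setminus W$, because $g_0\in G_W$ preserves $W$. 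By axiom~(\ref{wm_Cloin}) one then has $d(c_0,W)>30\delta$, so the chosen basepoint $p\in W$ with $d(p,C\setminus W)\ge 90\delta$ satisfies $d(p,c_0)\ge 90\delta$. The preceding lemma identifies $\gamma_{[v_W,gv_W]}$ as a genuine geodesic from $p$ to $\phi(g)p$ of length at least $90\delta>0$, whence $\phi(g)p\neq p$ and $\phi(g)\neq1$.

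To upgrade this to the graph isomorphism $f$, I use $\phi$-equivariance together with the fact that every element of $\pi_1(\Lambda)$ preserves the bipartition $V_W\sqcup V_C$ of $T_\Lambda$ (since the central and satellite vertex groups are non-conjugate in $\pi_1(\Lambda)$). It therefore suffices to match the vertex stabilizers: $\mathrm{Stab}_{G_{W_2}}(W)=G_W$ and $\mathrm{Stab}_{G_{W_2}}(c)=G_c$ for each $c\in\tilde C_1$. Suppose $\phi(g)W=W$ with $gv_W\neq v_W$; then $\gamma_{[v_W,gv_W]}$ is a geodesic whose endpoints $p$ and $\phi(g)p$ both lie in $W$, while by the previous paragraph it passes through an apex $c_0\in C\setminus W$ with $d(c_0,W)>30\delta$, violating the $4\delta$-quasiconvexity of $W$ (axiom~(\ref{wm_4delta})); hence $g\in G_W$. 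Suppose next that $\phi(g)c=c$ with $gv_c\neq v_c$ for some $c\in\tilde C_1$; then $\gamma_{[v_c,gv_c]}$ is a geodesic from $c$ to $c$, so has length $0$, yet it contains the initial subsegment $[c,c_1]$ for some distinct apex $c_1$, whence $d(c,c_1)\ge 100\delta$ by separation of $C$, a contradiction. This gives bijectivity of $f$ on vertices, and since edges in both graphs are determined by their endpoints, bijectivity on edges follows.

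The principal obstacle is purely bookkeeping: coordinating the quantitative constants of the construction (the $90\delta$-clearance of the basepoint $p$ from $C\setminus W$, the $30\delta$-separation in axiom~(\ref{wm_Cloin}), the $4\delta$-quasiconvexity of $W$, and the $100\delta$-separation of $C$) so that the first and last apices visited by $\gamma_{[v_W,gv_W]}$ are forced outside $W$, ensuring that the geodesic is non-degenerate and incompatible both with $\phi(g)$ fixing $W$ setwise and with $\phi(g)$ fixing an apex; this is the reason we must work with $\tilde C_1\subset C_1\setminus W$ rather than arbitrary orbit representatives.
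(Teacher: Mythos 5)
Your argument runs on the same key input as the paper's --- the geodesicity of the paths $\gamma_{[u,v]}$ from the preceding lemma --- but organizes it in the opposite order. The paper first gets injectivity of $f$ in one stroke: for any $u\neq v$ in $T_\Lambda$ the path $\gamma_{[u,v]}$ is a geodesic of positive length (each constituent segment is either a segment $[p,c_0]$ of length at least $90\delta$ or a segment between $100\delta$-separated apices), so its endpoints are distinct whatever the choices were made; injectivity of $\phi$ then follows in one line because the edge groups of $\Lambda$ are trivial, so every nontrivial $g$ moves some vertex of $T_\Lambda$, and equivariance plus injectivity of $f$ force $\phi(g)\neq 1$. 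You instead prove injectivity of $\phi$ directly (your dichotomy and the $90\delta$ length estimate on $\gamma_{[v_W,gv_W]}$ are correct) and then try to recover injectivity of $f$ from stabilizer computations; your identifications of $\mathrm{Stab}(W)$ via quasiconvexity and of $\mathrm{Stab}(c)$ via separation are also sound.

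The gap is in the final step ``this gives bijectivity of $f$ on vertices.'' Equivariance together with matching vertex stabilizers only yields injectivity of $f$ on each $\pi_1(\Lambda)$-orbit of vertices separately; one must also show that distinct orbits have disjoint images, and your type argument only separates $V_W$ from $V_C$. Nothing in your proof excludes $\phi(h)c=c'$ for some $h\in\pi_1(\Lambda)$ and two \emph{distinct} representatives $c\neq c'$ in $\tilde C_1$, which would identify $f(hv_c)$ with $f(v_{c'})$ even though all stabilizers match. The case can be repaired with tools you already have: if $hv_c$ and $v_{c'}$ are at distance $2$ in $T_\Lambda$, unwinding the adjacency to their common $V_W$-neighbour shows that $\phi(h)c=c'$ would force $c$ and $c'$ into the same $G_W$-orbit, contradicting the choice of $\tilde C_1$; if they are farther apart, $\gamma_{[v_{c'},hv_c]}$ is a geodesic of positive length with endpoints $c'$ and $\phi(h)c$, so these points differ. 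But as written the case is simply missing, and it is exactly the case that the paper's uniform ``nondegenerate geodesic, hence distinct endpoints'' argument absorbs for free.
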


\begin{proof}
The maps $f$ and $\phi$ are clearly onto.

Consider $u\neq v\in T_\Lambda$, and $\gamma_{[u,v]}$ a corresponding path.
Since $\gamma_{[u,v]}$ is geodesic, its endpoints are distinct (whatever the choices).
This implies that $f$ is injective and moreover that for all $g\notin G_W$, $gW\cap W=\es$.

It follows that $\phi$ is injective: since edge stabilizers are trivial, for any $g\in G\setminus \{1\}$
there exists a vertex $x\in T_\Lambda$ such that $gx\neq x$. Since $f$ is injective, $\phi(g)f(x)=f(gx)\neq f(x)$ so
$\phi(g)$ is non-trivial.
\end{proof}

This establishes that $G_{W'}=G_{W_2}\simeq \pi_1(\Lambda)=G_W*_{c\in \Tilde C_1} G_c$,
and in particular that $W^\prime$ satisfies axiom \ref{wm_freeprod} of a windmill.

\begin{lem}\label{lem_short}
Let $[u,v]$ be a segment of $T_\Lambda$, and let $(u,v)=[u,v]\setminus \{u,v\}$.
For all $v_c\in (u,v)\cap V_C$, $\gamma_{[u,v]}$ contains a $\delta$-shortening pair at $c=f(v_c)$.
\end{lem}

\begin{proof}
Consider $v_c  \in (u,v)\cap V_C$, $w_1,w_2\in V_W$ be the two neighbors of $v_c$ in $[u,v]$.
Up to translation by some element of $G_{W_2}$, we can assume that $w_1$ is the base point $v_W$.
Consider $h\in G_c$ such that $w_2=hw_1$.
Write $\gamma_{[u,v]}$ as the concatenation $[p,c]\cup [c,p^\prime]$.
Then Lemma \ref{lem_local_C} applies to $[p,c]$ and $h\m [c,p^\prime]$, so $\gamma_{[u,v]}$ contains a
$\delta$-shortening pair at $c$.
\end{proof}

\begin{lem}\label{lem_toutes}
For all $p,p^\prime\in G_{W_2}.(W\cup C_1)$,
any geodesic $[p,p^\prime]$ coincides with some $\gamma_{[u,v]}$.
\end{lem}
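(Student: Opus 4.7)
The plan is to exploit Lemma \ref{lem_short}, which says that $\gamma_{[u,v]}$ contains shortening pairs at each apex it passes through, and combine this with Lemma \ref{lem_very_global} to show that any competing geodesic between the same endpoints must also pass through those apices. The desired ``coincidence'' then reduces to recognising that a geodesic decomposed along the apices in the prescribed order is precisely one of the allowed choices in the definition of $\gamma_{[u,v]}$.

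First I would associate to the given points $p, p' \in G_{W_2}\cdot(W \cup C_1)$ two vertices $u, v \in T_\Lambda$: if $p = gc$ with $c \in C_1$ then $u = gv_c \in V_C$, and if $p \in gW$ (with no translate of an apex involved) then $u = gv_W \in V_W$ (and symmetrically for $v$). Next I would write down the segment $[u,v]\subset T_\Lambda$ and list the apex-vertices it meets, say $v_{c_0}, v_{c_1},\dots,v_{c_n}$, with corresponding apices $c_0,\ldots,c_n \in C$. The strategy is to show that any geodesic $[p,p']$ in $\bbX$ visits the apices $c_0,\ldots,c_n$ in this exact order.

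The crucial step is to fix \emph{some} choice of $\gamma_{[u,v]}$ with endpoints $p, p'$ (using Lemma \ref{lem_short}, picking suitable base points close to $p,p'$ if $u$ or $v$ lies in $V_W$, which is possible because the hypotheses of the lemma only constrain $p,p'$ to live in $G_{W_2}\cdot(W\cup C_1)$ and the freedom in the base point of $\gamma_{[u,v]}$ is used only to produce a geodesic). Then Lemma \ref{lem_short} guarantees that $\gamma_{[u,v]}$ carries an $8\delta$-shortening pair at each $c_i$ (for $v_{c_i}$ in $(u,v)\cap V_C$, using the first assertion, and for those pairs $c_i,c_{i+1}$ separated by a vertex of $V_W$ whose neighbours are in the same orbit, using the second assertion, which is legitimate because $W$ already satisfies axiom \ref{wm_greendlinger} of a windmill). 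Now let $[p,p']$ be an arbitrary geodesic. For each shortening pair $\{q_1,q_2\}$ on $\gamma_{[u,v]}$ at some $c_i$, hyperbolicity of the quadrilateral with sides $\gamma_{[u,v]}$ and $[p,p']$ produces points $q_1',q_2'\in[p,p']$ with $\d(q_i,q_i')\leq 2\delta$; the shortening element $h\in G_{c_i}$ then satisfies $\d(q_1',hq_2')\leq 4\delta + 2\cdot 2\delta \leq 15\delta$, and $\d(c_i,q_j')\geq \d(c_i,q_j)-2\delta \geq 20\delta-2\delta \geq 24\delta$. Applying Lemma \ref{lem_very_global} to $q_1'$ and $q_2'$ (or its direct analogue in the definition of very rotating) forces $[p,p']$ to contain $c_i$.

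Hence any geodesic $[p,p']$ passes through $c_0,\ldots,c_n$ in this order, and so decomposes as a concatenation
\[
[p,p'] = [p,c_0]\cup[c_0,c_1]\cup\cdots\cup[c_{n-1},c_n]\cup[c_n,p'],
\]
each piece being a sub-geodesic. Taking these very sub-geodesics as the ``chosen geodesics'' in the definition of $\gamma_{[u,v]}$ exhibits $[p,p']$ as an instance of $\gamma_{[u,v]}$, which is the desired coincidence. The main obstacle is the book-keeping at the endpoints, namely checking that the chosen $p,p'$ really are admissible base points for the definition of $\gamma_{[u,v]}$ (in particular that when $u\in V_W$ one can find a sub-geodesic starting at $p$ passing through the apex $c_0$); this is handled using the $4\delta$-quasiconvexity of $W$ together with the $100\delta$-separation of $C$, which guarantees that moving $p$ slightly (if necessary) to a point at distance $\geq 90\delta$ from $C\setminus gW$ does not alter the validity of the argument.
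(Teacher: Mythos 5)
Your proof follows essentially the same route as the paper: fix a geodesic $[p,p']$, compare it with the $\gamma_{[u,v]}$ built from the same endpoints, use Lemma \ref{lem_short} to produce a shortening pair at each apex $c_i$, push it onto $[p,p']$ via the $2\delta$-neighborhood property of geodesics, and invoke the very rotating condition to force $c_i\in[p,p']$, after which $[p,p']$ is itself an admissible choice of $\gamma_{[u,v]}$. The only blemish is the chain ``$\d(c_i,q_j')\geq 20\delta-2\delta\geq 24\delta$'', which is arithmetically wrong; the correct lower bound (about $16\delta$--$18\delta$, since $\d(c_i,q_j)\geq 18\delta$ from the definition of a shortening pair) still suffices once one moves the points slightly outward along $[p,p']$ to land in the annulus where the very rotating condition applies.
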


\begin{proof}
Assume for instance that $p\in gW$ and $p^\prime\in g^\prime W$ for some $g,g'\in G_{W_2}$, the other cases being similar.
We fix some geodesic $[p,p^\prime ]$ of $\X$.
Let $u=g v_W$ and $v=g^\prime v_W$ be the corresponding vertices of $T_\Lambda$,
and consider $\gamma_{[u,v]}=[p,c_0]\cup[c_0,c_1]\dots,[c_{n-1},c_n]\cup [c_n,p^\prime]$  corresponding to this choice of $p,p^\prime$.
We need only to prove that $c_i\in [p,p^\prime]$.
By Lemma \ref{lem_short}, $\gamma_{[u,v]}$ contains a   
$\delta$-shortening pair $\{q_1,q_2\}$ at $c_i$.
 Consider $h\in G_{c_i}\setminus \{1\}$ such that $d(q_1,hq_2)\leq\delta$.
Since $[p,p^\prime]$ is contained in the 
$\delta$ neighborhood of $\gamma_{[u,v]}$, consider $q'_i\in [p,p']$
at distance at most $\delta$ from $q_i$.
Then $d(q'_1,hq'_2)\leq 3\delta$ and
the global very rotating condition (Lemma \ref{lem_very_global})
then implies that $c_i\in  [p,p^\prime]$.
\end{proof}

\begin{lem}\label{lem;Wpr_4deltaqc}
  $W_2$ is 
$8\delta$-quasiconvex, and $W^\prime$ is $4\delta$-quasiconvex.
\end{lem}

\begin{proof}
Given any two points $p,p^\prime$ in $G_{W_2}.(W\cup C_1)$, look at the corresponding vertices $u,v$ in $T_\Lambda$.
By Lemma \ref{lem_toutes}, any geodesic joining $p$ to $p^\prime$ coincides with some
 geodesic $\gamma_{[u,v]}$.
This geodesic is a concatenation of geodesic segments joining two points in a translate of $W_1$.
Since $W_1$ is $6\delta$-quasiconvex, $\gamma$ lies in the $6\delta$-neighborhood of $W_2$.

Now consider the case where $p\in g [c,\ol c]$ for some $g\in G_{W_2}$ and some $c\in C_1$,
and $p'\in g' [c',\ol c']$ with $g'\in G_{W_2}$, $c'\in C_1$, the remaining cases being similar.
Since $[p,p']$ lies in the $2\delta$-neighbourhood of $[p,gc]\cup [gc,g'c']\cup [g'c',p']$
with $[p,gc]\cup [p',gc']\subset W_2$, and since $[gc,g'c']$ is contained in the $6\delta$-neighbourhood of $W_2$,
$W_2$ is $8\delta$-quasiconvex.
By Lemma \ref{lem_4delta}, $W^\prime=W_2^{+10\delta}$ is $4\delta$-quasiconvex.
\end{proof}

This shows axiom \ref{wm_4delta}.
To prove axiom \ref{wm_elliptic}, consider $g\in G_{W^\prime}$, and look at its action on $T_\Lambda$.
If it fixes a vertex of $T_\Lambda$, then either $g\in G_c$ for some $c\in W^\prime$, or $g$ is contained $G_W$ up to conjugacy,
and we can conclude using that $W$ is a windmill.
If $g$ acts hyperbolically on $T_\Lambda$, let $[c,gc)\subset T_\Lambda$ be a fundamental domain of its axis for the action of $g$,
with endpoints in $V_C$. Let $\gamma_{[c,gc]}$ be a geodesic of $\X$ associated to $[c,gc]$ as above.
Then $l=g^\bbZ \gamma_{[c,gc]}$ is a $g$-invariant geodesic of $\X$ so $g$ is loxodromic in $\X$.
Since $c,gc\in W_2$, and $W_2$ is $8\delta$-quasiconvex, $l\subset W_2^{+8\delta}$ and in particular, $l\subset W'$.
Moreover, $l$ contains a $\delta$-shortening pair at the image of each vertex of $V_C$ by Lemma \ref{lem_short}.

This concludes the proof of Proposition \ref{prop_grow}.

\subsubsection{Greendlinger's lemmas}\label{subsec_green}

\label{i-Greend2}
We now prove the two Greendlinger's Lemmas \ref{lem;green_unpointed} and \ref{lem;green_pointed}.
As noted above, the windmill Axiom \ref{wm_elliptic} implies a weak version
of the unpointed Greendlinger's Lemma with one shortening pair instead of two.
We will use inductively the existence of two distinct shortening pairs to prove the pointed
Greengliger's Lemma, and the pointed Greendlinger's Lemma to prove the existence of two distinct
shortening pairs.

The following definition is a technical version of a shortening pair
that is needed for our induction
(shortening pairs were defined in Definition \ref{dfn_shortening}, see Figure \ref{fig_shortening}).
Recall that the Gromov product $(u|v)_a=\frac{1}{2}(\d(u,a)+\d(v,a)-\d(u,v))$ measures
the distance between $a$ and $[u,v]$ in a comparison tree.

\begin{figure}[htbp]
  \centering\includegraphics{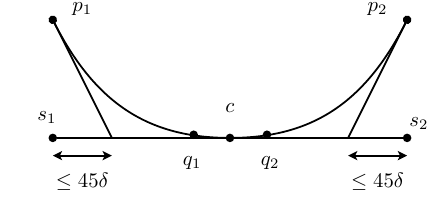}
  \caption{$\{s_1,s_2\}$ is a $d$-security pair for $c$: if $[p_1,p_2]$ comes close enough to $\{s_1,s_2\}$,
it contains a $d$-shortening pair $\{q_1,q_2\}$}\label{fig_security}
\end{figure}

\begin{defn}[Figure \ref{fig_security}]
Given $d<50\delta$,
 a $d$-security pair for $c\in C$ is a pair of points $s_1,s_2\in \X\setminus\{c\}$ such that $c\in [s_1,s_2]$,
and such that given any $p_1,p_2\in\X$ such that $(p_i|c)_{s_i}\leq 45\delta$,
any geodesic $[p_1,p_2]$ contains $c$ and a $d$-shortening pair at $c$.
\end{defn}

Informally, this means that if some geodesic $[p_1,p_2]$ comes close enough to $s_1$ and $s_2$,
then it contains $c$ and a $d$-shortening pair at $c$.
The definition implies that $[s_1,s_2]$ contains a $d$-shortening pair at $c$, taking $p_i=s_i$.

The first example of a security pair is given by rotating a point by an element of $G_c$.

\begin{lem}\label{lem_security_elliptic}
  Let $c\in C$, and $g\in G_c\setminus\{1\}$. For all $s\in \X$ such that $\d(s,c)\geq 75\delta$,
$\{s,gs\}$ is a $2\delta$-security pair for $c$.
\end{lem}

\begin{proof}
To unify notations, let $s_1=s$, $s_2=gs$.
Let $q_i\in [c,s_i]$ be the point at distance $27\delta$ from $c$, and note that $q_2=gq_1$.
Let $p_i$ be such that $(p_i|c)_{s_i}\leq 45\delta$. Then looking at the triangle $c,s_i,p_i$,
the fact that $\d(c,q_i)\leq (p_i|c)_{s_i}-\delta$ implies that there is a point $q'_i\in [c,p_i]$ at distance at most $\delta$ from $q_i$. In particular, $d(q'_2,gq'_1)\leq 2\delta$.
By the global very rotating condition in Lemma \ref{lem_very_global}, $c\in [p_1,p_2]$.
Then $\{q'_1,q'_2\}$ is a $2\delta$-shortening pair at $c$ in $[p_1,p_2]$.
It follows that $\{s_1,s_2\}$ is a $2\delta$-security pair for $c$.
\end{proof}

The definition of a security pair implies that $d(s_i,c)\geq 45\delta+25\delta$ since otherwise,
one could take $p_i$ at distance $< 25\delta$ from $c$,
preventing $[p_1,p_2]$ from containing a shortening pair at $c$.
Conversely, a $d$-shortening pair between two points that are far enough is a $d+2\delta$-security pair:

\begin{lem}\label{lem_security_axis}
  Let $l$ be a geodesic containing a $d$-shortening pair $\{q_1,q_2\}\subset l$ at $c\in l\cap C$
for some $d\leq 48\delta$.

\begin{enumerate}\item
  Then for any $s_1,s_2\in l$ with $c\in [s_1,s_2]$ and $d(c,s_i)>
  80\delta$, $\{s_1,s_2\}$ is a $(d+2\delta)$-security pair for $c$.

\item
  Similarly, for any $s_1,s_2\in \X$ whose projections $u_1,u_2$ on $l$
  are such that $c\in [u_1,u_2]$ and $d(c,u_i)\geq 40\delta$, and
  $d(s_i,l)\geq 50\delta$, then $\{s_1,s_2\}$ is a
  $(d+2\delta)$-security pair for $c$.

\item
  For any $p_1,p_2\in \X$ whose projections $u_1,u_2\in l$
  are such that $c\in [u_1,u_2]$ and $d(c,u_i)> 34\delta$, then $[p_1,p_2]$ contains $c$ and a
  $(d+2\delta)$-shortening pair at $c$.
\end{enumerate}
\end{lem}

\begin{proof}
Let us start with Assertion 3.
Looking at the quadrilateral $(p_1,u_1,u_2,p_2)$, 
we have that $[q_1,q_2]$ is in the $2\delta$-neighbourhood of $[p_1,p_2]\cup[p_1,u_1]\cup [p_2,u_2]$.
Since $d(c,u_i)> 34\delta$, and $d(c,q_i)\leq 30\delta$,
no point in $[q_1,q_2]$ can be $2\delta$-close to a point in $[p_i,u_i]$,
so $[q_1,q_2]$ lies in the $2\delta$-neighbourhood
of $[p_1,p_2]$.
Applying the local very rotating conditon to
the projections $q^\prime_1,q^\prime_2$ of $q_1,q_2$ on $[p_1,p_2]$,
we see that $c\in [q'_1,q'_2]\subset [p_1,p_2]$.
Consider the points $q''_i\in [c,p_i]$ such that $d(c,q''_i)=d(c,q_i)$;
since the triangle $(c,u_i,p_i)$ is  $\delta$-thin,
$d(q''_i,q_i)\leq \delta$.
It follows that $\{q''_1,q''_2\}$ is a $(d+2\delta)$-shortening pair in $[p_1,p_2]$.

We now prove Assertion 1.
Let $p_i$ be such that $(p_i|c)_{s_i}\leq 45\delta$.
Then the projection $u'_i$ of $p_i$ on $[c,s_i]$ is at distance $\leq 46\delta$ from $s_i$.
Since $\d(c,s_i)> 80\delta$, $d(u'_i,c)> 34\delta$. It follows that  the projection $u_i$ of $p_i$ on $l$
satisfies $d(u_i,c)> 34\delta$, and Assertion 3 shows that $[p_1,p_2]$ contains a $(d+2\delta)$-shortening pair.
This shows that $\{s_1,s_2\}$ is a $(d+2\delta)$-security pair for $c$, which concludes Assertion 1.

In a similar way, one checks that under the assumptions of Assertion 2,
given $p_i$  such that $(p_i|c)_{s_i}\leq 45\delta$,
the projection of $p_i$ on $l$ is at distance $> 34\delta$ from $c$.
Assertion 3 allows one to  conclude in the same way.
Indeed, let $v_i$ be the projection of $p_i$ on $[c,u_i]$ and assume that $\d(c,v_i)\leq 34\delta$.
In the triangle $(s_i,c,u_i)$, consider $v'_i,u'_i\in [s_i,c]$ such that
$\d(v'_i,c)=\d(v_i,c)$ and $\d(u'_i,c)=(u_i|s_i)_c$; since $u_i$ is the projection of $s_i$ and
$\d(c,v_i)\leq 34\delta \leq d(c,u_i)-\delta$,
we get $\d(u_i,u'_i)\leq \delta$ and $\d(v_i,v'_i)\leq \delta$.
In the triangle $(s,c,p_i)$, consider $u''_i,v''_i\in [p_i,c]$ with $\d(c,u''_i)=\d(c,u'_i)$,
and $\d(c,v''_i)=\d(c,v'_i)$; since $(p_i|c)_{s_i}\leq 45\delta\leq \d(s_i,u''_i)$,
we get $\d(u'_i,u''_i)\leq \delta$ and $\d(v'_i,v''_i)\leq \delta$.
Since $\d(p_i,v_i)=\d(p_i,[c,u_i])\leq \d(p_i,u''_i)+2\delta$,
$\d(p_i,v''_i)\leq \d(p_i,u''_i)+4\delta$,
so $\d(c,v''_i)\geq \d(c,u''_i)-4\delta\geq \d(c,u_i)-5\delta\geq 35\delta$.
\end{proof}

Unlike in Lemma \ref{lem_security_axis}, the constant characterizing the security pair does not increase in the following lemma.

\begin{lem}\label{lem_security_iter}
  Let $l$ be a bi-infinite geodesic, $\{s_1,s_2\}\subset l$ a $d$-security pair for $c\in l\cap C$.
Consider $s'_1,s'_2\in \X$ such that $d(s'_i,l)\geq 50\delta$ and assume that some closest point projection
$u_i$ of $s'_i$ on $l$ satisfies $d(u_i,s_i)\leq 40\delta$.

Then $\{s'_1,s'_2\}$ is a $d$-security pair for $c$.
\end{lem}

\begin{proof}
Consider  $p_i$ 
such that $(c|p_i)_{s'_i}\leq 45\delta$, and let's prove that $(c|p_i)_{s_i}\leq 45\delta$.
Since $\{s_1,s_2\}$ is a $d$-security pair for $c$, this will imply that so is $\{s'_1,s'_2\}$.

Let $\sigma\in [s'_i,c]$ be the point corresponding to the center of the tripod $(c,s'_i,p_i)$.
We have $\d(s'_i,\sigma)=(p_i|c)_{s'_i}\leq 45\delta$.

Denote by $\tau_1\in [s_i,c]$, $\tau_2\in [s_i,s'_i]$, $\tau_3\in [c,s'_i]$
the three points corresponding to the center of the tripod $(s_i,c,s'_i)$.
Since $\d(\tau_3,[s_i,c])\leq \delta$ and $\d(s'_i,[s_i,c])\geq 50\delta$,
$\d(s'_i,\tau_3)\geq 49\delta\geq \d(s'_i,\sigma)$ so $\tau_3\in [c,\sigma]$.
Let $\tau'\in [c,p_i]$ be the point such that $\d(\tau',c)=\d(\tau_3,c)$,
then $\d(\tau_3,\tau')\leq \delta$.

It is an easy fact that in any $\delta$-thin triangle, if $x\in [c,b],y\in [c,a]$ are at distance $\leq d$,
then $(b|a)_c\geq \min{\d(c,x),\d(c,y)}-d/2-\delta$.
Indeed, one may assume that $(b|a)_c\leq\min\{\d(a,x),\d(b,y)\}$;
then the points $x',y'\in [a,b]$ defined by $d(b,x)=d(b,x')$
and $d(a,y)=d(a,y')$ satisfy $d(x,x')\leq \delta$ and
 $d(y,y')\leq \delta$.
Now $2(b|a)_c= \d(c,x)+\d(x,b)+\d(c,y)+\d(y,a)-(\d(b,x')+\d(x',y')+\d(y',a))=\d(c,x)+\d(c,y)-\d(x',y')$ so
$(b|a)_c\geq \min\{\d(c,x),\d(c,y)\}-d/2-\delta$.

Applying this fact to $\tau_1,\tau'$ in the triangle $(s_i,c,p_i)$,
we get $(p_i|s_i)_{c}\geq \min\{\d(c,\tau_1),\d(c,\tau')\}-\d(\tau_1,\tau')/2-\delta
\geq \d(c,\tau_1)-2\delta$. 
It follows that $(c|p_i)_{s_i}=\d(s_i,c)-(p_i|s_i)_{c}\leq  \d(s_i,\tau_1)+2\delta=\d(s_i,\tau_2)+2\delta$.

Since $u_i$ is a projection of $s'_i$ on $l$,
$\d(s'_i,\tau_2)\geq \d(s'_i,u_i)-\delta$,
so $\d(s_i,\tau_2)=\d(s'_i,s_i)-\d(s'_i,\tau_2)\leq \d(s'_i,s_i)-\d(s'_i,u_i)+\delta\leq \d(s_i,u_i)+\delta$.

If follows that $(c|p_i)_{s_i}\leq  \d(s_i,\tau_2)+2\delta\leq \d(s_i,u_i)+3\delta \leq 43\delta$.
\end{proof}

\begin{defn}
  An improved windmill $W\subset \X$ is a windmill satisfying the following  additional axioms.
  \begin{enumerate}\setcounter{enumi}{\thememowm}

\item \label{wm_two}     For any loxodromic element $g\in G_W$ preserving a bi-infinite geodesic line $l$,
          $l\cap C$ contains a point $c$ at which $l$ has a $\delta$-shortening pair, and there
          exists $c'\in [c,gc]\cap C$
          such that $\{c,gc\}$ is a $3\delta$-security  pair for $c'$.

      \item \label{wm_security}
        If $g\in G_W\setminus \{1\}$ is loxodromic with axis $l\subset W$ as above, and if $p_0\in \X$ is such that
        $d(p_o,l)\geq 50\delta$  then $\{p_0,gp_0\}$ is a $3\delta$-security pair for some $c\in l\cap C$.
  \end{enumerate}
\end{defn}

If $c\in C$, the set $W=B(c,100\delta)$ is an improved windmill since Axioms \ref{wm_two} and \ref{wm_security}
are empty.

\begin{prop}\label{prop_grow2}
       Let $G$ act on a $\delta$-hyperbolic space $\X$, and  $\calC= (C, \{G_c, c\in C\}) $ be a $\rho$-separated very rotating
       family, with $\rho\geq 200\delta$.
Consider a windmill $W$, and $W^\prime$ the larger windmill constructed in Proposition \ref{prop_grow}.

If $W$ is an improved windmill then so is $W'$.
\end{prop}

\begin{proof}
We use the notations of the previous section.
  We first prove that $W'$ satisfies Axiom \ref{wm_two}.
Let $g\in G_{W'}$. If $g$ fixes a point in the tree $T_\Lambda$,
then either $g$ lies in some $G_c$ and there is nothing to do, or $g$ is conjugate in $G_W$,
and we conclude because $W$ satisfies Axiom \ref{wm_two}.
If $g$ is hyperbolic in $T_\Lambda$, let $l_\Lambda\subset T_\Lambda$ be its axis, $v_c\in l_\Lambda\cap V_C$,
$\gamma_{[v_c,gv_c]}\subset\X$ a corresponding geodesic,
and $l=g^\bbZ.\gamma_{[v_c,gv_c]}\subset\X$ a corresponding $g$-invariant bi-infinite geodesic
(recall that any $g$-invariant geodesic of $\X$ is of this form by Lemma \ref{lem_toutes}).
By Lemma \ref{lem_short}, $l$ has a $\delta$-shortening pair at each point of $l$ corresponding
to a point in $V_C\cap l_\Lambda$.
If the segment $[v_c,gv_c]$ in $T_\Lambda$ contains a point $v_{c'}\in V_C\setminus \{v_c,gv_c\}$,
then $l$ has a $\delta$-shortening pair at $c'$, so by Assertion 1 of Lemma \ref{lem_security_axis}, $\{c,gc\}$ is a $3\delta$-security pair for $c'$.
Because $T_\Lambda$ is bipartite, the only remaining possibility is that
$v_c$ and $gv_c$ are at distance $2$ in $T_\Lambda$, and their midpoint $w$ lies in $V_W$.
Up to conjugation, we may assume that the vertex $w$ corresponds to $W$,
so that $gc=hc$ for some $h\in G_W$.

If $h$ is elliptic in $\X$, then since $W$ satisfies Axiom \ref{wm_elliptic},
$h\in G_{c'}\setminus\{1\}$ for some $c'\in W$. Since $d(c,c')\geq 200\delta$ by
      separation of $C$, Lemma \ref{lem_security_elliptic}
implies that $\{c,hc\}$ is a $2\delta$-security pair for $c'$, and in particular,
that $c'\in [c,hc]\subset l$.

      If $h$ is not elliptic, Axiom \ref{wm_elliptic} for $W$ ensures
      that $h$ preserves an infinite geodesic $l'\subset W$, and since
      $d(c,W)\geq 50\delta$, Axiom \ref{wm_security} for $W$ ensures
      that $\{c,hc\}$ is a
      $3\delta$-security pair for some $c'\in [c,hc]\subset l$.
This concludes the proof of Axiom \ref{wm_two} for $W'$.

To prove Axiom \ref{wm_security} for $W'$, consider $g\in G_{W'}$.
If $g$ is conjugate in some $G_W$ or some $G_c$, there is nothing to do.
So we can assume that $g$ acts loxodromically on $T_\Lambda$, and let $l\subset\X$ be a corresponding
bi-infinite $g$-invariant geodesic. 
Let $p\in\X$ be at distance at least $50\delta$ from $l$.
Let $u$ be a closest point projection of $p$ on $l$.
By Axiom \ref{wm_two}, there exists $c\in [u,gu]\cap l$ at which $l$ has a $\delta$-shortening pair,
and such that $\{c,gc\}$ is a $3\delta$-security pair for some $c'\in [c,gc]$.
If $\d(c,\{u,gu\})\geq 40\delta$, then by Assertion 2 of Lemma \ref{lem_security_axis}, $\{p,gp\}$ is a $3\delta$-security pair for $c$, and we are done.
Otherwise, we can assume for instance that $\d(u,c)\leq 40\delta$.
By Lemma \ref{lem_security_iter}, $\{p,gp\}$ is $3\delta$-security pair for $c'$.
\end{proof}

We can now deduce the Greendlinger lemmas.

\begin{proof}[Proof of Lemmas \ref{lem;green_unpointed} and \ref{lem;green_pointed}.]
  Let $g\in \Rot\setminus\{1\}$. Then as in the proof of Theorem \ref{theo;app_wind},
$g\in G_W$ for some improved windmill $W$.
Axiom \ref{wm_two} ensures that if $g$ is loxodromic
with axis $l$, then $l$ contains a $\delta$-shortening pair at some $c\in l\cap C$,
and $\{c,gc\}$ is a $3\delta$-security pair for some $c'\in [c.gc]\cap C$.
In particular, $l$ has a $3\delta$-shortening pair at $c'$. This proves Lemma \ref{lem;green_unpointed}.

To prove Lemma \ref{lem;green_pointed}, consider
 $p_0\in \X$, and $g\in G_{W}\setminus\{1\}$.
If $g\in G_c$ for some $c\in C$, and if $\d(c,p_0)\geq 25\delta$,
the fact that $c\in [p_0,gp_0]$ is a consequence of the global very rotating condition,
and one gets a $\delta$-shortening pair $\{q_1,q_2\}$ by taking     $q_1\in [c,p_0], q_2\in [c,gp_0]$ such that $\d(c,q_i)=25\delta$.

So assume that $g$ is loxodromic, let $l\subset\X$ be a $g$-invariant geodesic,
and $u$ a closest point projection of $p_0$ on $l$.
Using Axiom \ref{wm_two} for $W$, consider $c,c'\in l$ such that
$l$ has a $\delta$-shortening pair at $c$, and $\{c,gc\}$ is a $3\delta$ security pair
for $c'$. In particular, $l$ has a $3\delta$-shortening pair at $c$ and at $c'$.
Since $C$ is $200\delta$-separated, $[u,gu]$ contains a point in the $g$-orbit of $c$ or $c'$ at distance at least
$100\delta$ from $\{u,gu\}$.
By Assertion 3 of Lemma \ref{lem_security_axis}, $[p_0,gp_0]$
contains a $5\delta$-shortening pair at $c$, which proves Lemma \ref{lem;green_pointed}.
\end{proof}

\subsection{Quotient space by a very rotating family, hyperbolicity, isometries, and acylindricity}

 We now describe the quotient of a space by a very rotating family.

\paragraph{Cartan-Hadamard Theorem} \label{i-CarH}

     Let us recall that local hyperbolicity implies global
     hyperbolicity, as the Cartan-Hadamard theorem states.   A length space
     is $\sigma$-simply connected if its fundamental group is normally
     generated by free homotopy classes of loops of diameter less than $\sigma$.

     \begin{thm}\cite[Cartan-Hadamard Theorem 4.3.1]{Del_Gro} \cite[A.1]{Coulon_notes}\label{theo;CH}
       For all $\delta$, there exists $R=\RCH(\delta)$ ($=10^7\delta$)  and $\delta^\prime=\delta_{CH}(\delta) =300\delta$ such that, for
       all geodesic $100\delta$-simply connected space $\X$ that is $R$-locally $\delta$-hyperbolic
       (in the sense that all its balls of radius $R$ are $\delta$-hyperbolic),
       the space $\X$ is $\delta^\prime$-hyperbolic (globally).
     \end{thm}

The assumption that $\X$ is $\delta$-simply connected means that $\pi_1(\X)$ is the normal closure of free homotopy classes of loops of diameter $\leq \delta$.

The subscript \emph{CH} stands for Cartan-Hadamard.
 For a complete
proof of Cartan-Hadamard theorem, we recommend the
appendix of Coulon's notes on small cancellation and Burnside's
problem, \cite[Appendix A]{Coulon_notes}.

\paragraph{Hyperbolicity}

 We now prove the hyperbolicity of the quotient of a hyperbolic space by a separated very rotating family. Our arguments follow \cite{Del_Gro}.

\begin{prop}\label{prop;quotient_hyp}
      Let $\X$ be a $\delta$-hyperbolic space
      equipped with a very rotating family,  whose set of apices is $\rho$-separated, for $\rho>10\RCH(200\delta)$.
      Let $\Rot$ be the group of isometries generated by the rotating family. Then
\begin{enumerate}
   \item[(a)]   $\X/\Rot$ is $60000\delta$-hyperbolic.

   \item[(b)]  For all $x\in \X$, if $x$ is at distance at least $\rho/5 + 100\delta$ from the apices, the ball $B(x, \rho/10)$ in $\X$ isometrically embeds in  $\X/\Rot$.
\end{enumerate}
\end{prop}

    \begin{proof}
      We are going to prove that $\X/\Rot$ is $\rho/10$-locally $200\delta$-hyperbolic.
      In the course of the proof of this fact, we will establish and use the second assertion of the Proposition.
      As $\X$ is $\delta$-hyperbolic, it is $4\delta$-simply
      connected,
      and
     so is $\X/Rot$ since $\Rot$ is generated by elliptic elements.
      This allows to apply the Cartan-Hadamard theorem (with $200\delta$ as our local hyperbolicity constant),
      which proves the proposition.

      Denote by $(C,(G_c)_{c\in C})$ the rotating family.
      Consider $x_0$ with  $\d (x_0,C)\geq \rho/5+100\delta$
      and consider the ball $B(x_0,\rho/5)\subset \X$.
      By the pointed Greendlinger Lemma  \ref{lem;green_pointed},
      $B(x_0,\rho/5)$ is disjoint from its translates by $\Rot$. In particular the quotient map is injective on $B(x_0,\rho/5)$, and
    therefore isometric on
    $B(x_0,\rho/10)$. This already proves the second assertion.

    Therefore $\X/\Rot$ is $\rho/10$-locally
 hyperbolic on the complement of the
    $(\rho/5+100\delta)$-neighborhood of $C$.

    Let $\bar c$ be the image of $c$ in  $\X/\Rot$, and $\bar x_0$ with $\d (\bar x_0, \bar c)\leq \rho/5+100\delta$.
    It is enough to prove hyperbolicity of the ball $B(\bar x_0,\rho/10)$.
    Consider a triangle $(\bar x, \bar y, \bar z)$ in this ball.
    Note that its perimeter is at most $6\rho/10$.
    We claim that any $\bar u\in [\bar x,\bar y]$ lies in the  $50\delta$-neighborhood of $[\bar x,\bar z]\cup[\bar z,\bar y]$.
    This will imply $200\delta$-hyperbolicity by \cite[Prop 21 p.41]{GdH}

    If both $\bar x$ and $\bar y$ are at distance at most $20\delta$ from $\bar c$,
    or if $\d (\bar u,\{\bar x,\bar y\})\leq 50\delta$,
    this simply follows from triangular inequality.
    So assume $\d (\bar x, \bar c)\geq 20\delta$.

    Up to replacing $\bar u$ by some $\bar u^\prime\in [\bar x,\bar y]$ at distance $\leq 15\delta$ from $\bar u$,
    we can assume $\d (\bar u,\bar c)>6\delta$, and we need to prove the existence of $\bar w\in  [\bar x,\bar z]\cup[\bar z,\bar y]$
    with $\d (\bar w,\bar u)\leq 35\delta$.

    Lift $[\bar x, \bar y]$ as a geodesic $[x,y]$ in $\X$, then $[\bar y, \bar z ]$ as a geodesic
    $[y,z]$, and finally $[\bar z, \bar x]$ as a geodesic $[z, x^\prime]$.
    If $x=x^\prime$ the claim follows from the hyperbolicity in $\X$.
    Otherwise, the bound on the perimeter of the triangle gives $\d (x,x^\prime)\leq 6\rho/10$. By Corollary \ref{cor_free2},
    $x^\prime = gx$ with $g\in G_c$, and by the very rotating hypothesis, any geodesic $[x,gx]$ must contain $c$.
    Let $u$ be the lift of $\bar u $ in $[x,y]$.
    Hyperbolicity in the quadrilateral $(x,y,z,x^\prime)$ ensures that $u$ is $2\delta$
    close to another side. If it is  $[x,y]$ or  $[y,z]$, we are done.
    If it is $2\delta$ close to $v \in [x,x^\prime]$, consider $v^\prime\in [x,x^\prime]$
    defined by $v^\prime=gv$ or $v^\prime=g\m v$ according to whether $v\in [c,x]$ or $v\in [c,x^\prime]$.
    Let  $w\in [x,y] \cup [y,z] \cup [z,x^\prime]$ at distance $2\delta$ from $v^\prime$.
    If  $w\in [y,z] \cup [z,x^\prime]$, we are done, since
    $\d (\bar u, \bar w )\leq \d (u,v) + \d (g^{\pm 1}v, w)  \leq 4\delta$.

    Assume that   $w\in [x,y]$. Since $[x,y]$ maps to a geodesic in $\X/\Rot$,
    and since $\d (\ol u,\ol w)\leq \d (u,v)+\d (g^{\pm1}v,w)\leq 4\delta$,
    $\d (u,w)\leq 4\delta$. It follows that $\d (v,gv)\leq \d (u,w)+4\delta\leq 8\delta$,
    so $\d (c,v)\leq 4\delta$ and $\d (c,u)\leq 6\delta$, a contradiction.
\end{proof}

\paragraph{Isometries of the quotients}
The next result is about isometries produced by the quotient group on the quotient space.

\begin{prop}\label{prop;quotient_isom}
        Let $G\actson \X$  be a group acting on a $\delta$-hyperbolic geodesic space, and
      $\calC= (C, \{G_c, c\in C\}) $ be a   $\rho$-separated very
      rotating family for $\rho>  10\RCH(200\delta)  $.

If $\ol g\in G/\Rot$ acts elliptically (resp.\ parabolically) on $\X/\Rot$, then $\ol g$ has a preimage in $G$ acting  elliptically (resp.\ parabolically) on $\X$.
\end{prop}

Note that  $10\RCH(200\delta)  $ is actually $ 2\times 10^{10} \delta$.

\begin{proof}
  Denote by $\ol\delta\leq 60000\delta$ the hyperbolicity constant of
  $\X/\Rot$. It is smaller than $\rho/1000$.

We first claim that if $\ol g$ moves some point $\ol x$ by at most
$d< 4\rho/10$,
and
  $\d (\ol x,\ol c)\leq  3\rho/10$  for some $c\in C$, then $\ol g$ has an elliptic preimage.

Indeed, consider $x,c\in\X$ some preimage of $\bar x,\bar c$ with $\d (x,c)\leq 4\rho/10$, and $g$ a preimage of $\ol g$ moving
$x$ by at most $d$. Then $\d (c,gc) <\rho$,
and $gc=c$ since $C$ is $\rho$-separated. This proves the claim.

Now if $\ol g$ is elliptic in $\X/\Rot$, consider
$\ol x$ whose orbit under $\grp{g}$ has diameter at most
$10\ol\delta$.
Choose $x$ some preimage of $\ol x$, and $g$ representing $\ol g$ with   $d(x,gx)\leq 10 \ol \delta$.
Using the claim above, we can assume $\d (x,C)\geq 3\rho/10$
since $10\ol\delta$ is smaller than  $4\rho/10$.
Recall that, by Proposition \ref{prop;quotient_hyp},  $B(x,\rho/10)$ isometrically embeds in $\X$.
We claim that the orbit of $x$ under $g$ has diameter at most
$10\ol\delta$, proving ellipticity of $g$. If not, let $i$ be the smallest integer with
$\d (x,g^i x)>10 \ol\delta$. Note that $g^i x$ lies in $B(x,\rho/10)$
since $\d (x,g^i x)\leq \d (x,g^{i-1},x) + \d (x,gx)\leq 20\ol\delta <\rho/10$.
Since $B(x,\rho/10)$ isometrically embeds in $\X/\Rot$, this is a contradiction.

Recall that $[g]$ denotes $\min_x \d (x,gx)$.
If  $\ol g$ is parabolic  in $\X/\Rot$, no $g$ representing it can be elliptic in $\X$.
Let $g\in G$ representing $\ol g$ and moving some point by at most $10\ol\delta$.
Assume that $g$ is loxodromic,
and consider $n$ such that $[g^n]\geq 100\ol\delta$, and $x\in \X$ minimizing $\d (x,g^nx)$.
Then $l=\grp{g^n}.[x,g^nx]$ is a $100\ol\delta$-local geodesic (\cite[2.3.5]{Del_Gro}).
Note that $g$ moves points of $l$ by $[g]\leq 10\ol\delta$.
It follows that $l$ stays at distance $\geq 3\rho/10$ from $C$ by the initial claim.
In particular, any ball of radius $\rho/10$  centered at a point of $l$ isometrically embed in $\X/\Rot$.
Since $\rho/10\geq 100\ol\delta$, it follows that the image $\ol l$ of $l$ in $\X/\Rot$ is a $\ol g^n$-invariant
$100\ol\delta$-local geodesic. It follows that $\ol l$ is quasi-isometrically embedded in $\X/\Rot$ and that
$\ol g$ acts loxodromically on $\X/\Rot$, a contradiction.

\end{proof}

\paragraph{Acylindricity on quotients}
We conclude on the acylindricity of the action of the quotient group on the quotient space, under
some properness assumption for the action of a rotation group on the link of its apex.

We first recall some equivalent definitions of acylindricity.

Following Bowditch \cite{B_acyl}, we define an acylindrical action as follows.
\begin{defn}[Acylindricity] \label{dfn_acyl}
Let $G$ be a group acting by isometries on a space $S$.  We say that  the action
is \emph{acylindrical} if       for all $d$ there exists $R_d>0, N_d>0$ such that for all $x,y \in S$ with $\d(x,y)\geq R_d$,the set $$\{ g\in G, \; \d(x,gx)\leq d, \d(y,gy)\leq d\}$$ contains at most $N_d$ elements.
\end{defn}

\begin{prop}[Equivalence of definitions]
Assume that $S$ is $\delta$-hyperbolic,  with $\delta>0$. Then the action of $G$ is acylindrical if and only if there exists $R_0, N_0$ such that for all $x,y \in S$ with $\d(x,y)\geq R_0$, the set $$\{ g\in G, \; \d (x,gx)\leq 100\delta, \d (y,gy)\leq 100\delta\}$$ contains at most $N_0$ elements.
\end{prop}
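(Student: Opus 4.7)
One direction is immediate: full acylindricity applied with $d = 100\delta$ gives $R_0 := R_{100\delta}$ and $N_0 := N_{100\delta}$ as required. The nontrivial direction is the converse, which I plan to prove by a pigeonhole argument on approximate ``shifts'' along a geodesic, allowing one to reduce displacements of size $d$ to displacements of size $O(\delta)$ at the cost of $O((d/\delta)^2)$ in the multiplicative constant.

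Set $R_d := R_0 + 2d + 100\delta$ and suppose $x, y \in S$ satisfy $\d(x, y) \geq R_d$. Write $\gamma = [x, y]$ and let $x', y' \in \gamma$ be the points at distance $d + 50\delta$ from $x$ and $y$ respectively, so $\d(x', y') \geq R_0$. For any $g$ with $\d(x, gx) \leq d$ and $\d(y, gy) \leq d$, I would apply $2\delta$-thinness of the geodesic quadrilateral with vertices $x, gx, gy, y$ to the image point $gx' \in g\gamma$: since $\d(gx, gx') = d + 50\delta > d + 2\delta$ and symmetrically at the other end, $gx'$ cannot be $2\delta$-close to the short sides $[x, gx]$ or $[gy, y]$, so it must be $2\delta$-close to $\gamma$ itself. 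This produces a well-defined ``shift parameter'' $\sigma_{x'}(g) \in [48\delta, \, 2d + 52\delta]$, namely the arc-length parameter on $\gamma$ of any chosen point of $\gamma$ within $2\delta$ of $gx'$; an analogous $\sigma_{y'}(g)$ is obtained.

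The heart of the argument is pigeonhole. I would partition the interval $[0, 2d + 100\delta]$ into $K = O(d/\delta)$ buckets of width $\delta$, and label each admissible $g$ by the pair of bucket-indices of $\sigma_{x'}(g)$ and $\sigma_{y'}(g)$. If two elements $g_i, g_j$ share a label, then their shift parameters differ by at most $\delta$, so the triangle inequality gives $\d(g_i x', g_j x') \leq 5\delta$, hence $\d(x', g_j^{-1} g_i x') \leq 5\delta$; the same computation at $y'$ gives $\d(y', g_j^{-1} g_i y') \leq 5\delta$. Since $\d(x', y') \geq R_0$ and $5\delta \leq 100\delta$, the hypothesis bounds the number of distinct $g_j^{-1} g_i$, and therefore the size of each label class, by $N_0$. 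Summing over the $K^2$ label classes gives $N_d := N_0 K^2$.

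The main step to execute carefully is the shift estimate coming from $2\delta$-thinness of the quadrilateral, together with the verification that the $g_j^{-1} g_i$-displacements at $x'$ and $y'$ really reduce to the bucket width plus the $2\delta$ slack rather than growing with $d$; once these are in place, the pigeonhole and triangle-inequality computations are routine. One may assume $d \geq 100\delta$ (otherwise the hypothesis directly applies), and the buffer $d + 50\delta$ is essentially forced by the requirement that $x'$ and $y'$ sit more than $2\delta$ away from both short sides.
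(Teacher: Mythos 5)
Your proof is correct. Both your argument and the paper's reduce a displacement of size $d$ to one of size $O(\delta)$ by pigeonholing on the approximate shift of an interior point along the geodesic $[x,y]$, using slimness of the quadrilateral $(x,gx,gy,y)$ to produce the shift parameter. The difference is in how the second endpoint is controlled. You pigeonhole independently on the shift parameters at \emph{both} $x'$ and $y'$, obtaining a two-parameter labelling with roughly $K^2 \approx (2d/\delta)^2$ classes and then applying the hypothesis at $(x',y')$ to a ratio $h^{-1}g$ of two elements in the same class. The paper pigeonholes only on the shift at $x'$ (about $2d/(10\delta)$ classes), picks a reference $g_0$ in a class, and then runs a \emph{second} slim-quadrilateral argument on $(x',gx',gy,y)$ with $g\in g_0^{-1}S'$ to deduce that these elements already move a suitable interior point $y'$ by $O(\delta)$, so the hypothesis applies directly to $g_0^{-1}S'$. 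The paper's route produces a constant $N_d = O(N_0\cdot d/\delta)$ linear in $d$, whereas yours gives $N_d = O(N_0\cdot(d/\delta)^2)$; this is immaterial for the qualitative statement. Your two-sided pigeonhole is arguably cleaner, avoiding the second quadrilateral computation (which is the delicate step the paper compresses into one sentence), at the cost of the larger multiplicative constant.
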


     \begin{rem}
       If $S$ is an $\bbR$-tree,
   it is not enough, in general,
       to assume
       the condition for only $\delta =0$, and one needs it to be true for
       some $\delta >0$ in order to have acylindricity in the sense of
       Bowditch condition.
     \end{rem}

     \begin{proof}
       If the action is acylindrical, the condition is obviously true.

       Conversely, let $d$ be arbitrary, and take $R_d= R_0 + 4d +100\delta$.
       Consider  $x,y$ at distance $\geq R_d$, and a subset $S\subset G$ of
       elements  that move $x$ and $y$ by at most $d$.
       Consider a geodesic $[x,y]$, and $x^\prime$ at distance $d+10\delta$ from $x$ on $[x,y]$.
       The point $gx^\prime$ lies on $[gx,gy]$ at distance $d+10\delta$ from $gx$.
       Looking at the quadrilateral $(x,gx,y,gy)$, we get that
       $gx^\prime$ is
$2\delta$-close
       to a point $p_g\in [x,y]$ since
$d(gx',[x,gx]\cup[y,gy])\geq 10\delta$ as $R_d\geq 2d+20\delta$.
       Note that $\d (p_g,x)\leq 2d+20\delta$.

       Consider $N_1=\ceil{\frac{2d+20\delta}{10\delta}}$,
       and  for all $i=1,\dots, N_1$, consider the point $p_i\in[x,y]$ at distance $10i\delta$ from $x$ (this is where we use $\delta>0$).
       By construction, for all $g\in S$, $p_g$ is $10\delta$-close to some $p_i$, so $gx^\prime$ is $20\delta$-close to some $p_i$.

       It follows that there exists $i\in \{1,\dots,N_1\}$,
       and a set $S^\prime\subset S$ of cardinality at least $\#S/N_1$ such that for all $g\in S^\prime$, $\d (gx^\prime,p_i)\leq 20\delta$.
       Choose $g_0\in S^\prime$,
and consider $g'\in g_o\m S^\prime$. Note that $g'$ moves $x^\prime$ by at most $40\delta$, and moves $y$ by at most $2d$.

       Let $y^\prime\in [x^\prime,y]$ be at distance $d+10\delta$ from $x^\prime$. By choice of $R_d$, $\d (x^\prime,y^\prime)\geq R_0$.
       Looking at the quadrilateral $(x^\prime,g'x^\prime,y,gy)$, we see that $\d (y^\prime,g'y^\prime) \leq 50\delta$.
       Since $\d (x^\prime,g'x^\prime)<50\delta$, our assumption implies that $\#S^\prime\leq N_0$.
       Hence $\#S\leq N_1 N_0$ so we can take $N_d=N_1 N_0=N_0\ceil{\frac{2d+20\delta}{10\delta}}$.     \end{proof}

  \begin{prop}\label{prop;quotient_acyl}
Let $\X$ be a
$\delta$-hyperbolic space, $G$ a group acting on $\X$, and
    $ (C, \{G_c,c\in  C\})$ a   $\rho$-separated very rotating family, for some
$\rho >10\RCH(200\delta) $.
 Let $\Rot\normal G$ the group of isometries generated by the rotating family.

Assume moreover that there exists $K\in\bbN$ such that for all $c\in C$, and for all $x$ at distance $50\delta$ from $c$,
the set of $g\in G$ fixing $c$ and moving $x$ by at most $10\delta$ has at most $K$ elements.

If $G\actson \X$ is acylindrical, then so is $G/\Rot\actson \X/\Rot$.
  \end{prop}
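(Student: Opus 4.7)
The plan is to reduce acylindricity of $G/\Rot\actson \X/\Rot$ to acylindricity of $G\actson\X$ by constructing, for each $\bar g\in G/\Rot$ with bounded displacement at two well-chosen reference points in $\X/\Rot$, a canonical lift $g\in G$ whose displacement at the corresponding points in $\X$ is also bounded. Let $\bar\delta$ denote the hyperbolicity constant of $\X/\Rot$ supplied by Proposition \ref{prop;quotient_hyp}. Given $d>0$, set $d'=d+C_1\bar\delta$ (with $C_1$ a standard thin-quadrilateral constant), let $R_{d'}, N_{d'}$ be the acylindricity parameters for $G\actson\X$ at scale $d'$, and take $\bar R_d=R_{d'}+20\rho$. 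Given $\bar x,\bar y$ with $\d_{\X/\Rot}(\bar x,\bar y)\geq \bar R_d$, lift them to $x,y\in\X$ with $\d_\X(x,y)$ arbitrarily close to $\d_{\X/\Rot}(\bar x,\bar y)$ (by path-lifting a quotient geodesic), and select points $p,q$ on $[x,y]$ with $\d_\X(p,q)\geq R_{d'}$ and $\d_\X(p,C)=\d_\X(q,C)=\rho/4$; the $\rho$-separation of $C$ permits this, with $p$ and $q$ placed at controlled distances from the nearest apices.

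Fix $\bar g$ displacing $\bar x,\bar y$ by at most $d$. Hyperbolicity yields $\d_{\X/\Rot}(\bar p,\bar g\bar p),\d_{\X/\Rot}(\bar q,\bar g\bar q)\leq d'$. Since $B(p,\rho/10)$ embeds isometrically into $\X/\Rot$ by Proposition \ref{prop;quotient_hyp}(b), one obtains the unique lift $g$ of $\bar g$ with $gp\in B(p,\rho/10)$ and $\d_\X(p,gp)\leq d'$; uniqueness follows from Corollary \ref{cor_free2}, since any two such lifts would differ by an element of $\Rot$ almost fixing $p$, forced to be trivial as $\d_\X(p,C)=\rho/4\gg 20\delta$. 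The crucial step is bounding $\d_\X(q,gq)$: the image $[gp,gq]=g\cdot[p,q]$ projects to a geodesic of $\X/\Rot$ fellow-traveling $[\bar p,\bar q]$, and path-lift uniqueness in the branched cover $\X\to\X/\Rot$ identifies $[gp,gq]$ with the lift of that projected geodesic starting at $gp$. Fellow-traveling in the quotient combined with $\rho$-separation of apex images forces $\bar g$ to fix each apex image $\bar c_i$ traversed by $[\bar p,\bar q]$, so $gc_i\in \Rot\cdot c_i$; because every sub-arc of $[p,q]$ between consecutive apices stays at distance $\geq \rho/2$ from $C$, the isometric-embedding argument promotes this to $gc_i=c_i$ in $\X$.

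Hence $g$ fixes every apex $c_1,\dots,c_m$ on $[p,q]$. Since an isometry of a hyperbolic space fixing two distinct points of a geodesic is the identity on the sub-arc between them, $g$ restricts to the identity on $[c_1,c_m]$; on the two tails $[p,c_1]$ and $[c_m,q]$ (each of length $\rho/4$) it acts by a common transverse rotation around the axis $[c_1,c_m]$, giving $\d_\X(q,gq)\leq C_2 d'$ for an absolute constant $C_2=C_2(\rho,\delta)$. The properness hypothesis on $G_{c_1}$-actions caps the number of such transverse rotations at $K$, providing quantitative control needed in regimes where hyperbolic fellow-traveling in $\X/\Rot$ alone does not rigidly preserve apex images. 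With $\d_\X(p,gp)\leq d'$, $\d_\X(q,gq)\leq C_2 d'$, and $\d_\X(p,q)\geq R_{C_2 d'}$ (enforceable by enlarging $\bar R_d$), acylindricity of $G\actson\X$ bounds the number of such lifts $g$ by $N_{C_2 d'}$; since distinct $\bar g$ yield distinct canonical lifts, the same constant serves as $\bar N_d$. The main obstacle is the apex-crossing analysis: proving $gc_i=c_i$ for every apex on $[p,q]$ and bounding the residual transverse rotation, which requires carefully combining hyperbolic fellow-traveling in $\X/\Rot$, the $\rho$-separation of $C$, local isometry of the quotient map away from $C^{+20\delta}$, and the local properness hypothesis near apices.
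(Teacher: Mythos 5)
Your overall strategy---lift $\bar g$ at one point and try to show the \emph{same} lift has controlled displacement at the other point, then quote acylindricity of $\X$ once---differs from the paper's, and the step on which it hinges is not established and is in fact false in general. The quotient metric only guarantees that \emph{some} $\Rot$-translate of $gq$ lies within $d'$ of $q$; if that translate is $hgq$ with $h\in G_c\setminus\{1\}$ for an apex $c$ near $q$, then $\d_\X(q,gq)$ can be of order $\rho$ (roughly $2\d(q,c)$), or much larger if several apices intervene between $p$ and $q$ --- there is no bound of the form $C_2d'$. The mechanism you invoke to get such a bound does not exist: in a general $\delta$-hyperbolic space an isometry fixing two points of a geodesic need not fix (even coarsely) the arc between them, and ``acting by a common transverse rotation around the axis $[c_1,c_m]$'' is not a meaningful operation outside a manifold/CAT(0) setting. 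Relatedly, your claim that $\bar g$ fixes every apex image on $[\bar p,\bar q]$ already requires the displacement $d'$ to be small compared with $\rho$, which fails for general $d$ unless you first invoke the reduction to $d=O(\bar\delta)$ (the ``Equivalence of definitions'' proposition); you never do.

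The paper's proof is organized around exactly the dichotomy your argument elides. One fixes a lift $g$ with $\d(b,gb)\le 110\bar\delta$ and an $r\in\Rot$ realizing the quotient distance at $a$. If $r=1$, acylindricity of $\X$ applies directly. If $r\ne 1$, one does \emph{not} try to bound $\d(a,ga)$; instead the qualitative Greendlinger lemma produces a shortening pair at an apex $c$ on $[a,ra]$, a pentagon argument places $c$ within $100\delta$ of $[a,b]$ and shows $g$ fixes $c$ and moves a point at distance $25\delta$ from $c$ by at most $\delta$, and then the local properness hypothesis bounds the number of such $g$ by $K$ \emph{per apex}, with at most $(R_0+\rho)/(\rho-200\delta)$ candidate apices along $[a,b]$. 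Your proposal mentions the $K$ hypothesis only as vague ``quantitative control'' and never performs this counting; without it (and without a correct bound on $\d(q,gq)$) the argument does not close.
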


\begin{proof}
Let us recall some orders of magnitude. Denote by
$\ol\delta\leq 60000\delta$ the hyperbolicity constant of
$\X/\Rot$, and $\rho$ is actually larger than $2\times 10^{10}
\delta$.
Acylindricity in $\X$
gives us  $R_0>0$ and $N_0$ such that for all $a,b\in \X$ with $\d (a,b)\geq R_0$,
there are at most $N_0$ elements $g\in G$ moving $a$ and $b$ by at most $110\ol\delta$.
Then we have $\delta <\!\!\!< \ol\delta  <\!\!\!< \rho $   and we have no control on $R_0$ so one could have $R_0\gg \rho$.

    Let $\ol a,\ol b\in \X/\Rot$ with $\d (\ol a, \ol b)\geq R_0+\rho$.
Let $\ol g\in G/\Rot$ that moves $\ol a$ and $\ol b$ by at most $100\ol\delta$.
Moving $\ol a,\ol b$ inwards, we can assume that $\ol a$ and $\ol b$ are at distance
at least $\rho/10$ from $C$.
Note that the new points $a,b$
satisfy $\d (a,b)\geq R_0+\rho-4\rho/10\geq R_0$, and are moved by at most $110\ol\delta$ by $\ol g$.

Lift the geodesic $[\ol a,\ol b]$ to a geodesic $[a,b]$ of $\X$ with $\d (a,b)=\d (\ol a, \ol b)$.
Choose a lift $g$ of $\ol g$ with $\d (b,gb)\leq 110\ol\delta$.
Choose $r\in \Rot$ such that $\d (ra,ga)=\d (\ol a, \ol g\ol a)$.
If $r$ is trivial, we can use acylindricity in $\X$ to bound the number of possible $g$.

Otherwise, by the pointed Greendlinger Lemma, there exists $c\in [a,ra]\cap C$  and
 $\{q_1,q_2\}\subset [a,ra]$ a $5\delta$-shortening pair.
In particular, $\d(q_1,q_2)\geq 40\delta$, and there exists $h\in G_c$ with $\d (q_1,hq_2)\leq 5\delta$.
Since $a,b,ra$ are far from cone points,  $c$ is at distance at least $\rho/10$ from $a$, $ra$, and $b$.
On the other hand, $\d (ra,ga),\d (b,gb)\leq 110\ol\delta\leq \rho/10$.
Looking at the pentagon $(a,b,gb,ga,ra)$,
we see that there are  $c^\prime,q^\prime_1,q^\prime_2\in [a,b]\cup [b,gb]\cup[ga,gb]\cup [ra,ga]$
with $\d (c,c^\prime),\d (q_1,q^\prime_1),\d (q_2,q^\prime_2)\leq 3\delta$.
Since $\d (b,C)\geq \rho/10$, and $\d (b,gb)\leq 110\ol\delta\leq \rho/10$, $c^\prime$, $q^\prime_1,q^\prime_2$ cannot lie in $[b,gb]$,
nor in $[ga,ra]$ for similar reasons.
Since $[a,b]$ maps to a geodesic in the quotient,
$[a,b]$ cannot contain both $q^\prime_1$ and $q^\prime_2$, and neither can $[ga,gb]$.
So we can assume that $q^\prime_1\in [a,b]$ and $q^\prime_2\in [ga,gb]$.
Let $q^{\prime\prime}_2\in [b,ga]$ at distance $\delta$ from $q^\prime_2$.

Let $p\in [a,b]$ be the center of the triangle $(a,b,ga)$.
One has $\d (c,p)\leq 100\delta$ since $\d (c,q_1)\leq 40\delta$, and $\d (p,[q^\prime_1,q^{\prime\prime}_2])\leq \delta$.
Looking at the quadrilateral $(a,b,gb,ga)$, one sees that $\d (p,gp)\leq \d (b,gb)+10\delta\leq \rho/10$,
so $\d (c,gc)\leq \rho/10+200\delta \leq 2\rho/10$. It follows that $g$ fixes $c$.
Since $\d (b,gb)\leq 110\ol\delta\LL \d (c,b)$, $g$ moves the point at
distance
$50\delta$ from $c$
on $[c,b]$ by at most $\delta$.
By hypothesis, given $c$, there are at most $K$ such elements $g$.
Since there are at most $(R_0+\rho)/(\rho-200\delta)$ elements of $C$ at distance $\leq 100\delta$ from $[a,b]$,
this bounds the number of possible elements $g$ with $r\neq 1$.

\end{proof}

\subsection{Hyperbolic cone-off} \label{sec;cstes}

In this section, we recall the cone-off construction of a hyperbolic space developed by Gromov, Delzant, and Coulon \cite{Coulon}.
This will be our main source of examples of spaces equipped with rotating families.

     We first collect a few universal constants that will be useful later.

     Let  $\du=\dCH(3) = 900$,
   and $\RCH(3)$ be the constants given by the
     Cartan-Hadamard theorem so that any $\RCH(3)$-locally
     $3$-hyperbolic simply connected space is globally $\du$-hyperbolic.

     Let also $\RCH(50\du)\geq \RCH(3)$ be given by the Cartan-Hadamard theorem
     for $\delta=50\du$.

     Finally, let us fix once and for all  $\ru >   10\RCH(50\du)$.

Note that, according to our conventions,   $10\RCH(50\du) > 5\times
10^{12}$, which is greater than $10 \RCH(3)$ and  $10^6 \du$

\paragraph{The hyperbolic cone}

Given a metric space $Y$ and $r_0>0$, define its \emph{hyperbolic cone} \label{i-hc} of radius $r_0$, denoted by $\Cone(Y,r_0)$, as the space
$Y\times[0,r_0]/\sim$ where $\sim$ is the relation collapsing $Y\times \{0\}$ to a point.
The image of $Y\times\{0\}$ in $\Cone(Y,r_0)$ is called its \emph{apex}.\label{i-apex}
We endow $\Cone(Y,r_0)$ with the metric
$$\d ((y,r),(y^\prime,r^\prime))=\acosh\left(\cosh r\cosh r^\prime -\cos \theta(y,y^\prime)\sinh r \sinh r^\prime \right)$$
where $\theta(y,y^\prime)=\min(\pi,\frac{\d (y,y^\prime)}{\sinh r_0})$.

For example, $Y$ is a circle of radius $r_0$ in $\bbH^2$, its perimeter is $2\pi\sinh(r_0)$,
and $\Cone(Y,r_0)$ is isometric to the disk of radius $r_0$ in $\bbH^2$.
If is a circle of perimeter $\theta\sinh(r_0)$, then $\Cone(Y,r_0)$ is a hyperbolic cone
of angle $\theta$ at the apex.
If $Y$ is a line, then $\Cone(Y,r_0)$ is a hyperbolic sector of radius $r_0$ and of infinite angle,
isometric to the completion of the universal cover of the hyperbolic disk of radius $r_0$ punctured at the origin.
We will always take $r_0 \geq \ru$ as defined above.

The \emph{radial projection} \label{i-radproj} is the map $p_Y$ defined on the complement of the apex in  $\Cone(Y,r_0)$
and mapping $(y,r)$ to $y$.

In what follows, we are going to assume that our initial space $Y$ is a metric
graph whose edges have constant length. This is to ensure that the cone-off
is a geodesic space \cite[I.7.19]{BH_metric}.
This is not a restriction because of the following well known fact.

\begin{lem}\label{lem_graph}
  Let $\X$ be a length space and $l>0$. Let $\Gamma_{\X,l}$ be the metric graph
with vertex set $\X$,
with an edge between $x,y$ if and only if $d_\X(x,y)\leq l$, and
where all edges are assigned the length $l$.

Then the inclusion $\X\subset \Gamma_{\X,l}$ is a $(1,l)$-quasi-isometry: any point of $ \Gamma_{\X,l}$ is at distance at most $l$ from $\X$, and
for all $x,y\in \X$,
$$d_{\X}(x,y)\leq d_{\Gamma_{\X,l}}(x,y)\leq  d_{\X}(x,y)+l.$$
\qed
\end{lem}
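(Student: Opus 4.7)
The plan is to establish each claim separately; both are essentially unwinding definitions, with the only subtlety being the ceiling estimate in the upper bound.

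First I would handle the statement that every point of $\Gamma_{X,l}$ lies within distance $l$ of $X$. Since $\Gamma_{X,l}$ is the union of its vertex set $X$ and its edges, and each edge has length $l$, any interior point of an edge is within distance $l/2$ of one of its endpoints, which is a vertex, i.e.\ a point of $X$.

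Next I would prove the lower bound $d_X(x,y)\leq d_{\Gamma_{X,l}}(x,y)$ for $x,y\in X$. A shortest path in $\Gamma_{X,l}$ between two vertices can be taken to consist of finitely many complete edges through a sequence of vertices $x=x_0,x_1,\dots,x_n=y$; its length equals $nl$. By the very definition of the edges, $d_X(x_{i-1},x_i)\leq l$ for each $i$, and the triangle inequality in $X$ gives
\[
d_X(x,y)\leq \sum_{i=1}^n d_X(x_{i-1},x_i)\leq nl=d_{\Gamma_{X,l}}(x,y).
\]

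For the upper bound $d_{\Gamma_{X,l}}(x,y)\leq d_X(x,y)+l$, I would use that $X$ is a length space: for every $\varepsilon>0$ there is a rectifiable path $\gamma\colon[0,L]\to X$ from $x$ to $y$ with $L\leq d_X(x,y)+\varepsilon$. Setting $n=\lceil L/l\rceil$ and subdividing $\gamma$ into $n$ arcs of equal length $L/n\leq l$, the endpoints $x=x_0,x_1,\dots,x_n=y$ satisfy $d_X(x_{i-1},x_i)\leq L/n\leq l$. They therefore form a path in $\Gamma_{X,l}$ of length $nl\leq L+l\leq d_X(x,y)+\varepsilon+l$. Letting $\varepsilon\to 0$ yields $d_{\Gamma_{X,l}}(x,y)\leq d_X(x,y)+l$.

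There is no serious obstacle; the only thing that requires care is the estimate $\lceil L/l\rceil\cdot l\leq L+l$, which is what produces the additive constant $l$ in the quasi-isometry and why one cannot hope for a better bound (as shown e.g.\ by taking $x,y$ with $d_X(x,y)$ slightly greater than a multiple of $l$). Combining the two bounds with the first claim gives the $(1,l)$-quasi-isometry.
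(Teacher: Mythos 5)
Your proof is correct, and it is the standard argument. The paper itself states this lemma as a well-known fact and gives no proof (the \qed follows the statement immediately), so your write-up simply supplies the routine verification the authors omitted: the $l/2$ bound for edge interiors, the triangle inequality applied to a combinatorial edge path for the lower bound, and the subdivision of an almost-geodesic in the length space $X$ into $\lceil L/l\rceil$ arcs of length at most $l$ for the upper bound, with the ceiling accounting for the additive constant $l$.
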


Note in particular that if $\X$ is $\delta$-hyperbolic, $\Gamma_{\X,l}$ is $\delta+l$-hyperbolic and  if $Y\subset \X$ is $\sigma$-strongly quasi-convex,
then the subgraph of $\Gamma_{\X,l}$ induced by $Y$ is $\sigma+2l$ quasi-convex.

\begin{prop}[{\cite[Prop I.5.10]{BH_metric}}]\label{prop;cone_BH}
For all $y\in Y$, and all $r\in [0,r_0]$, $(y,r)$ is at distance $r$ from the apex;
the radial path $\gamma:[0,r_0]\ra\Cone(y,r_0)$ defined by $r\mapsto (y,r)$ is the unique geodesic joining its endpoints.

Some geodesic joining $(y,r)$ to $(y^\prime,r^\prime)$ in $\Cone(Y,r_0)$ goes through the apex if and only if $\theta(y,y^\prime)\geq \pi$.
Such a geodesic is a concatenation
of two radial paths, and there is no other geodesic joining these points.

If $r,r^\prime>0$ and $\theta(y,y^\prime)<\pi$,
the radial projection $p_Y$ induces
a bijection between the set of (unparametrized) geodesics of $\Cone(Y,r_0)$ joining $(y,r)$ and $(y^\prime,r^\prime)$
 and the set of (unparametrized) geodesics of $Y$ joining $y$ and $y^\prime$.

In particular, if $Y$ is geodesic, so is $\Cone(Y,r_0)$.
\end{prop}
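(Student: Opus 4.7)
The plan is to follow the strategy of \cite{BH_metric} and reduce everything to planar hyperbolic geometry via an isometric developing map into $\mathbb H^2$. First I would verify the apex distance by specializing the distance formula: setting $r' = 0$ gives $\d((y,r),\text{apex}) = \acosh(\cosh r) = r$, and the radial path $\gamma(t) = (y,t)$ has length $r$, so it realizes the distance. Uniqueness of $\gamma$ will follow once the comparison below is in place, from strict convexity of the hyperbolic distance function to a point in $\mathbb H^2$.

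The heart of the argument is the following comparison. Given $y, y' \in Y$ with $\theta := \theta(y,y') < \pi$, pick points $\hat y, \hat y' \in \mathbb H^2$ at distance $r$ and $r'$ from a chosen origin $o \in \mathbb H^2$, making angle $\theta$ at $o$. The hyperbolic law of cosines gives
\[
\d_{\mathbb H^2}(\hat y, \hat y') = \acosh\!\left(\cosh r \cosh r' - \cos \theta \sinh r \sinh r'\right),
\]
which is exactly the cone distance. More generally, if $Y$ contains a rectifiable path $\alpha$ from $y$ to $y'$ of length $L \le \pi \sinh r_0$, I would develop $\alpha$ over $\mathbb H^2$ to obtain an isometric embedding of the ``sector'' $\alpha \times [0, r_0]/{\sim}$ into a convex sector of $\mathbb H^2$ of angle $L/\sinh r_0$, by explicitly identifying $(\alpha(t), s)$ with the point at radius $s$ and signed angle $t/\sinh r_0$. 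Since all relevant distances in the cone are defined in terms of $\theta$, this map is distance-preserving on the sector.

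Using this development, the bijection of geodesics for $\theta < \pi$ follows directly: any geodesic of $\Cone(Y,r_0)$ between $(y,r)$ and $(y', r')$ must avoid the apex (otherwise, passing to a small cone around the apex and comparing with $\mathbb H^2$, its length would strictly exceed $\d((y,r),(y',r'))$ since $\theta < \pi$). On the complement of the apex the radial projection $p_Y$ is $1$-Lipschitz with respect to angular distance $\theta$, so the projection of a cone geodesic is a shortest path in $Y$; conversely, any geodesic $[y,y']$ in $Y$ lifts to a geodesic in the corresponding developed sector, hence in $\Cone(Y,r_0)$. For $\theta \ge \pi$, the concatenation of the two radial paths has total length $r+r'$, and any alternative path must project to a path in $Y$ of angular length $\ge \pi$, which — by a development argument over $\mathbb H^2$ applied to successive short subpaths — produces length $\ge r + r'$ with equality only when the path passes through the apex.

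The main obstacle is not a single calculation but keeping track of when the developing map is genuinely an isometric embedding rather than merely a local isometry. The formula for $\d$ involves the truncated angle $\min(\pi, \d_Y(y,y')/\sinh r_0)$, so one must handle the boundary case $\theta = \pi$ carefully and confirm that no shortcut through the apex exists precisely when $\theta \geq \pi$. Assuming $Y$ is a geodesic metric graph (justified by Lemma~\ref{lem_graph}) removes the subtlety of existence of geodesics in $Y$: every pair can be joined by a shortest path, which lifts to a geodesic in $\Cone(Y,r_0)$, yielding the final ``$\Cone(Y,r_0)$ is geodesic'' conclusion.
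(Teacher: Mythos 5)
The paper does not actually prove this proposition: it is quoted verbatim from Bridson--Haefliger \cite[Prop.~I.5.10]{BH_metric}, so there is no in-paper argument to compare against. Your reconstruction follows the standard strategy of that reference (develop cones over paths of $Y$ onto sectors of $\mathbb{H}^2$ via the hyperbolic law of cosines) and is essentially sound.

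One claim is overstated. The development of the cone over an \emph{arbitrary} rectifiable path $\alpha$ of length $L$ onto a sector of angle $L/\sinh r_0$ is not an isometric embedding, only distance non-decreasing: one has $d_Y(\alpha(s),\alpha(t))\le |s-t|$, hence $\theta(\alpha(s),\alpha(t))$ is at most the developed angle, and the cone distance is an increasing function of $\theta$ on $[0,\pi]$. The development is a genuine isometric embedding exactly when $\alpha$ is a geodesic parametrized by arc length, which is the only place you need it in that strength (to lift $Y$-geodesics to cone geodesics). In the lower-bound direction (ruling out apex-avoiding shortcuts when $\theta\ge\pi$, and showing cone geodesics project to $Y$-geodesics when $\theta<\pi$) what you actually use is weaker and true: consecutive points of a fine subdivision develop at exactly their cone distance, the angles $\theta$ satisfy the triangle inequality so the total swept angle is at least $\theta(y,y')$, and a path in the infinite-angle hyperbolic cone joining radii $r$ and $r'$ while sweeping angle $\ge\pi$ has length $\ge r+r'$, with equality only through the apex. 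With the claim weakened in this way, your argument goes through and matches the proof in \cite{BH_metric}.
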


Recall that $Y\subset \X$ is $C$-\sqc\
if for any two points $x,y\in Y$,
there exists $x^\prime,y^\prime\in Y$ at distance at most $C$ from $x,y$ and
 geodesics $[x^\prime,y^\prime],[x,x^\prime],[y,y^\prime]$ that are contained in $Y$.

In general, the restriction to $Y$ of the metric $\d _\X$ of $\X$ is not a path metric.
However, if $Y$ is $C$-\sqc, the path metric of $Y$ induced by $\d _\X$ differs from
$\d _\X$ by at most $4C$.
We will assume that $\X$ is a graph with the induced path metric.
This will guarantee that $Y$, endowed with the induced path metric is a geodesic space.

\begin{prop}[{\cite[Prop. 2.2.3]{Coulon}}] \label{prop;the_cone}
  Given $r_0\geq \ru$  (as defined in the beginning of Section \ref{sec;cstes}),
  there exists a small $\delta_c>0$ such that the following holds.
  Let $Y$ be a $10\delta_c$-\sqc\ subset of a geodesic  $\delta_c$-hyperbolic
  metric graph $\X$, endowed with the induced path metric $\d _Y$.
  Then
   $\Cone(Y,r_0)$ is geodesic and $(3)$-hyperbolic.

 Moreover,
     there exists a constant $L(r_0)=\frac{\pi\sinh(r_0)}{2r_0}$ such that if $(y,r_0)$ and $(y^\prime,r_0)$ are at distance $l<2r_0$ in $\Cone(Y,r_0)$,
    then $\d _Y(y,y^\prime)\leq L(r_0)l$.

\end{prop}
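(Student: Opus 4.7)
The plan is to establish the two assertions separately, relying on the BH formula from Proposition \ref{prop;cone_BH} and the Cartan-Hadamard Theorem \ref{theo;CH}. First I would check that $(Y, \d_Y)$ is a geodesic space: since $X$ is a graph and $Y$ is $10\delta_c$-strongly quasiconvex, the controlled detours supplied by strong quasiconvexity can be concatenated to realize the induced path distance (and by Lemma \ref{lem_4delta}(b), $\d_Y$ and $\d_X|_Y$ differ by at most $20\delta_c$). Once $(Y,\d_Y)$ is geodesic, Proposition \ref{prop;cone_BH} makes $\Cone(Y,r_0)$ geodesic, with each nontrivial geodesic either passing through the apex (when $\theta(y,y')\geq\pi$) or lifted via the radial correspondence from a $\d_Y$-geodesic.

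For the hyperbolicity assertion, the plan is to verify $\RCH(3)$-local $3$-hyperbolicity of $\Cone(Y,r_0)$ and invoke Theorem \ref{theo;CH}; simple-connectedness at every scale is immediate from radial contraction to the apex. Local hyperbolicity splits into two regimes governed by the radial coordinate $r$ of the ball's center. When $r\leq 2r_0/3$, the ball of radius $\RCH(3)$ stays in a neighborhood of the apex where the cone distance formula is literally the $\mathbb H^2$ law of cosines (the angular parameter never reaches $\pi$ since $\sinh r_0\gg\RCH(3)$), and is therefore isometric to an open subset of $\mathbb H^2$, hence $3$-hyperbolic with a large safety margin. When $r\geq 2r_0/3$, the radial projection onto $Y$ becomes an anisotropic bi-Lipschitz comparison with tangential scaling $\sinh r/\sinh r_0\in[c,1]$; geodesic triangles in the cone ball then project to triangles in $Y$ which are $\delta_c$-thin, and pulling back yields $3$-thin triangles in the ball provided $\delta_c$ is chosen sufficiently small. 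Assembling the two regimes, Cartan-Hadamard delivers the global estimate.

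The Lipschitz bound $L(r_0)=\pi\sinh(r_0)/(2r_0)$ is a direct calculus consequence of the metric formula. Writing $\theta=\d_Y(y,y')/\sinh r_0$ and noting that the hypothesis $l:=\d((y,r_0),(y',r_0))<2r_0$ forces $\theta<\pi$, the cone formula becomes $\cosh l=1+(1-\cos\theta)\sinh^2 r_0$. The resulting smooth function $\theta\mapsto l(\theta)$ is strictly increasing from $l(0)=0$ to $l(\pi)=2r_0$, and a direct computation (exactly the comparison of arc-length to chord-length on a circle of radius $r_0$ in $\mathbb H^2$) shows that the ratio $\theta/l(\theta)$ attains its supremum on $[0,\pi]$ at $\theta=\pi$, where it equals $\pi/(2r_0)$. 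Hence $\d_Y(y,y')=\theta\sinh r_0\leq L(r_0)\cdot l$, as required.

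The principal technical obstacle will be the far-from-apex part of the local hyperbolicity analysis: one must keep careful track of the anisotropic bi-Lipschitz distortion of radial projection (tangential factor $\sinh r/\sinh r_0$, radial factor $1$) and absorb it into the target constant $3$, which in turn fixes how small $\delta_c$ must be taken and how large $\ru$ must be. This quantitative balancing, carried out in detail in Coulon's work cited just after the proposition, is precisely what justifies the large scale separations $\ru\gg\RCH(3)$ built into the paper's choice of constants.
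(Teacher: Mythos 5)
Your handling of the Lipschitz bound is correct and is in substance the paper's own argument made explicit: the paper simply quotes Coulon's Proposition 2.1.4 for the inequality $l\geq \frac{2r_0}{\pi}\,\theta$, while you derive it from $\sinh(l/2)=\sinh r_0\sin(\theta/2)$ and the concavity of $\theta\mapsto l(\theta)$ on $[0,\pi]$ (chord versus arc on a hyperbolic circle); that is a valid self-contained proof of the same inequality. The geodesicity assertion is also treated as in the paper, which merely observes that $Y$ with its induced path metric is geodesic because $X$ is a graph, so Proposition \ref{prop;cone_BH} applies.

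The hyperbolicity assertion is where there is a genuine problem. The paper does not prove it; it cites Coulon's Proposition 2.2.3. You propose instead to verify $\RCH(3)$-local $3$-hyperbolicity and then invoke Theorem \ref{theo;CH}; but Cartan--Hadamard converts $R$-local $\delta$-hyperbolicity into global $\dCH(\delta)=200\delta$-hyperbolicity, so even if your two-regime local analysis were completed it would only show that $\Cone(Y,r_0)$ is $600$-hyperbolic, not $3$-hyperbolic as asserted. The constant $3$ is not cosmetic: Theorem \ref{theo;cone-off_hyp_loc} needs each cone to be globally $3$-hyperbolic in order to conclude that the whole cone-off is $(\ru/8)$-locally $3$-hyperbolic, and Cartan--Hadamard is then applied once, to the cone-off, to get $\du=\dCH(3)=600$; applying it already at the level of a single cone double-counts the $200$-fold loss. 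Coulon obtains the small global constant for a single cone directly (by comparing configurations in $\Cone(Y,r_0)$ with cones over trees, which are controlled by $\bbH^2$, using that $\delta_c$ is tiny), not by a local-to-global principle. Two further points: the ball around the apex is a hyperbolic cone of possibly infinite total angle, hence not isometric to a subset of $\bbH^2$ as you claim (though it is still very hyperbolic); and the step you yourself flag as the principal obstacle --- the far-from-apex thinness estimate via the anisotropically distorted radial projection --- is exactly the content of the cited result and is not supplied, so as written your argument establishes strictly less than the citation it is meant to replace.
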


\begin{proof}
The cone is geodesic because $Y$ is (it is a graph).
 Hyperbolicity is proved in \cite[Prop 2.2.3]{Coulon},
  see also \cite{Coulon_notes}, or \cite[Prop. 4.6]{Gui_pcmi}.

By \cite[Proposition 2.1.4]{Coulon}, the distance $l$ between $(y,r_0)$ and $(y^\prime,r_0)$
satisfies $l\geq\frac{ 2r_0}{\pi} \theta(x,y)$. Since $\theta(x,y)=\frac{\d _Y(x,y)}{\sinh(r_0)}$,
we get $\d (x,y)\leq \frac{\pi\sinh(r_0)}{2r_0}\d _Y(x,y)$, so we can take $L(r_0)=\frac{\pi\sinh(r_0)}{2r_0}$.
\end{proof}

\paragraph{The cone-off}
\label{sec_coneoff}
We now define the cone-off construction on a hyperbolic space along some quasi-convex subspace.

Let $\X$ be a geodesic $\delta$-hyperbolic space, and $G$ a group of isometries of $\X$.
Consider $\calQ$ a $G$-invariant system of  $10\delta$-strongly quasiconvex subsets of $\X$.

Let us define the \emph{cone-off} $C(\X,\calQ,r_0)$  of $\X$ along $\calQ$
as the space obtained from the disjoint union of $\X$ and of $\Cone(Q,r_0)$ for all $Q\in\calq$,
and by gluing each $Q$ to $Q\times\{r_0\}$ in $\Cone(Q,r_0)$.
We endow  $C(\X,\calQ,r_0)$ with the induced path metric (in principle a pseudo-metric, but a genuine metric at least when  $\X$ is a graph).

Given $Q_1,Q_2 \in \calQ$ define
 their fellow traveling constant as \label{i-ftc}
$$\Delta(Q_1,Q_2) = \diam (Q_1^{+20\delta} \cap Q_2^{+20\delta})\in \mathbb{R}\cup\{+\infty\}$$
and $$\Delta(\calQ) = \sup_{ Q_1\neq Q_2 \in \calQ} \Delta(Q_1,Q_2).  $$\label{i-ftc1}

Note that radial paths in each cone are still geodesic in $C(\X,\calQ,r_0)$.
In particular, the apices are at distance $r_0$ from $\X$.

\begin{lem}\label{lem_lip}
    \begin{enumerate}
  \item \label{it_lip} Consider $[x,y]$ some geodesic
of $C(\X,\calQ,r_0)$
of length $l$ with endpoints in $\X$. If $[x,y]$ does not contain any apex, then
the length  of its radial projection on $\X$ is at most
 $L(r_0)l$.
\item In particular, if $x,y\in Q$ are such that $\d _\X(x,y)\geq M(r_0)$ with $M(r_0)=2L(r_0)r_0=\pi\sinh r_0$,
then any geodesic of $C(\X,\calQ,r_0)$ joining them contains the apex $c_Q$.
  \end{enumerate}
\end{lem}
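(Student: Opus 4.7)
My plan for part (1) is a direct decomposition-and-projection argument. Since $[x,y]$ has endpoints in $X$ and contains no apex, I would write it as a concatenation $\gamma_0\gamma_1\cdots\gamma_k$ of maximal subpaths, each lying either entirely in $X$ (as embedded in $\dot X$) or entirely inside some cone $\Cone(Q,r_0)$ at strictly positive radial distance from its apex $c_Q$. A cone-piece $\gamma_i\subset\Cone(Q,r_0)$ must then enter and exit the cone at radial level $r_0$, i.e.\ at two points of $Q\subset X$, by the identification $Q\times\{r_0\}\equiv Q$ used in the gluing and the hypothesis that $[x,y]$ starts and ends in $X$. By Proposition \ref{prop;cone_BH}, the radial projection $p_Q(\gamma_i)$ is a geodesic of $(Q,\d_Q)$ between these boundary points, and by the length-comparison in Proposition \ref{prop;the_cone} its $Q$-length is bounded by $L(r_0)\,\ell(\gamma_i)$ (noting $\ell(\gamma_i)<2r_0$). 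On each $X$-piece the radial projection is the identity. The projection is continuous at the gluing points, and concatenating the projected pieces yields a rectifiable path in $X$ joining $x$ to $y$ whose $X$-length agrees with its $Q$-length on each $Q$-segment; thus its total length is at most $L(r_0)\,l$, using $L(r_0)\geq 1$.

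For part (2), I would argue by contradiction: suppose a geodesic $[x,y]$ in $\dot X$ of length $l$ does not contain $c_Q$. Since $x,y\in Q$, the concatenation of two radial segments through $c_Q$ is a path of length $2r_0$ joining them, so $l\leq 2r_0$. If $[x,y]$ contains no apex at all, part (1) immediately produces a path in $X$ from $x$ to $y$ of length at most $L(r_0)\,l\leq L(r_0)\cdot 2r_0=M(r_0)$, contradicting $\d_X(x,y)\geq M(r_0)$ (the only delicate boundary case $l=2r_0$ is ruled out because then $[x,y]$ would coincide with a geodesic through $c_Q$). In the remaining case in which $[x,y]$ passes through some other apex $c_{Q'}$ with $Q'\neq Q$, each half has length $\geq r_0$, forcing $l=2r_0$ and thus forcing both halves to be radial segments; but then $x,y\in Q'$, and applying part (1) to the two cone halves shows that $x,y$ cannot be separated by at least $M(r_0)$ in $X$ either, again contradicting the hypothesis.

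The main content is the decomposition in part (1); Propositions \ref{prop;cone_BH} and \ref{prop;the_cone} do all the heavy metric work, so the only real obstacle is the combinatorial bookkeeping of the alternating decomposition along cones versus $X$, and the observation that radial projection joins up continuously at the gluing level $r_0$. Part (2) is then a straightforward numerical consequence obtained by comparing $l$ to the shortcut $2r_0$ through $c_Q$.
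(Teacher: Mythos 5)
Your part (1) is correct and is essentially the argument the paper intends (the paper's own proof is a one-line pointer to Proposition \ref{prop;the_cone}): decompose the geodesic into $X$-pieces and cone-pieces, note each cone-piece is in fact a geodesic of its cone with endpoints at radial level $r_0$ (since $\d_{\dot X}\le \d_{\Cone}$ and the piece realizes $\d_{\dot X}$), project radially, and use $L(r_0)\ge 1$ on the $X$-pieces. The only thing worth tightening there is the implicit finiteness of the decomposition, which is harmless since the cone-part of the parameter interval is open, hence a countable union of intervals, and the estimate sums.

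Part (2), however, mishandles both edge cases. First, the boundary case $l=2r_0$ with no apex is not ruled out by your stated reason: $\d_{\dot X}(x,y)=2r_0$ only says that the path through $c_Q$ is \emph{a} geodesic, not that $[x,y]$ coincides with it. The correct way to get strictness is to observe that an apex-avoiding cone-piece has angle $\theta<\pi$, so its projection has $\d_Q$-length $\theta\sinh(r_0)<\pi\sinh(r_0)=M(r_0)$; summing over pieces (using $\ell(\gamma_i)\ge \tfrac{2r_0}{\pi}\theta_i$ and $L(r_0)>1$ on any $X$-piece) yields the strict inequality $\d_X(x,y)<M(r_0)$. Second, and more seriously, the case where $[x,y]$ passes through another apex $c_{Q'}$ is not handled by "applying part (1) to the two cone halves": part (1) explicitly excludes paths containing an apex, and even setting that aside, the two halves are radial segments whose radial projections are the constant paths at $x$ and at $y$, so you obtain no path in $X$ joining $x$ to $y$ and no bound on $\d_X(x,y)$ whatsoever. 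In that configuration one has $x,y\in Q\cap Q'$ with $\d_X(x,y)\ge M(r_0)$, hence $\Delta(Q,Q')\ge M(r_0)$; this is excluded only by the standing hypothesis $\Delta(\calQ)\le \Delta_c\ll M(r_0)$ of the cone-off construction (Theorem \ref{theo;cone-off_hyp_loc}), not by any projection argument. You need to invoke that separation hypothesis explicitly to close this case.
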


\begin{proof}
  The first assertion follows from Proposition \ref{prop;the_cone}, the
  second is a consequence.
\end{proof}

\begin{thm}(Gromov, Delzant-Gromov,  Coulon \cite[3.5.2]{Coulon})
  \label{theo;cone-off_hyp_loc}

  Given $r_0\geq \ru$, there exists numbers $\Delta_c  <\infty$ and $\delta_c$
  such that the following holds.

  Let $\X$ be a
  $\delta_c$-hyperbolic metric graph (whose edges all have the same length),
  $\calQ$ be a system of $10\delta_c$-\sqc\ subsets, with $\Delta(\calQ) \leq \Delta_c$.
       Then
       the cone-off  $C(\X,\calQ,r_0)$ of $\X$ along $\calQ$
       is
       geodesic and $(\ru/8)$-locally $(3)$-hyperbolic.
     \end{thm}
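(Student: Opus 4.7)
The plan is to choose the constants hierarchically — $r_0$ first (which fixes $L(r_0)$ and $M(r_0)$ via Proposition \ref{prop;the_cone} and Lemma \ref{lem_lip}), then $\Delta_c$ small compared to $\ru$, and finally $\delta_c$ small compared to both $\Delta_c$ and $1/L(r_0)$ — and to handle the two conclusions separately. Geodesicity is immediate: $X$ is a length space (being a metric graph with edges of equal length), each cone $\Cone(Q,r_0)$ is a geodesic space by Proposition \ref{prop;the_cone}, and the gluing identifies $Q\subset X$ with $Q\times\{r_0\}\subset \Cone(Q,r_0)$ via the identity, so the induced path pseudo-metric on $C(X,\calQ,r_0)$ is a genuine geodesic metric.

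For the $(\ru/8)$-local $(3)$-hyperbolicity I fix a basepoint $o\in C(X,\calQ,r_0)$ together with a geodesic triangle $T=[a_1,a_2]\cup[a_2,a_3]\cup[a_3,a_1]$ whose vertices lie in $B(o,\ru/8)$, and show that $T$ is $3$-thin. I split on whether $o$ is deep in a cone or close to $X$. If $o$ lies inside some $\Cone(Q,r_0)$ at distance at most $r_0-\ru/2$ from the apex, then by Proposition \ref{prop;cone_BH} the ball $B(o,\ru/8)$ and every geodesic between two of its points remain inside $\Cone(Q,r_0)$, which is $(3)$-hyperbolic by Proposition \ref{prop;the_cone}; so $T$ is $3$-thin in this case. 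Otherwise $o$ is close to $X$ or to the rim of some cone, and I reduce to the hyperbolicity of $X$ via radial projection: for each geodesic side of $T$, I replace every maximal apex-free excursion into a cone $\Cone(Q,r_0)$ by the radial projection of its interior to $Q\subset X$. By Lemma \ref{lem_lip}(1) each such projection is an $L(r_0)$-Lipschitz path in $X$, so the resulting closed curve $T'\subset X$ has perimeter at most $3L(r_0)\ru/4$. With $\delta_c$ small enough that $\delta_c L(r_0)\ll 1$, the $\delta_c$-hyperbolicity of $X$ makes $T'$ essentially degenerate, and lifting this thinness back through the Lipschitz radial projection together with the intrinsic $(3)$-hyperbolicity of each traversed cone yields the $3$-thinness of $T$.

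The main obstacle is controlling pathological interactions between cones. A geodesic side of $T$ could oscillate between two cones $\Cone(Q_1,r_0),\Cone(Q_2,r_0)$ whose bases $Q_1,Q_2$ overlap substantially in $X$, producing shortcuts invisible to the hyperbolicity of $X$; or a geodesic could pass through an apex, in which case Lemma \ref{lem_lip}(1) does not apply directly. The overlap hypothesis $\Delta(\calQ)\leq\Delta_c$ handles the first issue: once $\Delta_c\ll \ru$, two distinct cones cannot offer competing shortcuts on the scale of $B(o,\ru/8)$, so the radial projection to $X$ is unambiguous and well-defined. The apex case is handled by Lemma \ref{lem_lip}(2): a geodesic through the apex of $\Cone(Q,r_0)$ forces its $X$-endpoints on $Q$ to be at $X$-distance at least $M(r_0)$, so at most one such apex can be close to $B(o,\ru/8)$, and that configuration reduces to a variant of the first case above. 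Packaging these estimates together with the specific hyperbolicity constant $(3)$ is the technical heart of the argument, carried out in detail in \cite[Ch.~5]{Coulon} following Gromov's sketch in \cite{Gro_cat} and Delzant's refinements in \cite{Del_Gro}; we will follow Coulon's exposition verbatim.
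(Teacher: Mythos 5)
Your proposal matches the paper's treatment: the paper's own ``proof'' of Theorem \ref{theo;cone-off_hyp_loc} is essentially a citation --- local hyperbolicity far from the apices is quoted from Coulon's Theorem 3.5.2 and near the apices from Proposition \ref{prop;the_cone} --- which is exactly your case split and your concluding deferral of the technical heart to \cite{Coulon}. The one small point where you are lighter than the paper is geodesicity: being a length space glued from geodesic pieces does not by itself give existence of geodesics, and the paper instead invokes \cite[I.7.19]{BH_metric} (a connected polyhedral complex with finitely many isometry types of cells is a complete geodesic space), which is the justification your ``so'' is silently relying on.
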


     \begin{proof}
The fact that $\X$ is geodesic is an easy adaptation of  Theorem I.7.19 of \cite{BH_metric}
saying that a simplicial complex with finitely many shapes is geodesic.
This result assumes that each simplex is isometric to a geodesic simplex
but the argument easily extends to our $2$-dimensional situation.

     The local hyperbolicity is stated and proved in \cite[Theorem 3.5.2]{Coulon}
     for the points far from the apices, and in the previous proposition for the points
     close to the apices. See also \cite[Theorem 5.2.1]{Del_Gro},
     and \cite[6.C, 7.B]{Gro_Meso},
 or the expositions \cite{Gui_pcmi,Coulon_notes}.
     \end{proof}

     By hyperbolicity, $\X$ is $\delta_c$-simply connected.
     It follows that so is $C(\X,\calQ,r_0)$.
We can apply the Cartan-Hadamard theorem since the cone-off is locally $3$-hyperbolic on balls of radius $\ru/8\geq \RCH(3)$  (see Section \ref{sec;cstes}).
Cartan-Hadamard Theorem \ref{theo;CH} gives global $\du$-hyperbolicity.

     Using the Cartan-Hadamard theorem, we get

     \begin{cor} \label{coro;cone-off_hyp}
       Under the  assumption of Theorem \ref{theo;cone-off_hyp_loc},
      $C(\X,\calQ,r_0)$  is globally $\du$-hyperbolic.
     \end{cor}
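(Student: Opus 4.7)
The plan is to invoke the Cartan-Hadamard theorem (Theorem \ref{theo;CH}) directly, with $\delta = 3$, so that it outputs global hyperbolicity with constant $\dCH(3) = \du = 600$. To do so, I need to verify its three hypotheses for the space $C(X,\calQ,r_0)$: (i) it is geodesic, (ii) it is $\delta'$-simply connected for some small $\delta'$, and (iii) it is $R$-locally $3$-hyperbolic for some $R \geq \RCH(3)$.

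First, geodesicity and local hyperbolicity are already packaged in Theorem \ref{theo;cone-off_hyp_loc}: under the stated assumptions, $C(X,\calQ,r_0)$ is geodesic and $(\ru/8)$-locally $3$-hyperbolic. By the choice of $\ru$ fixed in Section \ref{sec;cstes}, namely $\ru > 10\RCH(3)$, we have $\ru/8 > \RCH(3)$, so hypothesis (iii) of Cartan-Hadamard is satisfied. Hence only hypothesis (ii), the simple connectedness statement, has to be checked independently.

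For simple connectedness, I would argue in two stages. The underlying graph $X$ is $\delta_c$-hyperbolic, hence $\delta_c$-simply connected (any loop is homotopic to a concatenation of loops of diameter at most a bounded constant times $\delta_c$, a standard consequence of hyperbolicity and geodesicity). Now $C(X,\calQ,r_0)$ is obtained from $X$ by gluing, for each $Q\in\calQ$, the contractible cone $\Cone(Q,r_0)$ along the subspace $Q\subset X$. Any loop in $C(X,\calQ,r_0)$ can be pushed radially out of the cones to produce a homotopic loop lying entirely in $X$, and any path in a cone connecting two points of $Q$ is homotopic rel endpoints to a path in $Q$. Consequently $C(X,\calQ,r_0)$ is $\delta'$-simply connected for the same scale (up to an unimportant constant), verifying (ii).

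With all three hypotheses in place, Theorem \ref{theo;CH} applies and yields that $C(X,\calQ,r_0)$ is globally $\dCH(3) = \du$-hyperbolic, which is the desired conclusion. The main (indeed, only) nontrivial point is the simple connectedness verification, since the local hyperbolicity is imported from the previous theorem and the bookkeeping of constants is arranged precisely so that $\ru/8$ dominates $\RCH(3)$. I do not expect serious obstacles: the whole point of the preceding setup in Section \ref{sec;cstes} and Theorem \ref{theo;cone-off_hyp_loc} is to feed straight into Cartan-Hadamard.
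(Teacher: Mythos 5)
Your proposal is correct and follows exactly the route the paper takes: Theorem \ref{theo;cone-off_hyp_loc} supplies geodesicity and $(\ru/8)$-local $3$-hyperbolicity with $\ru/8\geq \RCH(3)$, the simple connectedness of the cone-off is deduced from that of the hyperbolic space $X$ together with contractibility of the cones, and Cartan--Hadamard (Theorem \ref{theo;CH}) then gives global $\du$-hyperbolicity. The paper's own argument is just a terser version of the same three-step verification.
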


\paragraph{Acylindricity of the cone-off}

     In  order to make Proposition \ref{prop;quotient_acyl} useful in practice, we need to check that acylindricity is
     preserved by taking (suitable) cone-off.

     \begin{prop} \label{prop;cone_acyl}
       Let $r_0 \geq \ru$, and  $\Delta_c  <\infty$ and $\delta_c$  as in  Theorem \ref{theo;cone-off_hyp_loc}.
       Let $\X$ be a $\delta_c$-hyperbolic graph,
       $\calQ$ be a system of $10\delta_c$-quasiconvex subsets, with $\Delta(\calQ) \leq \Delta_c$.
       Consider a group $G$ acting acylindrically by isometries on $\X$, and  preserving $\calQ$.
       Then
       the natural action of  $G$ on  $\dot{\X} =  C(\X,\calQ,r_0)$ is also acylindrical.
     \end{prop}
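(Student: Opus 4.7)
My strategy is to reduce the acylindricity of $G\actson\dot X$ to the assumed acylindricity of $G\actson X$, using the radial projection from the cone-off back to the base. Fix a displacement parameter $d\geq 0$, and set $d':=L(r_0)(d+2r_0)$, where $L(r_0)$ is the Lipschitz constant of the radial projection from Proposition~\ref{prop;the_cone}. Let $R^X_{d'},N^X_{d'}$ witness acylindricity of $G\actson X$ at displacement $d'$. The aim is to produce $R^{\dot X}_d\geq R^X_{d'}+O(r_0)$ and a suitable $N^{\dot X}_d$ depending on $N^X_{d'}$, $\Delta_c$, and combinatorial data.

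First I would reduce from $\dot X$ to $X$. Since every point of $\dot X$ lies at $\dot X$-distance at most $r_0$ from $X$, for any $x,y\in\dot X$ with $\d_{\dot X}(x,y)\geq R^{\dot X}_d$ one chooses $x_0,y_0\in X$ with $\d_{\dot X}(x,x_0),\d_{\dot X}(y,y_0)\leq r_0$. Any $g\in G$ moving $x,y$ by at most $d$ in $\dot X$ then moves $x_0,y_0$ by at most $d+2r_0$ in $\dot X$, while $\d_{\dot X}(x_0,y_0)\geq R^{\dot X}_d-2r_0$.

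Next, for each such $g$, I analyze a $\dot X$-geodesic $\alpha^x_g=[x_0,gx_0]$ (and similarly $\alpha^y_g$). By Lemma~\ref{lem_lip}(1), if $\alpha^x_g$ contains no apex, its radial projection to $X$ is a path from $x_0$ to $gx_0$ of length at most $L(r_0)(d+2r_0)=d'$, yielding $\d_X(gx_0,x_0)\leq d'$. Otherwise $\alpha^x_g$ passes through some apex $c_Q$, and since $x_0,gx_0\in X$ lie at $\dot X$-distance exactly $r_0$ from $c_Q$, one gets $x_0,gx_0\in Q$. The same dichotomy holds for $y_0$. In the \emph{clean case} where both geodesics avoid apices, $\d_X(gx_0,x_0),\d_X(gy_0,y_0)\leq d'$ and $\d_X(x_0,y_0)\geq\d_{\dot X}(x_0,y_0)\geq R^X_{d'}$ (since $X\into\dot X$ is $1$-Lipschitz), so acylindricity of $G\actson X$ bounds the number of such $g$ by $N^X_{d'}$.

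The main obstacle lies in the \emph{shortcut case}, where $\alpha^x_g$ passes through an apex $c_Q$, forcing $x_0,gx_0\in Q$ (and analogously for $y_0$). I would split this into two sub-cases. If $g\notin\Stab_G(Q)$, then $gx_0\in Q\cap gQ$, a set of diameter at most $\Delta(\calQ)\leq\Delta_c$; arguing similarly for $y_0$ and using the lower bound on $\d_X(x_0,y_0)$, one reduces to finitely many translate cones $gQ$, each giving rise to a single coset of $\Stab_G(Q)$ of candidate $g$'s, in turn bounded by $X$-acylindricity. If instead $g\in\Stab_G(Q)$, then $g$ acts on $Q$; strong quasiconvexity (Lemma~\ref{lem_4delta}(b)) makes the induced metric $\d_Q$ differ from $\d_X|_Q$ by at most an additive $20\delta_c$, so acylindricity of $G\actson X$ implies acylindricity of $\Stab_G(Q)\actson Q$. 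Proposition~\ref{prop;the_cone} translates the $\dot X$-displacement bound on $x_0$ into a $Q$-displacement bound, and combined with a second point in $Q$ far from $x_0$ (either $y_0$ if $y_0\in Q$, or an auxiliary point along $Q$), $Q$-acylindricity bounds the count. Finally, one must control the multiplicity of cones $Q$ providing a shortcut for a fixed $x_0$: any two such $Q_1\neq Q_2$ both contain $x_0$, so $x_0\in Q_1^{+20\delta_c}\cap Q_2^{+20\delta_c}$, which has diameter at most $\Delta_c$, limiting the multiplicity. Summing the contributions over all cases yields the required uniform bound $N^{\dot X}_d$.
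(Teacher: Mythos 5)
Your opening reduction (pass to points of $X$, then use the radial projection and Lemma \ref{lem_lip} when the geodesics $[x_0,gx_0]$, $[y_0,gy_0]$ avoid the apices) is the same as the paper's. But the proposal has a genuine gap: your entire ``shortcut case'' both contains errors and is the symptom of a missing reduction that the paper uses to make that case vanish.

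First, the errors. If $[x_0,gx_0]$ passes through the apex $c_Q$, you conclude $x_0,gx_0\in Q$; but $\d_{\dot X}(x_0,c_Q)=r_0$ holds only for points of $Q$, and all that follows from $\d_{\dot X}(x_0,gx_0)\le d+2r_0$ is $\d_{\dot X}(x_0,Q)\le d$. More seriously, in the sub-case $g\in\Stab_G(Q)$ you claim Proposition \ref{prop;the_cone} converts the $\dot X$-displacement bound into a $\d_Q$-displacement bound. It does not: that proposition only applies when the cone-distance is $<2r_0$, whereas in the shortcut case $\d_{\dot X}(x_0,gx_0)\ge 2r_0$ by construction. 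Indeed an element of $\Stab_G(Q)$ of arbitrarily large translation length on $Q$ moves every point of $Q$ by at most $2r_0$ in $\dot X$ (through the apex), so no bound on the $Q$-displacement, and hence no appeal to acylindricity inside $Q$, is available. The sub-case $g\notin\Stab_G(Q)$ is equally problematic: you have no bound on $\d_X(x_0,gx_0)$ there, so ``bounded by $X$-acylindricity'' has nothing to act on, and there is no reason for only finitely many translates $gQ$ to meet $Q$.

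The missing idea is the Proposition ``Equivalence of definitions'': to prove acylindricity of $G\actson\dot X$ it suffices to bound, for pairs $a,b$ at distance $\ge R'$, the set of $g$ moving $a$ and $b$ by at most $200\du$, where $\du$ is the hyperbolicity constant of $\dot X$. Since $r_0\ge\ru>10^6\du$, this displacement is tiny compared to $r_0$. One then moves $a,b$ inward \emph{along the geodesic} $[a,b]$ to points of $X$; by thinness of the quadrilateral $(a,b,gb,ga)$ the new points are moved by at most $220\du$ (not by $d+2r_0$, as in your cruder relocation). Since $220\du<r_0$ and every apex is at distance $\ge r_0$ from $X$, the geodesics $[a,ga]$ and $[b,gb]$ can never contain an apex, your ``clean case'' is the only case, and Lemma \ref{lem_lip} finishes the proof with $N'=N$ and $R'=R+4r_0$. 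Without this reduction, handling arbitrary $d$ directly forces you into the shortcut case, which your argument does not actually close.
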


     \begin{proof}
       By Corollary \ref{coro;cone-off_hyp},
       $\dot{\X}$ is $\du$-hyperbolic (with our notation
       $\du=\dCH(3)$). Recall that $r_0 \geq \ru >  300\du$.
       Let $M(r_0)$ be as in Lemma \ref{lem_lip}.

To prove acylindricity of $\dot \X$,
 it is sufficient to
find $R^\prime,N^\prime$ such that for all  $a,b$ in $\dot{\X}$ such that
       $\d _{\dot{\X}}(a,b)\geq R^\prime$, there are at most $N^\prime$ different
       elements $g$ of $G$ such that  $\max\{\d _{\dot{\X}}(a,ga),
       \d _{\dot{\X}}(b,gb) \} <  200\du$.

       By acylindricity of $\X$,
       there exists $R$, and $N$  such that, in $\X$, for all $a,b\in \X$, at
       distance at least $R$,
       there are at most $N$  elements $g$ of $G$ satisfying
       $\max\{\d _\X(a,ga), \d _\X(b,gb) \} \leq  220\du L(r_0) $.
       We will show that one can take $R^\prime=R+4r_0$ and $N^\prime=N$.

       Let $a,b$ in $\dot{\X}$ such that $\d _{\dot{\X}}(a,b)\geq R^\prime=R+4r_0$.
       First note that by hyperbolicity, if $\d _{\dot \X}(a,ga) \leq
       200\du$ and  $\d _{\dot \X}(b,gb) \leq 200\du$, then
       for all point in a segment $[a,b]$,  $\d _{\dot \X}(x,gx) \leq
       220\du$.
       Therefore, we can assume that $a,b \in \X\subset \dot \X$, $\d _{\dot
         \X}(a,b)\geq R$, and we need to bound the set of elements $g$ moving
       $a$ and $b$ by at most $220\du$.

       Let $g$ be such an element. Since $\d _{\dot \X}(a,ga)\leq 220\du
       < r_0$,
       a geodesic $[a,ga]$ in $\dot \X$ cannot contain an apex.
       Since $a,ga\in \X\subset \dot \X$, $\d _\X(a,ga)\leq L(r_0)\d _{\dot \X}(a,ga)\leq 220\du L(r_0)$ by Lemma \ref{lem_lip}.
       Similarly, $\d _\X(a,ga)\leq 220\du L(r_0)$.
       On the other hand, $\d _\X(a,b)\geq \d _{\dot \X}(a,b)\geq R$. There are at most $N$ such elements $g$ by acylindricity of $\X$,
       which concludes the proof.
     \end{proof}

\paragraph{Coning off quasiconvex subgroups with large injectivity radius.}

      We saw previously that coning off a nice family of quasiconvex subspaces $\calq$ provides a hyperbolic space.
      Here, we assume that a group $G$ acts on the space preserves $\calq$,
      and that to each subspace $Q\in \calq$ is equivariantly assigned a
      subgroup $G_Q\subset G$ stabilizing $Q$ and with large injectivity radius.
      We conclude that $(G_Q)_{Q\in \calq}$ defines a very rotating family on the cone-off.

      Recall that the injectivity radius of a subgroup $H\subset G$ is
      $$\inj_\X (H) =  \inf_{x\in \X \, g\in
        H\setminus\{1\}}  \d _\X(x,gx). $$ \label{i-inj}
      If $\calr$ is a family of subgroups, we define $\inj_\X(\calr)=\inf_{H\in\calr}{\inj_\X(H)}$.

     \begin{prop}  \label{prop;ex_of_VR}
       Let $r_0 \geq \ru$, and $\Delta_c,\delta_c$ be as in Theorem
       \ref{theo;cone-off_hyp_loc},
       and let $\inj_c= 4r_0L(r_0)$
       (where $L(r_0)$ is defined in Proposition \ref{prop;the_cone}   ).
       Let $\X$ be a $\delta_c$-hyperbolic graph,
       $\calQ$ a   system of $10\delta_c$-quasiconvex subsets of $\X$,
       with  $\Delta(\calQ) \leq \Delta_c$.
       Let $\dot \X =C(\X,\calQ,r_0)$ be the cone-off of $\X$, and $C\subset \dot \X$ be the set of apices.

       Consider a group $G$ acting on $\X$, and preserving $\calQ$.
       For each $Q\in \calq$, consider a subgroup $G_Q\subset G$ stabilizing $Q$,
       and such that $G_{gQ}=gG_Q g\m$.
       Assume that each $G_Q$ acts on $\X$ with injectivity radius at least $\inj_c$.
       Then  $(C,(G_Q)_{Q\in \calQ})$  is a very rotating family on $\dot{\X} =C(\X,\calQ,r_0)$, and $C$ is $2r_0$-separated.
     \end{prop}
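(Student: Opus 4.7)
The rotating family axioms are essentially built into the hypotheses: $C$ is $G$-invariant because $\calQ$ is, each $G_Q$ fixes the apex $c_Q$ since it stabilizes $Q$ and acts trivially on the radial coordinate, and the conjugation property $G_{gc_Q}=G_{c_{gQ}}=gG_Q g^{-1}$ is the assumed equivariance. For the $2r_0$-separation of $C$, any path in $\dot X$ from $c_Q$ to $c_{Q'}$ with $Q\ne Q'$ must exit $\Cone(Q,r_0)$ through the gluing locus $Q\times\{r_0\}$ and then enter $\Cone(Q',r_0)$ through $Q'\times\{r_0\}$, contributing a radial length of $r_0$ at each end.

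The bulk of the work is the very rotating condition. Fix $c=c_Q$, $g\in G_Q\setminus\{1\}$, and $x,y\in\dot X$ satisfying $20\du\le\d(x,c)\le 30\du$ and $\d(gx,y)\le 15\du$. Each of $x$, $gx$, $y$ is at distance at most $45\du$ from $c$, hence far below $r_0$; since leaving $\Cone(Q,r_0)$ requires reaching the sphere of radius $r_0$ about $c$, all three points lie inside this cone. I will write $x=(y_0,r_x)$, $gx=(gy_0,r_x)$, $y=(y_1,r_y)$ with $r_x\in[20\du,30\du]$ and $r_y\in[5\du,45\du]$. The same observation shows that any $\dot X$-geodesic between two of these points stays inside $\Cone(Q,r_0)$: escaping would force length at least $2r_0-75\du$, whereas the concatenation of two radii through $c$ provides a competing path of length at most $75\du$. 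Consequently such $\dot X$-geodesics are cone-geodesics, and by Proposition \ref{prop;cone_BH} such a geodesic passes through $c$ if and only if the angular distance between the radial projections of its endpoints saturates at $\pi$, equivalently $\d_Q(\cdot,\cdot)\ge\pi\sinh r_0$.

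This saturation for the pair $(y_0,y_1)$ will come from two bounds combined by the triangle inequality in $(Q,\d_Q)$. The injectivity-radius hypothesis gives $\inj_X(G_Q)\ge\inj_c=4r_0L(r_0)=2\pi\sinh r_0$, so
\[\d_Q(y_0,gy_0)\ge\d_X(y_0,gy_0)\ge 2\pi\sinh r_0.\]
On the other hand, the cone-geodesic from $gx$ to $y$ has length at most $15\du$ and cannot pass through $c$, since a path through $c$ would have length at least $r_x+r_y\ge 25\du$; by Proposition \ref{prop;cone_BH} this forces $\d_Q(gy_0,y_1)<\pi\sinh r_0$. The triangle inequality then yields $\d_Q(y_0,y_1)\ge 2\pi\sinh r_0-\pi\sinh r_0=\pi\sinh r_0$, so every $\dot X$-geodesic from $x$ to $y$ contains $c$, verifying the very rotating condition.

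The only delicate point is the comparison between $\dot X$-geodesics and cone-geodesics in a neighbourhood of an apex, which is handled comfortably by the gap between $r_0$ and $\du$. The key algebraic observation is that Proposition \ref{prop;cone_BH} converts the geometric condition ``geodesic passes through the apex'' into the quantitative angular threshold $\d_Q\ge\pi\sinh r_0$, precisely matched by the choice $\inj_c=2\pi\sinh r_0$ made in the statement; modulo this matching, the rest of the argument is mechanical.
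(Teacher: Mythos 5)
Your proof is correct. It establishes the same facts the paper does, and the quantitative heart is the same identity $\inj_c=4r_0L(r_0)=2\pi\sinh r_0=2M(r_0)$, but the route through the very rotating condition is organized differently. The paper argues by contradiction using two packaged lemmas: Lemma \ref{lem;proj} (if some $\dot X$-geodesic $[x,y]$ avoids $c$ then so does some geodesic between the radial projections $\bar x,\bar y$) and Lemma \ref{lem_lip} (an apex-avoiding geodesic with endpoints in $Q$ forces $\d_X(\bar x,\bar y)<M(r_0)$); applying this to both $[\bar x,\bar y]$ and $[\bar y,g\bar x]$ gives $\d_X(\bar x,g\bar x)<2M(r_0)=\inj_c$, contradicting the injectivity radius. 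You instead trap all the relevant geodesics inside the single cone $\Cone(Q,r_0)$, invoke the explicit angular criterion of Proposition \ref{prop;cone_BH} ($\theta\ge\pi$ iff $\d_Q\ge\pi\sinh r_0$), and run the displacement estimate directly in $(Q,\d_Q)$: the injectivity radius gives the lower bound $2\pi\sinh r_0$ on $\d_Q(y_0,gy_0)$, the short apex-avoiding geodesic $[gx,y]$ gives the upper bound $\pi\sinh r_0$ on $\d_Q(gy_0,y_1)$, and the triangle inequality saturates $\theta(y_0,y_1)$ at $\pi$. Your version is a direct (contrapositive) form of the same argument and is self-contained here, needing only Proposition \ref{prop;cone_BH}; the paper's packaging via Lemma \ref{lem_lip} is slightly more robust (it handles geodesics whose radial projections are not confined to one cone), which pays off in the later parabolic cone-off setting but is not needed in this proposition since everything lives within $45\du\ll r_0$ of one apex. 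Minor arithmetic slips (your $2r_0-75\du$ should be $2r_0-90\du$, and the competing path through $c$ has length at most $90\du$, not $75\du$) do not affect the conclusion since $r_0\ge\ru\gg\du$.
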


     \begin{rem}
       In a rotating family $(C,(G_c)_{c\in C})$, the subgroups $G_c$ should be indexed by $C$.
       In the statement above, we slightly abuse notation using the natural bijection between $C$ and $\calq$.
     \end{rem}

     \begin{proof}
       By  Corollary  \ref{coro;cone-off_hyp}, $\dot{\X}$ is $\du$-hyperbolic.
       Obviously, $(G_Q)_{Q\in \calq}$ is a rotating family, and
       the distance between two distinct apices is at least $2r_0$, by construction. So we
        need to check that the family  is very rotating.

Consider $Q\in \calq$, and $c\in \dot \X$ the corresponding apex.
Since $r_0>40\du$ the ball $B(c,40\du)$ is contained in a cone.
We need the following lemma.

\begin{lem} \label{lem;proj}
  Let $x,y\in \dot \X\setminus\{c\}$ at distance $\leq r_0$ from $c$, and let $\bar x, \bar y$ be their radial projection on $Q\subset \X$.

  If some geodesic $[x,y]$ avoids $c$, then some geodesic $[\bar x, \bar y]$ avoids $c$.
\end{lem}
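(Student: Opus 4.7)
The strategy is to exhibit, using the given geodesic $[x,y]$ avoiding $c$, an explicit candidate path from $\bar x$ to $\bar y$ of length at most $2r_0$ that avoids $c$, and then argue that this forces the existence of an honest geodesic between $\bar x$ and $\bar y$ avoiding $c$. The point is that the worst-case length of the candidate matches the length of the through-apex path $[\bar x, c] \cup [c, \bar y]$, and a simple dichotomy on whether this common bound is sharp finishes the argument.

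First I would observe that $x$ and $y$ must actually lie in $\Cone(Q,r_0)$. Indeed, any point $p \in \dot X$ with $\d_{\dot X}(p,c) \leq r_0$ is in this cone: every path from $c$ outside the cone must traverse it radially to its boundary $Q$, at cost $r_0$, so such a path has length $\geq r_0$ with equality only when its endpoint lies in $Q$. Writing $x = (q_x, r_x)$, $y = (q_y, r_y)$ with $r_x, r_y \in (0, r_0]$, the radial projections are $\bar x = q_x$, $\bar y = q_y$, identified with $(q_x, r_0), (q_y, r_0)$ on $Q$. Let $\gamma$ be the hypothesized geodesic from $x$ to $y$ avoiding $c$, and form the concatenation
\[
\bar\gamma \;=\; [\bar x, x] \,\cup\, \gamma \,\cup\, [y, \bar y],
\]
where $[\bar x, x]$ and $[y, \bar y]$ are the radial segments in $\Cone(Q, r_0)$, of lengths $r_0 - r_x$ and $r_0 - r_y$ respectively. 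Both radial segments are parametrized by radii bounded away from $0$, hence avoid $c$; and $\gamma$ avoids $c$ by hypothesis, so $\bar\gamma$ avoids $c$. Since the through-apex path $[x,c] \cup [c,y]$ has length $r_x + r_y$, the geodesic $\gamma$ satisfies $\ell(\gamma) \leq r_x + r_y$, so $\ell(\bar\gamma) \leq 2r_0$.

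Now $\d_{\dot X}(\bar x, c) = \d_{\dot X}(\bar y, c) = r_0$ (these are radial distances realized in the cone, and no shorter path exists since $c$ is at distance $r_0$ from $X$), which gives $\d_{\dot X}(\bar x, \bar y) \leq 2r_0$. I split into two cases. If $\d_{\dot X}(\bar x, \bar y) < 2r_0$, then no path through $c$ is a geodesic, so every geodesic $[\bar x, \bar y]$ in $\dot X$ automatically avoids $c$. If $\d_{\dot X}(\bar x, \bar y) = 2r_0$, then $\ell(\bar\gamma) \leq 2r_0 = \d_{\dot X}(\bar x, \bar y)$ forces $\bar\gamma$ itself to be a geodesic, and we have already checked it avoids $c$. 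The only point that requires any care is the localization of $x,y$ inside $\Cone(Q,r_0)$, which is the one nontrivial use of the hypothesis $\d_{\dot X}(x,c), \d_{\dot X}(y,c) \leq r_0$; everything else is bookkeeping on lengths.
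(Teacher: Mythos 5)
Your proof is correct and follows essentially the same route as the paper's: the same dichotomy on whether $\d(\bar x,\bar y)$ equals $\d(\bar x,c)+\d(c,\bar y)$, and in the equality case the same concatenation of the two radial segments with the given geodesic $[x,y]$, shown to be geodesic by a length count. The only cosmetic difference is that you work with the explicit radii $r_x, r_y, r_0$ where the paper argues abstractly with distances.
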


\begin{proof}
If  $\d (\bar x, \bar y) < \d (\bar x,c) + \d (c,\bar y)$ then the claim is obvious, so assume
$\d (\bar x, \bar y) = \d (\bar x,c) + \d (c,\bar y)$.
Since radial paths are geodesic,
we get $\d (\bar x, \bar y) = \d (\bar x,c) + \d (\bar y, c)=\d (\bar x,x)+\d (x,c)+ \d (c,y)+ \d (y,\bar y)\geq
\d (\bar x,x)+\d (x,y)+ \d (y,\bar y)$. By triangular inequality, this is an equality.
 In particular, for any geodesic $[x,y]$, the concatenation
 $[\bar x, x ]\cdot   [x,y] \cdot [y,\bar  y]$ is a geodesic.
By assumption, one of these geodesics avoids $c$, which proves the claim.
\end{proof}

We need to prove that for all $g\in G_Q\setminus \{1\}$,
and all $x\in \dot \X$ with
$20\delta \leq \d _{\dot \X}(x,c)\leq 40\delta$, and all $y\in \dot \X$ with $\d _{\dot \X}(gx,y)\leq 15\du$,
any geodesic of $\dot \X$ between $x$ and $y$ contains $c$.
Look at $\bar x,\bar y$ the radial projections of $x,y$ on $\X$, and note that $g\bar x$ is the radial projection of $gx$.
Assume by contradiction that some geodesic $[x,y]$ avoids $c$.
Note that no geodesic $[y,gx]$ can contain $c$ by triangular inequality.

By Lemma \ref{lem;proj}, there are geodesics $[\ol x,\ol y]$ and $[\ol y,g \ol x]$
avoiding $c$. By Lemma \ref{lem_lip}, $\d _\X(\ol x,\ol y)$ and $\d _\X(\ol y,g\ol x)$ are bounded by $M(r_0)=2r_0L(r_0)$.
It follows that $\inj_\X(G_Q)\leq 4r_0L(r_0)$, a contradiction.
\end{proof}


\section{Examples}\label{sec:ex}


In this section, we give examples of situations in which one finds   hyperbolically embedded subgroups and rotating subgroups. In particular, we show that if a group acts on a hyperbolic space with a so-called loxodromic WPD element, then this element is contained in a cyclic hyperbolically embedded subgroup, and a power of this element generates a rotating subgroup. There are actually two ways in which we can see the later fact. We will prove, in the subsection ``back and forth'' that any cyclic hyperbolically embedded group has a subgroup which is a (cyclic) rotating subgroup.  But we will also prove, somewhat more directly, that a certain small cancellation condition ensures the existence of rotating subgroups and WPD can be used to ensure this small cancellation condition. We think that it can be convenient to have the choice between these two ways of achieving rotating subgroups from the WPD condition, for instance depending on the expositions choices in a lecture.

We also prove that the existence of non-degenerate hyperbolically embedded subgroups implies the existence of non-elementary virtually free ones. This will become useful in certain applications, e.g., in the proof of SQ-universality (see Theorem \ref{large}). 

In the last subsection, we discuss a few specific groups, such as mapping class groups, outer automorphism groups of free groups, and the Cremona group.

\subsection{WPD elements and elementary subgroups}

The aim of this section is to show that if a non-elementary group $G$ acts on hyperbolic space and the action satisfies a certain weak properness condition, then $G$ contains non-degenerate hyperbolically embedded subgroups. The class of such groups includes, for example, all groups acting non-elementarily and acylindrically on a hyperbolic spaces.  More precisely, we recall the following definition due to Bestvina and Fujiwara \cite{BF}.

\begin{defn}\label{WPD}
Let $G$ be a group acting on a hyperbolic space $S$, $h$ an element of $G$.  One says that $h$ satisfies the {\it weak proper discontinuity} condition (or $h$ is a {\it WPD element}) if for every $\e >0$ and every $x\in S$, there exists $N=N(\e )$ such that
\begin{equation}\label{eq: wpd}
|\{ g\in G \mid \d (x, g(x))<\e, \;   \d (h^N(x), gh^N(x))<\e \} |<\infty .
\end{equation}
\end{defn}

Recall that an element $g$ of a group $G$ acting on a hyperbolic space $S$ is called \emph{loxodromic} if the map $\mathbb Z\to S$ defined by $n\mapsto g^ns$ is a quasi-isometry for some (equivalently, any) $s\in S$.

\begin{rem}
It is easy to see that the WPD property is conjugation invariant. That is, if $h_1=t^{-1}ht$ for some $h,h_1,t\in G$, then $h_1$ satisfies WPD if and only if $h$ does. Also it is clear that acylindricity implies WPD for all loxodromic elements.
\end{rem}

\begin{defn}\label{def-qga}
Given an element $h\in G$ and $x\in S$, consider the bi-infinite path $l_x$ in $S$ obtained by connecting consequent points in the orbit $\ldots ,h^{-1} (x), x, h(x), \ldots $ by geodesic segments so that the segment connecting $h^n(x)$ and $h^{n+1}(x)$ is the translation of the segment connecting $x$ and $h(x)$ by $h^n$. Clearly $l_x$ is $h$-invariant, and if $h$ is loxodromic then $l_x$ is quasi-geodesic for every $x\in S$. We call $l_x$ a \textit{quasi-geodesic axis} \label{i-qga} of $h$ (based at $x$).
\end{defn}

For technical reasons (e.g., to deal with involutions in Lemma \ref{elem1}), we will need the freedom of choosing $N$ in (\ref{eq: wpd}) sufficiently large. More precisely, we will use the following.

\begin{lem}\label{elem0}
Let $G$ be a group acting on a $\delta $-hyperbolic space $S$, $h\in G$ a loxodromic WPD element. Then for every $\e>0$ and every $x\in S$, there exists $N\in \mathbb N$ such that
\begin{equation}\label{eq: wpd0}
|\{ g\in G \mid \d (x, g(x))<\e\, \;   \d (h^M(x), gh^M(x)<\e \} |<\infty
\end{equation}
holds for any $M\ge N$.
\end{lem}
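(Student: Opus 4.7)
The plan is to use WPD at a suitably enlarged tolerance $\e^\ast>\e$ together with the stability of quasi-geodesics, so that the single integer $N$ furnished by WPD at tolerance $\e^\ast$ simultaneously works for \emph{all} $M\ge N$ at the original tolerance $\e$.

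First I would fix a quasi-axis. Since $h$ is loxodromic with translation length $\tau>0$, the bi-infinite path $l_x$ of Definition~\ref{def-qga} is a $(\lambda,c)$-quasi-geodesic with constants depending only on $L:=\d (x,h(x))$, $\tau$, and $\delta$; in particular, for every $M\ge 0$ the subpath $l_M\subset l_x$ from $x$ to $h^M(x)$ is a $(\lambda,c)$-quasi-geodesic passing through each $h^i(x)$ at arc-length parameter $iL$. Now suppose $g\in G$ satisfies $\d (x,gx)<\e$ and $\d (h^M(x),gh^M(x))<\e$. Then $g(l_M)$ is a $(\lambda,c)$-quasi-geodesic whose endpoints lie within $\e$ of those of $l_M$. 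Prepending and appending short geodesic segments at the endpoints produces $(\lambda,c')$-quasi-geodesics that \emph{share} endpoints (with $c'=c+2(\lambda+1)\e$), so Lemma~\ref{qg}(a) yields a constant $\kappa=\kappa(\delta,\lambda,c,\e)$, independent of $M$, with $\d _{Hau}(l_M,g(l_M))\le\kappa$. Consequently, for any $N_0\in\{0,\dots,M\}$ there is $y\in l_M$ with $\d (y,gh^{N_0}(x))\le\kappa$; since $g$ is an isometry, $\d (gx,gh^{N_0}(x))=\d (x,h^{N_0}(x))$, and the triangle inequality gives $|\d (x,y)-\d (x,h^{N_0}(x))|\le\e+\kappa$.

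Next, I would upgrade the Hausdorff bound to a pointwise estimate on $\d (h^{N_0}(x),gh^{N_0}(x))$. Applying Lemma~\ref{qg}(a) a second time, now to the quasi-geodesic $l_M$ and a genuine geodesic $[x,h^M(x)]$, I obtain points $y',z'\in[x,h^M(x)]$ with $\d (y,y')\le\kappa$ and $\d (h^{N_0}(x),z')\le\kappa$. On this honest geodesic, $\d (y',z')=|\d (x,y')-\d (x,z')|$, which by the triangle inequality differs from $|\d (x,y)-\d (x,h^{N_0}(x))|$ by at most $2\kappa$; hence $\d (y',z')\le\e+3\kappa$ and therefore $\d (y,h^{N_0}(x))\le\e+5\kappa$. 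Combining,
\[
\d (h^{N_0}(x),gh^{N_0}(x))\le\d (h^{N_0}(x),y)+\d (y,gh^{N_0}(x))\le\e+6\kappa=:\e''.
\]

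Finally, I would invoke the WPD hypothesis at tolerance $\e^\ast:=\max(\e,\e'')$, obtaining $N\in\mathbb N$ such that $\{g\in G:\d (x,gx)<\e^\ast,\ \d (h^N(x),gh^N(x))<\e^\ast\}$ is finite. For every $M\ge N$ and every $g$ in the set of Lemma~\ref{elem0}, taking $N_0:=N\le M$ in the argument above shows that $g$ lies in this finite WPD set, proving the lemma. The main obstacle is the second step: merely knowing $d_{Hau}(l_M,g(l_M))\le\kappa$ does not by itself pin down $gh^{N_0}(x)$ near $h^{N_0}(x)$ (it could be close to a distant point on $l_M$), so the isometry property of $g$ must be coupled with a second Morse comparison against a true geodesic to convert "same approximate distance from $x$" into "close in $S$"---and crucially, with bounds independent of $M$.
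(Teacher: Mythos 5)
Your proposal is correct and follows essentially the same strategy as the paper's proof: both reduce the claim to the WPD hypothesis at an enlarged, $M$-independent tolerance $\e'$ by using stability of quasi-geodesics (Lemma~\ref{qg}) together with thinness and the fact that $g$ is an isometry to show that $\d(h^N(x),gh^N(x))\le\e'$ whenever $g$ moves $x$ and $h^M(x)$ by less than $\e$. The only difference is bookkeeping — you compare $l_M$ with $g(l_M)$ and then with a genuine geodesic, while the paper tracks a single point $t\in[x,h^M(x)]$ near $h^N(x)$ through a thin quadrilateral — and the resulting constants differ only superficially.
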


\begin{figure}
  \centering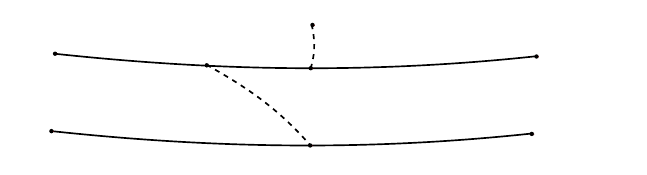\\
  \caption{}\label{61-f1}
\end{figure}

\begin{proof}
Fix $\e>0$ and $x\in S$. Let $l=l_x$ be the quasi-geodesic axis of $h$ based at $x$. Suppose that $l$ is $(\lambda , c)$-quasi-geodesic. Let $\e ^\prime = 3\e + 4\delta + 2\kappa $, where $\kappa =\kappa (\lambda, c, \delta)$ is given by Lemma \ref{qg}. Let $N=N(\e ^\prime )$ satisfy
\begin{equation}\label{eq: wpd1}
|\{ g\in G \mid \d (x, g(x))<\e^\prime , \;  \d (h^N(x), gh^N(x)<\e^\prime  \} |<\infty .
\end{equation}

Let $M\ge N$, $z=h^M(x)$, $y=h^N(x)$, and let $[x,z]$ be a geodesic segment. By Lemma \ref{qg}, there exists $t\in [x,z]$ such that $\d (y,t)\le \kappa $. Note that $g(t)$ belongs to the geodesic segment $[g(x), g(z)]=g([x,z])$ (Fig. \ref{61-f1}). As $S$ is $\delta $-hyperbolic, $g(t)$ is within $2\delta $ from the union of geodesic segments $[g(x), x]$, $[x,z]$, and $[z,g(z)]$. Hence there exists a point $u\in [x,z]$ such that  $\d (u,g(t))\le 2\delta +\e $. Since $$\d (x,u) \ge \d (g(x), g(t))- \d (x, g(x))-\d (u, g(t))\ge \d (x,t)-2\e-2\delta $$ and $[x,z]$ is geodesic,
we obtain $$\d (u,t)=\d (x,t)-\d(x,u) \le 2\e +2\delta $$ and consequently
$$ \d (g(t), t)\le \d (g(t), u)+\d (u,t)\le 3\e +4\delta .$$ This yields
$$
\d (y, g(y))\le \d(y,t)+\d (t, g(t))+\d (g(t), g(y))\le 3\e +4\delta +2\kappa =\e ^\prime.
$$
Thus (\ref{eq: wpd0}) follows from (\ref{eq: wpd1}).
\end{proof}

Bestvina and Fujiwara proved in \cite{BF} that for every loxodromic WPD element $h$ of a group $G$ acting on a hyperbolic space, the cyclic subgroup $\langle h\rangle $ has finite index in the centralizer $C_G(h)$. (Although the assumptions are stated in a slightly different form there.) We use the same idea to prove the following.

\begin{lem}\label{elem1}
Let $G$ be a group acting on a $\delta$-hyperbolic space $S$, $h\in G$ a loxodromic WPD element. Then
$h$ is contained in a unique maximal elementary subgroup of $G$, denoted $E(h)$. Moreover,
$$
E(h)=\{ g\in G\mid \d _{Hau} (l, g(l))<\infty \},\label{i-Eh1}
$$
where $l$ is a quasi-geodesic axis of $h$ in $S$.
\end{lem}

\begin{proof}
It is clear that $E(h)$ is a subgroup. Let $E^+(h)$ consist of all elements of $E(h)$ that preserve the orientation of $l$ (i.e., fixe the limit points of $l$ on the boundary). Clearly $E^+(h)$ is also a subgroup, which has index at most $2$ in $E(h)$.

Let $l=l_x$ be a $(\lambda, c)$-quasi-geodesic axis of $h$ based at some $x\in S$. It easily follows from Lemma \ref{qg} that if $\d _{Hau} (l, g(l))<\infty$ then, in fact, $\d _{Hau} (l, g(l))<\kappa=\kappa (\lambda , c)$.

Let $g\in E^+(h)$ and let $h^{k}(x)$ be the point of the $\langle h\rangle $-orbit of $x$ that is closest to $g(x)$. Thus $\d (g(x),h^{k}(x))$ is uniformly bounded from above by $\kappa$ plus the diameter of the fundamental domain for the action of $h$ on $l$. We denote this upper bound by $C$ and let
$\e = C +6\kappa$. Let $N=N(x,\e)$ be as in the definition of WPD.

We note that $g_0=h^{-k}g$ moves $x$ by at most $C$. Further, let $y=h^N(x)$, let $l^+$ be the half-line of $l$ that starts at $x$ and contains $y$, and let $t$ be the point on $l$ closest to $g_0(y)$. In particular, $\d (g_0(y), t)\le \kappa$. We can assume that $N$ (and hence $\d (x,y)$) is large enough by Lemma \ref{elem0}. This guarantees that $t\in l^+$ since $g_0$ fixes the limit points of $l$ on $\partial S$.  Let $z$ be a point on $l^+$ such that $y$ and $t$ are located between $x$ and $z$. Let also $y^\prime$ and $t^\prime$ be points on the geodesic segment $[x,z]$ closest to $y$ and $t$, respectively (Fig. \ref{61-f2}). We have
\begin{equation}\label{e+1}
|\d (x, t^\prime) - \d (x,y)|=|\d (x, t^\prime) - \d (g_0(x),g_0(y))|\le \d (x, g_0(x))+\d (g_0(y), t^\prime ) \le C + 2\kappa
\end{equation}
and
\begin{equation}\label{e+2}
|\d (x, y^\prime) - \d (x,y)|\le \d (y, y^\prime)\le \kappa .
\end{equation}
Since $[x,z]$ is geodesic, (\ref{e+1}) and (\ref{e+2}) imply
$$
\d (y^\prime , t^\prime)\le C+3\kappa .
$$
Consequently,
$$
\d (y, g_0(y))\le \d (y, y^\prime) +\d (y^\prime, t^\prime)+\d(t^\prime , g_0(y))\le C+6\kappa  =\e.
$$

\begin{figure}
  \centering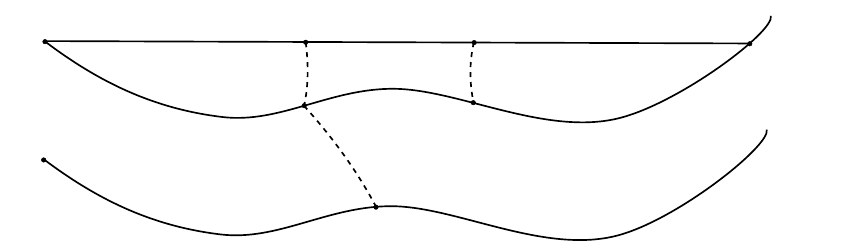\\
  \caption{}\label{61-f2}
\end{figure}

Thus $g_0$ moves both $x$ and $y=h^N(x)$ by at most $\e $. By WPD $g_0$ belongs to some finite set of elements and hence $g$ belongs to a finite set of cosets of $\langle h \rangle$ in $E^+(h)$. Since $g$ was an arbitrary element of $E^+(h)$, we have $|E(h):\langle h\rangle |<\infty $.

To prove that $E(g)$ is maximal, we note that if $E$ is another elementary subgroup containing $h$, then for every $g\in E$ we have $g^{-1}h^ng=h^{\pm n}$ for some $n\in \mathbb N$, which easily implies that $\d _{Hau} (l, g(l))<\infty $. Hence $g\in E(h)$ by definition.
\end{proof}

\begin{cor}\label{elemrem}
Let $G$ be a group acting on a hyperbolic space $S$, $h\in G$ a loxodromic WPD element. Then for every $g\in G$ the following conditions are equivalent.
\begin{enumerate}
\item[(a)] $g\in E(h)$.
\item[(b)] There exists $n\in \mathbb N$ such that $g^{-1}h^ng=h^{\pm n}$.
\item[(c)] There exist $k,m\in \mathbb Z\setminus \{ 0\}$ such that $g^{-1}h^kg=h^{m}$.
\end{enumerate}
Further, we have
$$
E^+(h)=\{ g\in G \mid \exists \, n\in \mathbb N\; g^{-1}h^ng=h^n\} =C_G (h^r).
$$
for some positive integer $r$.
\end{cor}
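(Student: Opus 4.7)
The plan is to deduce everything from Lemma \ref{elem1}, which says that $\langle h\rangle$ has finite index in $E(h)$ and that $E(h)$ equals the stabilizer (up to finite Hausdorff distance) of the quasi-geodesic axis $l=l_x$ of $h$. First I would extract an integer $r\geq 1$ such that $\langle h^r\rangle$ is \emph{normal} in $E(h)$: take $r$ so large that $\langle h^r\rangle\subseteq \bigcap_{g\in E(h)} g\langle h\rangle g^{-1}$, which is possible since this intersection is a finite-index (hence cofinite-index in $\langle h\rangle$) subgroup of $E(h)$. This $r$ will be the one appearing in the second part of the statement.

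For the three-way equivalence, I would argue (a)$\Rightarrow$(b)$\Rightarrow$(c)$\Rightarrow$(a). The step (a)$\Rightarrow$(b) is immediate from normality of $\langle h^r\rangle$: conjugation by any $g\in E(h)$ is an automorphism of $\langle h^r\rangle\cong \mathbb Z$, hence sends $h^r$ to $h^{\pm r}$, so (b) holds with $n=r$. The implication (b)$\Rightarrow$(c) is trivial. For (c)$\Rightarrow$(a), use that $h^k$ and $h^m$ are loxodromic (since $k,m\neq 0$) and that any quasi-geodesic axis of a nonzero power of $h$ lies within finite Hausdorff distance of $l$; the equation $g^{-1}h^kg=h^m$ means that $g$ carries an axis of $h^k$ to an axis of $h^m$, so $g(l)$ lies at finite Hausdorff distance from $l$, whence $g\in E(h)$ by Lemma \ref{elem1}.

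For the description of $E^+(h)$, I would show the two inclusions $E^+(h)\subseteq C_G(h^r)\subseteq\{g\in G\mid \exists n\in\mathbb N,\ g^{-1}h^ng=h^n\}\subseteq E^+(h)$. For the first inclusion, observe that any $g\in E^+(h)$ satisfies $g^{-1}h^rg=h^{\pm r}$ (by the equivalence just proved applied with $n=r$), and the $-$ sign is excluded because $g$ fixes the two endpoints of $l$ at infinity and hence cannot reverse the translation direction of $h^r$. The middle inclusion is trivial (take $n=r$). For the last, if $g^{-1}h^ng=h^n$ with $n\in\mathbb N$, then $g\in E(h)$ by (c)$\Rightarrow$(a), and $g$ preserves orientation on $l$ because it conjugates $h^n$ to $h^n$ (not $h^{-n}$), so $g\in E^+(h)$.

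The routine but mildly delicate point is the orientation argument: one must be sure that conjugation of $h^r$ to $h^{-r}$ is equivalent to swapping the two fixed points on the Gromov boundary, which in turn contradicts membership in $E^+(h)$. I expect this to be the only step requiring a line of care; everything else reduces directly to Lemma \ref{elem1} together with the elementary fact that conjugation permutes quasi-geodesic axes.
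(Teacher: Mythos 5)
Your proof is correct and follows essentially the same route as the paper: (a)$\Rightarrow$(b) via normality of $\langle h^r\rangle$ coming from $[E(h):\langle h\rangle]<\infty$, and (c)$\Rightarrow$(a) by observing that $g(l)$ is an invariant quasi-geodesic of a power of $h$ and hence at finite Hausdorff distance from $l$. The paper dismisses the $E^+(h)$ statement as following "easily from the definition"; your chain of inclusions and the fixed-point-at-infinity argument excluding the sign $-r$ is exactly the intended filling-in of that step.
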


\begin{proof}
Since $[E(h):\langle h\rangle]<\infty$, there exists $n\in \mathbb N$ such that $\langle h^n\rangle \lhd E(h)$ and the implication (a) $\Rightarrow$ (b) follows. The implication (b) $\Rightarrow$ (c) is obvious. Now suppose that (c) holds. Let $l$ be a quasi-geodesic axes of $h$. Then $h^k$ preserves the bi-infinite quasi-geodesic $g(l)$. This easily implies  $\d_{Hau} (g(l), l)<\infty$, which in turn yields $g\in E(h)$ by Lemma \ref{elem1}. Finally we note that the statements about $E^+(h)$ follow easily from the definition of $E^+(h)$ and the fact that $[E(h):\langle h\rangle]<\infty$.
\end{proof}

The next result is part (2) of \cite[Proposition 6]{BF}. Note that although in \cite[Proposition 6]{BF} the authors assume that all elements of $G$ satisfy WPD, this condition is only used for the element involved in the claim. Note also that the proof of \cite[Proposition 6]{BF} works for any fixed constant in place of $B(\lambda, c,\delta)$.

\begin{lem}\label{BFlemma}
Let $G$ be a group acting on a hyperbolic space $S$, $h\in G$ a loxodromic WPD element. Then for any constants $B, \lambda , c>0$, and any $(\lambda , c)$-quasi-axes $l$ of $h$, there exists $M>0$ with the following property. Let $ t_1(l)$, $ t_2(l)$ be two $G$-translations of $l$. Suppose that there exist segments $p_1, p_2$ of $t_1(l)$ and $t_2(l)$, respectively, which are oriented $B$-close, i.e., $$\max \{\d ((p_1)_-, (p_2)_-), \d ((p_1)_+, (p_2)_+)\} \le B,$$ and have length $$\min\{ \ell (p_1), \ell (p_2)\} \ge M.$$   Then the corresponding conjugates $t_1ht_1^{-1}, t_2ht_2^{-1}$ of $t$ have equal positive powers.
\end{lem}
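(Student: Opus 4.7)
The plan is to apply a pigeonhole argument using the WPD property, in the spirit of the proof of Lemma \ref{elem1}. Let $h_1 = t_1 h t_1^{-1}$ and $h_2 = t_2 h t_2^{-1}$; we seek $n > 0$ with $h_1^n = h_2^n$. Setting $s = t_2^{-1} t_1$, this is equivalent to $s \in C_G(h^n)$. Applying the isometry $t_2^{-1}$, the segments $t_2^{-1}(p_1) \subset s(l)$ and $t_2^{-1}(p_2) \subset l$ remain oriented $B$-close of length $\geq M$. I parameterize $l$ by $l(t)$, $t \in \mathbb{R}$, and $s(l)$ by $s(l)(t) := s(l(t))$. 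The oriented $B$-closeness on the interval $[a, a+M]$ then reads $l(t) \approx s(l)(t + D)$ for some offset $D \in \mathbb{R}$ that depends on the configuration and is not a priori bounded.

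For an integer $K$ to be chosen and $k = 0, 1, \ldots, K$, I consider the commutator-like elements $g_k = s h^k s^{-1} h^{-k} \in G$. Let $\tau$ denote the translation length of $h$ along $l$; I pick $y = l(t_0)$ with $t_0 \in [a + K\tau, a + M - K\tau]$ (which requires $M \geq 2K\tau + $ const) and set $y' = h^K(y)$. A direct telescoping computation, using (i) $s^{-1}(l(v)) \approx l(v + D)$ for $v$ in the close region, (ii) $s(l(u)) \approx l(u - D)$ for $u \in [a + D, a + M + D]$, and (iii) the relation $h^k(l(t)) = l(t + k\tau)$, shows that $g_k(y) \approx y$ and $g_k(y') \approx y'$, with errors bounded by a universal constant $\varepsilon = \varepsilon(B, \lambda, c, \delta)$. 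The crucial point is that the offset $D$ cancels telescopically inside $s h^k s^{-1} h^{-k}$, so the lower bound on $M$ does \emph{not} involve $|D|$.

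Now I invoke WPD. By Lemma \ref{elem0} applied to $h$ at a basepoint of $l$ (and using that $h$ acts by translation on $l$ to obtain uniform constants for all $y \in l$), there exist $N_0 = N_0(\varepsilon)$ and $P = P(\varepsilon)$ such that for every $y \in l$ and every $N \geq N_0$, the set of $g \in G$ satisfying $d(gy, y) \leq \varepsilon$ and $d(g h^N(y), h^N(y)) \leq \varepsilon$ has at most $P$ elements. I take $K = \max\{N_0, P + 1\}$ and $M$ large enough to satisfy the previous constraint. Then all $K + 1$ elements $g_0, g_1, \ldots, g_K$ lie in this finite set, so by pigeonhole $g_{k_1} = g_{k_2}$ for some $0 \leq k_1 < k_2 \leq K$. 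A short algebraic manipulation yields $s h^n = h^n s$ with $n = k_2 - k_1 > 0$, whence $t_1 h^n t_1^{-1} = t_2 h^n t_2^{-1}$, i.e., $h_1^n = h_2^n$, as desired.

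The main obstacle is the geometric bookkeeping needed to verify that $g_k$ moves both $y$ and $y' = h^K(y)$ by at most $\varepsilon$ with constants independent of $K$ and $D$. This requires carefully tracking the errors arising from quasi-geodesic stability (Lemma \ref{qg}), from the oriented $B$-close approximation, and from the fact that orbit points $h^k(l(t))$ lie on the $(\lambda, c)$-quasi-geodesic $l$ only up to bounded fuzz. The telescoping cancellation of $D$ inside $s h^k s^{-1} h^{-k}$ is the key geometric insight that makes the required lower bound on $M$ independent of the particular translates $t_1(l)$ and $t_2(l)$.
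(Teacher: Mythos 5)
The paper does not actually prove this lemma: it is quoted verbatim from Bestvina--Fujiwara \cite[Proposition 6(2)]{BF}, with a remark that their proof goes through for an arbitrary constant $B$. Your reconstruction follows exactly the strategy of that cited proof: reduce to $s=t_2^{-1}t_1$, form the commutators $sh^ks^{-1}h^{-k}$, show they uniformly almost-fix two far-apart points of $l$, and conclude by WPD plus pigeonhole and the algebraic identity $g_{k_1}=g_{k_2}\Rightarrow [s,h^{k_2-k_1}]=1$. The reduction, the use of Lemma \ref{elem0} to get WPD constants uniform along $l$, and the final algebra are all correct.

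There is, however, one concrete flaw in the deferred "bookkeeping", namely in your steps (i)--(ii). For $\lambda>1$ it is \emph{not} true that $s(l(u))$ is within a uniformly bounded distance of $l(u-D)$ for a single offset $D$ across the whole overlap: two oriented $B$-close $(\lambda,c)$-quasi-geodesic segments have arc lengths that agree only up to the multiplicative factor $\lambda$, so the affine transfer map $u\mapsto u-D$ already misses the far endpoint by roughly $(\lambda-1)M/\lambda$. Consequently the "telescoping cancellation of $D$" does not by itself bound $\d(g_k(y),y)$ independently of $k$. The correct replacement is to exploit the exact $h$-equivariance of $l$: the displacement function $p\mapsto \d(p,h^kp)$ restricted to $l$ is $\tau$-periodic, hence varies along $l$ by at most $2\ell([x,h(x)])$, uniformly in $k$. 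Writing $z_k$ for a point of $l$ with $s(z_k)$ within $\kappa'$ of $h^k(y)$, one then gets $|\d(z_0,z_k)-\d(z_0,h^k(z_0))|$ bounded independently of $k$, and since $z_k$ and $h^k(z_0)$ lie on the quasi-geodesic $l$ on the same side of $z_0$ (here the orientation hypothesis enters), hyperbolicity forces $\d(z_k,h^k(z_0))$ to be uniformly bounded. This is exactly the estimate needed to make $g_k$ move $y$ and $h^K(y)$ by at most $\varepsilon(B,\lambda,c,\delta)$, after which your argument closes as written.
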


Recall that two elements $g,h$ of a group $G$ are \textit{commensurable} \label{i-comm} if some non-zero powers of them are conjugate in $G$.

\begin{thm}\label{wpd}
Let $G$ be a group acting on a hyperbolic space $(S,d)$ and let $\{h_1, \ldots , h_k\} $ be a collection of pairwise non-commensurable loxodromic WPD elements of $G$. Then $\{ E(h_1), \ldots , E(h_k)\}\h G$.
\end{thm}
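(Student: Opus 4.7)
The strategy is to deduce the theorem from the geometric separation criterion Theorem \ref{crit}, applied to the action of $G$ on $S$ and the collection $\{E(h_1), \ldots, E(h_k)\}$. Three conditions must be verified: pairwise distinctness of the subgroups, proper action and quasiconvex orbits of each $E(h_i)$ on $S$, and geometric separability of the collection in the sense of Definition \ref{GeomSep}.

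Distinctness is immediate: if $E(h_i) = E(h_j)$ with $i \neq j$, then by Corollary \ref{elemrem} some nonzero power of $h_j$ would be conjugate in $G$ to a nonzero power of $h_i$, contradicting the non-commensurability hypothesis. For properness, Lemma \ref{elem1} gives $[E(h_i) : \langle h_i \rangle] < \infty$; since $h_i$ is loxodromic, $\langle h_i\rangle$ acts properly on $S$, and hence so does $E(h_i)$. For quasiconvexity, fix any $s \in S$ and let $l_i$ denote the quasi-geodesic axis of $h_i$ based at $s$ (Definition \ref{def-qga}). By Lemma \ref{elem1}, for every $g \in E(h_i)$ the translate $g(l_i)$ has finite Hausdorff distance from $l_i$, and Lemma \ref{qg}(b) then gives a uniform bound $\kappa$. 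Consequently $E(h_i)(s) \subseteq l_i^{+\kappa}$, which is quasiconvex in $S$.

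Geometric separability is the heart of the argument. By Remark \ref{rem-gs} it suffices to work with one base-point $s$. Suppose for contradiction that there exist $\e > 0$ and, for arbitrarily large $R$, some $g \in G$ and $\lambda, \mu$ with $\lambda \neq \mu$ or $g \notin E(h_\lambda)$ such that $\diam\bigl(E(h_\mu)(s) \cap (g E(h_\lambda)(s))^{+\e}\bigr) \geq R$. Using the inclusions $E(h_\mu)(s) \subseteq l_\mu^{+\kappa}$ and $gE(h_\lambda)(s) \subseteq g(l_\lambda)^{+\kappa}$, this yields arbitrarily long oriented $(\e + 2\kappa)$-close subsegments of the quasi-axes $l_\mu$ (of $h_\mu$) and $g(l_\lambda)$ (of $g h_\lambda g^{-1}$). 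The key claim is that such long fellow-traveling forces the existence of $N, M \in \mathbb{Z}\setminus\{0\}$ with $h_\mu^N = g h_\lambda^M g^{-1}$. When $\mu = \lambda$, the required quasi-axes are $l_\lambda$ and its $g$-translate $g(l_\lambda)$, so Lemma \ref{BFlemma} applies directly with $h = h_\lambda$, $t_1 = 1$, $t_2 = g$, yielding $g^{-1} h_\lambda^N g = h_\lambda^N$ for some $N > 0$; Corollary \ref{elemrem}(c) then gives $g \in E(h_\lambda)$, a contradiction. When $\mu \neq \lambda$, the conclusion $h_\mu^N = g h_\lambda^M g^{-1}$ immediately contradicts non-commensurability of $h_\mu$ and $h_\lambda$, so it suffices to prove the claim in this heterogeneous case. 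The proof mimics that of Lemma \ref{BFlemma}: choose $N, M$ so that the translation vectors of $h_\mu^N$ and $g h_\lambda^M g^{-1}$ along the fellow-traveling region approximately agree (i.e., $N \cdot \|h_\mu\| \approx M \cdot \|h_\lambda\|$); then $f = h_\mu^{-N} g h_\lambda^M g^{-1}$ moves a long subsegment of $l_\mu$ by a uniformly bounded amount; the WPD property of $h_\mu$ (applied via Lemma \ref{elem0}) forces $f$ into a finite set of elements depending only on $\e$, $\kappa$, and $\delta$; taking the fellow-traveling segment sufficiently long then forces $f = 1$.

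The main obstacle will be executing this heterogeneous version of Lemma \ref{BFlemma} carefully, since the stated lemma requires two translates of a single quasi-axis whereas our $l_\mu$ and $g(l_\lambda)$ come from distinct elements with a priori different translation lengths. Matching translation vectors along the fellow-traveling region requires a rational-approximation argument and careful tracking of constants, but the final WPD trap is straightforward once $f$ is set up. With geometric separability in hand, Theorem \ref{crit} produces a relative generating set $X \subseteq G$ such that $\{E(h_1), \ldots, E(h_k)\} \h (G, X)$, which completes the proof.
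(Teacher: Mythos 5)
Your proposal is correct in its overall architecture and matches the paper's: both reduce the theorem to Theorem \ref{crit}, verify properness and quasiconvexity of the orbits $E(h_i)(s)$ via $[E(h_i):\langle h_i\rangle]<\infty$ and the quasi-axes $l_i$, and establish geometric separability by showing that a large overlap of orbit neighborhoods forces long fellow-traveling of quasi-axes. The one place where you genuinely diverge is the case $\mu\neq\lambda$, which you correctly identify as your main obstacle: you propose to prove a heterogeneous analogue of Lemma \ref{BFlemma} for the two distinct axes $l_\mu$ and $g(l_\lambda)$, requiring a rational-approximation matching of translation lengths followed by a WPD trap. The paper sidesteps this entirely with a small trick: setting $f=gh_\lambda g^{-1}$, it observes that $f$ moves every point of $g(l_\lambda)$ by at most $\theta=\sup_x \d(x,h_\lambda(x))$, hence moves the two far-apart points $x_1,x_2\in l_\mu$ (which are $(\e+2\rho)$-close to $g(l_\lambda)$) by at most $B=2\e+4\rho+\theta$. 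Thus $l_\mu$ and $f(l_\mu)$ — two translates of the \emph{same} axis — have oriented $B$-close segments of length $\ge M$, and Lemma \ref{BFlemma} applies verbatim to give $fh_\mu^af^{-1}=h_\mu^b$, hence $f\in E(h_\mu)$ by Corollary \ref{elemrem}; commensurability then forces $\mu=\lambda$, and a second application gives $g\in E(h_\mu)$. This handles both cases uniformly and buys you exactly the step you were worried about. Your sketch of the heterogeneous lemma would also work, but note one imprecision: the WPD argument does not force $f=h_\mu^{-N}gh_\lambda^Mg^{-1}=1$ outright; rather, running over many values of $M$ one gets elements of a fixed finite set, and a pigeonhole argument produces two equal ones, yielding $h_\mu^{N_2-N_1}=gh_\lambda^{M_2-M_1}g^{-1}$ with nonzero exponents — which is what contradicts non-commensurability. (You would also need to handle the anti-aligned orientation case by replacing $h_\lambda$ with $h_\lambda^{-1}$.)
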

\begin{proof}
Fix any point $s$ of the space $S$. We will show that the conditions (a)-(c) from Theorem \ref{crit} are satisfied. The first condition is a part of our assumption. The second one follows immediately from the fact that $h_i$'s are loxodromic and $\langle h_i\rangle $ is of finite index in $E(h_i)$. Indeed the later condition implies $\d _{Hau} (E(h_i) (s), \langle h_i\rangle (s))<\infty $ and hence each $E(h_i)(s)$ is quasi-convex. Since  each $\langle h_i\rangle $ acts on $S$ properly, so does $E(h_i)$.

It remains to verify the geometric separability condition. Fix any $\e >0$. Let $l_i$ be a quasi-geodesic axes of $h_i$ based at $s$, $i=1, \ldots , k$. Fix $\lambda\ge 1$, $c>0$ such that each $l_i$ is $(\lambda , c)$-quasi-geodesic. Let
$$\theta =\sup\{ \d (x, h_i(x))\mid i=1, \ldots , k, \; x\in l_i\}.$$
Since the action of $h_i$ on $l_i$ is cocompact, $\theta <\infty $.
Choose a constant $M$ such that the conclusion of Lemma \ref{BFlemma} holds for every $h_i$ (with the axis $l_i$), and for $$B=2\e + 4\rho + \theta,$$ where
$$\rho = \max\{ d _{Hau} (E(h_i) (s), l_i)\mid i=1, \ldots, k\}.$$
Let $$R=M+2\rho.$$

\begin{figure}
  \centering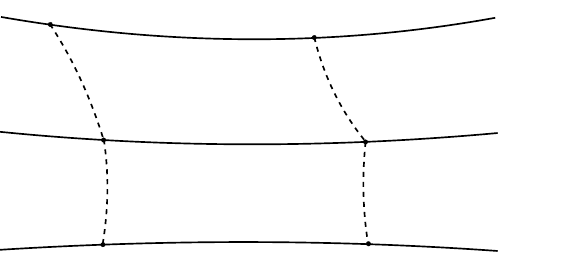\\
  \caption{}\label{61-f3}
\end{figure}

Suppose that
$$
{\rm diam} (E(h_i)(s) \cap  (gE(h_j)(s))^{+\e })\ge R
$$
for some $g\in G$, $i,j\in \{ 1, \ldots , k\}$. Then $$ {\rm diam} ( (l_i)^{+\rho} \cap  (g(l_j))^{+\e +\rho})\ge R $$ and hence there exist points $x_1, x_2 \in l_i$ and $y_1, y_2 \in g(l_j)$ such that $\max \{ \d (x_1, y_1), \d (x_2, y_2)\} \le \e +2\rho $ and $\d (x_1, x_2)\ge R -2\rho =M$ (Fig. \ref{61-f3}). Let $f=gh_jg^{-1}$. Note that for every $y\in g(l_j)$, we have $y=g(x)$ for some $x\in l_j$. Thus
$$
\d (y, f(y))\le \d (g(x), fg(x)) =\d (g(x), gh_j(x))=\d (x, h_j(x))\le \theta .
$$
Hence for $m=1,2$ we have $$\d (x_m, f(x_m)) \le \d (x_m, y_m) +\d (y_m, f(y_m))+\d (f(y_m), f(x_m)) \le 2\e +4\rho +\theta=B.$$ Thus $l_i$ and $f(l_i)$ have oriented $B$-close segments of length at least $M$.
By Lemma \ref{BFlemma}, there exist positive integers $a,b$ such that $fh_i^af^{-1}= h_i^b$. Hence $f\in E(h_i)$ by Corollary \ref{elemrem}. This implies that $h_i$ and $h_j$ are commensurable, which means that $i=j$. Similarly  $f=gh_ig^{-1}\in E(h_i)$ implies $g\in E(h_i)$. Thus the collection $\{ E(h_1), \ldots , E(h_k)\}$ is geometrically separated.
\end{proof}

Let us now show how to construct loxodromic WPD elements in weakly relatively hyperbolic groups. To state our next result, we will need the following.

\begin{defn}\label{i-oH}
Let $G$ be a group, $\Hl$ a collection of subgroups of $G$, $X$ a relative generating set of $G$ with respect to $\Hl$. Let $\dl $ denote the corresponding relative length function. Associated to these data we define
$$
o(H_\lambda)=\{ h\in H_\lambda \mid \dl (1,h)<\infty \} .
$$
\end{defn}

\begin{rem}\label{oHl}
In general, $o(H_\lambda)$ strongly depends on the choice of $X$. For instance, if $H=G$ and $X=\emptyset$, then $o(H)=\{ 1\}$. On the other hand, if $H\le \langle X\rangle $, then $o(H)=H$. Indeed for every $h\in H$ there is an admissible path in $\G $ connecting $1$ to $h$ labelled by a word in the alphabet $X$.
\end{rem}

\begin{thm}\label{elemhe}
Suppose that a group $G$ is weakly hyperbolic relative to $X$ and $\Hl $. Assume that for some $\lambda  \in \Lambda $ the following conditions hold.
\begin{enumerate}
\item[(a)] $H_\lambda $ is unbounded with respect to $\dl$.
\item[(b)] There exists an element $a\in X$ such that $|H_\lambda  ^a\cap H_\lambda|<\infty $.
\end{enumerate}
Then there exists an element $h\in H_\lambda $ such that $ah$ is a loxodromic element satisfying the WPD condition with respect to the action of $G$ on $\G $. In particular, $\{ E(ah)\} \h G$.

Moreover, if
\begin{enumerate}
\item[(a$^\prime$)] $o(H_\lambda )$ is unbounded with respect to $\dl$,
\end{enumerate}
then for every positive integer $k$, there are elements $h_1, \ldots , h_k\in H_\lambda $ such that $ah_1, \ldots, ah_k$  are non-commensurable loxodromic elements satisfying the WPD condition with respect to the action of $G$ on $\G $. In particular, $\{ E(ah_1), \ldots , E(ah_k)\} \h G$.
\end{thm}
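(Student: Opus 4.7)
The plan is to find $h\in H_\lambda$ with $\dl(1,h)>50D$, where $D=D(1,0)$ is the constant from Proposition~\ref{sn}, so that paths in $\G$ labelled by powers of $ah$ qualify for Lemma~\ref{w}. Such an $h$ exists by hypothesis (a), and I first note that $a\notin H_\lambda$: otherwise $H_\lambda^a\cap H_\lambda=H_\lambda$ would be infinite (unboundedness precludes $H_\lambda$ from being finite), contradicting (b). The word $W_n\equiv(ah)^n$ then lies in the class $\mathcal W$ of Lemma~\ref{w}: (W$_1$) because $X$-letters and $\mathcal H$-letters alternate in $W_n$, (W$_2$) by choice of $h$, and (W$_3$) because the only pattern $h_1 x h_2$ that occurs is $hah$, and $a$ does not represent an element of $H_\lambda$. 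Lemma~\ref{w}(a) then forces any path in $\G$ labelled by $W_n$ to be $(4,1)$-quasi-geodesic, so $ah$ acts loxodromically on $\G$.

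\textbf{WPD.} Fix $\varepsilon>0$, let $R:=R(\varepsilon,2)$ from Lemma~\ref{w}(b), and pick $N$ with $2N\ge R$. Suppose $t\in G$ satisfies $\dxh(1,t)\le\varepsilon$ and $\dxh((ah)^N,t(ah)^N)\le\varepsilon$, and let $p$ be the path from $1$ to $(ah)^N$ labelled by $W_N$. Then $p$ and $tp$ are oriented $\varepsilon$-close paths labelled by words in $\mathcal W$, so Lemma~\ref{w}(b) with $K=2$ yields two consecutive $H_\lambda$-components $p_i,p_{i+1}$ of $p$ connected to $tp_j,tp_{j+1}$. Writing $u_k:=(ah)^{k-1}a$ for the initial vertex of $p_k$, these two connections translate to $t\in u_i H_\lambda u_j^{-1}\cap u_{i+1} H_\lambda u_{j+1}^{-1}$; since $u_{k+1}=u_k(ha)$ and $h\in H_\lambda$, this reduces to $t\in u_i F' u_j^{-1}$, where $F'$ is an $H_\lambda$-conjugate of $H_\lambda\cap H_\lambda^{a^{-1}}$ (finite by (b)). There are at most $N^2$ pairs $(i,j)$, so the set of admissible $t$ is finite, giving WPD. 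Theorem~\ref{wpd} applied to $\{ah\}$ then yields $\{E(ah)\}\h G$.

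\textbf{Moreover part and main obstacle.} Assume now $o(H_\lambda)$ is unbounded under $\dl$. I plan to construct $h_1,\ldots,h_k\in o(H_\lambda)$ inductively with $\dl(1,h_i)>50D$ and each $ah_i$ not commensurable to any previously chosen $ah_j$; loxodromy and WPD then follow from the first two steps, and (by transitivity of commensurability) Theorem~\ref{wpd} yields $\{E(ah_1),\ldots,E(ah_k)\}\h G$. The crux is the non-commensurability step. Suppose $t(ah_i)^m t^{-1}=(ah_j)^n$ for some $t\in G$ and $m,n\ne 0$. Then the quasi-axes $tl_i$ and $l_j$ are both quasi-axes of the common element $(ah_j)^n$, so they share boundary fixed points and are at finite Hausdorff distance by Lemma~\ref{qg}(b); in particular they admit arbitrarily long oriented close subsegments. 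Applying Lemma~\ref{w}(b) with $K=3$ to such subsegments and repeating the coset computation from the WPD step produces auxiliary elements $\tilde x_0,\tilde x_1,\tilde x_2\in H_\lambda$ satisfying the twisted recursion $\tilde x_{s+1}=a^{-1}h_j^{\mp 1}\tilde x_s h_i a$. Requiring $\tilde x_1,\tilde x_2\in H_\lambda$ forces $h_i$ into a finite set of the form $F_+ h_j^{\pm 1} F_-$, where $F_{\pm}:=H_\lambda\cap H_\lambda^{a^{\pm 1}}$ are finite by hypothesis (b). Thus for each previously chosen $h_j$ only finitely many $h\in H_\lambda$ can make $ah$ commensurable with $ah_j$, so unboundedness of $o(H_\lambda)$ under $\dl$ allows me to pick $h_i\in o(H_\lambda)$ with $\dl(1,h_i)>50D$ avoiding $\bigcup_{j<i}(F_+ h_j F_-\cup F_+ h_j^{-1} F_-)$. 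The main obstacle is producing the constraint $h_i\in F_+ h_j^{\pm 1} F_-$: one must carefully track the orientations of $tl_i$ and $l_j$, pass to three (rather than just two) consecutive connected $H_\lambda$-components, and iterate the coset recursion far enough to convert commensurability into a finite-set membership condition on $h_i$.
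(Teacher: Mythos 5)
Your proposal is correct and follows the paper's overall route: verify (W$_1$)--(W$_3$) for powers of $ah$, deduce from Lemma~\ref{w}(a) that these paths are $(4,1)$-quasi-geodesic and hence that $ah$ is loxodromic, then use Lemma~\ref{w}(b) together with the finiteness of $H_\lambda\cap H_\lambda^{a^{\pm1}}$ for the WPD and non-commensurability steps, and conclude with Theorem~\ref{wpd}. The two finiteness steps are, however, implemented differently from the paper, and both of your versions work. For WPD, the paper takes $K=|H_\lambda\cap H_\lambda^a|+2$ consecutive connected components, notes that the connecting elements $c_j$ all lie in $H_\lambda\cap H_\lambda^a$, applies pigeonhole to find $c_{j_1}=c_{j_2}=c$, and writes $g=f_i^{a}cf_i^{b}$ with $|b|$ bounded; your version fixes $K=2$ and instead counts configurations, confining $t$ to one of at most $N^2$ cosets $u_iF'u_j^{-1}$ of a finite set --- a clean alternative. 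For non-commensurability, the paper chooses the $h_i$ with $\dl(1,h_i)>\dl(1,h_{i-1})+8D$ and derives the contradiction $|\dl(1,h_i)-\dl(1,h_j)|\le 8D$ from Proposition~\ref{sn} applied to the isolated connectors $e_2,g_2$ of the three-component configuration; you instead read off from the same configuration that $h_i$ must lie in the finite set $F_+h_j^{\pm1}F_-$ (with $F_\pm=H_\lambda\cap H_\lambda^{a^{\pm1}}$, finite by (b)) and choose each new $h_i$ avoiding these finitely many excluded elements. Your mechanism leans on hypothesis (b) where the paper leans on quantitative separation of relative lengths; both are legitimate, and yours makes explicit that each previously chosen $h_j$ rules out only finitely many candidates.

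One caveat, which your proof shares with the paper's: condition (W$_3$) requires that $a$ not belong to $H_\lambda$ as a group element, and your justification (``unboundedness precludes $H_\lambda$ from being finite'') is not literally valid, since $\dl$ may take the value $\infty$ and a finite $H_\lambda$ can therefore be unbounded with respect to $\dl$. For instance, $G=H_\lambda=\mathbb Z/2\times\mathbb Z/2$ with $X=\{a\}$ a single generator satisfies (a) and (b) with $a\in H_\lambda$, and there the conclusion fails trivially (a finite group has no loxodromic elements), so the statement must implicitly exclude this degenerate case. Your deduction of $a\notin H_\lambda$ from (b) is exactly right whenever $H_\lambda$ is infinite, which is the only case in which the theorem has content; it would be worth flagging this explicitly rather than burying it in a parenthesis.
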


\begin{figure}
  \centering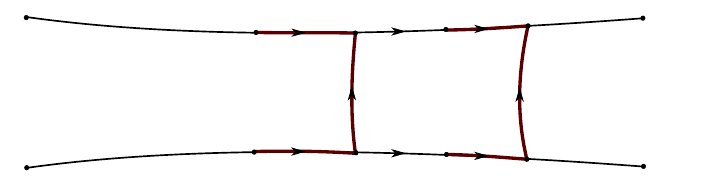\\
  \caption{}\label{61-f4}
\end{figure}

\begin{proof}
We first assume that (a$^\prime$) holds. Let us take $h_1\in o(H_\lambda) $ such that
\begin{equation}\label{h1}
\dl (1, h_1)>50D ,
\end{equation}
where $D=D(1,0)$ is provided by Proposition \ref{sn}. Since  $o(H_\lambda )$ is unbounded with respect to $\dl$, we can choose, by induction, $h_i\in o(H_\lambda) $ such that
\begin{equation}\label{hi}
\dl (1, h_i)> \dl (1, h_{i-1})+ 8D, \; n=2, \ldots, k.
\end{equation}
Let $f_i=ah_i$.

Note that for every $i\in \{ 1, \ldots , k\}$ and every integer $N\ne 0$, the word $(ah_i)^N$ satisfies conditions (W$_1$)-(W$_3$) from Lemma \ref{w}. Hence every path in $\G$ labelled by $(ah_i)^N$ is $(4,1)$-quasi-geodesic. This means that all $f_i$'s are loxodromic.

Let us verify the WPD condition. Fix $i\in \{ 1, \ldots , k\} $ and $\e>0$. Let $$K=|H_\lambda \cap H_\lambda ^a|+2$$ and let $R=R(\e, K)$ be given by Lemma \ref{w}. Let $N>R/2$ be an integer. Suppose that $g\in G$ moves
both $1$ and $f_i^N$ by at most $\e$. Let $p$ be a path in $\G $ starting at $1$ and labelled by $(ah_i)^N$ and let $q=g(p)$. Then $p$ and $q$ are oriented $\e $-close. Since $\ell (p)=2N>R$, by Lemma \ref{w} there exist subpaths $p_1r_1 \ldots p_Kr_K$ of $p$ and $q_1s_1\ldots q_Ks_K$ of $q$ such that $r_j$  and $s_j$  are edges labelled by $a$,  $p_j$  and $q_j$  are edges ($H_\lambda$-components) labelled by $h_i$, and $p_j$ is connected to $q_j$, $j=1, \ldots, K$. Let $e_j$ denote an empty path or an edge in $\G$ connecting $(p_j)_+$ to $(q_j)_+$ and labelled by an element $c_j\in H_\lambda \setminus\{ 1\} $. Reading labels of the loops $e_js_{j}q_{j+1}e_{j+1}^{-1} p_{j+1}^{-1}r_{j}^{-1}$ (Fig. \ref{61-f4}) yields $c_j\in H_\lambda \cap H_\lambda ^a$ for $j=1, \ldots , K-1$.

Since $K-1=|H_\lambda \cap H_\lambda ^a|+1$, there exist $j_1, j_2\in \{ 1, \ldots, K-1\} $ such that $c_{j_1}=c_{j_2}=c$. Let $d=|j_1-j_2|$. Again reading the labels of suitable loops it is easy to see that $[c, f_i^d]=1$ and $g=f_i^ac f_i^b$ for some integers $a,b$. By the former equality we can assume that $|b|<d<K<|H_\lambda \cap H_\lambda ^a|$.  Since $f_i$ is loxodromic, there are only finitely many integers $a$ satisfying $|f_i^ac f_i^b|_{X\cup \mathcal H}\le \e$ for any fixed $b$. Hence there are only finitely many choices for $g$ and thus $H_i$ satisfies the WPD condition for every $i\in \{ 1, \ldots , k\} $.

\begin{figure}
  \centering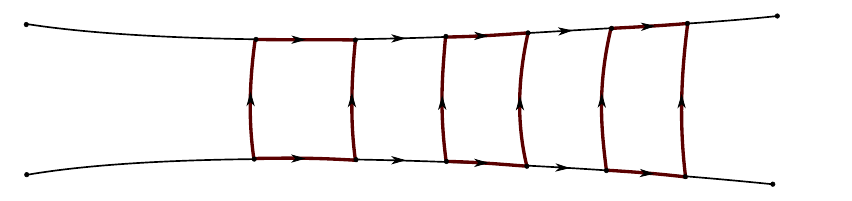\\
  \caption{}\label{61-f5}
\end{figure}

It remains to prove that $f_i$ and $f_j$ are non-commensurable whenever $i\ne j$. Suppose that $f_i^m=(f_j^n)^t$ for some $t\in G$. Let $p$ be the path in $\G $ starting at $1$ and labelled by $(ah_i)^m$, and let $q$ be the path starting at $t^{-1}$ and labelled by $(ah_j)^n$. Passing to multiples of $m$ and $n$ if necessary, we can assume that $|m|, |n|$ are sufficiently large. Applying Lemma \ref{w} for $K=3$ and $\e= |t|_{X\cup \mathcal H}$ as in the previous paragraph, we can find subpaths $p_1r_1 p_2r_2 p_3$ of $p$ and $q_1s_1q_2s_2 q_3$ of $q$ such that $r_1,r_2, s_1, s_2$  are edges labelled by $a^{\pm 1}$, $p_1, p_2,p_3$ and $q_1,q_2,q_3$  are edges ($H_\lambda$-components) labelled by $h_i^{\pm 1}$ and $h_j^{\pm 1}$, respectively, and $p_n$ is connected to $q_n$, $n=1,2,3$. Let $e_n$ (respectively, $g_n$) denote the edge in $\G$ or the trivial path connecting $(p_n)_+$ to $(q_n)_+$ (respectively, $(p_n)_-$ to $(q_n)_-$) and labelled by an elements of $H_\lambda $. Note that $g_2$ is an isolated component in the loop $g_2s_1^{-1}e_1^{-1}r_1$. Indeed otherwise two distinct components of $p$, namely $p_1$ and $p_2$, would be connected, which contradicts Lemma \ref{w}. Hence $\widehat\ell (g_2)\le 4D$ by Proposition \ref{sn}. Similarly $\widehat\ell (e_2)\le 4D$. Reading the label of the cycle $g_2q_2e_2^{-1}p_2^{-1}$ and applying the triangle inequality, we obtain $$|\dl (1, h_i)-\dl(1, h_j)| \le \widehat\ell (e_2)+\widehat\ell (g_2)\le 8D,$$ which contradicts (\ref{hi}).

Thus $f_1, \ldots , f_k$ are non-commensurable WPD loxodromic elements with respect to the action of $G$ on $\G $. To complete the proof it remains to apply Theorem \ref{wpd}.

Finally, if we only have (a), then we choose  any $h\in H_\lambda$ that satisfies $\dl (1, h)>50D $ (in particular, we may have $\dl (1, h)=\infty $). Then the same arguments as above show that $f=ah$ is a WPD loxodromic element with respect to the action of $G$ on $\G $.
\end{proof}

We record one corollary of Theorem \ref{elemhe} and Proposition \ref{malnorm} for the future use. Note the the last claim of the corollary follows from the proof of Theorem \ref{elemhe} and

\begin{cor}\label{elemhe1}
Let $G$ be a group, $X\subseteq G$, $H\h (G,X)$ a non-degenerate subgroup. Then for every $a\in G\setminus H$, there exists $h\in H$ such that $ah$ is  loxodromic and satisfies WPD with respect to the action on $\Gamma (G, X\sqcup H)$.

If, in addition, $H$ is finitely generated, then for  every integer $k>0$, there exist $h_1, \ldots , h_k\in H$ such that $ah_1, \ldots , ah_k$ are non-commensurable, loxodromic, and satisfy WPD. In particular, $\{ E(ah_1), \ldots , E(ah_k)\} \h G$. Moreover, if $H$ contains an element $h$ of infinite order, then we can choose $h_1, \ldots , h_k$ to be powers of $h$.
\end{cor}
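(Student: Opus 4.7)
The plan is to deduce Corollary \ref{elemhe1} directly from Theorem \ref{elemhe} by arranging that $a\in X$ and that $H$ plays the role of $H_\lambda$ in a singleton collection $\Hl=\{H\}$. First, by Corollary \ref{he-indep}, I can enlarge $X$ by any finite set without destroying the property $H\h(G,X)$, so I replace $X$ with $X\cup\{a\}$ and assume $a\in X$. To invoke Theorem \ref{elemhe}, I must check its hypotheses (a) and (b). Hypothesis (b), namely $|H^a\cap H|<\infty$, is Proposition \ref{malnorm-intr} applied to $a\in G\setminus H$. Hypothesis (a), that $H$ is $\widehat\d$-unbounded, follows from non-degeneracy: since $H$ is infinite and $(H,\widehat\d)$ is locally finite by Theorem \ref{ipchar-intr}, no ball of finite radius exhausts $H$. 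Theorem \ref{elemhe} then yields $h\in H$ with $ah$ loxodromic and WPD with respect to the action on $\Gamma(G,X\sqcup H)$; this Cayley graph is quasi-isometric to the original one (Proposition \ref{x1x2}), so the conclusion transfers back.

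For the stronger assertion with $k$ non-commensurable elements, I need to upgrade hypothesis (a) to (a$^\prime$): that $o(H)$ is $\widehat\d$-unbounded. When $H$ is finitely generated, fix a finite generating set $Y\subset H$ and, again by Corollary \ref{he-indep}, further replace $X$ with $X\cup\{a\}\cup Y$. Now $H\subseteq\langle X\rangle$, so by Remark \ref{oHl} every $h\in H$ admits an admissible path (one avoiding $\Gamma_H$) of finite length, giving $o(H)=H$; since $H$ is infinite and $(H,\widehat\d)$ is locally finite, $o(H)=H$ is unbounded. Theorem \ref{elemhe} in its stronger form then supplies $h_1,\dots,h_k\in H$ such that $ah_1,\dots,ah_k$ are pairwise non-commensurable, loxodromic, and WPD. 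The embedding $\{E(ah_1),\dots,E(ah_k)\}\h G$ is immediate from Theorem \ref{wpd}.

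For the final clause about powers of an infinite-order element $h\in H$, I would revisit the inductive selection in the proof of Theorem \ref{elemhe}, where the $h_i$ are chosen from $o(H)$ to satisfy $\widehat\d(1,h_i)>\widehat\d(1,h_{i-1})+8D$. With $o(H)=H$ (after adjusting $X$ as above), every power $h^n$ lies in $o(H)$. Because $\{h^n:n\in\mathbb Z\}$ is infinite and sits inside the locally finite space $(H,\widehat\d)$, each ball $B_{\widehat\d}(1,R)$ contains only finitely many powers of $h$; hence I can extract a subsequence $n_1,n_2,\dots$ with $\widehat\d(1,h^{n_i})\to\infty$ rapidly enough to satisfy the inductive inequality, and set $h_i=h^{n_i}$. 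Plugging these into the proof of Theorem \ref{elemhe} unchanged yields the desired non-commensurability, loxodromy, and WPD for $ah^{n_1},\dots,ah^{n_k}$.

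I expect the main conceptual point — not really an obstacle so much as the key idea — to be the passage to a modified relative generating set that absorbs both $a$ and, in the second part, a finite generating set of $H$; without this, hypothesis (a$^\prime$) of Theorem \ref{elemhe} can fail (e.g.\ if $H\cap\langle X\rangle=\{1\}$, then $o(H)=\{1\}$ trivially). Everything else is a bookkeeping consequence of Corollary \ref{he-indep}, Proposition \ref{malnorm-intr}, local finiteness of $(H,\widehat\d)$, and the verbatim proof of Theorem \ref{elemhe}.
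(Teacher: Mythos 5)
Your proposal is correct and follows essentially the same route as the paper: both parts reduce to Theorem \ref{elemhe} by using Corollary \ref{he-indep} to absorb $a$ (and, for the second part, a finite generating set of $H$) into $X$, verifying hypothesis (b) via Proposition \ref{malnorm}, hypothesis (a) via local finiteness of $(H,\widehat\d)$, and (a$'$) via Remark \ref{oHl}, with the final clause obtained by running the inductive choice in the proof of Theorem \ref{elemhe} inside the unbounded set $\langle h\rangle\subseteq o(H)$. Your explicit remark that the conclusion transfers back to the original $\Gamma(G,X\sqcup H)$ via Proposition \ref{x1x2} is a detail the paper leaves implicit, but it does not change the argument.
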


\begin{proof}
If $H$ is non-degenerate, the local finiteness of $H$ with respect to the metric $\widehat \d$ implies that $(H, \widehat d)$ is unbounded. On the other hand $|H  ^a\cap H|<\infty $ for any $a\in G\setminus H$ by Proposition \ref{malnorm}. Thus Theorem \ref{elemhe} gives us a loxodromic WPD element of the form $ah$, where $h\in H$.

If $H$ is finitely generated, we can assume that $X$ contains a generating set of $H$ by Corollary \ref{he-indep}.  As we noticed in Remark \ref{oHl}, in this case we have $o(H)=H$. Hence the condition (a$^\prime$) from Theorem \ref{elemhe} holds and we get what we want again. Finally the fact that $h_1, \ldots , h_k$ can be chosen to be powers powers of an element $h$ of infinite order in $H$ follows immediately from the proof of Theorem \ref{elemhe} since $\langle h\rangle \le o(H)$ and $\langle h\rangle$ is unbounded with respect to $\widehat d$.
\end{proof}

In Section \ref{appl} we will prove some general results about the class of groups with hyperbolically embedded subgroups. The next corollary shows that this class is closed under taking certain subgroups in the following sense.

\begin{cor}\label{he subgroup}
Suppose that a group $G$ contains a non-degenerate hyperbolically embedded subgroup $H$. Let $K$ be a subgroup of $G$ such that $|K\cap H|=\infty $ and $K\setminus H\ne \emptyset $. Then $K$ contains a non-degenerate hyperbolically embedded subgroup.
\end{cor}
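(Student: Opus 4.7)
The plan is to produce a loxodromic element $f\in K$ satisfying the WPD condition for the action of $K$ on $\Gamma(G,X\sqcup H)$, and then to apply Theorem \ref{wpd} to the single-element collection $\{f\}$ inside $K$, obtaining the maximal elementary subgroup $E_K(f)$ of $K$ containing $f$ as a non-degenerate hyperbolically embedded subgroup. The construction of $f$ will follow the scheme of Theorem \ref{elemhe}, while non-degeneracy of $E_K(f)$ will reduce to showing that $K$ itself is not virtually cyclic.

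Fix $X\subseteq G$ with $H\hookrightarrow _h (G,X)$ and let $\widehat\d$ be the associated relative metric on $H$, with $D=D(1,0)$ as in Proposition \ref{sn}. Pick $a\in K\setminus H$ (which exists by hypothesis); by Corollary \ref{he-indep} we may enlarge $X$ to $X\cup\{a\}$ without losing the hyperbolic embedding, so we assume $a\in X$. By Proposition \ref{malnorm}, $H\cap H^a$ is finite. Since $(H,\widehat\d)$ is locally finite and $K\cap H$ is infinite, the ball $B_{\widehat\d}(1,50D)$ in $H$ cannot contain all of $K\cap H$, so there exists $h\in K\cap H$ with $\widehat\d(1,h)>50D$ (possibly $\widehat\d(1,h)=\infty$). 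The final paragraph of the proof of Theorem \ref{elemhe} then shows that $f:=ah$ is loxodromic and satisfies WPD for the action of $G$ on $\Gamma(G,X\sqcup H)$; these two properties are automatically inherited by the restricted action of $K$, since WPD asks only for finiteness of certain sets of group elements moving two points by bounded amounts.

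Next I would show that $K$ is not virtually cyclic. If it were, then $K\cap H$, being an infinite subgroup of a virtually cyclic group, would have finite index in $K$, and so would its conjugate $a^{-1}(K\cap H)a$. Their intersection would then be of finite index in $K$, in particular infinite. But this intersection equals $K\cap H\cap H^a$, which sits inside the finite set $H\cap H^a$, a contradiction. This is the main subtle point of the proof: it is what rules out the bad scenario where $E_K(f)$ could a priori exhaust all of $K$, and it is the key place where almost malnormality of $H$ (Proposition \ref{malnorm}) is used.

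Finally, apply Theorem \ref{wpd} to the action of $K$ on $\Gamma(G,X\sqcup H)$ with the single loxodromic WPD element $f$: this yields $\{E_K(f)\}\hookrightarrow _h K$, where $E_K(f)=E_G(f)\cap K$ is the unique maximal elementary subgroup of $K$ containing $f$ provided by Lemma \ref{elem1}. Hence $E_K(f)\hookrightarrow _h K$ by the remark following Definition \ref{hes}. The subgroup $E_K(f)$ is contained in the virtually cyclic group $E_G(f)$, so it is virtually cyclic and therefore proper in $K$ by the previous paragraph; on the other hand it contains $\langle f\rangle$ and is thus infinite. Therefore $E_K(f)$ is a non-degenerate hyperbolically embedded subgroup of $K$, completing the proof.
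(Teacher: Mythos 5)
Your proof is correct and follows essentially the same route as the paper: pick $a\in K\setminus H$ and $h\in K\cap H$ with $\widehat\d(1,h)>50D$ (using local finiteness of $(H,\widehat\d)$), apply the last part of the proof of Theorem \ref{elemhe} to get a loxodromic WPD element $f=ah\in K$, and then use Theorem \ref{wpd} together with Proposition \ref{malnorm} to conclude that $E_K(f)$ is hyperbolically embedded, infinite, and proper in $K$. Your way of ruling out $E_K(f)=K$ (intersecting the finite-index subgroups $K\cap H$ and $K\cap H^a$ of a hypothetically virtually cyclic $K$ against the finite set $H\cap H^a$) is a slightly more direct variant of the paper's argument, which instead produces a normal finite-index subgroup $N\le K\cap H$ of $E$ and deduces $K\le H$; both hinge on the same almost-malnormality statement.
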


\begin{proof}
Let $a\in K\setminus H$. Arguing as in the previous corollary, we can find $h\in K\cap H$ such that $ah$ is a loxodromic WPD element and hence $E\h K$, where $E=E(ah)$. If $E=K$, then $K$ is elementary and hence every infinite subgroup has finite index in $K$. In particular, so does $K\cap H$. Therefore, there is a subgroup $N\le K\cap H$ that is normal of finite index in $E$. For every $g\in E$ we have $N\le H^g\cap H$. By Proposition \ref{malnorm} this implies that $E=K\le H$, which contradicts our assumption. Hence $E$ is a proper subgroup of $K$. Since $E$ is infinite, it is non-degenerate.
\end{proof}

\subsection{Hyperbolically embedded virtually free subgroups}

The goal of this section is to prove the following. Many of the ideas used here are due to Olshanskii \cite{Ols93} and Minasyan \cite{Min} (see also \cite{AMO}).

\begin{thm}\label{vf}
Suppose that a group $G$ contains a non-degenerate hyperbolically embedded subgroup. Then the following hold.
\begin{enumerate}
\item[(a)] There exists a maximal finite normal subgroup of $G$, denoted $K(G)$.
\item[(b)] For every infinite subgroup $H\h G$, we have $K(G)\le H$.
\item[(c)] For any $n\in \mathbb N$, there exists a subgroup $H\le G$ such that $H\h G$ and $H\cong F_n \times K(G)$, where $F_n$ is a free group of rank $n$.
\end{enumerate}
\end{thm}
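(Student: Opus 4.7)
My plan is to bound every finite normal subgroup $N \lhd G$ inside a fixed finite set, then take the union of all such $N$. First, Corollary~\ref{elemhe1} applied to a non-degenerate hyperbolically embedded subgroup $H_0 \h (G,X)$ produces a loxodromic WPD element $f_1 \in G$. Since $G$ contains a non-degenerate hyperbolically embedded subgroup it is not virtually cyclic (otherwise the relative metric on any infinite proper subgroup would be bounded, contradicting local finiteness), so $E(f_1)$ is a proper, finitely generated (virtually cyclic), non-degenerate hyperbolically embedded subgroup; applying Corollary~\ref{elemhe1} to $E(f_1)$ yields a second loxodromic WPD element $f_2$ non-commensurable with $f_1$. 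For a finite normal subgroup $N \lhd G$, $\Aut(N)$ is finite, so $C_G(N)$ has finite index in $G$ and hence $N \subseteq C_G(f_i^s)$ for some $s>0$. Any element conjugating $f_i^s$ to itself must preserve orientation of the quasi-axis of $f_i$ (swapping would send $f_i^s$ to $f_i^{-s} \neq f_i^s$), so by Corollary~\ref{elemrem} one has $C_G(f_i^s) \subseteq E^+(f_i) \subseteq E(f_i)$; thus $N \le E(f_1) \cap E(f_2)$, which is finite by non-commensurability (an infinite-order element in the intersection would yield a commensurability relation between $f_1$ and $f_2$). The set-theoretic union of all finite normal subgroups is again a subgroup (products of finite normal subgroups are finite and normal), contained in this fixed finite set, giving the desired $K(G)$.

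\textbf{Part (b).} Let $H \h G$ be infinite, and suppose toward contradiction that $k \in K(G) \setminus H$. The map $\phi : H \to K(G)$ defined by $\phi(h) = h^{-1} k h k^{-1}$ takes values in the finite set $K(G)$ by normality, so some fibre $\phi^{-1}(c)$ is infinite; for any $h_1,h_2$ in this fibre, $h_2 h_1^{-1} \in C_G(k)$. Hence $C_G(k) \cap H$ is infinite. On the other hand, any $h \in C_G(k) \cap H$ satisfies $h = k h k^{-1} \in kHk^{-1} = H^{k^{-1}}$, so $C_G(k) \cap H \subseteq H \cap H^{k^{-1}}$; this intersection is finite by Proposition~\ref{malnorm} since $k^{-1} \notin H$, a contradiction.

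\textbf{Part (c).} Set $K = K(G)$. Iterating the construction of part (a) (or applying the moreover clause of Corollary~\ref{elemhe1} to $E(f_1)$) produces $n$ pairwise non-commensurable loxodromic WPD elements $g_1,\dots,g_n \in G$ with $\{E(g_i)\}_{i=1}^n \h (G,X)$ for a suitable $X$ (Theorem~\ref{wpd}); by (b), $K \le E(g_i)$ for each $i$. Since $[G : C_G(K)] < \infty$, replacing each $g_i$ by a suitable power $f_i := g_i^{M_i} \in C_G(K)$ preserves $E(f_i) = E(g_i)$ and makes $f_i$ centralize $K$; as $f_i$ has infinite order, $\langle f_i \rangle \cap K = \{1\}$, so $\langle f_i \rangle \times K \le E(f_i)$. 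After further increasing each $M_i$, a ping-pong argument on the hyperbolic Cayley graph $\Gamma(G, X \sqcup \mathcal{H})$ with $\mathcal{H} = \bigsqcup_i (E(f_i) \setminus \{1\})$, using that non-commensurability forces distinct fixed points on the Gromov boundary, shows $\langle f_1,\dots,f_n \rangle \cong F_n$; centrality of $K$ then gives $H := \langle f_1,\dots,f_n, K \rangle \cong F_n \times K$ (the intersection $F_n \cap K$ is trivial by torsion-freeness of $F_n$). The main obstacle is establishing $H \h G$, for which my approach is to invoke Theorem~\ref{crit} applied to the action of $H$ on $\Gamma(G, X \sqcup \mathcal{H})$. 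Each generator $f_i$ and each non-trivial element of $K$ is a single letter of $\mathcal{H}$, so any element of $H$ written in normal form labels a path satisfying conditions (W$_1$)--(W$_3$) of Lemma~\ref{w} once the $M_i$ are large enough; such a path is therefore $(4,1)$-quasigeodesic, giving simultaneously quasiconvexity of the $H$-orbit of $1$ and properness of the action. The most delicate point is geometric separability of $H$: I would deduce it from geometric separability of the collection $\{E(f_i)\}$ by arguing that a long $\varepsilon$-close intersection $H \cdot s \cap (gH \cdot s)^{+\varepsilon}$ forces a corresponding intersection between some $E(f_i)$-orbit and a $g$-translate of an $E(f_j)$-orbit (the bulk of the length coming from one factor in the normal form), whereupon separability of $\{E(f_i)\}$ combined with almost malnormality (Proposition~\ref{malnorm}) forces $g \in H$.
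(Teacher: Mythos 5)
Your parts (a) and (b) are correct. Part (a) packages things slightly differently from the paper (you bound every finite normal subgroup inside $E(f_1)\cap E(f_2)$ and take the union of all finite normal subgroups, rather than defining $K(G)=\bigcap_{g\in\mathcal L_{WPD}}E(g)$), but both routes work and use the same ingredients (Corollary \ref{elemhe1}, Corollary \ref{elemrem}, Proposition \ref{malnorm}). Part (b) is essentially the paper's argument.

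Part (c) has a genuine gap, and in fact the subgroup you construct is generally not hyperbolically embedded. First, the properness hypothesis of Theorem \ref{crit} fails for your choice of space: in $\Gamma(G,X\sqcup\mathcal H)$ with $\mathcal H=\bigsqcup_i(E(f_i)\setminus\{1\})$, \emph{every} element of $E(f_i)$ is a single letter, so $\d(1,f_i^k)=1$ for all $k$ and the orbit of $\langle f_i\rangle\le H$ is bounded. Quasi-geodesicity of normal forms only bounds the number of syllables of an element in terms of its displacement, and there are infinitely many elements with one syllable, so properness does not follow; this is exactly why the paper takes the free generators to be products $x=g_1^ng_2^ng_3^n$, $y=g_4^ng_5^ng_6^n$ of letters from six \emph{distinct} elementary subgroups, so that word length in $\Gamma(G,Y\sqcup\mathcal E)$ grows linearly in syllable length and each syllable length is realized by finitely many elements of $\langle x,y\rangle$. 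Second, and worse: if $f_i=g_i^{M_i}$ with $M_i\ge 2$, then $g_i\in E(f_i)\setminus H$ (a basis element of a free group has no proper roots, so $g_i\notin\langle f_1,\dots,f_n\rangle\times K$), and $g_i$ commutes with a power of $f_i$, whence $|H\cap H^{g_i}|=\infty$. By Proposition \ref{malnorm}, $H$ is then not hyperbolically embedded at all, so no amount of verification of Theorem \ref{crit}'s hypotheses can succeed. The only way out is to first arrange $E(f_i)=\langle f_i\rangle\times K(G)$ exactly (equivalently, $E(f_i)\cap E(f_j)=K(G)$ for $i\ne j$), which is also what your separability sketch silently assumes when it needs the connecting elements produced by Lemma \ref{w} to land in $H$: a priori they only land in the finite group $E(f_i)\cap E(f_j)$, which contains $K(G)$ but may be strictly larger. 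Producing such elementary subgroups is the real content of the paper's Lemmas \ref{l+}, \ref{e+} (via B.~Neumann's covering theorem) and \ref{suitable}, all of which are missing from your argument. Finally, your heuristic that ``the bulk of the length comes from one factor of the normal form'' misreads the geometry of $\Gamma(G,X\sqcup\mathcal H)$: no single factor contributes more than one edge, so a long overlap of orbits means many alternating syllables, and the correct argument must go through consecutive connected components as in Lemma \ref{w}.
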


Note that in every group, a maximal finite normal subgroup is unique if exists. Also note that claim (c) is, in a sense, the best possible according to (b). The proof will be divided into a sequence of lemmas.

\begin{proof}[Proof of Theorem \ref{vf}]
By Corollary \ref{elemhe1}, we can assume without loss of generality that $G$ contains three infinite elementary subgroups $H_1,H_2,H_3$ such that $$\{ H_1,H_2, H_3\}\h (G, X)$$ for some $X\subseteq G$. We denote by $\mathcal L_{WPD}=\mathcal L_{WPD}(G,X, \mathcal H)$ \label{i-LWPD} the set of all loxodromic elements of $G$ satisfying the WPD condition with respect to the action of $G$ on $\G $. By Corollary \ref{elemhe1}, we have $\mathcal L_{WPD}\ne \emptyset$. We start by proving parts (a) and (b) of the theorem.

Let
 $$K(G)=\bigcap\limits_{g\in \mathcal L_{WPD}} E(g).$$\label{i-KG}

\begin{lem}\label{KG}
$K(G)$ is the maximal finite normal subgroup of $G$. For every infinite subgroup $H\h G$, we have $K(G)\le H$.
\end{lem}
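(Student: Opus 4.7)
The plan is to establish normality, finiteness, maximality, and the inclusion $K(G)\le H$ in turn. Normality follows at once from the fact that the WPD condition is conjugation-invariant together with $E(g^t)=E(g)^t$ (Lemma \ref{elem1}), so conjugation by any $t\in G$ simply permutes the family $\{E(g):g\in \mathcal L_{WPD}\}$. For finiteness, Corollary \ref{elemhe1} produces two non-commensurable loxodromic WPD elements $g_1,g_2\in \mathcal L_{WPD}$; then $K(G)\le E(g_1)\cap E(g_2)$, and this intersection must be finite because any infinite order element of it would lie in the virtually cyclic groups $E(g_1)$ and $E(g_2)$, forcing a power of it to be conjugate to nonzero powers of both $g_1$ and $g_2$, contradicting their choice.

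The observation that drives both maximality and assertion (b) is the following: for any finite normal subgroup $N\lhd G$ and any loxodromic WPD element $g$ of $G$ (with respect to any isometric action on a hyperbolic space), one has $N\le E(g)$. Indeed, the conjugation action of $G$ on $N$ factors through the finite group $\mathrm{Aut}(N)$, so some positive power $g^k$ centralizes every $n\in N$, and Corollary \ref{elemrem} then gives $n\in E(g)$. Applied to every $g\in \mathcal L_{WPD}$, this yields $N\le K(G)$ and establishes maximality.

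The main step, which I expect to be the most delicate, is $K(G)\le H$ for an arbitrary infinite $H\h G$ (the case $H=G$ being trivial). Assuming for contradiction that some $f\in K(G)\setminus H$ exists, Corollary \ref{elemhe1} applied with $a=f\in G\setminus H$ produces $h\in H$ such that $fh$ is loxodromic and satisfies WPD with respect to the action of $G$ on the relevant relative Cayley graph for $H$. By the key observation above, $f\in K(G)\le E(fh)$, so $\langle f,h\rangle=\langle f,fh\rangle$ lies in the virtually cyclic group $E(fh)$. Since $f$ has finite order while $fh$ has infinite order, $h$ must have infinite order as well. Consequently $\langle h\rangle$ and $f\langle h\rangle f^{-1}$ are both infinite cyclic subgroups of finite index in $\langle f,h\rangle$, so their intersection is infinite. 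But $\langle h\rangle\le H$ and $f\langle h\rangle f^{-1}\le H^{f^{-1}}$ with $f^{-1}\notin H$, so $H\cap H^{f^{-1}}$ is infinite, contradicting the almost malnormality of $H$ given by Proposition \ref{malnorm}.
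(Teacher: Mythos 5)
Your treatment of normality, finiteness, and maximality coincides with the paper's: the same observation that conjugation permutes $\{E(g):g\in\mathcal L_{WPD}\}$, the same two non-commensurable elements $g_1,g_2$ supplied by Corollary \ref{elemhe1} (the paper gets finiteness of $E(g_1)\cap E(g_2)$ directly from Proposition \ref{malnorm}, whereas you re-derive it by hand inside the virtually cyclic groups, which is fine), and the same argument that a positive power of any loxodromic WPD element centralizes a finite normal subgroup, so that Corollary \ref{elemrem} puts that subgroup inside every $E(g)$.

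Where you genuinely diverge is the inclusion $K(G)\le H$. The paper's argument is much shorter: since $K(G)$ is finite and normal, $C_G(K(G))$ has finite index in $G$, so $H_0:=H\cap C_G(K(G))$ is an infinite finite-index subgroup of $H$; for $f\in K(G)$ one then has $H_0=f^{-1}H_0f\le H\cap H^f$, and Proposition \ref{malnorm} forces $f\in H$. Your detour through Corollary \ref{elemhe1} and the structure of $E(fh)$ does reach the same contradiction with Proposition \ref{malnorm}, but it contains one step whose stated justification is invalid: from ``$f$ has finite order and $fh$ has infinite order'' it does not follow that $h$ has infinite order --- in the infinite dihedral group $\langle a,b\mid a^2=b^2=1\rangle$ take $f=a$ and $h=b$, so that $fh$ has infinite order while both factors are involutions. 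The step is nevertheless true in your situation, but only because $f$ lies in the finite \emph{normal} subgroup $K(G)$: the images of $h$ and of $fh$ in $G/K(G)$ coincide, and an element of $G$ has infinite order if and only if its image modulo the finite kernel does (equivalently, $(fh)^m\in K(G)h^m$ for all $m$, so $h$ of finite order would make the loxodromic element $fh$ torsion). You should supply this justification. Note, however, that once you invoke finiteness and normality of $K(G)$ in this way you already have all the ingredients of the paper's one-line argument, which dispenses with Corollary \ref{elemhe1} and the elementary-subgroup machinery entirely.
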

\begin{proof}
Note first that $K(G)$ is finite. Indeed by Corollary \ref{elemhe1}, there are two elements $g_1, g_2\in \mathcal L_{WPD}$ such that $\{ E(g_1), E(g_2)\} \h G$. Then by the definition $K(G)\le E(g_1)\cap E(g_2)$. The later intersection is finite by Proposition \ref{malnorm}. It is also easy to see that $K(G)$ is normal as the action of $G$ by conjugation simply permutes the set $\{ E(g)\mid g\in \mathcal L_{WPD} (G)\} $. Indeed the WPD condition is conjugation invariant, and every conjugate of a maximal elementary subgroups is also maximal elementary. Finally observe that for every finite normal subgroup $N\le G$ and every $g\in \mathcal L_{WPD}(G)$, there exists positive integer $n$ such that $N\le C_G(g^n)$. Hence $N\le E(g)$ for every $g\in \mathcal L_{WPD}(G)$. This implies $N\le K(G)$ and thus $K(G)$ is maximal. Finally we note that for every $H\le G$, a finite index subgroup of $H$ centralizes $K(G)$. This and Proposition \ref{malnorm} imply the second claim of the lemma.
\end{proof}

Let $$\mathcal L^+_{WPD}=\{ g\in \mathcal L_{WPD} \mid E(g)=E^+(g)\} . \label{i-L+WPD}$$ The proofs of the following three results are similar to proofs of their analogues for relatively hyperbolic groups (see \cite{AMO,Osi10}).

\begin{lem}\label{l+}
The set $\mathcal L^+_{WPD}$ contains infinitely many pairwise non-commensurable elements.
\end{lem}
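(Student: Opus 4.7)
The strategy is to start from the infinite supply of pairwise non-commensurable loxodromic WPD elements provided by Corollary \ref{elemhe1} (with $G$ having a non-degenerate hyperbolically embedded subgroup) and then modify each of them into an element of $\mathcal L^+_{WPD}$ while keeping the whole family pairwise non-commensurable. So the plan is to fix, once and for all, two non-commensurable elements $a, b \in \mathcal L_{WPD}$ (which exist by the same corollary), together with an infinite sequence $f_1, f_2, \ldots \in \mathcal L_{WPD}$ pairwise non-commensurable and each non-commensurable with $a$ and $b$.

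The basic observation driving the modification step is the following translation of Corollary \ref{elemrem}: an element $g \in \mathcal L_{WPD}$ fails to lie in $\mathcal L^+_{WPD}$ precisely when some positive power of $g$ is conjugate in $G$ to some negative power. Equivalently, there exists $t_g \in G$ with $t_g^{-1} g^k t_g = g^{-k}$ for some $k > 0$, and then $t_g$ sends a quasi-geodesic axis of $g^k$ in $\Gamma(G,X\sqcup\mathcal H)$ to itself reversing its orientation. My plan is to rule out this symmetry for carefully chosen perturbations of the $f_i$.

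For each $i$, I will consider the one-parameter family $g_{i,n} := f_i a^n$ and argue that for all sufficiently large $n$, $g_{i,n}$ lies in $\mathcal L^+_{WPD}$, is non-commensurable with $a$, and that $\{g_{i,n_i}\}_i$ for appropriately chosen indices $n_i \to \infty$ is pairwise non-commensurable. Loxodromicity, the WPD property, and non-commensurability for large $n$ follow from the standard ping-pong picture set up via Lemma \ref{w} and Theorem \ref{wpd}: the label $f_i a^n$ is in the class $\mathcal W$ (after replacing $a$ and $f_i$ by appropriate powers so that the relevant $\widehat\ell$ values exceed $50D$), so any path labeled by powers of $g_{i,n}$ is $(4,1)$-quasi-geodesic in $\Gamma(G, X\sqcup \mathcal H)$. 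To rule out the inversion symmetry, suppose for contradiction that $g_{i,n} \notin \mathcal L^+_{WPD}$ for infinitely many $n$. Then for each such $n$ there is $s_n \in G$ and $k_n > 0$ with $s_n^{-1} g_{i,n}^{k_n} s_n = g_{i,n}^{-k_n}$. I would apply part (b) of Lemma \ref{w} to the quasi-axis of $g_{i,n}^{k_n}$ and its translate by $s_n$ (which runs in the opposite direction) to force a long matching of components, and then read off an algebraic relation between $f_i$ and $a$ that contradicts non-commensurability for large $n$.

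The main technical obstacle will be the last step: converting the putative inversion $s_n$ into a concrete algebraic identity expressing a positive power of $f_i$ as conjugate to a negative power (or asserting that $f_i \in E(a)$). This requires combining the matched components produced by Lemma \ref{w} with a pigeonhole argument on the finitely many possible configurations of how $s_n$ pairs the $H_\lambda$-components in $g_{i,n}^{k_n}$ to those in its reversal, together with the almost malnormality statements of Proposition \ref{malnorm}. Once this is done, a diagonal extraction over $i$ and $n_i$ produces the required infinite family in $\mathcal L^+_{WPD}$.
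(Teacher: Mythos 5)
Your plan has a genuine gap at the step you yourself flag as "the main technical obstacle": ruling out the inversion symmetry for a product of only \emph{two} letters. The reason the paper's Lemma \ref{w}(b) yields an immediate contradiction in such arguments is a parity/ordering obstruction on the types of consecutive components: if $g$ is a product of letters from hyperbolically embedded subgroups $E_1,E_2,\dots$, then matching $K\ge 2$ consecutive components of a path labelled by $g^k$ with $K$ consecutive components of a path labelled by $g^{-k}$ forces a common length-$2$ subword of the type sequence of $g^k$ and that of $g^{-k}$. For your $g_{i,n}=f_ia^n$ (types $1,2,1,2,\dots$ for powers, and $2,1,2,1,\dots$ for inverse powers) such common subwords exist in abundance, so no contradiction arises, and the algebraic relations you can extract from the connecting edges (elements of $E_1\cap E_1^{s}$, $E_2\cap E_2^{s}$, etc.) are all consistent with an orientation-reversing symmetry. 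This is not merely a technical nuisance: take $G=F(a,f)\rtimes_\phi(\mathbb Z/2)$ where $\phi$ inverts both $a$ and $f$. Then $a,f$ are non-commensurable loxodromic WPD elements, $\{E(a),E(f)\}\h G$, and yet $t(f a^n)t^{-1}$ is conjugate to $(fa^n)^{-1}$ for \emph{every} $n$, where $t$ is the involution. So the family $f_ia^n$ can fail to meet $\mathcal L^+_{WPD}$ for all $n$, and no amount of pigeonholing on configurations will rescue the argument in this form.

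The paper's proof of Lemma \ref{l+} avoids this by using \emph{three} hyperbolically embedded subgroups $H_1,H_2,H_3$ (available from Corollary \ref{elemhe1}) and elements of the form $f_i=abh_i$ with $a\in H_1$, $b\in H_2$, $h_i\in H_3$, all of $\widehat\d$-length greater than $50D$. A relation $t^{-1}f_i^nt=f_i^{-n}$ would, via Lemma \ref{w}(b) with $K=2$, produce two consecutive components of a path labelled $(abh_i)^n$ connected to two consecutive components of a path labelled $(abh_i)^{-n}$; since connected components must lie in the same $H_\lambda$, this requires a common ordered length-$2$ subsequence of $123123\dots$ and $321321\dots$, which does not exist. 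If you want to salvage your strategy, the minimal fix is to replace $f_ia^n$ by a product of three letters from three distinct hyperbolically embedded subgroups (or otherwise build an orientation-asymmetric syllable pattern); non-commensurability of the resulting family is then obtained as in the paper by varying the $\widehat\d_3$-length of the third letter and applying Proposition \ref{sn}.
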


\begin{proof}
It suffices to find $k$ non-commensurable elements in $\mathcal L^+_{WPD}$  for all $k\in \mathbb N$. Let $a\in H_1$, $b\in H_2$, be elements satisfying
\begin{equation}\label{dfg}
\min\{ \widehat\d_1(1,a), \widehat\d_2(1,b)\} >50D,
\end{equation}
where $D=D(1,0) $ is given by Proposition \ref{sn}. Note that $ab\notin H_3$. Indeed otherwise both $a$ and $b$ are labels of isolated components in a loop of length $3$ in $\G $, which contradicts (\ref{dfg}) by Proposition \ref{sn}. Hence by Corollary \ref{elemhe1} there exist $h_i\in H_3$, $i=1, \ldots, k$  such that
$f_1=abh_1,\ldots , f_k=abh_k$ are non-commensurable elements of $\mathcal L_{WPD}$. The last assertion of Corollary \ref{elemhe1} allows us to assume that
\begin{equation}\label{d3hi}
\widehat\d_3 (1, h_i)>50D,\; i=1,\ldots, k.
\end{equation}
Indeed since $(H_3, \widehat d_3)$ is locally finite, there is an element of infinite order $h\in H_3$ such that every non-trivial power of $h$ has length $\widehat\d_3 (1, h^n)>50D$.

Let us show that every $f_i$ satisfies $E(f_i)=E^+(f_i)$.
To this end it suffices to show that no element $t\in G$ and no $n\in \mathbb N$ satisfy
\begin{equation}\label{tfn}
t^{-1} f_i^nt=f_i^{-n}
\end{equation}
(see Corollary \ref{elemrem}). Arguing by a contradiction, let $t\in G$ and $n\in \mathbb N$ satisfy (\ref{tfn}). Let $\e =|t|_{X\cup\mathcal H}$. Then there exist oriented $\e$-close paths $p$ and $q$ in $\G $ labelled by $(abh_i)^n$ and $(abh_i)^{-n}$, respectively. Note that by (\ref{dfg}) and (\ref{d3hi}) these labels satisfy conditions (W$_1$)-(W$_3$) of Lemma \ref{w}. Let $R=R(\e, 2)$ be given by part (b) of the lemma. Passing to a multiple of $n$ if necessary, we can assume that $p$ is long enough so that $\ell (p)\ge R$. Then by Lemma \ref{w} there exist $2$ consecutive components of $p$ that are connected to $2$ consecutive components of $q$. However this is impossible, actually because the sequences $123123\ldots $ and $321321\ldots $ contain no common subsequence of length $2$.
\end{proof}

\begin{lem}\label{e+}
There exist non-commensurable $h_1, h_2\in \mathcal L^+_{WPD}$ such that $K(G)=E(h_1)\cap E(h_2)$.
\end{lem}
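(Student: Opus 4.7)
First I would fix a starting element $h_1 \in \mathcal{L}^+_{WPD}$, which exists by Lemma \ref{l+}. Since $E(h_1)$ is virtually cyclic (being elementary), it contains only finitely many finite subgroups; let $F_1, \dots, F_m$ be those finite subgroups of $E(h_1)$ that are \emph{not} contained in $K(G)$, and for each $j$ pick some $f_j \in F_j \setminus K(G)$. By the definition $K(G) = \bigcap_{g \in \mathcal{L}_{WPD}} E(g)$, for each $j$ there exists $g_j \in \mathcal{L}_{WPD}$ with $f_j \notin E(g_j)$. Note that $g_j$ is automatically non-commensurable with $h_1$ (otherwise $E(g_j) = E(h_1) \ni f_j$) and, after discarding duplicates via the equality $E(a) = E(b)$ for commensurable $a, b$ (Corollary \ref{elemrem}), the elements $g_1, \dots, g_m$ are pairwise non-commensurable.

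Next, invoking Theorem \ref{wpd}, I would establish that the collection $\mathcal{F} = \{E(h_1), E(g_1), \dots, E(g_m)\}$ is hyperbolically embedded in $G$. The target $h_2$ will be produced via Corollary \ref{elemhe1} applied to this hyperbolically embedded family: choosing a ``mixing'' element $a = g_1^{N_1} g_2^{N_2} \cdots g_m^{N_m}$ for large $N_j$ ensures $a \notin E(h_1)$, so Corollary \ref{elemhe1} yields an element $h \in E(h_1)$ of infinite order such that $h_2 := ah$ is loxodromic and satisfies WPD on the Cayley graph $\Gamma(G, X \sqcup \mathcal{H})$, where $\mathcal{H} = \bigsqcup_{H \in \mathcal{F}} (H \setminus\{1\})$. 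By making this choice generically (using the fact that Corollary \ref{elemhe1} actually furnishes infinitely many non-commensurable such $h_2$'s), one can arrange $h_2$ to be non-commensurable with $h_1$; moreover, an argument in the spirit of Lemma \ref{l+} --- exploiting that the defining word of $h_2$ and its inverse $h_2^{-1}$ have combinatorially incompatible structures as paths in $\Gamma(G, X \sqcup \mathcal{H})$ --- shows $h_2 \in \mathcal{L}^+_{WPD}$, i.e.\ no element of $G$ conjugates a nontrivial power of $h_2$ to its inverse.

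It remains to verify $E(h_1) \cap E(h_2) \subseteq K(G)$; the reverse inclusion is automatic from the definition of $K(G)$. So suppose some $f \in (E(h_1) \cap E(h_2)) \setminus K(G)$. Then $f$ lies in one of the finite subgroups $F_j$, and without loss of generality $f = f_j$. In particular $f_j \in E(h_2)$, and since $f_j$ has finite order it belongs to the ``finite part'' of the virtually cyclic group $E(h_2) = E^+(h_2)$; concretely, some nontrivial power of $h_2$ is centralized by $f_j$. This would mean $f_j$ preserves a quasi-geodesic axis of $h_2$ in $\Gamma(G, X \sqcup \mathcal{H})$. By construction this axis is a concatenation of long isolated components in the factors of $\mathcal{F}$, where (after traveling one period) a translate of $E(g_j)$ appears as a long isolated $E(g_j)$-component. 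Applying Lemma \ref{w} to the axis of $h_2$ and its image under $f_j$ (which are oriented $\epsilon$-close for $\epsilon = |f_j|_{X \cup \mathcal{H}}$), one concludes that $f_j$ must translate these components to themselves, forcing $f_j \in E(g_j)$ by Proposition \ref{malnorm} (almost malnormality) --- contradicting the choice of $g_j$.

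The main obstacle is the last paragraph: one must control the finite part of $E(h_2)$ well enough to show that any finite element of $E(h_2)$ commuting with a power of $h_2$ is forced to lie in every $E(g_j)$ that appears as a syllable of $h_2$. This is delicate because $f_j$ need not fix the axis pointwise; it could act by a combinatorial shift. Ruling out such shifts uses both the non-commensurability of the $g_j$'s with each other and with $h_1$ (so distinct syllables cannot be identified) and the almost malnormality of the hyperbolically embedded family $\mathcal{F}$ (Proposition \ref{malnorm}), which forces $f_j$ to stabilize each syllable setwise and hence to lie in every $E(g_j)$ --- the desired contradiction.
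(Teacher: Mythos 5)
Your construction of $h_2$ as a long product $g_1^{N_1}\cdots g_m^{N_m}h$ visiting many elementary subgroups is genuinely different from the paper's argument, but the final step has a gap that I do not see how to close within your framework. When you apply Lemma \ref{w} to the relation $f_j h_2^n=h_2^n f_j$, the connectedness of an $E(g_{j'})$-component of the path labelled by $h_2^n$ with the corresponding component of its translate by $f_j$ tells you that $w^{-1}f_j w'\in E(g_{j'})$, where $w,w'$ are the group elements represented by the prefixes of the word ending at those components. Even after ruling out a combinatorial shift (so $w=w'$), the conclusion is only that \emph{some conjugate} of $f_j$ lies in \emph{some} $E(g_{j'})$ — with $j'$ determined by where the connected components happen to sit, not by the index $j$ of $f_j$. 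But your hypothesis is merely that $f_j\notin E(g_j)$ for the one subgroup $E(g_j)$ you selected; it says nothing about conjugates of $f_j$ meeting $E(g_j)$, nor about $f_j$ or its conjugates meeting the other $E(g_{j'})$'s. So no contradiction follows. This is exactly the difficulty the paper's proof is built to overcome: since the definition of $K(G)$ only provides, for each bad torsion element, \emph{one} $E(g)$ avoiding it, one must somehow beat all conjugates at once. There are also secondary problems: commensurable loxodromic WPD elements have \emph{conjugate}, not equal, maximal elementary subgroups (Corollary \ref{elemrem}), so your parenthetical "otherwise $E(g_j)=E(h_1)\ni f_j$" is false and the pairwise non-commensurability needed to invoke Theorem \ref{wpd} for the family $\{E(h_1),E(g_1),\dots,E(g_m)\}$ is not automatic; you must also enumerate \emph{all} elements of $T(h_1)\setminus K(G)$ rather than one representative per finite subgroup; and you must separately verify that $h_2$ is non-commensurable with each $g_j$, or else $E(h_2)\cap E(g_{j'})$ need not even be finite.

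The paper's proof is both shorter and structurally different. It takes any two non-commensurable $f,g\in\mathcal L^+_{WPD}$ (Lemma \ref{l+}) and shows that $E(x^{-1}fx)\cap E(g)=K(G)$ for \emph{some} conjugate $x^{-1}fx$. If not, then for every $x\in G$ the finite set $x^{-1}T(f)x\cap T(g)$ contains an element of $T(g)\setminus K(G)$, and a short computation shows $G=\bigcup_{(s,t)\in T(f)\times(T(g)\setminus K(G))}y(s,t)\,C_G(t)$, a covering of $G$ by finitely many cosets of centralizers. By B.~Neumann's theorem some $C_G(t)$ with $t\in T(g)\setminus K(G)$ has finite index, whence $t\in E(h)$ for every $h\in\mathcal L_{WPD}$, i.e.\ $t\in K(G)$ — a contradiction. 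This covering argument is precisely the mechanism that handles all conjugates simultaneously; some substitute for it would be needed to rescue your approach.
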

\begin{proof}
By Lemma \ref{l+}, $\mathcal L^+_{WPD}$ contains two non-commensurable elements $f$ and $g$.
We claim now that there exists $x \in G$ such that
$E (x^{-1}fx) \cap E (g)=K(G)$.  Note that for every $x\in G$ we have $K(G)
\subseteq E (x^{-1}fx) \cap E (g)$  by the definition of $K(G)$.

To obtain the inverse
inclusion, arguing by the contrary, suppose that for each $x \in
G$ we have
\begin{equation} \label{eq:inter-e}
(E(x^{-1}fx) \cap E(g)) \setminus K(G) \neq \emptyset.
\end{equation}
For any $h\in \mathcal L^+_{WPD}$,  the set of
all elements of finite order in $E(h)$ form a finite subgroup
$T(h) \le E(h)$.  This is a well-known and easy to prove property of groups, all of
whose conjugacy classes are finite; note that we use $E(h)=E^+(h)$ here.

Since the elements $f$ and $g$
are not commensurable, $E(f)\cap E(g)$ is finite by Proposition \ref{malnorm}. Hence every element of $E(f)\cap E(g)$ has finite order and we obtain
\begin{equation}\label{eq:Exfx}
E(x^{-1}fx) \cap E(g) =T(x^{-1}fx) \cap T(g)=x^{-1}T(f)x \cap T(g).
\end{equation}
Let $P=T(f) \times (T(g)\setminus K(G))$. For each pair of elements $(s,t) \in P$ we choose $y=y(s,t) \in G$ such that $y^{-1}sy=t$ if such $y$ exists; otherwise we set $y(s,t)=1$.

Note that
$$\displaystyle G=\bigcup_{(s,t) \in P} y(s,t) C_G(t).$$ Indeed given any $x\in G$, by (\ref{eq:inter-e}) and (\ref{eq:Exfx}) there exists $t\in T(g)\setminus K(G)$ such that $xtx^{-1}\in T(f)$. Let $y=y(xtx^{-1},t)$ be as above. Then $y^{-1}x \in C_G(t)$ and hence $x\in yC_G(t)$. Recall that by a well-known theorem of B. Neumann \cite{Neumann}, if a group is covered by finitely many cosets of some subgroups, then one of the subgroups has finite index. Thus there exists $t \in T(g) \setminus K(G)$ such that $|G:C_G(t)|<\infty$. Consequently, $t \in E(h)$ for every
$h \in \mathcal L _{WPD}$. Hence $t \in K(G)$, a contradiction.

Thus $E(xfx^{-1}) \cap E(g)=K(G)$ for some $x \in G$. Since $f$, $g$ are non-commensurable and belong to $\mathcal L^+_{WPD}$, so are $xfx^{-1}$ and $g$. It remains to set $h_1=xfx^{-1}$, $h_2=g$.
\end{proof}

\begin{lem}\label{suitable}
For every positive integer $k$, there exist subgroups $\{ E_1, \ldots , E_k\} \h G$ such that $E_i=\langle g_i\rangle \times K(G) $ for some $g_i\in G$, $i=1, \ldots , k$.
\end{lem}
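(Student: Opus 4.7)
The plan is to build the groups $E_i$ as finite-index subgroups of carefully chosen maximal elementary subgroups, and then invoke Proposition \ref{transitive} to transfer hyperbolic embeddedness. I would proceed as follows.

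First, applying the construction from the proof of Lemma \ref{l+} (with the parameter chosen to be $k$ rather than a generic integer), I obtain pairwise non-commensurable elements $f_1,\dots, f_k \in \calL^+_{WPD}$. By Theorem \ref{wpd}, the collection of associated maximal elementary subgroups satisfies $\{E(f_1),\dots,E(f_k)\}\h G$. Since $K(G)$ is a finite normal subgroup of $G$, the centralizer $C_G(K(G))$ has finite index in $G$, so for each $i$ some power $g_i=f_i^{n_i}$ commutes with $K(G)$. The element $g_i$ still lies in $\calL^+_{WPD}$ with $E(g_i)\subseteq E(f_i)$ of finite index, and since $g_i$ has infinite order while $K(G)$ is finite, one has $\langle g_i\rangle\cap K(G)=\{1\}$. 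Define
$$
E_i \;=\; \langle g_i\rangle\times K(G)\;\le\; E(f_i).
$$
This is of finite index in $E(f_i)$, because $\langle g_i\rangle$ already is.

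Next I would verify that $\{E_i\}\h E(f_i)$ for each $i$. Let $X_i\subset E(f_i)$ be a finite transversal of $E_i$ in $E(f_i)$. Every element of $E(f_i)$ has the form $xe$ with $x\in X_i$, $e\in E_i$, so the Cayley graph $\Gamma(E(f_i),X_i\sqcup E_i)$ has diameter at most $2$, and is in particular hyperbolic. Moreover, every admissible path between two points of $E_i$ consists exclusively of $X_i$-edges, so a pair $e_1,e_2\in E_i$ satisfies $\widehat\d(e_1,e_2)\le n$ only when $e_1^{-1}e_2$ equals a product of at most $n$ elements of the finite set $X_i$; there are only finitely many such elements, so $(E_i,\widehat\d)$ is locally finite, and $\{E_i\}\h E(f_i)$ by Definition \ref{hes}.

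Finally, Proposition \ref{transitive} applied to the finite collection $\{E(f_1),\dots,E(f_k)\}\h G$ together with the hyperbolic embeddings $\{E_i\}\h E(f_i)$ yields $\{E_1,\dots,E_k\}\h G$, as desired. The main obstacle is the middle step, verifying $E_i\h E(f_i)$: the diameter computation is routine, but one must take care that the admissibility condition in the definition of $\widehat\d$ really does force the use of only the finitely many letters from $X_i$, so that local finiteness passes through. Everything else is bookkeeping of the previously established facts about $K(G)$ and the structure of maximal elementary subgroups of WPD loxodromic elements.
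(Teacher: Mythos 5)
Your argument breaks down at the step you yourself identified as the main obstacle: the claim that $\{E_i\}\h E(f_i)$ when $E_i$ is a \emph{proper} subgroup of finite index in $E(f_i)$. This is false, and in fact contradicts Proposition \ref{malnorm}: a hyperbolically embedded subgroup must be almost malnormal, whereas for any $g\in E(f_i)\setminus E_i$ the intersection $E_i\cap E_i^g$ has finite index in $E_i$ and is therefore infinite (as $g_i$ has infinite order). The concrete error is the assertion that ``every admissible path between two points of $E_i$ consists exclusively of $X_i$-edges.'' Admissibility only forbids edges of the subgraph $\Gamma_{E_i}$, i.e.\ edges joining two vertices \emph{of the coset $E_i$ itself}; it does not forbid edges labelled by elements of $E_i$ lying in other cosets. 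Since $E_i$ has finite index (and, choosing $g_i$ to be a suitable central power of $f_i$ in $E(f_i)=E^+(f_i)=C_G(f_i^r)$, can be taken normal), for any $e_1,e_2\in E_i$ and any $x\in X_i\setminus E_i$ the path $e_1\to e_1x\to e_2x\to e_2$, whose middle edge is labelled by $(e_1^{-1}e_2)^x\in E_i$ and has both endpoints outside $E_i$, is admissible. Hence $\widehat\d(e_1,e_2)\le 3$ for all $e_1,e_2\in E_i$ and $(E_i,\widehat\d)$ is not locally finite --- this is exactly the phenomenon of Example \ref{basic-ex-he}(b). Consequently the input to Proposition \ref{transitive} is unavailable and the argument collapses.

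The underlying point your proposal misses is that one cannot realize $\langle g\rangle\times K(G)$ as a hyperbolically embedded subgroup by sitting it inside a strictly larger elementary subgroup; one must arrange for the \emph{maximal} elementary subgroup itself to equal $\langle g\rangle\times K(G)$. This is what the paper does: using Lemma \ref{e+} it produces non-commensurable $h_1,h_2\in\mathcal L^+_{WPD}$ with $E(h_1)\cap E(h_2)=K(G)$ and $E(h_j)=C_G(h_j)$, then forms $g_i=ab_i$ with $a$ a long power of $h_1$ and $b_1,\dots,b_k$ long powers of $h_2$. Corollary \ref{elemhe1} gives $\{E(g_1),\dots,E(g_k)\}\h G$ directly (these are maximal elementary subgroups of WPD loxodromics, so Theorem \ref{wpd} applies to them as they stand), and the remaining work --- carried out with Lemma \ref{w} and Proposition \ref{sn} applied to paths labelled by $(ab_i)^{\pm n}$ --- is to show that every $t\in E(g_i)$ lies in $\langle g_i\rangle K(G)$, whence $E(g_i)=\langle g_i\rangle\times K(G)$. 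That identification of $E(g_i)$, not a transitivity argument, is the substance of the lemma.
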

\begin{proof}
Let $h_1, h_2$ be non-commensurable elements of $ \mathcal L^+_{WPD}$ such that $K(G)=E(h_1)\cap E(h_2)$. By Corollary \ref{elemrem}, after passing to powers of $h_i$ if necessary, we can assume that $E(h_i)=C_G(h_i)$, $i=1,2$. By Theorem \ref{wpd}, $\{ E(h_1), E(h_2)\} \h (G, Y)$ for some $Y\subseteq G$.  Let $\mathcal E=(E(h_1)\setminus\{1\})\sqcup(E(h_2)\setminus\{1\})$. Let $\widehat\d_1, \widehat\d_2$ be the corresponding metrics on $E(h_1), E(h_2)$ defined using $\Gk $.

Let $a\in E(h_1)$ be a power of $h_1$ satisfying $\widehat\d_1 (1,a)>50D$, where $D=D(1,0) $ is given by Proposition \ref{sn} applied to the Cayley graph $\Gk $. Obviously $a\notin E(h_2)$ and Corollary \ref{elemhe1} allows us to choose $b_i\in \langle h_2\rangle $, $i=1, \ldots,k$,  such that $g_1=ab_1,\ldots, g_k=ab_k$ are non-commensurable loxodromic elements with respect to the action on $\Gk $, $\{ E(g_1), \ldots, E(g_k)\}\h G$ and
\begin{equation}\label{d2bi}
\min\{ \widehat\d_2 (1,b_i), \widehat\d_2 (1, b_i^2)\} >50D,\; i=1, \ldots, k.
\end{equation}

Let us show that for every $g_i$ we have $E(g_i)=\langle g_i\rangle \times K(G) $.

We are arguing as in the second paragraph of the proof of Lemma \ref{l+}. Let $t\in E(g_i)$ and $\e =|t|_{X\cup\mathcal H}$. Then $tg_i^{\pm n}=g_i^nt$ for some $n\in \mathbb N$. Hence there exist oriented $\e$-close paths $p$ and $q$ in $\Gk $ labelled by $(ab_i)^n$ and $(ab_i)^{\pm n}$, respectively, such that $p_-=1$ and $q_-=t$. By the choice of $a$ and $b_i$'s, $\Lab (p)$ and $\Lab (q)$ satisfy conditions (W$_1$)-(W$_3$) of Lemma \ref{w}. Let $R=R(\e, 3)$ be given by part (b) of the lemma. Passing to a multiple of $n$ if necessary, we can assume that $p$ is long enough so that $\ell (p)\ge R$. Then by Lemma \ref{w} there exists $3$ consecutive components $p_1, p_2, p_3$ of $p$ that are connected to $2$ consecutive components $q_1,q_2,q_3$ of $q$ (Fig. \ref{61-f6}).

\begin{figure}
  \centering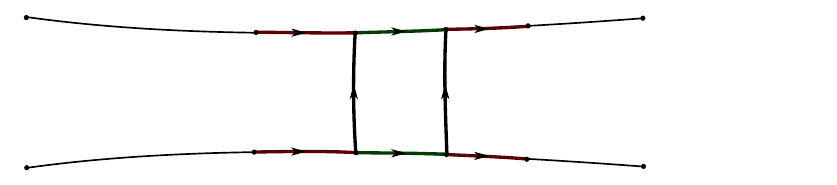\\
  \caption{}\label{61-f6}
\end{figure}

Without loss of generality we can assume that $p_1,p_3, q_1,q_3$ are $E(h_1)$-components while $p_2,q_2$ are $E(h_2)$-components. Let $e_j$ be a path connecting $(p_j)_+$ to $(q_j)_+$ in $\G $ and let $z_j$ be the element of $G$ represented by $\Lab (e_j)$, $j=1,2$. Then $z_j\in E(h_1)\cap E(h_2)=K(G)$.
 Note also that $z_j\in E(h_1)\cap E(h_2)$ implies
\begin{equation}\label{d2zi}
\d _2 (1,z_j) \le 2D, \; j=1,2
\end{equation}
by Proposition \ref{sn}. If $\Lab (q)=(ab_i)^{-n}$, then reading the label of the loop $e_1q_2e_2^{-1}p_2^{-1}$, we obtain $z_1b_i^{-1}z_2^{-1}b_i^{-1}=1$. Recall that $h_1$, $h_2$ are central in $E(h_1)$, $E(h_2)$, respectively, $a$ is a power of $h_1$ and $b_i$ is a power of $h_2$. Hence $z_1$ and $z_2$ commute with $a$ and $b_i$. In particular we obtain $z_1z_2=b_i^2$, which contradicts (\ref{d2bi}) and (\ref{d2zi}).

Thus $\Lab (q)=(ab_i)^{n}$. Reading the labels of the segment of $p$ from $1$ to $(p_1)_+$, $e_1$, and the segment of $q^{-1}$ from $(q_1)_+$ to $t$, we obtain $t= g_i^lz_1g_i^m$ for some $l,m\in \mathbb Z$. Since $z_1$ commutes with $g_i=ab_i$, we obtain $t\in \langle g_i\rangle K(G)$. Thus $E(g_i)\le \langle g_i\rangle K(G) $. Since $K(G)\le E(g_i)$ by Lemma \ref{l+}, we have $E(g_i)= \langle g_i\rangle K(G) $. Since $\langle g_i\rangle $ and $K(G)$ commute and intersect trivially, we obtain $E(g_i)\cong \langle g_i\rangle \times K(G)$.
\end{proof}

We are now ready to complete the proof of Theorem \ref{vf}. We prove it for $n=2$, other cases only differ by notation.

By Lemma \ref{suitable}, there exist subgroups $\{ E_1, \ldots , E_6\} \h (G,Y)$ for some $Y\subseteq G$ such that $E_i\cong \langle g_i\rangle \times K(G) $ for some $g_i\in G$, $i=1, \ldots, 6$. Let $\mathcal E=(E_1\setminus\{1\})\sqcup \ldots \sqcup (E_6\setminus\{1\})$.  Let $\widehat\d_1, \ldots , \widehat\d_6$ be the metrics on $E_1, \ldots , E_6$ constructed using the Cayley graph $\Gk $. Choose $n\in \mathbb N$ such that
\begin{equation}\label{dgi}
\d_i (1, g_i^n)>50D, \; i=1, \ldots , 6,
\end{equation}
where $D=D(1,0)$ is given by Proposition \ref{sn} applied to the Cayley graph $\Gk$.

Let $x= g_1^ng_2^ng_3^n$, $y= g_4^ng_5^ng_6^n$. We will verify that the subgroup $\langle x,y\rangle$ is free of rank $2$ and that $H=\langle x,y\rangle \times K(G)$ satisfies the assumptions of Theorem \ref{crit} with respect to the action of $G$ on $\Gk$.

First consider an arbitrary freely reduced word $W=W(x,y)$ in $\{x^{\pm 1}, y^{\pm 1}\}$. Let $r=r_1\ldots r_k$ be a path in $\Gk$ with $\Lab (r)\equiv W(g_1^ng_2^ng_3^n, g_4^ng_5^ng_6^n)$, where $\Lab (p_i)\in \{ (g_1g_2g_3)^{\pm 1}, (g_4g_5g_6)^{\pm 1}\}$ for $i=1, \ldots , k$. Here we think of $g_i^n$ as letters in $\mathcal E$. Then $\Lab (p)$ satisfies conditions (W$_1$)-(W$_3$) of Lemma \ref{w} and therefore $p$ is $(4,1)$-quasi-geodesic. In particular, it is not a loop in $\Gk$, which means that $W\ne 1$ in $G$. Thus $\langle x, y\rangle $ is free of rank $2$. Moreover it follows that $\langle x, y\rangle $ is $\kappa$-quasiconvex, where $\kappa =\kappa (\delta, 4,1)$ is given by Lemma \ref{qg} and $\delta $ is the hyperbolicity constant of $\Gk$.  It also follows that the action of $\langle x, y\rangle $ on $\Gk$ is proper and hence so is the action of $H$ as $|H: \langle x, y\rangle |<\infty $. This verifies conditions (a) and (b) from Theorem \ref{crit}.

To verify (c), fix $\e>0$ and let $R=R(\e, 4)$ be given by Lemma \ref{w}. Assume that for some $\e>0$, and $g\in G $, we have ${\rm diam} (H\cap (gH)^{+\e })>R$ in $\Gk$. Then there exist oriented $\e$-close  paths $p$, $q$ in $\Gk$ such that their labels are words obtained from some freely reduced words $U$, $V$ in $\{ x,y\}^{\pm 1}$, respectively, by substituting $x=g_1^ng_2^ng_3^n$ and $y=g_4^ng_5^ng_6^n$,  and
\begin{equation}\label{p-q-}
p_-\in H, \;\;\; q_-\in gH.
\end{equation}
By (\ref{dgi}), $\Lab (p)$ and $\Lab (q)$ satisfy  conditions (W$_1$)-(W$_3$) of Lemma \ref{w}. Therefore there exist at least $4$ consecutive components of $p$ connected to $4$ consecutive components of $q$. Taking into account the structure of the labels of $p$ and $q$, it is straightforward to derive that there exist consecutive edges $p_1$, $p_2$ of $p$ and $q_1$, $q_2$ of $q$, such that the following conditions hold. Note that to ensure (**) we essentially use that $x=g_1^ng_2^ng_3^n$ and $y=g_4^ng_5^ng_6^n$; taking $x=g_1^ng_2^n$ and $y=g_3^ng_4^n$ would not suffice.
\begin{enumerate}
\item[($\ast$)] The component $p_i$ of $p$ is connected to the component $q_i$ of $q$, $i=1,2$.

\item[($\ast\ast$)] There exist decompositions $U\equiv U_1U_2$, $V\equiv V_1V_2$ such that the initial subpath of $p$ corresponding to $U_1$ ends with $p_1$ and the initial subpath of $q$ corresponding to $V_1$ ends with $q_1$.
\end{enumerate}
Let $e$ be the empty path or an edge in $\Gk$ connecting $(p_1)_+$ to $(q_1)_+$. By ($\ast$), $\Lab (e)$ represents an element of $E_i\cap E_j$ in $G$ for some $i\ne j$. Hence $c\in K(G)$. Since $c$ commutes with $g_1, \ldots , g_6$, using (\ref{p-q-}) and ($\ast\ast$) it is easy to obtain $g=zc$, where $z\in \langle x,y\rangle $. Thus $g\in H$ and $H$ is geometrically separated. It remains to apply Theorem \ref{crit}.
\end{proof}

\subsection{Combination theorems}

In this section we mention two analogues of the combination theorems for relatively hyperbolic groups first proved by the first named author
 in \cite{Dah}. Our proofs are based on the approach  suggested in \cite{Osi06c}.

\begin{thm}\label{HNN}
Let $H$ be a group, $\Hl \cup \{ K\}$ a collection of subgroups, $X$ a subset of $H$. Suppose that $\Hl \cup \{ K\}  \h (H,X)$. Assume also
that $K$ is finitely generated and for some $\nu \in \Lambda $,
there exists a monomorphism $\iota :K\to H_{\nu }$. Let $G$ be the
HNN--extension
$$
\langle H,t\; |\; t^{-1}kt=\iota (k),\; k\in K\rangle .
$$
Then $\Hl \h (G,X\cup \{ t\})$.
\end{thm}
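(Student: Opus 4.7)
The plan is to apply the isoperimetric characterization (Theorem \ref{ipchar}): it suffices to produce a strongly bounded relative presentation of $G$ with respect to $X\cup\{t\}$ and $\{H_\lambda\}_{\lambda\in\Lambda}$ with linear relative isoperimetric function. The candidate presentation will be obtained by a substitution from a strongly bounded relative presentation of $H$ with respect to $X$ and $\{H_\lambda\}\cup\{K\}$, which is supplied by the hypothesis via Theorem \ref{ipchar}.

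Concretely, by Theorem \ref{ipchar} fix a strongly bounded relative presentation
$$H=\langle X,\,\mathcal{H}\sqcup(K\setminus\{1\})\mid\mathcal{S}\cup\mathcal{S}_K\cup\mathcal{R}\rangle$$
of $H$ with linear relative isoperimetric function, where $\mathcal{H}=\bigsqcup_\lambda(H_\lambda\setminus\{1\})$, $\mathcal{S}=\bigcup_\lambda\mathcal{S}_\lambda$ and $\mathcal{S}_K$ are the Cayley relators of the peripheral subgroups, and $\mathcal{R}$ is strongly bounded. Since $k=t\iota(k)t^{-1}$ in $G$ and $\iota(k)\in H_\nu\setminus\{1\}\subset\mathcal{H}$, replacing in each $r\in\mathcal{R}$ every occurrence of a letter $k\in K\setminus\{1\}$ by the length-$3$ word $t\iota(k)t^{-1}$ produces a set $\tilde{\mathcal{R}}$ of relators in the alphabet $(X\cup\{t\})\sqcup\mathcal{H}$. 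Tietze transformations applied to the natural enlarged presentation
$$G=\langle X\cup\{t\},\,\mathcal{H}\sqcup(K\setminus\{1\})\mid\mathcal{S}\cup\mathcal{S}_K\cup\mathcal{R}\cup\mathcal{T}\rangle,\qquad\mathcal{T}=\{t^{-1}kt\iota(k)^{-1}:k\in K\setminus\{1\}\},$$
eliminate every $K\setminus\{1\}$ generator: the relations in $\mathcal{T}$ become trivial after substitution, while each Cayley relation $k_1k_2\cdots k_m\in\mathcal{S}_K$ becomes, after free reduction, the Cayley relation $\iota(k_1)\iota(k_2)\cdots\iota(k_m)\in\mathcal{S}_\nu\subseteq\mathcal{S}$. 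This yields the candidate presentation
$$G=\langle X\cup\{t\},\,\mathcal{H}\mid\mathcal{S}\cup\tilde{\mathcal{R}}\rangle,$$
which is strongly bounded since the length of each $\tilde{r}$ is at most $3\max_{r\in\mathcal{R}}\|r\|$, and only finitely many $\mathcal{H}$-letters occur in $\tilde{\mathcal{R}}$, namely those of $\mathcal{R}$ together with the finite set $\{\iota(k):k\in K\setminus\{1\}\text{ occurs in }\mathcal{R}\}\subset H_\nu$.

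The heart of the argument is to verify a linear relative isoperimetric function for this new presentation. Given $W$ of length $n$ in $(X\cup\{t\})\sqcup\mathcal{H}$ with $W=_G 1$, I would construct a van Kampen diagram $\Delta$ over the enlarged presentation with boundary $W$, minimal in a suitable lexicographic sense. The $\mathcal{T}$-cells are thin rectangles organizing into $t$-bands, and in a minimal $\Delta$ there are no $t$-annuli: a $t$-annulus would force the existence of a nontrivial element of $K\cap K^g$ for some $g\notin K$, contradicting the almost malnormality of $K$ in $H$ (Proposition \ref{malnorm} applied to $K\hookrightarrow_h H$). Consequently every $t$-band terminates at two of the at most $n$ boundary $t$-edges of $\partial\Delta$, so there are at most $n/2$ bands; a parallel argument (looking at the $H$-region sandwiched between two long parallel $t$-bands) uses almost malnormality again to bound the individual band lengths uniformly, keeping the total length of all $t$-band sides $O(n)$.

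To finish, one converts $\Delta$ into a diagram $\tilde\Delta$ over the new presentation by absorbing each $\mathcal{T}$-cell into an adjacent $\mathcal{R}$-cell: every $K$-letter edge on the boundary of an $\mathcal{R}$-cell is replaced by the ``thumb'' $t\cdot\iota(k)\cdot t^{-1}$, turning that $\mathcal{R}$-cell into the corresponding $\tilde{\mathcal{R}}$-cell and consuming the $\mathcal{T}$-cells glued along those edges, while $\mathcal{S}_K$-cells transform into $\mathcal{S}$-cells under the same substitution. The number of $\tilde{\mathcal{R}}$-cells in $\tilde\Delta$ equals the number of $\mathcal{R}$-cells in $\Delta$; cutting $\Delta$ along its $t$-bands decomposes it into van Kampen subdiagrams over the relative presentation of $H$ whose boundary words have total length $O(n)$, and applying the linear relative isoperimetric function of $H$ bounds the total $\mathcal{R}$-count, hence the $\tilde{\mathcal{R}}$-count, by $Cn$. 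The main obstacle will be the diagram-theoretic bookkeeping behind $t$-band control—ruling out $t$-annuli and bounding band lengths—where almost malnormality of $K$ via Proposition \ref{malnorm} and the local finiteness of the relative metric on $K$ (inherent in $K\hookrightarrow_h H$) are the essential inputs.
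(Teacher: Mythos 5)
Your overall architecture---pass to the isoperimetric characterization of Theorem \ref{ipchar}, build a strongly bounded relative presentation of $G$ by substituting $k\mapsto t\iota(k)t^{-1}$ into the relators of a strongly bounded presentation of $H$, and verify a linear relative isoperimetric function by a $t$-band analysis of van Kampen diagrams---is the same in spirit as the paper's proof, which imports exactly this kind of argument from \cite{Osi06c}. (There the presentation of $G$ retains a finite generating set $Y$ of $K$ inside the relative generating set and Corollary \ref{he-indep} is invoked at the end to discard $Y$; your substitution trick bypasses that detour, and as a result you never visibly use the hypothesis that $K$ is finitely generated---worth pausing on, since that hypothesis is precisely what legitimizes the paper's final appeal to Corollary \ref{he-indep}.)

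The genuine problem lies in what you call the heart of the argument: both invocations of almost malnormality of $K$ in $H$ are the wrong mechanism, and the steps would fail as described. First, a $t$-annulus in a disk diagram does not produce a nontrivial element of $K\cap K^g$. An innermost $t$-annulus bounds a subdiagram over the presentation of $H$ whose boundary label $u$ is a word in $K$-letters (or $\iota(K)$-letters) representing $1$ in $H$, hence in $K$; one then replaces the annulus together with its interior by a single $\mathcal S_\nu$-cell (resp.\ $\mathcal S_K$-cell) carrying the label of the other side, a surgery that is free in the relative area count precisely because $\mathcal S$-cells are not counted. No malnormality is involved. Second, and more seriously, no ``region sandwiched between two long parallel $t$-bands'' argument bounds the length of a single $t$-band: that style of argument controls how long two \emph{distinct} bands can fellow-travel, not how many $\mathcal T$-cells one band contains, and a badly chosen diagram really can have a band whose length is unbounded in terms of $n$. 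What actually rescues the count is either the standard reduction merging consecutive $\mathcal T$-cells of a band into one (at the cost of one $\mathcal S_K$- and one $\mathcal S_\nu$-cell, again free), or---simpler still---the observation that band lengths need not be bounded at all: when you cut along the bands and estimate the relative area of each piece $\Delta_j$ over the presentation of $H$, each band side, being a word in $K\setminus\{1\}$ (or $H_\nu\setminus\{1\}$) letters, may be replaced by a single peripheral letter at the cost of $\mathcal S$-cells only, so the quantity that must be linear in $n$ is the \emph{number} of bands (at most $n/2$, since once annuli are removed each band pairs up two $t$-edges of $\partial\Delta$), not their total length. With these corrections the estimate $\sum_j\|W_j\|=O(n)$ and hence the linear bound on the number of $\tilde{\mathcal R}$-cells goes through; as written, your justification for the total band-side length being $O(n)$ does not.
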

\begin{proof}
The proof is almost identical to the proof of Theorem 1.2 from \cite{Osi06c}. Instead of copying it here, we only indicate the necessary changes. First, throughout the proof the words ``finite relative presentation" should be replaced with ``strongly bounded relative presentation" and references to Lemma 2.1 from \cite{Osi06c} should be replaced with references to Lemma \ref{omega} from our paper. After these substitutions, the proof given in \cite{Osi06c} starts with a strongly bounded relative presentation $\mathcal P$ of $H$ with respect to $\Hl \cup \{ K\} $ and $X$, and produces a strongly bounded relative presentation $\mathcal Q$  of $G$ with respect to $\Hl$ and the relative generating set $X\cup Y\cup \{ t\} $, where $Y$ is any finite generating set of $K$. It is proved in \cite{Osi06c} that if $\gamma (n)$ is a relative isoperimetric function of $\mathcal P$, then there exist constants $C_1,C_2,C_3$ such that $C_1\overline\gamma \circ \overline \gamma (C_2n)+C_3n$ is a relative  isoperimetric function of $\mathcal Q$, where
$$
\overline \gamma(n)=\max_{i=1,\dots , n}\left(\max_{a_1+\dots + a_i=n,\
a_i\in {\mathbb N}} \left(\gamma(a_1)+\dots +\gamma(a_i)\right)\right) .
$$
In our case $\gamma $ is linear since $\Hl \cup \{ K\}  \h (H,X)$. Hence the proof yields a linear relative isoperimetric inequality for $\mathcal Q$. This means that $\Hl \h (G,X\cup Y\cup \{ t\})$. Note that $G$ is also generated by $X\cup\{t\}\cup \mathcal H$ as $Y\subset K\le \langle t, H_\nu\rangle $. As $Y$ is finite, Corollary \ref{he-indep} implies that $\Hl \h (G,X\cup \{ t\})$.
\end{proof}

Similarly for amalgamated products, we have the following.

\begin{thm}\label{Am}
Let $A$ (respectively, $B$) be a group, $\Am \cup \{ K\} $
(respectively, $\Bn $) a collection of subgroups, $X$ (respectively, $Y$) a subset of $A$ (respectively, $B$).
Suppose that $\Am \cup \{ K\} \h (A,X)$, $\Bn \h (B,Y)$.  Assume also that $K$ is finitely generated and for some $\eta\in N$, there is a monomorphism $\xi : K\to B_\eta $. Then $\Am \cup\Bn\h (A\ast _{K=\xi (K)} B, X\cup Y) $.
\end{thm}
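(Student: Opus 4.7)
The plan is to adapt the analogous amalgamated product combination theorem from \cite{Osi06c} to the hyperbolically embedded setting, paralleling the way Theorem \ref{HNN} adapts Theorem 1.2 of \cite{Osi06c}. Let $\mathcal A^*=\bigsqcup_\mu (A_\mu\setminus\{1\})$ and $\mathcal B^*=\bigsqcup_\nu (B_\nu\setminus\{1\})$. Using Theorem \ref{ipchar}, I would begin with strongly bounded relative presentations
$$\mathcal P_A=\langle X,\,\mathcal A^*\sqcup (K\setminus\{1\})\mid \mathcal S_A\cup \mathcal R_A\rangle \quad\text{and}\quad \mathcal P_B=\langle Y,\,\mathcal B^*\mid \mathcal S_B\cup \mathcal R_B\rangle$$
of $A$ and $B$ with linear relative isoperimetric functions. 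Choose a finite generating set $Z=\{k_1,\ldots,k_m\}$ for $K$ and enlarge the relative generating set of $G=A*_{K=\xi(K)}B$ to $X\cup Y\cup Z$; by Corollary \ref{he-indep} it then suffices to establish $\Am\cup\Bn\h(G,X\cup Y\cup Z)$.

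Next I would form a relative presentation $\mathcal Q$ of $G$ with respect to $X\cup Y\cup Z$ and $\Am\cup\Bn$ whose relators are of three kinds: (i) the elements of $\mathcal R_A$ with every letter $\kappa\in K\setminus\{1\}$ appearing in them replaced by a chosen bounded word $V_\kappa$ over $Z$ representing $\kappa$ (the set of such $\kappa$ being finite by strong boundedness of $\mathcal P_A$); (ii) the elements of $\mathcal R_B$; and (iii) the identification relators $k_i\,\xi(k_i)^{-1}$ for $i=1,\ldots,m$, each of length $2$ because $\xi(k_i)\in B_\eta$ is a single letter of $\mathcal B^*$. That $\mathcal Q$ presents $G$ is a routine Tietze calculation built on the normal form for amalgamated products. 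Strong boundedness of $\mathcal Q$ is immediate: the letters from $\mathcal A^*\sqcup\mathcal B^*$ occurring in relators are those from $\mathcal R_A\cup \mathcal R_B$ (finitely many by hypothesis), augmented only by $\xi(k_1),\ldots,\xi(k_m)$.

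The core step, and the main obstacle, is to verify that $\mathcal Q$ has a linear relative isoperimetric function. Given a word $W$ of length $n$ representing $1$ in $G$, I would take a van Kampen diagram $\Delta$ over $\mathcal Q$ with $\partial\Delta$ labelled by $W$ and minimal area, and decompose it into maximal connected subdiagrams of $A$-cells (coming from family (i)) and $B$-cells (coming from family (ii)), separated by identification cells of family (iii). For each maximal $A$-subdiagram $\Sigma$, rewriting the $\mathcal B^*$-letters on $\partial\Sigma$ coming from identification edges back into $K$-letters via the relations $k_i=\xi(k_i)$ produces a word representing $1$ in $A$; the linear relative isoperimetric function of $\mathcal P_A$ then bounds the number of $A$-cells in $\Sigma$ by a linear function of $\ell(\partial\Sigma)$, and symmetrically for $B$-subdiagrams. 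Finally, every identification cell either sits on $\partial\Delta$ or separates two subdiagrams, so the sum of $\ell(\partial\Sigma)$ over all maximal subdiagrams is linearly controlled by $n$ plus the number of identification cells, and the standard rearrangement from \cite{Osi06c} yields the desired linear bound $Area^{rel}(W)\le Cn$. The principal difficulty, exactly as in \cite{Osi06c}, will be arranging the decomposition so that minimality of $\Delta$ prevents two identification cells from sharing an edge inside a maximal subdiagram, which is what keeps the linear bookkeeping alive.
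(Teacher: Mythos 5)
Your argument is sound in outline, but it is not the route the paper takes. You prove the amalgam case directly, by writing down a strongly bounded relative presentation of $A\ast_{K=\xi(K)}B$ over $X\cup Y\cup Z$ and $\Am\cup\Bn$ and then verifying a linear relative isoperimetric function via the usual decomposition of a minimal van Kampen diagram into maximal $A$- and $B$-subdiagrams separated by identification cells --- i.e.\ you adapt the amalgamated-product combination theorem of \cite{Osi06c} in exactly the way the paper adapts the HNN theorem of \cite{Osi06c} to prove Theorem \ref{HNN}. The paper instead avoids redoing any diagrammatic work for amalgams: it observes that $P=A\ast_{K=\xi(K)}B$ is isomorphic to the retract $\langle A^t,B\rangle$ of the HNN-extension $G$ of $A\ast B$ with associated subgroups $K$ and $\xi(K)$, notes that $\Am\cup\Bn\cup\{K\}\h A\ast B$ trivially, applies Theorem \ref{HNN} to get $\Am\cup\Bn\h G$, conjugates via Proposition \ref{heconj} to get $\{A_\mu^t\}\cup\Bn\h G$, and finally invokes Lemma \ref{retr} (hyperbolic embeddings pass to retracts) to descend to $\langle A^t,B\rangle\cong P$. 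Your approach buys a self-contained, explicit presentation of the amalgam with a concrete isoperimetric estimate, at the cost of repeating the delicate $\bar\gamma\circ\bar\gamma$ bookkeeping of \cite{Osi06c} --- which your sketch compresses rather aggressively in the final paragraph (the number of identification cells is itself one of the quantities being bounded, and the linear control comes from the two-stage iteration over the tree of subdiagrams, not from a single summation); the paper's approach buys brevity and reusability of Lemma \ref{retr}, at the cost of passing through an auxiliary group. Your reduction of the generating set from $X\cup Y\cup Z$ back to $X\cup Y$ via Corollary \ref{he-indep}, and your treatment of the relations of the possibly non-finitely-presented $K$ by pushing $Z$-words into $B_\eta$ through the identification relators, are both correct and necessary steps that you handle properly.
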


Theorem \ref{Am} can be derived from Theorem \ref{HNN} by using the standard ``retraction trick".

\begin{lem}\label{retr}
Let $W$ be a group, $\Ul$ a
collection of subgroups, $X$ a subset of $W$. Let $V\le W$ be a retract of $W$, $\e\colon W\to V$ a retraction.
Suppose that $V$ contains all subgroups from the set $\Ul $ and $\Ul \h (W,X)$. Then $\Ul \h (V, \e (X))$.
\end{lem}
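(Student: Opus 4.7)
Plan: I would use the isoperimetric characterization from Theorem~\ref{ipchar}. By assumption, fix a strongly bounded relative presentation
\[
W=\langle X,\mathcal U\mid \mathcal S\cup\mathcal R\rangle
\]
of $W$ with linear relative isoperimetric function. The natural candidate presentation of $V$ is obtained by retracting the alphabet via $\e$: for each $R\in\mathcal R$, let $\e_*(R)$ be the word in $\e(X)\sqcup\mathcal U$ obtained from $R$ by replacing each letter $x\in X$ with $\e(x)\in\e(X)$, leaving $\mathcal U$-letters fixed (this is legitimate because $\bigcup_\lambda U_\lambda\subseteq V$ forces $\e$ to restrict to the identity on $\mathcal U$). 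Consider
\[
\mathcal P'=\langle \e(X),\mathcal U\mid \mathcal S\cup\e_*(\mathcal R)\rangle;
\]
it suffices to show that $\mathcal P'$ is a strongly bounded relative presentation of $V$ with linear relative isoperimetric function.

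The easy steps are verifying that $\mathcal P'$ presents $V$ and is strongly bounded. The former follows from the universal property together with the retraction: the natural surjection $F(\e(X))\ast(\ast_\lambda U_\lambda)\to V$ annihilates $\mathcal S\cup\e_*(\mathcal R)$, and conversely the rule $x\mapsto\e(x)$, $u\mapsto u$ defines a homomorphism from $W$ to the group given by $\mathcal P'$ (because $\mathcal R$ is sent to $\e_*(\mathcal R)$); restricting this homomorphism to $V$ yields the inverse. Strong boundedness is immediate since $\|\e_*(R)\|=\|R\|$ and the substitution preserves the finite set of $\mathcal U$-letters occurring in $\mathcal R$.

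The core of the argument, which is where the main obstacle lies, is the linear relative isoperimetric bound. The guiding idea is that $\e_*$ acts naturally on van~Kampen diagrams: applied letter-by-letter to the edge labels of any diagram $\Delta$ over $\mathcal P$, it produces a diagram $\e_*(\Delta)$ over $\mathcal P'$ with the same underlying $2$-complex and the same number of $\mathcal R$-cells (now relabelled as $\e_*(\mathcal R)$-cells), whose boundary label is $\e_*(\partial\Delta)$. Given $\phi$ of length $n$ in $\e(X)\sqcup\mathcal U$ representing $1$ in $V$, one chooses a set-theoretic section $\sigma\colon\e(X)\to X$ satisfying $\e\circ\sigma=\mathrm{id}_{\e(X)}$, lifts $\phi$ letter-by-letter to a word $\tilde\phi$ of length $n$ in $X\sqcup\mathcal U$, and completes $\tilde\phi$ by a correction word $c$ of length $\le n$ in $X\sqcup\mathcal U$ representing $\pi(\tilde\phi)^{-1}\in\ker(\e)$ (which is legitimate because $\pi(\tilde\phi)$ has word length at most $n$ in $X\cup\mathcal U$). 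The isoperimetric function of $\mathcal P$ then fills the trivial word $\tilde\phi\cdot c$ with $O(n)$ $\mathcal R$-cells, and retracting via $\e_*$ yields a diagram over $\mathcal P'$ with $O(n)$ $\e_*(\mathcal R)$-cells and boundary $\phi\cdot\e_*(c)$. The main obstacle is arranging for $\e_*(c)$ to reduce freely to the empty word in $F(\e(X))\ast(\ast_\lambda U_\lambda)$ so that the retracted diagram fills $\phi$ itself rather than $\phi\cdot\e_*(c)$; this requires a careful choice of $c$ using the semidirect-product structure $W=\ker(\e)\rtimes V$, writing each $\sigma(x')=x'\cdot k_{x'}$ with $k_{x'}=(x')^{-1}\sigma(x')\in\ker(\e)$, commuting the corrections past the remaining letters, and assembling $c$ from elementary corrections whose images under $\e_*$ cancel letterwise.
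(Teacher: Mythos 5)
Your route is genuinely different from the paper's: you argue through the isoperimetric characterization (condition (b) of Theorem~\ref{ipchar}), retracting a strongly bounded presentation and its van Kampen diagrams, whereas the paper works with condition (a) and never lifts anything from $V$ to $W$ --- it only observes that $\e$ induces a retraction $\hat\e$ of $\Gamma(W,X\sqcup\mathcal U)$ onto $\Gamma(V,\e(X)\sqcup\mathcal U)$ (each $\e(x)$ being a \emph{single letter} of the new alphabet), deduces hyperbolicity of the target as a Lipschitz retract of a hyperbolic graph, and transfers local finiteness of the relative metrics using the fact that $\hat\e$ sends components to components and connected components to connected components. Your preliminary steps (that $\mathcal P'$ presents $V$ and is strongly bounded) are fine.

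The core step, however, has a genuine gap, and it sits exactly where you place the ``main obstacle.'' Your correction word $c$ must satisfy two requirements simultaneously: (i) $\|c\|\le Kn$ in the alphabet $X\sqcup\mathcal U$, so that filling $\tilde\phi c$ in $W$ costs $O(n)$ cells of $\mathcal R$; and (ii) $\e_*(c)$ is freely trivial in $F(\e(X))\ast(\ast_\lambda U_\lambda)$, so that the retracted diagram fills $\phi$ itself. A geodesic representative of $\pi(\tilde\phi)^{-1}$ gives (i) but offers no control whatsoever on $\e_*(c)$; and if $\e_*(c)$ is merely some word of length $\le n$ representing $1$ in $V$, then bounding its relative area over $\mathcal P'$ is precisely the problem you set out to solve, so the argument is circular. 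Your proposed fix via $W=\ker(\e)\rtimes V$ does not repair this: the elementary corrections $k_{x'}=(x')^{-1}\sigma(x')$ and the conjugators $a_1\cdots a_j\in V$ by which you must commute them are elements of $W$ whose lengths in $X\sqcup\mathcal U$ are completely uncontrolled --- nothing in the hypotheses bounds $|\e(x)|_{X\cup\mathcal U}$ for $x\in X$ --- so the resulting $c$ violates (i). This is not a presentational issue but the essential difficulty of the lifting direction, and it is the reason the paper's proof is arranged so that only the push-forward direction $\hat\e$ is ever used.
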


\begin{proof}
Let
$$
\mathcal U =\bigsqcup\limits_{\lambda\in \Lambda} (U_\lambda ).
$$
Obviously $\e $  defines a retraction $\hat\e $ between the corresponding Cayley graphs $\Gamma (W, X\sqcup\mathcal U)$ and $\Gamma (V, \e(X)\sqcup \mathcal U)$. Hence $\Gamma (V, \e(X)\sqcup \mathcal U)$ is hyperbolic. Note also that if $p$ is a path in $\Gamma (W, X\sqcup\mathcal U)$ and $q_1, q_2$ are components of $p$, then $\hat\e(q_1), \hat\e(q_2)$ are components of $\hat\e (p)$ and if $q_1, q_2$ are connected, then so are $\hat\e(q_1), \hat\e(q_2)$. Using this observation it is straightforward to verify that local finiteness of $(U_\lambda, \dl^\prime )$ easily follows from that of  $(U_\lambda, \dl )$, where $\dl ^\prime $ and $\dl $ are the distance functions defined as in Definition \ref{maindef} using $\Gamma (V, X\sqcup\mathcal U)$ and $\Gamma (W, X\sqcup\mathcal U)$, respectively. Hence the claim.
\end{proof}

\begin{proof}[Proof of Theorem \ref{Am}]
Recall that the amalgamated product $P=A\ast _{K=\xi (K)} B$ is isomorphic to a
retract of the HNN--extension $G$ of the free product $A\ast B$ with
the associated subgroups $K$ and $\xi (K)$ \cite{LS}. More precisely, $A\ast _{K=\xi (K)} B$ is isomorphic to the subgroup $\langle A^t,B\rangle \le G$ via the isomorphism sending $A$ to $A^t$ and $B$ to $B$, where $t$ is the stable letter. It is obvious from the isoperimetric characterization of hyperbolically embedded subgroups (see Theorem \ref{ipchar}) that $\Am\cup \Bn\cup \{ K\} \h A\ast B$. Then by Theorem \ref{HNN}, $\Am\cup \Bn\h G$. Further applying Proposition \ref{heconj} we conclude that $\{A_\mu^t\}_{\mu\in M }\cup \Bn\h G$. Consequently $\{A_\mu^t\}_{\mu\in M }\cup \Bn\h  \langle A^t, B\rangle $ by Lemma \ref{retr}. Passing from $\langle A^t, B\rangle $ to $P$ via the isomorphism, we obtain the claim.
\end{proof}

    \subsection{Rotating families from small cancellation subgroups}

\paragraph{Small cancellation subgroups}
Recall that if $\calr$ is a family of groups acting on $\X$,
we defined the injectivity radius of $\calr$ as
$$\inj_\X(\calr)=\inf_{H\in\calr}  \inf   \{\d(x,gx), \, g\in H\setminus\{1\}, x\in \X  \} $$
(see Section \ref{sec_coneoff}).
This invariant was relevant for to get a very rotating family in the cone-off in Proposition \ref{prop;ex_of_VR}.
Also recall that if $Q,Q'$ are two $10\delta$-\sqc\ subspaces, we defined their fellow-traveling constant by
$$\Delta(Q,Q')=\diam(Q^{+20\delta}\cap Q^{\prime+20\delta}).$$

      \begin{defn}\label{dfn_sc_subgroup}
        Let $G$ be a group acting on a $\delta$-hyperbolic space $\X$ with $\delta>0$.
        Consider a family of subgroups $\calr$  of $G$ stable under conjugation. 
        We say that $\calr$ satisfies the \emph{$(A,\eps)$-small cancellation condition} if the following hold.
        \begin{enumerate}
        \item[(a)] For each subgroup $H\in \calr$ there is a $10\delta$-\sqc\ subspace $Q_H\subset \X$ such that
$Q_{gHg\m}=gQ_{H}$.
        \item[(b)] $\inj_\X(\calr)\geq A \delta$.
        \item[(c)] For all $H\neq H^\prime\in\calr$,
$\Delta(Q_H,Q_{H^\prime})\leq \eps\cdot \inj_\X(\calr)$
        \end{enumerate}
      \end{defn}

Note that this condition does not change under rescaling of the metric.
Moreover, if $\X$ is a simplicial tree, we can choose $\delta$ as small as we want, so the assumption on the injectivity radius
means that all elements of $H$ should be hyperbolic, and the last assumption is (a
 strengthening of)
the usual $C^\prime(\eps)$ small condition.

Recall that
the universal constants $r_U, \delta_c,\Delta_c$ were defined in the beginning of section \ref{sec;cstes} and Theorem \ref{theo;cone-off_hyp_loc}.
We choose $r_0 \geq r_U$, and $\inj_c (r_0)$ is the  constant defined in Proposition \ref{prop;ex_of_VR}.

      \begin{prop}\label{prop_sc_subgroup}
        There are  constants
$A_0=\frac{\inj_c(r_0)}{\delta_c},\eps_0=\frac{\Delta_c}{\inj_c(r_0)}$ such that
        if a group $G$  acts on a $\delta$-hyperbolic graph $\X$ with $\delta>0$,
        if $\calr$ is a family of subgroups satisfying the $(A_0,\eps_0)$-small cancellation condition,
        then $\calr$ defines a  $2r_0$-separated very rotating family on the  ($\du$-hyperbolic)
        cone-off  $\dot \X=C(\lambda \X,(Q_H),r_0)$
        where $\lambda \X$ is the space $\X$  rescaled by the factor  $\lambda=\min(\frac{\delta_c}{\delta},\frac{\Delta_c}{\Delta})$.
      \end{prop}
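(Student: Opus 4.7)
The plan is to reduce to Proposition \ref{prop;ex_of_VR} by first rescaling so that the data $(X,\calq)$ fits into the hypotheses of the cone-off construction (Theorem \ref{theo;cone-off_hyp_loc}), and then verifying that the rescaled injectivity radius of each $H\in\calr$ is at least $\inj_c(r_0)$, which is the only nontrivial condition needed to feed Proposition \ref{prop;ex_of_VR}.

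More precisely, first I would set $\lambda=\min(\delta_c/\delta,\Delta_c/\Delta)$ and work in the graph $\lambda X$ (i.e., $X$ with its edge length scaled by $\lambda$), taking the $G$-invariant family $\calq=\{Q_H\}_{H\in\calr}$ as the system of strongly quasiconvex subspaces. Under rescaling, $\lambda X$ is $\lambda\delta$-hyperbolic with $\lambda\delta\le\delta_c$, each $Q_H$ is $10\lambda\delta$-strongly quasiconvex, hence $10\delta_c$-strongly quasiconvex, and the overlap invariant becomes $\Delta(\calq)_{\lambda X}=\lambda\Delta\le\Delta_c$. Thus Theorem \ref{theo;cone-off_hyp_loc} and Corollary \ref{coro;cone-off_hyp} apply and $\dot X=C(\lambda X,\calq,r_0)$ is geodesic and globally $\du$-hyperbolic; its apices are automatically $2r_0$-separated by the definition of the cone-off.

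Second, I would verify the injectivity-radius hypothesis of Proposition \ref{prop;ex_of_VR}, namely $\inj_{\lambda X}(H)\ge \inj_c(r_0)$ for every $H\in\calr$. Since injectivity radius scales linearly with the metric, $\inj_{\lambda X}(\calr)=\lambda\inj_X(\calr)$, and one has to check both cases for $\lambda$. If $\lambda=\delta_c/\delta$, then
\[
\lambda\inj_X(\calr)\ge \frac{\delta_c}{\delta}\cdot A_0\delta=A_0\delta_c=\inj_c(r_0),
\]
using the definition $A_0=\inj_c(r_0)/\delta_c$ and the small-cancellation hypothesis (b). If instead $\lambda=\Delta_c/\Delta$, then the small-cancellation hypothesis (c) gives $\inj_X(\calr)\ge \Delta/\eps_0$, so
\[
\lambda\inj_X(\calr)\ge \frac{\Delta_c}{\Delta}\cdot\frac{\Delta}{\eps_0}=\frac{\Delta_c}{\eps_0}=\inj_c(r_0),
\]
by the definition $\eps_0=\Delta_c/\inj_c(r_0)$. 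In either case the hypothesis is satisfied.

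Having checked all the hypotheses, Proposition \ref{prop;ex_of_VR} applies to $(\lambda X,\calq,(H)_{H\in\calr})$ and directly yields that the set of apices $C$ of $\dot X$, together with the rotation subgroups $\{H\}_{H\in\calr}$, is a $2r_0$-separated very rotating family, which is the conclusion. The only potential obstacle is purely bookkeeping: ensuring that the two numerical conditions arising from the two regimes $\lambda=\delta_c/\delta$ versus $\lambda=\Delta_c/\Delta$ are both absorbed by the single choice of the constants $A_0$ and $\eps_0$, which is exactly why these constants are defined as they are. No delicate geometric estimate is needed beyond those already packaged into Theorem \ref{theo;cone-off_hyp_loc} and Proposition \ref{prop;ex_of_VR}.
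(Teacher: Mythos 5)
Your proposal is correct and follows essentially the same route as the paper: rescale by $\lambda$ so that $(\lambda X,\calq)$ satisfies the hypotheses of Theorem \ref{theo;cone-off_hyp_loc}, check that $\inj_{\lambda X}(\calr)\ge\inj_c(r_0)$ (the paper phrases your two-case analysis as the single inequality $\lambda\inj_X(\calr)\ge\min(A_0\delta_c,\Delta_c/\eps_0)=\inj_c(r_0)$), and then invoke Proposition \ref{prop;ex_of_VR}. Nothing is missing.
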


      \begin{proof}
        Let $\X^\prime=\lambda \X$ be the rescaled space.
        Clearly, $\X^\prime$ is $\delta_c$-hyperbolic, and $\Delta(H,H^\prime)\leq \Delta_c$
        for all $H\neq H^\prime\in \calr$. Corollary \ref{coro;cone-off_hyp} implies that $\dot \X$ is $\du$-hyperbolic.
        Morever, $\inj_{\X^\prime}(\calr)=\min(\frac{\delta_c}{\delta},\frac{\Delta_c}{\Delta})\inj_\X(\calr)
        \geq \min(A_0\delta_c,\frac{\Delta_c}{\eps_0})$ since $\calr$  satisfies the $(A_0,\eps_0)$-small cancellation condition.
        This last quantity  is  $\inj_c$ by choice of $A_0,\eps_0$.
        Then, Proposition \ref{prop;ex_of_VR} applies, showing that $\calr$ is a $2r_0$-separated very-rotating family.
      \end{proof}

The assumption that $\X$ is a graph allows us to get particularly nice constants in the proposition above. However, if we do not care about constants, it is not very restrictive. Indeed if a group $G$ acts of a length space $\X$, we can define a graph $\Gamma(\X)$ by taking elements of $\X$ as vertices and connecting two vertices $x,y$ by an edge iff $\d (x,y)\le 1$. The action of $G$ on $\X$ induces the action on $\Gamma (\X)$ and it is obvious that $\X$ and $\Gamma (X)$ are $G$-equivariantly quasi-isometric. This allows us to apply Proposition \ref{prop_sc_subgroup} to actions on hyperbolic spaces as well. In particular, since  $C(\lambda \X,(Q_H),r_0)$ is $\delta_U$-hyperbolic (by Corollary \ref{coro;cone-off_hyp}), we obtain the following.

\begin{cor}
For any $\alpha$, there exist $A>0,\eps>0$, such that the following holds. Suppose that a group $G$ admits an action on a hyperbolic space and the family of conjugates of a subgroup $H\le G$ satisfies the $(A,\eps)$ small cancellation condition with respect to this action. Then $H$ is an $\alpha$-rotating subgroup of $G$.
\end{cor}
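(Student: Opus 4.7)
The plan is to reduce the Corollary to a direct application of Proposition \ref{prop_sc_subgroup}, with the only subtlety being a careful choice of the cone-off radius $r_0$. Recall that the notion of an $\alpha$-rotating subgroup (Definition \ref{def;VR}) requires producing an action on some $\delta$-hyperbolic space equipped with an $\alpha\delta$-separated very rotating family whose rotation subgroups are the conjugates of $H$. Proposition \ref{prop_sc_subgroup} already produces a very rotating family on the cone-off $\dot X = C(\lambda X, (Q_H), r_0)$, which by Corollary \ref{coro;cone-off_hyp} is globally $\delta_U$-hyperbolic; moreover, the family it produces is $2r_0$-separated. So the issue is purely one of bookkeeping: we must ensure $2r_0 \geq \alpha \delta_U$.

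First, given $\alpha$, I would fix $r_0 := \max\{r_U,\, \alpha \delta_U / 2\}$. This choice guarantees both $r_0 \geq r_U$ (so that all the results of Section \ref{sec;cstes} apply and $\delta_U$ is indeed a hyperbolicity constant of $\dot X$) and $2r_0 \geq \alpha\delta_U$. Then I would set
\[
A := A_0 = \frac{\inj_c(r_0)}{\delta_c}, \qquad \varepsilon := \varepsilon_0 = \frac{\Delta_c}{\inj_c(r_0)},
\]
as prescribed by Proposition \ref{prop_sc_subgroup} with this particular value of $r_0$. Note that $A$ and $\varepsilon$ depend only on $\alpha$, since $r_0$, $\delta_c$, $\Delta_c$, $\delta_U$, and the function $\inj_c(\cdot)$ are all universal constants (fixed in Section \ref{sec;cstes} and Theorem \ref{theo;cone-off_hyp_loc}, respectively Proposition \ref{prop;ex_of_VR}).

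Now suppose $G$ acts on a hyperbolic graph $X$ in which the family $\calR$ of conjugates of $H$ satisfies the $(A,\varepsilon)$-small cancellation condition of Definition \ref{dfn_sc_subgroup}. Applying Proposition \ref{prop_sc_subgroup} with the chosen $r_0$, we obtain that $\calR$ defines a $2r_0$-separated very rotating family on the $\delta_U$-hyperbolic cone-off $\dot X = C(\lambda X, (Q_H), r_0)$. Since $2r_0 \geq \alpha \delta_U$, this family is in particular $\alpha\delta_U$-separated. By Definition \ref{def;VR}, this exhibits $H$ as an $\alpha$-rotating subgroup of $G$, completing the argument.

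There is no serious obstacle: the only point requiring attention is that, a priori, the constants $A_0, \varepsilon_0$ of Proposition \ref{prop_sc_subgroup} depend on $r_0$ (through $\inj_c(r_0)$), but the separation constant $2r_0$ also grows with $r_0$, so one simply picks $r_0$ large enough to swallow $\alpha$ and then reads off $A, \varepsilon$ from the proposition. In other words, the corollary amounts to the observation that increasing $r_0$ in the coning-off construction gives as much separation as desired, at the cost of requiring stronger (but still universal in $\alpha$) small cancellation hypotheses.
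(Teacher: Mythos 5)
Your proposal is correct and is essentially the paper's own argument: the corollary is derived directly from Proposition \ref{prop_sc_subgroup} together with the $\du$-hyperbolicity of the cone-off (Corollary \ref{coro;cone-off_hyp}), the only point being that $r_0\geq \ru$ may be chosen large enough that the $2r_0$-separation of the resulting very rotating family exceeds $\alpha\du$. Your explicit choice $r_0=\max\{\ru,\alpha\du/2\}$ and the observation that $A_0,\eps_0$ then depend only on $\alpha$ make precise exactly what the paper leaves implicit.
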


Now we specialize the previous results to the case of cyclic groups.
The \emph{axis} of a loxodromic element $g$,  is the $20\delta$-neighborhood of the set
of points $x$ at which $\d (gx,x) \leq \inf_y \d (gy,y) +\delta$.
We denote it by $\Axis(g)$. Note that it is not quite the same as the quasi-geodesic axis previously introduced.

For two loxodromic elements $g,h$, we write $\Delta(g,h) = \Delta(\Axis(g),\Axis(h))$
(as defined earlier in section \ref{sec_coneoff}).

\begin{defn}\label{dfn_sc_cyclic}
  Let $G$ be a group acting on a $\delta$-hyperbolic space $\X$ with $\delta>0$.
  Consider a family $\calr$ of loxodromic elements of $G$ stable under conjugation.
        We say that $\calr$ satisfies \emph{$(A,\eps)$-small cancellation} if the following conditions hold.
        \begin{enumerate}
        \item[(a)] $\inj(\calr) \geq A \delta$; here by the injectivity radius of the family $\calr$ we mean the injectivity radius of the corresponding family of cyclic subgroups. 
        \item[(b)] For all $g\neq h^{\pm 1}\in\calr$, $\Delta(g,h)\leq \eps \cdot \inj(\calr).$
        \end{enumerate}
      \end{defn}

 Small cancellation implies that $\Delta(\calR)$ is finite. In particular,
if $\Axis(g)$ and $\Axis(h)$ satisfy $\Delta(g,h)=\infty$, then $g=h^{\pm 1}$.
Applying Proposition \ref{prop_sc_subgroup}, one immediately gets

\begin{prop}\label{prop_sc_cyclic}
  For any $\alpha$, there exist $A>0, \eps>0$,
  such that the following holds. Let $G$ be a group acting on a $\delta$-hyperbolic space $\X$ and let
  $\calr$ be a family of loxodromic elements of $G$ stable under conjugation satisfying the $(A,\eps)$-small cancellation condition. Then
  $\calr$ is an $\alpha$-rotating family of $G$ with respect to the induced action of $G$ on a certain cone-off of $\X$.
\end{prop}

\paragraph{Small cancellation from acylindricity}

We now show how acylindricity implies that large powers of elements give small cancellation families.
We prove similar (but less uniform) assertions under Bestvina and Fujiwara's WPD condition.
See Definitions \ref{dfn_acyl} and \ref{WPD} for the notions of acylindricity and WPD.

    We will use the following facts concerning the stable norm $||g||$ of an element $g\in G$.
  Recall that the stable norm is defined as
    $\| g\| = \lim \frac{1}{n} \d (g^nx,x)$, see \cite{CDP}.

\begin{lem}[see {\cite[Prop. 3.1]{Del_Duke}}]\label{lem_norm}
  There exist $K_0,K_1,K_2$ such that
for any $g$  such that $[g]\geq K_0\delta$, the following hold.
For any $i>0$,
 $\Axis(g)$ and $\Axis(g^i)$ are at Hausdorff distance at most $K_1\delta$.
Moreover, for any $x\in \Axis(g)$,
  and all $i\in \bbN$, $$i||g||\leq \d (x,g^ix) \leq i||g||+K_2\delta.$$
\end{lem}

\begin{proof}
First recall that for any $g\in G$, $\|g\|\leq [g] \leq \|g\| +16\delta$, where $[g]=\inf\{d(x,gx)|x\in \X\}$
(see \cite[10.6.4]{CDP}).

Let $x,x'\in \X$ be such that $\d(x,gx)\leq [g]+\delta$
and $\d(x',g^ix')\leq [g^i]+\delta$.
Then $l=\cup_{n\in\bbZ} g^n[x,gx]$ and $l'=\cup_{n\in\bbZ} g^{ni}[x',g^ix']$ are two $K$-local $(1,\delta)$-quasigeodesic
with $K=\min\{[g],[g^i]\}$.
Consider $\alpha,\lambda,\mu$ such that $\alpha\delta$-local $(1,\delta)$-quasigeodesics are global $(\lambda,\mu)$-geodesics.
Define $K_0=\alpha+16$ and assume that $[g]\geq K_0\delta$. Then $l$ and $l'$ are global $(\lambda,\mu)$-geodesics.
Since $\d_{Hau}(l,l')<\infty$, there exists $K'_1$ (depending only on $\lambda,\mu$)
$\d_{Hau}(l,l')\leq K'_1\delta$ (see Lemma \ref{qg}).
Since $\Axis(g)$ and $\Axis(g')$ are at Hausdorff distance at most $20\delta$ from $l$ and $l'$,
the first assertion follows.

To prove the second assertion,
consider any $x\in l$, and $x'\in l'$ with $\d(x,x')\leq K'_1\delta$.
Then $d(x,g^ix)\geq d(x',g^ix')-2K'_1\delta \geq ||g^i||-16\delta-2K'_1\delta$.
Since $\|g^i\|=i\|g\|$, and since $\Axis(g)$ is at Hausdorff distance at most $20\delta$ from $l$,
the second assertion follows.
\end{proof}

We also note the following consequence of acylindricity.

\begin{lem}\label{lem_norm_acyl}\cite{B_acyl}
If $G$ acts acylindrically on $\X$, then
  there exists $\eta>0$ such that the stable norm of all loxodromic elements is at least $\eta$.\qed
\end{lem}

    We now explain how to obtain families satisfying small
    cancellation conditions from the acylindricity of the action.

    \begin{prop}[Small cancellation from acylindricity]\label{prop;SC_from_acyl}
     Let $G\actson \X$ be an acylindrical action on a geodesic $\delta$-hyperbolic space.
      Then, for all $A,\eps>0$, there exists $n\in \mathbb N$ such that the
      following holds.
      Let  $\calR_0$ be a conjugacy closed family of loxodromic elements of $G$
      having the same positive stable norm.
      Then the family $\calr_0^{n}=\{ g^{n}, g\in \calR_0 \}$ satisfies the $(A,\eps)$-small cancellation condition.
    \end{prop}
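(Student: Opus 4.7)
The plan is to choose an integer $n$, depending only on $A$, $\varepsilon$, $\delta$ and the acylindricity parameters, and to verify conditions (a) and (b) of Definition~\ref{dfn_sc_cyclic} for the family $\mathcal{R}_0^n$. Condition (a) follows directly from the uniform lower bound on stable norms in Lemma~\ref{lem_norm}: acylindricity yields a constant $\eta>0$ such that $\|g\|\ge\eta$ for every loxodromic $g\in G$, so the common stable norm $\tau$ of elements of $\mathcal{R}_0$ satisfies $\tau\ge\eta$. Hence $\inj(\mathcal{R}_0^n)=n\tau\ge n\eta$, and it suffices to take $n\ge A\delta/\eta$.

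For condition (b), I would argue by contradiction. Assume $g^n\neq(h^n)^{\pm 1}$ yet $\Delta(g^n,h^n)>\varepsilon n\tau$. The Hausdorff bound from Lemma~\ref{lem_norm} between $\Axis(g^n)$ and $\Axis(g)$ (and similarly for $h$) implies that the axes of $g$ and $h$ themselves fellow-travel along a segment of length $L\geq \varepsilon n\tau - O(\delta+K_1)$, which is large when $n$ is large. After replacing $h$ by $h^{-1}$ if necessary, I may assume $g$ and $h$ translate in the same direction along this segment. For each integer $k$ with $k\tau$ substantially less than $L$, one finds a pair of points on the overlap at mutual distance at least $R_d$ (for a fixed $d$ of order $\delta$) that are simultaneously moved by at most $d$ under the element $\alpha_k:=g^kh^{-k}$. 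Acylindricity permits at most $N_d$ such distinct elements, so once the admissible range of $k$ exceeds $N_d+1$---which holds for $n$ sufficiently large in terms of $N_d$, $R_d$, and $\eta$---the pigeonhole principle produces $k_1<k_2$ with $\alpha_{k_1}=\alpha_{k_2}$, hence $g^m=h^m$ for some $1\leq m\leq N_d$.

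By Corollary~\ref{elemrem}, this forces $h\in E(g)$, which is virtually cyclic by Lemma~\ref{elem1}. A further use of acylindricity, applied to a long segment of the axis of $g$, uniformly bounds the order of the finite part $F$ of $E(g)$ by a constant $M$ depending only on the action: any extra element of $F$ would fix a long axis segment up to bounded displacement, violating the acylindricity count $N_d$. Since $g$ and $h$ share the stable norm $\tau$, their images under the canonical projection $E(g)\to\mathbb{Z}$ coincide up to sign, so $g\in\{h,h^{-1}\}\cdot F$. A direct computation with the twisted product $(hf)^n$, using that the conjugation automorphism induced by $h$ on $F$ has finite order dividing $|\mathrm{Aut}(F)|$, yields a uniform period $P=P(M)$ such that $n$ divisible by $P$ guarantees $g^n=h^{\pm n}$, contradicting our assumption. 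Picking $n$ divisible by $P$, at least $A\delta/\eta$, and large enough for the pigeonhole step to apply completes the proof. The main obstacle is this last step: extracting from acylindricity both a uniform bound on the finite part of $E(g)$ and the uniform period $P$ governing when twisted powers in a virtually cyclic group stabilize, so that $n$ is genuinely independent of $\mathcal{R}_0$ and of the common stable norm $\tau$.
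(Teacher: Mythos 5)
Your argument matches the paper's up through the key pigeonhole step, and that part is sound: condition (a) via the uniform lower bound $\eta$ on stable norms, and for condition (b) the observation that a long overlap of $\Axis(g)$ and $\Axis(h)$ (equal stable norms, same direction after possibly replacing $h$ by $h^{-1}$) forces the elements $g^k h^{-k}$, $0\le k\le N$, to move a fixed pair of points at distance $\ge R$ by a bounded amount, so that acylindricity and pigeonhole yield $g^m=h^m$ for some $1\le m\le N$ with $N$ a uniform acylindricity constant. (The paper reaches the same conclusion in two pigeonhole passes, first on commutators $[g^i,h]$ to align the axes and then on $g^ih^{-i}$; your single pass is legitimate here because the equal-stable-norm hypothesis lets you invoke the second half of Lemma~\ref{lem_commut} directly.)

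The final paragraph, however, is a wrong turn, and the "main obstacle" you identify there does not exist. Once you have $g^m=h^m$ with $1\le m\le N$ and $N$ depending only on the action, you are done: choose $n$ to be a multiple of $N!$ (and $\ge A\delta/\eta$, and large enough for the pigeonhole range), as the paper does. Then $m\mid N!\mid n$, so $g^n=(g^m)^{n/m}=(h^m)^{n/m}=h^n$ (or $h^{-n}$ if you had replaced $h$ by $h^{-1}$), contradicting $g^n\neq (h^n)^{\pm1}$. No appeal to $E(g)$, to a uniform bound on its finite normal part, or to periodicity of twisted powers is needed — and the uniform bound on $|F|$ you hope to extract from acylindricity is exactly the kind of delicate claim one should avoid when a divisibility argument suffices. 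Deleting that paragraph and inserting the choice $N!\mid n$ closes the proof.
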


    \begin{rem}
      The statement of the proposition extends to the following situation: assuming acylindricity, given $A,\eps$ and $L$, there exists $n$ such that
      the following holds. Assume that $\calr_0$ is
      family of loxodromic elements of stable norm at most $L$, closed under conjugacy,
      and such that any pair of elements $g,h\in\calr_0$ having axes at finite Hausdorff distance satisfy $||g||=||h||$.
      Then
      the family $\{ g^{n}, g\in \calR_0 \}$
      satisfies the $(A,\eps)$-small cancellation condition.
    \end{rem}

    \begin{rem}
      One easily checks that if $\calr$ satisfies the $(A,\eps)$-small cancellation condition,
then so does $\calr^k$ for all $k\geq 1$. In particular, if $\calr_0$ is as in the proposition, then
$\calr_0^{nk}$ satisfies the $(A,\eps)$-small cancellation condition. However,
 in presence of torsion,
it might not be the case that $\calr_0^k$ satisfies the $(A,\eps)$-small cancellation condition for
all $k$ large enough.
    \end{rem}

    Let us briefly explain the argument in the case of an action on a
    tree.
    First, we argue that if  $g,h \in
    \calR$ have different axis of translation in the tree, then the
    common segment $\sigma$ of the two axis has length controlled by $L$ and
    the constants of acylindricity. Actually, restricted on a
    subsegment of $\sigma$
    far from it ends, $[g^i, h^j]$ is trivial (since $g^i$ and $h^j$
    are merely translations on a same axis), and one can find a
    contradiction with acylindricity, if the possible $i$ and $j$ are
    numerous.

    The second point is that if $g,h$ have same axis, and same
    translation length, then $h^ig^{-i}$
    fixes the whole axis. Again by acylindricity, $(h^ig^{-i}) =
    (h^jg^{-j})$ for two different bounded indices, and therefore
    $h^k=g^k$ for some controlled power $k$.

We start with a well known technical lemma.

\begin{lem}\label{lem_commut}
  There is a universal constant $K$ such that the following holds.
  Let $g,h$ be loxodromic elements in a $\delta$-hyperbolic space, and $N\in \bbN\setminus \{0\}$.
  Assume  $\Delta(g,h)\geq ||g^N||+||h||+50\delta$.
  Consider $x,y \in \Axis^{+20\delta}(g)\cap \Axis^{+20\delta}(h)$, with $\d (x,y)=\Delta(g,h)$.
  Without loss of generality up to changing $g,h$ to their inverses, assume that $g\m x$ and $h\m x$ are at distance
  at most $50\delta$ from $[x,y]$.
  Let $p\in [x,y]$ be the point at distance $\Delta(g,h)- ||g||-||h||$ from $y$.

Then for all $i\in \{1,\dots, N\}$,
the commutator $[g^i,h]=g^i h g^{-i} h\m$ moves all points in $[p,y]$ by at most $K\delta$.
Moreover, if $||h||=||g||$, then $i\in \{1,\dots, N\}$, $g^i h^{-i}$ moves all points in $[p,y]$ by at most $K\delta$.
\end{lem}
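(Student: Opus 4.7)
The idea is that both $g$ and $h$ act as approximate translations along the common segment $[x,y]$, and hence commute approximately on any point that has enough ``room'' to allow all four steps of the commutator to remain in the common region. I would proceed in three steps.

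First, I would set up signed coordinates along $[x,y]$ and establish a \emph{near-translation} estimate: there is a universal constant $K'$ such that if $z$ lies within $25\delta$ of $[x,y]$ and its nearest-point projection $\pi(z)\in [x,y]$ is far enough from one endpoint, then $g(z)$ also lies within $25\delta$ of $[x,y]$, and $\pi(g(z))$ is within $K'\delta$ of the point obtained from $\pi(z)$ by shifting distance $\|g\|$ in the direction of $g$-translation along $[x,y]$. The direction is pinned down by the WLOG hypothesis, since $g^{-1}x$ being within $50\delta$ of $[x,y]$ fixes which way $g$ pushes, and similarly for $h$; one checks that with this convention $g$ and $h$ translate in the same direction along $[x,y]$. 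The estimate itself follows from $10\delta$-quasiconvexity of $\Axis(g)$, the $20\delta$-closeness of $\Axis(g)$ to $[x,y]$ in the common region, and a standard thin-quadrilateral argument applied to $(z, g(z), w, g(w))$, where $w$ is a projection of $z$ onto $\Axis(g)$.

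Next, I would apply the near-translation estimate iteratively along the four-step orbit
$$z \;\longmapsto\; z_1 = h^{-1}(z) \;\longmapsto\; z_2 = g^{-i}(z_1) \;\longmapsto\; z_3 = h(z_2) \;\longmapsto\; z_4 = g^{i}(z_3) = [g^i,h](z).$$
The hypothesis $\Delta(g,h) \geq \|g^N\|+\|h\|+50\delta$, together with the position of $p$ (which provides at least $\|g^N\|+\|h\|$ of slack on the relevant side of $z$), ensures that each intermediate point $z_1, z_2, z_3$ stays within the $25\delta$-neighborhood of $[x,y]$, so the estimate applies at every step. Along the orbit, $\pi(\cdot)$ shifts by $\pm\|h\|$ and $\pm\|g^i\|$; by the commutator structure these shifts cancel in pairs, so $\pi(z_4)$ is within $O(\delta)$ of $\pi(z)$, and hyperbolicity then gives $\d(z_4,z)\le K\delta$ for a universal $K$. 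A crucial point is that $g^{\pm i}$ enters as a \emph{single} application of the estimate (to the element $g^i$, whose axis coincides with $\Axis(g)$ up to Hausdorff distance $K_1$ by Lemma~\ref{lem_norm}, and for which $\d(w,g^iw) \in [i\|g\|,\, i\|g\|+K_0\delta]$ on the axis), so the cumulative error does not depend on $i$ or $N$.

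The main obstacle I anticipate is the sign bookkeeping: making sure that the four coordinate shifts really cancel rather than add up, and that the intermediate points truly stay within the common region—both of which rely delicately on the WLOG assumption on $g^{-1}x$ and $h^{-1}x$. Fixing signed coordinates on $[x,y]$ at the outset, with the orientation determined by that assumption, makes the bookkeeping essentially automatic. The second assertion, for $g^ih^{-i}$ when $\|h\|=\|g\|$, is proved by the same method applied to the two-step orbit $z \mapsto h^{-i}(z) \mapsto g^ih^{-i}(z)$, where the shifts are $\pm i\|h\| = \pm i\|g\|$ and cancel exactly; the intermediate point $h^{-i}(z)$ stays in the common region for the same reason as before, and the same universal $K$ works.
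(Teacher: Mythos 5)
The paper does not actually prove this lemma: it attributes the first assertion to the last claim of \cite{Pau_arboreal} and deduces the second from the first point of Lemma \ref{lem_norm}. So your proposal is attempting something the authors deliberately outsourced, and unfortunately it stumbles exactly where the real difficulty of Paulin's argument lies.

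The gap is your claim that ``each intermediate point $z_1,z_2,z_3$ stays within the $25\delta$-neighborhood of $[x,y]$,'' justified by ``the position of $p$, which provides at least $\|g^N\|+\|h\|$ of slack on the relevant side of $z$.'' Neither part is correct. Since $d(p,y)=\Delta(g,h)-\|g\|-\|h\|$, the point $p$ sits at distance only $\|g\|+\|h\|$ from $x$; thus a point $z\in[p,y]$ has at most $\|g\|+\|h\|$ of guaranteed room on the $x$-side and possibly \emph{zero} room on the $y$-side (take $z=y$). Now fix the orientation forced by the hypothesis that $g^{-1}x$ and $h^{-1}x$ lie within $50\delta$ of $[x,y]$: both $g^{-1}$ and $h^{-1}$ translate in the direction from $x$ toward $y$. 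Reading $[g^i,h]=g^ihg^{-i}h^{-1}$ from the right, the first two steps $h^{-1}$ and then $g^{-i}$ push $z$ toward $y$ by a cumulative $\|h\|+i\|g\|$, so for $z$ near $y$ the point $z_2$ lies up to $\|h\|+\|g^N\|$ \emph{beyond} $y$. But $y$ is an endpoint of $\Axis^{+20\delta}(g)\cap\Axis^{+20\delta}(h)$; past $y$ the two axes diverge, so $z_1=h^{-1}z$ is only known to be near $\Axis(h)$, not near $\Axis(g)$, and your near-translation estimate for $g^{-i}$ simply does not apply to it. The error you pick up at that step is of order $\|h\|$, not $O(\delta)$, and it is not removed by the later cancellation. (A tree computation makes this vivid: an elliptic commutator can fix a long subsegment near $x$ while still moving the branch point $y$ by roughly $2(\|h\|+i\|g\|)$, so ``the shifts cancel'' is not by itself enough once the orbit leaves the overlap.)

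So the missing ingredient is precisely a mechanism for controlling $g^{-i}h^{-1}z$ when $h^{-1}z$ has left the region where the two axes fellow-travel — for instance, comparing the two translated copies $g^{-i}z$ and $hg^{-i}h^{-1}z$ of the axis configuration rather than following the orbit pointwise. Your two-step argument for the second assertion has the identical problem ($h^{-i}z$ can lie $i\|h\|$ past $y$). If you want a self-contained proof, you must either restrict to points whose entire commutator orbit demonstrably stays in the overlap (which, with the stated orientation, is a segment with a buffer of $\|h\|+\|g^N\|$ at the $y$-end, not the segment $[p,y]$ of the statement), or reproduce Paulin's more careful argument; as written, the proposal does not establish the lemma.
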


The first assertion is in the last claim of \cite{Pau_arboreal}.
The second follows from the
second point of Lemma \ref{lem_norm}.

\begin{proof}[Proof of Proposition \ref{prop;SC_from_acyl}]
      Let us fix the constants.
      Let $K$ be as in Lemma \ref{lem_commut}.
      By acylindricity, there
      exists $N$ and $R$, such that for all $x,y$ at distance $\geq R$,
      at most $N$ different elements of $G$ send them at distance at most $K\delta$ from themselves.
      By
      Lemma \ref{lem_norm_acyl},
      consider $\eta>0$ such that the stable norm of any loxodromic element is $\geq \eta$.

Recall the constants $K_0,K_1$ from Lemma \ref{lem_norm}.
      Fix $A\geq K_0+16$ and $\eps>0$.
      Let $m_0\geq \max(\frac{A\delta}{\eta},\frac{R+(N+2)L+(100+K_1)\delta}{\eps\eta})$.
      Define $n$ as the smallest
      multiple of $N!$ greater than $m_0$.

      Clearly, for all $m\geq m_0$, $||g^{m}|| \geq A\delta$
so $\inj_\X(\calr^{m})\geq A\delta$
      as required by the definition of $(A,\eps)$-small cancellation.

      Next, we claim that for all  $g,h \in \calR_0$ such that
      $\Delta(g,h)\geq R+(N+2)L+100\delta$, then
      $\Axis(g)$ and  $\Axis(h)$ are
      at bounded Hausdorff distance from each other.
      Indeed, by Lemma \ref{lem_commut}, there exists two points $p,y$ at distance $\geq R$ such all commutators $[g^i,h]$ for $i=1,\dots, N+1$ move
      $p$ and $y$ by at most $K\delta$.
      By acylindricity, there exists $i\neq j$ such that $[g^i,h]=[g^j,h]$, so $[g^{j-i},h]=1$.
      It follows that
      $g^{j-i}$ preserves the axis of $h$, and that $\Axis(g),\Axis(h)$ are at finite Hausdorff distance.

      If follows that for all $g,h\in \calr_0$, either $\Axis(g)$ is at finite Hausdorff distance from $\Axis(h)$, or
$\Delta(g^{m_0},h^{m_0})\leq \Delta(g,h)+K_1\delta \leq R+(N+2)L+(100+K_1)\delta$.
      Note that for all $m\geq m_0$, $\inj_\X(\calr_0^m)\geq m_0\eta \geq \frac1\eps (R+(N+2)L+(100+K_1)\delta)$ by choice of $m_0$.
      It follows that $\Delta(R_0^{m})\leq \eps \inj_\X(R_0^{m})$.

      We claim that for all $g,h$ such that $\Axis(g)$ is at finite Hausdorff distance from $\Axis(h)$,
$g^{N!}=h^{\pm N!}$. The small cancellation condition will follow.
      By assumption, $||g||=||h||$, so by Lemma \ref{lem_commut}, up to  changing $h$ to $h\m$,
      all elements $g^ih^{-i}$ move points of $\Axis^{+20\delta}(g)\cap \Axis^{+20\delta}(h)$ by at most $K\delta$.
      By acylindricity, there exists $i\neq j \in \{0,\dots, N\}$ such that $g^ih^{-i}=g^jh^{-j}$.
      It follows that $g^{i-j}=h^{i-j}$ so $g^{N!}=h^{N!}$, which concludes the proof.

    \end{proof}

Recall that the WPD condition was defined in Definition \ref{WPD}.

\begin{prop}\label{prop_WPD_SC}
Let $G$ be a group acting on a $\delta$-hyperbolic space $\X$.
Let $h_1,\dots h_n\in G$ be pairwise non-commensurable loxodromic elements satisfying the WPD condition.
Then for any $A,\eps$, there exists $m\in \mathbb N$ such that the conjugacy closed set  $\calr^{m}=\{h_1^m,\dots,h_n^m\}^G$ satisfies the $(A,\eps)$-small cancellation.
\end{prop}

\begin{proof}
  Let $\calr=\{ h_1, \ldots, h_n\}^G$. 
  Let $\eta, L$ be the minimal and maximal stable norms of the elements $h_1,\dots h_n$.
  Consider $C=K\delta$ as in Lemma \ref{lem_commut},
Denote by $\calc_{a}(x,y)$ the set of elements $g\in G$ that move $x$ and $y$ by at most $a$.
Consider $p_i$ such that for all $x\in \X$,
the set $\calc_{2K\delta}(x,h_i^{p_i}x)$ is finite.

Since any $x\in \Axis^{+20\delta}(h_i)$ is at distance at most
$20\delta$    from
$h_i^\bbZ.[x_0,h_i x_0]$, we see that $\calc_{K\delta}(x,h_i^{p_i}x)$
is bounded by some number $N_i$ independent of $x\in \Axis^{+20\delta}(h_i)$.
Consider $N=\max N_i$.

Given $A$ and $\eps>0$,
define $m_0\geq \max(\frac{A\delta}{\eta},\frac{pL+(N+2)L+(100+K_1)\delta}{\eps\eta})$.

Consider $g,h\in \calr$.
If $\Delta(g,h)\geq  pL+(N+2)L+100\delta$, then
 by Lemma \ref{lem_commut}, for all $i=1,\dots,N+1$, all commutators $[g^i,h]$ for $i=1,\dots, N+1$ move
  $y$ and $h^{-p}y$ by at most $K\delta$.
As in the previous section, this implies that some power of $g$ commutes with $h$,
so    $\Axis(g)$ and  $\Axis(h)$ are
      at bounded Hausdorff distance from each other.
It follows that $\grp{g,h}$ is virtually cyclic, so $g$ and $h$ are conjugate of the same $h_i$ by assumption.
In particular $||g||=||h||$.
Arguing as above, we see that there exists $i\leq N$ such that $g^i=h^i$, and $g^{N!}=h^{N!}$.
\end{proof}

Let us record one application of the previous discussion. Note that the constant $n$ in the proposition below is independent of $g$ in the case of an acylindrical action. For simplicity, we state the result for a single element $g$ and leave the (obvious) generalization to several element to the reader.

\begin{prop} \label{prop;Acyl_free} Let $G$ be a group acting on a hyperbolic space $\X$ and let $\alpha$ be a positive number.
\begin{enumerate}
\item[(a)] For any pairwise non-commensurable loxodromic WPD elements $g_1, \ldots, g_n\in G$, there exists $m=m(\alpha, g_1, \ldots, g_n)\in \mathbb N$ such that the collection of subgroups $\{\langle g_i^m\rangle\mid i=1, \ldots, n\}$ is $\alpha$-rotating with respect to the induced action of $G$ on a certain cone-off of $\X$.
\item[(b)] If the action of $G$ on $\X$ is acylindrical, then there exists $n=n(\alpha)$ such that for every loxodromic $g\in G$ the subgroup $\langle g^n\rangle$ is $\alpha$-rotating with respect to the induced action of $G$ on a certain cone-off of $\X$.
\end{enumerate}
\end{prop}

\begin{proof}
Let us choose $A=A(\alpha)$ and $\e =\e(\alpha)$ according to Proposition \ref{prop_sc_cyclic}. 
Now to prove (a) it suffices to apply Proposition \ref{prop_WPD_SC} to the elements $g_1, \ldots, g_n\in G$.
Similarly to prove (b) we note that all conjugates of $g$ have the same positive stable norm. We can thus apply Proposition \ref{prop;SC_from_acyl}.
\end{proof}

\subsection{Back and forth}

In this section we discuss a canonical way of constructing rotating families from normal subgroups of hyperbolically embedded subgroups. Our first result is the following.

\begin{thm}\label{he-vrf}
Let $G$ be a group, $\Hl$ a collection of subgroups of $G$, and $X$ a subset of $G$ such that $\G$ is hyperbolic. Then for every $\alpha >0$, there exists $D=D(\alpha)$ such that the following holds. Suppose that a collection of subgroups $\{ N_\lambda \}_{ \lambda \in \Lambda }$, where $ N_\lambda \lhd H_\lambda$, satisfies $\dl(1,h)>D$ for every nontrivial element $h\in N_\lambda $ for all $\lambda\in \Lambda$. Then $\{ N_\lambda \}_{ \lambda \in \Lambda }$ is $\alpha $-rotating.
\end{thm}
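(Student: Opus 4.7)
The plan is to construct a $\delta'$-hyperbolic space $\dot X$ equipped with a $G$-equivariant $\alpha\delta'$-separated very rotating family whose rotation subgroups are precisely the conjugates of the $N_\lambda$. The apices should be ``cone points'' above the cosets $gH_\lambda$ in $\G$, and the rotation subgroup assigned to the apex above $gH_\lambda$ is $gN_\lambda g^{-1}$. This group fixes the apex because it stabilizes the coset (using $N_\lambda \lhd H_\lambda$), so the family is automatically $G$-equivariant.

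First I would construct $\dot X$ by a cone-off of $\G$ along the cosets $gH_\lambda$ with cones of radius $r_0=r_0(\alpha)$ chosen large. A crucial subtlety is the choice of metric on the base of each cone. The induced path metric on a coset $gH_\lambda \subset \G$ is bounded by $1$ (cosets are complete subgraphs), which produces a negligible rotation angle at the apex and does not suffice for the very rotating condition. Instead, each coset must be equipped with the relative metric $\dl$ (translated by $g$). Using Lemma \ref{omega}(b) one obtains a finite subset $Y_\lambda \subset H_\lambda$ with $d_{Y_\lambda} \sim_{Lip} \dl$, so one can form the cone of radius $r_0$ over the locally finite Cayley graph $\Gamma(H_\lambda, Y_\lambda)$, then glue these cones to $\G$ along the cosets. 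Global hyperbolicity of $\dot X$ would follow by combining local hyperbolicity of $\G$ with hyperbolicity of each cone (Proposition \ref{prop;the_cone}) and applying the Cartan-Hadamard theorem (Theorem \ref{theo;CH}), after rescaling so that constants match; the geometric separability of distinct cosets needed for the small-cancellation input is provided by Proposition \ref{malnorm}, which bounds $|H_\lambda^g \cap H_\mu|$.

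The heart of the argument is the verification of the very rotating condition. Fix a nontrivial $n \in N_\lambda$ and $x$ with $20\delta' \leq \d_{\dot X}(x, c) \leq 30\delta'$, where $c = c_{H_\lambda}$. Since $r_0 \gg \delta'$, $x$ lies in the cone over $H_\lambda$, so $x=(h,s)$ with $h \in H_\lambda$ and $s \in [20\delta',30\delta']$, and $nx = (nh,s)$. The cone-angular separation $\theta(h,nh) = \min(\pi, d(h,nh)/\sinh r_0)$ satisfies $\theta(h,nh) \geq \dl(1,n)/(C \sinh r_0) \geq D/(C\sinh r_0)$, where $C$ is the Lipschitz constant from Lemma \ref{omega}. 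Choosing $D$ so that $D/(C\sinh r_0) \geq \pi$ forces $\theta(h,nh) = \pi$, and by Proposition \ref{prop;cone_BH} any geodesic from $x$ to $nx$ must pass through $c$. Combined with the $15\delta'$-slack in the hypothesis $\d(nx,y)\leq 15\delta'$ and hyperbolicity of $\dot X$, this forces every geodesic $[x,y]$ to pass through $c$. Taking $r_0 \geq \alpha\delta'/2$ yields $\alpha\delta'$-separation of apices, and $D(\alpha) = \lceil \pi C \sinh r_0(\alpha)\rceil$ is the constant sought.

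The main obstacle is the rigorous implementation of the cone-off where the cone bases carry the $\dl$-metric rather than the induced path metric from $\G$, and verifying its global hyperbolicity: the standard framework of Section 5.3 is built around the induced path metric and must be adapted. The adaptation uses the Lipschitz equivalence $d_{Y_\lambda}\sim_{Lip}\dl$ (Lemma \ref{omega}), which lets us realize the modified cone as a cone over a locally finite hyperbolic-like graph, so the local hyperbolicity arguments and the Cartan-Hadamard passage to global hyperbolicity go through with only constant changes.
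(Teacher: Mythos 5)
Your core mechanism is the right one: put the relative metric $\dl\sim_{Lip}\d_{Y_\lambda}$ (Lemma \ref{omega}(b)) on the cone bases so that the hypothesis $\dl(1,h)>D$ converts into an angle at least $\pi$ at the apex, and your local verification of the very rotating condition near an apex is essentially sound. The gap is in the construction and hyperbolization of the ambient space, which you flag as ``the main obstacle'' and then wave away. First, the cone-off machinery of Section \ref{sec;cstes} (Theorem \ref{theo;cone-off_hyp_loc}) requires the coned subspaces to be strongly quasiconvex for the \emph{induced} path metric and to have uniformly bounded pairwise overlap $\Delta(\calQ)$; neither is available here. In $\G$ every coset has diameter $1$, and in the weakly relatively hyperbolic setting of the theorem (take $G=H\times\mathbb Z$, $X=\{x^{\pm1}\}$: $\G$ is hyperbolic but distinct cosets of $H$ stay at Hausdorff distance $1$ forever, so $\Delta(\calQ)=\infty$ in any graph where the cosets carry an unbounded metric). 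Proposition \ref{malnorm} bounds the $\dl$-diameter of the subgroup intersection $H_\lambda^g\cap H_\mu$, not the overlap of neighborhoods of cosets, so it does not supply the small-cancellation input you invoke. Second, gluing $\Cone(\Gamma(H_\lambda,Y_\lambda),r_0)$ onto $\G$ along a coset of diameter $1$ collapses the cone metric near its boundary sphere: two boundary points at arbitrarily large $\d_{Y_\lambda}$-distance are joined by a single edge of $\G$. Balls of radius $\RCH$ centred in this collapsed transition region mix cone geometry, $\G$-geometry and shortcuts, and you give no argument that they are uniformly hyperbolic, so the Cartan--Hadamard step (Theorem \ref{theo;CH}) is unjustified.

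The paper's proof avoids both problems with a different construction. It attaches Groves--Manning combinatorial horoballs with a single apex, $\mc H_r(\Gamma(H_\lambda,Y_\lambda))$, to the (generally non-hyperbolic) graph $\Gamma(G,X\cup Y)$ rather than to $\G$, and proves that the resulting space $\mathbb K_r$ is $\delta$-hyperbolic \emph{with $\delta$ independent of $r$} via the homological linear isoperimetric inequality (Proposition \ref{IP}, Lemmas \ref{hypcone1} and \ref{hypcone2}), feeding in the bounded reduced relative presentation with linear relative isoperimetric function from Lemma \ref{pres}. That is where the hypothesis that $\G$ is hyperbolic actually enters; your proposal never uses it, which is a sign something is missing. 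The very rotating condition is then checked using the exponential contraction of distances inside horoballs (part (c) of Theorem \ref{GM}) together with Lemma \ref{omega}(b) --- precisely the quantitative input you identified. To repair your argument you would need to replace the Cartan--Hadamard step by an isoperimetric proof of hyperbolicity of your modified cone-off (uniform in $r_0$), or simply switch to the horoball model.
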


Before proceeding with the proof of Theorem \ref{he-vrf}, we discuss some corollaries. The first one is an immediate consequence of the theorem and the definition of a hyperbolically embedded collection of subgroups. 

\begin{cor}\label{cor-he-vrf}
Let $\Hl$ be a hyperbolically embedded collection of subgroups of a group $G$. Then for every $\alpha >0$ there exists finite subsets $\mathcal F_\lambda \subseteq H_\lambda \setminus \{ 1\}$ such that any collection $\{ N_\lambda \}_{ \lambda \in \Lambda }$, where $ N_\lambda \lhd H_\lambda$ and $N_\lambda \cap \mathcal F_\lambda =\emptyset $ for every $\lambda\in \Lambda $,  is $\alpha $-rotating.
\end{cor}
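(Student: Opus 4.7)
The plan is a direct application of Theorem~\ref{he-vrf}. Given $\alpha > 0$, let $D = D(\alpha)$ be the constant furnished by that theorem, computed with respect to a relative generating set $X$ witnessing $\Hl \h (G,X)$. Since $\Hl$ is hyperbolically embedded, the Cayley graph $\G$ is hyperbolic and each metric space $(H_\lambda, \dl)$ is locally finite, so for every $\lambda \in \Lambda$ the ball
$$
B_\lambda = \{h \in H_\lambda \mid 0 < \dl(1,h) \le D\}
$$
is finite. Since the collection $\Lambda$ is understood to be finite in this corollary, the set
$$
\mathcal F = \bigcup_{\lambda \in \Lambda} B_\lambda
$$
is a finite subset of $G \setminus \{1\}$.

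Now suppose that $\{N_\lambda\}_{\lambda \in \Lambda}$ is a collection with $N_\lambda \lhd H_\lambda$ and $N_\lambda \cap \mathcal F = \emptyset$ for every $\lambda$. Then $N_\lambda \cap B_\lambda = \emptyset$ for each $\lambda$, which is to say that every nontrivial $h \in \bigcup_{\lambda} N_\lambda$ satisfies $\dl(1,h) > D$. The hypothesis of Theorem~\ref{he-vrf} is therefore met, and we conclude that $\{N_\lambda\}_{\lambda \in \Lambda}$ is $\alpha$-rotating.

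The only thing to check beyond invoking the previous theorem is the finiteness of $\mathcal F$, which reduces to the observation that each ball of finite radius in $(H_\lambda, \dl)$ is finite -- precisely the defining property of hyperbolic embedding beyond hyperbolicity of $\G$. There is no real obstacle here; the content of the corollary lies entirely in Theorem~\ref{he-vrf}, and this statement merely repackages it in a form convenient for later applications (e.g.\ to Dehn filling and to the mapping class group).
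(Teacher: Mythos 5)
Your argument is exactly the intended one: the paper states this corollary as an immediate consequence of Theorem \ref{he-vrf}, and your construction of $\mathcal F$ as the union of the punctured $D(\alpha)$-balls in the $(H_\lambda,\dl)$, finite by local finiteness, is the right way to make "immediate" precise.

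One small caveat: the corollary does not actually assume $\Lambda$ is finite, so your remark that it "is understood to be finite" is not justified by the statement. The conclusion still holds for infinite $\Lambda$, but then finiteness of $\mathcal F=\bigcup_\lambda B_\lambda$ needs an extra observation: by Theorem \ref{ipchar} there is a \emph{strongly bounded} relative presentation, so the set $Y_\lambda$ of letters of $H_\lambda\setminus\{1\}$ occurring in relators is empty for all but finitely many $\lambda$, and by Lemma \ref{omega}(b) this forces $\dl(1,h)=\infty$ for every nontrivial $h$ in those $H_\lambda$, i.e.\ $B_\lambda=\emptyset$ for all but finitely many $\lambda$. With that sentence added, your proof is complete in full generality.
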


For example, this corollary together with Proposition \ref{he-rh} can be applied to construct very rotating families in relatively hyperbolic groups. We mention yet another particular case.

\begin{cor}\label{theo;WPD_free}
Let $G$ be a group acting on a hyperbolic space and let $\{ E_1, \ldots , E_k\}\h G$ be a collection of infinite virtually cyclic subgroups. Let $g_i\in E_i$ be elements of infinite order. Then for every $\alpha >0$, there exists $n\in \mathbb N$ such that the collection of  subgroups $\{\langle g_i^{n}\rangle\mid i=1, \ldots, k\} $ is $\alpha$-rotating.
\end{cor}

\begin{proof}
Since every $E_i$ is virtually cyclic, there exists $m\in \mathbb N$ such that $\langle g_i^m\rangle $ is normal in $E(g_i)$ for all $i$. Moreover, for every finite subset $\mathcal F\subseteq G\setminus\{ 1\}$, we can always ensure the condition $\langle g^{dm}\rangle\cap \mathcal F=\emptyset$ by choosing $d$ large enough. Hence by Corollary \ref{cor-he-vrf}, for every $\alpha >0$ there exists $n\in \mathbb N$ (namely, $n=dm$ for a sufficiently large $d$) such that the collection of subgroups $\{\langle g_i^{n}\rangle\mid i=1, \ldots, k\} $ is $\alpha$-rotating.
\end{proof}

In particular, Corollary \ref{theo;WPD_free} and Theorem \ref{WPD} provide an alternative way of constructing rotating families starting from WPD elements (cf. Proposition \ref{prop;Acyl_free}).

The proof of Theorem \ref{he-vrf} is divided into a series of lemmas. From now on and until the end of the proof, we work under the assumptions of Theorem \ref{he-vrf}.

We start by defining combinatorial horoballs introduced by Groves and Manning \cite{Gr_Ma}, which play an important role in our construction.  \label{i-combhorob}

\begin{defn}
Let $\Gamma$ be any graph.
The \emph{combinatorial horoball based on $\Gamma$}, denoted
$\mc{H}(\Gamma)$, is the graph formed as follows:
\begin{enumerate}
\item[1)] The vertex set $\mc{H}^{(0)}(\Gamma )$ is $\Gamma^{(0)}\times \left( \{0\}\cup \mathbb N \right)$.
\item[2)] The edge set $\mc{H}^{(1)}(\Gamma )$ contains the following three types of edges.  The
  first two types are called \emph{horizontal}, and the last type is
  called \emph{vertical}.
\begin{enumerate}
\item[(a)] If $e$ is an edge of $\Gamma$ joining $v$ to $w$ then there is a
  corresponding edge $\bar{e}$ connecting $(v,0)$ to $(w,0)$.
\item[(b)] If $k>0$ and $0<\d _{\Gamma}(v,w)\leq 2^k$, then there is a single edge
  connecting $(v,k)$ to $(w,k)$.
\item[(c)] If $k\geq 0$ and $v\in \Gamma^{(0)}$, there is an edge  joining
  $(v,k)$ to $(v,k+1)$.
\end{enumerate}
\end{enumerate}
\end{defn}

Given $r\in \mathbb N$, let $\mc D_r$ be the full subgraph of $\mc H(\Gamma )$ with vertices $\{ (y,n)\mid  n\ge r, \; y\in Y\} $. By $\d_\Gamma $ and $\d_{\mc H(\Gamma )} $ we denote the combinatorial metrics on $\Gamma $ and $\mc H (\Gamma )$ respectively. The following results were proved in \cite{Gr_Ma}. (The first one is Theorem 3.8 and the other two follow easily from Lemma 3.10 in \cite{Gr_Ma}.)

\begin{thm}[Groves-Manning]\label{GM}
\begin{enumerate}
\item[(a)] There exists $\delta >0$ such that for every connected graph $\Gamma $, $\mc H (\Gamma )$ is $\delta $-hyperbolic.
\item[(b)] For every $r\in \mathbb N$, $\mathcal D_r$ is convex.

\item[(c)] For every two vertices $a,b\in \Gamma $, we have $$\d_\Gamma (a,b)\le 2^{3(\d_{\mc H(\Gamma )} (a,b)-3)/2}.$$
\end{enumerate}
\end{thm}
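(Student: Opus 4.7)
The plan is to follow the original Groves--Manning argument \cite{Gr_Ma}, which rests on understanding the shape of geodesics in $\mc H(\Gamma)$. The key structural lemma I would prove first is the following: for any two vertices $a=(v,n), b=(w,m)$ of $\mc H(\Gamma)$, there is a geodesic of the form $\gamma = \gamma_1 \gamma_2 \gamma_3$, where $\gamma_1$ is a vertical ascent from $(v,n)$ to $(v,N)$, $\gamma_3$ is a vertical descent from $(w,N)$ to $(w,m)$, and $\gamma_2$ is a horizontal segment at level $N$ consisting of at most $3$ edges, where $N$ is the smallest integer with $\d_\Gamma(v,w)\le 2^N$ (or $\max(n,m)$, whichever is larger). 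The proof is a swap argument: any horizontal edge at level $k$ can be replaced by ``up, horizontal at level $k+1$, down'' which is shorter as soon as one uses two consecutive horizontal edges at the same level (since two distance-$2^k$ jumps at level $k$ combine to a single distance-$2^{k+1}$ jump at level $k+1$, saving one edge while adding two vertical edges---balance forces the ``preferred path'' to be essentially unique up to bounded error).

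With this description of geodesics in hand, the three statements are essentially corollaries. For (c), the length $\ell=\d_{\mc H(\Gamma)}(a,b)$ of the preferred path between $a=(v,0)$ and $b=(w,0)$ equals $2N+k$ with $k\in\{0,1,2,3\}$ and $\d_\Gamma(v,w)\le 3\cdot 2^N$; solving gives $\d_\Gamma(v,w) \le 3\cdot 2^{(\ell-k)/2} \le 2^{3(\ell-3)/2}$, which is exactly the claimed bound (choosing the constants so that the small horizontal adjustment and the factor $3$ are absorbed). For (b), observe that the preferred geodesic between two vertices at height $\ge r$ only moves upward or horizontally at a level $\ge\max(n,m)\ge r$, hence stays in $\mc D_r$; since $\mc D_r$ is path-connected and any pair of vertices can be joined by such a geodesic, it is convex.

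For (a), I would verify thin triangles directly from the preferred-path description. Given a triangle with vertices $a,b,c$ and preferred geodesics between them, each side is shaped like an inverted triangle: ascend, cross a $\le 3$-edge horizontal top, descend. Consider a point $p$ on one side; if $p$ lies on a vertical segment, then the corresponding vertex on one of the other two sides (obtained by taking the same underlying $\Gamma$-vertex and suitable height, or moving at most one step down to a shared ancestor) is within a uniformly bounded distance. If $p$ lies on the horizontal top of a side, then at the level of $p$ the three base vertices project to $\Gamma$-vertices pairwise within distance $\le 2^{N}$ of each other where $N$ is the top level, and a short computation shows $p$ is within $O(1)$ edges of the other two sides. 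The whole analysis is combinatorial and reduces to checking finitely many configurations, yielding a universal $\delta$.

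The main obstacle is establishing the preferred-path lemma cleanly, in particular ruling out subtle detours such as paths that repeatedly oscillate in height or use long horizontal excursions at intermediate levels. This requires a careful induction on the number of horizontal edges at the lowest level used, showing at each step that such an edge can be shortened by pushing it up one level without changing the endpoints. Once this is done, everything else is a bounded combinatorial check, so the bulk of the effort is front-loaded into the structural lemma; hyperbolicity, convexity of horoballs, and the exponential distortion estimate then fall out essentially simultaneously.
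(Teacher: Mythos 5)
Your overall route — first establish the Groves--Manning ``preferred geodesic'' lemma, then read off (a), (b), (c) — is exactly the route the paper relies on: the paper does not prove Theorem \ref{GM} at all, it simply cites \cite{Gr_Ma} (Theorem 3.8 and Lemma 3.10 there), and your plan is a reconstruction of that cited argument. The problem is that the engine of your structural lemma, as you state it, is false. Replacing two consecutive horizontal edges at level $k$ by ``up, one horizontal edge at level $k+1$, down'' turns a subpath of length $2$ into one of length $3$; it is never shorter, so the induction ``push the lowest-level horizontal edges up, shortening at each step'' does not run. The correct bookkeeping is different: a horizontal edge at level $k$ advances the $\Gamma$-coordinate by at most $2^k$, which gives $\d_{\mc H(\Gamma)}\bigl((v,n),(w,m)\bigr)=\min_{N\ge\max(n,m)}\bigl[(N-n)+(N-m)+\lceil \d_\Gamma(v,w)/2^N\rceil\bigr]$; raising the level by one trades two extra vertical edges against roughly halving the horizontal count, and this is non-increasing only once the horizontal run has length at least $4$ — which is why the preferred geodesic has a horizontal top of length at most $3$, not because of strict local shortenings. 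The place where a strict shortening does exist is the opposite configuration: a lowest-level horizontal run of length $h$ entered by a descent and left by an ascent ($h+2$ edges) can be replaced by $\lceil h/2\rceil<h+2$ edges one level up; that is what rules out geodesics dipping below the level of their endpoints. Your sketch conflates these two inequalities, and without them the front-loaded lemma, hence everything downstream, is unproved.

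The confusion surfaces concretely in (b): convexity of $\mathcal D_r$ means that \emph{every} geodesic between two of its vertices stays in $\mathcal D_r$ (and this is how the paper uses it in the proof of Lemma \ref{hypcone1}, for an arbitrarily chosen geodesic $d$); exhibiting one preferred geodesic inside $\mathcal D_r$ is not enough. The fix is precisely the strict-shortening swap above: a geodesic reaching a minimal level $l<r$ must contain a horizontal run at level $l$ flanked by a descent and an ascent, and pushing that run up one level strictly shortens the path, a contradiction. Parts (a) and (c) are fine in outline once the structural lemma is in place (for (c), note that your bound $\d_\Gamma\le 3\cdot 2^{(\ell-k)/2}$ yields the stated inequality only for $\ell$ moderately large, which is the regime in which the paper applies it), but as written the proposal rests on a false key step and proves a weaker statement than convexity in (b).
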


Let $\Sigma $ be a graph. For a loop $c$ in $\Sigma $, we denote
by $[c]$ its homology class in $H_2(\Sigma , \mathbb Z)$. By
$\ell (c)$ and $\diam (c)$ we denote the length and the diameter of $c$
respectively. The next proposition is a homological variant of the
characterization of hyperbolic graphs by linear isoperimetric
inequality. It can be found in \cite{B_hyp}.

\begin{prop}\label{IP}
For any graph $\Sigma $ the following conditions are equivalent.
\begin{enumerate}
\item[(a)] $\Sigma $ is hyperbolic.

\item[(b)] There are some positive constants $M$, $L$ such that
if $c$ is a loop in $\Sigma $, then there exist loops $c_1,
\ldots , c_k$ in $\Sigma $ with $\diam(c_i)\le M$ for all $i=1,
\ldots, k$ such that
\begin{equation}\label{c}
[c]=[c_1]+\ldots +[c_k]
\end{equation}
and $k\le L\ell (c)$.
\end{enumerate}
\end{prop}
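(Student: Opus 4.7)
My plan is to prove each direction separately; the hard direction is (b) $\Rightarrow$ (a).

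For (a) $\Rightarrow$ (b), I would invoke the classical fact that $\delta$-hyperbolic graphs satisfy a linear combinatorial isoperimetric inequality: every loop $c$ of length $n$ bounds a van Kampen disk with at most $Ln$ $2$-cells, each of perimeter bounded by a constant depending only on $\delta$ (see e.g.~\cite[Ch.~III.H, Lemma~2.6]{BH_metric}, which is already invoked in the proof of Lemma \ref{pres}). Taking the boundary loops of these $2$-cells as $c_1,\ldots,c_k$ yields $[c]=\sum_i[c_i]$ with each $\diam(c_i)\le M$ and $k\le L\ell(c)$, as required.

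For (b) $\Rightarrow$ (a), I would deduce hyperbolicity by reducing (b) to the usual combinatorial linear isoperimetric inequality and then invoking Gromov's theorem that a simply connected geodesic space with such an inequality is hyperbolic (cf.~\cite[Ch.~III.H, Theorem~2.9]{BH_metric}). Given a loop $c$ of length $n$, apply (b) to write $[c]=\sum_{i=1}^{k}[c_i]$ with $\diam(c_i)\le M$ and $k\le Ln$. I would then assemble a combinatorial $2$-complex $K$ by attaching a $2$-cell for each $c_i$ and identifying edges that appear in distinct cells with opposite orientations. The homological relation $[c]=\sum_i[c_i]$ means that, after cancelling pairs of oppositely-oriented edges along shared $1$-cells and discarding connected components disjoint from $c$, what remains can be organized as a genuine combinatorial disk filling of $c$ with at most $k$ cells, each of perimeter at most $2M+1$. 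Applied to arbitrary loops $c$, this gives a linear combinatorial isoperimetric inequality for $\Sigma$ once it is regarded as a $2$-complex by coning off all loops of length $\le 2M+1$, and hence yields hyperbolicity.

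The main obstacle lies in the second direction. A homological filling is strictly weaker than a combinatorial disk filling: the cells $c_i$ may appear with multiplicities of both signs, the $2$-complex one naturally builds need not be simply connected, and higher-genus or sphere components may appear. Extracting a genuine disk filling of linearly controlled area from the homological data requires careful cancellation of edge pairs and topological simplifications of the resulting chain complex. This is the delicate technical core of the argument, carried out in \cite{B_hyp} to which the statement is attributed, and it is precisely what separates the present proposition from the standard (disk-filling) equivalence between linear isoperimetric inequalities and hyperbolicity.
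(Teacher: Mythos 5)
The paper does not actually prove this proposition: it states it as a known result and refers entirely to \cite{B_hyp} ("It can be found in \cite{B_hyp}"), so there is no in-paper argument to compare against. Your (a) $\Rightarrow$ (b) direction is fine and is the standard deduction from the linear combinatorial isoperimetric inequality for hyperbolic graphs.

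For (b) $\Rightarrow$ (a), however, your proposal has a genuine gap, and I would say the proposed route is the wrong one rather than merely "delicate." The step "after cancelling pairs of oppositely-oriented edges \ldots what remains can be organized as a genuine combinatorial disk filling of $c$ with at most $k$ cells" is precisely the implication (linear homological filling $\Rightarrow$ linear homotopical filling) whose failure as a direct combinatorial manipulation is the whole point of stating the homological criterion separately. The $2$-complex you assemble from the chain identity $[c]=\sum_i [c_i]$ need not be simply connected, cancellation can leave closed surface or higher-genus pieces, and there is no a priori bound — linear or even subquadratic — on the disk-filling area of $c$ in terms of $k$ obtainable by such surgery alone. (The implication is true, but only as a corollary of hyperbolicity itself: linear homological IPI $\Rightarrow$ hyperbolic $\Rightarrow$ linear homotopical IPI.) Bowditch's argument in \cite{B_hyp} does not pass through a disk filling at all; he deduces hyperbolicity (thinness of triangles, via an exponential-divergence type counting argument) directly from the homological filling data with bounded-diameter cells. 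So while you correctly and honestly localize the difficulty, the mechanism you propose for resolving it would not go through, and the attribution of that mechanism to \cite{B_hyp} is inaccurate; as written the hard direction is not proved.
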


\begin{rem}\label{recoverML}
Clearly replacing ``$c$ is a loop" in (b) with ``$c$ is a simple loop" leads to an equivalent condition. It is also easy to see from the proof given in \cite{B_hyp} that the hyperbolicity constant of $\Sigma $ can be recovered from $M$ and $L$ and vice versa.
\end{rem}

 In the following definition we are combinatorially coning-off $\mathcal D_r$.

\begin{defn}
Given a graph $\Gamma $ and $r\ge 1$, we denote by $\mc H_r(\Gamma )$ the graph obtained from $\mc H(\Gamma )$ by adding one vertex $v$ and edges connecting $v$ to all vertices of $\mathcal D_r$. We call $v$ the {\it apex}. The  additional edges are called the {\it cone edges} of $\mc H_r(\Gamma )$.
\end{defn}

\begin{lem}\label{hypcone1}
There exists $\delta >0$ such that for every (not necessarily connected) graph $\Gamma $ and every $r\in \mathbb N\cup \{ 0\}$, $\mc H_r (\Gamma )$ is $\delta $-hyperbolic.
\end{lem}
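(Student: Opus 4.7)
The plan is to verify the linear homological isoperimetric characterization of hyperbolicity given in Proposition~\ref{IP}, with constants independent of $\Gamma$ and $r$. By Theorem~\ref{GM}(a), combinatorial horoballs are uniformly $\delta$-hyperbolic, so by Proposition~\ref{IP} there exist universal constants $M, L > 0$ such that every simple loop $c$ in any $\mathcal{H}(\Gamma)$ decomposes as a sum of at most $L\ell(c)$ loops of diameter at most $M$. By Remark~\ref{recoverML}, to conclude that $\mathcal{H}_r(\Gamma)$ is uniformly hyperbolic it suffices to produce analogous constants $M', L'$ for $\mathcal{H}_r(\Gamma)$, depending only on $M, L$.

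Let $c$ be a simple loop in $\mathcal{H}_r(\Gamma)$. If $c$ avoids the apex $v$, then $c$ lies in $\mathcal{H}(\Gamma)$ and the desired decomposition is inherited directly (noting that the inclusion $\mathcal{H}(\Gamma) \hookrightarrow \mathcal{H}_r(\Gamma)$ is $1$-Lipschitz, so diameters do not increase). Otherwise $c$ visits $v$ exactly once, so $c = e_u \cdot p \cdot e_w$, where $e_u$ and $e_w$ are the two cone edges of $c$ incident to $v$, joining $v$ to vertices $u, w \in \mathcal{D}_r$, and $p$ is a path from $u$ to $w$ in $\mathcal{H}_r(\Gamma) \setminus \{v\} = \mathcal{H}(\Gamma)$. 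Let $\gamma$ be a geodesic in $\mathcal{H}(\Gamma)$ from $u$ to $w$; by the convexity of $\mathcal{D}_r$ (Theorem~\ref{GM}(b)), $\gamma$ is contained in $\mathcal{D}_r$.

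As $1$-cycles we then have $c = L_1 + L_2$, where $L_1 = e_u \cdot \gamma \cdot e_w$ is a loop through $v$ whose ``bulk'' $\gamma$ lies in $\mathcal{D}_r$, and $L_2 = \gamma^{-1}\cdot p$ is a loop in $\mathcal{H}(\Gamma)$ of length at most $2\ell(p) \leq 2\ell(c)$. Applying the isoperimetric inequality for $\mathcal{H}(\Gamma)$ to $L_2$ yields a decomposition of $L_2$ into at most $2L\ell(c)$ loops of diameter at most $M$ in $\mathcal{H}_r(\Gamma)$. To handle $L_1$, subdivide $\gamma$ at consecutive vertices $u = x_0, x_1, \ldots, x_n = w$, all lying in $\mathcal{D}_r$, so that each arc $\gamma_i \subset \gamma$ from $x_i$ to $x_{i+1}$ has length at most $M$; note that $n \leq \ell(\gamma)/M + 1 \leq \ell(c)/M + 1$. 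Letting $f_i$ denote the cone edge from $v$ to $x_i$ (so $f_0 = e_u$ and $f_n^{-1} = e_w$), the triangles $T_i = f_i \cdot \gamma_i \cdot f_{i+1}^{-1}$ are loops of diameter at most $M + 2$ in $\mathcal{H}_r(\Gamma)$, and telescoping gives $L_1 = \sum_{i=0}^{n-1} T_i$ as $1$-cycles. Combining both decompositions yields $c$ as a sum of at most $(2L + 1/M + 1)\ell(c)$ loops of diameter at most $M + 2$, with constants depending only on $M$ and $L$.

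The main (and very mild) obstacle is the case where $\Gamma$ is disconnected, since then $\mathcal{H}(\Gamma)$ is also disconnected and one must check that the geodesic $\gamma$ actually exists. This is automatic: the existence of the subpath $p$ of $c$ inside $\mathcal{H}_r(\Gamma)\setminus\{v\} = \mathcal{H}(\Gamma)$ forces $u$ and $w$ to lie in the same connected component, so $\gamma$ can be chosen within this component, and convexity of $\mathcal{D}_r$ applies as stated. Proposition~\ref{IP} together with Remark~\ref{recoverML} now yields a hyperbolicity constant for $\mathcal{H}_r(\Gamma)$ depending only on $M, L$, and hence uniform in $\Gamma$ and $r$.
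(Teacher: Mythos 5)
Your proof is correct and follows essentially the same route as the paper's: both split a simple loop at the apex into a piece lying in $\mc H(\Gamma)$ (handled by the uniform hyperbolicity of combinatorial horoballs via Proposition \ref{IP} and Remark \ref{recoverML}) and a cone piece whose geodesic replacement lies in $\mathcal D_r$ by convexity and is then triangulated through the apex. The only cosmetic difference is that the paper cones off $\gamma$ edge by edge into triangles of length $3$, while you group it into arcs of length at most $M$; the constants work out either way.
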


\begin{figure}
  \centering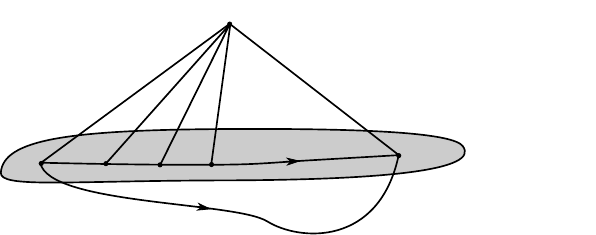\\
  \caption{}\label{46-f1}
\end{figure}

\begin{proof}
The statement follows easily from Theorem \ref{GM}. Indeed let $c$ be a simple loop in $\mc H_r (\Gamma )$. Since $c$ is simple, it passes through $v$ at most once. Hence $c$ can be decomposed as $c=ab$, where $a$ is a path in $\mc H (\Gamma )$ and $b$ is a path of length at most $2$ such that all edges of $b$ (if any) are cone edges of $\mc H _r (\Gamma )$. Let $d$ be a geodesic path in $\mc H (\Gamma )$ connecting $b_-$ to $b_+$.  Note that
\begin{equation}\label{decc1}
[c]=[ad]+[d^{-1}b].
\end{equation}
Since $\mathcal D_r$ is convex, $d$ belongs to $\mathcal D_r$. Hence $[d^{-1}b]$ can be decomposed into the sum of at most $\ell (d)$ homology classes loops of length $3$ (see Fig. \ref{46-f1}). Note that  $\ell (d)\le \ell (a) \le \ell (c)$.

By Theorem \ref{GM}, connected components of $\mc H (\Gamma )$ are hyperbolic with some universal hyperbolicity constant. By Remark \ref{recoverML} there exist  $M$ and $L$ such that all connected components of $\mc H (\Gamma )$ satisfy the condition (b) of Proposition \ref{IP}. Since $ad$ belongs to such a component, its homology class can be decomposed into a sum of at most $L \ell (ad)\le 2L\ell (a) \le 2L\ell (c)$ homology classes of loops of length at most $M$.

Now taking together the decompositions for the classes in the right side of (\ref{decc1}), we obtain a decomposition of $[c]$ into at most $(2L+1)\ell (c)$ classes of loops of length at most $M^\prime =\max\{ M, 3\}$. Thus $\mc H_r (\Gamma )$ satisfies condition (b) from the Proposition \ref{IP} with constants $M^\prime $ and and $L^\prime =2L+1$. Applying Remark \ref{recoverML} an Proposition \ref{IP} again, we obtain the claim.
\end{proof}

Lemma \ref{pres} provides us with a bounded reduced relative presentation
\begin{equation}
G=\langle X, \mathcal H  \mid  \mathcal S\cup \mathcal R\rangle
\label{rp2}
\end{equation}
with linear relative isoperimetric  function. Let $Y_\lambda\subseteq H_\lambda $ be the set of all letters from $H_\lambda \setminus\{ 1\} $ that appear in words from $\mathcal R$. Let $$Y = \bigcup\limits_{\lambda \in \Lambda }Y_\lambda.$$ Fix also any $r\in \mathbb N\cup\{ 0\}$. To these data we associate a graph $\mathbb K=\mathbb K (G, X, Y, \Hl, r)$ as follows.

\begin{defn} \label{KGXH}
Let $\Gamma (G, X\cup Y)$ be the Cayley graph of $G$ with respect to the set $X\cup Y$. Note that $\Gamma (G, X\cup Y)$ is not necessarily connected. Indeed it is connected iff $X$ is a relative generating set of $G$ with respect to the subgroups $\langle Y_\lambda\rangle$, which is not always the case. Let $\Gamma (H_\lambda, Y_\lambda)$ be the Cayley graph of $H_\lambda $ with respect to $Y_\lambda$. Again we stress that  $\Gamma (H_\lambda, Y_\lambda)$ is not necessarily connected. In what follows, $g\Gamma (H_\lambda, Y_\lambda)$ denotes image of $\Gamma (H_\lambda, Y_\lambda)$ under the left action of $G$ on $\Gamma (G, X\cup Y)$. For each $\lambda \in \Lambda $ we fix a set of representatives $T_\lambda $ of left cosets of $H_\lambda $ on $G$. Let
$$
\mathcal Q =\{ g\Gamma (H_\lambda, Y_\lambda) \mid \lambda\in \Lambda , g\in T_\lambda \}.
$$
Let $\mathbb K _r(G, X, Y, \Hl)$ be the graph obtained from $\Gamma (G, X\cup Y)$ by attaching $\mathcal H_r(Q)$ to every $Q\in \mathcal Q$ via the obvious attaching map $(q,0)\mapsto q$, $q\in Q$.
\end{defn}

The next lemma is similar to Theorem 3.23 from \cite{Gr_Ma}.
\begin{figure}
  \centering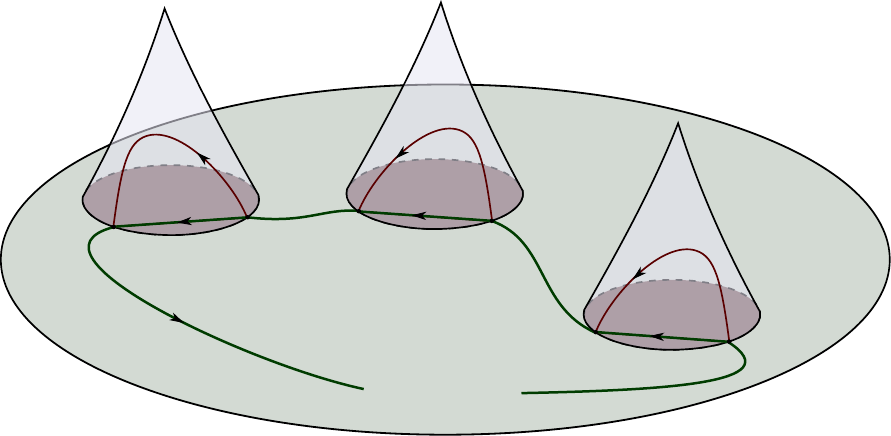\\
  \caption{}\label{46-f2}
\end{figure}

\begin{lem}\label{hypcone2}
There exists $\delta >0$ such that for every $r\in \mathbb N\cup\{ 0\}$, the graph $\mathbb K _r=\mathbb K (G, X, Y, \Hl)$ is $\delta $-hyperbolic.
\end{lem}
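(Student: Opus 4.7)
The plan is to verify the linear homological isoperimetric criterion of Proposition \ref{IP} for $\mathbb K_r$ with constants independent of $r$: by Remark \ref{recoverML} this will yield the required uniform hyperbolicity. Explicitly, I will find universal $M,L>0$ such that every simple loop $c$ in $\mathbb K_r$ of length $n$ admits a homological decomposition into $N\le Ln$ loops of $\mathbb K_r$-diameter at most $M$. Three ingredients will be combined: the uniform hyperbolicity of the combinatorial horoballs $\mathcal H_r(Q)$ with some universal constant $\delta_0$ (Lemma \ref{hypcone1}); the strongly bounded relative presentation $\langle X,\mathcal H\mid \mathcal S\cup\mathcal R\rangle$ of $G$ with linear relative isoperimetric constant $C$ and maximal relator length $M_0=\max_{R\in\mathcal R}\|R\|$ (Lemma \ref{pres}); and the fact, crucial here, that the letters of $\mathcal H$ appearing in $\mathcal R$ lie in the finite set $\bigcup_\lambda Y_\lambda$.

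Given such a loop $c$, I will decompose it cyclically as $c=a_1 p_1\cdots a_k p_k$, with each $a_j$ a maximal subpath lying at depth zero in $\Gamma(G,X\cup Y)\subset\mathbb K_r$ and each $p_j$ a maximal horoball excursion into some $\mathcal H_r(Q_{i_j})$, with both endpoints of $p_j$ at depth zero in $Q_{i_j}=g_jH_{\lambda_j}$. Writing $h_j=(p_j)_-^{-1}(p_j)_+\in H_{\lambda_j}$ and letting $e_j$ denote the single $H_{\lambda_j}$-edge in $\Gamma(G,X\sqcup\mathcal H)$ labelled by $h_j$, the auxiliary cycle $c'=a_1 e_1\cdots a_k e_k$ has length $\le n$ in $\Gamma(G,X\sqcup\mathcal H)$. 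A van Kampen diagram $\Delta$ filling $c'$ over the strongly bounded presentation provides at most $Cn$ $\mathcal R$-cells, each a loop of length $\le M_0$ entirely in $\Gamma(G,X\cup Y)\subset\mathbb K_r$ and hence of $\mathbb K_r$-diameter at most $M_0$. I will then group the $\mathcal S$-cells of $\Delta$ into maximal $\mathcal S$-subdiagrams, whose boundaries have edges either in $\partial\Delta$ or shared with $\mathcal R$-cells---so carrying labels in $Y_\lambda$; each boundary is thus a loop in $\mathcal H_r(Q)\subset\mathbb K_r$, and the total length of these loops is at most $n+M_0 Cn=O(n)$, so by the uniform hyperbolicity of $\mathcal H_r(Q)$ they decompose into $O(n)$ bounded-diameter cells. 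To pass from $c'$ back to $c$ I will write $[c]=[c']+\sum_j[p_j\tilde e_j^{-1}]$, where $\tilde e_j$ is a chosen realization of $e_j$ in $\mathbb K_r$: a $Y_{\lambda_j}$-word at depth zero when $h_j\in\langle Y_{\lambda_j}\rangle$ (which occurs for every ``short'' excursion, of length $<2(r+1)$, since such excursions cannot reach the apex and Theorem \ref{GM}(c) then places $h_j$ in the connected component of the identity of $\Gamma(H_{\lambda_j},Y_{\lambda_j})$), or a path of length $2(r+1)$ through the apex otherwise. Each defect loop $p_j\tilde e_j^{-1}$ sits inside the $\delta_0$-hyperbolic $\mathcal H_r(Q_{i_j})$ and is decomposed into $O(\ell(p_j)+\ell(\tilde e_j))$ bounded-diameter cells.

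The main obstacle will be to ensure that all these contributions sum to $O(n)$ uniformly in $r$: the apex-passing defects a priori contribute $r$-scale lengths. The key observation saving the argument is that, since $c$ is simple, it passes through any given apex at most once, and every apex-passing excursion has length $\ge 2(r+1)$ (one must traverse $r+1$ edges to reach the apex from depth zero and $r+1$ more to return). Hence the number of apex-passing excursions is at most $n/(2(r+1))$, and summing the $O(\ell(p_j)+r)$ cells per apex-passing excursion gives an aggregate $O(n+r\cdot n/(r+1))=O(n)$, independent of $r$. Combining the $\le Cn$ $\mathcal R$-cells with the $O(n)$ cells from $\mathcal S$-subdiagrams and from excursion defects yields the universal $M$ and $L$, and Proposition \ref{IP} with Remark \ref{recoverML} finishes the proof.
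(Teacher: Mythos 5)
Your overall strategy is the paper's: verify the homological linear isoperimetric criterion of Proposition \ref{IP} with constants independent of $r$, by replacing each horoball excursion of $c$ with a single $\mathcal H$-edge to get a cycle $c'$ in $\Gamma(G,X\sqcup\mathcal H)$, filling $c'$ with a van Kampen diagram over the reduced bounded relative presentation, and using the uniform hyperbolicity of $\mathcal H_r(Q)$ (Lemma \ref{hypcone1}) to absorb the $\mathcal S$-cells. However, there is a genuine gap in how you transfer the filling back into $\mathbb K_r$. You realize the edge $e_j$ by a path $\tilde e_j$ in $\mathbb K_r$, choosing a depth-zero $Y_{\lambda_j}$-word for every non-apex-passing excursion. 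The only available bound on the length of such a word is Theorem \ref{GM}(c), which gives $|h_j|_{Y_{\lambda_j}}\le 2^{3(\d_{\mathcal H(Q)}((p_j)_-,(p_j)_+)-3)/2}$ --- an \emph{exponential} bound in $\ell(p_j)$. Consequently the defect loop $p_j\tilde e_j^{-1}$, which you fill with $O(\ell(p_j)+\ell(\tilde e_j))$ cells, may require exponentially many cells in $\ell(p_j)\le n$; the same unbounded lengths contaminate the realized boundaries of your $\mathcal S$-subdiagrams (which contain the $\tilde e_j$'s), so your claimed $O(n)$ total is not justified. Your careful count for the apex-passing excursions is correct but addresses the easy case; the short excursions are where the argument breaks.

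The fix is exactly what the paper does: do not replace $e_j$ by a depth-zero word at all. Instead define a label- and orientation-preserving map $\phi$ from the $1$-skeleton of $\Delta$ to $\mathbb K_r$ that sends each boundary edge $e_j$ to the actual excursion $b_j\subset c$ (this is possible because, by minimality of $\Delta$, every internal edge lies on an $\mathcal R$-cell and is therefore labelled by a letter of $X\cup Y$, hence maps to a genuine edge of $\Gamma(G,X\cup Y)\subset\mathbb K_r$). Then the images of the $\mathcal S$-cell boundaries are loops in the various $\mathcal H_r(Q)$ whose \emph{total} length is at most $MC\ell(c')+\ell(c)$ --- the internal edges contribute at most $MC\ell(c')$ and the boundary edges contribute, in aggregate, at most $\ell(c)$ since their images tile $c$ --- and no defect loops are needed. (Alternatively, your scheme can be repaired by realizing $e_j$ as a geodesic of $\mathcal H_r(Q_{i_j})$, whose length is at most $\ell(p_j)$, rather than as a depth-zero word; but the depth-zero choice as written does not yield a linear bound.)
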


\begin{proof}
Observe that $\mathbb K _r$ is connected as left cosets of $H_\lambda$'s belong to connected subsets in $\mathbb K $ and $X$ generates $G$ relative to $\Hl $.

We will use Proposition \ref{IP} again. To each simple loop $c$ in $\mathbb K _r$ we associate a loop in $\Gamma (G, X\cup Y)\subseteq \G $  as follows. Let $b_1, \ldots , b_k$ be the set of all maximal subpaths of $c$ such that each $b_i$ belongs to $\mc H_r (Q_i) \setminus \Gamma ^{(1)}(G, X\cup Y)$ for some $\lambda _i\in \Lambda $ and $Q_i\in \mathcal Q$. We replace each $b_i$ with the edge  $e_i$ in $\G $ connecting $(b_i)_-$ to $(b_i)_+$ and labelled by an element of $H_{\lambda _i}$. Let $c^\prime $ be the resulting loop in $\G $.

Consider a van Kampen diagram $\Delta $ over (\ref{rp2}) such that:
\begin{enumerate}
\item[(a)] The boundary label of $\Delta $ is $\Lab (c^\prime)$.
\item[(b)] $\Delta $ has minimal number of $\mathcal R$-cells among all diagrams satisfying (a).
\item[(c)] $\Delta $ has minimal number of $\mathcal S$-cells among all diagrams satisfying (a) and (b).
\end{enumerate}
In what follows we identify $\partial \Delta $ with $c^\prime$.

The maps $e_i\mapsto b_i$ naturally induce a continuous map $\phi $ from $c^\prime$ to $\mathbb K_r$ whose image is $c$. Observe that (b) and (c) imply that every internal edge of $\Delta $ belongs to an $\mathcal R$-cell. Hence every such an edge is labelled by some element of $X\cup Y$ by the definition of $Y_\lambda$'s and the fact that the presentation (\ref{rp2}) is reduced. Thus we can naturally extend $\phi $ to the $1$-skeleton of $\Delta $. Note also that the total length of boundaries of all $\mathcal S$-cells of $\Delta $  does not exceed the total lengths of boundaries of all $\mathcal R$-cells. Let $f(n)=Cn$ be a relative isoperimetric function of (\ref{rp2}) and $M=\max\limits_{R\in \mathcal R} \| R\| $. Note that $M<\infty $ as (\ref{rp2}) is bounded.  Then $[c]$ decomposes into the sum of at most $C\ell (c^\prime)\le C\ell (c)$ homotopy classes of loops of length at most $M$ (corresponding to $\mathcal R$-cells of $\Delta $) plus $[s_1]+\cdots + [s_m]$, where $s_i$'s are images of boundaries of $\mathcal S$-cells and
\begin{equation}\label{tlsi}
\sum\limits_{i=1}^m \ell (s_i)\le MC\ell (c^\prime )+\ell (c)\le (MC+1)\ell (c).
\end{equation}

Note that every $s_i$ is a loop in some $\mc H_r(Q)$ and hence by Lemma \ref{hypcone1} there exist some constants $A,B$ independent of $r$ such that $[s_i]$ decomposes into the sum of at most $A\ell (s_i)$ homotopy classes of loops of length at most $B$. Hence $[c]$ decomposes into the sum of at most $(C+A(MC+1))\ell(c)$ homotopy classes of loops of length at most $\max \{ M, B\}$. Hence by Proposition \ref{IP}, $\mathbb K _r$ is $\delta$-hyperbolic, where $\delta $ is independent of $r$.
\end{proof}

We are now ready to prove the main result of this section.

\begin{figure}
  \centering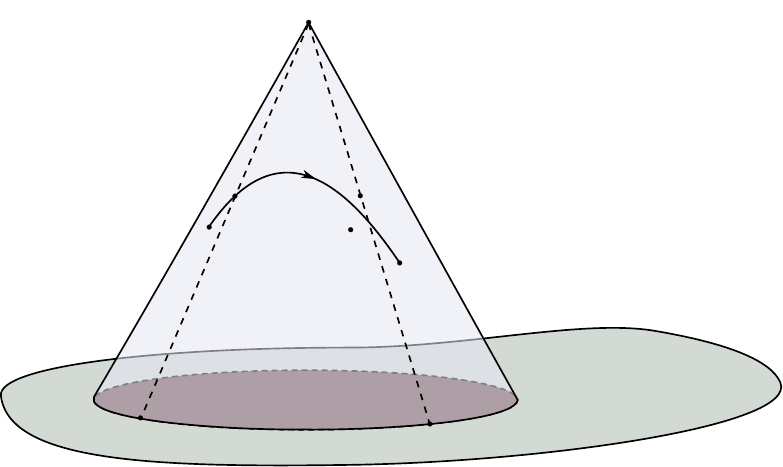\\
  \caption{}\label{46-f3}
\end{figure}

\begin{proof}[Proof of Theorem \ref{he-vrf}]
By Lemma \ref{hypcone2}, there exists $\delta >0$ such that $\mathbb K _r$ is $\delta $-hyperbolic for any $r$. Without loss of generality we may assume that $\delta \ge 1$. We take
\begin{equation}\label{choice_of_r}
r>\delta \max \{\alpha/2, 100\}
\end{equation}
Denote the combinatorial metric on $\mathbb K _r$ by $d$.

The left action of the group $G$ on $\Gamma (G, X\cup Y)$ can be extended to the action on $\mathbb K_r$ in a natural way. Namely given $g\in G$ and any vertex $(x,n)$ of $\mathcal H_r(Q)$ for some $Q\in \mathcal Q$ and $n\in \mathbb N$, we define  $g(x,n)$ to be the vertex $(gx,n)$ of $\mathcal H_r(gQ)$. Further we denote by $a_Q$ the apex of $\mathcal H_r(Q)$ and define $g(a_Q)=a_{gQ}$. This gives an action of $G$ on the set of vertices of $\mathbb K_r$. It is straightforward to check that this action preserves adjacency of vertices and hence extends to the action on $\mathbb K_r$.

Let $C=\{ a_Q\}_{Q\in \mathcal Q}$. If $Q=g\Gamma (H_\lambda, Y_\lambda )$ for some $g\in G$ and $\lambda\in \Lambda$, let $G_{a_Q}=gN_\lambda g^{-1}$. It is easy to verify that $(C, \{ G_c\}_{c\in C})$ is a rotating family. Clearly $C$ is $2r$-separated.  In particular, $C$ is $\alpha \delta $-separated by (\ref{choice_of_r}). To complete the proof it remains to show that $(C, \{ G_c\}_{c\in C})$ is very rotating.

Let $c\in C$ and let $x,y\in \mathbb K_r$, $g\in G_c\setminus\{1\}$, be as in the definition of a very rotating family. That is, suppose that
$$20\delta \le \d (x,c),\d (y,c)\le 40 \delta $$
and $$\d (gx,y)\le 15\delta .$$ Without loss of generality we may assume that $c$ is the apex of $\mc H_r(Q)$, where $Q=\Gamma (H_\lambda, Y_\lambda)$ for some $\lambda $ and thus $G_c=N_\lambda $.
Since $r\ge 100\delta $ and $\delta >1$, we have $x,gx,y\in \mc H(Q)\subset \mc H_r(Q)$ (see Fig. \ref{46-f3}).

Suppose that a geodesic $\gamma $ in $\mathbb K _r$ connecting $x$ and $y$ does not pass through $c$. This means that $\gamma $ does not intersect any cone edge of $\mc H_r(Q)$. On the other hand, $\gamma $ does not intersect $\Gamma (G, X\cup Y)$ as
$$\d (x,y) \le \d (x,c) +\d  (c,y)= \le 80\delta, $$ while any path between $x$ and $y$ intersecting $\Gamma (G, X\cup Y)$ would have length at least $$r-\d(c,x)+r-\d(c,y)\ge 100\delta - 40\delta +100\delta - 40 \delta >80 \delta .$$  Thus $\gamma $ entirely belongs to $\mc H(Q)$ and hence
$\d _{\mc H(Q)} (x,y)\le 80\delta $, where $\d _{\mc H(Q)} $ denotes the combinatorial metric on $\mc H(Q)$.
Similarly $\d _{\mc H(Q)} (gx,y)\le 15\delta $.

Note that $x$ is not necessary a vertex of $\mc H(Q)$ (it can be an internal point of an edge). Let $x^\prime \in \gamma $ be the vertex of $\mc H(Q)$ closest to $x$. We have
$$
\d _{\mc H(Q)} (x^\prime ,gx^\prime ) \le \d _{\mc H(Q)} (x,gx)+2\le \d _{\mc H(Q)}(x, y)+\d _{\mc H(Q)}(y, gx)+2\le 80\delta +15\delta +2\le 97\delta .
$$
Let $x^\prime =(a,n)$ for some $n\in \mathbb N$ and $a\in H_\lambda $. Then $gx^\prime=(ga,n)$. Recall that the vertex $a$ of $Q$ is identified with the vertex $(a,0)$ of $\mc H (Q)$. Thus we obtain
$$
\d _{\mc H(Q)} (a, ga) \le \d _{\mc H(Q)}(a, x^\prime) + \d _{\mc H(Q)}(x^\prime, gx^\prime ) + \d _{\mc H(Q)}(gx^\prime ,ga) \le 2n + 97\delta \le 2r+97\delta.
$$
Our sets $Y_\lambda$ are chosen in the same way as in the proof of Lemma \ref{omega} (see the first paragraph of the proof). Hence by part (b) of Lemma \ref{omega} there exists a constant $K$ such that $\dl (u,v)\le K\d _{Y_\lambda} (u,v)$ for every $u,v\in H_\lambda $. Applying part (c) of Theorem \ref{GM} we obtain
$$
 \dl (1, a^{-1}ga) =\dl (a, ga)\le K\d _{Y_\lambda} (a, ga) \le 2^{3(2r+97\delta -3)/2}K.
$$
Since $N_\lambda $ is normal in $H_\lambda$, we have $a^{-1}ga\in N_\lambda \setminus\{1\}$. This leads to a contradiction if
$D> 2^{3(2r+97\delta -3)/2}K$.
\end{proof}

In the other direction, we note that every $\alpha $-rotating subgroup of a group $G$ remains $\alpha $-rotating in $G\times \mathbb Z$ via the obvious induced action of $G\times \mathbb Z$. However $G\times \mathbb Z$ does not have any non-degenerate \he subgroups by Corollary \ref{center}. Thus, in general, passing from rotating families to \he subgroups is impossible. However, we show that, under good circumstances, very rotating subgroups are hyperbolically embedded.

Let $Y$ be a hyperbolic space, $\calc$ a $G$-invariant set of points,
and $\calc_0\subset\calc$ be a set of representatives of $\calc/G$.
Fix $R>0$ and $Y_0=Y\setminus \calc^{+R}$
the complement of the $R$-neighborhood of $\calc$
endowed with its intrinsic path metric $d_{Y_0}$.

\begin{lem}\label{lem_criterion}
Assume that the action of $G$ on $Y$ is cobounded.
Consider $x_0\in Y_0$, and assume  that
\begin{enumerate}
\item \label{it_proper} for each $c\in\calc$, $\Stab_G(c)$ acts properly on  $Y\setminus B_{R}(c)$
with its intrinsic metric;
\item \label{it_paths} for each $c\in\calc_0$ there is a path $q_c$ joining $x_0$ to $c$ and avoiding $(\calc\setminus\{c\})^{+R}$
\item \label{it_S} there is a (maybe infinite) set $S\subset G$ such that
  \begin{enumerate}
\item \label{it_rad} for each $D>0$, the set of elements of $G$ moving $x_0$ by at most $D$ for the metric $d_Y$
is contained in a ball of finite radius of $G$ for the word metric over $S\cup \{\Stab_G(c)\}_{c\in\calc_0}$

  \item \label{it_rad2} all elements of $S$
move $x_0$ by a bounded amount for the intrinsic metric $d_{Y_0}$
  \end{enumerate}
\end{enumerate}

Then $\{\Stab_G(c)\}_{c\in \calc_0}$ is hyperbolically embedded in $G$ with respect to $S$.
\end{lem}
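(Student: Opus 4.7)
The plan is to verify the three conditions in the definition of hyperbolically embedded subgroups directly for $X = S$ and $\mathcal{H} = \bigsqcup_{c\in \mathcal C_0}(\Stab_G(c)\setminus\{1\})$. Condition (a), that $G = \langle S \cup \mathcal H\rangle$, follows immediately from hypothesis \ref{it_rad}: coboundedness of the $G$-action on $Y$ (hypothesis \ref{it_proper}-adjacent, here hypothesis 1) gives a uniform $D$ with $d_Y(x_0, g\cdot x_0)\le D + |g\cdot x_0|_{\text{orbit}}$, and hypothesis \ref{it_rad} then forces any such $g$ to lie in a ball over $S \cup \{\Stab_G(c)\}$. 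So the substance is in establishing hyperbolicity of $\Gamma(G, S\sqcup\mathcal H)$ and local finiteness of the relative metrics $\widehat{d}_c$.

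For hyperbolicity, I would build an auxiliary $G$-graph $\widetilde Y$ by taking $Y$ (or a graph $(1,\ell)$-quasi-isometric to $Y$ as in Lemma \ref{lem_graph}) and, for every $c\in\mathcal C$, attaching a new apex vertex $a_c$ joined by an edge of length $1$ to every point of $\Stab_G(c)\cdot x_0$. The orbit map $\Phi\colon g\mapsto g\cdot x_0$ extends to a $G$-equivariant map $\Gamma(G, S\sqcup\mathcal H)\to \widetilde Y$: an $S$-edge goes to a path of uniformly bounded $d_{Y_0}$-length by hypothesis \ref{it_rad2}, while an edge labelled by $h\in\Stab_G(c)$ goes to the length-$2$ path $g\cdot x_0,\, a_{gc},\, gh\cdot x_0$. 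The map is cobounded by hypothesis 1 and proper by hypothesis \ref{it_rad}. Hyperbolicity of $\widetilde Y$ can be proved along the lines of the coning-off results of Section \ref{sec_coneoff}: the apex neighborhoods are disjoint (by properness in hypothesis \ref{it_proper}), each apex has bounded ``angular diameter'' at scale $R$, and hypothesis \ref{it_paths} guarantees geodesic-like paths joining $x_0$ to distinct apices through the complement of other apex neighborhoods, which is enough to check the local hyperbolicity needed to invoke the Cartan-Hadamard Theorem \ref{theo;CH}.

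Local finiteness of $(\Stab_G(c),\widehat{d}_c)$ is the delicate part. Fix $c\in\mathcal C_0$, let $h\in\Stab_G(c)$ with $\widehat{d}_c(1,h)\le r$, and take an admissible path $p$ of length $\le r$ in $\Gamma(G, S\sqcup\mathcal H)$ from $1$ to $h$. Push $p$ forward to a path $\bar p$ in $\widetilde Y$ from $x_0$ to $h\cdot x_0$. The admissibility of $p$ means that $\bar p$ does not use the star of the apex $a_c$; hence $\bar p$ may visit other apices $a_{c'}$ (with $c'\ne c$), but not $a_c$ itself. Using hypothesis \ref{it_paths} to detour around each visited apex $a_{c'}$ by replacing two incident apex-edges by a path of controlled length in $Y\setminus(\mathcal C\setminus\{c'\})^{+R}$, one produces a path $\bar p\,'$ in $Y\setminus B_R(c)$, of length bounded by an explicit function of $r$, joining $x_0$ to $h\cdot x_0$. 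Hypothesis \ref{it_proper} --- properness of the $\Stab_G(c)$-action on $Y\setminus B_R(c)$ for its intrinsic metric --- then bounds the cardinality of such $h$.

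The main obstacle is precisely this last step: converting ``$p$ is admissible in $\Gamma(G, S\sqcup\mathcal H)$'' into ``$\bar p\,'$ avoids $B_R(c)$ in $Y$''. An admissible path can still pass through apex neighborhoods of translates of \emph{other} stabilizers, and the orbit of $\Stab_G(c)$ is not assumed quasi-convex in $Y$, so a direct comparison of metrics is unavailable. This is where hypothesis \ref{it_paths} does essential work: the paths $q_c$, together with their $G$-translates, allow one to route around any offending apex without entering $B_R(c)$, because each $q_{c'}$ lies outside $(\mathcal C\setminus\{c'\})^{+R}$ and in particular outside $B_R(c)$. Making this rerouting quantitative---controlling the length blow-up by the number of apex edges traversed, which is itself bounded in terms of $r$---is the technical heart of the proof.
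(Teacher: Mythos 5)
Your treatment of local finiteness is essentially the paper's argument: the paper likewise replaces each letter $s_i$ of an admissible word $s_1\dots s_n$ by a translate $w_{i-1}p$ of a controlled path, taking $p\subset Y_0$ of bounded length when $s_i\in S$ (hypothesis 3(b)), and $p=q_{c'}\cdot s_i\bar q_{c'}$ when $s_i\in\Stab_G(c')\setminus\{1\}$, and then checks that the concatenation avoids $B_R(c_0)$ before applying the properness hypothesis 1. The one step you gloss over is exactly the pivot of that verification: the translate $w_{i-1}q_{c'}$ avoids $(\calc\setminus\{w_{i-1}c'\})^{+R}$, so to conclude it avoids $B_R(c_0)$ one must rule out $w_{i-1}c'=c_0$; this forces $c'=c_0$ (since $\calc_0$ contains one representative per $G$-orbit) and $w_{i-1}\in\Stab_G(c_0)$, whence the edge labelled $s_i$ would lie in $\Gamma_{\Stab_G(c_0)}$, contradicting admissibility. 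Your remark that the pushed-forward path ``may visit other apices but not $a_c$ itself'' is the same fact, but you should make the coset computation explicit, since it is the only place where the choice of $\calc_0$ as a transversal is used.

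Where you genuinely diverge is the hyperbolicity of $\Gamma(G,S\sqcup\calh)$, and there your argument as written does not go through. The coning-off results you appeal to (Theorem \ref{theo;cone-off_hyp_loc} and the Cartan--Hadamard theorem) require the coned subspaces to be strongly quasiconvex with uniformly bounded pairwise overlaps $\Delta(\calQ)$; neither is a hypothesis of the lemma, and the orbits $\Stab_G(c)\cdot x_0$ need not be quasiconvex in $Y$, so the ``local hyperbolicity'' you propose to check is not available. The detour is also unnecessary. Since every $h\in\Stab_G(c)$ fixes $c$, it moves $x_0$ by at most $2\,\d_Y(x_0,c)$, and elements of $S$ move $x_0$ boundedly for $d_{Y_0}\geq d_Y$ by hypothesis 3(b); hence the orbit map $g\mapsto g\cdot x_0$ is already a Lipschitz map from $\Gamma(G,S\sqcup\calh)$ to $Y$, it is cobounded by assumption, and hypothesis 3(a) provides the coarse inverse bound. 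So the relative Cayley graph is quasi-isometric to $Y$ itself, and its hyperbolicity is immediate; no auxiliary coned-off space is needed.
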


\begin{rem}
 In the first assumption, one can replace $Y\setminus B_{R}(c)$
by the smaller set
$B_{R+20\delta}(c)\setminus B_{R}(c)$ that plays the role of the link around $c$.
This follows from the divergence of geodesics and the fact that the closest point projection to the convex set $B_{R+20\delta}(c)$ in the
hyperbolic space $Y$ is almost length decreasing.
\end{rem}

\begin{proof}
Consider $\calh= \bigsqcup\limits_{c \in \calc_0} \Stab_G(c)\setminus\{1\}$,
and the Cayley graph $Z=\Gamma(G,S\sqcup \calh)$.
Since $d_Y\leq d_{Y_0}$, all elements of $S\cup \calh$
move $x_0$ at bounded distance away for $d_Y$,
so the map $Z\ra Y$ sending $g$ to $g.x_0$ is Lipschitz.
Since $G$ acts coboundedly on $Y$, Assumption
\ref{it_rad}
ensures that this map is a quasi-isometry, so $Z$ is hyperbolic.

Given $c_0\in\calc_0$ and $n>0$, we need to check that
there are only finitely many elements $g\in \Stab_G(c_0)$ that can be written as $g=s_1\dots s_n$
where the corresponding path in the Cayley graph $\Gamma(G,S\sqcup\calh)$ does not contain
any edge of $\Gamma(\Stab_G(c_0),\Stab_G(c_0))$.
Denoting by $w_i=s_1\dots s_i$, this amounts to ask that
whenever $w_i\in G(c_0)$, then  $s_i$ is from the set
$S\cup\bigsqcup\limits_{c \in \calc_0\setminus\{c_0\}} \Stab_G(c)\setminus\{1\}$.

To such a word, we associate a path $p_1\dots p_n$ of bounded length joining $x_0$ to $gx_0$ in $Y\setminus B_{R}(c_0)$.
By Assumption \ref{it_proper}, this will imply that there are finitely many such elements $g$, concluding the proof.
If $s_i\in S$, Assertion
\ref{it_rad2} gives us a path $p_{s_i}\subset Y_0$ of bounded length joining $x_0$ to $s_ix_0$,
and we take $p_i=w_{i-1}p_{s_i}$.
If $s_i$ is from the alphabet $\Stab_G(c)\setminus\{1\}$, we use the path $q_c$ given
by Assumption \ref{it_paths} to construct
the path $q= q_c.s_i\bar q_c$ joining $x_0$ to $s_i x_0$ and avoiding
$\calc\setminus\{c\})^{+R}$, and we take $p_i=w_{i-1}q$.
To prove that $p_i$ avoids $B_{R}(c_0)$, we check that $w_{i-1}c\neq c_0$.
If $w_{i-1}c=c_0$, then
since $c,c_0$ lie in the set of representatives $\calc_0$,
we get that $c=c_0$.
It follows that $w_{i-1}\in\Stab_G(c_0)$,
and since $s_i\in\Stab_G(c_0)$, this contradicts the form of the word $s_1\dots s_n$.
\end{proof}

Although less general that Theorem \ref{crit} because of the coboundedness assumption,
the following corollary is more direct.

\begin{cor}\label{cor;RFtoHE}
  Let $\X$ be a hyperbolic hyperbolic space, with a cobounded action of $G$,
and $\calQ\subset \X$ a $G$-invariant, $G$-finite family of quasiconvex subspaces.
Assume that $\calQ$ is geometrically separated: for all $Q\neq Q'$, $Q, Q'\in\calQ$, and all $\eps>0$, there exists $R$
such that $\diam(Q^{+\eps}\cap Q'^{+\eps})\leq R$.
Let $(Q_\lambda)_{\lambda\in\Lambda}$ be a family of representatives $\calQ$ modulo $G$.

If for each $\lambda\in\Lambda$,  $\Stab_G(Q_\lambda)$ acts properly and coboundedly on $Q_\lambda$,
then  $\{\Stab_G(Q_{\lambda})\}_{\lambda\in\Lambda}$ is hyperbolically embedded in $G$.
\end{cor}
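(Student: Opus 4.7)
The natural plan is to apply Lemma \ref{lem_criterion} to the cone-off $Y = C(X,\calQ,r_0)$ for some $r_0\geq\ru$, which is $\du$-hyperbolic by Corollary \ref{coro;cone-off_hyp}. The $G$-action on $X$ extends equivariantly to $Y$, permuting the set $\calC$ of apices so that $\Stab_G(c_\lambda)=\Stab_G(Q_\lambda)$ for the apex $c_\lambda$ of $\Cone(Q_\lambda,r_0)$; since every point of $Y$ lies within $r_0$ of $X$, coboundedness on $X$ transfers to $Y$. Set $R=r_0/2$, $\calC_0=\{c_\lambda\}_{\lambda\in\Lambda}$, and pick $x_0\in X$; then $x_0\in Y_0:=Y\setminus\calC^{+R}$, since $X$ sits at distance exactly $r_0>R$ from every apex. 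For Assumption \ref{it_paths}, take $q_{c_\lambda}$ to be an $X$-geodesic from $x_0$ to a nearest point of $Q_\lambda$ concatenated with the radial geodesic of $\Cone(Q_\lambda,r_0)$ to $c_\lambda$: its $X$-portion lies in $Y_0$, and its radial portion lies in a single cone disjoint from the $R$-neighborhoods of all other apices. For Assumption \ref{it_proper}, any element of $\Stab_G(Q_\lambda)$ moving a point of $Y\setminus B_R(c_\lambda)$ by a bounded amount in the intrinsic metric moves its radial projection on $Q_\lambda$ by a bounded $d_{Q_\lambda}$-amount via the Lipschitz estimate of Proposition \ref{prop;the_cone}, and by properness of the action on $Q_\lambda$ such elements form a finite set.

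For Assumption \ref{it_S}, apply Milnor--\v{S}varc to the cobounded $G$-action on $X$ to choose $D_0$ so that $S=\{g\in G:d_X(x_0,gx_0)\leq D_0\}$ generates $G$. Any $s\in S$ satisfies $d_{Y_0}(x_0,sx_0)\leq D_0$, since an $X$-geodesic from $x_0$ to $sx_0$ stays inside $X\subset Y_0$ and has $Y_0$-length equal to its $X$-length; this gives \ref{it_rad2}. For \ref{it_rad}, given $g\in G$ with $d_Y(x_0,gx_0)\leq D$, consider a $Y$-geodesic $\gamma$ from $x_0$ to $gx_0$. The $2r_0$-separation of apices implies that each cone excursion of $\gamma$ has length at least $2(r_0-R)=r_0$, so $\gamma$ has at most $\lceil D/r_0\rceil$ cone excursions, separated by segments in $Y_0$. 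For each excursion through $\Cone(Q_\lambda,r_0)$ with entry $p\in Q_\lambda$ and exit $p'\in Q_\lambda$, Lemma \ref{lem_lip} gives $d_{Q_\lambda}(p,p')\leq L(r_0)\cdot(\text{excursion length})$, and coboundedness of $\Stab_G(Q_\lambda)$ on $Q_\lambda$ supplies $h_\lambda\in\Stab_G(Q_\lambda)$ with $d_{Q_\lambda}(h_\lambda p,p')$ bounded by a universal constant. Each intervening $Y_0$-piece radially projects to a path in $X$ of comparable length, expressible as a bounded word in $S$. Concatenating yields an expression of $g$ as a product of finitely many elements (with bound depending only on $D$) from $S\cup\bigsqcup_\lambda\Stab_G(Q_\lambda)$, verifying \ref{it_rad}. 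Lemma \ref{lem_criterion} then gives $\{\Stab_G(Q_\lambda)\}_{\lambda\in\Lambda}\h (G,S)$.

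The main obstacle is the verification of \ref{it_S}\ref{it_rad}: decomposing a $Y$-geodesic into boundedly many alternating $Y_0$-segments and cone-excursions, and replacing each excursion by a stabilizer element via coboundedness on $Q_\lambda$, must combine into a single finite word for each fixed $D$. All the ingredients are in place---Lemma \ref{lem_lip} bounds excursion length via the $d_{Q_\lambda}$-distance between entry and exit points, the $2r_0$-apex separation bounds the number of excursions, and the $10\delta_c$-strong quasi-convexity of the $Q_\lambda$ allows passage between $d_X$ and $d_{Q_\lambda}$---but the error accounting across the approximations and the choice of each $h_\lambda$ must be executed with care so that the resulting word remains finite in length.
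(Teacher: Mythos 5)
Your proposal is correct and follows essentially the same route as the paper's proof: both apply Lemma \ref{lem_criterion} to the cone-off $C(X,\calQ,r_0)$, take $S$ to be the elements of $G$ displacing $x_0$ by a bounded amount in $X$, verify properness of the apex stabilizers via the Lipschitz radial projection together with properness of the actions on the $Q_\lambda$, and verify assumption (3)(a) by decomposing a $Y$-geodesic into boundedly many deep cone excursions (handled by coboundedness of $\Stab_G(Q_\lambda)$ on $Q_\lambda$) and segments avoiding the apices (handled by radial projection and the Milnor--\v{S}varc argument). The only differences are cosmetic choices of constants, e.g.\ your $R=r_0/2$ versus the paper's $R=20\du$.
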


\begin{proof}
Let $\kappa$ be such that every $Q\in \calq$ is $\kappa$-quasiconvex.
Up to changing each $Q\in \calQ$ to $Q^{+\kappa}$, we can assume that each $Q\in\calQ$
is $2\delta$-\sqc\ (see Lemma \ref{lem_4delta}). Up to rescaling the metric on $\X$, we can assume
that $\delta\leq \delta_c$ and $\Delta(\calQ)\leq \Delta_c$ where $\delta_c,\Delta_c$ are the constants
appearing in Theorem \ref{theo;cone-off_hyp_loc}, which guarantees that the cone-off of $\X$ over $\calQ$ is hyperbolic.

Fix a basepoint $x_0\in \X$, and $D_0$ such that any point in $\X$ lies at distance at most $ D_0$
from the orbit of $x_0$.
Up to changing our choice of representatives, we can assume that $d_\X(x_0,Q_\lambda)\leq D_0$
for all $\lambda\in\Lambda$.
Let $S$ be the set of elements of $G$ moving $x_0$ by at most $3D_0$.
Let $Y=C(\X,\calQ,r_0)$ be the cone-off of $\X$ along $\calQ$ for $r_0\geq \max(\ru,40\delta_U)$.
By Corollary \ref{coro;cone-off_hyp}, $Y$ is hyperbolic.
We denote by $\calc\subset Y$ the
set of apices, $c_Q$ the apex corresponding to $Q\in \calQ$, and by $C(Q)\subset Y$
the cone on $Q$.
We take $\calc_0=\{c_{Q_\lambda}\}_{\lambda\in\Lambda}$.
As in Lemma \ref{lem_criterion}, we consider $Y_0=Y\setminus \calc^{+20\delta_U}$.

We check that the hypotheses of Lemma \ref{lem_criterion} are satisfied.
The action of $G$ on $Y$ is clearly cobounded and Assumption \ref{it_paths} is also clear.
Since  $X\subset Y_0\subset Y$,
for all $g\in S$, $d_{Y_0}(x_0,gx_0)\leq d_{X}(x_0,gx_0)\leq 3D_0$.
Assumption \ref{it_rad2} follows.

Let us check that $\Stab_G(c)$ acts properly on  $B_{r_0}(c)\setminus B_{20\delta_U}(c)$
for its intrinsic metric.
As noted above, this will imply that the first assumption of Lemma \ref{lem_criterion} is satisfied.
Consider the radial projection $p_c:B_{r_0}(c)\setminus\{c\}\ra X$ defined above Proposition \ref{prop;cone_BH}.
It easily follows from \cite[Prop. 2.1.4]{Coulon} that this map is locally Lipschitz: there exists $L>0$ such that
if $x,y\in B_{r_0}(c)\setminus B_{20\delta_U}(c)$ are at distance at most $10\delta_U$, then
$d_X(p_c(x),p_c(y))\leq L d_Y(x,y)$.
Since the action of $\Stab_G(c)$ on the corresponding subspace in $\calQ$ is proper,
Assumption \ref{it_proper} of Lemma \ref{lem_criterion} follows.

Let $D'$ be such that for each $\lambda\in \Lambda$, the group
 $\Stab_G(Q_{\lambda}$
acts $D'$-coboundedly on $Q_{\lambda}$.
To prove Assumption \ref{it_rad}, fix any $D>0$ and consider $g\in G$ such that $d_Y(x_0,gx_0)\leq 3D$.
If a geodesic $[x_0,gx_0]$ in $Y$ avoids  $\calc^{+20\delta_U}$, the radial projection of this geodesic
gives a path showing that $d_X(x_0,gx_0)\leq 3DL$.
In general, write $[x_0,gx_0]$ as a concatenation of paths $p_0q_1\dots q_np_n$
where for each $i$, $p_i$ avoids $\calc^{+20\delta_U}$,
and $q_i$ is a path contained in a cone $C(Q_{i})$, with endpoints in $X$, and intersecting
$B_{20\delta_U}(c_i)$.
As above, the length of the radial projection of $p_i$ is at most $3DL$.
Since the length of $q_i$ is at least $2(r_0-20\delta_U)$, the number of
paths $q_i$ is bounded.
Since $\Stab_G(c_{Q_i})$ acts $D'$-coboundedly on $Q_i$,
one easily gets that $g$ can be written as a product of a bounded number of elements of
$S\cup \{\Stab_G(Q_\lambda)\}_{\lambda\in\Lambda}$.
Assumption  \ref{it_rad} follows, and we can apply Lemma \ref{lem_criterion}.
\end{proof}

\subsection{Some particular groups}\label{subsec:spg}

In this section we discuss some particular examples.

We begin with mapping class groups. Recall that every mapping class group admits an action on the so-called curve complex. The definition of the curve complex is not essential for our goals and we refer the interested reader to \cite{MM99}.  The following lemma is due to Masur-Minsky \cite{MM99} and Bowditch \cite{B_hyp, B_acyl}.

\begin{lem}\label{actionMCG}
Let $\Sigma$ be a $p\geq 0$ times punctured closed orientable surface of genus $g$ such that $3g+p-4>0$ and let $\calM\calC\calG(\Sigma)$ denote its mapping class group. Let also $\mathcal C$ denote the curve complex of $\Sigma$. Then the following conditions hold.
\begin{enumerate}
\item[(a)] (Masur-Minsky \cite{MM99}, Bowditch \cite{B_hyp}) $\mathcal C$ is hyperbolic.
\item[(b)] (Bowditch \cite{B_acyl}) The action of $\calM\calC\calG(\Sigma)$ on $\mathcal C$ is acylindrical.
\end{enumerate}
\end{lem}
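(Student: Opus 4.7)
Both statements of this lemma are quoted from the cited works, so the "proof" in this paper will presumably reduce to a reference. Nonetheless, here is how I would explain the ideas to someone seeing them for the first time.

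For part (a), the approach I would take follows Bowditch's simplification of the Masur--Minsky argument. First one fixes a Teichmüller geodesic $\gamma$ in the Teichmüller space of $\Sigma$ joining two markings. Along $\gamma$ one records, at each time $t$, the isotopy class of a shortest essential simple closed curve for the underlying singular Euclidean (or hyperbolic) structure. This produces a coarsely well-defined path $\pi(\gamma)$ in $\mathcal C$. The central estimate is that this path is an unparametrized quasigeodesic, and more importantly that it enjoys a contraction property: nearest-point projection to $\pi(\gamma)$ in $\mathcal{C}$ is coarsely Lipschitz when restricted to subsets far from $\pi(\gamma)$. Once one has a family of quasigeodesics with the Morse/contracting property, $\delta$-hyperbolicity of $\mathcal C$ follows by a standard thin-triangles argument. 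The main obstacle is establishing the contraction estimate; this is where the finer flat-surface geometry (or subsurface projections in the original Masur--Minsky proof) enters. A more modern alternative would be to use the unicorn-path construction of Hensel--Przytycki--Webb to give a direct combinatorial proof of hyperbolicity with uniform constants.

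For part (b), I would follow Bowditch's tight-geodesic approach. A \emph{tight} geodesic in $\mathcal{C}$ is one satisfying an additional combinatorial condition ensuring that each vertex is the boundary of the subsurface filled by its neighbors. The two inputs are: (i) any two vertices of $\mathcal{C}$ are joined by only finitely many tight geodesics, and (ii) any geodesic between them is fellow-traveled by a tight one. To prove acylindricity, fix $\varepsilon$ and take two points $x,y\in\mathcal C$ with $d_{\mathcal C}(x,y)$ sufficiently large; if $g\in \calM\calC\calG(\Sigma)$ moves both $x,y$ by at most $\varepsilon$, then $g$ permutes a bounded set of tight geodesics near $[x,y]$ and in particular stabilizes a long initial segment up to bounded error. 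One then shows that the pointwise stabilizer of a sufficiently long tight geodesic is finite, by invoking the fact that a mapping class fixing enough pairwise disjoint/filling curves must be of finite order (and there are only finitely many such of bounded complexity). Combining these gives the required uniform bound $N_\varepsilon$.

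The genuine obstacle in both parts is the same: one needs effective control over subsurface projections or tight sequences, and this control is what ultimately ties $\calM\calC\calG(\Sigma)$ to the hyperbolic geometry of $\mathcal C$. Since both inputs are nontrivial theorems from \cite{MM99}, \cite{B_hyp}, \cite{B_acyl}, my "proof" in this paper would simply consist of citing these works; the value of stating the lemma here is packaging (a) and (b) together so that, combined with the WPD-type machinery of the previous section, they feed directly into the examples of \he subgroups in $\calM\calC\calG(\Sigma)$ to be constructed next.
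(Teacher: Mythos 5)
The paper gives no proof of this lemma at all — it is stated purely as a citation of Masur–Minsky and Bowditch — and your proposal correctly identifies this and would likewise reduce to a reference. Your expository sketches of the contraction/tight-geodesic arguments are accurate background but play no role in the paper, so the two treatments coincide.
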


\begin{thm}\label{ex-MCG}
Let $\Sigma$ be a (possibly punctured) closed orientable surface. 
\begin{enumerate}
\item[(a)] For every collection of pairwise non-commensurable pseudo-Anosov elements $a_1, \ldots , a_k\in \calM\calC\calG(\Sigma)$, we have $\{ E(a_1), \ldots , E(a_k)\}\h \calM\calC\calG(\Sigma)$, where $E(a_i)$ is the unique maximal elementary subgroup containing $a_i$, $i=1, \ldots , k$. Furthermore, for every $\alpha >0$, there exists $n\in \mathbb N$ such that the collection $\{ \langle a_i^n\rangle \mid i=1, \ldots , k\}$ is $\alpha$-rotating.
\item[(b)] For every $\alpha >0$, there exists $n\in \mathbb N$ such that for every pseudo-Anosov element $a\in \calM\calC\calG(\Sigma)$, the cyclic subgroup $\langle a^{n}\rangle $  is $\alpha$-rotating.
\end{enumerate}
\end{thm}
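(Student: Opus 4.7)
The plan is to deduce both statements from the general machinery developed earlier in the paper, using Lemma \ref{actionMCG} as the sole mapping-class-group input. Recall that by Masur--Minsky, the pseudo-Anosov elements are precisely those acting loxodromically on the curve complex $\calC$, and by Lemma \ref{actionMCG} the action $\calM\calC\calG(\Sigma)\actson \calC$ is acylindrical on a hyperbolic space. Acylindricity trivially implies the WPD condition (Definition \ref{WPD}) for every loxodromic element: the set in \eqref{eq: wpd} is finite for \emph{all} sufficiently large $N$, not merely for some.

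For part (a), I would argue as follows. The elements $a_1,\dots,a_k$ are loxodromic with respect to the action on $\calC$, satisfy WPD by the preceding paragraph, and are pairwise non-commensurable by assumption. Therefore Theorem \ref{wpd} applies directly and yields $\{E(a_1),\dots,E(a_k)\}\h \calM\calC\calG(\Sigma)$; Lemma \ref{elem1} guarantees that each $E(a_i)$ is indeed the unique maximal elementary subgroup containing $a_i$. The non-degeneracy and virtually cyclic nature of each $E(a_i)$ follow from Lemma \ref{elem1} and the fact that $\langle a_i\rangle$ has finite index in $E(a_i)$.

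For part (b), the plan is to combine the small-cancellation-from-acylindricity result with the rotating-from-small-cancellation result, exploiting the crucial feature that the constants are uniform. Fix $\alpha>0$ and let $A,\eps>0$ be the constants furnished by Proposition \ref{prop_sc_cyclic}. Applied to the acylindrical action $\calM\calC\calG(\Sigma)\actson \calC$ with these $A,\eps$, Proposition \ref{prop;SC_from_acyl} produces an integer $n$ such that for \emph{any} conjugation-closed family $\calR_0$ of loxodromic elements of common positive stable norm, the family of $n$-th powers $\calR_0^n$ satisfies the $(A,\eps)$-small cancellation condition in the sense of Definition \ref{dfn_sc_cyclic}. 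Given any pseudo-Anosov $a$, its conjugacy class $\calR_0=\{a^t\mid t\in \calM\calC\calG(\Sigma)\}$ automatically consists of loxodromic elements of the \emph{same} positive stable norm (stable norm is a conjugation invariant), so the proposition applies and $\calR_0^n=\{(a^n)^t\mid t\in \calM\calC\calG(\Sigma)\}$ satisfies the $(A,\eps)$-small cancellation condition. Invoking Proposition \ref{prop_sc_cyclic} (applied to the family of cyclic subgroups generated by elements of $\calR_0^n$), we conclude that $\langle a^n\rangle$ is $\alpha$-rotating, with $n$ depending only on $\alpha$ and the acylindricity data of $\calM\calC\calG(\Sigma)\actson \calC$, not on $a$.

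The key subtlety, and the reason a uniform $n$ can be obtained, is that $n$ in Proposition \ref{prop;SC_from_acyl} depends only on $A,\eps$ and the acylindricity constants, and not on the stable norm of the chosen family; the conjugacy class of a single element is already a family of common stable norm, which is precisely the hypothesis needed. Were one to try to bypass acylindricity and use only WPD (via Proposition \ref{prop_WPD_SC}), the integer $n$ would depend on the individual element $a$, and uniformity would be lost; this is the essential reason Bowditch's acylindricity theorem \cite{B_acyl} is the crucial ingredient here.
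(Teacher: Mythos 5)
Your argument for both parts is exactly the paper's argument in the main case: acylindricity of the action on the curve complex (Lemma \ref{actionMCG}) gives WPD for all loxodromic elements, pseudo-Anosov elements are precisely the loxodromic ones, Theorem \ref{wpd} gives part (a), and the combination of Proposition \ref{prop;SC_from_acyl} (applied to the conjugacy class of $a$, which has constant stable norm) with Proposition \ref{prop_sc_cyclic} gives the uniform $n$ in part (b). Your explanation of why acylindricity, rather than mere WPD, is what makes $n$ independent of $a$ is also the correct reading of the situation.

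There is one small but genuine gap: you declare Lemma \ref{actionMCG} to be "the sole mapping-class-group input" and apply it unconditionally, but that lemma is stated only under the hypothesis $3g+p-4>0$. The theorem, however, is asserted for every (possibly punctured) closed orientable surface, including the sporadic cases (e.g.\ the once-punctured torus or the four-punctured sphere), where the curve complex as used in Masur--Minsky/Bowditch is not available in the form required and where pseudo-Anosov elements nevertheless exist. The paper disposes of these cases separately by observing that $\calM\calC\calG(\Sigma)$ is then a hyperbolic group, so one runs the same argument with the action on a locally finite Cayley graph (which is acylindrical, every infinite-order element is loxodromic and WPD) in place of the curve complex. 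Your proof needs this extra sentence to cover the full statement; otherwise it is complete.
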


\begin{proof}
We first observe that in all exceptional cases (i.e., when $3g+p-4\le 0$), $\calM\calC\calG(\Sigma)$ is hyperbolic and pseudo-Anosov elements have infinite order. In this situation the first claim of the theorem is well known (see, e.g., \cite{Bow}) and also follows immediately from Theorem \ref{wpd} as the action of a hyperbolic group on its Cayley graph with respect to a finite generating set is acylindrical and thus all infinite order elements are loxodromic WPD elements.

Suppose now that $3g+p-4> 0$ and consider the action of $\calM\calC\calG(\Sigma)$ on the curve complex $\mathcal C$, which is hyperbolic by part (a) of Lemma \ref{actionMCG}. Then part (b) of Lemma \ref{actionMCG} obviously implies the WPD property for every loxodromic element. Recall also that pseudo-Anosov elements  are precisely the loxodromic elements with respect to this action. Thus the first claim in (a) follows from Theorem \ref{wpd}; the second claim in (a) follows from either Corollary \ref{theo;WPD_free} or directly from part (a) of Proposition \ref{prop;Acyl_free}. Note that the constant $n$ in part (a) {\it a priori}  depends on the elements $a_1, \ldots, a_k$.  The more uniform version of this statement in (b) follows immediately from Lemma \ref{actionMCG}(b) and Proposition \ref{prop;Acyl_free} (b). Note, however, that (b) only applies to a single element.
\end{proof}

A result similar to part (a) of the previous theorem  also holds for outer automorphism groups of free groups, with iwip elements in place of pseudo-Anosov. Recall that an element $g\in Out(F_n)$ is {\it irreducible with irreducible powers} (or {\it iwip}, for brevity) if none  of its non-trivial powers preserve the conjugacy class of a proper free factor of $F_n$.
Given $F_n$, and a finite family $I$ of iwip elements in $Out(F_n)$, Bestvina and Feighn \cite{BFe,BFe2} constructed hyperbolic spaces on which $Out(F_n)$ acts so that the action of the elements of the family $I$ is loxodromic and satisfies the WPD condition; alternatively, we can use the free factor complex (see \cite[Th. 9.3]{BFe2}). Arguing as in the proof of Theorem \ref{ex-MCG} (a), we obtain the following.

\begin{thm}\label{outfn}
Let $F_n$ be the free group of rank $n$, $g_1, \ldots , g_k$ a collection of pairwise non-commensurable iwip elements in $Out(F_n)$. Then  $\{ E(g_1), \ldots , E(g_k)\}\h Out(F_n)$, where $E(g_i)$ is the unique maximal elementary subgroup containing $g_i$, $i=1, \ldots , k$. Furthermore, for every $\alpha >0$, there exists $n\in \mathbb N$ such that the collection of cyclic subgroups $\{ \langle g_i^{n}\rangle \mid i=1, \ldots , k\} $ is $\alpha $-rotating.
\end{thm}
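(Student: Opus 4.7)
The plan is to mirror the proof of Theorem \ref{ex-MCG} with the Bestvina--Feighn complex playing the role of the curve complex. The Masur--Minsky curve complex is intrinsic to $\Sigma$, whereas the Bestvina--Feighn hyperbolic space must be constructed \emph{depending on} the given finite collection of iwips; this is the only substantive difference from the mapping class group case and explains why a uniform version of the ``in particular'' clause (analogous to Theorem \ref{ex-MCG}(b)) is not obtained this way.

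First I would invoke the construction of \cite{BFe} applied to the finite collection $\{g_1,\ldots,g_k\}$ to produce a hyperbolic space $\mathcal Y$ on which $Out(F_n)$ acts and on which each $g_i$ acts as a loxodromic element satisfying the WPD condition (Definition \ref{WPD}). Lemma \ref{elem1} then supplies, for each $i$, a unique maximal elementary subgroup $E(g_i)\le Out(F_n)$ containing $g_i$; moreover $E(g_i)$ is virtually cyclic since $\langle g_i\rangle$ has finite index in it. Because the $g_i$'s are pairwise non-commensurable by hypothesis, Theorem \ref{wpd} applies directly and yields
\[
\{E(g_1),\ldots,E(g_k)\}\h Out(F_n),
\]
which is the first assertion.

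For the second assertion, fix $\alpha>0$. Applying Corollary \ref{cor-he-vrf} to the hyperbolically embedded collection just produced yields a finite subset $\mathcal F\subseteq Out(F_n)\setminus\{1\}$ with the property that any collection of normal subgroups $N_i\lhd E(g_i)$ satisfying $N_i\cap\mathcal F=\emptyset$ is $\alpha$-rotating. For each $i$ pick $m_i\in\mathbb N$ with $\langle g_i^{m_i}\rangle\lhd E(g_i)$; such $m_i$ exists because $[E(g_i):\langle g_i\rangle]<\infty$. Since $\mathcal F$ is finite and each $g_i$ has infinite order, only finitely many powers of $g_i$ can lie in $\mathcal F$. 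Hence one can choose $n\in\mathbb N$ divisible by every $m_i$ and large enough so that $\langle g_i^{n}\rangle\cap\mathcal F=\emptyset$ for all $i=1,\ldots,k$. Then each $\langle g_i^n\rangle$ is normal in $E(g_i)$ and disjoint from $\mathcal F$, so by Corollary \ref{cor-he-vrf} the collection $\{\langle g_i^n\rangle\}_{i=1}^k$ is $\alpha$-rotating.

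The hard part, such as it is, is entirely packaged into the cited Bestvina--Feighn construction; once that is available as a black box, everything else is a routine combination of Lemma \ref{elem1}, Theorem \ref{wpd}, and Corollary \ref{cor-he-vrf} that has already been used several times in this section.
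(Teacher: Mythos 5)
Your proposal is correct and follows essentially the same route as the paper: apply the Bestvina--Feighn construction to the given finite family of iwips to get a hyperbolic space on which each $g_i$ is loxodromic and WPD, then invoke Lemma \ref{elem1} and Theorem \ref{wpd} for the hyperbolic embedding, and Corollary \ref{cor-he-vrf} for the rotating-family clause. You also correctly identify the reason the statement is less uniform than Theorem \ref{ex-MCG}(b), namely that the Bestvina--Feighn space depends on the chosen collection of iwips.
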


\label{i-Cremo}
A similar argument  works for the Cremona groups. Recall that the $n$-dimensional Cremona group over a field $\bf k$ is the group ${\bf Bir}(\mathbb P^n_{\bf k})$ of birational transformations of the projective space $\mathbb P^n_{\bf k}$. In \cite{CL}, Cantat and Lamy used the Picard-Manin space to construct a hyperbolic space $\mathbb H_{\bar{\mathcal Z}}$ on which the group ${\bf Bir}(\mathbb P^n_{\bf k})$ acts. In fact, $\mathbb H_{\bar{\mathcal Z}}$ is the infinitely dimensional hyperbolic space in the classical sense.

Further, Cantat and Lamy introduce the notion of a tight element of a group $G$ acting on a hyperbolic space $S$, which can be restated as follows (see paragraph 2.3.3  and [Lemma 2.8] in \cite{CL}).
\begin{defn}\label{tightdef}
An element $g\in {\bf Bir}(\mathbb P^2_{\mathbb C})$ is \emph{tight} if the following conditions hold.
\begin{enumerate}
\item[(T$_1$)] $g$ acts on $S$ loxodromically and has an invariant geodesic axes $Ax(g)$.
\item[(T$_2$)] There exists $C >0$ ($C=2\theta $ in the notation of \cite{CL}) such that for every $\e\ge C$ there exists $B>0$ such that if $${\rm diam} (Ax(g)^{+\e} \cap f(Ax(g))^{+\e})\ge B$$ for some $f\in G$, then $f(Ax(g))=Ax(g)$.
\item[(T$_3$)] If for some $f\in G$ we have $f(Ax(g))=Ax(g)$, then $f^{-1}gf=g^{\pm 1}$.
\end{enumerate}
\end{defn}
In \cite{CL} it is shown that generic (in a certain precise sense) transformations from ${\bf Bir}(\mathbb P^2_{\mathbb C})$ are tight with respect to the action on the hyperbolic space $\mathbb H_{\bar{\mathcal Z}}$.

We will also need the following result proved in \cite{BC}

\begin{lem}[{\cite[Corollary 4.7]{BC}}]\label{BC}
Let $g\in {\bf Bir}(\mathbb P^2_{\mathbb C})$ be a loxodromic element with respect to the action on $\mathbb H_{\bar{\mathcal Z}}$. Then the centralizer of $g$ in ${\bf Bir}(\mathbb P^2_{\mathbb C})$ is virtually cyclic.
\end{lem}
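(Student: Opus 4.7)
The plan is to exploit the $\mathrm{CAT}(-1)$-like geometry of the infinite-dimensional hyperbolic space $\mathbb H_{\bar{\mathcal Z}}$ together with the arithmetic constraints on dynamical degrees. Since $g$ is loxodromic, it has a unique invariant axis $\mathrm{Ax}(g) \subset \mathbb H_{\bar{\mathcal Z}}$ with endpoints $g^{\pm} \in \partial \mathbb H_{\bar{\mathcal Z}}$. Any $h$ commuting with $g$ satisfies $h \mathrm{Ax}(g) = \mathrm{Ax}(hgh^{-1}) = \mathrm{Ax}(g)$, so $C(g)$ acts on $\mathrm{Ax}(g)$. Passing to the index-at-most-two subgroup $C^+(g)$ of elements fixing $g^+$ and $g^-$ pointwise on $\partial \mathbb H_{\bar{\mathcal Z}}$, we obtain a translation-length homomorphism $\tau : C^+(g) \to \mathbb R$. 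To conclude virtual cyclicity, I will show (a) $\ker\tau$ is finite and (b) $\tau(C^+(g))$ is discrete in $\mathbb R$.

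For (b), recall that the translation length of any loxodromic $h \in \mathbf{Bir}(\mathbb P^2_{\mathbb C})$ acting on $\mathbb H_{\bar{\mathcal Z}}$ equals $\log\lambda_1(h)$, where $\lambda_1(h)$ is the first dynamical degree. An element $h \in C^+(g)$ that acts on $\mathrm{Ax}(g)$ by a non-trivial translation must itself be loxodromic (it translates the bi-infinite geodesic $\mathrm{Ax}(g)$), and its induced action on the two-dimensional isotropic $\mathbb R$-subspace of the Picard--Manin space spanned by the classes dual to $g^+$ and $g^-$ commutes with that of $g$; hence on this plane $h^*$ and $g^*$ are simultaneously diagonalized with eigenvalues $\lambda_1(h)^{\pm 1}$ and $\lambda_1(g)^{\pm 1}$. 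Using the Cantat--type result that the eigenvalues of such commuting isometries generate a discrete subgroup of $\mathbb R_{>0}$ under multiplication (this is where the arithmetic of dynamical degrees and the Hodge index theorem for Picard--Manin classes enter, essentially the content drawn from the proof of Theorem B in \cite{C11}), the image $\tau(C^+(g))$ is contained in $(\log \mu)\mathbb Z$ for some $\mu > 1$, hence discrete and cyclic.

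For (a), an element $h \in \ker\tau$ fixes $\mathrm{Ax}(g)$ pointwise, in particular fixes the two isotropic eigenclasses of $g^*$. Combined with $[h,g]=1$, this constrains $h^*$ to act trivially on the $g^*$-invariant isotropic plane; a standard extension-of-scalars argument in the Picard--Manin lattice then shows that $h^*$ has finite order on a finite-dimensional $g^*$-invariant sublattice controlling the dynamics, so that some uniform power $h^N$ acts as an elliptic isometry on $\mathbb H_{\bar{\mathcal Z}}$ fixing $\mathrm{Ax}(g)$ pointwise. By the classification of elliptic elements in $\mathbf{Bir}(\mathbb P^2_{\mathbb C})$ (they are, up to conjugacy and birational equivalence, regularizable on some rational surface), and since such an element commutes with the loxodromic $g$, one shows via Lemma~2.8 and the related analysis in \cite{CL} that $h^N = 1$ and that $\ker\tau$ has bounded exponent, hence is finite.

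The main obstacle is step (b): establishing the discreteness of the translation-length spectrum along $C^+(g)$. This does not follow from soft hyperbolic geometry alone, because the action of $\mathbf{Bir}(\mathbb P^2_{\mathbb C})$ on $\mathbb H_{\bar{\mathcal Z}}$ is neither proper nor cocompact. One genuinely needs the arithmetic fact that the first dynamical degrees of elements in the centralizer of $g$ are powers of a single algebraic number, which is the technical heart of \cite{C11} and \cite{BC}. Everything else—the action on the axis, the kernel/image split, and the deduction of virtual cyclicity from (a) and (b)—is then routine.
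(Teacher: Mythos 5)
The paper contains no proof of this lemma: it is stated as a quotation from \cite{BC}, attributed to the proof of Theorem B in \cite{C11}, and used as a black box. So there is no in-paper argument to compare yours against; what can be assessed is whether your sketch would stand on its own. Its skeleton is the standard one (the centralizer preserves $\mathrm{Ax}(g)$, restrict to the index-$\le 2$ subgroup $C^+(g)$ fixing the endpoints, get a translation-length homomorphism $\tau$, and prove the image is discrete and the kernel finite), and that much is correct.

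Both load-bearing steps, however, are gaps rather than proofs. For (b), discreteness of $\tau(C^+(g))$ does not follow from commutation or from simultaneous diagonalization on the isotropic plane: two commuting loxodromic isometries with a common axis could a priori have translation lengths generating a dense subgroup of $\mathbb R$, and your phrase ``the eigenvalues of such commuting isometries generate a discrete subgroup'' is the conclusion restated, not a mechanism. What actually forces discreteness is an arithmetic gap: every nonzero value of $\tau$ equals $\log\lambda_1(h)$ for a loxodromic birational map $h$, and dynamical degrees of birational surface maps exceeding $1$ are Salem or Pisot numbers bounded below by Lehmer's number, so the image of $\tau$ avoids a neighborhood of $0$ and is therefore cyclic. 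For (a), the argument is broken as written: you never exhibit a bound on the exponent of $\ker\tau$, and ``bounded exponent, hence finite'' is false for abstract groups in any case. The finiteness of the pointwise stabilizer of the axis in \cite{BC,C11} comes instead from a regularization/boundedness argument: such elements displace the class of a line by a bounded amount, hence have bounded degree, hence lie in an algebraic subgroup of the Cremona group, whose intersection with the centralizer of a loxodromic element is finite. Since you explicitly outsource the ``technical heart'' to \cite{C11} and \cite{BC} --- which is precisely what the paper does --- your proposal is best read as a plausible table of contents for the cited proof, with the two essential steps missing and, where mechanisms are suggested, not the correct ones.
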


Let now $g\in {\bf Bir}(\mathbb P^2_{\mathbb C})$ be a tight element and let $$E(g)=\{ f\in {\bf Bir}(\mathbb P^2_{\mathbb C})\mid f(Ax(g))=Ax(g)\} .$$ Condition (T$_3$) implies that the centralizer of $g$ in ${\bf Bir}(\mathbb P^2_{\mathbb C})$ has index at most $2$ in $E(g)$. Hence by Lemma \ref{BC}, $E(g)$ is virtually cyclic. This means that $\langle g\rangle $ has finite index in $E(g)$, which in turn implies that the action of $E(g)$ on $\mathbb H_{\bar{\mathcal Z}}$ is proper since $g$ is loxodromic (see (T$_1$)). Further let $s$ be any point of $Ax(g)$. Then $\d_{Hau} (E(g)(s), Ax(g))<\infty $ and hence $E(g)(s)$ is quasi-convex. Finally observe that (T$_2$) implies that $E(g)$ is a geometrically separated subgroup of ${\bf Bir}(\mathbb P^2_{\mathbb C})$ with respect to the action on $\mathbb H_{\bar{\mathcal Z}}$. Thus Theorem \ref{crit} applies. Combining it with Corollary \ref{theo;WPD_free} we obtain the following.

\begin{cor}\label{crem}
Let $g$ be a tight element of the Cremona group ${\bf Bir}(\mathbb P^2_{\mathbb C})$. Then there exists an elementary subgroup $E(g)$ of ${\bf Bir}(\mathbb P^2_{\mathbb C})$ which contains $g$ and is hyperbolically embedded in ${\bf Bir}(\mathbb P^2_{\mathbb C})$. Furthermore, for every $\alpha>0$ there exists $n\in \mathbb N$ such that the cyclic subgroup $\langle g^{n}\rangle $ is $\alpha $-rotating.
\end{cor}


\section{Dehn filling}


\subsection{Dehn filling via rotating families}

Recall a definition of relative hyperbolicity, which is equivalent to
Definition \ref{rhg} for countable groups (see \cite[\S 2, \S 5,
Theorem 5.1]{Hru} for this equivalence; we also borrow the following definition
of horoball from there).

If $\bbX$ is a hyperbolic space, and $\xi\in \partial \bbX$, a
horofunction at $\xi$ is a function $h:\bbX\to \bbR$ such that there
exists a constant $D_0$ for which, for all
geodesic triangle of vertices $\xi$ and $x,y \in \bbX$, and all $w$ at
distance at most $\delta$ from each side of the triangle, one has
$\vert (h(x) - d(x,w)) - (h(y) - d(y,w)) \vert <D_0$. An horoball
centered at $\xi$  is a subset $H$ of $\bbX$  for which there is an
horofunction $h$ centered at $\xi$, and $D_1$,  such that    $\forall x\in H, h(x) \geq
- D_1 $ and    $\forall x\in \bbX\setminus H, h(x) \leq
  D_1 $. Note that combinatorial horoballs are horoballs in this
  sense.  \label{i-horob}

     \begin{defn}\label{dfn_relh} Let $G$ be a countable group, and $\calP$ a family of subgroups, closed under conjugacy.

       One says that $G$ is hyperbolic relative to $\calP$ (or to a set of conjugacy representative of  $\calP$ in $G$)
       if $G$ acts properly discontinuously by
       isometries on a proper geodesic $\delta$-hyperbolic
 graph $\X$, such that, for all $L>0$,
       there exists a $G$-invariant  family of  closed horoballs $\calH$ of $\X$ such that

       \begin{enumerate}
         \item[(a)] $\calh$ is $L$-separated: any two points in two different horoballs of $\calH$ are at distance at least $L$
         \item[(b)] the map  $\phi: \calH \to \calP$ defined by $\phi(H)=Stab_G(H)$ is a bijection
         \item[(c)] $G$ acts co-compactly on
$\X\setminus \left(  \bigcup_{H\in \calH} \rond H \right)$.
       \end{enumerate}
     \end{defn}

     As before, we can assume that $\X$ is a metric graph whose edges have the same length.
     The horoballs can be chosen so that they don't intersect any  ball given in advance,
     and they can be assumed to be
     $4\delta$-\sqc\ subgraphs (see Lemma  \ref{lem_4delta}).

     The family $\calP$ is finite up to conjugacy in $G$, and it is convenient to consider
     representatives $P_1, \dots , P_n$ of the conjugacy classes. We will also say that  $G$ is hyperbolic relative to $\{P_1, \dots , P_n\}$.

In the following,  we propose a specific cone-off construction over such a space $\X$, and proceed to
an argument for the Dehn filling theorem \cite{Osi07,Gr_Ma} through the
construction of very rotating families. \label{i-df2} Recall that this
theorem generalizes a construction of Thurston on hyperbolic
manifolds, and states that for all group $G$ that is hyperbolic
relative to $\{P_1, \dots, P_n\}$, there exists a finite set $F
\subset G\setminus\{1\}$ such that whenever one considers groups $N_i
\normal P_i$ avoiding $F$,        the quotient $\bar{G} =
G/\langle\langle \cup_i N_i \rangle\rangle$ is  again relatively
hyperbolic, relative to the images of the parabolic groups which are
$P_i/N_i$.
 In fact, this can be viewed as a variation on the small cancellation condition (see Lemma \ref{lem_RH_inj_rad} below).

Our motivation for this construction of  rotating family is to get a good control on the spaces appearing in the proof, and in particular the hyperbolic space on which the quotient group $\bar{G}$ acts. Indeed, consider for instance the case of groups $N_i$ of finite index in  $P_i$. Even though in this case the Dehn fillings are hyperbolic when the theorem applies, there is in principle no good control on the hyperbolic constant of their Cayley graph (for the image of a fixed generating set of $G$). In fact since big finite subgroups appear, the hyperbolicity constant has to go to infinity with the index of $N_i$ in $P_i$. On the contrary, the original construction of Thurston, on finite volume hyperbolic manifolds, provides hyperbolic compact manifolds of controlled volume. This is the phenomenon that we want to capture here, in statements, even if it was already implicitly present in the proofs of the Dehn filling theorems for relatively hyperbolic groups \cite{Osi07, Gr_Ma}. It turns out that rotating families are well suited for that. This aspect will be used in the forthcoming work of the two first named authors characterizing the isomorphism class of
a relatively hyperbolic group
in terms of its Dehn fillings.

If $\X$ is a $\delta_c$-hyperbolic space and $\calh$ a
$50\delta_c$-separated system of horoballs,
its fellow traveling constant $\Delta(\calH)$ is zero (as defined in Section \ref{sec_coneoff}),
and coning off the horoballs of $\calh$ yields a hyperbolic space:
for all $r_0\geq \ru$, $\dot \X=C(\X,\calh,r_0)$ is $\du$-hyperbolic by  Corollary \ref{coro;cone-off_hyp},
with $\delta_c,\ru$ as in Theorem \ref{theo;cone-off_hyp_loc}.

The assumption that $\X$ is $\delta_c$-hyperbolic is not a restriction thanks to rescaling (once given $r_0$).
However, this does not produce a very rotating family on $\dot \X$.
Indeed, for any parabolic element $g$, there are points very deep in a horoball of $\X$ moved by $g$ by a small amount.
This prevents $g$ to be part of a very rotating family on the cone-off
$\dot \X$.
   This is why we are going consider a subset of the cone-off where we remove all those bad points.
We will call this subset the \emph{parabolic cone-off}.
\\

So start with $\X$, a $\delta_c$-hyperbolic space and $\calh$ a $50\delta_c$-separated system of horoballs $\calh$.
For each horoball $H\in \calh$ of $\X$,
denote by $\partial H=H\setminus \rond H$ the corresponding horosphere.
Now consider the constant  $r_U$ given by Theorem \ref{theo;cone-off_hyp_loc}, and fix $r_0\geq r_U$.
Now let  $\dot \X=C(\X,\calh,r_0)$ be the cone-off of $\X$ along $\calh$
 which is $\du$-hyperbolic by  Corollary \ref{coro;cone-off_hyp}.
Recall that $\dot \X$ is obtained  by gluing on $\X$ a hyperbolic cone $\Cone(H,r_0)$ on each horoball $H$.
We denote by $c_H$ the apex of this cone.

For each geodesic $[p,q]$ of $\Cone(H,r_0)$ (for its intrinsic metric) avoiding $c_H$
and with endpoints in the horosphere $\partial H$,
we consider the filled triangle $T_{[p,q]}\subset \Cone(H,r_0)$
bounded by the three geodesics $[c_H,p],[c_H,q],[p,q]$.
When $p=q\in\partial H$, we define $T_{[p,q]}=[c_H,p]$.
When $[p,q]$ contains the apex $c_H$ (\ie when $\d_{H}(p,q)\geq \pi\sinh r_0$ by Proposition \ref{prop;cone_BH}),
we define $T_{[p,q]}=[p,c_H]\cup[c_H,q]=[p,q]$.

We define $B_H=\bigcup_{[p,q]} T_{[p,q]}$
as the union of all those triangles
where $[p,q]$ describes all geodesics of $\Cone(H,r_0)$
with endpoints in $\partial H$, and such that $\d_H(p,q)<\pi\sinh r_0$.
Note that we would get the same set if we dropped the condition $\d_H(p,q)<\pi\sinh r_0$.
Also note that $B_H$ is star-shaped: for all $x\in B_H$, $[c,x]\subset B_H$.

We claim that $B_H$ is isometric to a $2$-complex with finitely many isometry classes of triangles.
Indeed, given an edge $e$ of $H$, denote by $C_e\subset \Cone(H,r_0)$ the cone over $e$.
The intersection $T_{[p,q]}\cap C_e$ is determined the position of the edge $e$ in the radial projection
 of $[p,q]$, \ie by $\d_H(p,e)$ and $\d_H(q,e)$.
Since $\d_H(p,q)<\pi\sinh r_0$, $\d_H(p,e)$ and $\d_H(q,e)$ take only finitely many values as $p$ and $q$ vary,
so $B_H\cap C_e$ is a finite union of convex geodesic triangles containing $c_H$,
and $B_H\cap C_e$  can be written as a union of
finitely many convex geodesic triangles intersecting each other along radial segments.
Moreover, as $e$ varies, there are only finitely many possibilities for $B_H\cap C_e$ up to isometry
 which proves the claim.

A similar argument using local compactness of $\X$ shows that $B_H\setminus\{c_H\}$ is locally compact.
Indeed, any edge $e$ or vertex $v$ of  $H$ is contained in only finitely many segments
with endpoints in $\partial H$ and  of length at most $\pi\sinh r_0$.

\label{i-pconeoff}
     \begin{defn}\label{dfn_coneoff_para}
       Let $\ru,\delta_c$ be the constants as in Theorem \ref{theo;cone-off_hyp_loc}.
Let $\X$ be a $\delta_c$-hyperbolic space, and $\calh$ a $50\delta_c$-separated system of horoballs.
Fix $r_0\geq \ru$.

The \emph{parabolic cone-off} $C^\prime(\X,\calh,r_0)$ is the subset of $\dot \X=C(\X,\calh,r_o)$ defined as
$$
C^\prime(\X,\calh,r_0)
= \left(\dot \X\setminus \bigcup_{H\in\calh}\Cone(H,r_0)\right) \cup \left(\bigcup_{H\in \calh} B_H \right).$$
     \end{defn}

Denoting by $\X_0=\X\setminus \bigcup_{H\in \calh} \rond{H}$ the complement of the horoballs in $\X$,
the parabolic cone-off can also described as
$$C^\prime(\X,\calh,r_0)= \X_0 \cup  \left(\bigcup_{H\in\calh}  B_H \right).$$

We endow $C^\prime(\X,\calh,r_0)$ with the induced path metric.
Since $C^\prime(\X,\calh,r_0)$ has finitely many isometry classes of triangles,
this makes  $C^\prime(\X,\calh,r_0)$  a geodesic space
as in the proof of Theorem \ref{theo;cone-off_hyp_loc}.

     \begin{rem}
       Given that we start with  a $\delta_c$-hyperbolic space $\X$, there is no
       rescaling involved for defining the parabolic
       cone-off. In particular, modifying the choice of our system of horoballs $\calh$ does not imply any further rescaling.
     \end{rem}

\newcommand{\dP}{{\delta_P}}

\begin{lem}
The parabolic cone-off $C^\prime(\X,\calh,r_0)$ is $2\du$-quasiconvex in $\dot \X$,
and its intrinsic metric $\d_{C'}$ satisfies
$$\forall x,y\in C^\prime(\X,\calh,r_0),\quad \d_{\dot\X}(x,y)\leq \d_{C'}(x,y)\leq \d_{\dot\X}(x,y)+4\du.$$
In particular, it is $\dP$-hyperbolic with $\dP=16\du$.
\end{lem}

\begin{proof}
Denote by $\dot\X''\subset C^\prime(\X,\calh,r_0)$
the union of $\X_0=\X\setminus \bigcup_{H\in \calh} \rond{H}$
with all radial segments of the form $[c_H,x]$ with $x\in \partial H$.

We first claim that for all $x,y\in \dot\X''$,
every geodesic $[x,y]_{\dot \X}$ of $\dot \X$ is contained in $C^\prime(\X,\calh,r_0)$.

Assume first that $[x,y]_{\dot \X}$ is contained in $\Cone(H,r_0)$ for some $H\in\calh$.
If this geodesic contains $c_H$, then $[x,y]_{\dot \X}=[x,c_H]\cup [c_H,y]\subset \dot\X''$ and we are done.
If not, then $[x,y]_{\dot \X}$ is a geodesic of $\Cone(H,r_0)$ avoiding $c_H$,
so the radial projections $p,q$ of $x,y$  satisfy  $\d_{H}(p,q)< \pi\sinh r_0$.
Since $x,y\in \X''$, $p,q\in\partial H$,
and $[x,y]_{\dot \X}$ is contained in a triangle $T_{[p,q]}\subset B_H$,
so $[x,y]_{\dot \X}\subset C^\prime(\X,\calh,r_0)$.

If $[x,y]_{\dot \X}$ is not contained in a cone,
consider $[x',y']$ a connected component of
the intersection of $[x,y]_{\dot \X}$ with a cone $\Cone(H,r_0)$.
If $x'\neq x$, then $x'$ lies in $\partial H$ as this is the boundary of
$\Cone(H,r_0)$ in $\dot \X$, and so does $y'$ if $y'\neq y$.
In all cases, $x',y'\in \X''$.
The argument above shows that $[x',y']\subset  C^\prime(\X,\calh,r_0)$. Since this holds
for every connected component of the intersection of $[x,y]$ with a cone, this proves our claim.

Next, given $H\in \calh$,
every triangle $T_{[p,q]}$ occurring in the definition of $B_H$ is contained in the $\du$-neighborhood of
$[p,c_H]\cup [c_H,q]\subset \dot\X''$.
Thus, for each $x\in T_{[p,q]}$ there exists $x_0\in \dot\X''$
and a path in $T_{[p,q]}$ of length at most $\du$ joining $x$ to $x_0$,
and in particular, $\d_{C'}(x,x_0)\leq\du$.

To conclude, consider $x,y\in C^\prime(\X,\calh,r_0)$, and $x_0,x_0\in \dot\X''$
with $\d_{C'}(x,x_0)\leq\du$ and $\d_{C'}(y,y_0)\leq\du$.
Since $[x,y]_{\dot \X}$ is contained in the $3\du$-neighborhood of $[x_0,y_0]_{\dot \X}$ which is
itself contained in $C^\prime(\X,\calh,r_0)$,
$C^\prime(\X,\calh,r_0)$ is $3\du$-quasiconvex.
Moreover, we have
$$\d_{\dot \X}(x,y)\leq \d_{C'}(x,y)\leq \d_{C'}(x_0,y_0)+2\du=\d_{\dot\X}(x_0,y_0)+2\du
\leq  \d_{\dot\X}(x,y)+4\du.$$

These estimates for $\d_{C'}$ imply that it satisfies the  $4\du$-hyperbolic inequality,
so $C^\prime(\X,\calh,r_0)$ is $16\du$-hyperbolic.
  \end{proof}

The following lemma is similar to Proposition \ref{prop;ex_of_VR} saying that a family
of subgroups acting on quasiconvex subspaces with sufficiently large injectivity radius
provides a very rotating family on the cone-off.

  \begin{lem}\label{lem_RH_inj_rad}
     Let $G$ be countable group, hyperbolic relatively to
       $\{P_1, \dots , P_n\}$, action on a $\delta_c$-hyperbolic space $\X$
with $\calh$ a $50\delta_c$-separated family of horoballs as above.
Let $H_i\in \calh$ be the horoball stabilized by $P_i$.

 For each $i\in\{1,\dots,n\}$, consider a normal subgroup $N_i\normal P_i$
 such that
$$\forall g\in N_i\setminus \{1\} \forall x\in \partial H_i\quad \d_\X(x,gx)\geq 4\pi\sinh r_0.$$

Then the family  $\calr$  of $G$-conjugates
       of $N_1,\dots,N_n$ defines a $2r_0$-separated very rotating family
        on $C^\prime(\X,\calh,r_0)$.
  \end{lem}

\begin{rem}
In this section, we never use the fact that  $H\in \calh$ is a horoball:
 any family of $50\delta_c$-separated $10\delta_c$-\sqc\ subgraphs would work as well.
Moreover, we
did not use  locally compact or proper discontinuity up to now
(except to prove the local compactness of $B_H\setminus \{c_H\}$ which we did not use yet),
but they will used in the results below.

In contrast to Proposition \ref{prop;ex_of_VR}, we ask in this lemma that the fellow traveling constant is zero
(this is the requirement that $\calh$ should be $50\delta_c$-separated),
and the assumption on the injectivity radius is replaced
by a condition asking only that points on the \emph{boundary} of our subspaces are moved by a large amount.
\end{rem}

     \begin{proof}
Since the horoballs $H_i$
are in distinct orbits, and since $N_i$ is normal in $P_i$, one can unambiguously assign to the horoball $g.H_i\in\calh$
the group $gN_ig\m$.
It follows that $\calr$ is a rotating family on the set of apices of $C^\prime(\X,\calh,r_0)$.
By equivariance, it is enough to prove the very rotation condition at the apex $c_i$ of $\Cone(H_i,r_0)$.

Denote by $\d_{C'}$ the path metric on $C^\prime(\X,\calh,r_0)$.
Consider $x,y\in C^\prime(\X,\calh,r_0)$ such that $20\dP\leq \d_{C'} (x,c_i),\d_{C'} (y,c_i)\leq 40\dP$,
and $\d_{C'} (x,gy)\leq 15\dP$ for some $g\in N_i\setminus\{1\}$.
In particular, since $40\dP\leq 10^4\du\leq \frac{r_0}{100}$ (see beginning of Section \ref{sec;cstes}),
$x,y\in B_{H_i}$ (where $B_{H_i}$ was introduced above Definition \ref{dfn_coneoff_para}).
This also implies that the geodesics of $\dot \X$ joining $x$ to $y$ are exactly the geodesics
of $\Cone(H_i,r_0)$ joining $x$ to $y$.
Since $B_{H_i}$ is star-shaped, the radial segments $[c_i,x],[c_i,y]$ are contained $B_{H_i}$.
We claim that $[x,c_i]\cup [c_i,y]$ is geodesic in $\dot \X$.
By Proposition \ref{prop;cone_BH}, this will ensure that there is no other geodesic in $\Cone(H_i,r_0)$,
hence in $C^\prime(\X,\calh,r_0)$, and the very rotating condition will follow.

Consider $p_x,p_y\in H_i$ the radial projections of $x$ and $y$.
By definition of $B_{H_i}$, $y$ lies in a triangle $T_{[q,q']}$ for some geodesic
$[q,q']\subset \Cone(H_i,r_0)$
joining two points of $q,q'\in\partial H_i$ and avoiding $c_i$.
Since by Proposition \ref{prop;cone_BH},
the radial projection of $[q,q']$ is a geodesic of $H_i$ of length at most $\pi\sinh(r_0)$,
$\d_\X(p_y,q)\leq \pi\sinh(r_0)$.
This implies that $\d_\X(gp_y,p_y)\geq \d_\X(gq,q)-2\pi\sinh(r_0)\geq 2\pi\sinh(r_0)$.

On the other hand, denoting by $\d_C$ the path metric on $\Cone(H_i,r_0)$,
$\d_{C}(p_x,gp_y)\leq \d_{C}(p_x,x)+\d_{C'}(x,gy)+\d_{C}(gy,gp_y)
\leq (r_0-20\dP)+(15\dP)+(r_0-20\dP)<2r_0$.
This implies that
no geodesic of $\Cone(H_i,r_0)$ joining $p_x$ to $gp_y$ contains $c$,
so $\d_\X(p_x,gp_y)\leq \d_{H_i}(p_x,gp_y)< \pi\sinh(r_0) $.
It follows that $\d_\X(p_x,p_y)\geq \d_\X(p_y,gp_y)-\d_\X(gp_y,p_x) >\pi\sinh r_0 $.
By Lemma \ref{lem_lip}, $[p_x,c_i]\cup [c_i,p_y]$ is a geodesic in $\dot \X$.
This implies that $[x,c_i]\cup [c_i,y]$ is geodesic in $\dot \X$, as claimed.
\end{proof}

The following proposition is based on the fact that each $P_i$ acts
properly and cocompactly on the horosphere $\partial H_i$.

     \begin{prop}\label{prop;RH_VRF}
       Let $G$ be countable group, hyperbolic relatively to
       $\{P_1, \dots , P_n\}$. Let $\X$ be a proper $\delta_c$-hyperbolic graph
       and  $\calh$ a $50\delta$-separated system of horoballs
       as in Definition \ref{dfn_relh}.
       Consider $r_0\geq \ru$,
and  $C^\prime(\X,\calh,r_0)$ the parabolic cone-off.

      Then there exists a finite subset $S\subset G\setminus\{1\}$,
       such that,
 given for each $i\in\{1,\dots,n\}$ a normal subgroup $N_i\normal P_i$ avoiding $S$,
 the family  $\calr$  of $G$-conjugates
       of $N_1,\dots,N_n$ defines a $2r_0$-very rotating family
         on $C^\prime(\X,\calh,r_0)$.
     \end{prop}

     \begin{proof}
       Since $G$ acts cocompactly on $\X\setminus (\cup_{H\in \calh}
       \rond{H}) $, $P_i$ acts cocompactly on $\partial H_i$.  Let
       $K_i\subset \partial H_i$ be a compact set such that
       $P_iK_i=\partial H_i$.  Let $S_i\subset P_i\setminus\{1\}$ be
       the set of elements $g$ such that there exists some $x\in K_i$
       with $\d_\X (x, gx) \leq 4\pi\sinh r_0$.  Since the action of
       $G$ on $\X$ is proper, $S_i$ is finite. We take
       $S=S_1\cup\dots\cup S_n$. To conclude,  note that if $N_i$ is a normal
       subgroup of $P_i$ avoiding $S$, then any $g\in
       N_i\setminus\{1\}$ moves any point $q\in\partial H_i$ by at
       least $4\pi\sinh r_0$ (for the metric $\d_\X$).
       Thus Lemma \ref{lem_RH_inj_rad} applies.
     \end{proof}

\begin{cor}\label{cor_inject}
Under the assumptions of Proposition \ref{prop;RH_VRF},
consider $N=\grp{\grp{N_1,\dots N_n}}\normal G$,
$\dot\X'=C^\prime(\X,\calh,r_0)$ the parabolic cone-off,
and $\pi: \dot\X'\onto \dot\X'/N$ the quotient map.

Consider $p\in \X$ and $r$ such that $B_\X(p,r)$ is disjoint from $\calh$.
Then $\pi$ is injective in restriction to $B_\X(p,r)$, and for any $g\in N\setminus\{1\}$,
$g.B_\X(p,r)\cap B_\X(p,r)= \es$.

Moreover,
$\pi$ is isometric in restriction to $B_\X(p,r/3)$:
 for $x,y\in B_\X(p,r/3)$, $\d_{\X}(x,y)=\d_{\dot\X'}(x,y)=\d_{\dot\X'/N}(\pi(x),\pi(y))$.

Finally, if each $N_i$ has finite index in $P_i$, then $\dot\X'/N$ is locally compact,
and $G/N$ acts on $\dot\X'/N$ properly discontinuously and cocompactly. In particular, $G/N$ is a hyperbolic group.
\end{cor}

\begin{proof}
First note that for any $x\in B_\X(p,r)$, any path of length $\leq r$ in $\dot\X'$ with origin $p$
cannot exit $B_\X(p,r)$, so $\d_\X(p,x)=\d_{\dot\X'}(p,x)$, and
$B_\X(p,r)=B_{\dot\X'}(p,r)$.
To prove the first assertion, consider on the contrary
$x,y\in B_\X(p,r)$, such that $x=gy$ for some $g\in N\setminus\{1\}$.
By  the qualitative Greendlinger Lemma \ref{lem;green_pointed},
any geodesic $[x,y]$ in $\dot\X'$ contains an apex. Since $B_{\dot\X'}(p,r)$ is $2\dP$ quasiconvex,
this apex is at distance at most $r+2\dP$ from $p$, a contradiction since $\dP$ is small compared to $r_0$.
This proves the first assertion.
The second assertion is a consequence.

For the third assertion,
recall that for each $H\in \calh$, $B_H\setminus \{c_H\}$ is locally compact.
It follows that $\dot\X'$ and $\dot\X'/N$ are locally compact on the complement of the apices.
Since $\partial H_i/P_i$ is compact and $N_i$ has finite index in $P_i$,
 $\partial H_i/N_i$ is compact. This implies that the link of any apex in $\dot\X'/N$ is compact,
so $\dot\X'/N$ is locally compact.
Since the action of $G$ on $\X$ is proper,
and since $P_i/N_i$ is finite, vertex stabilizers of
the action of $G/N$ on $\dot\X'/N$ are finite. The third assertion follows.
\end{proof}

  \begin{thm}\label{theo;DF}
  Let $G$ be a group hyperbolic relatively to $\{P_1, \dots, P_n\}$.
Let $\{g_1,\dots, g_n\}\subset G$ be a finite generating set, $R>0$,
and $B_R(G)$ the ball of radius $R$ in the corresponding Cayley graph of $G$.

Then, there exists  a finite set $S \subset G\setminus\{1\}$ such that whenever
  $N_i\normal P_i$ is of finite index
  and avoids $S$,
  the quotient $G/\langle\langle  \cup_i N_i\rangle\rangle$ is hyperbolic, and the quotient map is injective
in restriction to $B_R(G)$.

  Moreover $\langle\langle  \cup_i N_i\rangle\rangle$ is a free
  product of conjugates of $N_i$'s, and its elements are either contained in some conjugate of $N_i$ or
are loxodromic (as elements of the relatively hyperbolic group $G$).
  \end{thm}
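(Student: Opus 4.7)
The plan is to lift the problem to the parabolic cone-off $X'$ of Definition \ref{dfn_coneoff_para}, use Proposition \ref{prop;RH_VRF} to equip it with a very rotating family, and then read off the four conclusions from the results of Section 5.

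First I would realize $G$ as acting properly on a proper $\delta_c$-hyperbolic space $X$ endowed with a $G$-invariant, $50\delta_c$-separated horoball system $\calH$, as in Definition \ref{dfn_relh}. Pick a basepoint $x_0\in X$, and let $K$ be an upper bound for the $X$-displacement of the generators $g_1,\dots,g_n$. Up to deepening every horoball equivariantly---an operation that only improves separation and does not affect any hypothesis of Proposition \ref{prop;RH_VRF}---I may assume that the ball $B = B_X(x_0,\rho)$ is disjoint from $\calH$, where $\rho$ is taken large enough (e.g.\ $\rho>6RK$) to guarantee $g\cdot x_0\in B_X(x_0,\rho/3)$ for every $g$ in the ball of radius $2R$ in the Cayley graph of $G$ with respect to $\{g_1,\dots,g_n\}$. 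Proposition \ref{prop;RH_VRF} then furnishes finite subsets $S_i\subset P_i\setminus\{1\}$ such that, whenever $N_i\lhd P_i$ avoids $S_i$, the collection of conjugates of the $N_i$ defines a very rotating family $(C,\calR)$ on $X'=C'(X,\calH,r_0)$. I set $S=\bigcup_i S_i$ and fix from now on a choice of finite-index $N_i\lhd P_i$ avoiding $S$, with $N=\ll\bigcup_i N_i\rr$.

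The injectivity of $G\to G/N$ on $B_R(G)$ follows immediately from Corollary \ref{cor_inject}: if some $g\in B_{2R}(G)\setminus\{1\}$ lay in $N$, then by the choice of $\rho$ we would have $g\cdot x_0\in B_X(x_0,\rho/3)\subset B$, whereas Corollary \ref{cor_inject} asserts $g\cdot B\cap B=\emptyset$. The free product decomposition $N=\ast_{c\in C'}G_c$, with each $G_c$ a conjugate of some $N_i$, is Theorem \ref{theo;app_wind}(a). Part (b) of the same theorem gives that any $h\in N\setminus\{1\}$ not lying in some $G_c$ is loxodromic on $X'$; such an $h$ cannot be elliptic in the original $X$-action, since an elliptic element has a bounded orbit in the proper space $X$ and hence a quasi-fixed point in $X'$, nor parabolic, since a parabolic element of $G$ fixes the apex over the horoball it stabilizes. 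Thus $h$ is loxodromic as an element of the relatively hyperbolic group $G$. Finally, Proposition \ref{prop;quotient_hyp} gives that $X'/N$ is hyperbolic; invoking the finite-index hypothesis $|P_i/N_i|<\infty$, the induced action of $G/N$ on $X'/N$ is proper and cocompact on a proper geodesic hyperbolic space, so the Milnor--\v{S}varc lemma yields that $G/N$ is finitely generated and hyperbolic.

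The main obstacle is the last step, deducing hyperbolicity of the quotient group $G/N$ from that of the quotient space $X'/N$. The subtle point is that $X'$ itself is not a proper metric space, since horospheres are generally not locally compact; however, after quotienting by $N$ the cone over $\partial H_i/N_i$ becomes a compact piece precisely because $P_i/N_i$ is finite and $P_i$ acts cocompactly on $\partial H_i$. This is where the finite-index assumption on each $N_i$ is essential. A secondary technical point, easily handled, is to verify that the deepening of horoballs needed to disjoin $B$ from $\calH$ preserves all the hypotheses of Proposition \ref{prop;RH_VRF}, which it does since deepening only increases the separation constant.
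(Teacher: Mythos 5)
Your proposal follows the paper's route step for step—realize $G$ on a space $X$ with a horoball system chosen to miss a large ball around the basepoint, apply Proposition \ref{prop;RH_VRF} to get the very rotating family on the parabolic cone-off $X'$, read off injectivity on $B_R(G)$ from Corollary \ref{cor_inject}, and get the free product and loxodromic statements from Theorem \ref{theo;app_wind}—and all of that part is correct. The problem is the last step. You assert that $G/N$ acts ``properly and cocompactly on a proper geodesic hyperbolic space'' $X'/N$ and invoke Milnor--\v{S}varc. Neither hypothesis holds. The space $X'$ is defined as the $10\du$-neighbourhood of $X''$ inside $\dot X$, so near each apex $c$ it contains the entire ball $B_{\dot X}(c,10\du)$, i.e.\ a small-radius cone over the \emph{full} horoball $H_c$, not merely over the horosphere $\partial H_c$. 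Since $P_i$ acts cocompactly only on $\partial H_i$ and the horoball is infinitely deep, $H_i/N_i$ is a non-compact cusp even when $[P_i:N_i]<\infty$; the corresponding piece of $X'/N$ is bounded in diameter but not compact and not proper. So the action of $G/N$ on $X'/N$ is cobounded but not cocompact, and $X'/N$ is not a proper space---the paper says exactly this (``Although the action of $G$ on $\bar X'$ is not cocompact \dots it is cobounded. We use an ad hoc version of Milnor-\v{S}var\v{c}'').

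What is missing is the content of that ad hoc argument: one must replace $X'/N$ by a genuinely locally finite model. The paper builds a graph $\bar\Gamma$ on the image of a $G$-orbit in $X\setminus\bigcup H^{+10}$ together with the apices, shows it is quasi-isometric to $X'/N$ (hence hyperbolic), and then verifies local finiteness and finiteness of vertex stabilizers: for the non-apex vertices this uses Corollary \ref{cor_inject} again (adjacent lifts realize their $X$-distance, so properness of $G\curvearrowright X$ applies), and for the apex vertices it uses precisely the finite-index hypothesis on $N_i\lhd P_i$. Equivalently, one could salvage your argument by proving directly that the cobounded action of $G/N$ on $X'/N$ is \emph{metrically} proper and applying the length-space version of Milnor--\v{S}varc, but that verification is the same work and cannot be skipped; as written, your final sentence asserts false properties of $X'/N$ and the deduction of hyperbolicity of $G/N$ does not go through.
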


\begin{proof}
  Let $r_0=\ru$, and consider a hyperbolic space $\X$ associated to the relatively
  hyperbolic group  $(G,\calP)$.
Assume without  loss of generality that $\X$ is $\delta_c$ hyperbolic.
Let $p\in \X$ be a base point.
Let $d$ be such that $\d_\X(p,g_ip)\leq d$ for  each generator $g_i$ of $G$.
Choose a system of horoballs that is
$50\delta_c$-separated, and that avoids the ball $B_\X(p,Rd)$.
Let $\dot\X'$ be the corresponding parabolic cone-off.
Consider $S$ a finite set satisfying the conclusions of Proposition \ref{prop;RH_VRF} and Corollary \ref{cor_inject}.
Consider $N_i\normal P_i$ with finite index, and $N_i\cap S=\es$.

Proposition \ref{prop;RH_VRF} says that the groups $N_i$ define a  $2r_0$-separated very rotating family of $\dot\X'$.
 Let  $\bar \X^\prime=\dot\X'/N$ where $N$ the normal subgroup of $G$ generated by $\cup_i N_i$.
Theorem \ref{theo;app_wind} about very rotating families then says that $N$ is a free product of conjugate of $N_i$'s,
and that any element of $N$ not conjugate to some $N_i$ is loxodromic in $\dot\X'$.
Such an element is necessarily loxodromic in $\X$, so the last assertion follows.

By Corollary \ref{cor_inject}, $G/N$ is hyperbolic,
and there remains to prove that the ball in the Cayley graph of $G$ injects in $\bar G$.
Consider  $u,v\in B_R(G)$ two words of length $\leq R$ with $uv\m\in N$. Since
$up,vp\in B_{\X}(p,dR)$, Corollary \ref{cor_inject} prevents that $uv\m\in N\setminus\{1\}$, so $u=v$, and
the injectivity follows.
\end{proof}

\subsection{Diagram surgery}

The goal of this section is to prove some auxiliary results about van Kampen diagrams over Dehn fillings of groups with hyperbolically embedded subgroups. These results will be used in the next section to prove the general version of the group theoretic Dehn filling theorem in the context of weak relative hyperbolicity (Theorem \ref{CEP-0}). Our exposition follows closely \cite{Osi07}. In fact, we could refer to \cite{Osi07} for proofs and just list the few necessary changes. However, since Theorem \ref{CEP-0} is one of the main results of our paper we decided to reproduce the proofs here for convenience of the reader.

Throughout this section, let $G$ be a group weakly hyperbolic with respect to a collection of subgroups $\Hl$ and a subset $X\subseteq G$. By Lemma \ref{pres} there exists a bounded reduced relative presentation
\begin{equation}\label{CEP-Gfull}
G=\langle X, \mathcal H \mid \mathcal R \cup \mathcal S\rangle
\end{equation}
of $G$ with respect to $\Hl $ and $X$ with linear relative isoperimetric function. Recall that $\mathcal S$ is the set of all relations in the alphabet $$\mathcal H=\bigsqcup_{\lambda\in \Lambda } (H_\lambda \setminus \{1\})$$ that hold in the free product $\ast_{\lambda\in \Lambda } H_\lambda $ and $\mathcal R\subseteq F$ normally generates the kernel of the homomorphism $F\to G$, where $F=F(X)\ast (\ast_{\lambda\in \Lambda } H_\lambda)$. We refer the reader to Section \ref{sec-GPIF} and Section \ref{sec-WRHBP} for details.

Given a collection $\N =\{ N_\lambda \} _{\lambda \in \Lambda }$, where
$N_\lambda $ is a normal subgroup of $H_\lambda $, we denote by $N$
the normal closure of $\mbox{$\bigcup_{\lambda \in \Lambda }$}
N_\lambda $ in $G$ and let $\bar G=G/N $.

We fix the following
presentation for $\bar G$
\begin{equation}\label{GNfull}
\bar G =\langle X, \mathcal H\; |\;\mathcal S\cup\mathcal R\cup \mathcal Q\rangle ,
\end{equation}
where $\mathcal Q=\bigcup_{\lambda \in \Lambda } \mathcal
Q_\lambda $ and $\mathcal Q_\lambda $ consists of all words (not
necessary reduced) in the alphabet $H_\lambda \setminus \{ 1\} $
representing elements of $N_\lambda $ in $G$.

In this section we consider van Kampen diagrams over (\ref{CEP-Gfull})
of a certain type. More precisely, we denote by $\mathcal D$ the
set of all diagrams $\Delta $ over (\ref{CEP-Gfull}) such that: \label{i-D}

(D1) \label{i-dd} Topologically $\Delta $ is a disc with $k\ge 0$ holes. More
precisely, the boundary of $\Delta $ is decomposed as $\partial
\Delta =\partial_{ext} \Delta \sqcup \partial_{int} \Delta $,
where $\partial _{ext}\Delta $ is the boundary of the disc and
$\partial _{int}\Delta $ consists of disjoint cycles ({\it
components}) $c_1, \ldots c_k$ that bound the holes.

(D2) For any $i=1, \ldots , k$, the label $\Lab (c_i)$ is a word
in the alphabet $H_\lambda$ for some $\lambda \in
\Lambda $ and this word represents an element of $N_\lambda $ in
$G$.

The following lemma relates diagrams of the described type to the
group $\bar G$.

\begin{lem}\label{cutting}
A word $W$ in $X\sqcup \mathcal H$ represents $1$ in $\bar G$ if and
only if there is a diagram $\Delta \in \mathcal D$ such that $\Lab
(\partial _{ext} \Delta )\equiv W$.
\end{lem}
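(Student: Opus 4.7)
The plan is to prove both directions via van Kampen's Lemma applied to the two relative presentations (\ref{CEP-Gfull}) and (\ref{GNfull}), using the fact that (\ref{GNfull}) is obtained from (\ref{CEP-Gfull}) by adjoining precisely the family $\mathcal Q$ of ``filling relations'' $\mathcal Q_\lambda $, whose members are exactly the words in $H_\lambda \setminus \{1\}$ representing elements of $N_\lambda $ in $G$. This is the standard surgery picture in which the diagrams of type $\mathcal D$ are, by definition, what one gets from disc diagrams over (\ref{GNfull}) by removing the $\mathcal Q$-cells (or what one needs in order to \emph{build} such a disc diagram by plugging in $\mathcal Q$-cells).

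For the ``if'' direction I would take $\Delta \in \mathcal D$ with $\phi (\partial _{ext}\Delta )\equiv W$ and fill in each hole $c_i$ by a single $2$-cell. By condition (D2), $\phi (c_i)$ is a word in some $H_{\lambda _i}\setminus \{1\}$ representing an element of $N_{\lambda _i}$, hence $\phi (c_i)\in \mathcal Q_{\lambda _i}\subseteq \mathcal Q$; thus the newly attached cells are legitimate $\mathcal Q$-cells of (\ref{GNfull}). The resulting planar $2$-complex is a disc diagram over (\ref{GNfull}) with external boundary label $W$, and van Kampen's Lemma gives $W=1$ in $\bar G$.

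For the ``only if'' direction I would start with an arbitrary disc van Kampen diagram $\Delta '$ over (\ref{GNfull}) with $\phi (\partial \Delta ')\equiv W$, which exists by van Kampen's Lemma since $W=_{\bar G}1$. Every cell of $\Delta '$ is an $\mathcal S$-cell, an $\mathcal R$-cell, or a $\mathcal Q$-cell. I would then delete the interior of each $\mathcal Q$-cell (after first, if necessary, slightly perturbing $\Delta '$ so that no two $\mathcal Q$-cells share an edge --- if they do, one can cut along the common edge or simply regard their union as a single hole). The resulting $2$-complex is planar, homeomorphic to a disc with finitely many disjoint open discs removed, and contains only $\mathcal R$- and $\mathcal S$-cells, hence is a diagram over (\ref{CEP-Gfull}). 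Its external boundary still reads $W$, and each new inner boundary component inherits the label of a (former) $\mathcal Q$-cell, which by definition of $\mathcal Q_\lambda $ is a word in $H_\lambda \setminus \{1\}$ representing an element of $N_\lambda $. Therefore this diagram lies in $\mathcal D$.

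The substantive content here is purely combinatorial-topological, so the main ``obstacle'' is really just bookkeeping: one must make sure the hole-creation procedure in the ``only if'' direction produces a diagram that actually satisfies (D1), i.e.\ that the removed pieces are disjoint open discs whose closures meet $\partial _{ext}\Delta '$ only in isolated points (if at all), and that the inherited labels of the new inner components are cyclic words in a single alphabet $H_\lambda \setminus \{1\}$ as required by (D2). Both can be arranged by an elementary preliminary adjustment of $\Delta '$ (merging adjacent $\mathcal Q_\lambda $-cells with the same $\lambda $, and pushing $\mathcal Q$-cells off $\partial _{ext}\Delta '$ by inserting trivial collars), after which the argument concludes as above.
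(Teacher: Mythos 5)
Your proof is correct and follows essentially the same route as the paper: attach $\mathcal Q$-cells to the holes for one direction, and cut out the $\mathcal Q$-cells from a disc diagram over (\ref{GNfull}) for the other, with a preliminary adjustment (the paper invokes a $0$-refinement) to make the resulting holes disjoint and well-labelled. No gaps.
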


\begin{proof}
Suppose that $\Sigma $ is a disc van Kampen diagram over
(\ref{GNfull}). Then by cutting off all essential cells labeled by
words from $\mathcal Q$ ({\it $\mathcal Q$--cells}) and passing to a
$0$--refinement if necessary we obtain a van Kampen diagram $\Delta
\in \mathcal D$ with $\Lab (\partial_{ext} \Delta )\equiv \Lab
(\partial \Sigma )$. Conversely, each $\Delta \in D$ may be
transformed into a disk diagram over (\ref{GNfull}) by attaching
$\mathcal Q$--cells to all components of $\partial _{int} \Delta $.
\end{proof}

In what follows we also assume the diagrams from $\mathcal D$ to
be endowed with an additional structure.

(D3) Each diagram $\Delta \in \mathcal D$ is equipped with a {\it
cut system} \label{i-cutsys} that is a collection of disjoint paths ({\it cuts})
$T=\{ t_1, \ldots , t_k\} $ without self--intersections in $\Delta $
such that $(t_i)_+, (t_i)_-$ belong to $\partial \Delta $, and after
cutting $\Delta $ along $t_i$ for all $i=1, \ldots , k$ we get a
connected simply connected diagram $\widetilde{\Delta }$.

By $\kappa\colon \widetilde{\Delta }\to\Delta $ we denote the
natural map that 'sews' the cuts. We also fix an arbitrary point $O$
in $\widetilde{\Delta }$. Recall that $\mu $ denotes the map from the $1$-skeleton of $\widetilde{\Delta }$ to $\G $ described in Remark \ref{rem-mu}.

\begin{lem}\label{trans}
Suppose that $\Delta \in \mathcal D$. Let $a,b$ be two vertices on
$\partial \Delta $, $\tilde a, \tilde b$ some vertices on
$\partial \widetilde{\Delta }$ such that $\kappa (\tilde a)=a$,
$\kappa (\tilde b)= b$. Then for any paths $r$ in $\G $ such that
$r_-=\mu (\tilde a)$, $r_+=\mu (\tilde b)$, there is a diagram
$\Delta _1\in \mathcal D$ endowed with a cut system $T_1$ such
that the following conditions hold:
\begin{enumerate}
\item[(a)] $\Delta _1$ has the same boundary and the same cut system as $\Delta $.
By this we mean the following. Let $\Gamma _1$ (respectively $\Gamma
$) be the subgraph of the $1$-skeleton of $\Delta_1$ (respectively
of the $1$-skeleton of $\Delta $) consisting of $\partial \Delta _1$
(respectively $\partial \Delta $) and all cuts from $T_1$
(respectively $T$). Then there is a graph isomorphism $\Gamma _1\to
\Gamma $ that preserves labels and orientation and maps cuts of
$\Delta _1$ to cuts of $\Delta $ and $\partial _{ext} \Delta _1$ to
$\partial _{ext} \Delta $.

\item[(b)] There is a paths $q$ in $\Delta _1$ without
self--intersections such that $q_-=a$, $q_+=b$, $q$ has no common
vertices with cuts $t\in T_1$ except for possibly $a$,$b$, and $\Lab
(q)\equiv \Lab (r)$.
\end{enumerate}
\end{lem}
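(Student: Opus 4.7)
The plan is to perform a diagram surgery by inserting an auxiliary van Kampen diagram along a chosen path in $\widetilde{\Delta}$ from $\tilde a$ to $\tilde b$, and then to recover $\Delta_1$ by re-sewing the cuts. The key point is that all the modification happens in the interior of $\widetilde{\Delta}$, so neither the external boundary nor the cuts are disturbed.

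First I would choose any simple path $\tilde p$ in $\widetilde{\Delta}$ from $\tilde a$ to $\tilde b$ whose interior lies in the interior of $\widetilde{\Delta}$ (this is possible after an elementary $0$-refinement). Let $W\equiv \phi(\tilde p)$ and $V\equiv \phi(r)$. By the very definition of the map $\mu$, we have $W=_G \mu(\tilde a)^{-1}\mu(\tilde b)$, and by hypothesis $V$ represents the same element of $G$. Hence $VW^{-1}=_G 1$, so by van Kampen's Lemma there is a disc van Kampen diagram $E$ over (\ref{CEP-Gfull}) with $\phi(\partial E)\equiv VW^{-1}$.

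Next I would carry out the cut-and-paste: cut $\widetilde{\Delta}$ along $\tilde p$ to obtain a simply connected diagram $\widetilde{\Delta}^\prime$ whose boundary contains two copies $\tilde p^\prime$ and $\tilde p^{\prime\prime}$ of $\tilde p$ (with labels $W$ and $W^{-1}$ as read in the boundary orientation). Take two copies $E_+$ and $E_-$ of $E$; the boundary of each decomposes as a $V$-labeled arc followed by a $W^{-1}$-labeled arc. Glue the $W^{-1}$-arc of $E_+$ to $\tilde p^\prime$ and the $W^{-1}$-arc of $E_-$ to $\tilde p^{\prime\prime}$ (with matching labels), then glue the two $V$-arcs of $E_\pm$ to each other along their common labeling. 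The result is a new simply connected van Kampen diagram $\widetilde{\Delta}_1$ over (\ref{CEP-Gfull}) in which the identified $V$-arc is a path $\tilde q$ from $\tilde a$ to $\tilde b$ with $\phi(\tilde q)\equiv V\equiv \phi(r)$, and this path lies in the interior away from $\partial\widetilde{\Delta}^\prime$ by construction.

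Finally I would re-sew along the same pairs of boundary arcs that produce $\Delta$ from $\widetilde{\Delta}$; call the resulting diagram $\Delta_1$, with induced cut system $T_1$ identified with $T$ under the label-preserving graph isomorphism of part (a). Since the surgery only inserted $\mathcal R$- and $\mathcal S$-cells in the interior, none of the cut labels nor the labels of the components $c_i$ changed, so $\Delta_1\in\mathcal D$ and (D1)--(D3) persist. The projection $q=\kappa(\tilde q)$ provides the required path, and after a small perturbation (or a $0$-refinement) we may assume $q$ is without self-intersections and meets $T_1$ only at $a$ and $b$. The main technical point to check carefully will be that $\tilde p$ can be chosen to avoid the cut arcs on $\partial\widetilde{\Delta}$ so that the insertion of $E_\pm$ leaves those arcs (and hence the cut system) intact; this is a routine application of general position for paths in a planar simply connected $2$-complex, together with a $0$-refinement to separate $\tilde q$ from the pre-images of the cuts.
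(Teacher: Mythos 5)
Your proposal is correct and follows essentially the same argument as the paper: choose a simple path from $\tilde a$ to $\tilde b$ meeting $\partial\widetilde{\Delta}$ (hence the cuts) only at its endpoints, use van Kampen's Lemma to produce a disc diagram realizing the equality $\phi(\text{path})=_G\phi(r)$, and insert its double along the cut so that the gluing seam becomes the desired path $q$ labeled $\phi(r)$. The only cosmetic difference is that you perform the surgery in $\widetilde{\Delta}$ and then re-sew, while the paper cuts $\Delta$ directly along $t=\kappa(\tilde t)$; these are the same operation.
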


\begin{proof}
Let us fix an arbitrary path $\tilde t$ in $\widetilde{\Delta }$
without self--intersections that connects $\tilde a$ to $\tilde b$
and intersects $\partial \widetilde{\Delta }$ at the points $\tilde
a$ and $\tilde b$ only. The last condition can always be ensured by
passing to a $0$--refinement of $\Delta $ and the corresponding
$0$--refinement of $\widetilde{\Delta }$. Thus $t=\kappa (\tilde t)$
connects $a$ to $b$ in $\Delta $ and has no common points with cuts
$t\in T$ except for possibly $a$,$b$. Note that
$$\Lab (t)\equiv \Lab (\tilde t)\equiv \Lab(\mu (\tilde t))$$
as both $\kappa $, $\mu $ preserve labels and orientation.

Since  $\mu (\tilde t)$ connects $\mu (\tilde a)$ to $\mu (\tilde
b)$ in $\G$, $\Lab (\mu (\tilde t))$ represents the same element
of $G$ as $\Lab (r)$. Hence there exists a disk diagram $\Sigma
_1$ over (\ref{CEP-Gfull}) such that $\partial \Sigma _1=p_1q^{-1}$,
where $\Lab (p_1)\equiv \Lab (t)$ and $\Lab (q)\equiv \Lab (r)$.
Let $\Sigma _2$ denote its mirror copy. We glue $\Sigma _1$ and
$\Sigma _2$ together by attaching $q$ to its mirror copy. Thus we
get a new diagram $\Sigma $ with boundary $p_1p_2^{-1}$, where
$\Lab (p_1)\equiv\Lab (p_2)\equiv \Lab (t)$. The path in $\Sigma $
corresponding to $q$ in $\Sigma _1$ and its mirror copy in $\Sigma
_2$ is also denoted by $q$.

We now perform the following surgery on the diagram $\Delta $.
First we cut $\Delta $ along $t$ and denote the new diagram by
$\Delta _0$. Let $t_1$ and $t_2$ be the two copies of the path $t$
in $\Delta _0$. Then we glue $\Delta _0$ and $\Sigma $ by
attaching $t_1$ to $p_1$ and $t_2$ to $p_2$ (Fig. \ref{7-f6}) and
get a new diagram $\Delta _1$. This surgery does not affect cuts
of $\Delta $ as $t$ had no common points with cuts from $T$ except
for possibly $a$ and $b$. Thus the system of cuts in $\Delta _1$
is inherited from $\Delta $ and $\Delta _1$ satisfies all
requirements of the lemma.
\end{proof}

\begin{figure}
\vspace{1mm}
\hspace{1.4cm}\includegraphics[]{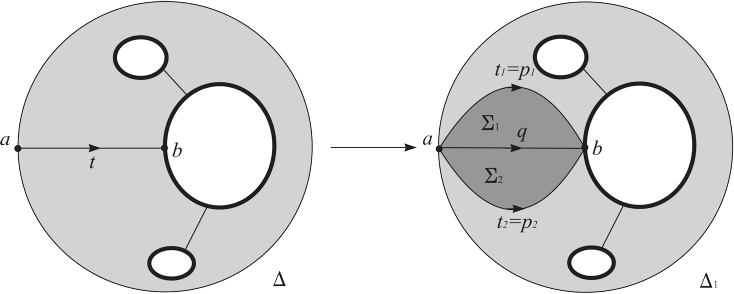}\\
\vspace{-2mm}
  \caption{}\label{7-f6}
\end{figure}

\begin{defn}
By an {\it $H_\lambda $--path} in $\Delta \in \mathcal D$ or in
$\widetilde{\Delta }$ we mean any paths whose label is a
(nontrivial) word in $H_\lambda \setminus \{ 1\} $. We say that two
such paths $p$ and $q$ in $\Delta \in \mathcal D$ are {\it
connected} if they are $H_\lambda $--paths for the same $\lambda \in
\Lambda $ and there are $H_\lambda $--paths $a$, $b$ in
$\widetilde{\Delta }$ such that $\kappa (a)$ is a subpaths of $p$,
$\kappa (b)$ is a subpaths of $q$, and $\mu (a)$, $\mu (b)$ are
connected in $\G $, i.e., there is a path in $\G $ that connects a
vertex of $\mu (a)$ to a vertex of $\mu (b)$ and is labelled by a
word in $H_\lambda \setminus \{ 1\} $. We stress that the equalities
$\kappa (a)=p$ and $\kappa (b)=q$ are not required. Thus the
definition makes sense even if the paths $p$ and $q$ are cut by the
cuts of $\Delta $ into several pieces.
\end{defn}

\begin{defn}\label{typeD}
We also define the {\it type} of a diagram $\Delta \in
\mathcal D$ by the formula
$$
\tau (\Delta )=\left( k, \sum\limits_{i=1}^k l(t_i)\right) ,
$$
where $k$ is the number of holes in $\Delta $. We fix the standard
order on the set of all types by assuming $(m,n)\le (m_1, n_1)$ is
either $m< m_1$ or $m=m_1 $ and $n\le n_1$.
\end{defn}

For a word $W$ in the alphabet $X\sqcup\mathcal H$, let $\mathcal
D(W)$ denote the set of all diagrams $\Delta \in \mathcal D$ such
that $\Lab(\partial _{ext} \Delta )\equiv W$. In the proposition
below we say that a word $W$ in $X\sqcup \mathcal H$ is {\it
geodesic} if any (or, equivalently, some) path in $\G $ labelled
by $W$ is geodesic.

\begin{prop}\label{DS}
Suppose that $W$ is a word in $X\sqcup\mathcal H$ representing $1$
in $\bar G$, $\Delta $ is a diagram of minimal type in $\mathcal
D(W)$, $T$ is the cut system in $\Delta $, and $c$ is a component
of $\partial _{int}\Delta $. Then:
\begin{enumerate}
\item[(a)] For each cut $t\in T$, the word $\Lab (t)$ is geodesic.

\item[(b)] The label of $c$ represents a nontrivial element in $G$.

\item[(c)] The path $c$ can not be connected to an $H_\lambda
$--subpath of a cut.

\item[(d)] The path $c$ can not be connected to another component of
$\partial _{int} \Delta $
\end{enumerate}
\end{prop}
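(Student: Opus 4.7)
The plan is to prove all four parts by contradiction, in each case producing a diagram $\Delta_1 \in \mathcal{D}(W)$ of strictly smaller type than $\Delta$, contradicting minimality. Lemma \ref{trans} is the main surgery tool: it lets us replace a chosen subpath of the $1$-skeleton by another with a prescribed label, provided their $\mu$-images in $\G$ have matching endpoints and represent the same element of $G$. Throughout, I will lift to $\widetilde{\Delta}$ in order to make sense of ``images in $\G$'' of components and subpaths of cuts.

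First I would handle (a). Suppose some cut $t \in T$ has a non-geodesic label. Lift $t$ to $\tilde t$ in $\widetilde{\Delta}$, and take a geodesic $r$ in $\G $ with $r_- = \mu(\tilde t_-)$, $r_+ = \mu(\tilde t_+)$; then $\ell(r) < \ell(t)$ and $\phi(r) =_G \phi(t)$. Apply Lemma \ref{trans} with $a = t_-$, $b = t_+$ to obtain $\Delta_1$ with the same boundary and cut system as $\Delta$, and an internal path $q$ from $a$ to $b$ with $\phi(q) \equiv \phi(r)$, disjoint from $T \setminus \{t\}$ except possibly at its endpoints. Replace $t$ by $q$ in the cut system. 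Since $q$ and $t$ are homotopic rel endpoints inside $\Delta_1$ (both connect the same pair of points on $\partial\Delta$ and are each disjoint from the other cuts), cutting along the new system yields a connected, simply connected diagram, so condition (D3) is preserved. The new diagram lies in $\mathcal{D}(W)$ but has strictly smaller second coordinate in $\tau$, contradicting minimality.

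For (b), if $\phi(c) =_G 1$ for some component $c$ of $\partial_{int}\Delta$, then by van Kampen's lemma for the presentation (\ref{CEP-Gfull}) there is a disc diagram $\Sigma$ with $\phi(\partial \Sigma) \equiv \phi(c)$. Glue $\Sigma$ into the hole bounded by $c$; this fills the hole, and the cuts attached to $c$ can now be shortened to terminate on $\partial_{ext}\Delta$ (or on the remaining holes) after a $0$-refinement, dropping the number of holes by one, and so strictly decreasing $\tau$. The plan for (c) is similar in spirit: if an $H_\lambda$-subpath $p$ of some cut $t$ is connected to $c$, then choosing lifts appropriately, $\mu$ sends (a subpath of) $c$ and $p$ into the same coset of $H_\lambda$, so there is an $H_\lambda$-edge $e$ in $\G $ joining their endpoints. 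Since $\phi(c)$ represents an element of $N_\lambda$, one can produce an $H_\lambda$-word in $\mathcal{Q}_\lambda$ relating the labels around $c$ through $e$. Applying Lemma \ref{trans} along $e$ allows us to reroute the cut $t$ so that it swallows the component $c$, replacing the hole by a pair of $H_\lambda$-pieces that can be merged via an $\mathcal S$-cell; the net effect is to remove one hole, again decreasing $\tau$.

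Finally (d) is the most delicate and will be the main obstacle. Suppose distinct components $c_1, c_2$ of $\partial_{int}\Delta$ are connected; both are $H_\lambda$-paths for the same $\lambda$, with labels representing elements $n_1, n_2 \in N_\lambda$. By the definition of connectedness, there exist $H_\lambda$-subpaths $a_1, a_2$ of their lifts in $\widetilde{\Delta}$ whose $\mu$-images are joined in $\G$ by an $H_\lambda$-edge $e$. I would use Lemma \ref{trans} to produce inside a modified diagram a path $q$ from a vertex of $c_1$ to a vertex of $c_2$ whose label is a single letter from $H_\lambda\setminus\{1\}$. Cutting $\Delta$ open along $q$ and reading off the boundary of the resulting merged hole, one obtains an $H_\lambda$-word representing $n_1\cdot(\text{conjugate of }n_2) \in N_\lambda$, so this new hole is again a legitimate component of $\partial_{int}$. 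After adjusting the cut system (removing one cut that used to separate the old holes and adding $q$ as an edge of the merged hole), the number of holes drops by one, whence $\tau$ decreases strictly. The difficulty here lies in verifying that the topological surgery respects (D3) — that the new cut system still produces a connected, simply connected $\widetilde{\Delta_1}$ — which I expect to require a careful $0$-refinement around the endpoints of $q$ before and after the surgery.
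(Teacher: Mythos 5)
Your overall strategy --- argue each part by contradiction, using Lemma \ref{trans} to perform surgery and produce a diagram in $\mathcal D(W)$ of strictly smaller type --- is exactly the paper's, and your treatments of (a), (b) and (d) are essentially correct. In (d) your worry about restoring (D3) is a non-issue: once the number of holes has strictly dropped, the first coordinate of the lexicographic type has already decreased, so one may simply fix an \emph{arbitrary} new cut system for $\Delta_1$; no careful bookkeeping is needed there.

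The genuine gap is in part (c). You claim that after rerouting the cut through the component $c$ ``the net effect is to remove one hole.'' This cannot be right: by part (b) the label of $c$ represents a \emph{nontrivial} element of $G$ (indeed of $N_\lambda$), so the hole it bounds can never be capped off or made to disappear --- if it could, one would already have contradicted minimality via the argument of (b). In case (c) the number of holes necessarily stays the same, and the type must instead decrease in its \emph{second} coordinate, the total length of the cuts. Concretely, writing the offending cut as $r=uev$ with $e$ the $H_\lambda$-subpath connected to $c$, one cuts along $e$ and along a connecting $H_\lambda$-path $s$ from $e_-$ to $c$; the old hole is enlarged to a single new internal boundary component with label $\phi(c)\phi(s)^{-1}\phi(e)\phi(e)^{-1}\phi(s)$, still a word in $H_\lambda\setminus\{1\}$ representing an element of $N_\lambda$. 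The cut $r$ is replaced by the two pieces $u$ and $v$, and to preserve (D3) one must also delete one further cut $t_1$, namely the one joining $c$ to the next component in the tree of boundary components of $\widetilde\Delta$. Only then does one get $\sum_{t\in T_1}\ell(t)\le\sum_{t\in T}\ell(t)-1$ with the same number of holes, hence a smaller type. This cut-system accounting is the actual content of (c), and it is precisely the point your proposal skips over; the ``merge via an $\mathcal S$-cell'' step you describe has no counterpart that eliminates the hole.
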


\begin{figure}
 \hspace{3mm}
 \includegraphics[]{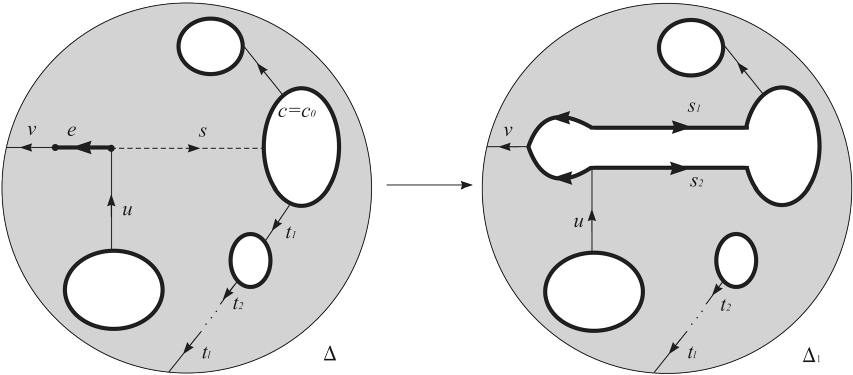}
\begin{center} a) \end{center}
\vspace{5mm}
\hspace{3mm}
\includegraphics[]{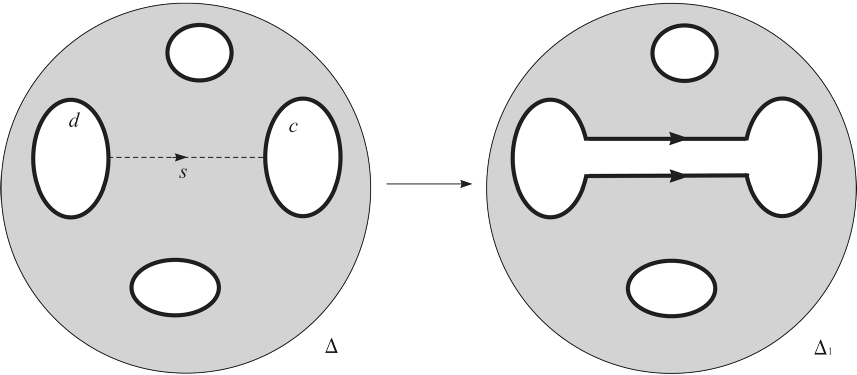}\\
\begin{center} b) \end{center}
\caption{} \label{7-f7}
\end{figure}

\begin{proof}
Assume that for a certain path $t\in T$, $\Lab (t)$ is not geodesic.
Let $\tilde a$, $\tilde b$ be vertices in $\widetilde \Delta $ such
that $\kappa (\tilde a)=t_-$, $\kappa (\tilde b)=t_+$. Let also $r$
be a geodesic paths in $\G $ that connects $\mu (\tilde a)$ to $\mu
(\tilde b)$. Applying Lemma \ref{trans}, we may assume that there is
a path $q$ in $\Delta $ such that $q_-=t_-$, $q_+=t_+$, and $\Lab
(q)\equiv \Lab (r)$, i.e., $\Lab (q)$ is geodesic. In particular,
$l(q)<l(t)$. Now replacing $t$ with $q$ in the cut system we reduce
the type of the diagram. This contradicts the choice of $\Delta $.

The second assertion is obvious. Indeed if $\Lab (c)$ represents $1$
in $G$, there is a disk diagram $\Pi $ over (\ref{CEP-Gfull}) with
boundary label $\Lab (\partial \Pi)\equiv \Lab (c)$. Attaching $\Pi
$ to $c$ does not affect $\partial _{ext}\Delta $ and reduces the
number of holes in the diagram. This contradicts the minimality of
$\tau (\Delta )$ again.

Further assume that $c$ is connected to an $H_\lambda $--subpath $e$
of some $r\in T$. Then $c$ is an $H_\lambda $--path for the same
$\lambda \in \Lambda$. Let $r=uev$. Cutting $\Delta $ along $e$ (to
convert $e$ into a boundary component), applying Lemma \ref{trans},
and gluing the copies of $e$ back, we may assume that there is a
path $s$ without self--intersections in $\Delta $ such that
$s_-=e_-$, $s_+\in c$, and $\Lab (s)$ is a word in $H_\lambda
\setminus \{ 1\} $. Moreover passing to a $0$--refinement, we may
assume that $s$ has no common vertices with the boundary of the
diagram, paths from $T\setminus \{ r\} $, $u$, and $v$ except for
$s_-$ and  $s_+$. Now we cut $\Delta $ along $s$ and $e$. Let $s_1$,
$s_2$ be the copies of $s$ in the obtained diagram $\Delta _1$. The
boundary component of $\Delta _1$ obtained from $c$ and $e$ has
label $\Lab(c)\Lab (s)^{-1}\Lab(e)\Lab(e)^{-1}\Lab{s}$ that is a
word in $H_\lambda \setminus\{ 1\} $ representing an element of
$N_\lambda $ in $G$. Note also that our surgery does not affect cuts
of $\Delta $ except for $r$. Thus the system of cuts $T_1$ in
$\Delta _1$ may obtained from $T$ as follows. Since $\widetilde
{\Delta } $ is connected and simply connected, there is a unique
sequence
$$
c=c_0,\; t_1,\; c_1,\; \ldots ,\; t_l,\; c_l=\partial _{ext} {\Delta
},
$$
where $c_0, \ldots , c_l$ are (distinct) components of $\partial
\Delta $, $t_i\in T$, and (up to orientation) $t_i$ connects
$c_{i-1} $ to $c_i$, $i=1, \ldots , l$ (Fig. \ref{7-f7}a). We set
$T_1=(T\setminus \{ r, t_1\} )\cup \{ u, v\} $. Thus $\Delta _1\in
\mathcal D(W)$ and $\tau (\Delta _1)<\tau (\Delta )$. Indeed
$\Delta _1$ and $\Delta $ have the same number of holes and
$\sum\limits_{t\in T_1} l(t)\le \sum\limits_{t\in T_1} l(t)-1$.
This contradicts the choice of $\Delta $.

Finally suppose that $c$ is connected to another component $d$ of
$\partial _{int} \Delta $, $d\ne c$. To be definite, assume that $c$
and $d$ are labelled by words in $H_\lambda \setminus\{ 1\} $. Again
without loss of generality we may assume that there is a path $s$
without self--intersections in $\Delta $ such that $s_-\in d$,
$s_+\in c$, $\Lab (s)$ is a word in $H_\lambda \setminus \{ 1\} $,
and $s$ has no common points with $\partial \Delta $ and paths from
$T$ except for $s_-$ and $s_+$. Let us cut $\Delta $ along $s$ and
denote by $\Delta _1$ the obtained diagram (Fig. \ref{7-f7}b). This
transformation does not affect $\partial _{ext} \Delta $ and the
only changed internal boundary component has label $\Lab
(c)\Lab(s)^{-1}\Lab (d)\Lab (s)$, which is a word in $H_\lambda
\setminus\{ 1\} $. This word represents an element of $N_\lambda $
in $G$ as $N_\lambda\lhd H_\lambda $. We now fix an arbitrary system
of cuts in $\Delta _1$. Then $\Delta _1\in \mathcal D(W)$ and the
number of holes in $\Delta _1$ is smaller that the number of holes
in $\Delta $. We get a contradiction again.
\end{proof}

\subsection{The general case}

The aim of this section is to prove the general version of the group theoretic Dehn filling theorem. We start by recalling the general settings.

Let $G$ be a group, $\Hl $ a collection of subgroups of $G$, $X$ a subset of $G$ that generates $G$ together with the union of $H_\lambda $'s.  As usual, $\dl $ denotes the corresponding distance function on $H_\lambda $ defined using $\G$. Given a collection $\N =\Nl$ of subgroups of $G$ such that $N_\lambda \lhd H_\lambda $ for all $\lambda \in \Lambda $, we define
$$
s (\N) = \min\limits_{\lambda \in \Lambda }\min\limits_{h\in N_\lambda\setminus \{ 1\} } \dl (1,h).
$$
The {\it Dehn filling} \label{i-df3} of $G$ associated to this data is the quotient group $$\bar G =G/\ll \bigcup\limits_{\lambda \in \Lambda }N_\lambda \rr ^G.$$ Let $\bar X $ be the natural image of $X$ in $\bar G$ and let
$$
\mathcal{\bar H} =\bigsqcup\limits_{\lambda\in \Lambda} H_\lambda/N_\lambda .
$$

Our main result is the following result. When talking about loxodromic, parabolic, or elliptic elements of the group $G$ or its subgroups (respectively, $\bar G$) we always mean that these elements are loxodromic, parabolic, or elliptic with respect to the action on $\G $ (respectively, $\Gamma (\bar G, \bar X\sqcup\mathcal{\bar H})$).

\begin{thm}\label{CEP-0}
Suppose that a group $G$ is weakly hyperbolic relative to a collection of subgroups $\Hl $ and a relative generating set $X$. Then there exists a constant $R>0$ such that for every collection
$\N =\{ N_\lambda \lhd H_\lambda \mid \lambda \in \Lambda \}$ satisfying
\begin{equation}\label{snr}
s(\N)>R,
\end{equation}
the following hold.
\begin{enumerate}
\item[(a)] The natural map from $H_\lambda /N_\lambda $ to $\bar G$ is injective for every $\lambda \in \Lambda $.

\item[(b)] $\bar G$ is weakly hyperbolic relative to $\mathcal{\bar H}$ and $\bar X$.

\item[(c)] The natural epimorphism $\e \colon G\to \bar G$ is injective on $X$.

\item[(d)] Every element of $Ker (\e )$ is either conjugate to an element of $N_\lambda $ for some $\lambda \in \Lambda $ or is loxodromic. Moreover, translation numbers of loxodromic elements of $Ker (\e)$
(with respect to the action on $\G $) are uniformly bounded away from zero.

\item[(e)] $Ker (\e)=\ast_{\lambda\in \Lambda }\ast_{t\in T_{\lambda} } N_{\lambda }^t $ for some subsets $T_\lambda \subseteq G$.

\item[(f)] Every  loxodromic (respectively, parabolic or elliptic) element of $\bar G$ is the
image of a loxodromic (respectively, parabolic or elliptic) element of $G$.
\end{enumerate}
\end{thm}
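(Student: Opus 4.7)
The plan is to combine the van Kampen diagram surgery developed in Section 7.2 with the rotating family machinery from Section 5, proving the ``isoperimetric'' assertions (a), (b), (c) by the former and the ``free product / dynamics'' assertions (d), (e), (f) by the latter. The constant $R$ will be chosen so that both arguments go through simultaneously; the precise value will be the maximum of two thresholds.

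First, I would apply Lemma \ref{pres} to fix a bounded reduced relative presentation $G = \langle X, \mathcal H \mid \mathcal R \cup \mathcal S\rangle$ with linear relative isoperimetric function, say with constant $C$, and let $M = \max_{R\in \mathcal R}\|R\|$. This is precisely the setup of Section 7.2, so the whole surgery apparatus (diagrams in $\mathcal D(W)$, cuts, types, and especially Proposition \ref{DS}) is available. For (a), (b), (c) the plan is to prove that the presentation $\bar G = \langle \bar X, \mathcal{\bar H} \mid \mathcal{\bar R} \cup \mathcal{\bar S}\rangle$ (with relators obtained by reducing $\mathcal R$ modulo $\N$) is again strongly bounded with linear relative isoperimetric function, which by Theorem \ref{ipchar} / Lemma \ref{pres} will simultaneously give hyperbolicity of $\Gamma(\bar G, \bar X \sqcup \mathcal{\bar H})$ and the injectivity statements. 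Concretely: given a word $W$ over $\bar X \sqcup \mathcal{\bar H}$ representing $1$ in $\bar G$, lift it to $\tilde W$ over $X \sqcup \mathcal H$ of the same length, take $\Delta \in \mathcal D(\tilde W)$ of minimal type, and use Proposition \ref{DS} together with the linear relative isoperimetric inequality. The key quantitative input is this: every internal component $c$ of $\Delta$ is labelled by a word representing some nontrivial $h \in N_\lambda$ with $\dl(1,h) > R$, yet by Lemma \ref{Ylambda} applied to $c$ (isolated by Proposition \ref{DS}(c),(d)) the sum $\sum \widehat\ell(c_i)$ is bounded by $MC \cdot \ell(\partial_{ext}\Delta) + MC \cdot (\text{total cut length})$, while the cut lengths are in turn controlled by the external perimeter via minimality. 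Choosing $R$ larger than this bound forces $\Delta$ to have no holes, giving the standard linear isoperimetric estimate for $\bar G$. Specialising $W$ to a single letter of $H_\lambda$ (resp.\ of $X$) then yields (a) (resp.\ (c)).

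For parts (d) and (e), I would invoke Theorem \ref{he-vrf}: there exists $D(\alpha)$ such that for any threshold $\alpha$ (take e.g.\ $\alpha = 100$, as needed for Theorem \ref{theo;app_wind}), the collection $\{N_\lambda\}$ is $\alpha$-rotating provided $s(\N) > D(\alpha)$. This gives a $\delta$-hyperbolic space $\mathbb K_r$ built from the combinatorial horoball construction of Definition \ref{KGXH}, on which $G$ acts with the family $\{g N_\lambda g^{-1}\}$ furnishing a very rotating family with apices $\alpha\delta$-separated. Theorem \ref{theo;app_wind} then immediately delivers (e) — the normal closure $\ker(\e) = \ll \bigcup N_\lambda\rr^G$ splits as the claimed free product $\ast_{\lambda,t} N_\lambda^{t}$ — and says every element is either conjugate into some $N_\lambda$ or acts loxodromically on $\mathbb K_r$ with translation length $\ge \alpha\delta$. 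To obtain (d) as stated (translation lengths with respect to $\G$, not $\mathbb K_r$), I would use the natural $G$-equivariant Lipschitz retraction from $\mathbb K_r$ onto $\Gamma(G,X \sqcup Y)$ and the fact that $\G$ and $\Gamma(G,X \sqcup Y)$ are $G$-equivariantly quasi-isometric (by part (b) of Lemma \ref{omega}), so that loxodromicity and a uniform lower bound on translation length transfer across.

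Finally, for (f), the plan is to apply Proposition \ref{prop;quotient_isom} to the action of $G$ on $\mathbb K_r$ with the rotating family $\{N_\lambda\}$: after the quotient, elliptic and parabolic elements of $\bar G$ on $\mathbb K_r / \ker(\e)$ have preimages in $G$ of the same type on $\mathbb K_r$, and loxodromic elements of $\bar G$ automatically have loxodromic preimages (any preimage moves points at least as fast as its image does, modulo the equivariant quasi-isometry to $\G$). Translating again via the quasi-isometry between $\mathbb K_r$ and $\G$ (and the analogous one for the quotients) yields (f). The main obstacle I anticipate is in part (b): carefully showing that the naive relative presentation of $\bar G$ genuinely has \emph{linear} relative isoperimetric function — this forces a fairly delicate induction on the type $\tau(\Delta)$ of minimal diagrams, using Proposition \ref{DS} repeatedly to guarantee that cuts remain geodesic and that the holes (whose boundary labels contribute no $\mathcal R$-cells) do not blow up the count; the choice of $R$ must then swallow the constants $M$, $C$ coming from Lemma \ref{omega} and $D(100)$ coming from Theorem \ref{he-vrf} all at once.
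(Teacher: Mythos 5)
Your plan for parts (d)--(f) coincides with the paper's: Theorem \ref{he-vrf} makes $\{N_\lambda\}$ an $\alpha$-rotating family on the horoball space $\mathbb K_r$, Theorem \ref{theo;app_wind} gives (d) and (e), and Proposition \ref{prop;quotient_isom} gives (f); the only slip there is that the needed transfer of loxodromicity uses the $G$-equivariant quasi-isometry between $\G$ and $\mathbb K_r$ itself (the two spaces are at finite Hausdorff distance), not a quasi-isometry between $\G$ and $\Gamma(G,X\cup Y)$, which Lemma \ref{omega}(b) does not provide and which is false in general. The genuine gap is in your argument for (a)--(c), where the central quantitative step fails in three ways. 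First, Proposition \ref{DS}(c),(d) only forbids an internal boundary component $c$ of a minimal diagram from being connected to an $H_\lambda$-subpath of a cut or to another internal component; it does \emph{not} make $c$ isolated, since $c$ may be connected to an $H_\lambda$-subpath of $\partial_{ext}\Delta$ --- and this is precisely the case that must occur and that drives the proof. Second, the bound you extract from Lemma \ref{Ylambda} is proportional to the length of the ambient cycle, hence grows with $\| W\|$ and the cut lengths; no fixed constant $R$ can be ``chosen larger than'' a quantity that is unbounded over all $W$. The tool that actually works is Proposition \ref{sn}: viewing $\mu(\partial\widetilde\Delta)$ as a polygon whose non-component sides are $(2,0)$-quasi-geodesic pieces of $\partial_{ext}\Delta$ (after first reducing to quasi-geodesic $W$) and geodesic cuts (Proposition \ref{DS}(a)), the sum of $\widehat\ell$ over the isolated hole-components is at most $D$ times the \emph{number of sides}, i.e.\ at most $4Dq(W)$, independently of $\| W\|$. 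Third, the conclusion cannot be that a minimal diagram has no holes: whenever $W\ne 1$ in $G$ (for instance $W$ a single letter of $N_\lambda\setminus\{1\}$), every diagram in $\mathcal D(W)$ has at least one hole.

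What the counting genuinely yields is a dichotomy: if every hole of a minimal diagram were isolated in $\mu(\partial\widetilde\Delta)$, then some hole would carry a nontrivial element of $N_\lambda$ with $\dl(1,\cdot)\le 4D=R$, contradicting (\ref{snr}) together with Proposition \ref{DS}(b); hence some hole is connected to an $H_\lambda$-subpath of the external boundary. One then cuts along that connection, absorbs the hole into the boundary word at the cost of one letter of $H_\lambda$, and runs an induction on the pair $(q(W),\| W\|)$. This inductive mechanism --- the content of Lemmas \ref{CEP1} and \ref{CEP2} --- is what produces (a), (c), and the linear relative isoperimetric function required for (b); without it the first half of your proof does not close.
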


The proof of parts (a)-(c) of Theorem \ref{CEP-0} repeats the proof of the main result of \cite{Osi07}. It consists of a sequence of lemmas, which are proved by induction on the rank of a diagram defined as follows. We assume that the reader is familiar with the terminology and notation introduced in the previous section. Let
\begin{equation}\label{r4d}
R=4D,
\end{equation}
where $D=D(2,0)$ be the constant from Proposition \ref{sn}.

\begin{defn}\label{theta}
Given a word $W$ in the alphabet $X\sqcup \mathcal H$ representing
$1$ in $\bar G$, we denote by $q(W)$ the minimal number of holes
among all diagrams from $\mathcal D(W)$. Further we define the
{\it type} of $W$ by the formula $\theta (W)=(q(W), \| W\| )$. The
set of types is endowed with the natural order (as in Definition
\ref{typeD}).
\end{defn}
The next three results are
proved by common induction on $q(W)$. Recall
that a word $W$ in $X\sqcup \mathcal H$ is called $(\lambda ,
c)$--quasi--geodesic (in $G$) for some $\lambda \ge 1$, $c\ge 0$,
if some (or, equivalently, any) path in $\G $ labelled by $W$ is
$(\lambda , c)$--quasi--geodesic.

\begin{lem}\label{CEP1}
Suppose that $W$ is a word in the alphabet $X\sqcup \mathcal H$
representing $1$ in $\bar G$ and $\Delta $ is a diagram of minimal
type in $\mathcal D (W)$. Then:

\begin{enumerate}
\item[(a)] Assume that for some $\lambda \in \Lambda $, $p$ and $q$ are
two connected $H_\lambda $--subpaths of the same component $c$ of
$\partial _{int}\Delta $, then there is an $H_\lambda $--component
$r$ of $\partial\widetilde{\Delta }$ such that $p$ and $q$ are
subpaths of $\kappa (r)$.

\item[(b)]  If $W$ is $(2,0)$--quasi--geodesic and $q(W)>0$, then some
component of $\partial _{int} \Delta $ is connected to an
$H_\lambda $--subpath of $\partial _{ext}\Delta $ for some
$\lambda \in \Lambda $.

\item[(c)] If $W$ is a word in the alphabet $H_\lambda \setminus \{ 1\}
$ for some $\lambda \in \Lambda $, then $W$ represents an element
of $N_\lambda $ in $G$.
\end{enumerate}
\end{lem}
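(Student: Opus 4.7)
The plan is to establish (a), (b), (c) simultaneously by induction on $q(W)$, with $R=4D$ where $D=D(2,0)$ is the constant from Proposition \ref{sn}. The base case $q(W)=0$ is immediate: parts (a) and (b) are vacuous since $\partial_{int}\Delta$ is empty, and $\Delta$ is a genuine disk van Kampen diagram over (\ref{CEP-Gfull}), so $W$ represents $1$ in $G$ and (c) follows trivially.

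For the inductive step, fix $\Delta\in\mathcal D(W)$ of minimal type $\tau(\Delta)$. To prove (b), I would view $\mathcal P=\partial\tilde\Delta$ via $\mu$ as a cycle in $\G$ and decompose it into one $(2,0)$-quasi-geodesic side labelled by $W$ coming from $\partial_{ext}\tilde\Delta$, two geodesic sides for each of the $q(W)$ cuts (Proposition \ref{DS}(a); minimality of $\tau(\Delta)$ forces the cut system to contain exactly $q(W)$ cuts, each with one endpoint on a distinct internal component), and each internal component $c_j$ as a single $H_{\lambda_j}$-labelled side, giving $n(\mathcal P)=3q(W)+1$ sides. If no $c_j$ were connected to an $H_\lambda$-subpath of $\partial_{ext}\Delta$, then Proposition \ref{DS}(c),(d), together with the inductive hypothesis (a), would force each $c_j$ to be an isolated $H_{\lambda_j}$-component of $\mathcal P$. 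By Proposition \ref{DS}(b) the element $g_j$ represented by $c_j$ lies in $N_{\lambda_j}\setminus\{1\}$, so $\widehat\ell(c_j)\ge s(\N)>R=4D$. Proposition \ref{sn} would then yield
\[
4D\cdot q(W)<\sum_j\widehat\ell(c_j)\le D(3q(W)+1),
\]
forcing $q(W)<1$, a contradiction.

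For (a), suppose $p,q$ are connected $H_\lambda$-subpaths of a single component $c$ of $\partial_{int}\Delta$ that are not jointly contained in any $\kappa(r)$ with $r$ an $H_\lambda$-component of $\partial\tilde\Delta$. Then $p$ and $q$ lift to distinct $H_\lambda$-components of $\partial\tilde\Delta$ and hence lie in arcs of $c$ separated by cut endpoints. Using the short $H_\lambda$-path in $\G$ realizing the connection between $p$ and $q$, together with Lemma \ref{trans}, I would reroute the separating cut so that its label is a single letter from $H_\lambda\setminus\{1\}$; merging this $H_\lambda$-cut into the component $c$ would produce a diagram in $\mathcal D(W)$ with strictly smaller $\tau$, contradicting minimality. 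For (c), assume $W$ is a word in $H_\lambda\setminus\{1\}$ representing $h\in H_\lambda\le G$; we may assume $h\ne 1$ (otherwise $h\in N_\lambda$ trivially) and replace $W$ by the single letter $h$, which is $(1,0)$-geodesic and satisfies $q(h)\le q(W)$ (any $\mathcal D(W)$-diagram converts to a $\mathcal D(h)$-diagram by attaching a disk diagram over (\ref{CEP-Gfull}) without creating new holes). Part (b) applied to $h$ then produces an internal component $c_j$ connected to the single $H_\lambda$-edge $\partial_{ext}\Delta$, forcing $\lambda_j=\lambda$. Invoking Lemma \ref{trans} to realize this connection by an $H_\lambda$-edge inside $\Delta$, I would absorb the hole $c_j$ (whose label represents an element of $N_\lambda$) into the external boundary, producing a diagram with strictly fewer holes whose boundary label $W'$ differs from $h$ by multiplication by an element of $N_\lambda$; the inductive hypothesis (c) applied to $W'$ then yields $h\in N_\lambda$.

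The main obstacles are (i) the precise side count in (b), which relies on showing that minimal-type diagrams have exactly $q(W)$ cuts arranged as a spanning tree from $\partial_{ext}\Delta$ to the holes, and (ii) the careful use of Lemma \ref{trans} in the surgeries of (a) and (c) to realize the combinatorial moves inside $\mathcal D(W)$ while strictly decreasing $\tau$ or $q$; the interplay between these three statements within a single inductive level must also be organized in the order (a), (b), (c) to avoid circularity.
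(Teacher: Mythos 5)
Your treatment of parts (b) and (c) follows the paper's argument in all essentials: the polygon decomposition of $\mu(\partial\widetilde\Delta)$ plus Proposition \ref{sn} for (b), and the cut-along-$s$ surgery plus the inductive hypothesis for (c). (Your side count $3q(W)+1$ for (b) is not quite right -- nothing forces the minimal cut system to be a star from $\partial_{ext}\Delta$, so a hole may be subdivided into several arcs by cut endpoints and the external boundary cannot be treated as a single side; the correct bound is $n\le 4q(W)$, as each of the $q(W)$ cuts contributes two sides and two subdivision points. This does not affect the contradiction, since $4Dq(W)<D\cdot 4q(W)$ already fails.)

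Part (a), however, has a genuine gap. Your proposed surgery -- ``reroute the separating cut so that its label is a single letter from $H_\lambda\setminus\{1\}$ and merge it into $c$'' -- is not well defined: the cut separating $p$ from $q$ joins a point of $c$ to a point of a \emph{different} boundary component, and there is no reason these two points lie in the same left coset of $H_\lambda$, so the cut cannot be relabelled by an $H_\lambda$-letter; nor is ``merging a cut into a hole'' a type-decreasing operation in $\mathcal D(W)$. The actual difficulty is the following. Using Lemma \ref{trans} you get an $H_\lambda$-path $s$ from $p$ to $q$ avoiding all cuts, and the subdiagram $\Xi$ bounded by $s$ and the arc $u$ of $c$ between them (on the side away from the hole of $c$). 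A cut with an endpoint on $u$ cannot cross $s$ and cannot have both endpoints on $u$ (that would disconnect $\widetilde\Delta$), so it must terminate on a hole \emph{inside} $\Xi$. Thus the obstruction to your conclusion is precisely that $\Xi$ may contain holes, and ruling this out requires the paper's step that you omit: $\phi(\partial\Xi)$ is a word in $H_\lambda\setminus\{1\}$ with $q(\phi(\partial\Xi))<q(W)$, so by part (c) at a strictly smaller level of the induction it represents an element of $N_\lambda$; only then, using normality of $N_\lambda$ in $H_\lambda$, can $\Xi\cup c$ be legally replaced by a single hole satisfying (D2), reducing the number of holes and contradicting minimality. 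Without this inductive appeal to (c), the case where $\Xi$ contains a hole is not handled, and your claim that the induction can be organized so that (a) is independent of (c) does not hold: (a) at level $q$ genuinely needs (c) at levels below $q$.
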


\begin{proof}
For $q(W)=0$ the lemma is trivial. Assume that $q(W)>0$.

\begin{figure}
  \vspace{1mm}
 \hspace{13mm}\includegraphics[]{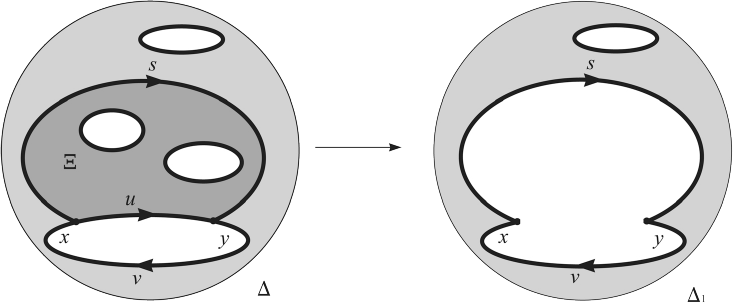}
  \caption{}\label{7-f8}
\end{figure}

Let us prove the first assertion. Let $x$ (respectively $y$) be an
ending vertex of a certain essential edge of $p$ (respectively
$q$). Passing to a $0$--refinement of $\Delta $, we may assume
that $x$ and $y$ do not belong to any cut from the cut system $T$
of $\Delta $. Applying Lemma \ref{trans} we get a paths $s$ in
$\Delta $ connecting $x$ to $y$ such that $\Lab (s)$ is a word in
the alphabet $H_\lambda \setminus \{ 1\}$ and $s$ does not
intersect any path from $T$. Let us denote by $\Xi $ the
subdiagram of $\Delta $ bounded by $s$ and the segment $u=[x,y]$
of $c^{\pm 1}$ such that $\Xi $ does not contain the hole bounded
by $c$ (Fig. \ref{7-f8}).

Note that $V\equiv \Lab (\partial \Xi )$ is a word in the alphabet
$H_\lambda \setminus \{ 1\}$ and $q(V)< q(W)$. By the third
assertion of our lemma, $V$ represents an element of $N_\lambda $.
Up to a cyclic shift, the label of the external boundary component
of the subdiagram $\Sigma =\Xi \cup c$ of $\Delta $ is a word in
$H_\lambda \setminus \{ 1\}$ representing the same element as
$\Lab (c^{\pm 1})\Lab(u)^{-1}V^{\pm 1}\Lab (u)$ in $G$. As
$N_\lambda $ is normal in $H_\lambda $ and $\Lab (u)$ represents
an element of $H_\lambda $ in $G$, $\Lab (\partial _{ext} \Sigma
)$ represents an element of $N_\lambda $ in $G$. If $\Xi $
contains at least one hole, we replace $\Sigma$ with a single hole
bounded by $\partial _{ext}\Sigma $ (Fig. \ref{7-f8}). This reduces
the number of holes in $\Delta $ and we get a contradiction.
Therefore $\Xi $ is simply connected. In particular, the path $u$
does not intersect any cut from $T$. This means that $p$ and $q$
are covered by the image of the same $H_\lambda $--component of
$\partial\widetilde{\Delta }$.

To prove the second assertion we suppose that for every component
$c_i$ of $\partial _{int} \Delta $, no $H_\lambda $--subpath of
$\partial _{ext}\Delta $ is connected to $c$. Then Proposition
\ref{DS} and the first assertion of our lemma imply that each
component $c_i$ of $\partial _{int}\Delta $ gives rise to
$H_\lambda $--components $a_{i1}, \ldots, a_{il}$ of $\partial
\widetilde{\Delta }$  for some $l=l(i)$ such that $\kappa
(a_{ij})\in c_i$, $j=1, \ldots , l$, and $\mu (a_{i1}),\ldots ,\mu
(a_{il})$ are isolated $H_\lambda $--components of the cycle
$\mathcal P=\mu (\partial \widetilde{\Delta })$ in $\G $.

For each component $c_i$ of $\partial _{int} (\Delta
)$, we fix a vertex $o_i\in c_i$ such that $o_i=t_-$ or $o_i=t_+$
for some $t\in T$ and denote by $g_i$ the element represented by
$\Lab (c_i)$ when we read this label starting from $o_i$. Clearly $g_i\in H_{\lambda_i}$ for some $\lambda _i\in \Lambda $ and
\begin{equation}\label{gi}
\widehat \d_{\lambda_i} (1, g_i) \le \sum\limits_{j=1}^{l(i)} \widehat\ell (\mu
(a_{ij})).
\end{equation}

The path $\mathcal P$ may be considered as an $n\le 4q(W)$--gon
whose sides (up to orientation) are of the following three types:
\begin{enumerate}
\item[(1)] sides corresponding to parts of $\partial _{ext} \Delta
$;

\item[(2)] sides corresponding to cuts in $\Delta $;

\item[(3)] components corresponding to $\partial _{int}\Delta $.
\end{enumerate}

The sides of $\mathcal P$ of type (1) are $(2,0)$--quasi--geodesic
in $\G $ as $W$ is $(2,0)$--quasi--geodesic. The sides of type (2)
are geodesic in $\G $ by the first assertion of Proposition
\ref{DS}. Hence we may apply Proposition \ref{sn} to the
$n$--gon $\mathcal P$, where the set of components $I$ consists of
sides of type (3). Taking into account (\ref{gi}), we obtain
$$
\sum\limits_{i=1}^{q(W)} |g_i|_\Omega \le \sum\limits_{p\in
I}l_{\Omega } (p) \le Dn\le 4D q(W),
$$
where $D=D(2,0)$ is provided by Proposition \ref{sn}.  Hence at
least one element $g_i\in N_{\lambda_i}$ satisfies $\widehat \d_{\lambda_i} (1,g_i) <4D $. According to
(\ref{r4d}) and (\ref{snr}) this implies $g_i=1$ in $G$. However this contradicts the second assertion of Proposition
\ref{DS}.

To prove the last assertion we note that it suffices to deal with
the case when $W$ is geodesic as any element of $H_\lambda $ can
be represented by a single letter. Let $\Delta $ be a diagram of
minimal type in $\mathcal D (W)$. By the second assertion of the
lemma, some component $c$ of $\partial _{int}\Delta $ labelled by
a word in $H_\lambda \setminus \{ 1\} $ is connected to
$\partial_{ext}\Delta $. Applying Lemma \ref{trans} yields a path
$s$ in $\Delta $ connecting $\partial_{ext}\Delta $ to $c$ such
that $\Lab (s) $ is a word in the alphabet $H_\lambda \setminus \{
1\}$. Let us cut $\Delta $ along $s$ and denote the new diagram by
$\Delta _1$. Obviously the word
$$
\Lab (\partial_{ext}\Delta _1)\equiv \Lab (s)\Lab(c)\Lab (s^{-1})\Lab
(\partial_{ext}\Delta)
$$
is a word in the alphabet $H_\lambda \setminus \{ 1\}$ and $q(\Lab
(\partial_{ext}\Delta _1))<q(W)$. By the inductive assumption, $\Lab (\partial_{ext} \Delta
_1)$ represents an element of $N_\lambda $ in $G$. Since $\Lab
(c)$ represents an element of $N_\lambda $ and $N_\lambda \lhd
H_\lambda $, the word $\Lab (\partial_{ext}\Delta )$ also represents an element
of $N_\lambda $.
\end{proof}

For a word $W$ in the alphabet $X\sqcup \mathcal H$ representing $1$
in $\bar G$, we set $$\AA (W)=\min\limits_{\Delta \in \mathcal
D(W)} N_\mathcal R (\Delta ).$$ It is easy to see that for any two
words $U$ and $V$ in $X\sqcup \mathcal H$ representing $1$ in $\bar G$, we have
\begin{equation}\label{AA}
\AA (UV)\le \AA (U) +\AA (V).
\end{equation}

\begin{lem}\label{CEP2}
For any word $W$ in $X\sqcup \mathcal H$ representing $1$ in $\bar G$, we have $\AA (W)\le 3C\| W\| $, where $C$ is the relative isoperimetric constant of (\ref{CEP-Gfull}).
\end{lem}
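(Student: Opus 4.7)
The plan is to induct on the type $\theta(W)=(q(W),\|W\|)$ of $W$ in the lexicographic order, using the subadditivity (\ref{AA}) of $\AA$ as the main gluing device. For the base case $q(W)=0$, any diagram $\Delta\in\mathcal D(W)$ with no holes is an honest disc van Kampen diagram over the presentation (\ref{CEP-Gfull}), so the hypothesis that $Cn$ is a relative isoperimetric function of (\ref{CEP-Gfull}) directly gives $\AA(W)\le C\|W\|\le 3C\|W\|$.

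For the inductive step with $q(W)\ge 1$, I split on whether $W$ is $(2,0)$-quasi-geodesic. If it is not, there is a decomposition $W\equiv W_1 V W_2$ and a word $V'$ in $X\sqcup\mathcal H$ representing the same element of $G$ as $V$ with $\|V'\|<\|V\|/2$. The identity $W=(W_1 V' W_2)\cdot (W_2^{-1} V'^{-1} V W_2)$ in the free group $F$, combined with the conjugation-invariance of $\AA$, yields
\[
\AA(W)\le \AA(W_1 V' W_2)+\AA(V'^{-1}V).
\]
Since $V'^{-1}V$ represents $1$ in $G$, the isoperimetric bound for (\ref{CEP-Gfull}) gives $\AA(V'^{-1}V)\le C(\|V\|+\|V'\|)\le 3C\|V\|/2$. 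For the first term, gluing any disc diagram for $V'^{-1}V$ over (\ref{CEP-Gfull}) to a diagram in $\mathcal D(W)$ along its $V$-subpath produces a diagram in $\mathcal D(W_1 V' W_2)$ with the same number of holes, whence $q(W_1 V' W_2)\le q(W)$ and $\|W_1 V' W_2\|\le \|W\|-\|V\|/2$; the inductive hypothesis gives $\AA(W_1 V' W_2)\le 3C(\|W\|-\|V\|/2)$, and the two terms sum to exactly $3C\|W\|$.

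If $W$ is $(2,0)$-quasi-geodesic, fix a diagram $\Delta\in\mathcal D(W)$ of minimal type. By Lemma \ref{CEP1}(b), some component $c$ of $\partial_{int}\Delta$, labelled by a word $C$ in $H_\lambda\setminus\{1\}$ representing an element of $N_\lambda$, is connected to an $H_\lambda$-subpath $p$ of $\partial_{ext}\Delta$; write $W\equiv U P V$ with $P=\Lab(p)$. Lemma \ref{trans}, after a $0$-refinement to keep the new path disjoint from the other cuts and internal boundary components, produces a simple path $s$ in $\Delta$ from an endpoint of $p$ to a vertex of $c$ labelled by a word $S$ in $H_\lambda\setminus\{1\}$. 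Cutting $\Delta$ along $s$ absorbs the hole $c$ into the external boundary and produces the subword $P\cdot S\cdot C\cdot S^{-1}$ in the alphabet $H_\lambda\setminus\{1\}$; attaching a single $\mathcal S$-cell collapses this $H_\lambda$-word to a single letter $h^*\in H_\lambda$, giving a diagram in $\mathcal D(W^*)$ with external boundary label $W^*\equiv U h^* V$, with $q(W^*)\le q(W)-1$ and $\|W^*\|\le \|W\|$. Normality of $N_\lambda$ in $H_\lambda$ ensures that $h^{*-1}P$ represents an element of $N_\lambda$ in $G$, so $h^{*-1}P\in\mathcal Q_\lambda$; hence the identity $W=W^*\cdot(W^{*-1}W)$ in $F$ yields $\AA(W^{*-1}W)=0$, and the inductive hypothesis gives $\AA(W)\le \AA(W^*)\le 3C\|W^*\|\le 3C\|W\|$.

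The main obstacle will be the surgery in the quasi-geodesic case: one must use Lemma \ref{trans} carefully, together with a suitable $0$-refinement, to produce the path $s$ disjoint from the other cuts and from the other internal boundary components, so that cutting along $s$ eliminates exactly the hole $c$ while preserving the remaining holes, and one must verify that $h^{*-1}P$ truly belongs to $\mathcal Q_\lambda$, which rests crucially on $N_\lambda$ being normal in $H_\lambda$.
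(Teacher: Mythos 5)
Your proof is correct and follows essentially the same route as the paper's: induction on the type $\theta(W)=(q(W),\|W\|)$ using the subadditivity (\ref{AA}), a shortcut surgery when $W$ fails to be $(2,0)$-quasi-geodesic, and in the quasi-geodesic case the surgery along the path provided by Lemma \ref{trans} to absorb a hole connected to $\partial_{ext}\Delta$ (guaranteed by Lemma \ref{CEP1}(b)) into a single $H_\lambda$-letter, with $\AA(h^{*-1}P)=0$ coming from normality of $N_\lambda$. The only cosmetic difference is that the paper performs cyclic shifts to place the relevant subword at the end of $W$, whereas you keep it in the middle and invoke conjugation-invariance of $\AA$; both are valid.
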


\begin{proof}
If $q(W)=0$, then $W=1$ in $G$ and the required
estimate on $\AA (W)$ follows from the relative hyperbolicity of
$G$. We now assume that $q(W)>1$.

First suppose that the word $W$ is not $(2,0)$--quasi--geodesic in
$G$. That is, up to a cyclic shift $W\equiv W_1W_2$, where $W_1=U$
in $G$ and $\| U\| < \| W_1\| /2$. Note that $q(W_1U^{-1})=0$,
$q(UW_2)=q(W)$, and $\| UW_2\| \le \| W\| -\| W_1\| /2$. Hence
$\theta (UW_2)<\theta (W)$. Using the inductive assumption and
(\ref{AA}), we obtain
$$
\begin{array}{rl}
\AA (W)\le & \AA (W_1U^{-1})+\AA (UW_2)< \\ & \\ & \frac32C \|
W_1\| + 3C \left(\| W\| -\frac12 \| W_1\| \right) = 3C\| W\|.
\end{array}
$$

Now assume that $W$ is $(2,0)$--quasi--geodesic. Let $\Delta $ be
a diagram of minimal type in $\mathcal D(W)$. By the second
assertion of Lemma \ref{CEP1}, some component $c$ of $\partial
_{int} \Delta $ is connected to an $H_\lambda $--subpath $p$ of
$\partial _{ext} \Delta $ for some $\lambda \in \Lambda $.
According to Lemma \ref{trans}, we may assume that there is a path
$s$ in $\Delta $ connecting $c$ to $p_+$ such that $\Lab (s)$ is a
word in the alphabet $H_\lambda \setminus \{ 1\} $. We cut $\Delta
$ along $s$ and denote by $\Delta _1$ the obtained diagram. Up to
cyclic shift, we have $W\equiv W_0\Lab (p)$ and $$\Lab (\partial_{ext}\Delta
_1)\equiv W_0\Lab (p)\Lab (s)^{-1}\Lab (c)\Lab (s).$$ Let $h$ be
the element of $H_\lambda $ represented by $\Lab (p)\Lab
(s)^{-1}\Lab (c)\Lab (s)$ in $G$. Observe that $q(W_0h)=q(\phi
(\partial_{ext}\Delta _1))<q(W)$. Further since $h^{-1}\Lab (p)$ is a word in
$H_\lambda \setminus \{ 1\} $ representing $1$ in $\bar G$, we
have $h^{-1}\Lab (p)\in \mathcal Q$ and hence $\AA (h^{-1}\Lab
(p))=0$. Applying the inductive assumption we obtain
$$
\begin{array}{rl}
\AA (W)= & \AA (W_0h)+\AA (h^{-1}\Lab (p)) = \\ & \\ & \AA (W_0h)
\le 3C \| W_0h\| \le 3C \| W\| .
\end{array}
$$
\end{proof}

\begin{proof}[Proof of Theorem \ref{CEP-0}]
Lemma \ref{CEP1} gives part (a).

Part (b) follows from Lemma \ref{CEP2} in the same way as in \cite{Osi07}. Indeed let $\e_1 \colon F(\N )\to
\bar G$ be the natural homomorphism, where $F(\N )=F(X)\ast (\ast
_{\lambda \in \Lambda }H_\lambda /N_\lambda )$. Let $\e _0$ denote
the natural homomorphism $F\to F(\N )$, where $F$ is given by
(\ref{F}). Part (a) of the theorem implies that
$Ker\, \e _1=\langle \e _0 (\mathcal R)\rangle ^{F(\N )}$. Now let
$U$ be an element of $F(\N )$ such that $\e _1 (U)=1$, $W\in F$ a
preimage of $U$ such that $\| W\| =\| U\| $. Lemmas \ref{CEP2} and
\ref{cutting} imply that
\begin{equation}\label{WW1}
W=_F\prod\limits_{i=1}^{k} f_i^{-1}R_i^{\pm 1}f_i,
\end{equation}
where $f_i\in F$, $R_i\in \mathcal R\cup \mathcal Q$, and the
number of multiples corresponding to $R_i\in \mathcal R$ is at
most $3C\| W\| $. Applying $\e _0 $ to the both sides of
(\ref{WW1}) and taking into account that $\e_0 (f_i^{-1}R_if_i)=1$
in $F(\N )$ whenever $R_i\in \mathcal Q$, we obtain
$$
U=_{F(\N )} \prod\limits_{i=1}^{l} g_i^{-1}P_i^{\pm 1}g_i,
$$
where $g_i\in F(\N )$, $P_i\in \e _0 (\mathcal R)$, and $l\le 3C
\| W\| =3C\| U\| $.

This shows that $\bar G$ has a relative presentation
\begin{equation}\label{GNrel}
\bar G =\langle \bar X, \; \mathcal{\bar H} |\; \mathcal S^\prime \cup\mathcal \e_0 (R) \rangle ,
\end{equation}
with linear relative isoperimetric function. Hence the corresponding relative Cayley graph is hyperbolic by Lemma \ref{pres}, i.e., $\bar G$ is weakly hyperbolic relative to the collection $\{ H_\lambda /N_\lambda \mid \lambda \in \Lambda \} $ and the image of $X$ in $\bar G$.

To prove (c), suppose that $x=y$ in $\bar G$ for some $x,y\in X$. Assume that
$xy^{-1}\ne 1$ in $G$. Then $q(xy^{-1})>0$. Let $\Delta $ be a diagram of
minimal type in $\mathcal D (xy^{-1})$. Since $xy^{-1}$ is a $(2,0)$-quasi-geodesic word in
$G$, some component of $\partial _{int}\Delta $ must be connected to an
$H_\lambda $--subpath of $\partial _{ext}\Delta $ by the second assertion of Lemma
\ref{CEP1}. However $\partial _{ext} \Delta $ contains no
$H_\lambda $--subpaths at all and we get a contradiction.

Parts (d)-(f) can be derived from Corollary \ref{cor-he-vrf} and the corresponding results about $\alpha $-rotating families. Indeed by Corollary \ref{cor-he-vrf} we can assume that the collection $\Nl$ is $\alpha$-rotating with respect to the action of $G$ on the hyperbolic space $\mathbb K$ provided by Theorem \ref{he-vrf}. Recall that the space $\mathbb K$ constructed in the proof of Theorem \ref{he-vrf} contains $\G $ as a subspace and it is obvious from the construction that $\d_{Hau} (\G, \mathbb K)<\infty $. Thus the inclusion of $\G $ in $\mathbb K$ is a $G$-equivariant quasi-isometry and hence an element $g\in G$ is loxodromic (respectively, parabolic or elliptic) with respect to the action on $\G $ if and only if it is loxodromic (respectively, parabolic or elliptic) with respect to the action on $\mathbb K $. Thus Theorem \ref{theo;app_wind} yields parts (d)and (e). Similarly an element of $\bar G$ is loxodromic (respectively, parabolic or elliptic) with respect to the action on $\Gamma (\bar G, \bar X\sqcup\mathcal{\bar H})$ are also loxodromic (respectively, parabolic or elliptic) with respect to the action on $\mathbb K /Rot$ and we obtain (f) by applying Proposition \ref{prop;quotient_isom}.
\end{proof}

For hyperbolically embedded collections, we obtain the following.

\begin{thm}\label{CEP}
Let $G$ be a group, $X$ a subset of $G$,  $\Hl $ a collection of subgroups of $G$. Suppose that $\Hl\h (G,X)$. Then for any finite subset $Z\subseteq G$, there exists a family of finite subsets $\mathcal F_\lambda \subseteq H_\lambda \setminus \{ 1\} $  such that for every collection $\N =\{ N_\lambda \lhd H_\lambda \mid \lambda \in \Lambda \}$ satisfying $N_\lambda \cap \mathcal F_\lambda =\emptyset$ the following hold.
\begin{enumerate}
\item[(a)] The natural map from $H_\lambda /N_\lambda $ to $\bar G$ is injective for every $\lambda \in \Lambda $.

\item[(b)] $\{ H_\lambda/N_\lambda \}_{\lambda\in \Lambda }\h \bar G $.

\item[(c)] The natural epimorphism $\e \colon G\to \bar G$ is injective on $Z$.

\item[(d)] Every element of $Ker (\e )$ is either conjugate to an element of $N_\lambda $ for some $\lambda \in \Lambda $ or is loxodromic. Moreover, translation numbers of loxodromic elements of $Ker (\e)$
(with respect to the action on $\G $) are uniformly bounded away from zero.

\item[(e)] $Ker (\e)=\ast_{\lambda\in \Lambda }\ast_{t\in T_{\lambda} } N_{\lambda }^t $ for some subsets $T_\lambda \subseteq G$.

\item[(f)] Every  loxodromic (respectively, parabolic or elliptic) element of $\bar G$ is the
image of a loxodromic (respectively, parabolic or elliptic) element of $G$.
\end{enumerate}
\end{thm}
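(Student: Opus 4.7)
The plan is to derive Theorem~\ref{CEP} from Theorem~\ref{CEP-0} together with the isoperimetric characterization of hyperbolic embedding (Theorem~\ref{ipchar}), exploiting the local finiteness of each metric space $(H_\lambda,\dl)$ that is built into the hyperbolic embedding hypothesis.

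First, to handle the extra flexibility in part~(c) (injectivity on an arbitrary finite set $Z$, not merely on $X$), I invoke Corollary~\ref{he-indep}: since $|X\triangle(X\cup Z)|\leq |Z|<\infty$, we still have $\Hl\h(G,X\cup Z)$. Replacing $X$ by $X\cup Z$, I may therefore assume $Z\subseteq X$ throughout. Next, I apply Theorem~\ref{CEP-0} to $(G,\Hl,X)$ to obtain the constant $R>0$ provided there, and set
$$\mathcal F_\lambda=\{h\in H_\lambda\setminus\{1\}\mid \dl(1,h)\leq R\}.$$
Each $\mathcal F_\lambda$ is finite by the local finiteness of $(H_\lambda,\dl)$ guaranteed by Theorem~\ref{ipchar}. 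For any normal collection $\N=\{N_\lambda\}$ with $N_\lambda\cap\mathcal F_\lambda=\emptyset$, we have $s(\N)>R$, and Theorem~\ref{CEP-0} directly yields parts~(a), (c), (d), (e), and (f).

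The remaining task is to upgrade the conclusion of Theorem~\ref{CEP-0}(b) (weak relative hyperbolicity of $\bar G$) to the full hyperbolic embedding statement in part~(b). Here I begin from a \emph{strongly bounded} relative presentation
$$G=\langle X,\mathcal H\mid \mathcal S\cup\mathcal R\rangle$$
with linear relative isoperimetric function, which is available by the ``only if'' direction of Theorem~\ref{ipchar} applied to $\Hl\h(G,X)$. The argument of Theorem~\ref{CEP-0}(b) (using Lemma~\ref{CEP2} and Lemma~\ref{cutting}) produces a relative presentation
$$\bar G=\langle \bar X,\mathcal{\bar H}\mid \mathcal S^\prime\cup\epsilon_0(\mathcal R)\rangle$$
of $\bar G$ with linear relative isoperimetric function. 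I claim that this presentation is again strongly bounded: the map $\epsilon_0$ does not increase word length, so the uniform bound on $\|\mathcal R\|$ transfers to $\epsilon_0(\mathcal R)$; and the set of $\mathcal{\bar H}$-letters occurring in $\epsilon_0(\mathcal R)$ is the $\epsilon_0$-image of the finite set of $\mathcal H$-letters occurring in $\mathcal R$, hence finite. Applying the ``if'' direction of Theorem~\ref{ipchar} to this strongly bounded presentation yields $\{H_\lambda/N_\lambda\}\h(\bar G,\bar X)$, which is part~(b).

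The main subtlety in this plan is the preservation of strong boundedness: it is essential to start from a strongly bounded relative presentation of $G$ rather than a merely bounded one as supplied by Lemma~\ref{pres}, since the quotient presentation inherits letters from $\mathcal R$ and would otherwise typically involve infinitely many letters from $\mathcal{\bar H}$. Everything else reduces to a routine combination of Theorem~\ref{CEP-0}, Corollary~\ref{he-indep}, and Theorem~\ref{ipchar}; no further diagram surgery or rotating family arguments are needed beyond what is already packaged in those results.
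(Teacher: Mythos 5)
Your proposal is correct and follows essentially the same route as the paper's own proof: take $R$ from Theorem \ref{CEP-0}, define $\mathcal F_\lambda$ as the ball of radius $R$ in $(H_\lambda,\dl)$ (finite by local finiteness), reduce to $Z\subseteq X$ via Corollary \ref{he-indep}, and upgrade part (b) by observing that a strongly bounded relative presentation of $G$ descends to a strongly bounded one of $\bar G$ with linear relative isoperimetric function. Your explicit justification of why strong boundedness is preserved under $\epsilon_0$ is a point the paper only asserts, but the argument is the same.
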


\begin{proof}
Let $R$ be the constant chosen as in the proof of Theorem \ref{CEP-0} (see (\ref{r4d})). Note that
$$
F_\lambda =\{ h\in N_\lambda \setminus \{ 1\} \mid \dl (1,h)\le R\}
$$
is finite as $\Hl \h G$. Then parts (a) and (d)-(f) follow from the corresponding parts of Theorem \ref{CEP-0}. To prove (b) note that in the notation of the proof of Theorem \ref{CEP-0}, we can assume that (\ref{CEP-Gfull}) is strongly bounded and hence so is (\ref{GNrel}). Therefore, $\{ H_\lambda/N_\lambda \}_{\lambda\in \Lambda }\h (\bar G,\bar X) $. Finally note that we can assume that $Z\subseteq X$ without loss of generality (see Corollary \ref{he-indep}). This and Theorem \ref{CEP-0} (c) give part (c).
\end{proof}


\section{Applications}\label{appl}

\subsection{Largeness properties}

The main purpose of this section is to obtain some general results about groups with non-degenerate hyperbolically embedded subgroups. For the definitions and a survey of related results we refer to Section \ref{subsec:appl}.

\begin{thm}\label{large}
Suppose that a group $G$ contains a non-degenerate hyperbolically embedded subgroup. Then the following hold.
\begin{enumerate}
\item[(a)] The group $G$ is SQ-universal. Moreover, for every finitely generated group $S$ there is a quotient group $Q$ of $G$ such that $S\h Q$.
\item[(b)] The group $G$ contains a non-trivial free normal subgroup.
\item[(c)] ${\rm dim\, }\widetilde {QH} (G)=\infty $, where $\widetilde {QH} (G)$ is the space of homogeneous quasimorphisms. In particular, ${\rm dim\,} H_b^2(G, \mathbb R)=\infty $ and $G$ is not boundedly generated.
\item[(d)] The elementary theory of $G$ is not superstable.
\end{enumerate}
\end{thm}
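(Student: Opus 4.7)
The common thread for all four assertions is the application of Theorem \ref{vf-intr}, which provides hyperbolically embedded subgroups of the form $F_n \times K(G)$ in $G$ for any $n$, together with the Dehn filling Theorem \ref{CEP-simple} and Corollary \ref{elemhe1} (producing loxodromic WPD elements for the action of $G$ on the relative Cayley graph $\G$).

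For part (a), since every countable group embeds in a $2$-generator group by Higman--Neumann--Neumann, it suffices to realize each $2$-generator group $T$ as a hyperbolically embedded subgroup of some quotient of $G$. I would take $H = F_2 \times K(G) \h G$ and set $N = M \times K(G) \lhd H$, where $M$ is the kernel of a surjection $\phi\colon F_2 \twoheadrightarrow T$; then $H/N \cong T$. The exceptional finite set $\mathcal F$ from Theorem \ref{CEP-simple} is avoided by pre-composing $\phi$ with a generic automorphism of $F_2$ (using that orbits under $\mathrm{Aut}(F_2)$ are infinite, after possibly replacing $T$ by $T \ast \bbZ$ to ensure that $T$ is infinite). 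Theorem \ref{CEP-simple}(b) then yields $T \h G/\ll N\rr^G$. For part (b), take $H = F_n \times K(G) \h G$ and, by residual finiteness of $F_n$, choose a proper finite-index normal subgroup $N \lhd F_n$ avoiding the finite set $\{f\in F_n : (f,1) \in \mathcal F\}$; since $H$ is a direct product, $N \times \{1\}$ is normal in $H$ and disjoint from $\mathcal F$. Then Theorem \ref{CEP-simple}(d) gives $\ll N\rr^G = \ast_{t\in T_0} N^t$ for some $T_0 \subseteq G$, a free product of copies of the free group $N$, which is therefore free, non-trivial, and normal in $G$.

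For part (c), Corollary \ref{elemhe1} applied to the non-degenerate hyperbolically embedded subgroup of $G$ produces a pair of non-commensurable loxodromic WPD elements for the action of $G$ on $\G$, giving a non-elementary action in the sense of Bestvina--Fujiwara \cite{BF}. Their theorem then yields $\dim \widetilde{QH}(G) = \infty$, and since $\widetilde{QH}(G)$ is isomorphic to the kernel of the comparison map $H_b^2(G, \bbR) \to H^2(G, \bbR)$, we conclude $\dim H_b^2(G, \bbR) = \infty$. The bounded generation claim follows from the standard observation that if $G = \{x_1^{a_1}\cdots x_k^{a_k}\}$, then every homogeneous quasi-morphism is determined up to bounded error by its values on $x_1, \ldots, x_k$, forcing $\dim\widetilde{QH}(G) \leq k$.

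For part (d), the main obstacle is to translate the rich geometric structure into a model-theoretic obstruction to superstability. My plan is to use Theorem \ref{vf-intr}(c) and Corollary \ref{elemhe1} to exhibit infinitely many pairwise non-commensurable loxodromic WPD elements in $G$, each with almost-malnormal elementary centralizer (Proposition \ref{malnorm-intr}). Adapting Ould Houcine's argument from \cite{Ould} (which shows that a superstable torsion-free hyperbolic group is cyclic), one can use these incommensurable elements to build a chain of centralizers (or definable commensurators) of unbounded length, contradicting the chain conditions satisfied by superstable groups. An alternative route is to combine part (b) with Poizat's result \cite{P} that non-trivial free products are not superstable. The delicate point in either approach is the definability of the relevant subgroups, given that the hyperbolically embedded structure is not a priori a first-order property.
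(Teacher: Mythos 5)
Parts (b) and (c) are essentially correct and close to the paper's own argument (the paper realizes the free normal subgroup as $\ll g^n\rr$ for $g$ in an infinite elementary subgroup $E\h G$ rather than via a finite-index normal subgroup of $F_n$, but both work). Parts (a) and (d), however, have genuine gaps.

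In part (a), the step ``the exceptional finite set $\mathcal F$ is avoided by pre-composing $\phi$ with a generic automorphism of $F_2$'' does not hold. The $\mathrm{Aut}(F_2)$-orbit of $[a,b]$ consists of conjugates of $[a,b]^{\pm1}$, so if $T$ has nontrivial abelianization constraints (e.g.\ $T=\mathbb Z^2$) the element $[a,b]$ lies in $\alpha^{-1}(M)$ for \emph{every} automorphism $\alpha$; infinitude of orbits is irrelevant, since one needs a single $\alpha$ moving all of $\mathcal F$ off $M$ simultaneously. The $T\ast\mathbb Z$ trick does not repair this without further argument (and changes the rank). The paper instead deepens the relators: it finds a CEP-embedded copy of $F_n$ far inside $F_2$, or equivalently uses small cancellation over free products (Lemma \ref{FreeSQ}) to produce a quotient $K$ of the free group that is hyperbolic \emph{relative to} $S$ with the quotient map injective on $\mathcal F$; relative hyperbolicity plus transitivity (Propositions \ref{he-rh} and \ref{transitive}) is also what upgrades ``$S$ embeds in $Q$'' to ``$S\h Q$'', which your reduction to $2$-generator overgroups $T$ does not give, since $S\le T\h Q$ does not imply $S\h Q$. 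A second problem: your $N=M\times K(G)$ contains $K(G)$, which is normal in $G$; if $K(G)\ne 1$ then $\ll N\rr^G$ cannot be a nontrivial free product of conjugates of $N$ (all factors would share the subgroup $K(G)$), so such an $N$ can never satisfy the hypotheses of Theorem \ref{CEP-simple}. The paper first passes to $G/K(G)$, using Lemma \ref{quotbyfin} to keep a non-degenerate hyperbolically embedded subgroup.

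In part (d), neither proposed route is a proof. The centralizer-chain plan is left open precisely at the point you flag (definability of the relevant subgroups), and the ``alternative route'' rests on a false inference: superstability of $\mathrm{Th}(G)$ is not inherited by normal subgroups, so the existence of a free normal subgroup does not let you invoke Poizat's theorem that nontrivial free products are not superstable. The paper's actual argument is different: by Baudisch's theorem an infinite superstable group has a finite normal series whose sections are abelian or simple, hence $G$ would contain an infinite subnormal subgroup that is finite-by-abelian or finite-by-simple; Lemma \ref{g-normal} shows every infinite subnormal subgroup of $G$ again contains a non-degenerate hyperbolically embedded subgroup, so it is SQ-universal and has a nontrivial free normal subgroup by parts (a) and (b), ruling out both cases.
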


\begin{proof}
We start with (a). Note first that SQ-universality of $G$ follows easily from Theorems \ref{vf} and \ref{CEP}. Recall the following definition.
\begin{defn}
A subgroup $A$ of a group $B$ satisfies the {\it congruence extension property} (or CEP) if for every normal subgroup $N\lhd A$ one has $A\cap \ll N\rr ^B=N$ (or, in other words, the natural map from $A/N$ to $B/\ll N\rr ^B$ is injective.
\end{defn}
Obviously, the CEP is transitive: if $A\le B\le C$, $A$ has the CEP in $B$, and $B$ has the CEP in $C$, then $A$ has the CEP in $C$.

Let $F_n$ denote a finitely generated free group of rank $n$. By Theorem \ref{vf}, there exists a hyperbolically embedded subgroup $H$ of $G$ such that $H\cong F_2\times K(G)$. Obviously $F_2$ has the CEP in $H$. It is well known that for every $n$ and $R>0$, one can find a subgroup $F_n\le F_2$ with the CEP such that the lengths of the shortest nontrivial element of the normal closure of $F_n$ in $F_2$ with respect to a fixed finite generating set of $F_2$ is at least $R$ (see, e.g., \cite{Ols}). Obviously $F_n$ also has CEP in $H$. Using transitivity of the CEP and (a) of Theorem  \ref{CEP} we conclude that $F_n$ has CEP in $G$ if $R$ is big enough. Let $S=F_n/N$. Then $S$ embeds in $Q=G/\ll N\rr ^{G}$.

To make this embedding hyperbolic, we have to be a bit more careful. We will need two auxiliary results. The first one generalizes a well-known property of relatively hyperbolic groups.

\begin{lem}\label{quotbyfin}
Let $\Hl \h (G,X)$ and let $N$ be a finite normal subgroup of $G$. Then $\{ H_\lambda N/N \}_{\lambda \in \Lambda } \h (G/N, {\bar X})$, where  $\bar X$ is the natural image of $X$ in $G/N$.
\end{lem}
\begin{proof}
Let $$\mathcal H=\bigsqcup\limits_{\lambda\in \Lambda } H_\lambda  $$ and $$\bar{\mathcal H}=\bigsqcup\limits_{\lambda\in \Lambda } \bar H_\lambda   ,$$ where $\bar H_\lambda=H_\lambda N/N\le G/N$. Since $|N|<\infty $, the map $G\to G/N$ obviously extends to a quasi-isometry $\G \to \Gamma (G/N, \bar X\sqcup\bar{\mathcal H})$. In particular, $\Gamma (G/N, \bar X\sqcup\bar{\mathcal H})$ is hyperbolic.

Further let $\dl $ and $\dl^\prime$ be the distance functions on $H_\lambda $ and $\bar H_\lambda $ defined using the Cayley graphs $\G $ and $\Gamma (G/N, \bar X\sqcup\bar{\mathcal H})$, respectively. We have to show that $(\bar H_\lambda , \dl^\prime)$ is locally finite for every $\lambda\in \Lambda $. Fix $\lambda \in \Lambda $. If $|H_\lambda| <\infty$, we are done, so assume that $H_\lambda $ is infinite. In this case $N\le H_\lambda $ by Theorem \ref{vf}. Let us fix any section $\sigma \colon G/N\to G$. Note that $\sigma (\bar H_\lambda )\subseteq H_\lambda $. Thus $\sigma $ naturally extends to a map from the set of words in the alphabet $\bar X\sqcup\bar{\mathcal H}$ to the set of words in the alphabet $X\sqcup\mathcal H$. We denote this extension by $\sigma $ as well.

Let $\bar p$ be a path in $\Gamma (G/N, \bar X\sqcup\bar{\mathcal H})$ connecting $1$ to some $\bar x\in \bar H_\lambda $. Define $p$ to be the path in $\G $ starting at $1$ with label $\Lab (p)\equiv \sigma(\Lab (\bar p))$. Then $(p_+) =x$ for some $x\in H_\lambda N=H_\lambda $. It is straightforward to see that if $\bar p$ contains no edges of the subgraph $\Gamma (\bar H_\lambda, \bar H_\lambda )$ of $\Gamma (G/N, \bar X\sqcup\bar{\mathcal H})$ , then $p$ contains no edges of the subgraph $\Gamma (H_\lambda, H_\lambda )$ of $\G $. Thus $\dl (1,x) \le \dl ^\prime (1, \bar x)$. Therefore locall finiteness of $(H_\lambda, \dl)$ implies local finiteness of $(\bar H_\lambda , \dl^\prime)$.
\end{proof}

The next lemma is an exercise on small cancellation theory over free products.

\begin{lem}\label{FreeSQ}
Let $H$ be a non-abelian free group, $\mathcal F$ a subset of $H$, $S$ a finitely generated group. Then $S$ embeds into a quotient group $K$ of $H$ such that $K$ is hyperbolic relative to $S$ and the natural homomorphism $H\to K$ is injective on $\mathcal F$.
\end{lem}

\begin{proof}
Since $H$ is free and non-cyclic, we can decompose it as $H=A\ast B$, where $A$ and $B$ are nontrivial. Let $\{ s_1, \ldots, s_k\} $ be a generating set of $S$. Let $K=\langle A,B, S \mid x_i=w_i,\, i=1, \ldots , k\rangle$, where $w_i\in A\ast B$ and $x_i^{-1}w_i$ satisfy the $C^\prime (1/6)$ condition over the free product $A\ast B\ast S$. Note that $K$ is generated by the images of $A$ and $B$ and hence is a quotient of $H$. It is well-known that $S$ embeds in $K$ \cite[Corollary 9.4, Ch. V]{LS} and it follows immediately from the Greendlinger Lemma for free products \cite[Theorem 9.3, Ch. V]{LS} that the relative Dehn function of $K$ with respect to $S$ is linear. Hence $K$ is hyperbolic relative to $S$. The Greendlinger Lemma also implies that if the elements $w_i$ are long enough with respect to the generating set $A\cup B$ of $H$, then $H\to K$ is injective on $\mathcal F$.
\end{proof}

Let now $G$ be a group with a non-degenerate hyperbolically embedded subgroup. Recall that $K(G)$ denote the maximal normal finite subgroup of $G$ (see Theorem \ref{vf}). Indeed let $H$ be a non-degenerate \he subgroup of $G$. Then $K(G)\le H$ by Theorem \ref{vf} and hence the image of $H$ in $G/K(G)$ is also non-degenerate (i.e., proper and infinite). By Lemma \ref{quotbyfin} the image of $H$ in $G/K(G)$  is \he in $G/K(G)$. Thus passing to $G/K(G)$ if necessary and using Lemma \ref{quotbyfin}, we can assume that $K(G)=\{ 1\} $.

Again by Theorem \ref{vf} there exists a hyperbolically embedded free subgroup $H$ of rank $2$ in $G$. Let $R>0$ be the constant provided by Theorem \ref{CEP} and let $\mathcal F$ be the set of all nontrivial elements $h\in H$ such that $\widehat\d (1,h)\le R$. By Lemma \ref{FreeSQ}, $S$ embeds in a quotient group $K$ of $H$ such that $K$ is hyperbolic relative to $S$ and the natural homomorphism $H\to K$ is injective on $\mathcal F$. In particular, $S\h K$ by Proposition \ref{he-rh}. Let $N=\Ker (H\to K)$, and let $G_1= G/\ll N\rr$. Since $H\to K$ is injective on $\mathcal F$, $N$ satisfies the assumptions of Theorem \ref{CEP}. Hence $K=H/N\h G_1$. Since $S\h K$ we have $S\h G_1$  by Proposition \ref{transitive}. This completes the proof of the part (a) of Theorem \ref{large}.

The proof of (b) follows the standard line. By Theorem \ref{vf}, there exists an infinite elementary subgroup $E\h G$. Let $g\in E$ be an element of infinite order such that $\langle g\rangle \lhd E$. Then for sufficiently large $n\in \mathbb N$, we can apply Theorem \ref{CEP} to the group $G$, the subgroup $E\h G$, and the normal subgroup $\langle g^n\rangle $. In particular, $\ll N\rr ^G$ is free.

Recall that a \textit{quasi-morphism} of a group $G$ is a map $\phi\colon G\to \mathbb R$ such that
$$ \sup_{g,h\in G} |\phi (gh)-\phi(g)-\phi (h)|<\infty .$$ Trivial examples of quasi-morphisms are bounded maps and homomorphisms. Note that the set $QH(G)$ of all quasi-morphisms has a structure of a linear vector space and $\ell ^\infty (G)$ and $Hom(G,\mathbb R)$ are subspaces of $QH(G)$. By definition, the \textit{space of non-trivial quasi-morphisms} is the quotient space
$$
\widetilde{QH} (G) =QH(G)/(\ell^\infty (G)\oplus Hom (G, \mathbb R)).
$$

The third part of Theorem \ref{large} follows easily from Corollary \ref{elemhe1}, Proposition \ref{malnorm}, and \cite[Theorem 1]{BF}. Indeed suppose that $H$ is a non-degenerate subgroup of $G$ such that $H\h (G,X)$ for some $X\subseteq G$. Consider  the action of $G$ on $\Gamma (G, X\sqcup H)$. By Corollary \ref{elemhe1}, there exist two loxodromic elements $g,h\in G$ such that $\{ E(g), E(h)\} \h G$. By the characterization of elementary subgroups obtained in Lemma \ref{elem1}, $g$ and $h$ are independent in the terminology of \cite{BF}. Furthermore, $g\not\sim h$ in the notation of \cite{BF} by Proposition \ref{malnorm}. (Recall that $g\sim h$ if and only if some positive powers of $g$ and $h$ are conjugate, see the remark after the definition of the equivalence on p. 72 of \cite{BF}.) Now Theorem 1.1 from \cite{BF} gives ${\rm dim\, }\widetilde {QH} (G)=\infty $. The fact that $\dim H^2_b(G,\mathbb R)=\infty $ follows from the well-known observation that the space $\widetilde{QH} (G)$ can be naturally identified with the kernel of the canonical map $H^2_b(G,\mathbb R) \to H^2(G,\mathbb R)$ of the second bounded cohomology space to the ordinary second cohomology. It is also well-known and straightforward to prove that for every boundedly generated group $G$, the space $\widetilde{QH} (G)$ is finite dimensional.

To prove (d) we need the following lemma, which is a simplification of \cite[Corollary 1.7]{bau}.
\begin{lem}[Baudisch, \cite{bau}]\label{Baudisch}
Let  $G$ be an infinite superstable group.  Then there are subgroups $1=H_0 \lhd H_1 \lhd \dots \lhd H_n = G$ such that every quotient $H_{i+1}/H_{i}$ is either abelian or simple.
\end{lem}

On the other hand, we have the following.

\begin{lem}\label{g-normal}
Let $G$ be a group that contains a non-degenerate \he subgroup. Then every infinite subnormal subgroup of $G$ contains a non-degenerate \he subgroup.
\end{lem}

\begin{proof}
Clearly it suffices to prove the theorem for normal subgroups; then the general case follows by induction. Let $N\lhd G$.

By Theorem \ref{vf}, there exists an infinite elementary subgroup $E$ such that $E\h (G,Y)$ for some $Y\subseteq G$. If $|N\cap E|=\infty$,  then $N$ is finite by Proposition \ref{malnorm}, which contradicts our assumption. Thus $N\setminus E$ is non-empty. Let $a\in N\setminus E$ and let $g\in E$ be an element of infinite order.

Let $\widehat \d$ denote the metric on $E$ defined using $\Gamma (G, Y\sqcup E)$. Without loss of generality we can assume that $a\in Y$ (see Corollary \ref{he-indep})).  Take $f\in \langle g\rangle $ such that $\widehat\d(1,f)>50 D$, where $D=D(1,0) $ is given by Proposition \ref{sn}. Let $w=faf^{-1}a$. Clearly $w\in N$. On the other hand, the word $faf^{-1}a$ in the alphabet $Y\sqcup E$, where $f$ is interpreted as a letter from $E$, satisfies the conditions (W$_1$)--(W$_3$) of Lemma \ref{w} applied to $G$, $Y$, and the collection $\{ E \}$. Hence $w$ acts loxodromically on $\Gamma (G, Y\sqcup E)$. Moreover, the WPD condition can be verified for $w$ exactly in the same way as in the third paragraph of the proof of Theorem \ref{elemhe} (with $E$ in place of $H_\lambda $ and $Y$ in place of $X$). Now   Theorem \ref{wpd} applied to the group $N$ acting on $\Gamma (G, Y\sqcup E)$ yields an elementary subgroup $E_1$ containing $w$ such that $E_1\h N$. Similarly applying Theorem \ref{wpd} to the action of $G$, we obtain a maximal elementary subgroup $E_2$ of $G$ containing $w$, which is \he in $G$. Obviously $E_1\le E_2$. Thus if $N=E_1$, we get a contradiction with Proposition \ref{malnorm}. Hence $N\ne E_1$ and we are done.
\end{proof}

We now observe that part (c) of Theorem \ref{large} follows easily from Lemma \ref{Baudisch} and Lemma \ref{g-normal}. Indeed if $G$ was superstable, it would contain either infinite finite-by-abelian or infinite finite-by-simple subnormal subgroup by Lemma \ref{Baudisch}. The first case contradicts Lemma \ref{g-normal} and the first part of Theorem \ref{large} as no finite-by-abelian can be $SQ$-universal. In the second case we get a contradiction with the existence of free normal subgroups.
\end{proof}

\subsection{Subgroups in mapping class groups and $Out(F_n)$}\label{sec-MCG}

All theorems in this section are formulated for normal closures of a single element for simplicity. We leave the (obvious) generalization to the case of several elements to the reader.

We begin with a general result about normal closures of high powers of loxodromic WPD elements and its uniform version for acylindrical actions.

\begin{thm}\label{wpd-free}
Let $G$ be a group acting on a hyperbolic space $\X$.
\begin{enumerate}
\item[(a)] For every loxodromic WPD element $g\in G$, there exists $n\in \mathbb N$ such that the normal closure $\ll g^n\rr$ in $G$ is free.
\item[(b)] If the action of $G$ is acylindrical, then there exists $n\in \mathbb N$ such that for every loxodromic element $g\in G$, the normal closure $\ll g^n\rr$ in $G$ is free.
\end{enumerate}
Moreover, in both cases every non-trivial element of $\ll g^n\rr $ is loxodromic with respect to the action on $\X$.
\end{thm}

\begin{proof}
To prove (a), we recall that, by Proposition \ref{prop;Acyl_free} (a), there exists $n>0$ such that the  cyclic subgroup $\grp{g^n}$ is $200$-rotating with respect to the induced action of $G$ on a certain cone-off $C(\X)$ of $\X$. Hence the subgroup $\ll g^n\rr$ is a free product of cyclic groups by Corollary \ref{coro;app_wind}, i.e. it is a free group. The same argument with a reference to Proposition \ref{prop;Acyl_free} (b) proves (b). Finally, Theorem \ref{theo;app_wind} (b) applied to the action of $G$ on $C(\X)$ implies (in both (a) and (b)) that every element $h\in \ll g^n\rr$ is either loxodromic with respect to the action of $G$ on $C(\X)$ or is conjugate to a power of $g$. Clearly $h$ is loxodromic with respect to the action on $\X$ in both cases.
\end{proof}

In particular, Lemma \ref{actionMCG} allows us to apply this result to mapping class groups.

\begin{thm}\label{theo;MCG}
Let $\Sigma$ be a (possibly punctured) orientable closed surface. Then there exists $n$ such that for any pseudo-Anosov element  $g\in \calM\calC\calG(\Sigma)$, the normal closure of $g^{n}$ is free and purely pseudo-Anosov.
\end{thm}

\begin{proof}
If $\Sigma $ is exceptional (i.e., $3g+p-4\le 0$), then $\calM\calC\calG(\Sigma)$ is hyperbolic and acts acylindrically on its Cayley graph with respect to a finite generating set. If $\Sigma$ is non-exceptional, then $\calM\calC\calG(\Sigma)$ acts acylindrically on the corresponding hyperbolic curve complex (see Lemma \ref{actionMCG}). In both cases loxodromic elements with respect to the action are exactly the pseudo-Anosov elements. Thus Theorem \ref{wpd-free} immediately gives the result.
\end{proof}

Recall that a subgroup $H<\MCG$ is \emph{reducible} if it contains no pseudo-Anosov elements \cite{Iva92}.
In the spirit of some constructions of infinite periodic groups, we can also obtain the following.

  \begin{thm}\label{theo;MCG_period}
Let $\Sigma$ be a closed orientable surface, possibly with punctures.
 Then, there exists a quotient of its Mapping Class group $\pi:  \calM\calC\calG(\Sigma) \to Q$ such that,
\begin{enumerate}
 \item[(a)]   $\pi$ is injective on each reducible subgroup and
\item[(b)] for all element $g \in  \calM\calC\calG(\Sigma)$, either $\pi(g)$ has finite order, or $\pi(g)\in\pi(H)$ for some reducible subgroup $H<\MCG$.
\end{enumerate}
\end{thm}

To prove Theorem \ref{theo;MCG_period}, we construct by induction a sequence of quotients
using repeatedly the argument of Theorem \ref{theo;MCG}.

\begin{proof}
Observe that in the exceptional cases,  (i.e., when $3g + p - 4 \leq 0$),  $\calM\calC\calG(\Sigma)$ is hyperbolic.
Moreover, the reducible subgroups are the subgroups of the stabilizers of multicurves, which consist of finitely many conjugacy classes of finite or
virtually cyclic subgroups. Thus, the result is well known in this case.
 We assume  $3g + p - 4 > 0$.

 Let $(g_n)_{n\geq 1} $ be an enumeration of the pseudo-Anosov elements of   $\calM\calC\calG(\Sigma)$.  Let $Q_0 = \calM\calC\calG(\Sigma)$.  We want to prove that, for all $n\geq 1$, there is a quotient  $\pi_{n}:  Q_{n-1} \to Q_{n}$ injective on (the image of) each  reducible subgroup, and such that the image of $g_{n}$ in $Q_{n}$ is either of finite order or equals the image of a reducible element. Indeed, if such a quotient is found, The theorem holds with $Q=G/Q_\infty$ where
 $Q_\infty=\bigcap_{n\geq1} \ker \pi_n\circ\dots\circ\pi_1$.

Our induction hypothesis is the following. The group $Q_n$ acts acylindrically, co-boundedly on a hyperbolic graph $\mathcal{K}_n$, and the elliptic elements are precisely the  images of the reducible elements of  $\calM\calC\calG(\Sigma)$.

This is satisfied for $n=0$,  by theorems of Masur-Minsky, and Bowditch (recalled in Lemma \ref{actionMCG}).

Assume it is satisfied for $n-1$. Consider $g_n$, and its image $\bar g_n$ in $Q_{n-1}$. If it is  elliptic on  $\mathcal{K}_{n-1}$, then taking $Q_n = Q_{n-1}$ and $\pi_n$ to be the identity is suitable.

Assume then that  $\bar g_n$ is loxodromic in  $\mathcal{K}_{n-1}$ (the argument that we are going to give now is similar to that of  Theorem \ref{theo;MCG}, but with $\mathcal{K}_{n-1}$ replacing the curve complex).  The action of   $Q_{n-1}$ on  the graph $\mathcal{K}_{n-1}$ is acylindrical, therefore  by Proposition \ref{prop;SC_from_acyl}, we can choose $m$ so that the family of conjugates of  $\bar g_n^m$ satisfy the $(A_0,\epsilon_0)$-small-cancellation condition (the constants are those of  Proposition \ref{prop_sc_subgroup}).
Then,  Proposition \ref{prop_sc_subgroup} can be applied, which ensures that, for the constants defined there (which are universal),  the cone-off space  $\dot{\mathcal{K}}_{n-1} = C(\lambda \mathcal{K}_{n-1}, Q_{\bar g_n^m}, r_0)$ along the axis of $\bar g_n^m$ (and its conjugates)
is $\du$-hyperbolic and
 carries a  $2r_0>100\du$-separated very rotating family consisting of conjugates of $\bar g_n^m$.
The group generated by this family is denoted by $Rot_n$.

The action of $Q_{n-1}$ on  $\dot{\mathcal{K}}_{n-1}$ is still acylindrical, by  Proposition \ref{prop;cone_acyl}.

  Then  we define $Q_n = Q_{n-1}/ Rot_n$ and  $\mathcal{K}_n'= \dot{\mathcal{K}}_{n-1}/Rot_n$. By Proposition
  \ref{prop;quotient_hyp},
   $\mathcal{K}_n'$ is hyperbolic.  By construction, the action of $Q_n$ on  $\mathcal{K}_n'$ is also co-bounded. Also, by Theorem \ref{theo;app_wind}
any element of $Rot_n\setminus\{1\}$ is either conjugate to a power of $\bar g_n$, or acts loxodromically on $\dot{\mathcal{K}}_{n-1}$,
so $Rot_n\setminus\{1\}$ contains no element elliptic in $\mathcal{K}_{n-1}$.
It follows that the quotient map  $\pi_n:Q_{n-1}\ra Q_n$ is injective on the image of each reducible subgroups in $Q_{n-1}$.

Proposition  \ref{prop;quotient_isom}
  ensures that elliptic elements in the quotient are images of elliptic
  elements in the cone-off, namely elliptic elements on $\mathcal{K}_{n-1}$ or
elements conjugate in the maximal virtually cyclic group containing  $g_n$.

 Since we showed that the action of   $Q_{n-1}$ on  $\dot{\mathcal{K}}_{n-1}$ is acylindrical,   by Proposition  \ref{prop;quotient_acyl}, the action of  $Q_n$ on $\mathcal{K}_n'$ also is acylindrical. Finally, $\mathcal{K}_n'$ is not a graph but one can replace it by a graph $\mathcal{K}_n$ thanks to Lemma \ref{lem_graph}.
Clearly, $\mathcal{K}_n$ is hyperbolic, $Q_n$ still acts coboundedly and acylindrically on $\mathcal{K}_n$, and the elements elliptic in $\mathcal{K}'_n$ and $\mathcal{K}_n$  are the same.
\end{proof}

The next theorem is useful for proving results about subgroups of mapping class groups.

\begin{thm}\label{thm_MCG_HE}
Let $\Sigma$ be a (possibly punctured) closed orientable surface.
Let $G<\MCG$ be a subgroup, that is not virtually abelian.
Then $G$ has a finite index subgroup
having a quotient $Q$ such that
$Q$ contains a non-degenerate hyperbolically embedded cyclic subgroup.
\end{thm}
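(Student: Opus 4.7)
The plan is to proceed by induction on the complexity $\xi(\Sigma)=3g+p$ of the surface (where $g$ is the genus and $p$ the number of punctures). The base cases are those small-complexity surfaces for which $\MCG(\Sigma)$ is itself virtually abelian; then every subgroup is virtually abelian and the theorem is vacuous. For the inductive step, I would first replace $G$ by a torsion-free finite index subgroup $G_0 \le G$, which exists by Serre's classical construction (e.g.\ the kernel of the action on $H_1(\Sigma;\mathbb Z/3)$). Since $G$ is not virtually abelian, $G_0$ is not virtually abelian either. By Ivanov's version of the Tits alternative / Nielsen--Thurston classification, exactly one of the following holds: (i) $G_0$ contains a pseudo-Anosov element, or (ii) $G_0$ is reducible, i.e.\ fixes a nonempty multicurve $\mathcal C$.

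In case (i), pick a pseudo-Anosov $g \in G_0$. By Lemma \ref{actionMCG}, $\MCG(\Sigma)$ acts acylindrically on the curve complex $\mathcal C(\Sigma)$, and therefore so does $G_0$. Hence $g$ is a loxodromic WPD element for the $G_0$-action, so by Theorem \ref{wpd} (applied to $G_0$) the maximal elementary subgroup $E_{G_0}(g) \hookrightarrow_h G_0$. Since $G_0$ is torsion-free and $E_{G_0}(g)$ is virtually cyclic, $E_{G_0}(g)$ is infinite cyclic. It is non-degenerate in $G_0$ because $G_0$ is not virtually cyclic (otherwise $G$ would be virtually abelian). Taking $Q := G_0$ with the trivial quotient map finishes this case.

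In case (ii), pass to a further finite-index subgroup $G_1 \le G_0$ that fixes every component of $\mathcal C$ and preserves setwise (with orientation) each complementary piece $\Sigma_1,\dots,\Sigma_k$. There is a natural restriction homomorphism $\pi \colon G_1 \to \prod_i \MCG(\Sigma_i)$ whose kernel is contained in the free abelian group of Dehn twists along curves of $\mathcal C$, hence is abelian. If every factor projection $\pi_i(G_1)$ were virtually abelian, then $\pi(G_1)$ would be virtually abelian, so $G_1$ would be an extension of a virtually abelian group by an abelian one, hence virtually solvable; by the Birman--Lubotzky--McCarthy theorem, virtually solvable subgroups of $\MCG$ are virtually abelian, contradicting the non-virtual-abelianness of $G_1$. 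So some $\pi_i(G_1) \le \MCG(\Sigma_i)$ is not virtually abelian, and $\xi(\Sigma_i) < \xi(\Sigma)$. By the inductive hypothesis, $\pi_i(G_1)$ contains a finite index subgroup $H$ admitting a surjection $\psi \colon H \twoheadrightarrow Q$ with $Q$ containing a non-degenerate hyperbolically embedded cyclic subgroup. Then $\pi_i^{-1}(H)$ has finite index in $G_1$, hence in $G$, and the composition $\psi \circ (\pi_i|_{\pi_i^{-1}(H)}) \colon \pi_i^{-1}(H) \twoheadrightarrow Q$ provides the required quotient.

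The main obstacle is the bookkeeping in the reducible case: one must carefully set up the induction so that the complexity of each $\Sigma_i$ strictly drops, and one must argue that non-virtual-abelianness survives the projection $\pi$ despite its nontrivial (albeit abelian) kernel, which is precisely where the Birman--Lubotzky--McCarthy theorem enters. The pseudo-Anosov case, by contrast, reduces immediately to Theorem \ref{wpd} once one observes that acylindricity of the $\MCG$-action on $\mathcal C(\Sigma)$ is inherited by any subgroup, and that a torsion-free reduction turns the virtually cyclic $E_{G_0}(g)$ into an honestly cyclic subgroup.
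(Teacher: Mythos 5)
Your argument is, in all essentials, the paper's own proof: torsion-free reduction, the pseudo-Anosov case via acylindricity of the action on the curve complex (Lemma \ref{actionMCG}) together with Theorem \ref{wpd}, and the reducible case via Ivanov's theorem, the restriction homomorphism with abelian (twist) kernel, the Birman--Lubotzky--McCarthy/Tits alternative to exclude the possibility that all projections are virtually abelian, and induction on complexity.

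The one genuine issue is your treatment of the base cases. You declare them to be ``those small-complexity surfaces for which $\MCG$ is itself virtually abelian,'' where the statement would be vacuous. But the exceptional surfaces with $3g+p-4\le 0$ also include the torus, the once-punctured torus and the four-punctured sphere, whose mapping class groups are virtually free and \emph{not} virtually abelian; for these the theorem has content (they contain non-virtually-abelian subgroups, e.g.\ free groups of rank $2$). These surfaces are covered neither by your base case nor by your inductive step: case (i) invokes Lemma \ref{actionMCG}, which is only available when $3g+p-4>0$ (the curve complex is degenerate otherwise), and in case (ii) cutting produces only pieces with finite mapping class groups, so the induction returns nothing. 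The fix is exactly the paper's opening remark: in these exceptional cases $\MCG$ is hyperbolic (finite or virtually free), so a subgroup that is not virtually cyclic contains a loxodromic WPD element for the action on a Cayley graph of $\MCG$, and Theorem \ref{wpd} applies directly. With that patch your proof is complete and coincides with the one in the paper.
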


\begin{proof}
Suppose that our surface has genus $g$ and $p\ge 0$ punctures. The proof is by induction on the complexity. We first take care of surfaces
for which $3g+p-4\leq 0$. In these cases,
$\MCG$ is finite for $(g,p)\in\{(0,0),(0,1),(0,2),(0,3)\}$
and virtually free for $(g,p)\in\{(0,4),(1,0),(1,1)\}$.
The result is thus clear in these cases (see for instance Theorem \ref{WPD}).

Assume now that $3g+p-4>0$.
Since $\MCG$ is virtually torsion free, we can assume that $G$ is torsion-free.
If $G$ contains a pseudo-Anosov element, then by Theorem \ref{WPD} and Lemma \ref{actionMCG}
$G$ contains a hyperbolically embedded infinite cyclic subgroup (it is proper since $G$ is not cyclic). Note that we use here the well-known (and easy to prove) fact that a torsion free virtually cyclic group is cyclic.

If $G$ does not contain a pseudo-Anosov element, we use Ivanov's theorem which states that any subgroup of $\MCG$ containing no pseudo-Anosov element is either finite, or preserves a multicurve \cite{Iva92}.
The stabilizer of a multicurve has a finite index subgroup $R_0$
such that there is a homomorphism $\psi:R_0\ra\prod_i \calM\calC\calG(\Sigma_i)$, where $\ker\psi$ is abelian,
and $\Sigma_i$ are surfaces with lower complexity. 
Denote by $G_i$ the natural projection of $\psi(G\cap R_0)$ to $\calM\calC\calG(\Sigma_i)$.
If all the groups $G_i$ are virtually abelian, then $G$ is virtually solvable, and
hence it is virtually abelian
by the Tits alternative for $\MCG$ \cite{Iva92}.
Otherwise, by induction some $G_i$ has a finite index subgroup
having a quotient $Q$ satisfying the conclusion of the theorem,
and the result follows.
\end{proof}

We will see below that Theorem \ref{thm_MCG_HE} implies that a subgroup
of $\MCG$ that is not virtually abelian is SQ-universal. This allows to reprove various (well-known) non-embedding theorems for lattices in mapping class groups. Compare the following corollary to \cite{Farb_Masur}.

  \begin{cor}\label{coro;HJI}
Let $\Sigma$ be a (possibly punctured) closed orientable surface. Then every subgroup of $\MCG$ is either virtually abelian or $SQ$-universal. In particular, every homomorphism from an irreducible lattice in a connected semisimple Lie group of $\mathbb R$-rank at least $2$ with finite center to $\MCG $ has finite image.
  \end{cor}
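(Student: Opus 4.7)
The plan is to split the corollary into its two halves and handle them separately. For the first half, suppose $G \le \MCG$ is a subgroup that is not virtually abelian. I would apply Theorem \ref{thm_MCG_HE} directly: it gives a finite-index subgroup $G^\prime \le G$ which has a quotient $Q$ containing a non-degenerate hyperbolically embedded subgroup. By part (a) of Theorem \ref{large-intr}, $Q$ is $SQ$-universal. I would then invoke the elementary fact (already recorded in the paragraph preceding Corollary \ref{SQ-sub-MCG}) that a group is $SQ$-universal whenever it admits an $SQ$-universal quotient, and also whenever it contains a finite-index $SQ$-universal subgroup. Applying the first of these to $G^\prime \twoheadrightarrow Q$ shows $G^\prime$ is $SQ$-universal, and applying the second to $G^\prime \le G$ finishes the first half.

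For the second half, let $\Gamma$ be an irreducible lattice in a connected semisimple Lie group of $\mathbb R$-rank at least $2$ with finite center, and let $\psi \colon \Gamma \to \MCG$ be a homomorphism with image $H$. By the Margulis normal subgroup theorem, every normal subgroup of $\Gamma$ is either finite or of finite index; since $\Gamma$ is finitely generated and residually finite, there are only countably many such subgroups, and therefore only countably many normal subgroups of $H$. On the other hand, any $SQ$-universal group admits uncountably many pairwise non-isomorphic quotients, as recalled in the discussion following Theorem \ref{large-intr}, and hence has uncountably many distinct normal subgroups. Thus $H$ cannot be $SQ$-universal, and the first half of the corollary forces $H$ to be virtually abelian.

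To upgrade ``virtually abelian'' to ``finite'', let $A \triangleleft H$ be a finite-index abelian subgroup and set $\Gamma_0 = \psi^{-1}(A)$, which is normal of finite index in $\Gamma$. Then $[\Gamma_0,\Gamma_0]\subseteq \ker(\psi|_{\Gamma_0})$. Since $\Gamma_0$ is itself an irreducible higher-rank lattice, Margulis's theorem applied to the normal subgroup $[\Gamma_0,\Gamma_0] \triangleleft \Gamma_0$ yields the dichotomy: either $[\Gamma_0,\Gamma_0]$ is finite, which would make $\Gamma_0$ virtually abelian and contradict the fact that irreducible higher-rank lattices contain non-abelian free subgroups, or $[\Gamma_0,\Gamma_0]$ has finite index in $\Gamma_0$. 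In the latter case, $A$ is a quotient of the finite group $\Gamma_0/[\Gamma_0,\Gamma_0]$, so $A$, and hence $H$, is finite. I expect no genuine obstacle in this argument; the only point requiring a small amount of care is the passage from ``virtually abelian image'' to ``finite image'', which is handled by the straightforward second application of Margulis just outlined.
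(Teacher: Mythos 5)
Your proof is correct and follows essentially the same route as the paper: Theorem \ref{thm_MCG_HE} plus the SQ-universality of groups with non-degenerate hyperbolically embedded subgroups for the dichotomy, and the count of normal subgroups via Margulis versus the uncountably many quotients of an SQ-universal group for the lattice statement. The only difference is that you spell out the final passage from ``virtually abelian image'' to ``finite image'' (which the paper dismisses with ``by the same Margulis theorem''), and your argument for that step is sound.
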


\begin{proof}
Let $G\le MCG$. Suppose that $G$ is not virtually abelian.  By Theorem \ref{thm_MCG_HE}, $G$ has a finite index subgroup $G_0$ having a quotient $Q$ containing a non-degenerate \he subgroup. By Theorem \ref{large} proved below, $Q$ (and hence $G_0$) is SQ-universal. By a theorem of P. Neumann \cite{PNeu} (who attributes the result to Ph. Hall), a group containing an SQ-universal subgroup of finite index is itself SQ-universal. Hence $G$ is SQ-universal.

The claim about lattices easily follows from the Margulis normal subgroup theorem. Indeed the latter says that every normal subgroup of an irreducible lattice $\Gamma $ in a connected semisimple Lie group of $\mathbb R$-rank at least $2$ is either finite or of finite index. In particular, $\Gamma $ contains only countably many normal subgroups. On the other hand, every countable SQ-universal group $G$ has uncountably many normal subgroups. Indeed, every single quotient of $G$ has only countably many finitely generated subgroups while the number of isomorphism classes of finitely generated groups is continuum.  Thus the definition of SQ-universality implies that $G$ has continuously many quotients. Thus the image of $\Gamma $ in $\MCG $ is virtually abelian and consequently it is finite finite (say, by the same Margulis theorem).
\end{proof}

Similarly to Theorem  \ref{theo;MCG}, we obtain the following.

\begin{thm}\label{theo;Out}
  Let $Out(F_n)$ be the outer automorphism group of a free group. For any  iwip element $g\in Out(F_n)$, there exists $m$ such that the normal closure of $g^{m}$ is free and purely iwip.
\end{thm}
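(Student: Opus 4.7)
The plan is to mirror the proof of Theorem \ref{theo;MCG} for mapping class groups, using Theorem \ref{outfn} in place of Theorem \ref{ex-MCG}. First I would fix an iwip element $g \in Out(F_n)$ and apply Theorem \ref{outfn}: the maximal virtually cyclic subgroup $E(g)$ containing $g$ is hyperbolically embedded in $Out(F_n)$, and for $\alpha = 100$ there exists $m \in \mathbb{N}$ such that the cyclic subgroup $\langle g^m \rangle$ is $100$-rotating for the action of $Out(F_n)$ on the corresponding Bestvina-Feighn hyperbolic complex $\X$ associated to $g$.

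Given this rotating family of conjugates of $\langle g^m \rangle$, I would then invoke Theorem \ref{theo;app_wind}: the normal closure $N = \langle\langle g^m \rangle\rangle^{Out(F_n)}$ decomposes as a free product
\[
N \;=\; \underset{t \in T}{\ast}\; \langle g^m \rangle^{t}
\]
for some subset $T \subseteq Out(F_n)$. Since each factor is infinite cyclic, $N$ is a free group, which handles the first half of the statement. Moreover, Theorem \ref{theo;app_wind}(b) tells us that every element $h \in N \setminus \{1\}$ is either conjugate in $N$ to a power of $g^m$, or else acts loxodromically on $\X$.

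For the ``purely iwip'' conclusion I would handle the two cases separately. If $h$ is conjugate to a nontrivial power of $g^m$, then $h$ is iwip because the property of being iwip is invariant under taking nonzero powers and under conjugation. The remaining case, where $h$ acts loxodromically on the Bestvina-Feighn complex $\X$, is the main obstacle: I would need to argue that every element of $Out(F_n)$ acting loxodromically on $\X$ is iwip. This will follow from the construction of $\X$ in \cite{BFe}, where reducible automorphisms (those fixing the conjugacy class of a proper free factor) preserve extra combinatorial structure that forces them to act elliptically (or at least not loxodromically) on $\X$; equivalently, the projection/subsurface-like machinery that makes $g$ loxodromic and WPD detects exactly the iwips among $N$-elements. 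The careful bookkeeping here is likely the subtle part of the proof, because unlike the mapping class group case, where pseudo-Anosov equals loxodromic on the curve complex, for $Out(F_n)$ the Bestvina-Feighn complex is tailored to a particular conjugacy class of iwip, and one must verify that no reducible element of the specific normal subgroup $N$ can end up acting loxodromically on this complex. Once this verification is in place, combining the two cases yields that $N$ is purely iwip, completing the proof.
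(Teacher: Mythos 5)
Your proposal is correct and follows essentially the same route as the paper: apply Theorem \ref{outfn} to get a $100$-rotating family of conjugates of $\langle g^m\rangle$ on the Bestvina--Feighn complex, invoke Theorem \ref{theo;app_wind} for the free product decomposition and the loxodromic/conjugate dichotomy, and close the ``purely iwip'' case by the fact that loxodromic elements on that complex are iwip. The one step you flag as the remaining obstacle is exactly what the paper disposes of by citing \cite[Proposition 4.24]{BFe}.
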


\begin{proof}
We consider the action of $Out(F_n)$ on the free factor complex originally introduced by Hatcher and Vogtmann \cite{HaVo}. This complex is hyperbolic and every element of $Out(F_n)$ acting loxodromically also satisfies WPD \cite[Theorem 9.3]{BFe2}. Recall also that all loxodromic elements with respect to this action are iwip (by definition of the free factor complex, an element that is not iwip has a finite orbit). In these settings, theorem follows immediately from Theorem \ref{wpd-free}.\end{proof}

We also have a weak version of Theorem \ref{thm_MCG_HE} for $\Out(F_n)$.
\begin{thm}
  Let $G<\Out(F_n)$ be a subgroup containing an iwip element.
If $G$ is not virtually cyclic, then $G$ is SQ-universal.
\end{thm}

\begin{proof}
  Let $g\in G$ be an iwip element.
  As above, we use the fact that $\Out(F_n)$ acts on the free factor complex $\X$ in which $g$ acts loxodromically with the WPD property.
  This also holds for the action of $G$ on $\X$. By Theorem \ref{wpd},  $G$ contains a hyperbolically embedded virtually cyclic subgroup,
  so $G$ is SQ-universal by Theorem \ref{large} below.
\end{proof}

\subsection{Inner amenability and $C^\ast $-algebras}

The main goal of this section is to characterize groups with non-degenerate hyperbolically embedded subgroups that are inner amenable or have simple reduced $C^\ast $-algebra with unique trace.

\begin{thm}\label{cstar}
Suppose that a group $G$ contains a non-degenerate hyperbolically embedded subgroup. Then the following conditions are equivalent.
\begin{enumerate}
\item[(a)] $G$ has no nontrivial finite normal subgroups.
\item[(b)] $G$ contains a proper infinite cyclic hyperbolically embedded subgroup.
\item[(c)] $G$ is ICC.
\item[(d)] $G$ is not inner amenable.
\end{enumerate}
If, in addition, $G$ is countable, the above conditions are also equivalent to
\begin{enumerate}
\item[(e)] The reduced $C^\ast $-algebra of $G$ is simple.
\item[(f)] The reduced $C^\ast $-algebra of $G$ has  a unique normalized trace.
\end{enumerate}
\end{thm}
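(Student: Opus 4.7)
The plan is to first establish the easy equivalences $(a) \Leftrightarrow (b) \Leftrightarrow (c)$ using the maximal finite normal subgroup $K(G)$ furnished by Theorem \ref{vf}(a), then to carry out the main construction showing $(a)$ forces an \emph{Akemann--Lee structure} on $G$, from which $(d)$, $(e)$, $(f)$ will follow via \cite{AL}, and finally to close the loop by deriving direct contradictions with $(d)$, $(e)$, $(f)$ from the existence of a non-trivial $K(G)$.

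For the first paragraph, the implication $(a) \Rightarrow (b)$ is immediate from Theorem \ref{vf}(c) with $n=1$: when $K(G)=\{1\}$ it produces a hyperbolically embedded $H \cong F_1 \times K(G) = \mathbb Z$, and $H$ must be a proper subgroup because the presence of a non-degenerate hyperbolically embedded subgroup prevents $G$ from being virtually cyclic. Conversely $(b) \Rightarrow (a)$ because by Theorem \ref{vf}(b) any infinite hyperbolically embedded subgroup contains $K(G)$, and infinite cyclic groups are torsion-free. For $(a) \Leftrightarrow (c)$, any non-trivial element of $K(G)$ has finite conjugacy class, so $(a)^c \Rightarrow (c)^c$. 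In the other direction, if $g\in G\setminus\{1\}$ has finite conjugacy class then $|G:C_G(g)|<\infty$, so $g$ commutes with some positive power of every loxodromic WPD element $h\in G$; by Corollary \ref{elemrem} this forces $g\in E(h)$ for every such $h$, hence $g\in\bigcap_h E(h)=K(G)$, showing $(c)^c \Rightarrow (a)^c$.

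The main obstacle is the implication $(a)\Rightarrow$ \emph{Akemann--Lee property}, namely the existence of a non-abelian free normal subgroup $N\lhd G$ with $C_G(N)=\{1\}$. Assuming $K(G)=\{1\}$, Theorem \ref{vf}(c) with $n=2$ yields a hyperbolically embedded $F\cong F_2$. Using Corollary \ref{cor-he-vrf} I would select a normal subgroup $M\lhd F$ of rank at least two (for instance the normal closure in $F$ of $\{a^k,b^k\}$ for $k$ large) that avoids the finite exceptional set, so the conjugates of $M$ form a very rotating family. Theorem \ref{theo;app_wind} then identifies $N:=\ll M\rr^G$ with the free product $\ast_{t\in T}M^t$, a free non-abelian group, and shows that $N$ contains many loxodromic elements for the action of $G$ on the associated hyperbolic space. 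Combining this with Corollary \ref{elemhe1} (applied inside $N$ to produce non-commensurable loxodromic WPD elements $g_1,g_2,\dots\in N$) and with Lemma \ref{elem1} (which gives $C_G(g_i)\le E(g_i)$), I obtain
\[
C_G(N)\;\le\;\bigcap_{i} C_G(g_i)\;\le\;\bigcap_i E(g_i)\;=\;K(G)\;=\;\{1\}.
\]
The last equality holds once enough non-commensurable $g_i$'s are taken, by the defining property of $K(G)$. This step is the technical heart of the theorem; the main care is choosing enough loxodromic WPD elements inside $N$ itself, not merely in $G$.

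Given the Akemann--Lee property, the implications $(a)\Rightarrow (d),(e),(f)$ follow from the corresponding theorems of Akemann and Lee in \cite{AL}. For the converses, assume $K(G)\neq\{1\}$. The element $p=|K(G)|^{-1}\sum_{n\in K(G)} u_n\in C^*_r(G)$ is a central projection (central because $K(G)$ is normal, projection because $K(G)$ is a finite group), and it is neither $0$ nor $1$; the two-sided ideal it generates is proper, so $C^*_r(G)$ is not simple, proving $(d)\Rightarrow (a)$. Since $K(G)$ is finite hence amenable, the quotient map $G\twoheadrightarrow G/K(G)$ extends to a $\ast$-homomorphism $C^*_r(G)\to C^*_r(G/K(G))$; pulling back the canonical trace gives a second trace on $C^*_r(G)$ distinct from the standard one, proving $(e)\Rightarrow (a)$. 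Finally, the normalized counting measure on $K(G)\setminus\{1\}$ is a conjugation-invariant mean on $G\setminus\{1\}$ (conjugation permutes $K(G)$ since $K(G)\lhd G$), so $G$ is inner amenable, proving $(f)\Rightarrow (a)$.
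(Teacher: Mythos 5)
Most of your proposal is sound. The cycle of implications among (a), (b), (c) is correct (your direct argument for (b) $\Rightarrow$ (a) via Theorem \ref{vf}(b) and torsion-freeness of $\mathbb Z$ is a legitimate shortcut to the paper's (b) $\Rightarrow$ (c)); the three converses (d), (e), (f) $\Rightarrow$ (a) are exactly the standard arguments the paper leaves to the reader; and your route to the Akemann--Lee structure, though heavier than the paper's (which simply takes $F=\ll c^n\rr^G$ for the cyclic subgroup $C=\langle c\rangle$ from (b), gets freeness from Theorem \ref{CEP}, and kills $C_G(F)$ with two applications of Proposition \ref{malnorm}), can be made to work. One imprecision there: $\bigcap_i E(g_i)=K(G)$ is not the definition of $K(G)$, which is the intersection over \emph{all} loxodromic WPD elements; what you actually need is that $C_G(N)$ is a normal subgroup which is finite because $C_G(N)\le E(g_1)\cap E(g_2)$ for two non-commensurable loxodromic WPD elements of $N$ (Proposition \ref{malnorm}), whence $C_G(N)\le K(G)=\{1\}$.

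The genuine gap is the implication (a) $\Rightarrow$ (f). The Akemann--Lee theorem (\cite{AL}, quoted as Lemma \ref{Ake1}) yields simplicity and uniqueness of the trace of $C^*_r(G)$ from a $C^*$-simple normal subgroup with trivial centralizer; it says nothing about inner amenability, which concerns invariant means for the conjugation action and is not a consequence of any property of the reduced $C^*$-algebra established here. The paper itself notes that non-inner-amenability was open even for non-elementary ICC hyperbolic groups -- which certainly possess free normal subgroups with trivial centralizer -- so this implication cannot be outsourced to \cite{AL}. This is precisely why the paper's proof devotes its final two pages to a separate argument: using Lemma \ref{suitable} it produces four infinite cyclic subgroups with $\{H_1,\dots,H_4\}\h (G,X)$, partitions $G\setminus\{1\}$ into sets $A$ and $B$ according to whether some geodesic from $1$ in $\Gamma(G,X\sqcup\mathcal H)$ begins with an $H_1$- or $H_2$-component, and proves (Lemmas \ref{A1A2}--\ref{B1B2}) that $A^{h_3},A^{h_4},B^{h_1},B^{h_2}$ are pairwise disjoint for suitable $h_i\in H_i$ with $\widehat\d_i(1,h_i)$ large. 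This Tarski-type paradoxical decomposition forces any conjugation-invariant mean to satisfy $1\ge 2\mu(A)+2\mu(B)=2$, a contradiction. As written, your proof establishes (f) $\Rightarrow$ (a) but never (a) $\Rightarrow$ (f), so condition (f) is not actually tied into the equivalence.
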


The rest of the section is devoted to the proof of Theorem \ref{cstar} so we assume that $G$ contains a non-degenerate hyperbolically embedded subgroup.

We will show first that (c) $\Rightarrow$ (a) $\Rightarrow$ (b) $\Rightarrow$ (c). The implication (c) $\Rightarrow $ (a) is obvious. Further by Theorem \ref{vf} applied in the case $n=1$ we obtain (a) $\Rightarrow$ (b). Let us show that (b) $\Rightarrow$ (c). Let $C=\langle c\rangle \h G$ be an infinite cyclic subgroup and let  $g\in G\setminus \{1\}$. We want to show that the conjugacy class of $g$ in $G$ is infinite. If the set $\{ g^{c^n} \mid n\in \mathbb Z\}$ is infinite, we are done. Otherwise $g^{c^m}=g$ for some $m\in \mathbb N$. Hence $C^g\cap C$ is infinite and $g\in C$ by Proposition \ref{malnorm}. Now if $g^h=g^f$ for some $f,h\in G$, then $g^{fh^{-1}}=g$ and we similarly obtain $fh^{-1}\in C$, i.e., $f$ and $h$ belong to the same right coset of $C$. As every group containing a non-degenerate hyperbolically embedded subgroup is non-elementary (say, by Theorem \ref{vf}), the index of $C$ in $G$ is infinite. Hence the conjugacy class of $g$ in $C$ is infinite. Thus conditions (a)-(c) are equivalent.

To relate (a)-(c) to properties of $C^\ast $-algebras we need the following results.

\begin{lem}[{\cite[Theorem 3]{AL}}]\label{Ake1}
If a countable group $G$ contains a $C^\ast $-simple normal subgroup $N$ with trivial centralizer, then $G$ is $C^\ast $-simple.
\end{lem}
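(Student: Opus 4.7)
The plan is to prove $G$ is $C^*$-simple via a Powers-type averaging argument, using the canonical embedding $C_r^*(N) \hookrightarrow C_r^*(G)$ and the faithful $G$-equivariant conditional expectation $E\colon C_r^*(G)\to C_r^*(N)$ defined on group elements by $E(\lambda(g))=\lambda(g)$ if $g\in N$ and $E(\lambda(g))=0$ otherwise. Recall Dixmier's criterion: it is enough to show that for every $a\in C_r^*(G)$, suitable convex averages $\sum_i t_i\lambda(h_i)a\lambda(h_i)^{-1}$ with $h_i\in G$ converge in norm to $\tau(a)\cdot 1$, where $\tau$ is the canonical trace. Since $N$ is normal, conjugation by $G$ induces an action $\alpha$ of $G$ on $C_r^*(N)$ by $*$-automorphisms, and this action is faithful because $C_G(N)=\{1\}$.

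First I would handle the ``$N$-part'' $E(a)\in C_r^*(N)$. By $C^*$-simplicity of $N$, $N$ has the Powers averaging property inside itself: there exist convex combinations $\sum_j s_j\lambda(n_j)E(a)\lambda(n_j)^{-1}$, $n_j\in N$, converging in norm to $\tau_N(E(a))\cdot 1=\tau(a)\cdot 1$. Since $N\lhd G$, these same averages preserve the decomposition $a=E(a)+(a-E(a))$: they act by $\alpha$-conjugation on $E(a)$, and they map $a-E(a)$, which lies in $\ker E$, back into $\ker E$. Thus after this first averaging we may assume $E(a)=\tau(a)\cdot 1$ and the problem reduces to driving the ``off-diagonal'' component $b=a-E(a)$ to zero while leaving the scalar part $\tau(a)\cdot 1$ fixed.

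The principal obstacle is killing $b$, which is supported on $G\setminus N$; this is where the trivial centralizer hypothesis enters decisively. For each coset $gN$ appearing nontrivially in the support of $b$, the $N$-orbit $\{ngn^{-1}:n\in N\}$ is infinite: indeed, if it were finite then a finite-index subgroup of $N$ would centralize $g$, contradicting $C_G(N)=\{1\}$ after a short argument. In operator-algebraic terms, the induced action $\alpha$ on $C_r^*(N)$ is \emph{properly outer}: for every $g\in G\setminus\{1\}$, $\alpha_g$ is not implemented by a unitary in $C_r^*(N)$, because the fixed-point algebra of $\alpha_g$ is a proper corner of the simple algebra $C_r^*(N)$. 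One then invokes (or reproves by hand, via Kishimoto-type averaging across the $N$-orbits of the coset representatives) the fact that a reduced crossed-type product of a simple $C^*$-algebra by a properly outer action of a discrete group is again simple, which suffices to average $b$ to zero in norm without disturbing $\tau(a)\cdot 1$.

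The hardest step is the last: simultaneously averaging inside $N$ to make $E(\,\cdot\,)$ scalar and across $G/N$ to annihilate $\ker E$, without the two procedures interfering. The cleanest implementation is to iterate: fix a small $\varepsilon>0$, apply the $N$-averaging to reduce $\|E(a)-\tau(a)\cdot 1\|$ below $\varepsilon$, then apply a properly-outer averaging with $N$-elements (which commute with the trace and preserve scalars) to reduce $\|a-E(a)\|$ below $\varepsilon$, and iterate. Convergence rests on the uniform control of Powers averaging inside $C_r^*(N)$, together with the $G$-orbit infiniteness guaranteed by $C_G(N)=\{1\}$.
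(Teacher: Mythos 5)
First, be aware that the paper offers no proof of this lemma: it is quoted verbatim from Akemann--Lee \cite{AL}, so your attempt has to be measured against the argument in the literature rather than against anything in the text. Your overall architecture is the right one and is essentially that of Akemann--Lee and its descendants: regard $C^\ast_r(G)$ as a reduced (twisted) crossed product $C^\ast_r(N)\rtimes_r(G/N)$ via the conditional expectation $E$, show that the conjugation action of $G/N$ on $C^\ast_r(N)$ is outer using the trivial-centralizer hypothesis, and invoke the Kishimoto--B\'edos theorem that a reduced crossed product of a simple $C^\ast$-algebra by an outer action of a discrete group is simple. With that theorem in hand the clean conclusion is by ideals, not by averaging: if $0\neq I\lhd C^\ast_r(G)$, then $I\cap C^\ast_r(N)\neq 0$, and this intersection is a nonzero $G$-invariant ideal of the simple algebra $C^\ast_r(N)$, hence contains $1$.

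As written, however, your proof has genuine gaps. (i) ``By $C^\ast$-simplicity of $N$, $N$ has the Powers averaging property'' is the Haagerup--Kennedy theorem; it is a deep result, not a recollection, and the whole Dixmier-averaging framework it supports is unnecessary here. (ii) Your justification of outerness is not correct as stated: that the fixed-point algebra of $\alpha_g$ is ``a proper corner'' of a simple algebra is not a criterion for outerness. The correct argument is that if $\mathrm{Ad}\,\lambda(g)$ were inner on $C^\ast_r(N)$, some element of the coset $gN$ would have finite $N$-conjugacy orbit; the set $\Delta$ of elements of $G$ with finite $N$-conjugacy orbit is a normal subgroup meeting $N$ trivially (because $C^\ast$-simple groups are ICC), so $[\Delta,N]\le\Delta\cap N=1$ and $\Delta\le C_G(N)=1$. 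Note that your ``short argument'' genuinely needs the ICC property of $N$: an element centralizing a finite-index subgroup of $N$ does not by itself contradict $C_G(N)=1$. (iii) The decisive step is unjustified: infiniteness of the $N$-conjugacy orbit of $g$ does not let you drive $\|a-E(a)\|$ to $0$ by convex $N$-averages (already for infinite conjugacy classes in amenable groups such averages stay bounded away from $0$ in norm), and Kishimoto's theorem asserts simplicity of the crossed product, not a norm-averaging (Dixmier) property---you are conflating the two. To run an averaging proof one needs a genuine Powers-type paradoxical decomposition inside $N$, which is exactly why Akemann--Lee work with a free normal subgroup; for the statement in the generality given here, one should instead use the ideal-intersection argument above.
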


Suppose now that $G$ satisfies (b). Let $C=\langle c\rangle \h G$ be an infinite cyclic subgroup. By Theorem \ref{CEP}, there exists $n\in \mathbb N$ such that the normal closure of $c^n$ in $G$ is free. We denote this normal closure by $F$. Observe that $F$ satisfies the assumptions of Lemma \ref{Ake1}. Indeed for every $g\in C_G(F)$ we have $[g,c^n]=1$. Hence by Proposition \ref{malnorm}, we have $g\in C$. Further let us take any $a\in G\setminus C$. Since $(c^n)^a\in F$, we have $[g, (c^n)^a]=1$, which can be rewritten as $[g^{a^{-1}}, c^n]=1$. Again by Proposition \ref{malnorm} we obtain $g^{a^{-1}}\in C$ or, equivalently, $g\in C^a$. One more application of Proposition \ref{malnorm} gives $|g| \le |C\cap C^a|<\infty $, which is only possible if $g=1$ as $C\cong \mathbb Z$. Thus we obtain (e) and (f) by Lemma \ref{Ake1}. On the other hand it is well-known that discrete group with simple reduced $C^\ast $-algebra (or with a non-unique trace) can not have a non-trivial finite (and even amenable) normal subgroup (see, e.g., \cite{BeHa}). Thus either of (e), (f) is equivalent to to (a)-(c).

Finally let us prove that (d) is equivalent to the other conditions. The implication (d) $\Rightarrow$ (c) is obvious since every group $G$ with a finite nontrivial conjugacy class $g^G$ admits the natural conjugation invariant finitely additive measure on $G\setminus \{ 1\}$ such that $\mu (G\setminus \{1\})=1$. Namely, given $A\subseteq G\setminus\{ 1\}$, we let $\mu (A)=|A\cap [g]|/|[g]|$, where $[g]$ is the conjugacy class of $g$ in $G$.

To complete the proof of the theorem, we will prove the implication (a) $\Rightarrow$ (d). The proof is more technical and uses a variant of Tarski paradoxical decomposition.

By Lemma \ref{suitable} there exist infinite cyclic subgroups $H_1, \ldots , H_4\le  G$ such that $\{ H_1, \ldots , H_4\} \h (G,X)$ for some $X\subseteq G$. Note that by Proposition \ref{malnorm}, we have
\begin{equation}\label{HiHj}
H_i\cap H_j=\emptyset
\end{equation}
for every $i\ne j$, $i,j\in \{ 1,2,3,4\}$.

We define $\mathcal H$ in the usual way by $$\mathcal H=\bigsqcup _{i=1}^4 (H_i ).$$ Denote by $A$ the set of all elements $g\in G\setminus \{ 1\}$ satisfying the following property: there exists a geodesic $\gamma $ going from $1$ to $g$ in $\G $ such that the first edge of $\gamma $ is an $H_i $-component for $i\in \{ 1, 2\}$. Further let $B=G\setminus (A\cup \{ 1\} )$. Let $D=D(1,0)$ be the constant provided by Proposition \ref{sn}. Since for every $i\in \{ 1, \ldots, 4\}$, $H_i$ is infinite and hyperbolically embedded, there exists $h_i \in H_i $ such that
\begin{equation}\label{h1h4}
\widehat\d_i (1, h_i)>6D.
\end{equation}
Let $$A_1=A^{h_3},\;\; A_2=A^{h_4},\;\; B_1=B^{h_1},\;\; B_2=B^{h_2}.$$ We are going to show that the sets $A_1$, $A_2$, $B_1$, $B_2$ are pairwise disjoint.

\begin{lem}\label{A1A2}
$A_1\cap A_2=\emptyset $.
\end{lem}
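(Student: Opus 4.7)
The plan is to derive a contradiction by building a geodesic hexagon in $\Gamma(G,X\sqcup\mathcal H)$ whose four single-edge sides are $H_3$- or $H_4$-components of length exceeding $6D$, then applying Proposition~\ref{sn}. Suppose for contradiction that $g\in A_1\cap A_2$, so $g=h_3^{-1}ah_3=h_4^{-1}a'h_4$ for some $a,a'\in A$. Choose geodesics $\alpha$ from $1$ to $a$ and $\alpha'$ from $1$ to $a'$, each starting with an $H_i$-edge for some $i\in\{1,2\}$, and form the loop $\mathcal P=e_1\cdot\beta_1\cdot e_2\cdot e_3\cdot\beta_2\cdot e_4$ where $e_1,e_2$ are the $H_3$-edges labeled $h_3^{\mp 1}$, $e_3,e_4$ are the $H_4$-edges labeled $h_4^{\mp 1}$, $\beta_1=h_3^{-1}\cdot\alpha$, and $\beta_2=(h_4^{-1}\cdot\alpha')^{-1}$. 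Viewed as a geodesic $6$-gon with $\beta_1,\beta_2$ geodesic, Proposition~\ref{sn} (with $\mu=1,c=0$) yields $\sum_{i=1}^4\widehat\ell(e_i)\le 6D$ once the $e_i$ are shown to be isolated components.

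The bulk of the work is verifying that each $e_i$ is indeed a maximal $H_{3\text{ or }4}$-component of $\mathcal P$ and is not connected to any other component. Maximality at the neighbors is immediate because adjacent sides lie in different subgroups (the first edge of $\beta_1$ is $H_1/H_2$, etc.). Non-connection of $e_1$ and $e_2$ reduces to $a\notin H_3$, which follows from $a\in A$: any nontrivial element of $H_3$ has the single $H_3$-edge as its unique geodesic, which does not start with an $H_1$- or $H_2$-edge. Similarly $e_3,e_4$ are not connected.

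The delicate case is ruling out connections from $e_i$ to $H_3$- or $H_4$-components lying \emph{inside} $\beta_1$ or $\beta_2$. The key input is that $\alpha,\alpha'$ are geodesic, so if a vertex $v$ of $\alpha'$ lies in $h_4 H_3$, then $d(1,v)\le 2$, forcing $v$ to occupy position $0$, $1$, or $2$ of $\alpha'$. Position $0$ gives $h_4\in H_3\cap H_4=\{1\}$, impossible; position $1$ would give $v\in (H_1\cup H_2)\cap h_4 H_3$, which I rule out by an auxiliary triangle $1\to v\to h_4\to 1$ whose three sides are isolated components in three different subgroups, so Proposition~\ref{sn} gives $\sum\widehat\ell\le 3D$, contradicting $\widehat\ell($ the $H_4$-edge$)=\widehat d_4(1,h_4)>6D$; position $2$ is handled by an analogous quadrilateral argument where only the $H_4$-edge is declared an isolated component, yielding $\widehat d_4(1,h_4)\le 4D$, again a contradiction. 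The symmetric analysis bounds connections from $e_2$ to $H_3$-components in $\beta_1$ (using the tail of $\alpha$ and the condition $v\in aH_3$), and likewise for $e_3,e_4$.

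Having established that $e_1,e_2,e_3,e_4$ are isolated components of $\mathcal P$, Proposition~\ref{sn} gives $\sum_{i=1}^4\widehat\ell(e_i)\le 6D$; but each $\widehat\ell(e_i)$ equals $\widehat d_3(1,h_3)$ or $\widehat d_4(1,h_4)$, so by \eqref{h1h4} the sum exceeds $4\cdot 6D=24D$, a contradiction. The main obstacle is the thorough case analysis of step three: one must also handle the degenerate situation where the last edge of $\alpha$ (respectively the first edge of the reverse of $\alpha'$) lies in $H_3$ (respectively $H_4$), in which case $e_2$ (respectively $e_3$) fuses with an adjacent edge of $\beta_1$ (respectively $\beta_2$); this is treated either by replacing $e_2$ with the fused edge, which still has $\widehat\ell$ bounded below by $\widehat d_3(1,h_3)$ minus a controlled amount (via the triangle argument), or by truncating $\alpha,\alpha'$ and repeating the analysis on a smaller polygon.
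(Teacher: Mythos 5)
Your overall strategy is sound and genuinely different in logical structure from the paper's: the paper applies Proposition~\ref{sn} to the hexagon to conclude that the edge labelled $h_3^{-1}$ \emph{cannot} be isolated (since $\widehat\d_3(1,h_3)>6D$), deduces that it must be connected to an $H_3$-component of the other path, and then gets its contradiction from a small $3$-gon containing the $h_4^{-1}$-edge as an isolated component; you instead rule out all such connections directly (your position-$k$ polygons are essentially the paper's final $3$-gon, broken into edges) and then apply Proposition~\ref{sn} to the hexagon at the end. Both routes use the same geometry, and your coset-and-position analysis for $e_1$ and $e_4$ is correct.

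There is, however, a genuine gap in the final assembly. Your claim that "maximality at the neighbors is immediate" fails for $e_2$ and $e_3$: nothing in the definition of $A$ controls the \emph{last} letter of the geodesic word $U_1$ for $a$ (resp.\ $U_2$ for $a'$), so that letter may lie in $H_3$ (resp.\ $H_4$), in which case $e_2$ (resp.\ $e_3$) is not a component of $\mathcal P$ at all but fuses into a longer $H_3$-subpath with label $uh_3$. Your first proposed repair — that the fused component has $\widehat\ell$ bounded below by $\widehat\d_3(1,h_3)$ minus a controlled amount — is false in general: $\widehat\d_3(1,u)$ for the last letter $u$ of a geodesic word is not controlled, and indeed $uh_3$ can even be trivial. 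The second repair (truncation) could be made to work but is not executed. Fortunately the gap is inessential: Proposition~\ref{sn} applied to the hexagon with $I=\{e_1\}$ alone already gives $\widehat\d_3(1,h_3)=\widehat\ell(e_1)\le 6D$, contradicting (\ref{h1h4}), and the isolation of $e_1$ is exactly the part you establish correctly (non-connection to $e_2$ or its fused version because $a\notin H_3$, so their coset is $h_3^{-1}aH_3\ne H_3$; non-connection to $\beta_1$ and $\beta_2$ by your position arguments). So drop $e_2$, $e_3$, $e_4$ and the sum over four terms entirely; you need only one isolated component of large relative length, not four.
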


\begin{figure}
  \centering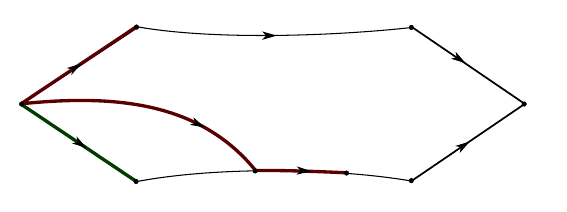\\
  \caption{}\label{83-f1}
\end{figure}

\begin{proof}
Suppose that there is $g\in A_1\cap A_2$. Then $g^{h_3^{-1}}\in A$ and $g^{h_4^{-1}}\in A$. Thus there exist be geodesic words $U_1$, $U_2$ in $X\sqcup \mathcal H$ representing $g^{h_3^{-1}}$ and $g^{h_4^{-1}}$, respectively, such that the first letters of $U_1$ and $U_2$ belong to $H_1\cup H_2$.  (Recall that a word is geodesic if it has shortest length among words representing the same element or, alternatively, every path in $\G$ labelled by this word is geodesic.) Let $p_1$, $p_2$ be paths in $\G $ starting at $1$ and having labels $\Lab (p_1)\equiv h_3^{-1}U_1h_3$ and $\Lab (p_2)\equiv h_4^{-1}U_2h_4$, respectively. Clearly $(p_1)_+=(p_2)_+=g$.

Since the first letter in $U_1$ belongs to $H_1\cup H_2$, the first edge $a$ of $p_1$ labelled by $h_3^{-1}$ is an $H_3$-component of the cycle $q=p_1p_2^{-1}$. Note that $q$ consists of $6$ geodesic segments and hence by (\ref{h1h4}) and Proposition \ref{sn} $a$ can non be isolated in $q$. Observer first that $a$ can not be connected to an $H_3$-component of $p_1$. Indeed this would mean that $U_1\equiv WU$, where $U$ may be trivial while $W$ is nontrivial and represents a (nontrivial) element of $H_3$. Since every (nontrivial) element of $H_3$ can be represented by a single letter from $H_3 $ and $U_1$ is geodesic, we conclude that $W$ consists of a single letter. By the choice of $U_1$ this letter is from $H_1$ or $H_2$. Hence one of the intersections $H_3\cap H_1$ or $H_3\cap H_2$ is nontrivial, which contradicts (\ref{HiHj}).

Thus $a$ is connected to an $H_3$-component $b$ of $p_2$. Let $e$ be a path in $\G$ of length at most $1$ labelled by an element of $H_3$ and going from $1=a_-$ to $b_-$. Repeating the arguments from the previous paragraph, we obtain that the first edge $c$ of $p_2$ is an $H_4$-component of $q$, which is isolated in $p_2$. In particular, $c$ is isolated in the cycle $ce[h_4^{-1},b_-]^{-1}$, where $[h_4^{-1},b_-]$ is the segment of $p_2$ from $h_4^{-1}$ to $b_-$. Note that $ce[h_4^{-1},b_-]^{-1}$ is composed of at most $3$ geodesics. Hence $\widehat\d _4 (1, h_4^{-1})\le 3D$ by Proposition \ref{sn}, which contradicts (\ref{h1h4}).
\end{proof}

\begin{lem}\label{A1B1}
$A_i\cap B_j=\emptyset $ for any $i,j\in \{1, 2\}$.
\end{lem}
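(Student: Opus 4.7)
The plan is to adapt the argument of Lemma \ref{A1A2}, now playing the asymmetry between the $\{1,2\}$ and $\{3,4\}$ clusters of $H_i$'s against each other. The defining feature of $B$ --- that no geodesic from $1$ to a point of $B$ begins with a letter in $H_1 \cup H_2$ --- is what will do the essential work on the $p_2$-side.

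I would suppose for contradiction that $g \in A_i \cap B_j$ for some $i,j \in \{1,2\}$ and set $k = i+2 \in \{3,4\}$, so that $g^{h_k^{-1}} \in A$ and $g^{h_j^{-1}} \in B$. Choose geodesic words $U_1, U_2$ in the alphabet $X \sqcup \mathcal H$ representing $g^{h_k^{-1}}$ and $g^{h_j^{-1}}$ respectively; by the definitions of $A$ and $B$, the first letter of $U_1$ lies in $H_1 \cup H_2$ while the first letter of $U_2$ lies in $X \cup H_3 \cup H_4$. Let $p_1, p_2$ be the paths from $1$ in $\G$ with labels $h_k^{-1} U_1 h_k$ and $h_j^{-1} U_2 h_j$, both ending at $g$, and let $a$ and $c$ denote their first edges, labelled by $h_k^{-1}$ and $h_j^{-1}$ respectively. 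By the choice of the first letters of $U_1$ and $U_2$ together with (\ref{HiHj}), $a$ is an $H_k$-component of $p_1$ and $c$ is an $H_j$-component of $p_2$, each consisting of a single edge.

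Next I would consider the cycle $q = p_1 p_2^{-1}$, viewed as a geodesic $6$-gon. If $a$ were isolated in $q$, Proposition \ref{sn} would force $\widehat\d_k(1, h_k^{-1}) \le 6D$, contradicting (\ref{h1h4}). The same argument as in Lemma \ref{A1A2} rules out the possibility that $a$ is connected to another $H_k$-component of $p_1$ (such a connection forces a prefix of $U_1$ to represent an element of $H_k$, hence a single $H_k$-letter, clashing with the known first letter of $U_1$). Since both extreme edges of $p_2$ are $H_j$-components with $j \ne k$, the partner of $a$ must be an $H_k$-component $b$ of $p_2$ lying within the $U_2$-segment. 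Let $e$ be an edge (or trivial path) labelled in $H_k$ from $a_- = 1$ to $b_-$, and let $\sigma$ be the subpath of $p_2$ from $c_+ = h_j^{-1}$ to $b_-$.

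Finally I would apply Proposition \ref{sn} to the cycle $c\, \sigma\, e^{-1}$, which is a geodesic $3$-gon since $\sigma$ is a subpath of the geodesic word $U_2$ and $c, e$ are single edges. The edge $c$ is an $H_j$-component of this cycle, and it is isolated: it cannot be connected to $e^{-1}$ since $H_j \cap H_k = \emptyset$ by (\ref{HiHj}), and it cannot be connected to any $H_j$-component inside $\sigma$ because such a connection would force the first letter of $U_2$ to lie in $H_j \subseteq H_1 \cup H_2$, contradicting $g^{h_j^{-1}} \in B$. Proposition \ref{sn} then gives $\widehat\d_j(1, h_j) = \widehat\d_j(1, h_j^{-1}) \le 3D$, contradicting (\ref{h1h4}). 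The only subtle point in the whole plan is this last non-connection, where the membership $g^{h_j^{-1}} \in B$ is used decisively; the remaining steps are routine applications of the machinery already developed in Lemmas \ref{w} and \ref{A1A2}.
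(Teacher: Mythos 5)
Your proof is correct and follows the paper's argument essentially verbatim: the same $6$-gon $p_1p_2^{-1}$ with Proposition \ref{sn} and (\ref{h1h4}) forcing $a$ to be connected to an $H_k$-component $b$ in the $U_2$-part of $p_2$, followed by the same triangle $c\,\sigma\,e^{-1}$ in which $c$ is isolated, yielding $\widehat\d_j(1,h_j)\le 3D$. The only compression is in the final non-connection step, where one should say explicitly that geodesicity of $U_2$ first reduces any connection of $c$ to an $H_j$-component of $\sigma$ to one occurring at the very start of the $U_2$-segment (otherwise a prefix of $U_2$ of length at least $2$ would represent an element of $H_j$), and only then does the $B$-membership of $g^{h_j^{-1}}$ exclude that initial case --- but this matches the paper's own level of detail.
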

\begin{proof}
We assume that $i=j=1$. The proof for other pairs $i,j$ is identical. Suppose that there exists $g\in A_1\cap B_1$. Then $g^{h_3^{-1}}\in A$ and $g^{h_1^{-1}}\in B$. Let $U_1$, $U_2$ be geodesic words in $X\sqcup \mathcal H$ representing $g^{h_3^{-1}}$ and $g^{h_1^{-1}}$, respectively, such that the first letter of $U_1$ belongs to $H_1\cup H_2$ while the first letter of $U_2$ does not belong to $H_1\cup H_2$. Let $p_1$, $p_2$ be paths in $\G $ starting at $1$ and having labels $\Lab (p_1)\equiv h_3^{-1}U_1h_3$ and $\Lab (p_2)\equiv h_1^{-1}U_2h_1$, respectively. Clearly $(p_1)_+=(p_2)_+=g$.

Let $a$ and $c$ be the first edges of $p_1$ and $p_2$ respectively. As in the proof of Lemma \ref{A1A2}, we prove that $a$ is an $H_3$-component of $q=p_1p_2^{-1}$, which is isolated in $p_1$. Hence, as above, we conclude that $a$ is connected to an $H_3$-component $b$ of $p_2$. Let $e$ be a path in $\G$ of length at most $1$ labelled by an element of $H_3$ and going from $1=a_-$ to $b_-$.

Since the first letter of $U_2$ does not belong to $H_1$, $c$ is an $H_1$-component of $p_2$. Since $U_2$ is geodesic, $c$ can not be connected to an $H_1$-component of the segment $[h_1^{-1}, b_-]$ of $p_1$. Hence $c$ is isolated in the cycle $ce[h_1^{-1}, b_-]^{-1}$, and we obtain $\widehat\d _1(1, h_1^{-1})$, which contradicts (\ref{h1h4}) again.
\end{proof}

\begin{lem}\label{B1B2}
$B_1\cap B_2=\emptyset $.
\end{lem}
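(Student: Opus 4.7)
The plan is to imitate the proof of Lemma~\ref{A1B1} almost verbatim, with the asymmetric pair $(H_3,H_1)$ replaced by the symmetric pair $(H_1,H_2)$. Suppose for contradiction that some $g\in B_1\cap B_2$, so that $g^{h_1^{-1}},g^{h_2^{-1}}\in B$. By the definition of $B$, I would pick geodesic words $U_1,U_2$ in $X\sqcup\mathcal H$ representing these two elements and whose first letters lie outside $H_1\cup H_2$. Let $p_1,p_2$ be the paths in $\G$ starting at $1$ with labels $h_1^{-1}U_1h_1$ and $h_2^{-1}U_2h_2$; both terminate at $g$. Denote by $a,c$ the first edges of $p_1$ and $p_2$ (labelled by $h_1^{-1}$ and $h_2^{-1}$), and form the cycle $q=p_1p_2^{-1}$.

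The first step would be to show that $a$ is an $H_1$-component of $p_1$ that is isolated in $p_1$. It is an $H_1$-component because the next letter in $p_1$, namely the first letter of $U_1$, is not in $H_1$. Isolatedness is the same shortening argument already used in Lemmas~\ref{A1A2} and~\ref{A1B1}: a connection would force some prefix of $U_1h_1$ to represent an element of $H_1$, whose replacement by a single letter of $H_1\setminus\{1\}$ would shorten $U_1$ and contradict geodesicity; the borderline cases when this prefix has length $0$ or $1$ are ruled out by the hypothesis that the first letter of $U_1$ lies outside $H_1$. Since $q$ consists of six geodesic sides, if $a$ were isolated in $q$ then Proposition~\ref{sn} together with~(\ref{h1h4}) would give $\widehat\d_1(1,h_1)\le 6D$, a contradiction. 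Thus $a$ must be connected to an $H_1$-component $b$ of $p_2$, and since the two extremal edges of $p_2$ are in $H_2$ (disjoint from $H_1$ by~(\ref{HiHj})), the component $b$ lies strictly inside the $U_2$-portion of $p_2$.

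Finally, let $e$ be a path of length at most $1$ in $\G$ labelled by an element of $H_1$ joining $1=a_-$ to $b_-$, and consider the triangle $\mathcal T=c\cdot[h_2^{-1},b_-]\cdot e^{-1}$, where $[h_2^{-1},b_-]$ is the corresponding subpath of $p_2$. All three sides of $\mathcal T$ are geodesic: two of them are single edges and the third is a subpath of the geodesic word $U_2$. The edge $c$ is an $H_2$-component of $\mathcal T$, since its neighbours in $\mathcal T$ are $e^{-1}$ (which carries an $H_1$-letter, or is trivial) and the first letter of $U_2$ (which is outside $H_2$). Repeating the shortening argument then shows that $c$ cannot be connected to any other $H_2$-component of $\mathcal T$, so $c$ is isolated in $\mathcal T$. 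Proposition~\ref{sn} applied to this triangle yields $\widehat\d_2(1,h_2)=\widehat\d_2(1,h_2^{-1})\le 3D$, in direct contradiction with the choice $\widehat\d_2(1,h_2)>6D$ from~(\ref{h1h4}). The only real care needed, exactly as in Lemmas~\ref{A1A2} and~\ref{A1B1}, is the routine case analysis establishing the two isolatedness claims; I do not expect any genuinely new obstacle.
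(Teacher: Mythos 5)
Your strategy is the paper's: the same paths $p_1,p_2$ labelled $h_1^{-1}U_1h_1$ and $h_2^{-1}U_2h_2$, the same hexagon argument via Proposition \ref{sn} forcing $a$ to be connected to an $H_1$-component $b$ inside the $U_2$-portion of $p_2$, and the same concluding triangle $c\cdot[h_2^{-1},b_-]\cdot e^{-1}$ giving $\widehat\d_2(1,h_2)\le 3D$ against (\ref{h1h4}). The final triangle step is correct as you state it, since there you only need $c$ not to be connected to components of a \emph{proper} initial segment of $U_2$, and geodesicity plus the first-letter condition do handle that.

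There is, however, one case in your isolatedness claim for $a$ that the shortening argument does not cover, and it is precisely the point where this lemma deviates from Lemmas \ref{A1A2} and \ref{A1B1}: here the first and last edges of $p_1$ are \emph{both} $H_1$-edges (labelled $h_1^{-1}$ and $h_1$), so $a$ could a priori be connected to the terminal edge of $p_1$. In that case the relevant ``prefix'' is all of $U_1$: the connection forces $g\in H_1$, hence $\overline{U_1}=g^{h_1^{-1}}\in H_1\setminus\{1\}$, and geodesicity only yields $\|U_1\|=1$ --- nothing gets shortened. Your remark about prefixes of length $0$ or $1$ does not close this either: the single letter of $U_1$ lies outside $H_1\cup H_2$ as a \emph{letter}, but it may be a letter $x\in X$ that equals an element of $H_1$ in $G$ (multiple edges with coinciding endpoints are explicitly allowed in $\G$), so no contradiction with the choice of $U_1$ arises. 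Note that in Lemmas \ref{A1A2} and \ref{A1B1} the analogous case is killed by (\ref{HiHj}) because there the first letter of $U_1$ is forced into $H_1\cup H_2$ while the component is an $H_3$- or $H_4$-component; that disjointness trick is unavailable here. The paper closes the case by a different (one-line) observation: once $g^{h_1^{-1}}\in H_1\setminus\{1\}$, the single $H_1$-edge from $1$ to $g^{h_1^{-1}}$ is a geodesic whose first edge is an $H_1$-component, so $g^{h_1^{-1}}\in A$ by the definition of $A$, contradicting $g^{h_1^{-1}}\in B$. Insert this observation (and its mirror image is not even needed for $c$, by the remark above) and your proof is complete.
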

\begin{proof}
Suppose that $g\in B_1\cap B_2$. Then $g^{h_1^{-1}}\in B$ and $g^{h_2^{-1}}\in B$. Again let $U_1$, $U_2$ be geodesic words in $X\sqcup \mathcal H$ representing $g^{h_1^{-1}}$ and $g^{h_2^{-1}}$, respectively, such that the first letters of $U_1$ and $U_2$ do not belong to $H_1\cup H_2$. Let $p_1$, $p_2$ be paths in $\G $ going from $1$ to $g$ and having labels $\Lab (p_1)\equiv h_1^{-1}U_1h_1$ and $\Lab (p_2)\equiv h_2^{-1}U_2h_2$, respectively.

Let $a$ and $c$ be the first edges of $p_1$ and $p_2$, respectively. Again it is easy to see that $a$ and $c$ are components of $q=p_1p_2^{-1}$. Suppose $a$ is connected to another $H_1$-component $d$ of $p_1$. As $U_1$ is geodesic, $d$ must be the last edge of $p_2$. Hence $U_1$ represents an element of $H_1$, i.e., $g^{h_1^{-1}}\in H_1$. However this means that $g^{h_1^{-1}}\in A$ by the definition of $A$. A contradiction. Thus $a$ is isolated in $p_1$. Similarly, $c$ is isolated in $p_2$. The rest of the proof is identical to that of Lemmas \ref{A1A2} and \ref{A1B1}.
\end{proof}

Now we are ready to complete the proof of Theorem \ref{cstar}. Assuming (a), suppose also that the group $G$ is inner amenable. That is, there exists a finitely additive conjugation invariant measure defined on all subsets of $G\setminus \{1\}$ such that $\mu (G\setminus \{1\})=1$. Since $A\sqcup B=G\setminus \{1\}$, $\mu (A)+\mu (B)=1$. On the other hand, by Lemmas \ref{A1A2} - \ref{B1B2} we have
$$
1=\mu (G\setminus \{1\}) \ge \mu (A_1)+\mu(A_2)+\mu (B_1)+\mu (B_2)=2\mu (A)+2\mu (B)=2 .
$$
A contradiction. Hence $G$ is not inner amenable. This completes the proof of the theorem.


\section{Some open problems}


In this section we discuss some natural open problems about hyperbolically embedded subgroups and rotating families. Since the first version of this paper was published in arXiv, most of the problems from the list below were solved partially or completely. We keep this section in the new version of our paper for historical reason and add footnotes describing the recent progress.

We start with problems which ask whether the ``hyperbolic properties" of groups considered in this paper are geometric. Recall that if a finitely generated group $G_1$ is hyperbolic relative to a collection of proper subgroups, then so is any finitely generated group $G_2$ quasi-isometric to $G_1$. In the full generality this fact was proved by Drutu in \cite{Dr09} (see also \cite{DS} for a particular case). For a survey of some other classical and more recent quasi-isometric rigidity results we refer to \cite{Dr11}.

\begin{prob}Is the existence of non-degenerate \he subgroups a quasi-isometry invariant? That is, suppose that a finitely generated group $G_1$ contains a non-degenerate \he subgroup $H_1$ and $G_2$ is a finitely generated group quasi-isometric to $G_1$.
\begin{enumerate}
\item[(a)] Does $G_2$ contain any non-degenerate \he subgroup?
\item[(b)] Does $G_2$ contain a hyperbolically embedded subgroup $H_2$ which is within a finite Hausdorff distance from the image of $H_1$ under the quasi-isometry between $G_1$ and $G_2$?
\end{enumerate}
\end{prob}

Similar questions make sense for rotating families. There are several ways to make these questions precise. We suggest just one of them.
Except in degenerate cases, groups with $\alpha $-rotating subgroups for $\alpha >>1$ contain non-abelian free subgroups, and are therefore non-amenable. Recall that two finitely generated non-amenable groups $G_1$, $G_2$ are quasi-isometric if and only if they are bi-Lipschitz equivalent, i.e., there exists a map $f\colon G_1\to G_2$ such that
$$
\frac 1C \d (g,h) \le \d (f(g), f(h)) \le C \d (g,h)
$$
for some fixed constant $C>0$. We call a map $f\colon G_1\to G_2$ satisfying the above property {\it $C$-bi-Lipschitz}.

\begin{prob}
Let $G_1$ be a finitely generated group that contains an $\alpha $-rotating subgroup for some sufficiently large $\alpha $ and let $G_2$ be another finitely generated group. Suppose there exists a $C$-bi-Lipschitz map $G_1\to G_2$ for some $C>0$. Is it true that $G_2$ contains an $\alpha ^\prime$-rotating subgroup, where $\alpha ^\prime =\alpha ^\prime (C, \alpha )$ only depends on $\alpha $ and $C$ and satisfies $\lim \limits_{\alpha \to \infty} \alpha ^\prime (C, \alpha)=\infty $ for every fixed $C>0$?
\end{prob}

Recall that a finitely generated group is {\it constricted} if every its asymptotic cone has cut points. Examples of constricted groups include relatively hyperbolic groups \cite{DS}, all but finitely many mapping class groups \cite{Ber}, $Out (F_n)$ for $n\ge 2$ \cite{Alg}, and many ``exotic" groups such as  Tarski Monsters \cite{OOS}. Constricted groups share many common properties with groups containing non-degenerate \he subgroups. For instance, constricted groups do not satisfy any nontrivial law \cite{DS}. Existence of cut points in asymptotic cones of a group $G$ is an important tool in studying outer automorphisms of $G$ and proving ``non-embeddability" theorems (see \cite{BDS, BDSadd, DMS} for examples).

A geodesic $l$ in a Cayley graph $\Gamma (G,X)$ of a group $G$ generated by a finite set $X$ is called {\it Morse} if for every $(\lambda ,c)$ there exists $B$ such that every $(\lambda, c)$-quasi-geodesic in $\Gamma (G,X)$ with endpoints on $l$ is contained in the closed $B$-neighborhood of $l$. It is not hard to show that existence of a Morse geodesics in $\Gamma (G,X)$ implies that $G$ is constricted.

\begin{prob}\footnote{A. Sisto answered affirmatively both parts of this question as well as the next one, see \cite[Theorem 1]{Sis13}.}
\begin{enumerate}
\item[(a)] Is every group with a non-degenerate \he subgroup constricted?
\item[(b)] Does every group $G$ with a non-degenerate \he subgroup contain a Morse quasi-geodesic?
\end{enumerate}
\end{prob}

More precisely, let $E$ be an infinite elementary subgroup such that $E\h G$, which always exists by Corollary \ref{elemhe1}. Let $g\in E$ be an element of infinite order.
\begin{prob}\footnote{Solved by A. Sisto, see the comment to the previous problem.}
Is it true that any bi-infinite $g$-invariant line in any Cayley graph of $G$ (with respect to a finite generating set) is a Morse quasi-geodesic?
\end{prob}

Let $G$ be a $1$-relator group. If $G=BS(m,n)=\langle a,b \mid (a^m)^b=a^n\rangle $ for some $m,n\in \mathbb Z\setminus\{0\} $ or $G= \langle a,b\mid a^m=b^n\rangle$, then it is easy to show that $G$ does not contain any hyperbolically embedded subgroup. Other examples of $1$-relator groups which do not  have any hyperbolically embedded subgroups are groups with infinite center (for particular examples and a structure theory of such groups we refer to \cite{Piet}). However it seems that a generic $1$-relator group must contain a non-degenerate hyperbolically embedded subgroup and, moreover, we do not know any examples of $1$-relator groups without non-degenerate hyperbolically embedded subgroups except for the groups from the two classes described above. Thus we ask the following.

\begin{prob}\footnote{Some progress towards solution of this problem is made in \cite{MO}. In particular, it is proved that every $1$-relator group with at least $2$ generators contains non-degenerate hyperbolically embedded subgroups.}
Classify $1$-relator group which do not contain non-degenerate hyperbolically embedded subgroups. Is it true that every such a group is either a Baumslag-Solitar group $BS(m,n)$ for some $m,n\in \mathbb Z\setminus\{0\} $ or has infinite center?
\end{prob}

This problem is closely related to the old conjecture by P. Neumann saying that all $1$-relator groups other than the Baumslag-Solitar groups $BS(m,n)$ defined below are $SQ$-universal \cite{Sch}. For the discussion of this problem see \cite{MO}.

It follows from Theorem \ref{wpd} that every group which admits a non-elementary acylindrical action on a hyperbolic metric space contains a non-degenerate hyperbolically embedded subgroup. Note that if a subgroup $H$ is a hyperbolically embedded in a group $G$ with respect to a subset $X\subseteq G$, then (unlike in the case when $G$ is hyperbolic relative to $H$) the action of $G$ on $\G $ is not necessary acylindrical. Here is the easiest counterexample. Let $G=(K\times  \mathbb Z)\ast H$, where $K$ is an infinite group. Let $X=K\cup \{x\}$, where $x$ is a generator of $\mathbb Z$. It is easy to verify that $H\h (G,X)$. However the action of $G$ on $\G $ is not acylindrical, as any element of $K$ moves any vertex of the infinite geodesic ray in $\G $ starting from $1$ and labelled by the infinite power of $x$ by a distance at most $1$.

However it seems plausible to modify $\G $ so that the action becomes acylindrical. For instance, in the above example the action of $G$ on $\Gamma (G, Y\sqcup H)$, where $Y=K\cup \mathbb Z$ is acylindrical. Thus we propose the following.

\begin{conj}\footnote{Proved in \cite{Osi13}.}
A group $G$ contains a non-degenerate hyperbolically embedded subgroup if and only if it admits a non-elementary acylindrical action on a hyperbolic space.
\end{conj}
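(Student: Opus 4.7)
The direction ``non-elementary acylindrical action $\Rightarrow$ non-degenerate \he subgroup'' is essentially already in the paper. If $G$ admits a non-elementary acylindrical action on a hyperbolic space $S$, then acylindricity trivially implies that every loxodromic element satisfies the WPD condition of Definition \ref{WPD}. Non-elementarity produces two independent loxodromic elements $h_1,h_2$, which can be chosen non-commensurable (by replacing $h_2$ with a conjugate by a suitable element, if necessary, using Lemma \ref{elem1} to rule out $E(h_1)=E(h_2)$). Then Theorem \ref{wpd} gives $\{E(h_1),E(h_2)\}\hookrightarrow_h G$, and each $E(h_i)$ is a non-degenerate (infinite, proper) \he subgroup.

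The hard direction is the converse: given $H\hookrightarrow_h(G,X)$ with $H$ non-degenerate, build a non-elementary acylindrical action of $G$ on a hyperbolic space. My plan is to construct, by enlarging $X$ step by step, a (typically infinite) relative generating set $Y\supseteq X$ such that $\Gamma(G,Y\sqcup\mathcal H)$ is still hyperbolic and the natural $G$-action on it is acylindrical. The counterexample just before the conjecture makes the principle explicit: the obstruction to acylindricity lives in subgroups that fix (or nearly fix) long subpaths of geodesics in $\G$, and one removes these obstructions by absorbing such subgroups into the generating set. First I would apply Corollary \ref{elemhe1} to produce a countable family of pairwise non-commensurable loxodromic WPD elements $\{g_n\}_{n\in\mathbb N}$ and, via Theorem \ref{wpd}, arrange that every finite subfamily gives a hyperbolically embedded collection $\{E(g_{n_1}),\ldots,E(g_{n_k})\}\hookrightarrow_h(G,X_k)$ for some $X_k\supseteq X$. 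Using Corollary \ref{he-indep} and Proposition \ref{transitive}, one can then arrange an exhausting chain $X\subseteq X_1\subseteq X_2\subseteq\cdots$ and try to pass to $Y=\bigcup_k X_k$. To verify acylindricity of the action on $\Gamma(G,Y\sqcup\mathcal H)$, I would use Bowditch's criterion (stated in Section~3.2 of the paper): control, for $\delta$ the hyperbolicity constant, the set $\{g\in G:\d(x,gx),\d(y,gy)\le 100\delta\}$ when $\d(x,y)$ is large, by analyzing how such $g$ act on the WPD axes of the $g_n$'s.

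The key technical step is to identify, given any $d>0$, the ``almost-fixers'' of geodesic segments of length $\ge R_d$ in $\Gamma(G,Y\sqcup\mathcal H)$, and show that throwing them into $Y$ in a controlled way neither destroys hyperbolicity nor changes the local geometry of $\G$ on scales relevant to WPD. Here I would rely on Lemma \ref{w} on paths with long isolated components (to guarantee that long almost-fixing elements must come from finitely many cosets of $H$ or of some $E(g_n)$), Proposition \ref{sn} on isolated components in polygons (to control back-tracking in the enlarged Cayley graph), and a pigeonhole argument leveraging the WPD property of each $g_n$. Non-elementarity of the resulting action is immediate since any $g_n$ is loxodromic with respect to $Y\sqcup\mathcal H$ (this only gets easier after enlarging the generating set, by the quasi-isometry arguments of Section~4).

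The main obstacle, and the reason this is stated as a conjecture rather than a theorem, is that the inductive enlargement $X\mapsto X_k\mapsto Y$ must be controlled simultaneously in two competing directions: making $Y$ \emph{large enough} to kill all sources of non-acylindricity (long almost-fixers of pairs of far-apart points), while keeping $Y$ \emph{small enough} that the metric $\widehat\d$ on $H$ (and on each $E(g_n)$ we have absorbed) remains locally finite, as required by Definition \ref{hes}. In the counterexample $G=(K\times\mathbb Z)\ast H$ this is trivial because one can add all of $\mathbb Z$ and retain local finiteness of $\widehat\d$ on $H$; in general one would need a diagonal argument, choosing at stage $k$ only finitely many new almost-fixers to adjoin, with displacements on $\G$ tending to infinity. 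Making this diagonal argument terminate in a generating set $Y$ on which both Theorem \ref{ipchar}'s isoperimetric condition (for hyperbolicity) and Bowditch's criterion (for acylindricity) hold uniformly is, I expect, where the real work lies.
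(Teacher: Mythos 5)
This statement is not proved in the paper: it appears in Section~9 as an open conjecture, and the authors explicitly write ``If the conjecture holds\ldots'', so there is no proof of theirs to compare against. Your proposal likewise does not prove it. The forward direction (non-elementary acylindrical action $\Rightarrow$ non-degenerate hyperbolically embedded subgroup) is correct and is exactly the observation the paper makes in the paragraph preceding the conjecture: acylindricity gives WPD for all loxodromic elements, non-elementarity supplies two non-commensurable ones, and Theorem~\ref{wpd} does the rest. But that is the direction the authors already had; the content of the conjecture is the converse, and for that your text is a research plan, not an argument. You say so yourself in the final paragraph, which is candid but means the proposal has a genuine gap rather than a proof.

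Concretely, the missing step is a convergence mechanism for the enlargement $X\subseteq X_1\subseteq X_2\subseteq\cdots$. Adjoining the ``almost-fixers'' of long segments at stage $k$ changes the metric on $\Gamma(G,X_k\sqcup\mathcal H)$, hence changes both the hyperbolicity constant $\delta_k$ and the set of almost-fixers relevant at stage $k+1$; nothing in your sketch shows this process stabilizes, nor that the Bowditch-type criterion can be verified at a single uniform scale on the limit graph $\Gamma(G,Y\sqcup\mathcal H)$ when $\delta_k$ may degenerate along the chain. Two of the tools you invoke also do not apply as stated: Proposition~\ref{transitive} requires a \emph{finite} collection of subgroups, so it cannot absorb the infinite family $\{E(g_n)\}$, and Corollary~\ref{he-indep} only tolerates finite symmetric differences of generating sets, whereas your $Y\setminus X$ is infinite. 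Finally, the competing constraint you identify --- keeping $(H,\widehat\d)$ locally finite while making $Y$ large --- is precisely the crux, and the proposal offers no criterion for deciding which almost-fixers may be safely adjoined. (For the record, the conjecture was later settled affirmatively by Osin, and the successful argument does enlarge the relative generating set, but the key point there is a quantitative statement proved from scratch --- that a suitable $Y$ can be chosen with $H\hookrightarrow_h(G,Y)$ and the action on $\Gamma(G,Y\sqcup\mathcal H)$ acylindrical --- which is exactly the step left open here.)
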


If the conjecture holds, we obtain an alternative definition of the class of groups with hyperbolically embedded subgroups, which does not use subgroups at all. The conjecture would also yield an alternative proof of the following result obtained in \cite{HO}: Every group $G$ with a non-degenerate hyperbolically embedded subgroup is in the Monod-Shalom class $\mathcal C_{reg}$. Indeed every group admitting a non-elementary acylindrical action on a hyperbolic space is in $\mathcal C_{reg}$ by a result of Hamenst\"adt \cite{Ham}.

\addcontentsline{toc}{section}{Bibliography}

\begin{theindex}

\addcontentsline{toc}{section}{Index}

\indexspace
\item $0$-edges \hspace{10pt} \pageref{i-0edge}

\indexspace
\item $0$-cells \hspace{10pt} \pageref{i-0c}

\indexspace
\item $0$-refinement \hspace{10pt} \pageref{i-0ref}

\indexspace
\item $\Delta (\mathcal Q)$ \hspace{10pt} \pageref{i-ftc1}
\indexspace
\item $\Delta (Q_1, Q_2)$ \hspace{10pt} \pageref{i-ftc}
\indexspace
\item $\G$ \hspace{10pt} \pageref{i-Gxh1}, \pageref{i-Gxh2}
\indexspace
\item $\mu$ \hspace{10pt} \pageref{rem-mu}
\indexspace
\item $\mathcal D$ \hspace{10pt} \pageref{i-D}
\indexspace
\item $\partial _{ext}\Delta$ \hspace{10pt} \pageref{i-dd}
\indexspace
\item $\partial _{int}\Delta$ \hspace{10pt} \pageref{i-dd}

\indexspace
\item $\widehat\d$ \hspace{10pt} \pageref{i-dhat}
\indexspace
\item $\dl $ \hspace{10pt} \pageref{maindef}
\indexspace
\item $\d _{Hau}$ \hspace{10pt} \pageref{i-dHau}
\indexspace
\item $E(h)$ \hspace{10pt} \pageref{i-Eh1}
\indexspace
\item $E^+(h)$ \hspace{10pt} \pageref{elemrem}
\indexspace
\item $H_\lambda$-component \hspace{10pt} \pageref{defcomp}
\indexspace
\item $H_\lambda$-subpath \hspace{10pt} \pageref{defcomp}
\indexspace
\item $inj_{\mathbb X} (\mathcal R)$ \hspace{10pt} \pageref{i-sc}, \pageref{i-inj}
\indexspace
\item $\hat\ell $ \hspace{10pt} \pageref{defcomp}
\indexspace
\item $K(G)$ \hspace{10pt} \pageref{i-KG}
\indexspace
\item $\ell_Y$ \hspace{10pt} \pageref{i-lY}
\indexspace
\item $\Lab $ \hspace{10pt} \pageref{i-lab1}, \pageref{i-lab2}
\indexspace
\item $\mathcal L_{WPD}$ \hspace{10pt} \pageref{i-LWPD}
\indexspace
\item $\mathcal L_{WPD}^+$ \hspace{10pt} \pageref{i-L+WPD}
\indexspace
\item $o(H_\lambda)$ \hspace{10pt} \pageref{i-oH}
\indexspace
\item $s_{\mu, c} (n)$ \hspace{10pt} \pageref{i-smcn}
\indexspace

\item Acylindrical action \hspace{10pt} \pageref{dfn_acyl}

\indexspace
\item Bi-Lipschitz equivalence \hspace{10pt} \pageref{i-bL1}, \pageref{i-bL2}

\indexspace

\item Cartan Hadamard \hspace{10pt} \pageref{i-CarH}

\indexspace
\item Cayley graph \hspace{10pt} \pageref{i-Cg}

\indexspace
\item Cell (in a van Campen diagram)
\subitem essential \hspace{10pt} \pageref{i-0c}
\subitem $\mathcal R$-cell \hspace{10pt} \pageref{i-RSc}
\subitem $\mathcal S$-cell \hspace{10pt} \pageref{i-RSc}

\indexspace
\item Components  \hspace{10pt} \pageref{defcomp}
\subitem connected \hspace{10pt} \pageref{defcomp}
\subitem isolated \hspace{10pt} \pageref{defcomp}

\indexspace
\item Cone-off \hspace{10pt} \pageref{sec_coneoff}
\subitem parabolic \hspace{10pt} \pageref{i-pconeoff}

\indexspace
\item Cremona group \hspace{10pt} \pageref{i-Cremo}

\indexspace
\item Cut system \hspace{10pt} \pageref{i-cutsys}

\indexspace
\item Dehn filling
\subitem in $3$-manifolds \hspace{10pt} \pageref{i-dfgeom}
\subitem group theoretic \hspace{10pt} \pageref{i-df1}, \pageref{i-df2}, \pageref{i-df3}

\indexspace
\item Elements
\subitem commensurable \hspace{10pt} \pageref{i-comm}
\subitem elliptic \hspace{10pt} \pageref{i-ell}
\subitem loxodromic \hspace{10pt} \pageref{i-lox}
\subitem parabolic \hspace{10pt} \pageref{i-par}

\indexspace
\item Fellow traveling constant \hspace{10pt} \pageref{i-ftc}

\indexspace
\item Greendlinger \hspace{10pt} \pageref{i-Greend},  \pageref{i-Greend2}

\indexspace
\item Group
\subitem hyperbolic \hspace{10pt} \pageref{defhypg}
\subitem relatively hyperbolic \hspace{10pt} \pageref{i-rhg}, \pageref{dfn_relh}
\subitem weakly relatively hyperbolic \hspace{10pt} \pageref{i-wrh}
\subitem elementary \hspace{10pt} \pageref{i-elem}

\indexspace
\item Horoball \hspace{10pt} \pageref{i-horob}
\subitem combinatorial \pageref{i-combhorob}

\indexspace
\item Hyperbolic cone \hspace{10pt} \pageref{i-hc}

\indexspace
\item Hyperbolic space \hspace{10pt} \pageref{i-hyps}

\indexspace
\item Injectivity radius \hspace{10pt} \pageref{i-inj}

\indexspace
\item Isoperimetric function \hspace{10pt} \pageref{i-ip}

\indexspace
\item Iwip \hspace{10pt} \pageref{i-iwip}

\indexspace
\item Lipschitz quasi-retract \hspace{10pt} \pageref{i-Lqr}

\indexspace
\item Mapping Class Group  \hspace{10pt} \pageref{i-MCG}

\indexspace
\item Presentation
\subitem bounded \hspace{10pt} \pageref{i-brp}
\subitem relative \hspace{10pt} \pageref{i-rp}
\subitem reduced \hspace{10pt} \pageref{i-brp}
\subitem strongly bounded \hspace{10pt} \pageref{i-sbp}

\indexspace
\item Projection complex \hspace{10pt} \pageref{projc}

\indexspace
\item Quasi-geodesic axis (of a loxodromic element) \hspace{10pt} \pageref{i-qga}

\indexspace
\item Quasi-geodesic path \hspace{10pt} \pageref{i-qg}

\indexspace
\item Radial projection \hspace{10pt} \pageref{i-radproj}

\indexspace
\item Relative generating set \hspace{10pt} \pageref{i-rgs}

\indexspace
\item Relative area \hspace{10pt} \pageref{i-ra}

\indexspace
\item Relative isoperimetric function \hspace{10pt} \pageref{i-rip}

\indexspace
\item Relative metric \hspace{10pt} \pageref{maindef}

\indexspace
\item Rotating family \hspace{10pt} \pageref{i-rot}, \pageref{i-rot1}
\subitem $\rho$-separated \hspace{10pt} \pageref{i-sep}, \pageref{i-sep1}
\subitem very rotating \hspace{10pt} \pageref{i-vrot}, \pageref{i-vrot1}

\indexspace
\item  Shortening pair  \hspace{10pt} \pageref{i-shorteningpair}

\indexspace
\item $SQ$-universality \hspace{10pt} \pageref{i-SQ}

\indexspace
\item Subgroups
\subitem geometrically separated  \hspace{10pt} \pageref{GeomSep_mainres}, \pageref{GeomSep}
\subitem hyperbolically embedded \hspace{10pt} \pageref{i-hypemb}, \pageref{hes}
\subitem $\alpha $-rotating \hspace{10pt} \pageref{i-arot}, \pageref{i-arot1}

\indexspace Subset
\subitem quasi-convex \hspace{10pt} \pageref{dfn_sqc}
\subitem strongly quasi-convex \hspace{10pt} \pageref{dfn_sqc}

\indexspace
\item Van Kampen diagram \hspace{10pt} \pageref{i-vK}

\indexspace
\item Windmill \hspace{10pt} \pageref{dfn_windmill}

\indexspace
\item Weak proper discontinuity (WPD) \hspace{10pt} \pageref{WPD}

\indexspace
\item Word length \hspace{10pt} \pageref{i-wl}

\indexspace
\item Word metric \hspace{10pt} \pageref{i-wd}
\end{theindex}

\newpage

\vspace{1cm}
\noindent \textbf{Fran\c{c}ois Dahmani: } Institut Fourier, 100 rue des maths, Universit\'e de Grenoble (UJF),  BP74.
38 402 Saint Martin d'H\`eres, Cedex France.\\
E-mail: \emph{francois.dahmani@ujf-grenoble.fr}

\bigskip

\noindent \textbf{Vincent Guirardel: } Universit\'e de Rennes 1 263 avenue du G\'en\'eral Leclerc, CS 74205. F-35042 RENNES Cedex France.\\
E-mail: \emph{vincent.guirardel@univ-rennes1.fr}

\bigskip

\noindent \textbf{Denis Osin: } Department of Mathematics, Vanderbilt University, Nashville 37240, USA.\\
E-mail: \emph{denis.osin@gmail.com}

\end{document}